\setheadfoot{\onelineskip}{2\onelineskip} %header height, footer height (and gap)
\theoremstyle{plain}
\newtheorem{proposition}{Proposition}
\newtheorem{theorem}[proposition]{Theorem}
\newtheorem{theorem*}{Main Theorem}
\newtheorem{corollary}[proposition]{Corollary}
\newtheorem{lemma}[proposition]{Lemma}
\theoremstyle{definition}
\newtheorem{definition}{Definition}
\theoremstyle{remark}
\newtheorem{example}{Example}
\newtheorem*{remark}{Remark}
\newcommand{\defeq}{\stackrel{\mathrm{def}}{\, = \,}}
\newcommand{\bZ}{{\mathbb{Z}}}
\newcommand{\bR}{{\mathbb{R}}}
\newcommand{\bC}{{\mathbb{C}}}
\newcommand{\bQ}{{\mathbb{Q}}}
\newcommand{\bF}{{\mathbb{F}}}
\newcommand{\bH}{{\mathbb{H}}}
\newcommand{\be}{{\mathbf{e}}}
\newcommand{\OO}{{\mathrm{O}}}
\newcommand{\SO}{{\mathrm{SO}}}
\newcommand{\Aut}{{\mathrm{Aut}}}
\newcommand{\SL}{{\mathrm{SL}}}
\newcommand{\GL}{{\mathrm{GL}}}
\newcommand{\N}{{\mathbf{N}}}
\begin{document} 

\title{Report on Freely Representable Groups}

\author{Wayne Aitken\thanks{Thanks to Shahed Sharif and his student Antony Savage for showing me the usefulness of
norm relations in the problem of finding units in algebraic number theory, 
and further thanks to Shahed Sharif for discussions in 2019 that led me to wonder what groups
have norm relations of unity. 
Thanks to the authors of \cite{biasse2020norm} for giving  nice
answers to such questions and for prompting my interest in freely representable groups, an interest that led to this report.
Finally, thanks to my student Jason Martin for his help in reviewing various drafts of this report.}}

\date{\today}

\maketitle

%%%%

\begin{abstract}This report is an account of freely representable groups, which are finite groups admitting linear representations
whose only fixed point for a nonidentity element is the zero vector. The standard reference for such groups is Wolf~(1967)
where such groups are used to classify spaces of constant positive curvature. Such groups also arise in the theory
of norm relations in algebraic number theory, as demonstrated recently by Biasse, Fieker, Hofmann, and Page (2020).
This report aims to synthesize the information and results from these and  other sources
 to give a continuous, self-contained development of the subject. 
I  introduce new points of view, terminology,  results, and proofs in an effort to give a coherent, detailed, self-contained,
and accessible narrative.
\end{abstract}

This is an account of freely representable groups, which are finite groups admitting linear representations
whose only fixed point for a nonidentity element is the zero vector.
Such groups attracted my attention as being exactly
the groups that do not have a general type of norm relation (as recently shown in~\cite{biasse2020norm}).
Interestingly, such groups arose earlier as the key to the classification of Riemannian manifolds with constant positive curvature (see~\cite{wolf2011}).
Such groups were studied even earlier from a purely group theoretical point of view. For example, in 1905, 
W.~Burnside~\cite{burnside1905b} proved necessary conditions 
for a group to be freely representable. In~1955, S.~A.~Amitsur~\cite{amitsur1955} observed that all finite subgroups of a division ring are freely 
representable groups. Amitsur used this as the starting point of his classification of such groups; the actual classification required class field theory.
Finally, these groups are related to the subject of Frobenius complements in finite group theory and groups with periodic cohomology.\footnote{See Wall~\cite{wall2013} for periodic cohomology. 
This report does not consider Amitsur's classification, Frobenius complements, or periodic cohomology. I hope to include some of these topics in sequels or in a future version of the current report.}
%I still need to follow-up
%on this topic by reading Isaacs Chapter 6 as well as Wall~\cite{wall2013}.}

Freely representable groups include cyclic groups, and in general behave like cyclic groups in many interesting
ways. 
Non-cyclic examples of freely representable groups can be
readily given.
In fact, the quaternion group with 8 elements is a freely representable group and is the smallest such group.
Freely representable groups can be thought of as playing  a role in the class of finite groups analogous to the role played by cyclic groups
in the class of finite Abelian groups. 
Because of this we regard freely representable groups as a kind of ``cycloidal group'', where the term ``cycloidal''  makes
use of the \text{suffix~``-oid''} meaning ``having the likeness of''.\footnote{
There are actually several types of ``cycloidal groups'' considered in this report besides freely representable groups.
One interesting class is the collection of  semiprime-cyclic groups. These are groups with the property that
any subgroup of order the product of two primes is cyclic. The remarkable fact is that 
freely representable group and  semiprime-cyclic groups correspond exactly for solvable groups.
The semiprime-cyclic groups will be considered in more detail, from another perspective, in the sequel, \emph{The Funakura Invariant and Norm Relations}.}

My main sources for this material are  \cite{wolf2011}, \cite{biasse2020norm}, and \cite{suzuki1955}.
I have also incorporated some insights of Allcock~\cite{allcock2018} which makes several improvements to Wolf \cite{wolf2011}.
I have also consulted several standard works on finite group theory.
In fact, a major purpose of this report is to synthesize the information and results from  these various sources
 to give a continuous, self-contained development of the subject. 
 There is still work to be done in this synthesis, but I think this report is a good start.
 I have tried to simplify and polish proofs whenever possible, and introduce new points of view, terminology, and results whenever they help the overall narrative.
New proofs are given to several of the results of \cite{wolf2011} 
with the goal of making the proofs of these results more accessible. For example Wolf uses Burnside's theorem
concerning Sylow subgroups central in their normalizers, where my proofs avoid this technique. I also avoid
transfer techniques more generally, and I avoid the Schur-Zassenhaus theorem (used in~\cite{allcock2018}). I explore Sylow-cyclic groups in more detail than Wolf and gives some results not found in Wolf~\cite{wolf2011} which might be new: for example 
Theorem~\ref{divisor_thm} concerning the conjugacy of subgroups of the same order may be new.
The characterization of freely representable groups in terms of the existence of unique subgroups of each prime order (see Theorem~\ref{sc_fr_thm}) may be new. I make more use of the idea of a maximum cyclic conjugate (MCC) subgroup, which leads
to a different perspective from Wolf, and the resulting theorem on the existence of a unique element of order 2
may be new (Corollary~\ref{new_corollary2}). 
The results from~\cite{biasse2020norm} cited here on norm relations are given more elementary proofs which use less representation
theory (for example, the proof here does not use central primitive idempotents), and they are generalized a bit.
In general the paper~\cite{biasse2020norm} has inspired me to use norm relations as a tool to study freely representable groups.
For example, I have caste the proof of the $pq$-theorem (Theorem~\ref{pq_thm}) using norm relations, which seems to be novel point
of view, and have used norm relations in a few other places  (Lemma~\ref{new_lemma} and Theorem~\ref{new_thm}) to 
show that groups are not freely representable.

I believe this report is reasonably self-contained at least for the solvable case; appendices have been given to help
make the report more accessible.
The non-solvable case requires me to draw from some high-level sources such as Suzuki~\cite{suzuki1955}.
I hope to give complete proofs even in the non-solvable case in a sequel or future edition of this report.

%%%%%
%%%%%

\chapter{Definitions and Examples}

Suppose a group $G$ acts on a set $S$. We say that $G$ \emph{acts freely} if for all $g \ne 1$ in $G$
and all $s \in S$ we have $g s \ne s$. In other words, no nonidentity element has a fixed point.
Such actions occur naturally, for example, in the theory of covering spaces.

Suppose a group $G$ acts linearly on a vector space $V$. We say that the action is a \emph{free
linear
representation}  if  $V\ne 0$ and if $G$ acts freely on the set of nonzero vectors of $V$. 
In other words, for all $g\in G$, the associated linear transformation~$V\to V$ has eigenvalue $1$ if and only if $g=1$.
%Note that this definition does not use scalar multiplication, so can be expressed in terms of more general
%additive groups $V$. I might take this up in a later edition.

Here are a few observations related to this definition:

\begin{lemma}\label{subrep_lemma}
If $G$ is a free linear representation of $V$ then any nonzero subrepresentation is also free.
\end{lemma}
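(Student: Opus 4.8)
The plan is to verify the defining condition of a free linear representation directly for the subrepresentation. Write $W$ for the given nonzero $G$-invariant subspace, with $G$ acting on $W$ by restriction of the action on $V$. By hypothesis $W \ne 0$, so the first clause of the definition holds automatically; the substance is to check that $G$ acts freely on the set of nonzero vectors of $W$.

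The key observation is that every nonzero vector of $W$ is in particular a nonzero vector of $V$, and that the action of any $g \in G$ on such a vector is the same whether we regard the vector as lying in $W$ or in $V$, precisely because the action on $W$ is by restriction. So I would argue by contradiction: suppose some $g \ne 1$ in $G$ and some nonzero $w \in W$ satisfy $gw = w$. Then $w$ is a nonzero vector of $V$ fixed by a nonidentity element, contradicting freeness of the original action on $V$. Hence no such pair $(g,w)$ exists, and the restricted action on $W$ is free.

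I expect no real obstacle here: the entire content is the elementary but useful remark that a fixed point inside a subspace is a fixed point of the ambient space. If one prefers the equivalent eigenvalue formulation given in the definition, the same reasoning applies, since an eigenvalue $1$ for the restriction $g|_W$ produces a nonzero $w \in W$ with $gw = w$, which is again a nonzero fixed vector in $V$. The only matters to state carefully are that a subrepresentation carries the action inherited by restriction, and that the phrase ``nonzero subrepresentation'' is exactly what supplies the required hypothesis $W \ne 0$.
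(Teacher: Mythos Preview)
Your argument is correct. The paper actually states this lemma without proof, treating it as immediate from the definition; your direct verification that a nonzero fixed vector in $W$ is a nonzero fixed vector in $V$ is exactly the intended reasoning.
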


\begin{lemma}\label{faithful_lemma}
A free linear representation of $V$ over $F$ is faithful. 
In other words, it yields an injective homomorphism $G \hookrightarrow \mathrm{GL}(V)$.
\end{lemma}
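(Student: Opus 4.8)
The plan is to prove injectivity of the representation homomorphism $\rho \colon G \to \mathrm{GL}(V)$ by showing that its kernel is trivial. Recall that a group homomorphism is injective precisely when its kernel consists of the identity alone, so it suffices to show that if $\rho(g)$ equals the identity transformation of $V$, then $g = 1$.

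So suppose $g \in G$ satisfies $\rho(g) = \mathrm{id}_V$. The key point is to exploit the hypothesis $V \ne 0$, which is part of the definition of a free linear representation. Since $V \ne 0$, I can choose some nonzero vector $v \in V$. Because $\rho(g)$ is the identity, we have $gv = v$; that is, $v$ is a fixed point of the action of $g$ on the set of nonzero vectors. The freeness of the action then forces $g = 1$. This shows that the kernel of $\rho$ is trivial, and hence that $\rho$ is injective. Alternatively, I could phrase the argument through the eigenvalue reformulation stated in the definition: if $\rho(g) = \mathrm{id}_V$, then $\rho(g)$ has $1$ as an eigenvalue (indeed every nonzero vector is an eigenvector with eigenvalue $1$), and the free representation condition immediately yields $g = 1$.

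I do not anticipate any genuine obstacle here; the whole argument is a direct unwinding of the definitions. The one subtlety worth flagging is the need to invoke the nondegeneracy condition $V \ne 0$: without the guaranteed existence of a nonzero vector to test against, there would be no fixed point to feed into the freeness hypothesis, and the argument would stall. Once that condition is in hand, faithfulness is essentially immediate.
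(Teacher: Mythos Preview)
Your proof is correct and follows exactly the same approach as the paper: take $g$ in the kernel, pick a nonzero $v \in V$ (using $V \ne 0$), observe $gv = v$, and conclude $g = 1$ from freeness. The paper's version is just a terser rendition of the same argument.
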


\begin{proof}
Suppose $g\in G$ is in the kernel of $G \to \mathrm{GL}(V)$. Let $v \ne 0$ be in $V$. Then~$g v = v$ since $g$ is in the kernel. So $g = 1$
since $G$ acts freely on nonzero vectors.
\end{proof}

A  group $G$ is \emph{freely representable} over a field $F$ if it possesses a free linear representation of an $F$-vector space $V$.
In this report we focus on finite groups only, so it is understood that all freely representable groups we consider are finite.
This allows us to restrict our attention, when convenient, to finite dimensional vector spaces:

\begin{lemma}\label{finite_dim_lemma}
Let $G$ be a finite group and let $F$ be a field. If $G$ is 
freely representable over~$F$ then there is a free linear representation of $G$ on a finite dimensional vector space $V$.
In fact we can choose $V$ to be of dimension at most $|G|$.
\end{lemma}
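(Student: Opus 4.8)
The plan is to cut down the given (possibly infinite-dimensional) representation to a finite-dimensional subrepresentation using a single group orbit, and then invoke Lemma~\ref{subrep_lemma} to conclude that this subrepresentation is again free.

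First I would take a free linear representation of $G$ on some vector space $V$, which exists by the assumption that $G$ is freely representable over $F$. Since $V \ne 0$, I can choose a nonzero vector $v \in V$. I would then form the subspace $W \subseteq V$ spanned by the orbit $\{gv : g \in G\}$. Because $G$ is finite, this spanning set has at most $|G|$ elements, so $\dim W \le |G|$, and $W$ is in particular finite dimensional.

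Next I would verify that $W$ is a subrepresentation, that is, $G$-invariant: for any $h \in G$ and any spanning vector $gv$, we have $h(gv) = (hg)v$, which is again one of the spanning vectors, so $hW \subseteq W$. Moreover $W \ne 0$, since $v = 1 \cdot v$ lies in $W$. Thus $W$ is a nonzero subrepresentation of the free representation $V$, and by Lemma~\ref{subrep_lemma} the action of $G$ on $W$ is itself free. This $W$ is then the desired finite-dimensional free linear representation of dimension at most $|G|$.

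There is no serious obstacle here; the content is just the observation that a free representation always contains a small, ``cyclically generated'' free subrepresentation. The only points needing any care are the $G$-invariance of $W$ and the fact that $W \ne 0$, both of which are immediate, while the dimension bound $\dim W \le |G|$ follows directly from the orbit having at most $|G|$ elements.
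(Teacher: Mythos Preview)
Your proof is correct and follows exactly the same approach as the paper: pick a nonzero vector, take the span of its $G$-orbit, and apply Lemma~\ref{subrep_lemma}. Your version is slightly more detailed in explicitly checking $G$-invariance and the dimension bound, but the argument is identical.
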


\begin{proof}
By assumption $G$ has a free linear representation on some vector space $W$.
By definition, $W$ is not the zero space; let $w \in W$ be a nonzero vector
and let $V$ be the span of $\{ \sigma w \mid \sigma \in G \}$. 
The result now follows from Lemma~\ref{subrep_lemma}
\end{proof}

\begin{corollary}\label{irreducible_cor}
Let $G$ be a finite group and let $F$ be a field. Then $G$ is 
freely representable over~$F$ 
if and only if there is an irreducible free linear representation of $G$ on a finite dimensional $F$-vector space $V$.
\end{corollary}

Combining Lemma~\ref{finite_dim_lemma} with Lemma~\ref{faithful_lemma} yields the following:

\begin{corollary}\label{finite_dim_cor}
Let $G$ be a finite group and let $F$ be a field. Then $G$ is freely representable over $F$ if and only if
it is isomorphic to a subgroup $\Gamma$ of $\GL_n(F)$, for some positive integer $n$, such that
the only element of $\Gamma$ with eigenvalue $1$ is the identity. 
\end{corollary}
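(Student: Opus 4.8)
The plan is to prove both directions by translating between the geometric condition ``acts freely on nonzero vectors'' and the algebraic condition ``has no eigenvalue~$1$'', and then invoking the two preceding lemmas. The key elementary observation throughout is that a linear transformation $T \colon V \to V$ has eigenvalue $1$ precisely when there is a nonzero $v \in V$ with $Tv = v$; thus a representation is free exactly when no nonidentity element fixes a nonzero vector, which is what the definition already records.

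For the forward direction, suppose $G$ is freely representable over $F$. First I would apply Lemma~\ref{finite_dim_lemma} to obtain a free linear representation $\rho \colon G \to \GL(V)$ with $V$ finite dimensional, say $\dim V = n$; choosing a basis identifies $\GL(V)$ with $\GL_n(F)$. By Lemma~\ref{faithful_lemma} the homomorphism $\rho$ is injective, so $G$ is isomorphic to its image $\Gamma = \rho(G) \subseteq \GL_n(F)$. It remains to check the eigenvalue condition on $\Gamma$. Any element of $\Gamma$ has the form $\rho(g)$; it has eigenvalue $1$ iff $gv = v$ for some nonzero $v$, which by freeness forces $g = 1$, i.e.\ $\rho(g)$ is the identity. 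Hence the identity is the only element of $\Gamma$ with eigenvalue $1$.

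For the converse, suppose $G$ is isomorphic to a subgroup $\Gamma \subseteq \GL_n(F)$ in which only the identity has eigenvalue $1$. I would let $V = F^n$ and let $G$ act on $V$ through the isomorphism $G \cong \Gamma$ followed by matrix multiplication; this is a linear representation, and $V \ne 0$ since $n \ge 1$. To see it is free, note that the isomorphism carries $1 \in G$ to the identity matrix, so for $g \ne 1$ the transformation $\rho(g)$ is not the identity and therefore, by hypothesis, has no eigenvalue $1$; equivalently no nonidentity $g$ fixes a nonzero vector. This is exactly the definition of a free linear representation, so $G$ is freely representable over $F$.

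I do not anticipate a genuine obstacle here, since the statement is a repackaging of the two lemmas. The only point demanding care is the bookkeeping in the two translations---between ``fixed nonzero vector'' and ``eigenvalue~$1$'', and between the abstract group $G$ and the concrete matrix group $\Gamma$ under the isomorphism---together with confirming that the condition ``eigenvalue $1$ iff $g=1$'' from the definition matches the condition ``only the identity has eigenvalue $1$'' in the statement, the two being equivalent because the identity automatically has eigenvalue $1$.
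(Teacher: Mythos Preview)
Your proof is correct and follows the same approach as the paper, which simply states that the corollary is obtained by combining Lemma~\ref{finite_dim_lemma} with Lemma~\ref{faithful_lemma} without writing out the details. You have filled in exactly those details: the forward direction uses the two lemmas to get a faithful finite-dimensional representation and then translates freeness into the eigenvalue condition, while the converse is the direct verification that the eigenvalue hypothesis yields a free linear representation.
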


Note we can even take $n$ be to at most $|G|$ in the above corollary if we wish.

If we do not specify $F$ it is understood that~$F$ is $\bC$. 
This default is reasonable since $\bC$ is in some sense the most basic field 
used in representation theory. But note that free representations over $\bR$ are central to the theory
of Riemannian manifolds with constant positive curvature, so one might argue that $\bR$ could have been a reasonable default.
The following proposition shows this question is moot:  there is no distinction between being freely representable over $\bC$
and being freely representable over $\bR$.

\begin{lemma}\label{all_char_lemma}
If a finite group $G$ is freely representable over a field $F$ then it is 
freely representable over all fields of the same characteristic at that of $F$.
\end{lemma}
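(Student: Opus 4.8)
The plan is to route everything through the prime field $P$ of $F$, exploiting that $F$ and $F'$ share the same characteristic and hence the same prime field $P$ (either $\bQ$ or some $\bF_p$). I would move the given free representation \emph{down} from $F$ to $P$ and then \emph{up} from $P$ to $F'$. The guiding observation is that the freeness condition, as stated in the definition, is really a condition on the underlying set of vectors together with the group action alone---``no $g\ne 1$ fixes a nonzero vector''---and so it interacts very simply with both restriction and extension of scalars.

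\emph{Descent to the prime field.} First I would use Lemma~\ref{finite_dim_lemma} to take the free representation of $G$ to be on a finite-dimensional $F$-vector space $V$. Then I would regard $V$ merely as a $P$-vector space, restricting the scalar action to $P\subseteq F$: the $G$-action remains $P$-linear, and the \emph{set} of vectors is unchanged. Since an element $g\in G$ fixes a nonzero vector of $V$ viewed over $P$ exactly when it does so viewed over $F$, the resulting $P$-representation is again free. Thus $G$ is freely representable over $P$. (The $P$-dimension may be infinite when $[F:P]=\infty$, but this is harmless, and a second application of Lemma~\ref{finite_dim_lemma} returns a finite-dimensional free $P$-representation $\rho\colon G\to \GL_n(P)$.)

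\emph{Ascent to $F'$.} Since $P\subseteq F'$, I would base change, forming $V\otimes_P F'$, on which $G$ acts through the same matrices $\rho(g)$. Here I switch to the determinant formulation: an element $g$ has eigenvalue $1$ precisely when $\det(\rho(g)-I_n)=0$. Over $P$ this determinant is nonzero for every $g\ne 1$ because $\rho$ is free; being an element of $P$, it remains nonzero under the inclusion $P\hookrightarrow F'$, since a field homomorphism is injective and commutes with the polynomial expression $\det(\rho(g)-I_n)$ in the matrix entries. Hence no nonidentity element acquires eigenvalue $1$ over $F'$, and $G$ is freely representable over $F'$.

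The two halves are genuinely dual: descent is transparent once freeness is read as a fixed-point statement about the underlying set, while ascent needs the determinant criterion to see that nonvanishing survives the embedding into $F'$. I do not anticipate a serious obstacle; the only points requiring care are the possible infinite-dimensionality after restriction of scalars, which Lemma~\ref{finite_dim_lemma} disposes of, and the trivial but essential remark that passing between $F$ and $F'$ is legitimate precisely because they share the prime field $P$.
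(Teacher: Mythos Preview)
Your proof is correct and follows essentially the same approach as the paper's: descend to the prime subfield by restriction of scalars (freeness is a set-theoretic fixed-point condition, hence preserved), then ascend to the target field via the determinant criterion $\det(\rho(g)-I)\ne 0$, which survives the field embedding. The paper phrases the ascent through Corollary~\ref{finite_dim_cor} rather than explicitly tensoring, but the substance is identical.
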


\begin{proof}
Let $F$ be a field and let $F_0$ be its prime subfield. If we have a free representation of $G$ on an $F$-vector space $V$
then observe that it is a free representation of~$V$ regarded as an~$F_0$-vector space (where we allow the dimension to increase).

Conversely, suppose $G$ is freely representable over $F_0$. By Corollary~\ref{finite_dim_cor}
we can assume that $G$ is a finite subgroup of $\GL_n(F_0)$ where $n \ge 1$ and such that
the only element of $G$ with eigenvalue $1$ is the identity. In other words,
$\det (g - I) \ne 0$ for all~$g \in G$ not equal to the identity.
Note that $\GL_n(F_0)$ is a subgroup of~$\GL_n(F)$, and the determinant of $g - I$ with $g \in \GL_n(F_0)$
is the same whether we take determinants in $\GL_n(F_0)$ or in $\GL_n(F)$.
Thus, by Corollary~\ref{finite_dim_cor}, $G$ is freely representable over~$F$.
\end{proof}

Now we consider some basic examples that will prove to be central in what follows:

\begin{example} \label{cyclic_example}
Every cyclic group is freely representable.
To see this let $G$ be the group of $N$th roots of unity. This group  $G$ acts on $V=\bC^n$ by scalar multiplication
for all $n \ge 1$. This construction gives a free representation over~$\bC$ of dimension~$n$.
Now if we view $V=\bC^n$ as a real vector space, we get
instead a free representation over~$\bR$ of dimension~$2n$.
We can also find free representations  of a cyclic group $G$ over $\bR$ directly by considering finite groups of rotations of $\bR^2$
fixing the origin. Of course this is equivalent, in the case $n=1$, to the early construction.%\footnote{We can view this example from the lens of Corollary~\ref{finite_dim_cor}: since $\GL_1(F)$ can be identified with~$F^\times$
%we conclude that every subgroup of $F^\times$ is freely representable over $F$.
%This gives us any cyclic group as a freely representable groups over~$F=\bC$.
%This gives us such groups for all $F$ of characteristic zero (Lemma~\ref{all_char_lemma}).
%Using basic properties of finite fields we see
%that in characteristic $p$ we can use this construction 
%to gives us any cyclic group of order $N$, where $N$ is not divisible by $p$, as a freely representable group in characteristic $p$.
%(We can use Euler's generalization of Fermat's little theorem).

%On the other hand, a freely representable group $G$ in characteristic $p$ cannot have an element of order $p$.
%Otherwise we would get a non-identity matrix whose minimal polynomial divides $X^p-1$, so $1$ is a root
%of the characteristic polynomial. So $1$ is an eigenvalue.}
\end{example}

\begin{example}\label{quaternion_example}
We can extend the stock of examples in a very interesting way by replacing $\bC$ in Example~\ref{cyclic_example} with the quaternions~$\bH$.
Let $G$ be a finite multiplicative subgroup of~$\bH^\times$. Then $G$ acts on $\bH^n$ by scalar multiplication.
Now identify $\bH^n$ with~$\bR^{4n}$ in the usual way, and observe we get a $4n$-dimensional
free representation over $\bR$.
(See the relevant appendix for more information on the division algebra~$\bH$.)

We conclude that every finite subgroup of $\bH^\times$  is a freely representable group.
In particular, the quaternion group is a freely representable noncyclic group with~8 elements.

The classification of  finite subgroups of $\bH^\times$ is easy to describe
in terms of finite rotations groups of $\bR^3$.
First observe that any finite subgroup of $\bH^\times$ sits 
in the group of elements $\bH_1$  of norm~$1$, which is topologically the $3$-sphere.\footnote{It turns out that the group $\bH_1$
is isomorphic to the Lie group $\mathrm{SU}(2)$. We do not require this fact in this report.}
There is a two-to-one surjective homomorphism $\bH_1 \to \mathrm{SO}(3)$
which sends~$h \in \bH_1$ to the rotation
$$
v \mapsto h v h^{-1}
$$
where $v \in \bR^3$ and where $\bR^3$ is identified with the  $\bR$-span of
the quaternions $\mathbf{i}, \mathbf{j}, \mathbf{k}$.
(See the appropriate appendix for more information).
The kernel is $\left\{\pm 1\right\}$. 

This two-to-one mapping $\bH_1 \to \mathrm{SO}(3)$  can be used to classify finite subgroups of~$\bH_1$ in terms of finite subgroups of $\mathrm{SO}(3)$.
The fact that $-1$ the only element of~$\bH_1$ of order 2 helps us describe the correspondence (see Lemma~\ref{doublecover_lemma}).

First consider any finite subgroup $G$ of $\bH_1$ of \emph{odd order}. Then the restriction of~$\bH_1 \to \mathrm{SO}(3)$
to $G$ has trivial kernel, so $G$ is naturally isomorphic to a subgroup of $\mathrm{SO}(3)$ of odd order.
All finite subgroups of $\mathrm{SO}(3)$ of odd order are cyclic, so
$G$ must be be cyclic in this case. Since $\bH_1$ contains the unit circle subgroup of $\bC^\times$ as a subgroup, we
have cyclic subgroup of all odd orders in $\bH_1$.

We can form finite subgroups $G$ of $\bH_1$ of even order by taking preimages of finite subgroups of $\mathrm{SO}(3)$.
Conversely, every finite subgroup $G$ of $\bH_1$ of even order 
must contain $-1$ since $-1$ is the unique element of $\bH_1$ of order 2. This implies that such a $G$ is the preimage of
its image (see Lemma~\ref{doublecover_lemma} for details).
So to classify the even order subgroups of $\bH_1$ we can just look at preimages of finite subgroups of $\mathrm{SO}(3)$.
See the appropriate appendix for a classification of subgroups of~$\mathrm{SO}(3)$. 

We start the classification of even ordered subgroups of $\bH_1$ by looking at the preimage $L$ of a cyclic subgroup $C_k$
of order $k$. The preimage of a group isomorphic to $C_k$ is isomorphic to $C_{2k}$ (Lemma~\ref{doublecover_lemma}).
Next we consider the preimage of noncyclic subgroups of $\mathrm{SO}(3)$. 
Observe that the preimage of a noncyclic group cannot be a cyclic group.
So the classification of noncyclic finite subgroups of $\mathrm{SO}(3)$ now gives us a classification 
of noncyclic finite subgroup of $\bH_1$.
Every noncyclic finite subgroup of $\bH_1$ is the preimage of one of the following: a dihedral group $D_m$ of order~$2m$ where~$m\ge 2$, the tetrahedral group $T$ (which is isomorphic to~$A_4$), the octahedral group $O$ (which is isomorphic to $S_4$),
or the icosahedral group $I$ (which is isomorphic to~$A_5$).
We call $G$ a \emph{binary dihedral group, binary tetrahedral group, binary octahedral group}, or \emph{binary iscosahedral group}
depending on its image in~$\mathrm{SO}(3)$.\footnote{The binary tetrahedral group is isomorphic to $\mathrm{SL}_2(\bF_3)$,
and the binary icosahedral group is isomorphic to~$\mathrm{SL}_2(\bF_5)$.}
In this classification we include $D_2$, the dihedral group of order 4; this is just the Klein four group $C_2 \times C_2$. The
quaternion group of size 8 has such a $D_2$ as its image.
\end{example}

\begin{remark}
With a little work, we can show that binary dihedral groups of equal order are isomorphic, and up to isomorphism there
is a unique binary tetrahedral group, a unique binary octahedral group, and a binary icosahedral group. (See the following proposition).
We denote these groups as $2 D_n, 2 T, 2 O,$ and $2 I$.
\end{remark}

\begin{proposition} \label{binary_conjugate_prop}
Let $G_1$ and~$G_2$ be subgroups of $\bH_1$ of the same order
and whose images under 
the standard map $\pi \colon \bH_1 \to \mathrm{SO}(3)$
are isomorphic. Then $G_1$ and $G_2$ are isomorphic. In fact, they are conjugate
subgroups of~$\bH_1$.
\end{proposition}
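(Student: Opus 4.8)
The plan is to push everything down to $\SO(3)$ through the double cover $\pi\colon\bH_1\to\SO(3)$ and then exploit the classification of finite subgroups of $\SO(3)$ recalled in Example~\ref{quaternion_example}. The one external fact I will lean on is that \emph{isomorphic finite subgroups of $\SO(3)$ are conjugate in $\SO(3)$}: the classification provides exactly one conjugacy class for each of $C_n$ (with $n\ge 1$), $D_n$ (with $n\ge 2$), $T\cong A_4$, $O\cong S_4$, and $I\cong A_5$, and a quick inspection of orders and structure shows these abstract isomorphism types are pairwise distinct, so the assignment sending a conjugacy class to its isomorphism type is injective. Since $\pi$ is surjective, any $\rho\in\SO(3)$ conjugating $\pi(G_1)$ onto $\pi(G_2)$ lifts to some $\tilde\rho\in\bH_1$, and I will check that the lift conjugates $G_1$ onto $G_2$ exactly. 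The role of the hypothesis $|G_1|=|G_2|$ is to force $G_1$ and $G_2$ to have the same parity; this matters because the relation between $G_i$ and $\pi(G_i)$ is governed entirely by whether $-1\in G_i$, and without equal parity a single image could have preimages of two different orders (an odd group versus its full even preimage).

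Consider first the even case, where $|G_1|=|G_2|$ is even. Then each $G_i$ contains an element of order $2$, which must be $-1$ since $-1$ is the unique element of $\bH_1$ of order $2$; hence by Lemma~\ref{doublecover_lemma} each $G_i$ is the full preimage $\pi^{-1}(\pi(G_i))$. Choosing $\rho$ with $\rho\,\pi(G_1)\,\rho^{-1}=\pi(G_2)$ and a lift $\tilde\rho$, I use the identity $\pi(\tilde\rho x\tilde\rho^{-1})=\rho\,\pi(x)\,\rho^{-1}$ to get $\tilde\rho\,\pi^{-1}(K)\,\tilde\rho^{-1}=\pi^{-1}(\rho K\rho^{-1})$ for every subgroup $K\le\SO(3)$. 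Applying this with $K=\pi(G_1)$ yields $\tilde\rho G_1\tilde\rho^{-1}=\pi^{-1}(\pi(G_2))=G_2$, so $G_1$ and $G_2$ are conjugate and therefore isomorphic.

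Now consider the odd case, where $|G_1|=|G_2|$ is odd. Then $-1\notin G_i$, so $\pi$ restricts to an isomorphism $G_i\to\pi(G_i)$; in particular $\pi(G_1)\cong\pi(G_2)$ are isomorphic odd-order subgroups of $\SO(3)$, hence cyclic, hence conjugate. Replacing $G_1$ by the conjugate $\tilde\rho G_1\tilde\rho^{-1}$ as above, I may assume $\pi(G_1)=\pi(G_2)=:K$, and it remains to prove $G_1=G_2$. Both are cyclic of odd order $k=|K|$; let $a\in G_1$ and $b\in G_2$ be the unique elements mapping to a fixed generator of $K$. Then $ab^{-1}\in\ker\pi=\{\pm1\}$, so $b=\pm a$; and if $b=-a$ then $(-a)^k=(-1)^k a^k=-1\ne 1$ because $k$ is odd, forcing $-a$ to have even order, contradicting $b\in G_2$. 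Hence $b=a$ and $G_1=\langle a\rangle=\langle b\rangle=G_2$.

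The genuinely substantive ingredient is the classification-level fact that isomorphic finite subgroups of $\SO(3)$ are conjugate, together with the bookkeeping that the distinct conjugacy types $C_n$, $D_n$, $T$, $O$, $I$ are pairwise non-isomorphic; everything else becomes elementary once the parity dichotomy is in place. The only remaining care I expect is in the lifting step, namely verifying that conjugation by $\tilde\rho$ interacts correctly with full preimages and that the odd-order lift of a generator is unique, and both of these are short.
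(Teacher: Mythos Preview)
Your proof is correct and follows essentially the same approach as the paper: both reduce to the known fact that isomorphic finite subgroups of $\SO(3)$ are conjugate, lift the conjugating element through the surjection $\pi$, and split into even and odd cases based on whether $-1\in G_i$. The only cosmetic difference is in the odd case, where the paper passes to the even preimages $G'_i=\pi^{-1}(\pi(G_i))$ and invokes the already-proved even case together with uniqueness of the index-$2$ subgroup in a cyclic group, whereas you argue directly that the two lifts of a generator must coincide; your argument is essentially an inline proof of Lemma~\ref{doublecover_lemma}(9).
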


\begin{proof}
We take it as known that isomorphic subgroups of $\mathrm{SO}(3)$ are conjugate subgroups  of~$\mathrm{SO}(3)$.
Thus $\pi [G_2 ] = \gamma^{-1} \pi [G_1] \gamma$ for some $\gamma \in  \mathrm{SO}(3)$.
Now choose an element~$h \in \bH_1$ mapping to $\gamma$ and our goal is to show that $G_2 = h^{-1} G_1 h$.

We start with the case where $G_1$ and $G_2$ have even order.
Since $G_2$ and $h^{-1} G_1 h$ have the same order it is enough to show 
the inclusion $G_2 \subseteq h^{-1} G_1 h$. 
So let $g_2 \in G_2$.
Then $\pi(g_2) = \gamma^{-1} \pi(g_1) \gamma$ for some $g_1 \in G_1$.
Observe that  $g_2$ and~$h^{-1} g_1 h$ have the same image, so $g_2^{-1} h^{-1} g_1 h$ is in the kernel of $\pi$. 
But the kernel of $\pi$ is $\{ \pm 1 \}$.
So
$$
g_2 = h^{-1} (\pm g_1) h.
$$
But $\pm g_1 \in G_1$ since $-1\in G_1$ (since $G_1$ has even order). Thus $G_2 \subseteq h^{-1} G_1 h$ as desired.

The remaining case is where $G_1$ and $G_2$ are odd of order $k$. Let $G'_i$ be the preimage of $\pi [G_i]$.
In this case $G_i$ and~$\pi[G_i]$ are cyclic of order $k$ and so the~$G'_i$ are cyclic of order $2k$. 
By the above argument, $G'_1$ and $G'_2$
are conjugate groups. Since $G_i$ is the unique subgroup of $G_i'$ of index 2, it follows that $G_1$ and~$G_2$ must be
conjugate as well.
\end{proof}

The  facts we used about the
correspondence between subgroups and elements of $\bH_1$ and  their images in $\mathrm{SO}(3)$
are part of a more general phenomenon:

\begin{lemma} \label{doublecover_lemma}
Let $\pi\colon G \to M$ be a surjective homomorphism between groups with kernel $K$ of size $2$.
Then the following hold:
\begin{enumerate}
\item
The map $\pi\colon G \to M$ is a $2$-to-$1$ map: the preimage of each $t\in M$ has size $2$.

\item
The preimage in $G$ of any finite subgroup $L$ of $M$ is a finite subgroup of $G$
of order twice the order of $L$.

\item
Every finite subgroup $H$ of $G$ of odd order is isomorphic to its image in $M$.

\item
If $g \in G$ has odd finite order then $g$ and $\pi(g)$ have the same order.

\end{enumerate}
Furthermore if $G$ has a unique element of order $2$ then also 
\begin{enumerate}
\setcounter{enumi}{4}
\item
Every finite subgroup $H$ of $G$ of even order is the preimage of its image~$\pi[H]$.

\item
Every finite subgroup $H$ of $G$ of even order has order twice that of its image~$\pi[H]$.

\item
If $g \in G$ has even finite order then $\pi(g)$ has order one-half the order of $g$.

\item
Let $L$ be a  finite cyclic subgroup of $M$ of odd order $k$. 
Then its preimage in~$G$ is a cyclic subgroup of order $2k$.
\item
Let $L$ be a finite cyclic subgroup of $M$ of odd order $k$.
Then there is a unique subgroup of $G$ of order $k$ whose image in $M$ is $L$.
\end{enumerate}
\end{lemma}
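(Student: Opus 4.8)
The plan is to first isolate two structural facts and then run through the nine parts in order, using only Lagrange and Cauchy. The central observation is that $K$, being a kernel, is normal, and a normal subgroup of order $2$ is automatically central: writing $K = \{1,\tau\}$, for every $g$ the element $g\tau g^{-1}$ lies in $K$ and is $\neq 1$, hence equals $\tau$. So $\tau$ is a central element of order $2$. The second fact, invoked once the ``unique element of order $2$'' hypothesis is active, is that this unique element can only be $\tau$ itself.

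With this in hand the first four parts are routine. Part (1) just records that the fibers of $\pi$ are the cosets of $K$, each of size $|K|=2$. Part (2) follows because $\pi^{-1}[L]$ is a subgroup that is a union of $|L|$ such fibers, hence has order $2|L|$. For part (3), if $H$ has odd order then $H\cap K$ is a subgroup of $K$ whose order divides both $2$ and $|H|$, so it is trivial and $\pi|_H$ is injective, giving $H \cong \pi[H]$; part (4) is the special case $H=\langle g\rangle$ (or, directly: if $\pi(g)^m=1$ then $g^m\in K$ forces $g^{2m}=1$, and oddness of the order of $g$ then pins the two orders together).

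For parts (5)--(7) I bring in the uniqueness hypothesis. In (5), an even-order $H$ contains an element of order $2$ by Cauchy, which must be $\tau$; thus $K\subseteq H$ and therefore $\pi^{-1}[\pi[H]] = HK = H$. Part (6) combines (5) with (2). Part (7) mirrors (4): for $g$ of even order, $\langle g\rangle$ contains $\tau$, so $\langle g\rangle\cap K = K$ and $\langle \pi(g)\rangle = \pi[\langle g\rangle] \cong \langle g\rangle/K$ has half the order.

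The step I expect to be the main obstacle is part (8), precisely because I want a proof that avoids the Schur--Zassenhaus theorem. Here $\pi^{-1}[L]$ has order $2k$ with $k$ odd and must be shown cyclic. My approach is to pick $g$ with $\pi(g)$ generating $L$ and observe that any $x$ in the preimage satisfies $\pi(x)=\pi(g)^i$, so $xg^{-i}\in K$ and hence $\pi^{-1}[L] = \langle g,\tau\rangle$, which is abelian since $\tau$ is central. The order of $g$ is a multiple of $k$ dividing $2k$, so it equals $k$ or $2k$; in the latter case $\langle g\rangle$ is already everything, while in the former $\langle g\rangle\cap\langle\tau\rangle = 1$ by coprimality gives $\pi^{-1}[L]\cong C_k\times C_2\cong C_{2k}$. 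Part (9) is then immediate: the cyclic group $\pi^{-1}[L]$ of order $2k$ has a unique subgroup $H$ of order $k$; by (3) its image has order $k$ and so is all of $L$; and any order-$k$ subgroup mapping onto $L$ lies in $\pi^{-1}[L]$, hence equals that unique $H$.
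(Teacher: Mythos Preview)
Your proof is correct and follows essentially the same route as the paper's: both run through the nine parts using Lagrange, Cauchy, and the basic fiber structure, with the cyclic-preimage argument in (8) reducing to the two cases $|g|=k$ or $|g|=2k$ and using that $\tau$ commutes with $g$. The one small difference is that you observe at the outset that $\tau$ is central simply because $K$ is a normal subgroup of order $2$, whereas the paper defers this and in (8) invokes the uniqueness hypothesis to get $g\tau g^{-1}=\tau$; your observation is slightly cleaner, though it makes no practical difference since (8) is stated under the uniqueness hypothesis anyway.
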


\begin{proof}
Suppose $t \in M$ is given, and let $g \in G$ map to $t$. Then $g'$ maps to $t$ if and only if $g' = h g$ for some $h \in K$.
Since $K$ has order 2, there are two such elements~$g'$.
So $G \to M$ is two-to-one. From this (1) and (2) follow.

If $H$ is a subgroup of $G$ then the restriction of $G \to M$ to $H$ has kernel $H \cap K$.
If~$H$ is finite of odd order, then $H\cap K$ must be the trivial group since it is a subgroup of $K$.
Thus $H$ is isomorphic to its image in $M$ and (3) follows.

Note that if $g \in G$ then $\pi(g)$ generates the image of $\left< g \right>$ in $M$.
If in addition $g$ has finite odd order then,
by (3), $\left< g \right>$ and $\left< \pi(g) \right>$ are isomorphic, so $g$ and $\pi(g)$ have the same order.
Thus~(4) holds.

If $H$ is a finite subgroup of $G$ of even order then it has an element of order 2 by Cauchy's theorem. Thus $H$ contains $K$ since there is a unique
element of order 2.
The kernel of the restriction of $G \to M$ to~$H$ has kernel $H \cap K$, which in this case is $K$ itself.
Since the image $\pi[H]$ of $H$ is isomorphic to $H/K$ (first isomorphism theorem) this implies that $H$ has twice the order of $\pi[H]$.
By (2) the preimage $H'$ of $\pi[H]$ also has order twice that of $\pi[H]$. Since $H \subseteq H'$, this implies that $H = H'$.
Thus (5) and (6) hold.

Note that if $g \in G$ then $\pi(g)$ generates the image of $\left< g \right>$ in $M$.
If in addition~$g$ has finite even order then, by (6), 
$\left< g \right>$ and has twice the size of $\left< \pi(g) \right>$, so $g$ has order twice that of $\pi(g)$.
Thus~(7) holds.

Let $L$ be cyclic subgroup of $M$ of odd order $k$. By $(6)$ its preimage $H$ has order~$2k$.
Let $g\in H$ map to a generator of $L$. If $g$ has even order then $g$ has order~$2 k$ by (7), so $H$ is cyclic.
If $g$ has odd order then it has order $k$ by (4). Let $\tau$ be the unique element of order $2$ in $G$.
Then $g \tau g^{-1} = \tau$ by uniqueness, so~$g \tau = \tau g$. 
Note that $g$ and $\tau$ generate $H$, and so~$H$ is~Abelian.
This means that $\tau g$ has
order~$2k$. Thus $H$ is cyclic. So (8) holds.

Finally, let $L$ be cyclic subgroup of $M$ of odd order $k$. 
By (8), the preimage~$H$ of $L$ in $G$ is cyclic of order $2k$, and has a unique
subgroup $C$ of order $k$. The image of $C$ is $L$ by (3). Since any subgroup of $G$ with image $L$ must
be a subgroup of~$H$, this means that $G$ has a unique subgroup of order $k$ whose image is $L$.
So (9) holds.
\end{proof}

\begin{example}\label{divisionring_example}
We can extend Example~\ref{quaternion_example} from $\bH$ to a general division ring $D$.
Suppose~$G$ is a finite group of $D^\times$.
The prime subfield $F$ of $D$ is either $\bQ$ or $\bF_p$ for some prime $p$. In the first case we say that~$D$ has characteristic~0.
If the prime field is finite then we define the characteristic of $D$ to be the size of the prime field.
Note that $F$ is in the center of $D$ in the sense that $a v = v a$ for all $a \in F$ and~$v \in D$. This follows
from the observation that, for each nonzero $v \in D$, the map $a \mapsto v^{-1} a v$ is a ring homomorphism
$F \to D$ that maps $1$ to $1$. By definition of $F$ and properties of ring homomorphisms this map
must send any $a\in F$ to itself.

Observe that $D$ is an $F$-vector space. Observe also that multiplication defines a linear representation of $G$ on $V = D$.
Suppose $g \in G$ and $v \in D$. If $g\ne 1$ and if~$g$ has a fixed vector $v \in D$ then $g v = v$.
So $(g-1) v = 0$. Since $D$ is a division ring and since $g-1 \ne 0$, we have $v = 0$. 
Thus we get a free linear representation of $G$ on the~$F$-vector space $D$.
In other words, $G$ is freely representable over $F$. In characteristic $0$ we  conclude that $G$ is freely representable
in the usual sense (for fields of characteristic zero).

What about if $F$ has finite prime characteristic?  In that case the subset $R$ of finite linear combinations $\sum a_i g_i$ with $a_i \in F$ and $g_i \in G$
is closed under addition and multiplication. In fact $R$ forms a subring of $D$.
Observe that~$R$ is finite. Since~$R$ has no zero divisors, it must be a finite field by Wedderburn's theorem. It is well-known that $R^\times$
is a cyclic group if $R$ is a finite field, and since $G$ is a subgroup of~$R^\times$ it is also cyclic. So $G$ is freely representable (over $\bC$)
in this case as well.\footnote{The fact that $D^\times$ is cyclic in prime characteristic was noticed by Herstein in 1953.
Herstein then conjectured for in any characteristic that any subgroup of $D^\times$ of odd order is cyclic, because this holds in cases such as~$D = \bH$
and for $D$ of prime characteristic.
In 1955, Amitsur~\cite{amitsur1955} found a counter-example to the conjecture of size 63.}

So \emph{all finite subgroups of division rings are freely representable}, and in finite prime characteristic they are actually cyclic.\footnote{Here we used
the fact that (1) $F^\times$ is cyclic if $F$ is finite, and (2) Wedderburn's theorem. We will give 
independent arguments for these facts later in the document as a consequence of the structure theorem for Sylow-cyclic groups. See Section~\ref{Wedderburn} below.}
\end{example}

\chapter{Norm Relations of Unity}

Norm relations of unity are of  interest in algebraic number theory, both for theoretic reasons and for computational reasons. See \cite{biasse2020norm} for a discussion on their history and applications to algebraic number theory.
One of my motivations for studying freely representable groups is the  fact that such group are
exactly the finite groups without norm relations of unity (a result of \cite{biasse2020norm}, and a proof is provided in this section as well).

\begin{definition}
Let $G$ be a finite group and let $H$ be a subgroup. Let $R$ be a commutative ring (with unity). 
The \emph{norm of~$H$} 
is defined to be the formal sum of elements of $H$ in $R[G]$:
$$
\N H \; \defeq \; \sum_{\sigma\in H} \; \sigma.
$$
\end{definition}

If we fix a linear representation of a group $G$ on an $F$-vector space $V$ then we can view
 $V$ as an $F[G]$-module.
 If in addition the representation is a free linear representation
then norms of nontrivial subgroups have the interesting property that they 
annihilate $V$.

\begin{proposition}
Let $G$ be a finite group and let $F$ be a field.
If $G$ has a free linear representation on an $F$-vector space~$V$,
and if $H \ne \{1\}$ is a  subgroup of $G$, then~$(\N H) v = 0$ for all $v\in V$.
\end{proposition}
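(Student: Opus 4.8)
The plan is to fix an arbitrary $v \in V$, set $w \defeq (\N H) v = \sum_{\sigma \in H} \sigma v$, and argue that $w$ must be the zero vector. The central observation is that $w$ is invariant under the action of every element of $H$. Indeed, given any $\tau \in H$, left multiplication by $\tau$ is a bijection of $H$ onto itself, so reindexing the sum gives
$$
\tau w = \tau \sum_{\sigma \in H} \sigma v = \sum_{\sigma \in H} (\tau \sigma) v = \sum_{\sigma' \in H} \sigma' v = w,
$$
where the penultimate equality uses that $\{\tau \sigma \mid \sigma \in H\} = H$ as sets.

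Next, since $H \ne \{1\}$, I would choose some nonidentity element $\tau \in H$; any such element suffices, though one could invoke Cauchy's theorem to produce one of prime order if desired. This $\tau$ fixes $w$, that is, $\tau w = w$, so $w$ is a fixed vector for the linear transformation associated to $\tau$. But the representation is \emph{free}: by definition, the only element of $G$ admitting a nonzero fixed vector is the identity. Since $\tau \ne 1$, its fixed vector $w$ must therefore be the zero vector. As $v \in V$ was arbitrary, this yields $(\N H) v = 0$ for all $v \in V$, as claimed.

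I do not expect a substantive obstacle here; the argument is quite short. The only point meriting a word of care is the reindexing step, which rests on the elementary fact that left translation by a fixed group element permutes the finite group, so that the collection $\{\tau\sigma \mid \sigma \in H\}$ coincides with $H$. I would also make explicit that freeness is precisely the statement that a nonidentity element has no nonzero fixed vector (as recorded in the definition via the eigenvalue~$1$ condition), so that once $w$ is seen to be fixed by some $\tau \ne 1$, the conclusion $w = 0$ is immediate.
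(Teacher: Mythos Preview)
Your proof is correct and follows essentially the same approach as the paper: define $w=(\N H)v$, observe it is fixed by any $\tau\in H$ via the left-translation reindexing, pick $\tau\ne 1$, and invoke freeness to conclude $w=0$. The only difference is cosmetic---you spell out the reindexing $\{\tau\sigma:\sigma\in H\}=H$ explicitly, whereas the paper writes this as $\sigma\,\N H=\N H$ in one line.
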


\begin{proof}
Let $v \in V$, and consider $v' = (\N H) v$. Since $H$ is nontrivial it has a nonidentity
element $\sigma$. Observe
$$
\sigma v' = \sigma  ((\N H) v) = (\sigma \N H) v = (\N H) v = v'.
$$
Since $G$ acts freely on nonzero vectors we have $v' = 0$. Thus $(\N H) v= 0$ for all vectors $v$ in $V$.
\end{proof}

 \begin{definition}
 Let $G$ be a finite group.
A \emph{norm relation of unity} for $G$ is an expression in $\bQ[G]$ of the form
$$
\mathbf{1} = \sum_{H \in \mathcal H} a_H \, (\N H) \, b_H
$$
where $\mathcal H$ is the collection
of nontrivial subgroups of $G$,  where $a_H, b_H \in \bQ[G]$, 
and where $\mathbf{1}$ is the unit in $\bQ[G]$ which can be viewed as the norm of the trivial group.

If we replace $\bQ$ in the above with a field $F$, then we call such a relation a \emph{norm relation of unity
relative to $F$}.
\end{definition}

\begin{example}
The first norm identity of unity that came to my attention was one exploited by my colleague S.~Sharif and heavily used by
his student A.~Savage in his thesis~\cite{savage2019}. This  norm relation involves the group $G = C_3 \times C_3$.
In addition to~$\{1\}$ and $G$ itself, there are four subgroups $H_1, H_2, H_3, H_4$ of $G$ which are all cyclic of order~3.
In $\bZ[G]$ we have the following (as in the proof of Theorem~\ref{pq_thm} below):
$$
3\cdot \mathbf{1} = \N H_1 + \N H_2 + \N H_3 +  \N H_4 - \N G
$$
giving us a simple norm relation of unity in $\bQ[G]$:
$$
\mathbf{1} = \frac{1}{3} \N H_1 +  \frac{1}{3} \N H_2 + \frac{1}{3} \N H_3 + \frac{1}{3} \N H_4 - \frac{1}{3} \N G.
$$
The above relation in $\bZ[G]$ was then applied to bicubic Galois extensions $K / \bQ$ with Galois group identified with~$G$. 
The above additive relation yields a multiplicative relation for~$\alpha \in K$:
$$
\alpha^3 = \frac{N_{K/K_1} (\alpha) N_{K/K_2} (\alpha) N_{K/K_3} (\alpha) N_{K/K_4} (\alpha) }{N_{K/\bQ} (\alpha)}
$$
where $N_{K/L}$ is the usual norm of field theory, and where $K_1, \ldots, K_4$ are the intermediate fixed fields
associated, as in Galois theory, with the subgroups $H_1, \ldots, H_4$.
Sharif and Savage were interested in the case where $\alpha$ is a unit in the ring of integers~$\mathcal O_K$.
For such $\alpha$ the equation becomes
$$
\alpha^3 = \pm {N_{K/K_1} (\alpha) N_{K/K_2} (\alpha) N_{K/K_3} (\alpha) N_{K/K_4} (\alpha) }.
$$
This identity helps one to identify units in $\mathcal O_K$ given a predetermination of units of each~$\mathcal O_{K_i}$.
See also \cite{biasse2020norm} for generalization of this identity and applications to finding units and other invariants of~$K$ for more general number fields~$K$.
\end{example}

\begin{example}

A  earlier example, from 1966, can be found in Wada~\cite{wada1966}
for the case where $G = C_2 \times C_2$ is the Klein four group. Let $\sigma_1, \sigma_2, \sigma_3$ be the 
nontrivial elements of $G$ and let $H_i$ be the subgroup generated by $\sigma_i$. Then we have 
$$
2 \cdot \mathbf{1} = \N H_1 + \N H_2 - \sigma_1  \N H_3
$$
which, when divided by $2$, yields a  norm relation of unity. In particular if $K$ is a biquadratic extension of $\bQ$, say,
with Galois group identified with $G$, and if~$K_1, K_2, K_3$ are the quadratic extensions associated with $H_1, H_2, H_3$,
then we get
$$
\alpha^2 =  \frac{N_{K/K_1} (\alpha) N_{K/K_2} }{\sigma_1 N_{K/K_3} (\alpha)}.
$$
As in the previous example this was used to describe units in $\mathcal O_{K_1}$ which was then used
to calculate class numbers of such biquadratic fields~$K$.
\end{example}

\begin{example}
A third example can be found in Parry~\cite{parry1977} for $G = C_3 \times C_3$. 
As in the earlier example, let~$H_1, H_2, H_3, H_4$ be the distinct cyclic subgroups of order~3.
Fix a  generator $\sigma$ of~$H_1$ and a  generator~$\tau$ of $H_2$. 
Switching $H_3$ and $H_4$ if necessary, we
can assume that $H_3$ is generated by~$\sigma \tau$ and that $H_4$ is generated by $\sigma \tau^2$. 
In $\bZ[G]$ we have the following:
$$
3\cdot \mathbf{1} = \N H_1 + \N H_2 + \N H_3 - (\sigma + \sigma \tau) \N H_4
$$
which gives a  norm identity of unity when we divide by $3$. Parry used these to study bicubic extensions of $\bQ$.
\end{example}

\begin{lemma} \label{normofunity_lemma}
Let $G$ be a finite group and let $\mathcal H$ be the collection of nontrivial subgroups of $G$. Then the following are equivalent:
\begin{enumerate}
\item
$G$ has a  norm relation of unity relative to~$F$.
\item
The two-sided ideal of $F[G]$ generated by $\left\{ \N H \mid H \in \mathcal H \right\}$ is all of $F[G]$.
\item
The left ideal of $F[G]$ generated by $\left\{ \N H \mid H \in \mathcal H \right\}$ is all of $F[G]$.
\item
The right ideal of $F[G]$ generated by $\left\{ \N H \mid H \in \mathcal H \right\}$ is all of $F[G]$.
\end{enumerate}
\end{lemma}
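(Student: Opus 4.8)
The plan is to collapse the three ideals in (2)--(4) into a single ideal, after which (1) becomes a matter of bookkeeping. I would first record the two conjugation identities $g\,(\N H) = \N(gHg^{-1})\,g$ and $(\N H)\,g = g\,\N(g^{-1}Hg)$, valid for every $g \in G$ and $H \in \mathcal H$; each is an immediate reindexing of $\N H = \sum_{h\in H} h$. The content they encode is that conjugation merely permutes the generating set $\{\N H : H \in \mathcal H\}$ up to a one-sided unit, using also that a conjugate $gHg^{-1}$ of a nontrivial subgroup is again a nontrivial subgroup, hence again lies in $\mathcal H$.

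Next I would show that the left ideal $L$, right ideal $R$, and two-sided ideal $J$ generated by the norms all coincide. The set $L$ is a left ideal by construction; to see it is also a right ideal it suffices to check $(c\,\N H)\,g \in L$ for $c \in F[G]$ and $g \in G$, and the identity $(\N H)\,g = g\,\N(g^{-1}Hg)$ rewrites this product as $(cg)\,\N(g^{-1}Hg) \in L$. Thus $L$ is two-sided and contains the generators, whence $J \subseteq L$; since $L \subseteq J$ trivially, $L = J$. The symmetric computation using $g\,(\N H) = \N(gHg^{-1})\,g$ gives $R = J$. As conditions (2), (3), (4) each assert that one of these now-equal ideals is all of $F[G]$, they are equivalent.

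Finally I would match (1) with (3). The implication (1) $\Rightarrow$ (3) is immediate, since a relation $\mathbf 1 = \sum_{H} a_H(\N H)b_H$ exhibits $\mathbf 1$ in the two-sided ideal $J = L$, forcing $L = F[G]$. For (3) $\Rightarrow$ (1), condition (3) gives $\mathbf 1 \in L$, so $\mathbf 1 = \sum_i c_i\,\N H_i$ for finitely many $c_i \in F[G]$ and subgroups $H_i \in \mathcal H$ that need not be distinct. Setting $a_H := \sum_{i : H_i = H} c_i$ for each nontrivial subgroup $H$ then gives $\mathbf 1 = \sum_{H \in \mathcal H} a_H\,\N H$, which is a norm relation of unity with every $b_H = \mathbf 1$.

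The single delicate point, and the one I expect to be the real obstacle, is this last combining step. The definition of a norm relation permits only one summand $a_H(\N H)b_H$ per subgroup, whereas membership in $J$ a priori allows many summands sharing a subgroup and carrying factors on both sides that do not merge. What saves the argument is precisely the identification $J = L$: once all right-hand factors may be taken to be $\mathbf 1$, same-subgroup terms combine by left distributivity, and the rigid shape demanded by the definition is recovered automatically.
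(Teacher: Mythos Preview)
Your proof is correct and uses the same conjugation identity $(\N H)\,g = g\,\N(g^{-1}Hg)$ that the paper uses to show the one-sided ideal generated by the norms is already two-sided, so that the ideals in (2)--(4) coincide. Your final paragraph on combining same-subgroup terms is actually more careful than the paper, which simply declares $(1)\Leftrightarrow(2)$ ``clear''; but the underlying approach is the same.
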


\begin{proof}
Clearly $(1) \iff (2)$, $(3) \implies (2)$, and $(4) \implies (2)$.
Next we note that the implication $(2) \implies (3)$ is a consequence of the following fact:
\emph{Let $R$ be a commutative ring and let $I$ be the left ideal of $R[G]$ generated by $\left\{ \N H \mid H \in \mathcal H \right\}$.
Then $I$ is a two-sided ideal of~$R[G]$.}
To establish this claim it is enough to show that~$(\N H) g \in I$ for all $H \in \mathcal H$ and $g \in G$. This
follows from the equation 
$$
(\N H) g = g g^{-1} (\N H) g = g \N (g^{-1} H g ).
$$
Similarly, $(2) \implies (4)$.
\end{proof}

\begin{theorem} \label{biasse2020norm_thm}
Let $G$ be a finite group and let $F$ be a field of characteristic not dividing~$|G|$. 
Let $I$ be the two-sided ideal of $F[G]$ generated by the norms $ \N H$
of nontrivial subgroups of $G$. Then $G$ is freely representable over $F$ if and only if $I$ is a proper ideal of $F[G]$.
\end{theorem}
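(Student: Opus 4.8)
The plan is to reduce the theorem to the earlier Proposition together with one new observation: under the hypothesis that the characteristic of $F$ does not divide $|G|$, a nonzero $F[G]$-module $V$ is a free linear representation if and only if $(\N H)v = 0$ for every nontrivial subgroup $H$ and every $v \in V$. The forward implication here is exactly the Proposition proved above, and it requires no hypothesis on the characteristic. For the reverse implication I would take any $g \ne 1$, set $H = \langle g\rangle$, and suppose $gv = v$ for some $v$; then $(\N H)v = |g|\,v$, and since $|g|$ divides $|G|$ the characteristic hypothesis guarantees $|g| \ne 0$ in $F$, so $(\N H)v = 0$ forces $v = 0$. Hence no nonidentity element fixes a nonzero vector and $V$ is free.

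Granting this equivalence, the forward direction of the theorem is quick. If $G$ is freely representable over $F$, fix a free linear representation on some $V \ne 0$. The Proposition shows every $\N H$ annihilates $V$, hence the two-sided ideal $I$ they generate annihilates $V$. If $I$ were all of $F[G]$, then $\mathbf 1 \in I$ would annihilate $V$, forcing $V = 0$, a contradiction; therefore $I$ is proper.

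For the converse I would exhibit an explicit free representation whenever $I$ is proper, namely the quotient $V = F[G]/I$ viewed as a left $F[G]$-module with $G$ acting by left multiplication. Since $I$ is proper this module is nonzero, and because $I$ is a two-sided ideal we have $(\N H)\,(x + I) = (\N H)x + I = I$ for every $\N H \in I$ and every $x \in F[G]$, so every norm of a nontrivial subgroup annihilates $V$. By the equivalence of the first paragraph, $V$ is a free linear representation, so $G$ is freely representable over $F$.

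The main point to get right is the key equivalence in the first paragraph, and in particular its reverse implication, where the characteristic hypothesis enters essentially through the identity $(\N \langle g\rangle)v = |g|\,v$; this is the only place the assumption that $\mathrm{char}\,F \nmid |G|$ is used, and it is precisely what lets the regular-representation quotient $F[G]/I$ serve as a witness. No appeal to semisimplicity or to Lemma~\ref{normofunity_lemma} seems strictly necessary, though one could alternatively phrase the converse by passing to an irreducible quotient of $F[G]/I$ and invoking Corollary~\ref{irreducible_cor}.
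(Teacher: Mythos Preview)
Your proof is correct and follows essentially the same approach as the paper: both directions match the paper's argument, with the same use of the annihilation Proposition for the forward direction and the quotient module $V=F[G]/I$ together with the computation $(\N\langle g\rangle)v=|g|\,v$ for the converse. The only difference is organizational---you isolate the equivalence ``$V$ is free $\iff$ every $\N H$ annihilates $V$'' as a preliminary lemma, whereas the paper weaves that computation directly into the converse argument.
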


\begin{proof}
Suppose that $G$ has a free linear representation on the $F$-vector space $V\ne 0$.
Let $K$ be the kernel of the action of $F[G]$. In other words, $K$ is the collection of all elements $\alpha \in F[G]$
such that $\alpha v = 0$ for all $v\in V$. Observe that $K$ is a two-sided ideal of $F[G]$.
By the above proposition, $\N H \in K$ for all nontrivial subgroups $H$.
Thus $I \subseteq K$. However, if $v\ne 0$ then $\mathbf{1} v \ne 0$. So $\mathbf{1}$ is not in $K$.
Thus $K$, and hence~$I$, must be a proper ideal of $F[G]$.

Conversely, suppose that $I$ is a proper ideal of $F[G]$. Let $V = F[G]/I$. Although~$V$ is an algebra
over $F$, we will think of it just as an $F[G]$-module; in other words, $V$ is an $F$-vector space with
a linear action of $G$. Since $I$ is a proper ideal of $F[G]$, the vector space $V$
is not the zero space. For convenience let's write~$[\alpha]$ for the coset $\alpha + I$ in $V$.

The goal is to show that the linear action of $G$ on $V$ is free. So
suppose that~$g\ne 1$ in $G$ and suppose that $g [\alpha] = [\alpha]$ for some $[\alpha] \in V$. 
Let~$C$ be the subgroup of~$G$ generated by $g$.
So $(\N C) [\alpha] = k [\alpha]$ where $k$ is the size of~$C$. 
We can rewrite this as~$[\N C \cdot \alpha] = k [\alpha]$ where now we think of $k$ is an element of~$F$.
 Since~$\N C \cdot \alpha \in I$, we have that $k [\alpha] = [0]$.
Since~$k$ is invertible in $F$ we have~$[\alpha] = [0]$. This show that the representation of $G$  on~$V$
is a free linear representation.
\end{proof}

As a corollary we get the following:

\begin{theorem} \label{biasse2020norm_thm2}
A finite group $G$ has a  norm relation of unity if and only if it is not freely representable.
\end{theorem}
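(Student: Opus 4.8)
The plan is to derive this statement as a direct corollary of Theorem~\ref{biasse2020norm_thm} and Lemma~\ref{normofunity_lemma}, with Lemma~\ref{all_char_lemma} used only to reconcile the two field conventions in play. The essential point to keep in view is that the definitions fix \emph{different} fields: an unqualified norm relation of unity is an expression in $\bQ[G]$, whereas ``freely representable'' (unqualified) means freely representable over $\bC$. So there is genuinely nothing new to prove here beyond threading the earlier results together and making the field bookkeeping line up.

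First I would take $F = \bQ$. Since $\mathrm{char}(\bQ) = 0$ does not divide $|G|$, the hypothesis of Theorem~\ref{biasse2020norm_thm} is satisfied, and so, writing $I$ for the two-sided ideal of $\bQ[G]$ generated by the norms $\N H$ of nontrivial subgroups, that theorem gives: $G$ is freely representable over $\bQ$ if and only if $I$ is a proper ideal of $\bQ[G]$. Next I would invoke Lemma~\ref{normofunity_lemma}, again with $F = \bQ$: by the equivalence of (1) and (2) there, $G$ has a norm relation of unity if and only if the two-sided ideal generated by the $\N H$ is all of $\bQ[G]$, that is, if and only if $I$ is improper. Combining the two facts, $G$ has a norm relation of unity if and only if $I$ is improper, if and only if $G$ is \emph{not} freely representable over $\bQ$.

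It remains to replace ``over $\bQ$'' by the default field $\bC$. Since $\bQ$ and $\bC$ both have characteristic zero, Lemma~\ref{all_char_lemma} shows that $G$ is freely representable over $\bQ$ if and only if it is freely representable over $\bC$, which is exactly what ``freely representable'' means by default. Chaining this with the previous paragraph yields that $G$ has a norm relation of unity if and only if $G$ is not freely representable, as claimed. The only step demanding any care is this last reconciliation of conventions; without Lemma~\ref{all_char_lemma} the logical chain would not close, since the norm relation lives over $\bQ$ while free representability is asserted over $\bC$. Everything else is a formal consequence of the two results cited, so I do not expect any real obstacle.
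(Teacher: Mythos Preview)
Your proof is correct and follows essentially the same route as the paper's: apply Theorem~\ref{biasse2020norm_thm} and Lemma~\ref{normofunity_lemma} over $\bQ$, then invoke Lemma~\ref{all_char_lemma} to pass between $\bQ$ and $\bC$. The paper argues the two directions separately while you package them as a single chain of equivalences, but the content is identical.
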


\begin{proof}
Let $\mathcal H$ be the collection of nontrivial subgroups of $G$. Let $I$ be the two-sided ideal of $\bQ[G]$ generated
by $\{ \N H \mid H \in \mathcal H\}$.

Suppose $G$ has a  norm relation of unity. By Lemma~\ref{normofunity_lemma}, $I$ is all of~$\bQ[G]$.
 By Theorem~\ref{biasse2020norm_thm}, $G$ is not freely representable over~$\bQ$,
 hence is not freely representable over~$\bC$ (Lemma~\ref{all_char_lemma}).

Conversely, suppose $G$ is not freely representable. In other words, suppose~$G$ is not freely
representable over $\bC$, and hence over $\bQ$ (Lemma~\ref{all_char_lemma}).
By Theorem~\ref{biasse2020norm_thm} the ideal $I$ must be all of $\bQ[G]$.
So $G$ has a  norm relation of unity by Lemma~\ref{normofunity_lemma}.
\end{proof}

\chapter{Basic Properties of Freely Representable Groups}

We start with some important properties of freely representable groups that can be proved
without too much effort. We will see that the Abelian groups that are freely representable are just the cyclic groups. And we
will see various  ways in which freely representable groups behave like cyclic groups, confirming
the idea that freely representable groups can play the same role for finite groups in general that the cyclic groups
play for the finite Abelian groups. In other words, they can be regarded as ``cycloidal'' groups.

For example, every subgroup of a cyclic group is a cyclic group. The same holds for freely representable groups:

\begin{proposition} \label{freely_representable_subgroup_prop}
Every subgroup of a freely representable group is freely representable.
\end{proposition}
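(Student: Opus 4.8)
The plan is to take a free linear representation of the ambient group and simply restrict it to the subgroup, then verify that restriction preserves freeness. So suppose $G$ is freely representable. By definition there is a nonzero vector space $V$ (over $\bC$, or over any field $F$ if we want the more general statement) on which $G$ acts linearly and freely on nonzero vectors; equivalently, by Lemma~\ref{faithful_lemma} and the definition, there is a homomorphism $\rho\colon G \to \mathrm{GL}(V)$ such that for every $g\in G$ the transformation $\rho(g)$ has eigenvalue $1$ if and only if $g=1$.

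Given a subgroup $H\le G$, I would form the restricted representation $\rho|_H\colon H \to \mathrm{GL}(V)$, which is again a linear action of $H$ on the \emph{same} nonzero space $V$. No new vector space and no new construction are needed. The only thing to check is the freeness condition for $H$. Here I would simply observe that the set of nonidentity elements of $H$ is contained in the set of nonidentity elements of $G$: if $h\in H$ with $h\ne 1$, then $h\ne 1$ as an element of $G$, so by freeness of $\rho$ the map $\rho(h)$ fixes no nonzero vector. Hence $\rho|_H(h)$ fixes no nonzero vector for every nonidentity $h\in H$, and since $V\ne 0$ this exhibits $\rho|_H$ as a free linear representation of $H$. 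Therefore $H$ is freely representable.

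There is essentially no obstacle to overcome in this argument, and I do not expect any hard step. The substance is entirely conceptual: freeness is a universally quantified condition over nonidentity elements, and that quantifier range only \emph{shrinks} when we pass from $G$ to $H$, so the condition is inherited automatically. If one wanted the stronger field-theoretic version (freely representable over a fixed $F$ rather than just over $\bC$), the same restriction argument applies verbatim, since restricting the action of $G$ on the $F$-vector space $V$ to $H$ keeps $V$ an $F$-vector space and keeps the action $F$-linear. The only point worth stating carefully is the containment $H\setminus\{1\}\subseteq G\setminus\{1\}$, which is what makes the inheritance immediate.
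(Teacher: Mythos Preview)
Your argument is correct and is exactly what the paper has in mind: the paper's own proof is the single sentence ``This follows from the definition,'' and you have simply unpacked that sentence by restricting the free representation of $G$ to $H$ and noting that the freeness condition is inherited because $H\setminus\{1\}\subseteq G\setminus\{1\}$.
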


\begin{proof}
This follows from the definition.
\end{proof}

\begin{lemma}\label{abelian_lemma}
If $A$ is a freely representable finite Abelian group then $A$ is cyclic.
\end{lemma}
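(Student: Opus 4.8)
The plan is to work over the default field $\bC$ and to use the fact that a finite abelian group acts by a simultaneously diagonalizable family of operators. First I would apply Lemma~\ref{finite_dim_lemma} to obtain a free linear representation of $A$ on a finite-dimensional $\bC$-vector space $V$. Each element $g \in A$ then acts on $V$ by a linear map of finite order, which is diagonalizable over $\bC$ because it is annihilated by some $x^m - 1$, a polynomial with distinct roots. Since $A$ is abelian these operators pairwise commute, and a commuting family of diagonalizable operators over an algebraically closed field is simultaneously diagonalizable.

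This gives a decomposition $V = L_1 \oplus \cdots \oplus L_n$ into one-dimensional $A$-invariant subspaces. The decisive step is to extract a faithful character from this decomposition. By Lemma~\ref{subrep_lemma}, each nonzero subrepresentation $L_i$ is itself free; since $\dim L_i = 1$, the action on $L_i$ is a homomorphism $\chi_i \colon A \to \bC^\times$, and by Lemma~\ref{faithful_lemma} this $\chi_i$ is injective. Hence $A$ is isomorphic to a finite subgroup of $\bC^\times$. I would then conclude with the standard fact that every finite subgroup of $\bC^\times$ is cyclic, its elements being roots of unity; therefore $A$ is cyclic.

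I expect the only point really needing care to be the simultaneous diagonalization of the commuting family (equivalently, the complete reducibility of the representation together with the one-dimensionality of its irreducible pieces). A clean alternative that sidesteps this linear algebra is to start instead from Corollary~\ref{irreducible_cor}: take an irreducible free representation of $A$ over $\bC$; since $\bC$ is algebraically closed, Schur's lemma forces the commuting abelian action to be by scalars, so irreducibility makes $V$ one-dimensional, and one finishes exactly as before via Lemma~\ref{faithful_lemma}. Either route reduces the statement to the embedding $A \hookrightarrow \bC^\times$ and the cyclicity of finite subgroups of $\bC^\times$.
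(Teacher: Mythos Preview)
Your proposal is correct, and your alternative route via Corollary~\ref{irreducible_cor} and Schur's lemma is exactly the paper's proof: take an irreducible free representation over~$\bC$, note that irreducible representations of an Abelian group are one-dimensional, and use faithfulness (Lemma~\ref{faithful_lemma}) to embed $A$ into $\bC^\times$. Your primary route via simultaneous diagonalization is a minor variation that reaches the same one-dimensional subrepresentation by hand rather than by first passing to an irreducible piece; both routes reduce to the same endgame of embedding $A$ into $\bC^\times$.
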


\begin{proof}
(We will give here a proof that uses a bit of representation theory over~$\bC$ and the fact that 
finite subgroups of $\bC^\times$ are cyclic. However,  more algebraic arguments can be given that do not use $\bC$ in any essential way. See the remark after 
Corollary~\ref{pq_cor}.)

A freely representable group has an irreducible free linear representation over~$\bC$ (Corollary~\ref{irreducible_cor}).
Every irreducible linear representation of an Abelian group is one-dimensional and so gives a homomorphism into the circle group of $\bC^\times$ (see the appendix on group representations).
Also, free linear representations are necessarily faithful (Lemma~\ref{faithful_lemma}). So any freely representable finite Abelian group is isomomorphic to a subgroup
of the circle group in~$\bC^\times$. Such groups are cyclic.
\end{proof}

Combining this lemma with Example~\ref{cyclic_example} gives the following:

\begin{corollary}\label{abelian_cor}
A finite Abelian group $A$ is freely representable if and only if it is cyclic.
\end{corollary}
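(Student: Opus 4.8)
The plan is to prove the two implications separately, since each direction has essentially already been set up in the material above.

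For the backward direction (cyclic implies freely representable), I would appeal directly to Example~\ref{cyclic_example}. A cyclic group of order $N$ can be realized as the group of $N$th roots of unity acting on $\bC$ by scalar multiplication. A root of unity $\zeta \ne 1$ fixes a vector $v \in \bC$ only when $\zeta v = v$, which forces $v = 0$; hence the action is free on nonzero vectors. This exhibits every finite cyclic group as freely representable over $\bC$, and the appeal to Example~\ref{cyclic_example} dispatches this half.

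For the forward direction (freely representable implies cyclic), the argument is exactly Lemma~\ref{abelian_lemma}, which I would simply invoke. The content there is that a freely representable finite Abelian group admits an irreducible free representation over $\bC$ (by Corollary~\ref{irreducible_cor}), that an irreducible representation of an Abelian group is one-dimensional and hence lands in $\bC^\times$, and that freeness plus faithfulness (Lemma~\ref{faithful_lemma}) realize $A$ as a finite subgroup of the circle group of $\bC^\times$; such subgroups are cyclic.

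Because both halves are already available, I do not expect any genuine obstacle: the corollary is a bookkeeping combination of Example~\ref{cyclic_example} and Lemma~\ref{abelian_lemma}. If I were forced to reprove the nontrivial half from scratch, the one step carrying all the weight is the reduction of an irreducible complex representation of an Abelian group to dimension one, after which freeness turns the representation into a faithful embedding into $\bC^\times$ and cyclicity follows from the structure of finite subgroups of $\bC^\times$.
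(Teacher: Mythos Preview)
Your proposal is correct and matches the paper's approach exactly: the paper simply states that the corollary follows by combining Lemma~\ref{abelian_lemma} (the forward direction) with Example~\ref{cyclic_example} (the backward direction), which is precisely what you do.
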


Combining the above lemma with Proposition~\ref{freely_representable_subgroup_prop} yields the following:

\begin{corollary}\label{abelian_cor2}
If a finite group~$G$ is freely representable then all its Abelian subgroups are cyclic.
\end{corollary}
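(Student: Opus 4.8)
The plan is to obtain this as an immediate composition of the two results flagged in the sentence preceding the corollary, so the work is really just assembling the right implications in the right order. The statement quantifies over all Abelian subgroups $A$ of a fixed freely representable $G$, so I would begin by fixing an arbitrary Abelian subgroup $A \le G$ and aim to show it is cyclic.

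The first step is to transfer free representability from $G$ down to $A$. Since $A$ is a subgroup of $G$ and $G$ is freely representable, Proposition~\ref{freely_representable_subgroup_prop} immediately yields that $A$ is itself freely representable (this is the ``hereditary'' property of free representability, which in turn falls straight out of the definition: restricting a free linear representation of $G$ to $A$ still has no nonzero fixed vectors for nonidentity elements). Note that $A$ is finite, being a subgroup of the finite group $G$, so the finiteness hypotheses of the results I want to invoke are satisfied.

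The second step is to exploit that $A$ is Abelian. Having established that $A$ is a finite Abelian group that is freely representable, Lemma~\ref{abelian_lemma} applies verbatim and gives that $A$ is cyclic. Since $A$ was an arbitrary Abelian subgroup, this proves the corollary.

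There is no genuine obstacle here: the corollary is a formal consequence of two previously proved facts, and the only thing to be careful about is invoking them in the correct logical order (first descend to the subgroup, then apply the Abelian classification). All the substantive content lives upstream—namely in Lemma~\ref{abelian_lemma}, whose proof uses that finite subgroups of $\bC^\times$ are cyclic together with the one-dimensionality of irreducible representations of Abelian groups, and in the hereditary property recorded in Proposition~\ref{freely_representable_subgroup_prop}.
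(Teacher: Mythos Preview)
Your proposal is correct and matches the paper's own proof exactly: invoke Proposition~\ref{freely_representable_subgroup_prop} to see that any Abelian subgroup $A$ is freely representable, then apply Lemma~\ref{abelian_lemma} (equivalently Corollary~\ref{abelian_cor}) to conclude $A$ is cyclic.
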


\begin{proof}
Every subgroup of $G$ is freely representable, so every Abelian subgroup is cyclic
by the previous corollary.
\end{proof}

\begin{remark}
In Example~\ref{divisionring_example} we observed that if $F$ is a division ring then any finite subgroup of $F^\times$ is freely representable.
Thus all finite subgroups of $F^\times$ are cyclic if~$F$ is a field by Lemma~\ref{abelian_lemma}.
If we use the above proof of Lemma~\ref{abelian_lemma} then the argument is  somewhat circular
since it uses the result for $F = \bC$. Also, in Example~\ref{divisionring_example} we used the result for finite $F$.
However, the remark after Corollary~\ref{pq_cor} gives an argument for Lemma~\ref{abelian_lemma} that does not assume the result for $F = \bC$, and
in Section~\ref{Wedderburn} we will give an independent argument for the result that $F^\times$ is cyclic without assuming
the result for finite~$F$.
\end{remark}

Recall that finite subgroups of $\bH^\times$ give examples of freely representable groups, and all such 
 groups all have at most one element of order 2.
This occurs more generally. (Note also that this is a basic property of cyclic groups as well).

\begin{proposition}\label{even_prop}
Suppose that $G$ is a finite group of even order that is freely representable over a field~$F$.
Then~$G$ has a unique element of order two, and
this element is in the center of $G$.
\end{proposition}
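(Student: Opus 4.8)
The plan is to realize $G$ concretely as a matrix group and then pin down every element of order two explicitly, showing it can only be the scalar $-I$. By Corollary~\ref{finite_dim_cor}, since $G$ is freely representable over $F$, I may assume $G$ is a subgroup $\Gamma$ of $\GL_n(F)$ in which the identity is the only element having eigenvalue $1$; equivalently, $g - I$ is invertible for every $g \in \Gamma$ with $g \ne I$. Because $|G|$ is even, Cauchy's theorem guarantees at least one element of order two, so it remains to identify all such elements and check that they coincide with a single central element.

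First I would analyze an arbitrary element $g \in \Gamma$ of order two. From $g^2 = I$ I obtain the factorization $(g - I)(g + I) = g^2 - I = 0$. Since $g \ne I$, the factor $g - I$ is invertible, and multiplying on the left by $(g - I)^{-1}$ forces $g + I = 0$, that is, $g = -I$. Thus the only possible order-two element of $\Gamma$ is the scalar matrix $-I$, which immediately yields uniqueness. Moreover $-I$ commutes with every matrix, so it is central in $\GL_n(F)$ and hence central in $\Gamma \cong G$, giving the second assertion.

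It remains to combine this with existence and to address the characteristic. When $\operatorname{char} F \ne 2$, the scalar $-I$ genuinely has order two and differs from $I$, so together with Cauchy's theorem I conclude that $G$ has exactly one element of order two, namely the central element $-I$. When $\operatorname{char} F = 2$, however, $-I = I$, so the computation above would force $g = I$, contradicting that $g$ has order two; hence a group of even order cannot be freely representable over a field of characteristic $2$ at all, and the statement holds vacuously in that case.

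The only real subtlety, and the step I expect to be the main obstacle, is precisely this behavior in characteristic $2$, where the usual ``eigenvalues are $\pm 1$'' intuition breaks down because $+1$ and $-1$ coincide. I expect the cleanest route is to avoid diagonalization entirely and rely on the single identity $(g - I)(g + I) = 0$ together with the invertibility of $g - I$; this handles all characteristics uniformly and makes the characteristic-$2$ obstruction fall out automatically as a vacuous case rather than requiring a separate diagonalizability argument.
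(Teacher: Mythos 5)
Your proof is correct and takes essentially the same route as the paper: realize $G$ inside $\GL_n(F)$ via Corollary~\ref{finite_dim_cor}, invoke Cauchy's theorem for existence, and force any order-two element to equal the central scalar $-I$ from the factorization $(g-I)(g+I)=0$ together with the invertibility of $g-I$. Your explicit handling of characteristic $2$ (where the hypotheses become contradictory) is sound and matches what the paper notes only in the remark following the proposition.
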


\begin{proof}
By Corollary~\ref{finite_dim_cor} we can think of $G$ as a subgroup of $\mathrm{GL}_n(F)$
such that
the only element of $\Gamma$ with eigenvalue $1$ is the identity. 
Since $G$ has even order it has at least one element of order $2$ (Cauchy's theorem).
We will prove the result by showing that if $g \in \mathrm{GL}_n(F)$ has order $2$
and does not have eigenvalue $1$ then~$g = - I$.

To see this, observe that $(g-I)(g+I) = 0$ since $g$ has order $2$. Also observe that the null-space of $g-I$ is $\{0\}$ since $g$ fixes only the
zero vector. Thus $g-I$ is invertible, and so $g+I = 0$. In other words, $g = - I$ as desired.
\end{proof}

\begin{remark}
The above proof can be adapted to prove the following: if $F$ has characteristic $2$ then every freely
representable group over $F$ is of odd order.
\end{remark}

\begin{remark}
By the above proposition, every freely representable group of even order has a normal subgroup of order 2.
The Feit-Thompson theorem states that every non-Abelian simple group has even order.
So the above proposition implies that a simple group is  freely representable if and only if
it is cyclic (and necessarily of prime order).
\end{remark}

\begin{remark}
The above proposition implies that many familiar non-Abelian groups, such as the $A_n$ if $n \ge 4$
or $S_n$ if $n \ge 3$ are not freely representable. Similarly dihedral groups are not freely representable.
\end{remark}

\begin{remark}
The quaternion group with 8 elements is the classic example of a non-Abelian group with a single element of
order two. Later we will see that every $2$-group with only one such element is freely representable,
and will form a well-known class of groups called the \emph{generalized quaternion groups}.
\end{remark}

We give another class of groups where freely representable and cyclic coincide.

\begin{theorem}\label{pq_thm}
Let $G$ be a group of order equal to the product of at most two primes where the two primes in question can be equal.
Then $G$ is freely representable if and only if it is cyclic. 
\end{theorem}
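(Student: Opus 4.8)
The plan is to prove the two directions separately. The cyclic-implies-freely-representable direction is immediate from Example~\ref{cyclic_example}, so the work is all in the converse, which I would establish in contrapositive form: if $G$ is not cyclic then it is not freely representable. I would split this into cases according to $|G|$. If $|G|$ is a single prime $p$, then $G$ is automatically cyclic and there is nothing to prove. If $|G| = p^2$, then $G$ is Abelian (the standard fact about groups of prime-square order), and Corollary~\ref{abelian_cor} immediately gives that $G$ is freely representable if and only if it is cyclic. This settles both the one-prime case and the equal-primes case, leaving only $|G| = pq$ with $p < q$ distinct primes.

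For $|G| = pq$ with $p < q$, I would first pin down the subgroup structure using Sylow theory. The number $n_q$ of Sylow $q$-subgroups satisfies $n_q \equiv 1 \pmod q$ and $n_q \mid p$; since $p < q$ this forces $n_q = 1$, so there is a unique normal subgroup $Q$ of order $q$. If the Sylow $p$-subgroup were also normal, then $G$ would be an internal direct product of cyclic groups of coprime order, hence cyclic, contrary to assumption. So non-cyclicity forces $n_p = q$, giving $q$ distinct subgroups $P_1, \dots, P_q$ of order $p$. Since distinct subgroups of prime order meet only in the identity, the nonidentity elements of $G$ partition into the $q - 1$ elements of order $q$ (all in $Q$) and the $q(p-1)$ elements of order $p$ (distributed among the $P_i$); a count confirms $1 + (q-1) + q(p-1) = pq$, so in particular a non-cyclic group of order $pq$ has no element of order $pq$.

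Next I would produce a norm relation of unity from this partition. Writing $\N$ for the norm in $\bZ[G]$, the subgroup $Q$ contributes the identity together with all order-$q$ elements, the sum $\sum_i \N P_i$ contributes the identity $q$ times together with each order-$p$ element exactly once, and $\N G$ is the identity plus all order-$q$ and order-$p$ elements. Subtracting, everything cancels except $q$ copies of the identity:
$$
\N Q + \sum_{i=1}^{q} \N P_i - \N G = q \cdot \mathbf{1}.
$$
Dividing by $q$, which is invertible in $\bQ$, gives
$$
\mathbf{1} = \frac{1}{q}\N Q + \sum_{i=1}^{q} \frac{1}{q}\N P_i - \frac{1}{q}\N G,
$$
a genuine norm relation of unity involving only the nontrivial subgroups $Q, P_1, \dots, P_q$, and $G$. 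By Theorem~\ref{biasse2020norm_thm2}, $G$ is then not freely representable, completing the contrapositive.

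The main obstacle is not any single computation but assembling the correct subgroup picture in the $pq$ case: one must use the Sylow count to conclude that non-cyclicity forces exactly $q$ Sylow $p$-subgroups alongside the unique normal $Q$, and then verify that the nonidentity elements split cleanly between $Q$ and the $P_i$. Once this structure is established the norm relation is essentially forced, and the remaining work is bookkeeping in $\bZ[G]$.
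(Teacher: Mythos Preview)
Your proof is correct, but it takes a more circuitous route than the paper's. The paper handles all cases uniformly in a few lines: if $G$ has order a product of at most two primes and is not cyclic, then every nonidentity element has prime order and hence lies in a \emph{unique} nontrivial cyclic subgroup (the one it generates). Letting $\mathcal{C}$ be the collection of these subgroups, one immediately gets
\[
\sum_{C \in \mathcal{C}} \N C = (k-1)\,\mathbf{1} + \N G,
\]
which is a norm relation of unity since $k > 1$. No case split, no Sylow counting, no appeal to the $p^2$-groups-are-Abelian fact or to Corollary~\ref{abelian_cor}.

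Your $pq$ argument is really the same norm relation with the subgroups enumerated explicitly via Sylow; the extra structure you extract (normality of $Q$, exactly $q$ Sylow $p$-subgroups) is true but unnecessary for the identity. More significantly, your $p^2$ case leans on Corollary~\ref{abelian_cor}, which in the paper is proved via representation theory over~$\bC$. The paper deliberately keeps the proof of Theorem~\ref{pq_thm} independent of that result, so that Theorem~\ref{pq_thm} can later furnish an alternative, more elementary proof of Lemma~\ref{abelian_lemma} (see the remark after Corollary~\ref{pq_cor}). Your version is logically valid in the given order of development, but it forfeits that independence.
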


\begin{proof}
One direction is clear from previous results, thus it is enough to show that if $G$ not cyclic
then it is not freely representable. 
So suppose that $G$ is not cyclic. This means that all the nontrivial cyclic subgroups of~$G$ have prime order, and
every nonidentity element of $G$ is in a unique cyclic group.
 Let $\mathcal C$
be the collection of nontrivial cyclic subgroups of~$G$. Then every nonidentity element of~$G$ is in exactly
one $C \in \mathcal C$. So
$$
\sum_{C \in \mathcal C} \N C = (k-1) \mathbf1 + \N G
$$
where $k$ is the size of $\mathcal C$.
This yields a norm relation of unity since~$k>1$.
So $G$ is not freely representable by Theorem~\ref{biasse2020norm_thm2}
\end{proof}

\begin{remark}
The above theorem cannot be extended to all groups whose size is a product of three primes: we have seen that the 
quaternion group with $8 = 2^3$ elements is freely representable, but not cyclic.
\end{remark}

\begin{remark}
The proof of the above theorem generalizes as follows: suppose $G$ is the union of two or more nontrivial proper subgroups 
such that when you remove the identity element from each you get a partition of~$G - \{e\}$. Then $G$ has a norm relation
of unity, and so is not freely representable.
For example, dihederal groups must have such norm relations and so we see (in a second way) that they are not freely representable.
This idea will arise later (See Lemma~\ref{new_lemma}).
\end{remark}

\begin{corollary}\label{pq_cor}
If $G$ is a finite freely representable group then every subgroup of $G$ of order $pq$, where $p$ and $q$ are primes, is cyclic.
\end{corollary}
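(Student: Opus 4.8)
The plan is to combine two results already established in the excerpt: Proposition~\ref{freely_representable_subgroup_prop}, which states that every subgroup of a freely representable group is itself freely representable, and Theorem~\ref{pq_thm}, the $pq$-theorem, which characterizes the freely representable groups of order a product of at most two primes as exactly the cyclic ones. Since all the substantive work has already been done in proving Theorem~\ref{pq_thm} via the norm-relation argument, this corollary should fall out almost immediately as a specialization to subgroups.

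Concretely, I would argue as follows. Let $H$ be any subgroup of $G$ with $|H| = pq$, where $p$ and $q$ are primes (allowing $p = q$). First, by Proposition~\ref{freely_representable_subgroup_prop}, the subgroup $H$ is freely representable, since free representability passes to subgroups. Next, observe that $|H| = pq$ is the product of exactly two primes, which in particular is a product of at most two primes, so the hypothesis of Theorem~\ref{pq_thm} is satisfied by $H$. Applying that theorem to $H$ directly, we conclude that $H$ is freely representable if and only if it is cyclic; since we have just seen $H$ is freely representable, it must be cyclic.

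The only point requiring any care is matching the hypotheses of Theorem~\ref{pq_thm} to the situation at hand, namely confirming that the case $p = q$ (where $|H| = p^2$) is covered. This is not a genuine obstacle, because the theorem is explicitly stated for orders equal to the product of at most two primes \emph{where the two primes may be equal}, so the square-of-a-prime case is included. Thus there is no real difficulty, and the corollary is an immediate consequence of the two cited results; the entire content lies in the heredity of free representability combined with the $pq$-theorem.
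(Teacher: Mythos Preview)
Your proof is correct and is exactly the argument the paper intends: the corollary is stated without proof in the paper precisely because it is an immediate consequence of Proposition~\ref{freely_representable_subgroup_prop} together with Theorem~\ref{pq_thm}, which is what you spell out.
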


\begin{remark}
We can use this result to give another proof of Lemma~\ref{abelian_lemma} (all finite Abelian freely representable groups are cyclic)
using the structure theorem for finite Abelian groups. 
 
We can even an even more elementary algebraic proof of  Lemma~\ref{abelian_lemma} based on the above result without appealing to the structure theorem for finite Abelian groups. 
\emph{We assume we have a  Abelian subgroup $A$ with the property that all subgroups
of size~$p^2$ are cyclic where~$p$ is any prime dividing $|A|$ and we show that all such groups are cyclic.}
Start by fixing a prime $p$ dividing~$|A|$ (if $|A| = 1$ we are done of course) and consider the endomorphism~$x \mapsto x^p$.
Let~$K$ be the kernel and let~$I$ be the image. By Cauchy's theorem (in the easy case of Abelian groups), the group~$K$ is a nontrivial $p$-group.
Suppose $K$ contains two distinct subgroups~$C_1$ and $C_2$ of order~$p$. Then $C_1 C_2$ is a group of order $p^2$,
and so is cyclic. This means that~$C_1 = C_2$ since a cyclic group of order $p^2$ has a unique subgroup of order~$p$.
This is a contradiction, thus $K$ is a group of order $p$. This means that $I$ has index~$p$ in $G$, so by induction we can
assume $I$ is cyclic. If $I$ has order prime to $p$ then~$G = K I \cong K \times I$ is cyclic. So we assume $p$ divides the
order of $I$. Let $g \in G$ be an element mapping to a generator of $I$. So $p$ divides the order of $g$, which means
that $g$ has order $p$ times the order of $I$. Thus $g$ has order $|I | \cdot |K| = |G|$.
\end{remark}

We now consider the Cartesian products of groups of relatively prime order.

\begin{proposition}
Suppose $A$ and $B$ are finite groups of relatively prime order, and let $F$ be a field.
If $A$ has a free linear representation on a
finite dimensional $F$-vector space $V_A$, and $B$ has a free representation on a finite dimensional $F$-vector
space on $V_B$, then the associated representation of $A \times B$ on $V_A \otimes V_B$ is a free linear representation,
where  $A\times B$ acts on $V_A \otimes V_B$ according to the rule
$$(a, b) (v_1 \otimes v_2) = a(v_1) \otimes b(v_2).$$
\end{proposition}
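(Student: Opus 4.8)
The plan is to verify the definition of a free linear representation directly: I would show that the only element of $A \times B$ having a nonzero fixed vector is the identity $(1,1)$, since that is precisely what freeness of the action on the nonzero vectors means. (That $V_A \otimes V_B \ne 0$ is automatic, as $V_A$ and $V_B$ are nonzero finite-dimensional spaces.) So I would begin by fixing $(a,b) \in A \times B$ and a vector $z \ne 0$ in $V_A \otimes V_B$ with $(a \otimes b)\, z = z$, where $a \otimes b$ denotes the tensor of the operators of $a$ on $V_A$ and $b$ on $V_B$, and aim to conclude that $a = 1$ and $b = 1$.

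The key observation is that, because the action is given by tensoring the operators of the two factors and each factor is a homomorphism, the $k$-th power of $(a,b)$ acts as $a^k \otimes b^k$; hence $(a^k \otimes b^k)\, z = z$ for every integer $k$. Writing $m$ and $n$ for the orders of $a$ and $b$, I note that $m \mid |A|$ and $n \mid |B|$, so $\gcd(m,n) = 1$ because $\gcd(|A|,|B|) = 1$. The Chinese Remainder Theorem then produces an integer $k$ with $k \equiv 1 \pmod{m}$ and $k \equiv 0 \pmod{n}$, for which $a^k = a$ while $b^k = 1$. For this $k$ the relation becomes $(a \otimes \mathrm{id}_{V_B})\, z = z$, that is, $\big((a - I) \otimes \mathrm{id}_{V_B}\big)\, z = 0$.

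Now I would invoke freeness of the $A$-representation. If $a \ne 1$, then the operator $a - I$ has trivial kernel on the finite-dimensional space $V_A$, hence is invertible; therefore $(a - I) \otimes \mathrm{id}_{V_B}$ is invertible on $V_A \otimes V_B$, with inverse $(a-I)^{-1} \otimes \mathrm{id}_{V_B}$. This contradicts the fact that it annihilates the nonzero vector $z$, so $a = 1$. Interchanging the roles of $m$ and $n$ (choose $k'$ with $k' \equiv 0 \pmod{m}$ and $k' \equiv 1 \pmod{n}$) gives $b = 1$ by the identical argument. Thus $(a,b) = (1,1)$, and the representation is free.

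I expect that the one genuine idea is the coprimality-plus-CRT trick that decouples the two factors; once it is found, the remaining steps (passing to powers of the fixed vector, and invertibility of the tensor of an invertible operator with the identity) are routine. A virtue of this route is that it never passes to an algebraic closure and uses no eigenvalue theory, so it works verbatim over an arbitrary field $F$ in any characteristic. The natural alternative would be to base-change to $\bar F$ and argue that the eigenvalues of $a \otimes b$ are products $\lambda \mu$ of eigenvalues of the factors, which are roots of unity whose orders divide $|A|$ and $|B|$, forcing $\lambda = \mu = 1$ and hence $a = b = 1$ by freeness; but that approach needs the extra justification that freeness survives field extension and becomes delicate in positive characteristic, so I would prefer the CRT argument above.
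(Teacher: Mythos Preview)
Your proof is correct and shares the paper's core idea: use the coprimality hypothesis to reduce to the case where an element of a single factor fixes the vector, then invoke freeness of that factor. The execution differs in two minor ways. First, the paper passes to a power of $g$ of \emph{prime} order $p$; since $p$ divides only one of $|A|$, $|B|$, this element lies entirely in $A$ or in $B$. Your CRT maneuver instead isolates $a$ itself (and then $b$), which is a slightly cleaner way to reach the same place and directly yields $a=1$, $b=1$ rather than a contradiction. Second, once an element $\sigma\in A$ fixes the vector, the paper expands in a basis $e_1,\dots,e_n$ of $V_B$, writes $v=\sum v_i\otimes e_i$, and reads off $\sigma v_i=v_i$ coordinatewise; you phrase the same step as invertibility of $(a-I)\otimes\mathrm{id}_{V_B}$. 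These are equivalent formulations of the same linear-algebra fact, so the two arguments are really variations on one proof.
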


\begin{proof}
Suppose $g v = v$ where $g$ is a nonidentity element of $A\times B$.
Then $g^k v = v$ for all powers of $g$. 
In particular, if $p$ is a prime divisor of the order of $g$ then there is an element $\sigma$ of order $p$
such that $\sigma v = v$.
Since $A$ and $B$ have relatively prime orders, $\sigma$ must
be in $A$ or $B$ (where $A$ and $B$ are regarded as subgroups of~$A \times B$).

Suppose $\sigma \in A$. Let $e_1, \ldots, e_n$ be a basis for $V_B$. Then 
$$
v = v_1 \otimes e_1 + \ldots + v_n \otimes e_n
$$
where $v_i \in V_A$. Since $\sigma v = v$,
$$
(\sigma v_1) \otimes e_1 + \ldots + (\sigma v_n)  \otimes e_n = v_1 \otimes e_1 + \ldots + v_n \otimes e_n.
$$
Thus $\sigma v_i = v_i$ for each $i$. So $v_i = 0$ since the representation on $V_A$ is a free linear representation.
Thus $v=0$ as desired. Similarly, $v = 0$ if $\sigma \in B$.
\end{proof}

This yields a generalization of a result about cyclic groups:  if $A$ and $B$ have relatively prime orders
then $A \times B$ is cyclic if and only if both $A$ and $B$ are cyclic.

\begin{corollary}\label{rel_prime_cor}
Suppose $A$ and $B$ are finite groups of relatively prime order. 
Then~$A\times B$ is freely representable if and only if $A$ and $B$ are both freely representable.
\end{corollary}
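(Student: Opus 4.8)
The plan is to dispatch the two implications separately, with the preceding proposition supplying the substance of one direction and Proposition~\ref{freely_representable_subgroup_prop} the other. The whole argument is understood over the default field $\bC$ (though nothing depends on this choice, since every cited result holds over an arbitrary field $F$).

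For the forward direction, suppose $A$ and $B$ are both freely representable. Since both groups are finite, Lemma~\ref{finite_dim_lemma} lets me replace any given free representations by free representations on \emph{finite dimensional} vector spaces $V_A$ and $V_B$; this finiteness is exactly the hypothesis the preceding proposition requires. Applying that proposition to $V_A$ and $V_B$ then shows that the tensor product representation of $A \times B$ on $V_A \otimes V_B$ is free, so $A \times B$ is freely representable. The only point demanding care here is to invoke Lemma~\ref{finite_dim_lemma} \emph{before} forming the tensor product, so that $V_A \otimes V_B$ is genuinely a representation of the kind the proposition handles.

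For the converse, suppose $A \times B$ is freely representable. The factors $A$ and $B$ sit inside $A \times B$ as subgroups (via $a \mapsto (a,1)$ and $b \mapsto (1,b)$), so Proposition~\ref{freely_representable_subgroup_prop} immediately yields that $A$ and $B$ are each freely representable. This direction uses neither the relatively prime hypothesis nor the tensor product construction.

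I expect no real obstacle: both implications reduce to results already in hand, and the relatively prime condition enters only through the preceding proposition (it is what forces a prime-order element with a fixed vector to lie entirely in $A$ or in $B$). The sole subtlety worth flagging in the write-up is the reduction to finite dimension in the forward direction, which is why Lemma~\ref{finite_dim_lemma} must be cited explicitly rather than assumed.
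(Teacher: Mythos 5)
Your proof is correct and takes the same route the paper intends: the tensor product proposition (together with Lemma~\ref{finite_dim_lemma} to reduce to finite dimension) gives one implication, and Proposition~\ref{freely_representable_subgroup_prop} applied to the factor subgroups gives the other. The paper leaves this corollary's proof implicit, and your write-up, including the explicit citation of Lemma~\ref{finite_dim_lemma}, is exactly the argument it relies on.
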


Next we investigate freely representable $p$-groups. 
If $p$ is odd these turn out to cyclic, as we will soon see.
In the case where $p=2$ we can get generalizations of quaternion groups in addition
to cyclic groups, but the details will have to wait for a later section.

We start with a few lemmas. The first is motivated by the
following question: if a prime~$p$ divides 
the order of a freely representable group $G$,  is there a unique subgroup of order $p$?
This is a natural question since we always have uniqueness when $G$ is cyclic, and we have established uniqueness
for general freely representable groups~$G$ in the case  $p=2$. The following shows that $p$ dividing the order of the center
is a sufficient condition.
(We can even weaken the hypothesis a bit: 
instead of requiring that $G$ be freely representable, we just assume $G$ has the property that all 
subgroups of order~$p^2$ are cyclic.)

\begin{lemma} \label{p2top3}
Suppose $p$ is a prime and $G$ is a finite group with the property that every subgroup of $G$ of order $p^2$
is cyclic. If $p$ divides the order of the center $Z$ of~$G$ then 
$G$ has exactly one subgroup of order $p$, and that group is a subgroup of $Z$.
\end{lemma}

\begin{proof}
By Cauchy's theorem $Z$ has a subgroup $Z_p$ of order $p$.
Let $C_p$ be a subgroup of order $p$. Then $H = C_p Z_p$ is a subgroup with at most $p^2$
elements. Since every element of $H$ has order $1$ or $p$, the order of $H$ is $p$ or $p^2$. 
Thus $H$ is cyclic, and so~$H$ has a unique subgroup of order $p$. Thus $C_p = Z_p$.
\end{proof}

\begin{lemma}\label{hom_lemma}
Suppose $G$ is a group of order $p^k$ where $p$ is a prime.
Suppose that~$H_1$ and $H_2$ are  distinct Abelian subgroups of $G$ of index $p$.
If $p\ne 2$ then the map~$x \mapsto x^p$ is a homomorphism from $G$ to $H_1 \cap H_2$.
If $p=2$ then the map $x \mapsto x^4$ is a homomorphism from $G$ to $H_1 \cap H_2$.
\end{lemma}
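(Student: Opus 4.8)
The plan is to first pin down the structure of $D \defeq H_1 \cap H_2$ and then exploit the resulting nilpotency of class at most $2$ via the standard power formula. Since $H_1$ and $H_2$ are distinct subgroups of index $p$ in a $p$-group, they are maximal and normal, so $H_1 H_2$ is a subgroup; it strictly contains $H_1$, hence $H_1 H_2 = G$. The product formula then forces $[G : D] = p^2$, and $D$ is normal, being an intersection of normal subgroups.

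The key structural observation I would establish next is that $D \subseteq Z(G)$. Indeed, any $d \in D$ lies in the abelian group $H_1$, so it commutes with every element of $H_1$, and it likewise lies in $H_2$, so it commutes with every element of $H_2$; since $G = H_1 H_2$, the element $d$ commutes with all of $G$. Because $G/D$ has order $p^2$ it is abelian, giving $[G,G] \subseteq D$, and therefore $[G,G] \subseteq D \subseteq Z(G)$, so $G$ has nilpotency class at most $2$. Moreover $G/D$ has two distinct subgroups of order $p$, namely the images $H_1/D$ and $H_2/D$ (distinct since $H_1 \ne H_2$ and both contain $D$), so $G/D$ is not cyclic; hence $G/D \cong C_p \times C_p$ has exponent $p$, which means $x^p \in D$ for every $x \in G$.

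With class at most $2$ in hand, I would use that commutators are bilinear and central, so that $[y, x^p] = [y,x]^p$; since $x^p \in D \subseteq Z(G)$ the left side is trivial, and thus every commutator satisfies $[y,x]^p = 1$. I would also invoke the class-$2$ power formula $(xy)^n = x^n y^n [y,x]^{\binom n 2}$. For odd $p$, taking $n = p$, the exponent $\binom p 2 = p(p-1)/2$ is divisible by $p$, so $[y,x]^{\binom p 2} = 1$ and $(xy)^p = x^p y^p$; this is exactly the assertion that $x \mapsto x^p$ is a homomorphism, and its image lies in $D = H_1 \cap H_2$ since $x^p \in D$. For $p = 2$, taking $n = 4$, we have $\binom 4 2 = 6$ and $[y,x]^2 = 1$, hence $[y,x]^6 = 1$ and $(xy)^4 = x^4 y^4$; so $x \mapsto x^4$ is a homomorphism, with image in $D$ because $x^4 = (x^2)^2$ and $x^2 \in D$.

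The main obstacle, and the part requiring the most care, is the split between odd $p$ and $p = 2$: it is precisely the factor of $2$ hidden in $\binom p 2$ that makes $x \mapsto x^2$ fail to be multiplicative when $p=2$, which is why passing to the fourth power is necessary. I would also be sure to justify the power formula $(xy)^n = x^n y^n [y,x]^{\binom n 2}$, either by a short induction on $n$ using that $[y,x]$ is central (so its exact commutator convention is immaterial), or by citing it as a standard identity for groups of nilpotency class at most $2$.
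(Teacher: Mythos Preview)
Your proposal is correct and follows essentially the same route as the paper's proof: both establish that $D = H_1 \cap H_2$ is central (via $G = H_1 H_2$ with each $H_i$ abelian), that $x^p \in D$ for all $x$, that commutators have order dividing $p$, and then apply the class-$2$ power formula $(xy)^n = x^n y^n [y,x]^{\binom{n}{2}}$ to split into the odd and $p=2$ cases. The only cosmetic difference is that the paper obtains $x^p \in D$ directly from $g^p \in H_i$ (since $[G:H_i]=p$) rather than via your argument that $G/D \cong C_p \times C_p$, and it writes out the exponent $p(p-1)/2$ explicitly instead of as $\binom{p}{2}$.
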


\begin{proof}
Since $H_1$ and $H_2$ have index $p$ in $G$, they are normal subgroups of $G$ (Proposition~\ref{indexp_prop}).
Thus  $Z = H_1 \cap H_2$  is also a normal subgroup of $G$.
Observe that~$G = H_1 H_2$ and that $Z$ is in the center of $G$ since $H_1$ and $H_2$ are Abelian.\footnote{In the
interesting case where
$G$ is non-Abelian $Z$ is all of the center. Hence the notation $Z$ for this subgroup. (Why? If $Z'$
is  the center then $Z' H_i$ is Abelian, so is $H_i$.)}
Since~$H_1$ and $H_2$ both have index $p$, if $g\in G$ then $g^p \in Z$.

Given $x, y \in G$ we form the commutator $[x, y] = x^{-1} y^{-1} x y$. So~$[x, y]$ is defined by
$$x y  = y x [x, y].$$
In the case where $a\in H_1$ and $b \in H_2$ we have $a^{-1} b^{-1} a b \in H_1 \cap H_2$
since $H_1, H_2$ are both normal. So $[a, b] \in Z$.
This means that $G/Z$ is Abelian since $G=H_1 H_2$ and $H_1, H_2$ are 
Abelian.
In particular, $[x, y]\in Z$ for all~$x, y \in G$.

If $x, y \in G$ then $x^p y = y x^p$ since $x^p \in Z$ and $Z$ is in the center.
However, 
$$
x^n y = x \cdots x y = x \cdots x y x [x, y] = \ldots = y x^n [x, y]^n
$$
since $[x, y]$ is in the center. Comparing these expressions when $n = p$ gives
$$
[x, y]^p = 1.
$$
Another consequence of the fact that $[x, y]$ is in the center of $G$ is that
$$
(xy)^p = x yxy \cdots xy xy = x^p y^p [y, x]^m
$$
where $m = 1 + \ldots + (p-1) = p(p-1)/2$. 
When $p$ is odd, we have that $m$ is a multiple of $p$, so $[y, x]^m = 1$ for all $x, y \in G$
and so
$$
(xy)^p = x^p y^p.
$$
If $p = 2$ then $(xy)^2 = x^2 y^2 [y, x]$ which doesn't necessarily give a homomorphism.
But
since $[y, x]^2 = 1$ and since $[y, x]$ is in the center,
$$
(xy)^4 = xyxyxyxy = x^4 y^4 [y, x]^6 = x^4 y^4.
$$
\end{proof}

Here is an interesting group theoretical consequence (interesting beyond just the theory of freely representable
groups):

\begin{proposition}\label{p3top4}
If $p$ is an odd prime then every $p$-group $G$ with only one subgroup of order $p$
is cyclic.
\end{proposition}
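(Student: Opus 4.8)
The plan is to argue by induction on $|G| = p^n$, with the trivial base cases $n \le 1$ (where $G$ is cyclic outright). The first key observation is that the hypothesis is inherited by subgroups: any subgroup $H \le G$ has its subgroups of order $p$ among those of $G$, so $H$ too possesses a unique subgroup of order $p$. Two consequences follow. First, every subgroup of order $p^2$ has a unique subgroup of order $p$; since a group of order $p^2$ is abelian and $C_p \times C_p$ has $p+1$ subgroups of order $p$, such a subgroup must be cyclic. This is exactly the condition needed to invoke Lemma~\ref{hom_lemma}. Second, every \emph{proper} subgroup of $G$ inherits the unique-order-$p$ property and is strictly smaller, so by the inductive hypothesis every proper subgroup of $G$ is cyclic.

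Next I would split on the number of maximal subgroups of $G$, recalling the standard fact that every maximal subgroup of a $p$-group is normal of index $p$. If $G$ has a \emph{unique} maximal subgroup $M$, then any $g \in G \setminus M$ lies in no maximal subgroup, so $\langle g \rangle = G$ and $G$ is cyclic, as desired. Otherwise $G$ has two distinct maximal subgroups $M_1, M_2$; by the reduction above these are cyclic, hence abelian, and each has index $p$. Since $p$ is odd, Lemma~\ref{hom_lemma} applies and shows that $\phi \colon x \mapsto x^p$ is a homomorphism $G \to M_1 \cap M_2$.

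It remains to derive a contradiction in this second case by counting. The kernel of $\phi$ is precisely the set of $x$ with $x^p = 1$, i.e. the elements of order dividing $p$; every nonidentity such element generates a subgroup of order $p$, which by hypothesis is the unique one, so $\ker \phi$ equals that subgroup and has order $p$. Hence $|\mathrm{im}\,\phi| = p^{n-1}$. On the other hand $\mathrm{im}\,\phi \subseteq M_1 \cap M_2$, and since $M_1, M_2$ are distinct and normal we have $M_1 M_2 = G$, whence $|M_1 \cap M_2| = |M_1|\,|M_2|/|G| = p^{n-2}$. This would force $p^{n-1} \le p^{n-2}$, which is absurd. Thus the second case cannot occur, $G$ has a unique maximal subgroup, and $G$ is cyclic, completing the induction.

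The main obstacle is not any single computation but arranging the hypotheses of Lemma~\ref{hom_lemma}: I must exhibit two distinct abelian subgroups of index $p$, and this is exactly what the reduction to \emph{all proper subgroups cyclic} together with the index-$p$ property of maximal subgroups supplies. The delicate point to get right is the identification of $\ker\phi$ with the unique subgroup of order $p$, since it is precisely this that makes $\mathrm{im}\,\phi$ too large to fit inside $M_1 \cap M_2$ and thereby drives the contradiction; the oddness of $p$ enters only through the fact that $x \mapsto x^p$ (rather than $x \mapsto x^4$) is the relevant homomorphism.
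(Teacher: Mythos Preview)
Your proof is correct and follows essentially the same approach as the paper: induct on $|G|$, produce two distinct index-$p$ subgroups (you via maximal subgroups, the paper by picking $g\notin H_1$ and enclosing it in some $H_2$), observe they are cyclic by induction, apply Lemma~\ref{hom_lemma} to get the $p$-power map as a homomorphism into their intersection, and derive the same size contradiction $p^{n-1}\le p^{n-2}$ from the kernel being the unique subgroup of order~$p$. The only difference is cosmetic packaging; your ``unique maximal subgroup implies cyclic'' dichotomy is a clean way to handle the case the paper dispatches by choosing $g\notin H_1$ and asking whether $\langle g\rangle=G$.
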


\begin{proof}
We proceed by induction on $k \ge 1$ where $p^k$ is the order of $G$. 
The base case~$k=1$ is clear so assume $k \ge 2$.
So suppose $G$ has a unique
subgroup $C$ of order $p$.
Let $H_1$ be a subgroup of index $p$ in $G$ (which exists by Proposition~\ref{cseries_prop}).
Let~$g \in G$ be any element of $G$ not in $H_1$. If $g$ generates~$G$ we are done, so 
assume $g$ generates a proper subgroup. By Proposition~\ref{cseries_prop}
there is a subgroup $H_2$ of index~$p$ in $G$ containing $g$.

Observe that $H_1$ and $H_2$ can have only one subgroup of order $p$ since that holds of $G$.
So $H_1$ and $H_2$ are cyclic by the induction hypothesis.
By the above lemma,~$x \mapsto x^p$ is a homomorphism from~$G$ to $H_1 \cap H_2$.
Observe that the kernel is $C$, the unique subgroup of $G$ of order $p$.
Thus the image of $x \mapsto x^p$ has $p^{k-1}$ elements. However $H_1 \cap H_2$ has order bounded by $p^{k-2}$,
a contradiction.
\end{proof}

These results can be collected together to give another family of groups where freely representable coincides with cyclic:

\begin{theorem}\label{pgroup_thm}
Suppose $G$ is a $p$-group where $p$ is an odd prime. Then the following are equivalent.
\begin{enumerate}
\item\label{p1}
$G$ is freely representable.
\item\label{p2}
Every subgroup of $G$ of order $p^2$ is cyclic.
\item\label{p3}
$G$ has a unique  subgroup of order $p$.
\item\label{p4}
$G$ is cyclic.
\end{enumerate}
\end{theorem}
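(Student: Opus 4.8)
The plan is to prove the four conditions equivalent by closing the cycle of implications $(\ref{p1}) \implies (\ref{p2}) \implies (\ref{p3}) \implies (\ref{p4}) \implies (\ref{p1})$. The pleasant feature of this theorem is that each arrow is a direct application of a result already established, so the work is essentially one of assembly rather than fresh argument.

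For $(\ref{p1}) \implies (\ref{p2})$ I would appeal to Corollary~\ref{pq_cor}, taking the two primes there to be equal: if $G$ is freely representable then every subgroup of order $p^2 = p \cdot p$ is cyclic. For $(\ref{p4}) \implies (\ref{p1})$ I would simply cite Example~\ref{cyclic_example}, since every cyclic group is freely representable. Both of these are immediate. The implication $(\ref{p3}) \implies (\ref{p4})$ is exactly Proposition~\ref{p3top4}, and it is here that the hypothesis that $p$ is odd is genuinely used; this is the step that prevents the theorem from extending to $p=2$, as the quaternion group (freely representable but noncyclic, with a unique subgroup of order $2$) shows.

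The only arrow requiring a small additional input is $(\ref{p2}) \implies (\ref{p3})$. Here I would invoke Lemma~\ref{p2top3}, whose hypothesis asks that $p$ divide the order of the center $Z$ of $G$. To meet this hypothesis I would use the standard fact that a nontrivial finite $p$-group has nontrivial center, which follows from the class equation (the number of conjugacy classes forces $|G| \equiv |Z| \pmod{p}$, so $p \mid |Z|$). Granting this, $p$ divides $|Z|$, and since condition~(\ref{p2}) supplies exactly the ``every subgroup of order $p^2$ is cyclic'' hypothesis of Lemma~\ref{p2top3}, that lemma yields at once that $G$ has exactly one subgroup of order $p$.

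I do not expect a real obstacle in this proof: all the substantive content has been front-loaded into the preceding lemmas (especially Proposition~\ref{p3top4} and Lemma~\ref{p2top3}), and the theorem is a clean corollary of that development. The one thing to be careful about is to record explicitly that each hypothesis is met at each stage — in particular, that $p$ being odd is exactly what Proposition~\ref{p3top4} needs, and that the nontrivial-center fact is what activates Lemma~\ref{p2top3}.
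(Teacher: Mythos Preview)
Your proposal is correct and essentially identical to the paper's own proof: the paper closes the same cycle $(\ref{p1}) \Rightarrow (\ref{p2}) \Rightarrow (\ref{p3}) \Rightarrow (\ref{p4}) \Rightarrow (\ref{p1})$, citing Theorem~\ref{pq_thm} (of which your Corollary~\ref{pq_cor} is the immediate consequence), Lemma~\ref{p2top3} together with the nontrivial-center fact (Proposition~\ref{center_prop}), Proposition~\ref{p3top4}, and Example~\ref{cyclic_example}. Your commentary on where the oddness of $p$ is used is accurate and matches the paper's later remark.
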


\begin{proof}
We have $(\ref{p1}) \implies (\ref{p2})$ by Theorem~\ref{pq_thm}.
We have $(\ref{p2}) \implies (\ref{p3})$ by Lemma~\ref{p2top3} (the center
is nontrivial by Proposition~\ref{center_prop}).
We have $(\ref{p3}) \implies (\ref{p4})$ by Proposition~\ref{p3top4}.
Finally  $(\ref{p4}) \implies (\ref{p1})$  as in Example~\ref{cyclic_example}.
\end{proof}

\begin{remark}
The quaternion group on 8 elements shows that this theorem does not
hold for $p=2$. We will investigate $2$-groups in a later section.
\end{remark}

\begin{remark}
It follows from the above theorem that if $G$ is freely representable then for every odd prime $p$ the $p$-sylow
subgroup is cyclic. An important case is where every Sylow subgroup is cyclic (even for $p=2$).
Such groups have a long history.
Frobenius and Burnside
proved such groups are solvable (and even metacyclic). 
In fact, such groups constitute one of the three classes of groups studied by Burnside in his classification
of groups that could be freely representable (\cite{burnside1905a} and \cite{burnside1905b}).
We will review the theory of such groups in Section~\ref{sylow_cyclic_chapter}.
\end{remark}

\chapter{Manifolds of Constant Positive Curvature}

Now that we have explored the more accessible properties of freely representable groups and 
have some feel for them, we will take a break from group theory and discuss their central
role in Riemannian geometry in order to further motivate their study.
After this we will return to group theoretic matters and survey the classification of such groups.

Freely representable groups are of importance in Riemannian geometry because of the following
(where $\mathbf{S}^n$ is the unit sphere in $\bR^{n+1}$):

\begin{theorem}
Suppose $\Gamma$ is a finite group of isometries of $\mathbf{S}^n$ acting freely on~$\mathbf{S}^n$.
Then $\Gamma$ is freely representable. Conversely if $G$ is a freely representable group, then for some $n$
there is a  finite group of isometries $\Gamma$ of $\mathbf{S}^n$ isomorphic to $G$ acting freely on~$\mathbf{S}^n$.
\end{theorem}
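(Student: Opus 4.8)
The plan is to prove both implications by connecting the geometric notion of a finite group acting freely by isometries on $\mathbf{S}^n$ with the algebraic notion of a free linear representation over $\bR$, which by Lemma~\ref{all_char_lemma} is equivalent to being freely representable over $\bC$.

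For the forward direction, suppose $\Gamma$ is a finite group of isometries of $\mathbf{S}^n$ acting freely. The key fact I would invoke is that every isometry of $\mathbf{S}^n$ fixing the center of the sphere extends to (indeed, is the restriction of) a linear orthogonal transformation of $\bR^{n+1}$; that is, the isometry group of $\mathbf{S}^n$ is naturally $\OO(n+1)$ acting linearly on $\bR^{n+1}$. So $\Gamma$ embeds in $\OO(n+1) \subseteq \GL_{n+1}(\bR)$. I then need to check that this linear action is free on the nonzero vectors of $\bR^{n+1}$, i.e.\ that no nonidentity element has eigenvalue $1$. Suppose $g \in \Gamma$ fixes a nonzero $v \in \bR^{n+1}$. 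Since $g$ is linear it fixes $v/|v|$, which lies on $\mathbf{S}^n$; freeness of the action on $\mathbf{S}^n$ forces $g = 1$. Hence the only element with eigenvalue $1$ is the identity, and by Corollary~\ref{finite_dim_cor} the group $\Gamma$ is freely representable over $\bR$, hence freely representable in the default sense by Lemma~\ref{all_char_lemma}.

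For the converse, let $G$ be freely representable. By Lemma~\ref{all_char_lemma} it is freely representable over $\bR$, so by Corollary~\ref{finite_dim_cor} we may realize $G$ as a subgroup $\Gamma$ of $\GL_m(\bR)$ for some $m$ such that no nonidentity element has eigenvalue $1$. The one extra ingredient needed is to arrange that $\Gamma$ acts by \emph{isometries}, i.e.\ that it lies in $\OO(m)$; this is the standard averaging trick. Starting from any inner product $\langle\cdot,\cdot\rangle_0$ on $\bR^m$, I would define the averaged form $\langle u, w \rangle = \sum_{g \in \Gamma} \langle g u, g w \rangle_0$, which is again a positive-definite symmetric bilinear form and is $\Gamma$-invariant by a reindexing of the finite sum. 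Choosing an orthonormal basis for this new inner product conjugates $\Gamma$ into $\OO(m)$ without changing the eigenvalue condition (conjugation preserves eigenvalues). Now $\Gamma \subseteq \OO(m)$ acts on $\mathbf{S}^{m-1}$ by isometries, and setting $n = m-1$, the same argument as above shows the action on $\mathbf{S}^n$ is free: if $g v = v$ for some $v \in \mathbf{S}^n$ then $g$ has eigenvalue $1$, so $g = 1$.

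The main obstacle, and the place where care is warranted, is the geometric input in the forward direction: the identification of the isometry group of the round sphere $\mathbf{S}^n$ with the linear group $\OO(n+1)$ acting on the ambient $\bR^{n+1}$. Once one grants that every isometry of $\mathbf{S}^n$ is the restriction of a linear orthogonal map (which follows from the fact that such an isometry must fix the origin as the centroid of the sphere and preserve distances, hence preserve the inner product), both directions reduce to the elementary observation that a linear map fixes a unit vector if and only if it has eigenvalue $1$. The averaging argument in the converse is routine and poses no real difficulty; the only thing to verify is that it does not disturb the eigenvalue-$1$ condition, which holds since conjugate matrices share the same eigenvalues.
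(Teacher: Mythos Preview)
Your proposal is correct and follows essentially the same route as the paper: both directions hinge on the identification of the isometry group of $\mathbf{S}^n$ with $\OO(n+1)$, the passage between $\bR$ and $\bC$ via Lemma~\ref{all_char_lemma}, and the standard averaging trick to produce an invariant inner product in the converse. Your write-up is slightly more explicit (normalizing $v/|v|$, noting that conjugation preserves the eigenvalue condition), but there is no substantive difference in strategy.
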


\begin{proof}
We appeal to the following standard result in differential geometry: the isometry group
of $\mathbf{S}^n$ can be identified with the orthogonal group $\mathbf{O}(n+1)$.
In particular, an orthogonal linear tranformation $\bR^{n+1} \to \bR^{n+1}$ restricts to an isometry
of any sphere $\mathbf{S}^n \subseteq \bR^{n+1}$ centered at the origin.

Assume $\Gamma$ is a finite subgroup of $\mathbf{O}(n+1)$ acting freely on a 
$\mathbf{S}^n \subseteq \bR^{n+1}$. This means that the inclusion map gives a free representation of $\Gamma$,
so $\Gamma$ is freely representable.

Conversely, if $G$ is a freely representable, then $G$ is freely representable over $\bR$ by Lemma~\ref{all_char_lemma}.
So there is a free representation of $G$ on a real vector space $V$.
We can form a positive definite inner product on $V$ that is $G$-invariant (using the standard averaging technique).
In particular, every element of $G$ acts as an orthogonal transformation with respect to such an inner product.
Fix an orthonormal basis and identify $V$ with $\bR^{n+1}$ for some $n \ge 0$.
Since the representation is free, it is faithful. So $G$ is isomorphic its image $\Gamma$ in $\mathbf{O}(n+1)$.
Since the representation is free, $\Gamma$ acts freely on the unit sphere $\mathbf{S}^n$.
\end{proof}

\begin{remark}
If $n$ is $0$ or $1$ then any finite group of isometries acting freely of $S^n$ is cyclic.
However, in Example~\ref{cyclic_example}
we showed how to construct free linear representation over~$\bR$ of cyclic groups for any even dimension.
So we can safely remove the cases $n=0$ and $n=1$ in the above theorem, and thus assume $\mathbf{S}^n$
is simply connected: \emph{if $G$ is a freely representable group then for some $n\ge 2$
there is a  finite group of isometries~$\Gamma$ of~$\mathbf{S}^n$ isomorphic to $G$ acting freely on
the simply connect sphere~$\mathbf{S}^n$.}
\end{remark}

\begin{remark}
If $n+1$ is odd then any element of $\mathbf O(n+1)$ must have a real eigenvalue, and that eigenvalue must equal to $\pm 1$.
Thus the square of every element must have eigenvalue $+1$. This means that for any finite subgroup $\Gamma$ of $\mathbf O(n+1)$
acting freely on~$\mathbf{S}^n$, the square of every element is the identity. So \emph{all} complex eigenvalues of elements
of~$\Gamma$ are $\pm 1$, but we cannot have nonidentity elements of $\Gamma$ with eigenvalue~$+1$
since $\Gamma$ acts freely. So $\Gamma$ is a subgroup of $\{ \pm I \}$. 
In other words, if $n$ is even then the only  finite subgroup of $\mathbf{O}(n+1)$ acting freely on
$\mathbf{S}^n$
are $\{ \pm I \}$ and the trivial group $\{  I \}$.

This means we can restrict our attention to $\mathbf{S}^n$ for odd $n$:
\emph{if $G$ is a freely representable group, then for some odd $n\ge 3$
there is a  finite group of isometries~$\Gamma$ of~$\mathbf{S}^n$ isomorphic to $G$ acting freely on
the simply connect sphere~$\mathbf{S}^n$.}

Observe that Example \ref{quaternion_example} gives interesting non-Abelian freely representable groups
action on $\mathbf{S}^3$.
\end{remark}

The groups $\Gamma$ of the above theorem are of importance in the classification of spaces of positive curvature because
of the following theorem:

\begin{theorem}
Suppose $M$ is a complete, connected Riemannian manifold of constant positive sectional curvature $K$
and of dimension $n \ge 2$.
Then $M$ is isometric to~$\mathbf{S}^n/\Gamma$ 
where $\Gamma$ is a finite subgroup of isometries of~$\mathbf{S}^n$ acting freely on 
the sphere~$\mathbf{S}^n$ of curvature $K$.

If both $\Gamma$ and $\Gamma'$ are finite subgroup of isometries of~$\mathbf{S}^n$ acting freely on $\mathbf{S}^n$,
then $\mathbf{S}^n/\Gamma$ is isometric to $\mathbf{S}^n/\Gamma'$ if and only if 
$\Gamma$ and $\Gamma'$ are conjugate subgroups of~$\mathbf O(n+1)$.
\end{theorem}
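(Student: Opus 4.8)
The plan is to reduce both assertions to the theory of universal covers together with the classification of simply connected space forms (the Killing--Hopf theorem), which I would take as the one substantial input from differential geometry. For the first assertion, I would begin with the universal cover $p\colon \tilde M \to M$, pulling back the metric of $M$ so that $p$ is a local isometry; since sectional curvature is a local invariant, $\tilde M$ again has constant positive curvature $K$, and completeness lifts to covers, so $\tilde M$ is complete. The Killing--Hopf theorem then identifies $\tilde M$ isometrically with the round sphere $\mathbf{S}^n$ of curvature $K$ (here $n \ge 2$ guarantees $\mathbf{S}^n$ is simply connected, so it really is the universal cover). The deck transformation group $\Gamma = \pi_1(M)$ acts on $\tilde M \cong \mathbf{S}^n$; because each deck transformation commutes with $p$ and $p$ is a local isometry, the deck transformations preserve the pulled-back metric and hence are isometries of $\mathbf{S}^n$. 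Invoking the identification $\mathrm{Isom}(\mathbf{S}^n) \cong \mathbf{O}(n+1)$ established earlier, $\Gamma$ becomes a subgroup of $\mathbf{O}(n+1)$.

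It remains to see that this $\Gamma$ is finite and acts freely, and that $M \cong \mathbf{S}^n/\Gamma$. Deck transformations always act freely and properly discontinuously, which gives freeness directly. For finiteness I would use that $\mathbf{S}^n$ is compact: a properly discontinuous action has discrete orbits, and a discrete subset of a compact space is finite, so each free orbit has exactly $|\Gamma|$ points and $\Gamma$ is finite. The standard identification of a manifold with the quotient of its universal cover by the deck group then yields the isometry $M \cong \mathbf{S}^n/\Gamma$, completing the first assertion.

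For the classification, the backward direction is a short computation: if $\Gamma' = g\Gamma g^{-1}$ with $g \in \mathbf{O}(n+1)$, then $g$ carries $\Gamma$-orbits to $\Gamma'$-orbits, since $g(\Gamma x) = \Gamma'(gx)$, and therefore descends to a well-defined isometry $\mathbf{S}^n/\Gamma \to \mathbf{S}^n/\Gamma'$. The forward direction is the crux. Given an isometry $f\colon \mathbf{S}^n/\Gamma \to \mathbf{S}^n/\Gamma'$, write $\pi, \pi'$ for the two quotient maps out of $\mathbf{S}^n$; I would lift the local isometry $f \circ \pi$ through $\pi'$, the lifting criterion applying precisely because $\mathbf{S}^n$ is simply connected for $n \ge 2$. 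The lift $\tilde f$ is a local isometry of the complete manifold $\mathbf{S}^n$ onto the simply connected $\mathbf{S}^n$, hence a one-sheeted covering, hence a global isometry, so $\tilde f \in \mathbf{O}(n+1)$. Finally, for $\gamma \in \Gamma$ both $\tilde f \circ \gamma$ and $\tilde f$ lift $f \circ \pi$ through $\pi'$, so they differ by a deck transformation $\gamma' \in \Gamma'$, giving $\tilde f \gamma \tilde f^{-1} = \gamma'$; thus $\tilde f \Gamma \tilde f^{-1} \subseteq \Gamma'$, and applying the same argument to $f^{-1}$ forces equality.

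The main obstacle is really a matter of what one is willing to cite. The hard analytic content lives entirely in the Killing--Hopf theorem, whose proof rests on the exponential map, Jacobi fields, and curvature comparison; in keeping with the expository aims here I would quote it rather than reprove it. The one genuinely delicate point internal to the argument is the step asserting that a local isometry from the complete sphere onto the simply connected sphere is automatically a global isometry; this uses the theorem that a local isometry out of a complete manifold is a covering map, and it is exactly here, and in the lifting criterion, that the hypothesis $n \ge 2$ (equivalently, the simple connectivity of $\mathbf{S}^n$) is indispensable.
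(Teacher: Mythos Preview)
The paper does not prove this theorem; it is stated without proof as a standard input from Riemannian geometry (essentially the Killing--Hopf theorem together with the covering-space classification of isometries), serving only to motivate the study of freely representable groups. Your outline is the standard argument and is correct, so there is nothing in the paper to compare it against.
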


\begin{corollary}
A finite group $G$ is a freely representable group if and only if it occurs as the fundamental group
of a complete, connected Riemannian manifold of constant positive sectional curvature
of dimension $n \ge 2$.
\end{corollary}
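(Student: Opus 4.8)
The plan is to combine the two theorems of this chapter, using covering space theory to bridge between a free isometric action on a sphere and the fundamental group of the quotient. The key observation I would isolate first is this: a finite group acting freely on a manifold automatically acts properly discontinuously (freeness together with finiteness produces a basis of evenly covered neighborhoods), so the quotient is again a manifold and the projection is a covering map whose deck transformation group is the acting group. Since the dimension satisfies $n \ge 2$, the sphere $\mathbf{S}^n$ is simply connected, and hence the projection $\mathbf{S}^n \to \mathbf{S}^n/\Gamma$ is the universal cover; standard covering space theory then identifies $\pi_1(\mathbf{S}^n/\Gamma)$ with the deck group $\Gamma$. This bridging fact is what lets me translate freely between ``$\Gamma$ acts freely by isometries'' and ``$\Gamma \cong \pi_1$ of the quotient.''

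For the forward direction, suppose $G$ is freely representable. By the first theorem of this chapter, together with the remark that permits taking $n \ge 2$, there is a finite group $\Gamma$ of isometries of some $\mathbf{S}^n$ with $n \ge 2$, isomorphic to $G$, acting freely on $\mathbf{S}^n$. I would set $M = \mathbf{S}^n/\Gamma$. Because $\Gamma$ acts by isometries, $M$ inherits a Riemannian metric making the projection a local isometry, so $M$ has the same constant positive sectional curvature as $\mathbf{S}^n$; and $M$ is complete and connected as a quotient of a complete, connected manifold by a finite group. By the bridging observation, $\pi_1(M) \cong \Gamma \cong G$, exhibiting $G$ as the fundamental group of a manifold of the required type.

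For the converse, suppose $G \cong \pi_1(M)$ for some complete, connected Riemannian manifold $M$ of constant positive sectional curvature and dimension $n \ge 2$. By the preceding space-form theorem, $M$ is isometric to $\mathbf{S}^n/\Gamma$ for some finite group $\Gamma$ of isometries of $\mathbf{S}^n$ acting freely. The same bridging observation gives $\pi_1(M) \cong \Gamma$, whence $G \cong \Gamma$. Finally, the first theorem of this chapter asserts that a finite group acting freely by isometries on $\mathbf{S}^n$ is freely representable, so $\Gamma$, and therefore $G$, is freely representable.

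I expect the main obstacle to be the careful justification of the bridging step rather than either theorem invocation: one must confirm that freeness of a finite action yields proper discontinuity (so the quotient is a manifold and the projection a covering), and that $n \ge 2$ is exactly what guarantees $\mathbf{S}^n$ is simply connected and thus the universal cover, pinning down $\pi_1$ as $\Gamma$. The boundary cases $n = 0, 1$, where $\mathbf{S}^n$ fails to be simply connected, are precisely those excluded by the hypothesis $n \ge 2$, so no separate treatment is required.
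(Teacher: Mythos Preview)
Your proposal is correct and takes essentially the approach the paper intends: the corollary is stated in the paper without proof, as an immediate consequence of the two preceding theorems, and you have correctly supplied the standard covering-space bridge (free finite action on simply connected $\mathbf{S}^n$ makes the quotient map the universal cover, so $\pi_1 \cong \Gamma$) that links them. Your attention to the $n \ge 2$ hypothesis and its role in guaranteeing simple connectivity is exactly the point that makes the argument go through.
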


\begin{remark}
As pointed out in an earlier remark, if $n\ge 2$ is even then $\Gamma$ in the above theorem
is limited to either $\{ \pm I \}$ or the trivial group $\{  I \}$. This means that there are only
two  complete, connected Riemannian manifold of constant positive sectional curvature $K$ of dimension $n$:
$\mathbf S^n$ itself and real projective space.
\end{remark}

\begin{remark}
Since cyclic groups are freely representable, they occurs as the fundamental group of such manifolds.
Such manifolds are \emph{lens spaces} of constant curvature.
\end{remark}

%%%

\section{Constant curvature: some history}

The Riemannian manifolds of constant curvature play a central role in geometry as a whole. 
The study of such manifolds is geometry \emph{par excellence}. It arose out of the hyperbolic geometry of Lobatchevsky-Bolyai-Gauss.
In fact, it predates hyperbolic geometry since Euclidean and spherical geometry are special cases.
In~1854 Riemann gave his account of elliptic geometry, and began the development of Riemann geometry (as a generalization
of Gauss's differential geometry of surfaces).
Beltrami played an essential role by providing models for non-Euclidean geometry in 1868.

In the 1870s, William Clifford began exploring interesting topologies that
can occur with surfaces of constant curvature. He was an early proponent
of Riemann's formulation of differential geometry, which was revolutionary at the time.  
Clifford was a remarkable figure, who tragically died at age 33
in 1879. Clifford
algebras are named for him, and 
some of Clifford's ideas about curvature
anticipate Einstein's theory of gravity.

Starting in the early 1870s
Felix Klein became interested non-Euclidean geometry and in 
classifying surfaces of constant curvature. He pioneered the techniques of using covering
spaces for studying such surfaces in 1891.
Klein coined the term ``hyperbolic'' and ``elliptic'' geometry (and even ``parabolic'' geometry for Euclidean geometry, but that
term has not caught on).

In 1891, Killing posed the classification question for
spaces of positive curvature in $n$-dimensions and called this the Clifford-Klein spherical space form
problem (The \emph{Clifford-Kleinschen Raumproblem}).

The term ``space form'' is a term for complete, connected manifold of constant curvature or
equivalently (the Killing-Hopf theorem) manifolds of the form $M/\Gamma$ where $M$ is the (simply connected) standard  hyperbolic, Euclidian,
or spheric spaces and where $\Gamma$ is a group of isogenies acting freely and properly discontinuously on~$M$.
This last condition means that for each $x \in M$ there is a neighborhood $U$ such that~$\gamma U$ and $U$ are
disjoint if $\gamma \in \Gamma$ is not the identity.
In the case where $M$ is a sphere, this just means that $\Gamma$ is finite since the sphere is compact.
The term ``space form'' can also refer to this analogous situation in differential topology or the topology of manifolds
more generally, where $\Gamma$ is not restricted to isometries.

H.~Hopf (1926) modernized Killing's spherical space form problem,
and highlighted the use of group theoretical methods. 
He studied the examples where $\Gamma$ is a binary dihedral group, thus establishing an infinite
family of topologically distinct three-dimensional spaces of positive curvature.

Threfall and Seifert classified three dimensional spherical spaces in 1930 which uses the interesting subgroups
of $\bH^\times$. 
Georges Vincent made significant progress toward the general classification in 1947. Vincent classifed all the solvable
freely representable groups and their free representations over~$\bR$.
This lead to the classification of spherical space forms in all dimensions not congruent to $3$ mod~$4$.\footnote{As far as I know, Vincent only
published  one paper.}
Wolf in the 1960s completed the classification of freely representable groups, classified their real free representations,
and so classified all complete, connected Riemannian manifolds of constant curvature \cite{wolf2011}.
He simplied Vincent's approach by using work by Zassenhaus, and was thus able to expand the results
to non-solvable freely representable groups.

As mentioned above Burnside had studied
examples of freely representable groups as early as the 1905, but outside the context of geometry.
More specifically he studied groups whose Sylow subgroups are cyclic except for $2$-Sylow
subgroups which are of quaternion type, and showed all freely representable groups have these properties.
Vincent (and earlier Hopf?) brought this work on freely representable groups into the classification problem of Clifford-Klein space forms.
(Holder proved the solvability in 1895).

%%%

\chapter{Freely Representable $2$-Groups}

For odd primes $p$ the only freely representable $p$-groups are cyclic.
However, in Example~\ref{quaternion_example} we saw that any finite subgroup of~$\bH^\times$
is freely representable and many of these are non-Abelian $2$-groups. We will see that all 
freely representable $2$-groups are in fact isomorphic to a subgroup of $\bH^\times$.

\begin{definition}
A \emph{generalized quaternion group} is a group isomorphic to a noncyclic finite group of~$\bH^\times$ whose order
is a power of $2$. For example, the standard quaternion group~$\{\pm 1, \pm \mathbf{i}, \pm \mathbf{j}, \pm \mathbf{k}\}$
qualifies as a generalized quaternion group.
\end{definition}

It is immediate from this definition that generalized quaternion groups are freely-representable groups.
As discussed in Example~\ref{quaternion_example}, every noncyclic finite subgroups of $\bH^\times$
is the preimage of a finite noncyclic subgroup of $\mathrm{SO}(3)$ under the standard two-to-one homomorphism
$\bH_1 \to \mathrm{SO}(3)$. So a generalized quaternion group must be the preimage of a noncyclic
subgroup
of $\mathrm{SO}(3)$ whose order is a power of two. The only such subgroups of $\mathrm{SO}(3)$ are dihedral groups $D_n$
where~$n \ge 2$ is a power of~$2$.
Thus we have the following:

\begin{proposition}
A group is a generalized quaternion group if and only if it is isomorphic to $2 D_n$ where $n\ge 2$ is some power of $2$.
Thus every generalized quaternion group has order $2^k$ for some $k\ge 3$, and for every $k\ge 3$ there is a unique
generalized quaternion group of order $2^k$ up to isomorphism.
\end{proposition}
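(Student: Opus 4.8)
The plan is to assemble the facts already established in the discussion immediately preceding the statement, since essentially all of the real content lives in the correspondence between finite subgroups of $\bH_1$ and finite subgroups of $\mathrm{SO}(3)$. First I would recall that, by definition, a generalized quaternion group is a noncyclic finite subgroup of $\bH^\times$ of $2$-power order, and that any finite subgroup of $\bH^\times$ lies in $\bH_1$. By the preceding paragraph such a group is the preimage under $\pi\colon \bH_1 \to \mathrm{SO}(3)$ of a noncyclic subgroup of $\mathrm{SO}(3)$ of $2$-power order, and the only such subgroups of $\mathrm{SO}(3)$ are the dihedral groups $D_n$ with $n \ge 2$ a power of $2$. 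Conversely, the preimage of any such $D_n$ is noncyclic (the preimage of a noncyclic group cannot be cyclic) and, as computed below, of $2$-power order, hence a generalized quaternion group. Since the preimage of $D_n$ is by definition the binary dihedral group $2D_n$, this yields the claimed characterization: a group is generalized quaternion if and only if it is isomorphic to $2D_n$ for some $n \ge 2$ that is a power of $2$.

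Next I would pin down the orders. The dihedral group $D_n$ has order $2n$, so by Lemma~\ref{doublecover_lemma}(2) its preimage $2D_n$ has order $4n$. Writing $n = 2^m$ with $m \ge 1$ (since $n \ge 2$ is a power of $2$), we get $|2D_n| = 2^{m+2}$, so every generalized quaternion group has order $2^k$ with $k = m+2 \ge 3$. This proves the order claim.

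For existence and uniqueness at each $k \ge 3$: given $k$, set $n = 2^{k-2} \ge 2$, which is a power of $2$; then $2D_n$ is a generalized quaternion group of order $2^k$, giving existence. For uniqueness, any generalized quaternion group of order $2^k$ is isomorphic to some $2D_n$ with $4n = 2^k$, which forces $n = 2^{k-2}$; and the isomorphism type of $2D_n$ is well defined because binary dihedral groups of equal order are isomorphic (Proposition~\ref{binary_conjugate_prop}). Hence all generalized quaternion groups of a given order $2^k$ are isomorphic.

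The step requiring the most care is the well-definedness of the symbol $2D_n$ up to isomorphism, namely that two preimages inside $\bH_1$ of (possibly distinct but isomorphic) copies of $D_n$ are themselves isomorphic. This is precisely what Proposition~\ref{binary_conjugate_prop} supplies, so no genuine obstacle remains; everything else is order bookkeeping together with a direct appeal to the classification of finite subgroups of $\mathrm{SO}(3)$ recorded just before the statement.
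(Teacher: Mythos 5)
Your proof is correct and takes essentially the same approach as the paper: the characterization and the order bookkeeping come from the discussion preceding the proposition together with Lemma~\ref{doublecover_lemma}, and uniqueness comes from Proposition~\ref{binary_conjugate_prop} applied to preimages of isomorphic dihedral subgroups of $\mathrm{SO}(3)$ of equal order. The paper's own proof is simply a terser statement of this same argument.
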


\begin{proof}
Most of this is clear from the above discussion. The uniqueness is a consequence of the fact that dihedral 
subgroups of $\SO(3)$ of a fixed order are isomorphism (see appendix on finite subgroups of $\SO(3)$), together with
Proposition~\ref{binary_conjugate_prop}.
\end{proof}

Let $Q$ be a generalized quaternion group of order $2^n$ with $n \ge 3$. We identify~$Q$ with a subgroup of $\bH^\times$.
As mentioned in Example~\ref{quaternion_example} (and Lemma~\ref{doublecover_lemma}), $Q$
has a unique element of order 2, namely $-1$. Also $Q$ is a subgroup of $\bH_1$, 
and restricting the map~$\bH_1 \to \mathrm{SO}(3)$ to~$Q$ gives a two-to-one homomorphism $Q \to D$ where $D$ is a dihedral subgroup of $\mathrm{SO}(3)$
of order~$2^{n-1}$. The kernel of this homomorphism is~$\{ \pm 1\}$.

As in our discussion in Example~\ref{quaternion_example}, the image of any nontrivial cyclic subgroup~$C$ of $Q$ of order $k$
is a cyclic subgroup of $D$ of order $k/2$. In other words, an element of~$Q$ of order $k>1$ maps to an element of order $k/2$.
Since $D$ is dihedral of order~$2^{n-1}$, it has generators $\rho$ and $\tau$ such that~$\rho$ has order $2^{n-2}$ and $\tau$ has order~$2$,
and such that~$\tau \rho \tau = \rho^{-1}$. Let $R \in Q$ map to $\rho$ and let $T \in Q$ map to $\tau$.
So~$R$ has order~$2^{n-1}$ and~$T$ has order~$4$. Note that~$R$ generates a cyclic subgroup of~$Q$ of index 2.
Since~$R^{2^{n-2}}$ and $T^2$ are elements of order 2, they are both equal to~$-1$, and so~$T^2 = R^{2^{n-2}}$.
The subgroup $\left<R, T\right>$ of $Q$ generated by $R$ and $T$ maps onto~$D$ 
since~$\rho$ and $\tau$ generate $D$, so $\left<R, T\right>$ has order $2^n$ (Lemma~\ref{doublecover_lemma}),
meaning that~$\left<R, T\right> = Q$.

%, and the kernel contains $\pm 1$
%since $-1 = T^2 \in \left<R, T\right>$. So~$\left<R, T\right>$ and $Q$ have the same order, and hence $Q$ is generated by $R$ and $T$.

In $D$ we have that $\tau \rho \tau  = \rho^{-1}$ so that $(\tau \rho)^2 = 1$. This means that $\tau \rho$ has order~2 (it cannot
have order 1 since $\tau$ and $\rho$ are not inverses; otherwise $D$ would be generated by $\tau$ and would have order 2).
Thus $TR$ has order 4, and $(TR)^2$ must be -1. Note also that $T^{-1} = - T$ since $T^2 = -1$.
Thus
$$
TRT^{-1}R = TR(-T) R = -(TR)^2 = 1, \qquad\text{so}\quad TRT^{-1} = R^{-1}. 
$$
In summary, we have the following relations between the generators $R$ and $T$:
$$
R^{2^{n-1}} = 1, \quad T^2 = R^{2^{n-2}},   \quad TRT^{-1} = R^{-1}.
$$
These relations are sufficient to characterize $Q$:

\begin{proposition}\label{relations_prop}
The quaternion group group $Q$ with $2^n$ elements is isomorphic to the following abstract group $G$ described
by two generators $R$ and $T$ and three relations
$$
R^{2^{n-1}} = 1, \quad T^2 = R^{2^{n-2}},   \quad TRT^{-1} = R^{-1}.
$$
\end{proposition}

\begin{proof}
Using the given relations we see that we $TR = R^{-1} T$, and so
every element of $G$ can be written as~$R^i T^j$ where~$0 \le i \le 2^{n-1}-1$ and where~$0 \le j \le 1$. So $G$ has at most $2^n$
elements.

Next observe that the specific choice of elements we also call $R$ and $T$ in $Q$ given above satisfies the desired relations,
giving us a homomorphism $G \to Q$ (where we view $G$ as a quotient of a free group generated by two elements).
Since $R$ and $T$ in~$Q$ generates~$Q$, the homomorphism $G \to Q$ is surjective. 
Since~$Q$ has order $2^n$ and $G$ has order at most $2^n$, this map is an isomorphism
and~$G$ has order exactly~$2^n$.
\end{proof}

Our next goal is to show that every freely representable $2$-group is a generalized quaternion group. 
It turns out that we can do so by exploiting the fact that freely representable groups of even order have
a unique element of order~$2$. To this end we will require a few lemmas about $2$-groups with a unique
element of order~$2$.

\begin{lemma} \label{2_abelian_lemma}
If $A$ is an Abelian group of order $2^k$ with a unique element of order~$2$ then
$A$ is cyclic.
\end{lemma}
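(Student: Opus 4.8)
The plan is to induct on $k$, using the squaring endomorphism to peel off one factor of $2$ at a time. Since $A$ is assumed to possess an element of order $2$, we have $k \ge 1$, and the base case $k=1$ is immediate: a group of order $2$ is cyclic. For the inductive step I would fix $k \ge 2$ and consider the map $\phi \colon A \to A$ given by $x \mapsto x^2$. Because $A$ is Abelian this is a group homomorphism, and its kernel is precisely the set of elements of order dividing $2$, namely $\{1, t\}$ where $t$ is the unique element of order $2$. Thus $\ker \phi$ has order $2$, and the image $I = \phi[A]$ has order $2^{k-1}$.

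The key observation is that $I$ again satisfies the hypothesis of the lemma. Indeed, $I$ is a subgroup of $A$, so any element of order $2$ lying in $I$ is an element of order $2$ in $A$ and hence equals $t$; and since $I$ has even order $2^{k-1} \ge 2$, Cauchy's theorem guarantees that $I$ contains at least one element of order $2$. Therefore $I$ has exactly one element of order $2$, and by the induction hypothesis $I$ is cyclic of order $2^{k-1}$.

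It then remains to lift a generator of $I$ back to $A$. Since every element of $I = \phi[A]$ has the form $x^2$, I can choose $x \in A$ with $x^2$ a generator of $I$, so that $x^2$ has order $2^{k-1}$. As the order of $x$ is a power of $2$ exceeding $1$, it is even, whence $\mathrm{ord}(x^2) = \mathrm{ord}(x)/2$; this forces $\mathrm{ord}(x) = 2^k = |A|$, and so $A = \langle x \rangle$ is cyclic.

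The argument is largely mechanical; the step requiring the most care is verifying that the image $I$ inherits the ``unique element of order $2$'' property so that the induction applies — both the uniqueness (which uses that $I$ sits inside $A$) and the existence (via Cauchy's theorem). An alternative, quicker route would invoke the structure theorem for finite Abelian groups: writing $A$ as a product of $r$ cyclic $2$-groups, its $2$-torsion subgroup is $(C_2)^r$ with $2^r - 1$ elements of order $2$, so the hypothesis forces $r = 1$. I prefer the squaring-map induction, since it keeps the development self-contained and avoids the structure theorem, in keeping with the elementary arguments used earlier in the report.
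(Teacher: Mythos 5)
Your proof is correct and follows essentially the same route as the paper's own argument: induction on $k$ via the squaring homomorphism $x \mapsto x^2$, noting its kernel has order $2$, applying the inductive hypothesis to the image, and lifting a generator using the fact that an element of even order $2m$ has square of order $m$. Even your closing remark mirrors the paper, which likewise notes the structure-theorem shortcut before giving the same elementary induction.
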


\begin{proof}
This follows from the structure theorem for finite Abelian groups. 

A direct proof can be given. We proceed by induction. The base case~$k=1$ is clear, 
so suppose $A$ is Abelian with order $2^k$ where $k \ge 2$, and assume~$A$ has a unique element of order 2. Consider the homomorphism $A\to A$ defined by the rule~$x \mapsto x^2$.
The kernel of this map has two elements, so the image~$C$ has order~$2^{k-1}$.
Since $C$ is a subgroup of $A$ of order $2^{k-1}$, it has a unique element of order 2.
So, by induction, $C$ is cyclic.
Let $h \in A$ map to a generator of~$C$, so $h^2$ has order $2^{k-1}$.
Since the order of $h$ is even, the order of $h^2$ is half the order of $h$.
So~$h$ has order~$2^k$, and $A$ must be cyclic with generator $h$.
\end{proof}

\begin{lemma}\label{2_small_lemma}
Suppose that $G$ is a $2$-group with a unique element of order $2$.
\text{If $|G| \le 8$} then $G$ is cyclic or is the quaternion group with $8$ elements.
\end{lemma}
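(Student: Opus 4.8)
The plan is to split on the order $|G| \in \{1,2,4,8\}$, reduce the abelian cases to the preceding lemma, and treat by hand the one genuinely nonabelian case that survives.

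First, if $|G| \le 4$ then $|G|$ is $1$, $2$, or $4$, and every such group is abelian (groups of order $p^2$ are abelian); by Lemma~\ref{2_abelian_lemma} a group of order $2^k$ with a unique element of order $2$ is cyclic, so $G$ is cyclic. Likewise, if $|G| = 8$ and $G$ happens to be abelian, Lemma~\ref{2_abelian_lemma} again gives that $G$ is cyclic. Thus the only remaining case is $|G| = 8$ with $G$ nonabelian, where the goal is to show $G$ is the quaternion group.

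In that case I would first locate an element of order $4$. A group in which every nonidentity element has order $2$ is abelian, and $G$ is not; so some element has order $4$ or $8$, and order $8$ would make $G$ cyclic (hence abelian), leaving an element $a$ of order $4$. Then $A = \langle a \rangle$ has index $2$, hence is normal, and $a^2$ is the unique element of order $2$. Pick $b \notin A$. If $b$ had order $2$ it would be a second element of order $2$ (since $b \ne a^2$), contradicting uniqueness, and $b$ cannot have order $8$; so $b$ has order $4$, whence $b^2$ is the unique element of order $2$ and $b^2 = a^2$. Conjugation by $b$ preserves the normal subgroup $A$ and sends $a$ to an element of order $4$, so $b a b^{-1} \in \{a, a^{-1}\}$; the choice $b a b^{-1} = a$ would force $a$ and $b$ to commute and make $G$ abelian, so $b a b^{-1} = a^{-1}$.

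This produces the relations $a^4 = 1$, $b^2 = a^2$, $b a b^{-1} = a^{-1}$, which are exactly those of Proposition~\ref{relations_prop} with $n = 3$. Since $\langle a, b \rangle$ already has order $8 = |G|$, it follows that $G = \langle a, b \rangle$ is isomorphic to the quaternion group of order $8$, finishing the proof. The only delicate point is the nonabelian subcase, and there the real leverage is the uniqueness hypothesis: it simultaneously forces $b^2 = a^2$ (ruling out an order-$2$ element outside $A$, which is what would instead yield a dihedral group) and, together with nonabelianness, pins down the conjugation relation.
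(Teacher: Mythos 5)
Your proposal is correct and follows essentially the same route as the paper: both reduce to the noncyclic (nonabelian) case of order $8$, select two elements of order $4$ not lying in a common cyclic subgroup, use the uniqueness of the order-$2$ element to get $b^2 = a^2$, use normality of the index-$2$ subgroup plus Lemma~\ref{2_abelian_lemma} to force $bab^{-1} = a^{-1}$, and conclude via Proposition~\ref{relations_prop}. The only (immaterial) difference is bookkeeping: the paper dispatches the small and abelian cases by counting elements of order $4$, while you route them through Lemma~\ref{2_abelian_lemma}.
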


\begin{proof}
Suppose that $G$ is not cyclic. Then $|G| = 4$ or $|G|=8$, and all elements of~$G$ have order 1, 2, or 4.
Note that $G$ has one element of order 1 and one element of order~2, so $G$ has $|G|-2$ elements have order 4.
If $|G| = 4$, then any of the elements of order 4 generate $G$ so $G$ is in fact a cyclic group.
So. we can assume $G$ is a noncyclic group of order 8 with 6 elements of order 4.

Let $R$ be any element of order 4. Note that the subgroup generated by $R$ has~2 elements
of order 4, so there are 4 additional elements of order 4. Let~$T$ be one of these additional elements of order 4.
Note that the group generated by~$R$ and $T$ has more than 4 elements, so must be all of $G$.

Finally we work out enough relations between $R$ and $T$ to determine $G$.
Since~$R$ has order 4, we have the relation $R^4 = 1$.
Since $R^2$ and $T^2$ have order~2, they must be equal. So we get a second relation $R^2 = T^2$. 
The group generated by $R$ has index two in $G$ so must be a normal subgroup of $G$.
Thus the conjugate $TRT^{-1}$ of~$R$ is a power of $R$. It must have order $4$ and so this conjugate is $R$ or $R^3 = R^{-1}$.
If $TRT^{-1} = R$ then $R$ and $T$ commute and so $G$ is Abelian. This can't happen 
by the previous lemma.
So we must have $TRT^{-1} = R^{-1}$ as a third relation.
By Proposition~\ref{relations_prop}, these three relations insure that the group $G$ is a 
quotient of a quaternion group of order $8$. Since $G$ has size $8$ this means that $G$ is isomorphic to the quaternion
group with 8 elements.
\end{proof}

Next we see that with one exception $2$-groups with a unique element of order $2$ must have a unique cyclic subgroup of index $2$.

\begin{lemma} \label{2_unique_lemma}
Let $G$ be a group of order $2^n$ with a unique element of order $2$
and with at least two distinct cyclic subgroups of index $2$.
Then 
$G$ is isomorphic to the quaternion group with $8$ elements.
\end{lemma}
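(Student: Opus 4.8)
The plan is to reduce to the case $|G| = 8$ and then apply Lemma~\ref{2_small_lemma}. First I would record that $G$ is non-Abelian and non-cyclic: if $G$ were Abelian then Lemma~\ref{2_abelian_lemma} would force it to be cyclic, but a cyclic group has exactly one subgroup of each index, contradicting the existence of two distinct cyclic subgroups of index~$2$. Thus, once I establish that $|G| \le 8$, Lemma~\ref{2_small_lemma} applied to the non-cyclic group $G$ will immediately give that $G$ is the quaternion group with $8$ elements.

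Write $H_1 = \langle a \rangle$ and $H_2 = \langle b \rangle$ for two distinct cyclic subgroups of index~$2$, each of order $2^{n-1}$ and each normal (Proposition~\ref{indexp_prop}). Since $H_1 \neq H_2$, the product $H_1 H_2$ properly contains the maximal subgroup $H_1$, so $H_1 H_2 = G$ and hence $|H_1 \cap H_2| = 2^{n-2}$; being the unique subgroup of this order inside each cyclic $H_i$, it equals $\langle a^2 \rangle = \langle b^2 \rangle$. Conjugation by $b$ is an automorphism of $\langle a \rangle$, say $b a b^{-1} = a^r$ with $r$ odd. The key local computation is that $b$ centralizes $H_1 \cap H_2 = \langle b^2 \rangle$ and that $a^2 \in H_1 \cap H_2$, whence $a^{2r} = b a^2 b^{-1} = a^2$; since $a$ has order $2^{n-1}$ this forces $r \equiv 1 \pmod{2^{n-2}}$.

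Next I would assume $n \geq 4$ and derive a contradiction. Modulo $2^{n-1}$ the congruence $r \equiv 1 \pmod{2^{n-2}}$ leaves only $r = 1$ or $r = 1 + 2^{n-2}$. The value $r = 1$ makes $a$ and $b$ commute, so $G$ would be Abelian, which is excluded. In the remaining case, write $b^2 = a^s$ with $s$ even (as $b^2 \in \langle a^2 \rangle$), and compute $(a^i b)^2 = a^{i(1+r)} b^2 = a^{\,i(1+r)+s}$. Here $1 + r = 2 + 2^{n-2}$ has $2$-adic valuation exactly $1$ once $n \geq 4$, so as $i$ varies $i(1+r)$ runs through every even residue modulo $2^{n-1}$; since $-s$ is even, the congruence $i(1+r) + s \equiv 0 \pmod{2^{n-1}}$ has a solution. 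For such $i$ the element $a^i b$ lies outside $H_1$, squares to the identity, and is therefore an element of order~$2$ distinct from the involution $a^{2^{n-2}} \in H_1$, contradicting the uniqueness hypothesis. Hence $n \leq 3$.

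Finally, since $n \leq 3$ we have $|G| \leq 8$, so Lemma~\ref{2_small_lemma} gives that $G$ is cyclic or the quaternion group with $8$ elements; being non-cyclic, $G$ must be the latter (which in particular forces $|G| = 8$). I expect the main obstacle to be the computation in the third paragraph: pinning down $r$ via the valuation argument and checking that the sole non-Abelian possibility manufactures a second involution. The reduction in the first two paragraphs and the small-order endgame are routine. One could instead obtain the power map $x \mapsto x^4$ as a homomorphism directly from Lemma~\ref{hom_lemma}, but the explicit conjugation computation above seems more transparent.
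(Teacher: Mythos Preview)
Your proof is correct. Both your argument and the paper's reduce to showing $n \le 3$ and then invoke Lemma~\ref{2_small_lemma}, but the mechanisms differ. The paper identifies $Z = H_1 \cap H_2$ with the center, then uses Lemma~\ref{hom_lemma} to see that $x \mapsto x^4$ is a homomorphism $G \to C$ into the index-$2$ subgroup $C$ of $Z$; the kernel $K$ (elements of order $\le 4$) has size $\ge 8$, and for $n \ge 4$ one picks $b \in K$ outside the cyclic order-$4$ subgroup $Z_4 \subseteq Z$ to obtain an Abelian subgroup $Z_4\langle b\rangle$ of order $\ge 8$, which Lemma~\ref{2_abelian_lemma} forces to be cyclic---impossible since every element of $K$ has order $\le 4$. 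Your route is instead an explicit conjugation computation: from $a^2 \in \langle b\rangle$ you extract $r \equiv 1 \pmod{2^{n-2}}$, eliminate $r=1$ by non-Abelianness, and in the case $r = 1 + 2^{n-2}$ directly manufacture a second involution $a^i b$ via a $2$-adic valuation argument. Your approach is more computational but self-contained (it does not need the homomorphism lemma), while the paper's is more structural and reuses Lemma~\ref{hom_lemma}, which is set up precisely for this purpose. You correctly anticipated this alternative in your final remark.
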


\begin{proof}
Observe that $G$ is non-Abelian: otherwise $G$ would be cyclic by Lemma~\ref{2_abelian_lemma} and would only have one subgroup of index $2$.
Let $H_1$ and $H_2$ be distinct cyclic subgroups of index $2$. Because $H_1$ and $H_2$ have index 2 in $G$, they are normal subgroups of $G$.
Thus the intersection~$H_1 \cap H_2$ is also normal. Observe that $G$ must have order at least $8$ since otherwise it could not have two distinct
subgroups of index 2.

Let $Z$ be the center of $G$. 
Observe that $Z H_i$ is Abelian, and so cannot be all of~$G$. This means $Z H_i = H_i$ and so $Z \subseteq H_i$.
Hence $Z \subseteq H_1 \cap H_2$.  Observe that~$G = H_1 H_2$, and that~$H_1 \cap H_2 \subseteq Z$ is in the center of $G=H_1 H_2$
since $H_1$ and~$H_2$ are cyclic. We conclude that  $Z = H_1 \cap H_2$.

From this we see that the inclusion $H_1 \hookrightarrow G = H_1 H_2$ induces an isomorphism~$H_1/Z \to G/H_2$ (since $Z = H_1 \cap H_2$).
So $[H_1: Z] = 2$. Since $H_1$ has order~$2^{n-1}$ we have that $Z$ has order $2^{n-2}$.
Since $Z$ is Abelian, it is cyclic by Lemma~\ref{2_abelian_lemma}.
Let~$C$ be the unique subgroup of $Z$ of index 2 in $Z$. So $C$ is a cyclic group of order $2^{n-3}$.

Since $G/H_i$ has order 2, if $x \in G$ then $x^2 \in H_i$. So $x^2 \in Z = H_1 \cap H_2$. 
Furthermore, since $Z/C$ has order 2, we have $x^4 \in C$ for all $x \in G$.
In other words,~$x \mapsto x^4$ is a map $G \to C$.
By Lemma~\ref{hom_lemma} the map $x \mapsto x^4$ is actually a homomorphism $G \to C$.
Let $K$ be the kernel. Observe that $K$ consists of all elements of $G$ of order 1, 2, or 4.
Since $C$ has $2^{n-3}$ elements, the image of~$G \to C$ has at most $2^{n-3}$ elements, so $K$ has at least $8$ elements.

As noted above $G$ has at least $8$ elements. Suppose $G$ has more than $8$ elements, so $n \ge 4$.
Since $Z$ has order $2^{n-2}$ this means $Z$
contains a subgroup $Z_4$ of order~$4$. Then $Z_4 \subseteq K$ by definition of $K$. Since $K$ has order at least~$8$,
there is an element~$b \in K$ not in $Z_4$. Since $Z_4$ is in the center, $Z_4 \left< b \right>$ would be an Abelian subgroup of $K$ of order
at least $8$. By Lemma~\ref{2_abelian_lemma}, this means~$Z_4 \left< b \right>$ would be cyclic of order $8$ or more. But every element of $K$ has
order at most $4$, a contradiction. We conclude that~$G$ has~8 elements.
Since it is not cyclic, it must be the quaternion group by the previous lemma.
\end{proof}

We know that a generalized quaternion group $Q$ has a cyclic subgroup of index~2 
and a unique element of order $2$.  Now we prove the converse for noncyclic groups:

\begin{lemma}\label{index2_cyclic_lemma}
Suppose $G$ is a noncyclic group of order $2^n \ge 8$ with 
a cyclic subgroup~$C$ of index $2$ and a
unique element of order~$2$.
Then $G$ is a generalized quaternion group.
\end{lemma}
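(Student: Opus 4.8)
The plan is to pin down a pair of generators satisfying the presentation of Proposition~\ref{relations_prop}. Let $R$ generate the cyclic subgroup $C$, so $R$ has order $2^{n-1}$, and choose any $T \in G \setminus C$. Since $C$ has index $2$ it is normal (Proposition~\ref{indexp_prop}), so conjugation by $T$ is an automorphism of $C = \langle R\rangle$; hence $TRT^{-1} = R^{j}$ with $j$ odd, and $T^2 \in C$, say $T^2 = R^m$. Because $T^2$ lies in the abelian group $C$ it commutes with $R$, and computing $T^2 R T^{-2} = R^{j^2}$ forces $j^2 \equiv 1 \pmod{2^{n-1}}$; conjugating the relation $T^2 = R^m$ by $T$ gives likewise $(j-1)m \equiv 0 \pmod{2^{n-1}}$. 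The unique involution of $G$ is then necessarily $R^{2^{n-2}}$, the unique involution of $C$.

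The base case $n = 3$ (order $8$) follows at once from Lemma~\ref{2_small_lemma}, since a noncyclic $2$-group of order at most $8$ with a unique element of order $2$ must be the quaternion group of order $8$. So assume $n \ge 4$. Then the solutions of $j^2 \equiv 1 \pmod{2^{n-1}}$ are exactly $j \equiv \pm 1$ and $j \equiv 2^{n-2}\pm 1 \pmod{2^{n-1}}$, and I will eliminate every value except $j \equiv -1$. The value $j \equiv 1$ is impossible because it makes $R$ and $T$ commute, hence $G$ abelian, whereas a noncyclic abelian $2$-group has more than one involution (Lemma~\ref{2_abelian_lemma}).

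For the remaining ``bad'' values $j \equiv 2^{n-2}\pm 1$ I produce a second element of order $2$, contradicting uniqueness. The key computation is that any $R^i T \notin C$ satisfies $(R^i T)^2 = R^{\,i(1+j)+m}$, so $R^i T$ is an involution exactly when the congruence $i(1+j) + m \equiv 0 \pmod{2^{n-1}}$ is solvable in $i$; since $R^i T \notin C$ while the unique involution $R^{2^{n-2}}$ lies in $C$, any such solution is a contradiction. Using the relation $(j-1)m \equiv 0$ to constrain the admissible values of $m$, a short check of $2$-adic valuations shows that for both $j \equiv 2^{n-2}+1$ and $j \equiv 2^{n-2}-1$ this involution congruence is solvable, ruling both out. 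Hence $j \equiv -1$.

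Finally, with $j \equiv -1$ the relation $(j-1)m \equiv 0$ reads $-2m \equiv 0 \pmod{2^{n-1}}$, so $m \equiv 0 \pmod{2^{n-2}}$ and thus $m \in \{0,\,2^{n-2}\}$. If $m \equiv 0$ then $T$ itself is an involution outside $C$, again contradicting uniqueness; therefore $T^2 = R^{2^{n-2}}$. The generators $R,T$ now satisfy exactly $R^{2^{n-1}} = 1$, $T^2 = R^{2^{n-2}}$, and $TRT^{-1} = R^{-1}$, so by Proposition~\ref{relations_prop} there is a surjection from the quaternion group of order $2^n$ onto $G$; comparing orders shows it is an isomorphism, and $G$ is a generalized quaternion group. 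The main obstacle is precisely the case analysis eliminating the semidihedral and modular values $j \equiv 2^{n-2}\pm 1$: this is where the unique-involution hypothesis must be converted into an explicit solvability statement, and the bookkeeping of the $2$-adic valuations of $1+j$, $j-1$, and $m$ must be done carefully.
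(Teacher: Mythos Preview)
Your proposal is correct, and the route differs from the paper's in an instructive way. Both proofs choose generators $R$ (of $C$) and $T\notin C$ and aim to show $TRT^{-1}=R^{-1}$ and $T^2=R^{2^{n-2}}$, but they attack the two unknowns in opposite orders. The paper first pins down $m$: it builds a subgroup $Q=H_1H_2$ of order $8$ (with $H_2=\langle T\rangle$ and $H_1$ the cyclic subgroup of $C$ of order $2|T^2|$), invokes Lemma~\ref{2_unique_lemma} to see $Q\cong Q_8$, hence $T$ has order $4$ and $T^2=R^{2^{n-2}}$; only then does it analyse the three nontrivial involutory automorphisms of $C$, eliminating $x\mapsto x^{2^{n-2}+1}$ because it acts trivially on $H_1$ (while $Q$ is non-Abelian) and eliminating $x\mapsto x^{2^{n-2}-1}$ by computing $(TR)^2=1$. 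You instead first pin down $j$: from $j^2\equiv 1$ and $(j-1)m\equiv 0$ you uniformly eliminate both ``bad'' values $j\equiv 2^{n-2}\pm 1$ by solving $i(1+j)+m\equiv 0\pmod{2^{n-1}}$ to produce an involution $R^iT\notin C$; only afterwards do you read off $m$. Your method avoids the structural detour through Lemma~\ref{2_unique_lemma} and treats the semidihedral and modular cases symmetrically, at the cost of the $2$-adic bookkeeping you flag at the end. That bookkeeping is genuinely short (for $j\equiv 2^{n-2}+1$ one has $v_2(1+j)=1$ and $v_2(m)\ge 1$, so the congruence is solvable; for $j\equiv 2^{n-2}-1$ one has $m\in\{0,2^{n-2}\}$ and $1+j=2^{n-2}$, and again it is solvable), but you should write it out explicitly rather than leave it as a promise.
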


\begin{proof}
Because $G$ has order $2^n$, 
it is enough to find generators in $G$ satisfying the relations of Proposition~\ref{relations_prop}.
We simply choose $R$ to be any generator of $C$ and $T$ to be any element outside of $C$.
These elements $R$ and $T$ generate~$G$ since $C$ has index $2$ in $G$.
We just need to show that $R$ and $T$ satisfy the desired relations.
The first relation, $R^{2^{n-1}} = 1$, is immediate.

Observe that~$C$ is normal in $G$ since its index in $G$ is 2, and  so $T^2 \in C$
since~$G/C$ has order 2. Let~$C'$ be the cyclic subgroup of $C$ generated by $T^2$.
If $C'=C$ then~$T$ generates $G$, so $G$ is cyclic, a contradiction. So $C'$ is a proper subgroup of $C$.
Let~$H_1$ be the unique subgroup of $C$ containing $C'$ with $[H_1: C'] = 2$. Let $H_2$ be the subgroup of~$G$ generated
by~$T$. So~$C'$ also has index $2$ in $H_2$ and $|H_1| = |H_2|$.

Every subgroup of a normal cyclic subgroup of $G$ must be normal in $G$ (since there is at most one subgroup of a cyclic group of any given order).
Thus $H_1$ is a normal subgroup of $G$. This implies that $Q = H_1 H_2$ is a subgroup of $G$. Every element of~$Q$
can be written as $h$ or $h T$ with $h\in H_1$ since $T^2 \in H_1$.
Thus $Q$ has order~$2|H_1|$.
Since $H_1$ and $H_2$ have equal order, they both have index 2 in $Q$. By the previous lemma $Q$ is the quaternion group of 8 elements.

Since $Q$ is the quaternion group with 8 elements, 
$T$ has order $4$. Now $T^2$ and~$R^{2^{n-2}}$ have order 2, and so  are equal:
$$
T^2 = R^{2^{n-2}}.
$$
Also $H_1$ has order $4$ so $C$ has order at least $4$.

Note that $T$ acts via conjugation on $C$ since $C$ is normal in $G$. The square~$T^2$ is in $C$ so acts trivially on $C$.
However, $T$ cannot act trivially on $C$ since $G$ is not Abelian (otherwise it would be cyclic by Lemma~\ref{2_abelian_lemma}). 
Thus $T$ acts as an automorphism of $C$ of order~$2$.
One such automorphism is $x \mapsto x^{-1}$. If $|C| \ge 8$ there are two other
automorphism of order $2$, namely $x \mapsto x^{2^{n-2} + 1}$
and $x \mapsto x^{2^{n-2} - 1}$. 

Assume for now that $C$ has order $8$ or more. 
Then $H_1$ (which has four elements) is a proper subgroup of $C$.
Observe that $x \mapsto x^{2^{n-2} + 1}$ maps $R^2$ to $R^2$, so restricts to the identity on 
any proper subgroup of $C$, including $H_1$. But $T$ acts nontrivially on $H_1$ (since $H_1H_2 = Q$ is non-Abelian),
so this gives a contradiction. 
Suppose instead that~$T$ acts as $x \mapsto x^{2^{n-2} -1}$. Then
$$
(TR)^2  = T^2(T^{-1} R T) R =  T^2 R^{2^{n-2}} = 1
$$
since $T^2$ and $R^{2^{n-2}}$ are each the unique element of order $2$ in $G$.
Thus $TR$ has order $2$. But $TR$ is not in $C$, so cannot be the unique element of order $2$.

Thus in any case  $T$ acts as $x \mapsto x^{-1}$ on $C$.
So we get the third and last relation
$$
TRT^{-1} = R^{-1}.
$$
\end{proof}

As before, let $G$ be a $2$-group with a unique element of order 2.
The above tells us that if $G$ has a cyclic subgroup of index $2$ it is either cyclic or a generalized
quaternion group. The following tells us that if $G$ has a generalized quaternion subgroup
of index 2 then $G$ is itself a generalized quaternion group.
Since $G$ must have a subgroup of index 2, we can combine these two cases to construct a simple induction
to argue that $G$ is either cyclic or is a generalized quaternion group. 

\begin{lemma}\label{index2_quaternion_lemma}
Let $G$ be a $2$-group with a unique element of order $2$.
If $G$ has a 
generalized quaternion subgroup $Q$ of index $2$ then $G$ is itself a generalized quaternion group.
\end{lemma}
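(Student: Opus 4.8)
The plan is to produce a cyclic subgroup of $G$ of index $2$ and then invoke Lemma~\ref{index2_cyclic_lemma}. This suffices: since $G$ contains the noncyclic group $Q$ it is itself noncyclic, and $|G| = 2^n$ with $|Q| = 2^{n-1} \ge 8$, so $|G| \ge 16$; thus once $G$ has a cyclic subgroup of index $2$ (and it has a unique element of order $2$ by hypothesis), Lemma~\ref{index2_cyclic_lemma} forces $G$ to be generalized quaternion. Let $z$ denote the unique element of order $2$ of $G$; it is central, being the only involution. By Proposition~\ref{relations_prop} write $Q = \langle R, T\rangle$ and set $C = \langle R\rangle$, the cyclic subgroup of order $2^{n-2}$ and index $2$ in $Q$. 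Finding a cyclic subgroup of index $2$ in $G$ amounts to producing an element of order $2^{n-1}$.

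First I would treat the generic case $n \ge 5$. Here every element of $Q \setminus C$ has order $4$, so $C$ is the unique cyclic subgroup of index $2$ in $Q$; hence $C$ is characteristic in $Q$ and, as $Q$ is normal in $G$, normal in $G$. I would then split on the centralizer $C_G(R)$, noting $C_G(R)\cap Q = C_Q(R) = C$. If $C_G(R) \supsetneq C$, pick $s \in C_G(R)\setminus C$ (so $s \notin Q$); then $\langle R, s\rangle$ is abelian, has a unique involution, and so is cyclic by Lemma~\ref{2_abelian_lemma}. It properly contains $C$ but is not all of $G$ (else $G$ would be cyclic), so it has order $2^{n-1}$: the desired subgroup. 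If instead $C_G(R) = C$, then $G/C$ embeds in $\Aut(C) \cong (\bZ/2^{n-2})^\times$. Since every $g\in G$ has $g^2 \in Q$, and elements of $Q$ act on $C$ either trivially or by inversion, each element $\psi$ of the image satisfies $\psi^2 \in \{1,-1\}$; as $-1$ is not a square modulo $2^{n-2}$, in fact $\psi^2 = 1$, so the image is elementary abelian, and being of order $4$ it is $\cong C_2 \times C_2$. I would then choose an index-$2$ subgroup $M$ of $G$ with $C \subset M$ and $M \ne Q$. Its nontrivial coset acts on $C$ as one of the two involutions of $\Aut(C)$ other than inversion; but in a generalized quaternion group the outer coset inverts the cyclic maximal subgroup. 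Hence $M$ is not generalized quaternion, so by Lemma~\ref{index2_cyclic_lemma} it is cyclic of order $2^{n-1}$, again giving a cyclic subgroup of index $2$.

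The base case $n = 4$, where $Q$ is the quaternion group of order $8$ and $C$ need not be normal in $G$, I would handle by hand. Choose $s \in G \setminus Q$. If $s^2$ has order $4$ then $s$ has order $8$ and we are done; and $s^2 \ne 1$, since otherwise $s$ would be a second involution. So the only obstruction is that $s^2 = z$ for every $s \in G\setminus Q$. But conjugation by such an $s$ is an automorphism $\phi$ of $Q$ with $\phi^2 = \mathrm{id}$, and imposing $(s x)^2 = z$ for each of the three standard order-$4$ generators $x$ of $Q$ forces $\phi(x) = x^{-1}$ on all of them. Since inversion is not an automorphism of the non-abelian group $Q$, this is impossible; hence some element of $G\setminus Q$ has order $8$.

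The main obstacle is the self-centralizing case $C_G(R) = C$ in the generic argument (and its analogue for $n=4$). The crux there is to rule out that the auxiliary index-$2$ subgroup $M$ is itself generalized quaternion; what makes this work is the arithmetic fact that $-1$ is not a square modulo $2^{n-2}$, which forces the outer action on $C$ to be an involution of $\Aut(C)$ distinct from inversion, combined with the structural dichotomy of Lemma~\ref{index2_cyclic_lemma} that a noncyclic such $M$ would have to invert its cyclic maximal subgroup.
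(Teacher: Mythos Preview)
Your proof is correct and uses the same core ideas as the paper: reduce to producing a cyclic index-$2$ subgroup and invoke Lemma~\ref{index2_cyclic_lemma}, exploit that $-1$ is not a square modulo $2^{n-2}$ so that $\tau^2$ acts trivially on $C$, and rule out the generalized-quaternion alternative for the auxiliary index-$2$ subgroup by the requirement that its outer coset invert $C$. The paper's one organizational advantage is that it absorbs your $n=4$ case into the main argument: rather than treating $|Q|=8$ by hand, it observes that the $2$-element group $G/Q$ acting on the three cyclic index-$2$ subgroups of $Q_8$ must fix at least one, so some such $C$ is normal in $G$ even when $|Q|=8$, after which the same endgame (essentially your Case~B) runs uniformly for all $n\ge 4$.
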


\begin{proof}
The subgroup $Q$ has a unique cyclic subgroup $C$ of index $2$ in  $Q$ in the case where $|Q|>8$.
If~$|Q|=8$ then $Q$ has three such cyclic subgroups $C_1, C_2, C_3$. 
Note that $Q$ is normal in $G$ since it has index 2. By uniqueness of $C$ for $|Q|>8$ we see
that $C$ is also a normal subgroup of $G$.
Now consider the case $|Q| = 8$. Any~$\tau \in G$ permutes
the set~$\{C_1, C_2, C_3\}$ via conjugation, but $Q$ acts trivially on this set since each $C_i$
has index 2 and so is normal in $Q$. In other words, the 
$2$-element group~$G/Q$
acts on the three element set $\{C_1, C_2, C_3\}$. 
At least one $C_i$ must be fixed by this action, and so must be normal in $G$.
So in any case $Q$ has a cyclic subgroup $C$ of index 2 that is a normal subgroup of $G$.

Since $C$ is a normal subgroup of $G$, 
if $\tau \in G$ then $\tau$ acts on $C$ by conjugation. It must act as~$x \mapsto x^k$ for some odd $k$,
since all automorphisms of~$C$ are of this form.
So $\tau^2$ acts as $x \mapsto x^{k^2}$.
But $\tau^2 \in Q$ since $Q$ has index 2 in $G$. So~$\tau^2$ acts as 
either~$x \mapsto x^{- 1}$ or $x \mapsto x$ since $Q$ is a generalized quaternion group.
Note that~$k^2 \not\equiv -1$ modulo $2^j$ if $2^j \ge 4$ (this can be verified by checking modulo~$4$).
So~$\tau^2$ acts as the trivial map~$x \mapsto x$. 
This implies that $\tau^2 \in C$.

If $\tau\not\in C$ then $H_\tau = C \left< \tau \right>$ is a subgroup of $G$ (since $C$ is normal in $G$).
As above we have $\tau^2 \in C$, so every element of $H_\tau$ is of the form $h$ or $h \tau$ with $h \in C$.
\text{So $[H_\tau: C] = 2$} and hence $[G: H_\tau] = 2$. If $H_\tau$ is cyclic then
$G$ is a generalized quaternion group by the previous lemma, and we are done ($G$ is not cyclic since it contains
$Q$ as a subgroup).
If $H_\tau$ is not cyclic, then it must be a generalized quaternion group by the previous lemma.
In particular, $\tau$ acts on $C$ by the
automorphism~$x \mapsto x^{-1}$.

Let $\tau \in G-Q$ and $\sigma \in Q - C$. If 
either $H_{\tau}$ or $H_{\tau \sigma}$ is cyclic, then as pointed out above, $G$ is a general quaternion
group. If neither are cyclic, then $\tau$ and $\tau \sigma$ both act on $C$ by the
automorphism $x \mapsto x^{-1}$. But this means $\sigma \in Q-C$ acts trivially on $C$, a contradiction.
\end{proof}

We are now ready to assert the main results about $2$-groups.

\begin{theorem}\label{unique_2_theorem}
Let $G$ be a $2$-group with a unique element of order $2$. Then $G$ is cyclic or is isomorphic
to a general quaternion group.
\end{theorem}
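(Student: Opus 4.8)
The plan is to induct on $n$, where $|G| = 2^n$, since each preceding lemma is tailored to serve as a step of exactly this induction. For the base case I would take $n \le 3$, so that $|G| \le 8$, and invoke Lemma~\ref{2_small_lemma} directly: such a $G$ is either cyclic or the quaternion group of order $8$, and the latter qualifies as a generalized quaternion group. This disposes of all groups of order $2$, $4$, and $8$.

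For the inductive step, I would suppose $n \ge 4$ and assume the result holds for all $2$-groups of smaller order having a unique element of order $2$. First I would produce a subgroup $H$ of index $2$ in $G$, which exists by Proposition~\ref{cseries_prop}. The key observation is that $H$ inherits the hypothesis: since $H$ has even order $2^{n-1}$, Cauchy's theorem gives an element of order $2$ in $H$, and because $H \subseteq G$ this must be the unique element of order $2$ in $G$; in particular $H$ cannot contain two distinct elements of order $2$. Thus $H$ is a $2$-group with a unique element of order $2$ and smaller order, so by the induction hypothesis $H$ is either cyclic or a generalized quaternion group.

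This splits into two cases. If $H$ is cyclic, then $G$ is a group of order $2^n \ge 8$ with a cyclic subgroup of index $2$ and a unique element of order $2$; if $G$ itself happens to be cyclic we are finished, and otherwise Lemma~\ref{index2_cyclic_lemma} shows that $G$ is a generalized quaternion group. If instead $H$ is a generalized quaternion group, then $G$ has a generalized quaternion subgroup of index $2$, and Lemma~\ref{index2_quaternion_lemma} shows directly that $G$ is itself a generalized quaternion group. In either case $G$ is cyclic or generalized quaternion, which completes the induction.

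Since the genuine content has already been isolated into the preceding lemmas, I expect no serious obstacle; the induction is merely the scaffolding that glues Lemmas~\ref{2_small_lemma}, \ref{index2_cyclic_lemma}, and \ref{index2_quaternion_lemma} together. The only points requiring a moment of care are verifying that the index-$2$ subgroup truly inherits the unique-element-of-order-$2$ property (which rests on Cauchy's theorem applied to $H$) and confirming that the order hypothesis $|G| \ge 8$ of Lemma~\ref{index2_cyclic_lemma} is met, both of which are immediate once $n \ge 4$.
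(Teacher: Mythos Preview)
Your proposal is correct and follows essentially the same approach as the paper: induction on the exponent, producing an index-$2$ subgroup via Proposition~\ref{cseries_prop}, applying the induction hypothesis to it, and then invoking Lemma~\ref{index2_cyclic_lemma} or Lemma~\ref{index2_quaternion_lemma} according to whether that subgroup is cyclic or generalized quaternion. Your treatment is in fact slightly more careful than the paper's, since you take the base case up through $n\le 3$ via Lemma~\ref{2_small_lemma} and explicitly verify the side conditions (that $H$ inherits the unique involution and that $|G|\ge 8$ in the cyclic case), whereas the paper starts the induction at $k=1$ and leaves these checks implicit.
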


\begin{proof}
We proceed by induction on $k \ge 1$ for $|G| = 2^k$. The base case $k=1$ is clear so assume $k \ge 2$.
Let $H$ be a subgroup of index $2$ (Proposition~\ref{cseries_prop}).
By the induction hypothesis, $H$ is cyclic or isomorphic to a generalized quaternion group.
If $H$ is cyclic, the result follows from Lemma~\ref{index2_cyclic_lemma}.
If $H$ is a generalized quaternion group, the result follows from Lemma~\ref{index2_quaternion_lemma}.
\end{proof}

\begin{corollary} \label{unique_2_cor}
Let $G$ be a nontrivial $2$-group. Then the following are equivalent:
\begin{enumerate}
\item
$G$ is freely representable.
\item
Every subgroup of $G$ of order $4$ is cyclic.
\item
$G$ has a unique element of order $2$.
\item
$G$ is a cyclic or generalized quaternion group.
\end{enumerate}

\begin{proof}
The implication $(1) \Rightarrow (2)$ is covered by Theorem~\ref{pq_thm}.
Note that $2$ divides the order of the center of $G$ (Proposition~\ref{center_prop}),
so the implication~$(2) \Rightarrow (3)$ is covered by Lemma~\ref{p2top3}.
The implication~$(3) \Rightarrow (4)$ is covered by the above theorem.
The implication~$(4) \Rightarrow (1)$ is covered by the discussion of Example~\ref{quaternion_example}.
\end{proof}
\end{corollary}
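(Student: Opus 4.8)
The plan is to prove the four conditions equivalent by establishing the cycle of implications $(1)\Rightarrow(2)\Rightarrow(3)\Rightarrow(4)\Rightarrow(1)$. I expect each step to reduce to a result already available in the excerpt rather than to require fresh group-theoretic work; the value of the corollary lies in packaging those results together for $2$-groups.

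For $(1)\Rightarrow(2)$, I would take any subgroup $H$ of $G$ with $|H|=4$. By Proposition~\ref{freely_representable_subgroup_prop} such an $H$ is itself freely representable, and since $4$ is a product of two primes, Theorem~\ref{pq_thm} forces $H$ to be cyclic. (If $|G|=2$ there are no such subgroups and the condition holds vacuously, so no separate argument is needed.)

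For $(2)\Rightarrow(3)$ the key is to invoke Lemma~\ref{p2top3} with $p=2$. To apply it I first need $2$ to divide the order of the center $Z$ of $G$; this holds because $G$ is a nontrivial $2$-group, whose center is nontrivial by Proposition~\ref{center_prop} and hence of even order. Condition $(2)$ supplies exactly the hypothesis of Lemma~\ref{p2top3}, namely that every subgroup of order $p^2=4$ is cyclic, so the lemma yields a unique subgroup of order $2$, i.e.\ a unique element of order $2$. The step $(3)\Rightarrow(4)$ is then precisely the content of Theorem~\ref{unique_2_theorem}, which I would simply cite. Finally, $(4)\Rightarrow(1)$ follows because cyclic groups are freely representable (Example~\ref{cyclic_example}) and generalized quaternion groups, being finite subgroups of $\bH^\times$, are freely representable by the discussion of Example~\ref{quaternion_example}.

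The main obstacle is not located inside this corollary at all: all the substantive work sits in the already-established Theorem~\ref{unique_2_theorem} used for $(3)\Rightarrow(4)$, whose proof rests on the ladder of Lemmas~\ref{2_abelian_lemma}--\ref{index2_quaternion_lemma}. Within the corollary itself the only point demanding genuine care is verifying that the hypotheses of the cited results are met---most notably confirming that the center has even order, so that Lemma~\ref{p2top3} legitimately applies in the $(2)\Rightarrow(3)$ step.
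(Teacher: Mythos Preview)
Your proof is correct and follows essentially the same cycle of implications as the paper, citing the same results (Theorem~\ref{pq_thm}, Proposition~\ref{center_prop} with Lemma~\ref{p2top3}, Theorem~\ref{unique_2_theorem}, and Example~\ref{quaternion_example}) at the corresponding steps. Your version is slightly more explicit in spelling out why the hypotheses are met, but the argument is identical in substance.
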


It will be convenient to give the groups of the above corollary a special name:

\begin{definition}
A \emph{$2$-cycloidal} group is a $2$-group that is either cyclic or a generalized quaternion group.
Equivalently a 2-cyloidal group is a freely representable 2-group.
\end{definition}

Here is another way in which $2$-cycloidal groups behave like cyclic groups:

\begin{proposition}\label{2_cyloidal_prop}
Let $G$ be a $2$-group. Then $G$ is a $2$-cycloidal group if and only if every Abelian subgroup $A$ of $G$ is cyclic.
\end{proposition}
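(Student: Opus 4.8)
The plan is to reduce everything to Corollary~\ref{unique_2_cor}, which already establishes the equivalence of the relevant characterizations of a $2$-cycloidal $2$-group (freely representable; every order-$4$ subgroup cyclic; unique element of order $2$; cyclic or generalized quaternion), and then to handle the two implications separately.

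For the forward implication I would assume $G$ is $2$-cycloidal and let $A$ be an Abelian subgroup. By Corollary~\ref{unique_2_cor} the group $G$ has a unique element of order $2$. If $A$ is trivial it is cyclic, so assume $A$ is nontrivial; then $A$ is an Abelian $2$-group, so by Cauchy's theorem it has at least one element of order $2$, and since $A \subseteq G$ it has at most one. Thus $A$ is an Abelian $2$-group with a unique element of order $2$, and Lemma~\ref{2_abelian_lemma} shows $A$ is cyclic.

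For the converse I would argue by contraposition: assuming $G$ is not $2$-cycloidal, I produce a non-cyclic Abelian subgroup. By Corollary~\ref{unique_2_cor}, failing to be $2$-cycloidal is equivalent to failing its condition (2), so $G$ has a subgroup $H$ of order $4$ that is not cyclic. Since every group of order $4$ is Abelian (the only possibilities being $C_4$ and $C_2 \times C_2$), this $H$ is a non-cyclic Abelian subgroup, contradicting the hypothesis that every Abelian subgroup of $G$ is cyclic.

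I expect no serious obstacle, since the proposition is essentially a repackaging of Corollary~\ref{unique_2_cor} together with Lemma~\ref{2_abelian_lemma}. The only points requiring care are the two easy edge cases in the forward direction---noting that the trivial subgroup is vacuously cyclic, and that a nontrivial Abelian subgroup genuinely inherits a \emph{unique} element of order $2$ via Cauchy's theorem---and, in the converse, recording that a non-cyclic group of order $4$ is automatically Abelian.
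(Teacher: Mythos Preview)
Your proof is correct and follows essentially the same route as the paper: both directions rest on Corollary~\ref{unique_2_cor} together with Lemma~\ref{2_abelian_lemma}, and the converse hinges on the observation that every group of order $4$ is Abelian. The only cosmetic difference is that the paper disposes of the trivial case $|G|=1$ up front before invoking Corollary~\ref{unique_2_cor}, whereas you implicitly rely on it; this is harmless since a trivial $G$ is vacuously handled in both directions.
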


\begin{proof}
We assume $|G| \ge 2$ since the case $|G| = 1$ is clear.
Suppose $G$ is a  2-cycloidal group.
Then~$G$ and hence every nontrivial subgroup~$A$ of $G$ has a unique element of order 2.
By Lemma~\ref{2_abelian_lemma}, any Abelian subgroup $A$ of $G$ is cyclic.

Conversely, suppose every Abelian subgroup $A$ of $G$ is cyclic. 
Since every subgroup of order 4 is Abelian,  every subgroup of order 4 is cyclic. Thus $G$ is a 2-cycloidal group by 
Corollary~\ref{unique_2_cor}.
\end{proof}

Here are some additional properties of $2$-cycloidal groups

\begin{proposition}\label{2_cycloid_subgroup_prop}
Every subgroup $H$ of a $2$-cycloidal group $G$ is a $2$-cycloidal group.
\end{proposition}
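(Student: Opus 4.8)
The plan is to reduce everything to the characterization recorded just above, namely that a $2$-cycloidal group is exactly a freely representable $2$-group (this is part of the definition, and also the equivalence $(1)\iff(4)$ of Corollary~\ref{unique_2_cor}). Once this dictionary is in place, the statement becomes an immediate consequence of the fact that the freely representable property passes to subgroups.

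Concretely, I would argue as follows. First observe that $H$, being a subgroup of the $2$-group $G$, has order dividing $|G| = 2^k$, so $H$ is itself a $2$-group. Next, since $G$ is $2$-cycloidal it is freely representable; this is either immediate from the definition (cyclic and generalized quaternion groups are freely representable, as seen in Examples~\ref{cyclic_example} and~\ref{quaternion_example}) or is the content of the equivalence in Corollary~\ref{unique_2_cor}. By Proposition~\ref{freely_representable_subgroup_prop}, every subgroup of a freely representable group is freely representable, so $H$ is freely representable. Thus $H$ is a freely representable $2$-group, and by Corollary~\ref{unique_2_cor} again $H$ is $2$-cycloidal, as desired.

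There is essentially no obstacle here; the only point meriting a moment's care is the degenerate case where $H$ is trivial, which is already covered since the trivial group is a (cyclic) $2$-group of order $2^0$. Alternatively, one could bypass the freely representable machinery and argue directly: $G$ is cyclic or generalized quaternion, hence has a unique element of order $2$, so any nontrivial subgroup $H$ inherits a unique element of order $2$ while remaining a $2$-group, and Theorem~\ref{unique_2_theorem} then forces $H$ to be cyclic or generalized quaternion. I would present the first argument as the main one, since it is the shortest and simply reuses the general subgroup result, and mention the second only as an aside.
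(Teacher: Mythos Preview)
Your proposal is correct. Interestingly, the paper's proof is exactly your ``alternative'' argument rather than your main one: it simply observes that every nontrivial subgroup of $G$ contains a unique element of order $2$, and then (implicitly) invokes Corollary~\ref{unique_2_cor}. Your main route via Proposition~\ref{freely_representable_subgroup_prop} is equally valid and arguably more in the spirit of the surrounding development, since it reuses the general subgroup-closure result for free representability rather than re-deriving the order-$2$ uniqueness by hand; the paper's version is marginally more self-contained in that it stays entirely within the $2$-group characterizations just established. Either way the content is the same one-line observation.
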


\begin{proof}
Every nontrivial subgroup of $G$ must contain a unique element of order~$2$.
\end{proof}

\begin{proposition}
Let $G$ be a $2$-cycloidal group. Then $G$ contains a cyclic subgroup of index $2$. If $G$
is not the quaternion group with 8 elements then the cyclic subgroup of index $2$ is unique.
\end{proposition}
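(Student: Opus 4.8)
The plan is to treat existence and uniqueness separately, reducing each to results already established. Throughout I assume $|G| \ge 2$, since the trivial group has no subgroup of index $2$ and the degenerate case $|G| = 1$ can be excluded as in the proof of Proposition~\ref{2_cyloidal_prop}.

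For existence, I would split on the two possibilities in the definition of a $2$-cycloidal group. If $G$ is cyclic of order $2^n$ with $n \ge 1$, then $G$ has a unique subgroup of order $2^{n-1}$, which is cyclic and of index $2$; this settles the cyclic case immediately. If instead $G$ is a generalized quaternion group, then by the analysis preceding Proposition~\ref{relations_prop} the group $G$ has order $2^n$ with $n \ge 3$ and is generated by elements $R, T$ with $R$ of order $2^{n-1}$, so that $\langle R \rangle$ is cyclic of index $2$. In either case $G$ contains a cyclic subgroup of index $2$, as claimed.

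For uniqueness, the key observation is that any $2$-cycloidal group of order at least $2$ has a unique element of order $2$: this holds for cyclic $2$-groups directly, and for generalized quaternion groups it was noted in Example~\ref{quaternion_example} (the element $-1$). I would then argue by contraposition. Suppose $G$ has two distinct cyclic subgroups of index $2$. Since $G$ has a unique element of order $2$, Lemma~\ref{2_unique_lemma} applies and forces $G$ to be isomorphic to the quaternion group with $8$ elements. Hence, if $G$ is \emph{not} the quaternion group with $8$ elements, it cannot have two distinct cyclic subgroups of index $2$; combined with the existence half, the cyclic subgroup of index $2$ is then unique.

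The argument is essentially bookkeeping once Lemma~\ref{2_unique_lemma} is in hand, so I do not expect a serious obstacle. The points demanding care are purely definitional: confirming that the distinguished index-$2$ cyclic subgroup of a generalized quaternion group is exactly the one $\langle R \rangle$ produced in the setup before Proposition~\ref{relations_prop}, and checking that the uniqueness claim is precisely the contrapositive of Lemma~\ref{2_unique_lemma}, so that the lone exception, the quaternion group of order $8$, is correctly the only group admitting several such subgroups. The small orders $|G| = 2, 4$ need no separate treatment, since such a $G$ is forced to be cyclic and the claim is transparent there.
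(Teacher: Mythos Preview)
Your proof is correct. The uniqueness half matches the paper exactly: both simply invoke Lemma~\ref{2_unique_lemma}. For existence, however, you take a slightly different route. You split on the definition of $2$-cycloidal and, in the generalized quaternion case, point to the generator $R$ of order $2^{n-1}$ already produced in the discussion before Proposition~\ref{relations_prop}. The paper instead identifies $G$ with a subgroup of $\bH_1$, passes to the image $G'$ in $\SO(3)$ under the double cover, notes that $G'$ (being cyclic or dihedral) has a cyclic index-$2$ subgroup $H'$, and pulls $H'$ back to a subgroup $H$ of index $2$ in $G$. Your argument is a touch more elementary and self-contained, since it avoids the quaternion geometry and simply reuses the explicit presentation already set up; the paper's argument is more uniform in that it treats the cyclic and quaternion cases with a single construction, but it leaves the cyclicity of the pulled-back subgroup $H$ a bit implicit (Lemma~\ref{doublecover_lemma}(8) literally covers only odd-order $H'$, so one must tacitly observe that a $2$-cycloidal subgroup with cyclic image cannot be generalized quaternion). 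Either way the content is the same, and your version is arguably cleaner.
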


\begin{proof}
We can identify $G$ with a subgroup of $\bH_1$. The image of $G$ under the double cover $\bH_1 \to \SO(3)$
has image $G'$ which is either dihedral or cyclic. So $G'$ has a cyclic subgroup $H'$ of index $2$, and its
preimage $H$ has index 2 in $G$ (see Lemma~\ref{doublecover_lemma}).

Lemma~\ref{2_unique_lemma} covers the uniqueness claim for generalized quaternion groups.
\end{proof}

\begin{proposition}\label{2_quaternion_subgroup_prop}
Let $G$ be a generalized quaternion group. For all~$8 \le 2^k \le |G|$
there is a generalized quaternion subgroup $H$ of $G$ of order $2^k$.
\end{proposition}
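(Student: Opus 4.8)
The plan is to exhibit the required subgroup explicitly using the presentation of Proposition~\ref{relations_prop}. Write $|G| = 2^n$ and identify $G$ with the group generated by $R$ and $T$ subject to $R^{2^{n-1}} = 1$, $T^2 = R^{2^{n-2}}$, and $TRT^{-1} = R^{-1}$, so that $C = \langle R\rangle$ is the cyclic subgroup of index $2$ and $T \notin C$. Given $k$ with $8 \le 2^k \le 2^n$, i.e.\ $3 \le k \le n$, I would set $S = R^{2^{n-k}}$ and take $H = \langle S, T\rangle$. Since $\gcd(2^{n-1}, 2^{n-k}) = 2^{n-k}$, the element $S$ has order $2^{k-1}$, so $\langle S\rangle$ is a cyclic subgroup of $C$ of order $2^{k-1}$.

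Next I would check that $S$ and $T$ satisfy exactly the defining relations of the generalized quaternion group of order $2^k$. The relation $TRT^{-1} = R^{-1}$ gives $TST^{-1} = (TRT^{-1})^{2^{n-k}} = S^{-1}$. For the second relation, note $2^{n-2}$ is a multiple of $2^{n-k}$ precisely because $k \ge 2$, so $T^2 = R^{2^{n-2}} = S^{2^{k-2}}$; moreover $S^{2^{k-2}}$ has order $2$ in $\langle S\rangle$, consistent with its being $-1$, the unique element of order $2$ in $G$. Together with $S^{2^{k-1}} = 1$, these are precisely the three relations of Proposition~\ref{relations_prop} with $(R, T, n)$ replaced by $(S, T, k)$. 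The hypothesis $k \ge 3$ (equivalently $2^k \ge 8$) is exactly what makes the second relation involve a genuine power of $S$ and yields a noncyclic group.

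It then remains to pin down the order of $H$. Because $T$ normalizes $\langle S\rangle$ (it conjugates $S$ to $S^{-1}$) and $T^2 \in \langle S\rangle$ while $T \notin C \supseteq \langle S\rangle$, the coset decomposition $H = \langle S\rangle \sqcup T\langle S\rangle$ shows $|H| = 2 \cdot 2^{k-1} = 2^k$. Finally, since $S$ and $T$ generate $H$ and satisfy the relations above, Proposition~\ref{relations_prop} supplies a surjective homomorphism from the abstract quaternion group of order $2^k$ onto $H$; comparing orders, this map is an isomorphism, so $H$ is a generalized quaternion group of order $2^k$, as desired.

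I expect the only delicate point to be the exponent bookkeeping in the middle step --- specifically confirming that $T^2 = R^{2^{n-2}}$ lands in $\langle S\rangle$ as the correct power $S^{2^{k-2}}$ and equals the unique involution --- and making sure the hypothesis $2^k \ge 8$ is used to exclude the degenerate case $k = 2$. Everything else is routine once the generators $S$ and $T$ are chosen. (An equivalent route is a downward induction: show that a generalized quaternion group of order $2^n$ with $n \ge 4$ contains one of order $2^{n-1}$ by taking $\langle R^2, T\rangle$, then iterate down to order $8$; but the direct construction above produces all the required orders at once.)
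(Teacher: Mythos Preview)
Your proof is correct. The exponent bookkeeping is fine: $R^{2^{n-2}} = (R^{2^{n-k}})^{2^{k-2}} = S^{2^{k-2}}$ since $2^{n-2} = 2^{n-k}\cdot 2^{k-2}$, and the coset argument for $|H| = 2^k$ is clean since $T$ normalizes $\langle S\rangle$ and $T\notin C\supseteq\langle S\rangle$ while $T^2\in\langle S\rangle$.

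Your approach is, however, genuinely different from the paper's. The paper argues geometrically: it identifies $G$ with a subgroup of $\bH_1$, so that the image of $G$ under the double cover $\bH_1 \to \mathrm{SO}(3)$ is a dihedral $2$-group $D$ of order $|G|/2$; since any dihedral group of order $2n$ contains dihedral subgroups of every order $2m$ with $m>1$ dividing $n$, one picks a dihedral subgroup $E\subseteq D$ of order $2^{k-1}$ and takes $H$ to be its preimage in $G$, which is by definition a generalized quaternion group of order $2^k$. Your route is purely algebraic and self-contained, working directly with the presentation of Proposition~\ref{relations_prop} and never touching $\bH_1$ or $\mathrm{SO}(3)$. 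The paper's argument is shorter \emph{given} the ambient quaternion machinery it has already set up, while yours is more elementary and would stand on its own outside that framework. Both are perfectly valid; your explicit choice of generators $S = R^{2^{n-k}}$ and $T$ is exactly the algebraic shadow of the paper's geometric step of picking a dihedral subgroup of the image.
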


\begin{proof}
We can think of $G$ as a subgroup of~$\bH_1$.
As in Example~\ref{quaternion_example}, there is a natural homomorphism $\bH_1 \to \mathrm{SO(3)}$,
and the preimage of any subgroup of order $n$ of~$\mathrm{SO(3)}$ is a subgroup of order $2n$ of $\bH_1$.
The image of $G$ is a dihedral group $D$ of order~$|G|/2$, and the preimage of $D$ is $G$. A standard property of a dihedral group of order $2n$
is that it contains a dihedral subgroup of order $2m$ for each $m >1$ dividing $n$. So let~$E$ be a dihedral 
subgroup of $D$ of order $2^{k-1}$, and let $H$ be the preimage of~$E$. Since $H$ is the preimage of a dihedral $2$-group, $H$ is
a generalized quaternion group.
\end{proof}

What is required to force a $2$-group to be cyclic? The following gives an answer:

\begin{proposition}
Let $G$ be a $2$-group of order at least $4$. Then $G$ is cyclic if and only if $G$ has a unique  subgroup of 
order~$4$ and that subgroup is cyclic.
\end{proposition}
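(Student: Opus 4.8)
The plan is to prove the two implications separately; the forward direction is immediate, and the reverse direction I would reduce to the classification already obtained in Corollary~\ref{unique_2_cor}.

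For the forward implication, suppose $G$ is cyclic of order $2^n$ with $n \ge 2$. Since a cyclic group has exactly one subgroup of each order dividing its order, and every subgroup of a cyclic group is cyclic, $G$ has a unique subgroup of order $4$ and that subgroup is cyclic. This direction needs no real work.

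For the reverse implication, suppose $G$ has a unique subgroup of order $4$ and that it is cyclic. I would first observe that condition $(2)$ of Corollary~\ref{unique_2_cor} then holds: since the only subgroup of order $4$ is cyclic, every subgroup of $G$ of order $4$ is cyclic. By that corollary $G$ is therefore either cyclic or a generalized quaternion group, and it only remains to rule out the latter.

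The single step with any content is to show that a generalized quaternion group has more than one subgroup of order $4$. A generalized quaternion group has order at least $8$, so by Proposition~\ref{2_quaternion_subgroup_prop} it contains a copy of the quaternion group of order $8$. That group already has two distinct cyclic subgroups of order $4$, namely those generated by $\mathbf{i}$ and by $\mathbf{j}$, and these remain distinct subgroups of order $4$ inside $G$. This contradicts the uniqueness hypothesis, so $G$ cannot be generalized quaternion and must be cyclic. I expect this exclusion of the quaternion case to be the only real obstacle, everything else being a direct appeal to the structure theorem for $2$-groups with a unique involution.
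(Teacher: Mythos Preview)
Your proof is correct and follows essentially the same route as the paper. The only cosmetic difference is that the paper reaches the dichotomy ``cyclic or generalized quaternion'' by citing Lemma~\ref{p2top3} and Theorem~\ref{unique_2_theorem} separately, whereas you invoke the packaged Corollary~\ref{unique_2_cor}; both then exclude the quaternion case by exhibiting multiple order-$4$ subgroups inside the $Q_8$ sitting in any generalized quaternion group.
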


\begin{proof}
If $G$ is cyclic, then it has a unique subgroup of any order dividing $|G|$, and that subgroup is unique.
So the result follows.

Now suppose $G$ has a unique subgroup $H$ of order $4$, and that $H$ is cyclic.
Note that $2$ divides the order of the center of $G$ (Proposition~\ref{center_prop}),
so $G$ contains a unique element of order 2~by Lemma~\ref{p2top3}.
By Theorem~\ref{unique_2_theorem},
$G$ is either cyclic or a generalized quaternion group. We just need to eliminate the second possibility.
If $G$ is a generalized quaternion group, then it has a subgroup isomorphic to the quaternion subgroup with $8$ elements,
and that subgroup contains three subgroups of order 4.
\end{proof}

%%%%

\chapter{Toward the General Classification} \label{classification_section}

Now that we have classified the $p$-groups that are freely representable, we turn
to the general case. 
This document gives a self-contained treatment in the solvable case, but  when we get to the non-solvable case 
 some results will be given without proof,  with citations to the group-theoretic literature instead.\footnote{At least in this version of this document. The hope is to give a unified account that incorporates the non-solvable case in a sequel, or a
 future version of this report.}

A necessary but far from sufficient condition for a group $G$ to be freely representable is that every Sylow subgroup
of $G$ is freely representable. We have by our previous results that a~$p$-Sylow subgroup $G$ with $p$ odd will be freely representable if and only if $G$ is cyclic.
A $2$-Sylow subgroup of $G$ is freely representable if and only if it is cyclic or a generalized quaternion group. 
We have an informal notion of
\emph{cycloidal} where we want this adjective to apply to families of groups that behave in important ways like cyclic groups.
We won't commit to a final definition of ``cycloidal'' here, but will keep the notion a bit open. Let's agree to (1) admit
all finite subgroups of $\bH^\times$ as cycloidal. Let's also agree that (2) every subgroup of a cycloidal group is cycloidal, and (3) an Abelian group is cycloidal if and only if it is actually cyclic.
 These stipulations are enough to force a definition of cycloidal in the case of $p$-groups (see Theorem~\ref{pgroup_thm}, 
 Proposition~\ref{p^2_prop}, and Corollary~\ref{unique_2_cor}):

\begin{definition}
Let $p$ be an odd prime.
Then a $p$-group is \emph{cycloidal} if it is cyclic.
As above, a $2$-group is \emph{cycloidal} if it is cyclic or a generalized quaternion group.
A group $G$ is \emph{Sylow-cycloidal} if every Sylow subgroup is cycloidal in the above sense.
A group $G$ of even order is \emph{Sylow-cycloidal-quaternion} if every Sylow subgroup is cycloidal and if the $2$-Sylow subgroups are quaternion.
A group $G$ is \emph{Sylow-cyclic} if every Sylow subgroup is cyclic.\footnote{As far as I know, the terminology introduced
in this definition is new.}
\end{definition}

So when we classify freely representable groups we can limit our attention to Sylow-cycloidal groups. We will divide our classification of Sylow-cycloidal groups, and hence freely representable groups, into three  categories:

\begin{enumerate}
\item
\textbf{Sylow-cyclic groups}. These turn out to be
solvable groups.

\item
\textbf{Solvable Sylow-cyclic-quaternion groups}. 

\item
\textbf{Non-Solvable Sylow-cyclic-quaternion groups}. 
\end{enumerate}

\begin{remark}
According to the Sylow theorems,
all $p$-Sylow subgroups are conjugate hence isomorphic. So to determine if $G$ is Sylow-cycloidal or Sylow-cyclic, it
is enough to look at one $p$-Sylow subgroup for each prime $p$.
\end{remark}

\begin{remark}
It is interesting to note that every freely representable group has the property that
all its Sylow subgroups are isomorphic to subgroups of $\bH^\times$. Also note that Sylow-cyclic groups
are those whose Sylow subgroups are isomorphic to subgroups of $\bC^\times$.
\end{remark}

\begin{remark}
Although not all Sylow-cycloidal groups are freely representable, the collection of  Sylow-cycloidal groups is
of great interest since such groups are exactly the groups whose Abelian subgroups are all cyclic.
A proof of this fact is provided below (Theorem~\ref{abelian_cyclic_thm}).
Such groups arise in the classification of groups of periodic cohomology (see Wall~\cite{wall2013}).\footnote{Unfortunately
I have not explored periodic cohomology in this document, but I hope to cover this topic either in a sequel or
in a future version of this report.}% There is a recent article
%on this topic, Wall~\cite{wall2013}, which I have not read yet. (Up to now I have relied
%mainly on Wolf~\cite{wolf2011}, Suzuki~\cite{suzuki1955},  Funakura~\cite{funakura1978}, and, for more
%basic results, on Hall~\cite{hall1959}. 
%I have started to consult Allcock~\cite{allcock2018} as well. I still have a few more sources that I need to consult.)
\end{remark}

%%%

\section{Sylow-Cycloidal Groups} 

Now we consider some basic observations concerning Sylow-cycloidal groups.

\begin{proposition} \label{cycloidal_subgroup_prop}
Let $G$ be a Sylow-cycloidal group. Then 
every subgroup $H$ of $G$ is a Sylow-cycloidal group.
\end{proposition}

\begin{proof}
Let $H_p$ be a $p$-Sylow subgroup of $H$ for a prime $p$ dividing $|G|$.
By a Sylow theorem  (Theorem~\ref{sylow2_thm})  $H_p$ is a subgroup of a $p$-Sylow subgroup $G_p$ of $G$.
When~$p$ is odd, $G_p$ is cyclic, and so the subgroup $H_p$ of $G_p$ must also be cyclic.
Similarly, 
if $p=2$ then $H_p$ must be a~$2$-cycloidal group (Proposition~\ref{2_cycloid_subgroup_prop}).
\end{proof}

\begin{proposition} \label{cycloidal_abelian_prop}
Let $A$ be an Abelian Sylow-cycloidal group. Then $A$ is cyclic, hence $A$ is freely representable.
\end{proposition}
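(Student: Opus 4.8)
The plan is to decompose $A$ into its Sylow subgroups and observe that each factor is forced to be cyclic. Since $A$ is a finite Abelian group, it is the internal direct product of its Sylow subgroups: writing $|A| = p_1^{k_1} \cdots p_r^{k_r}$ for distinct primes $p_i$, we have $A \cong A_{p_1} \times \cdots \times A_{p_r}$, where $A_{p_i}$ is the (unique, since $A$ is Abelian) $p_i$-Sylow subgroup of $A$. The factors have pairwise coprime orders. This elementary decomposition of a finite Abelian group into its primary components is all the structure theory I would need.

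Next I would examine each Sylow factor $A_p$ using the definition of Sylow-cycloidal. For an odd prime $p$, the factor $A_p$ is cycloidal, which for an odd $p$-group means cyclic, so $A_p$ is cyclic. The only point requiring a moment's care is the prime $p=2$, since a priori a cycloidal $2$-group may be either cyclic or generalized quaternion. However, $A_2$ is a subgroup of the Abelian group $A$, hence is itself Abelian, whereas every generalized quaternion group is non-Abelian: in the presentation of Proposition~\ref{relations_prop} the relation $TRT^{-1} = R^{-1}$ shows that $R$ and $T$ commute only when $R = R^{-1}$, and this fails because $R$ has order $2^{n-1} \ge 4$. Thus $A_2$ cannot be generalized quaternion, and so $A_2$ is cyclic as well.

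Having shown that every Sylow factor is cyclic, I would finish by invoking the standard fact that a direct product of cyclic groups of pairwise coprime orders is cyclic (an iterated application of $C_m \times C_n \cong C_{mn}$ when $\gcd(m,n) = 1$). This makes $A$ cyclic. The final clause is then immediate: every cyclic group is freely representable by Example~\ref{cyclic_example} (equivalently, Corollary~\ref{abelian_cor}).

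I do not expect any genuine obstacle here. The entire content of the argument is the single observation that an Abelian $2$-group cannot be one of the non-Abelian generalized quaternion groups, so that on every Sylow factor the adjective \emph{cycloidal} collapses to \emph{cyclic}; once that is in hand, the coprime-order product of cyclic groups is cyclic by a routine appeal.
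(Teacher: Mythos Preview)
Your proof is correct and follows essentially the same approach as the paper: decompose $A$ as the product of its Sylow subgroups, observe that each Sylow subgroup is Abelian and hence cyclic (since generalized quaternion groups are non-Abelian), and conclude that a product of cyclic groups of pairwise coprime orders is cyclic. The paper's version is simply a more compressed statement of exactly this argument.
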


\begin{proof}
Every Abelian group is the product of its Sylow subgroups.\footnote{Suppose $A$ and $B$ are subgroups 
of an Abelian group with $A\cap B = \{1\}$, and consider the natural map~$A \times B \to AB$.
This must be an isomorphism.} %appendix? 
Since $A$ is Abelian, all its Sylow subgroups must be Abelian, and hence cyclic.
The product of cyclic subgroups of pairwise relatively prime orders is cyclic.
\end{proof}

\begin{remark}
This gives another proof of Lemma~\ref{abelian_cor} that is less dependent on representation theory
or the fact that all finite subgroups of $\bC^\times$ is cyclic.
\end{remark}

\begin{proposition}  \label{cycloidal_quotient_prop}
If $G$ is a Sylow-cycloidal group, and $N$ is a normal subgroup of odd order, then $G/N$ is a Sylow-cycloidal group.
\end{proposition}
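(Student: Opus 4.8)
The plan is to verify the defining condition of Sylow-cycloidal directly for $G/N$: I must show that for every prime $q$, some $q$-Sylow subgroup of $G/N$ is cycloidal. Since all $q$-Sylow subgroups of $G/N$ are conjugate and hence isomorphic, it suffices to produce one cycloidal $q$-Sylow subgroup for each $q$, and the natural candidate is the image of a $q$-Sylow subgroup of $G$ under the quotient map.

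Let $\pi \colon G \to G/N$ be the quotient map, fix a prime $q$, and let $G_q$ be a $q$-Sylow subgroup of $G$. I would first use the standard fact that $\pi[G_q] = G_q N/N$ is a $q$-Sylow subgroup of $G/N$; by the isomorphism theorems it is isomorphic to $G_q/(G_q \cap N)$. If one prefers to avoid citing this fact, it follows from a short order count: writing $q^a = |G_q|$ and taking a $q$-Sylow subgroup $P$ of $G/N$ containing $\pi[G_q]$, the group $G_q \cap N$ is a $q$-subgroup of $N$, so its order is at most the $q$-part of $|N|$, whence $|\pi[G_q]| = |G_q|/|G_q \cap N| \ge q^a/(q\text{-part of }|N|) = |P|$, and since $\pi[G_q] \subseteq P$ this forces $\pi[G_q] = P$.

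Now I split into two cases according to the parity of $q$. If $q$ is odd, then $G_q$ is cyclic because $G$ is Sylow-cycloidal, and any quotient $G_q/(G_q \cap N)$ of a cyclic group is again cyclic; thus this $q$-Sylow subgroup of $G/N$ is cyclic, hence cycloidal. If $q = 2$, then since $N$ has odd order, the subgroup $G_2 \cap N$ is a $2$-group contained in $N$, so $G_2 \cap N = \{1\}$; consequently $\pi[G_2] \cong G_2$, which is cyclic or generalized quaternion by the hypothesis that $G$ is Sylow-cycloidal (Corollary~\ref{unique_2_cor}), hence cycloidal.

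In both cases the exhibited $q$-Sylow subgroup of $G/N$ is cycloidal, so $G/N$ is Sylow-cycloidal. The only load-bearing input is that images of Sylow subgroups are Sylow subgroups of the quotient; the rest is immediate from the definition of cycloidal together with the oddness of $N$. The role of the odd-order hypothesis is precisely to force $G_2 \cap N = \{1\}$, which protects the $2$-Sylow subgroup under passage to the quotient: a cyclic quotient of a cyclic group stays cyclic, but a proper quotient of a generalized quaternion group need not be cycloidal (for instance $2D_2/\{\pm 1\} \cong C_2 \times C_2$), so the hypothesis cannot simply be dropped.
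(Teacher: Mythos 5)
Your proof is correct and takes essentially the same route as the paper: both arguments pass to the image of a $q$-Sylow subgroup of $G$ (justified by the same order count that the paper relegates to a footnote), then split into the odd case (a quotient of a cyclic group is cyclic) and the case $q=2$, where the oddness of $|N|$ forces $G_2 \cap N = \{1\}$ so the $2$-Sylow subgroup survives up to isomorphism. Your closing remark explaining why the odd-order hypothesis cannot be dropped (a proper quotient of a generalized quaternion group need not be cycloidal) is a worthwhile observation the paper does not make, but it does not change the substance of the argument.
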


\begin{proof}
If $G_p$ is a $p$-Sylow subgroup, then its image $G_p'$ in $G/N$ is a $p$-Sylow subgroup of $G/N$,
assuming $p$ divides the order of $G/N$.\footnote{This is a nice general fact.
To see it, note that the restricted canonical map $G_p \to G/N$ has kernel
$G_p \cap N$. The largest power of $p$ dividing $|G/N|$ times the  largest power of $p$ dividing~$|N|$ is equal to the order of $G_p$.
Similarly, $|G'_p|$ times~$|G_p \cap N|$
is also equal to the order of $G_p$.
An inequality forces equality: $|G_p'|$ is equal to the largest power of $p$ dividing $|G/N|$.
Note also that~$G_p \cap N$ must be the $p$-Sylow subgroup of $N$.}
If $p$ is odd then $G_p'$ is the image of a cyclic group, so $G_p'$ is itself cyclic.
If $p = 2$ then the restricted canonical map~$G_p \to G/N$ has trivial kernel $G_p \cap N$ since $|N|$ is odd, so $G_p'$ is isomorphic to~$G_p$, and so
is a $2$-cycloidal group.
\end{proof}

\begin{proposition} \label{cycloidal_product_prop}
Suppose $A$ and $B$ are subgroups of $G$ such that $|A|$ and $|B|$ are relatively prime
and such that $|A||B| = |G|$. Then $G$ is Sylow-cycloidal if and only if both $A$ and $B$ are Sylow-cycloidal.
\end{proposition}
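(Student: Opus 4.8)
The plan is to treat the two implications separately, and the forward direction is essentially free. If $G$ is Sylow-cycloidal, then since $A$ and $B$ are subgroups of $G$, both $A$ and $B$ are Sylow-cycloidal by Proposition~\ref{cycloidal_subgroup_prop}. Neither the coprimality nor the order hypothesis is needed for this half.

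For the converse, suppose $A$ and $B$ are both Sylow-cycloidal, and let $p$ be any prime dividing $|G|$. The crucial observation is that since $|A|$ and $|B|$ are relatively prime while $|A|\,|B| = |G|$, the prime $p$ divides exactly one of $|A|$, $|B|$, and the full $p$-part of $|G|$ is carried entirely by that factor. Without loss of generality (by the symmetry of $A$ and $B$) assume $p$ divides $|A|$, so that the largest power of $p$ dividing $|A|$ equals the largest power of $p$ dividing $|G|$.

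Next I would choose a $p$-Sylow subgroup $A_p$ of $A$. By the previous paragraph the order $|A_p|$ equals the full $p$-power in $|G|$, so $A_p$ is in fact a $p$-Sylow subgroup of $G$. Since $A$ is Sylow-cycloidal, $A_p$ is cycloidal. By the Sylow theorems (Theorem~\ref{sylow2_thm}), every $p$-Sylow subgroup of $G$ is conjugate to $A_p$, hence isomorphic to it; and being cycloidal is an isomorphism-invariant property, since it depends only on whether the group is cyclic or, when $p=2$, generalized quaternion. Hence every $p$-Sylow subgroup of $G$ is cycloidal. As $p$ was an arbitrary prime divisor of $|G|$, this shows $G$ is Sylow-cycloidal.

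I do not expect any real obstacle here; the only point requiring care is the bookkeeping that identifies a Sylow subgroup of $A$ (or $B$) with a Sylow subgroup of $G$, which is precisely where the hypotheses $\gcd(|A|,|B|)=1$ and $|A|\,|B|=|G|$ enter. Splitting into the two directions as above, rather than trying to transport Sylow data simultaneously, keeps the logic cleanest.
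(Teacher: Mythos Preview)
Your proof is correct and follows essentially the same route as the paper: both directions are handled identically, with the key observation that a $p$-Sylow subgroup of $A$ (or $B$) is already a $p$-Sylow subgroup of $G$ thanks to the coprimality and product-of-orders hypotheses. One minor slip: the conjugacy of Sylow subgroups is Theorem~\ref{sylow3_thm}, not Theorem~\ref{sylow2_thm}.
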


\begin{proof}
One direction follows just because $A$ and $B$ are subgroups of $G$. So suppose that~$A$ and $B$ are Sylow-cycloidal.
Let $p$ be a prime dividing $|A|$. Every~$p$-Sylow subgroup $P$ of $A$ is in fact a $p$-Sylow subgroup of $G$ (since $p$ cannot divide $|B|$), and is 
either cyclic or is a generalized quaternion group. The same holds for any prime dividing $|B|$.
Since $|G| = |A| |B|$, for any prime $p$ dividing the order of $G$ there is a 
is $p$-Sylow subgroup that is
either cyclic or is a generalized quaternion group. This implies that $G$ is Sylow-cycloidal.
\end{proof}

\begin{corollary}\label{cycloidal_product_cor}
The product or even the semi-direct product of two Sylow-cycloidal groups of relatively prime orders is itself Sylow-cycloidal.
\end{corollary}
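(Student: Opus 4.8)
The plan is to reduce the statement immediately to the preceding Proposition~\ref{cycloidal_product_prop}, which already characterizes exactly when a group admitting a coprime factorization into two subgroups is Sylow-cycloidal. The only real work will be to exhibit the two factors as honest subgroups of the product, with the correct orders; once that is done the proposition applies verbatim.

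First I would dispose of the direct product case, which is the cleanest. If $G = A \times B$, then $A \times \{1\}$ and $\{1\} \times B$ are subgroups of $G$ isomorphic to $A$ and $B$ respectively, with orders $|A|$ and $|B|$ (relatively prime by hypothesis) and with $|A||B| = |G|$. At this point Proposition~\ref{cycloidal_product_prop} gives that $G$ is Sylow-cycloidal precisely when both $A$ and $B$ are, and since both factors are assumed Sylow-cycloidal the conclusion follows.

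Next I would handle the semidirect product case $G = A \rtimes B$, for an arbitrary action of one factor on the other (and it will not matter which factor acts on which). Here the underlying set is again $A \times B$, so $|G| = |A||B|$, and the standard structure of a semidirect product provides a normal copy of $A$ together with a complementary copy of $B$ sitting inside $G$, again of orders $|A|$ and $|B|$. Because these orders are relatively prime and multiply to $|G|$, Proposition~\ref{cycloidal_product_prop} applies once more and yields that $G$ is Sylow-cycloidal.

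The main—indeed the only—obstacle is bookkeeping: verifying that in the semidirect product the two factors genuinely embed as subgroups of the stated orders, so that the hypotheses of Proposition~\ref{cycloidal_product_prop} are met. This is entirely standard, so the corollary is a direct consequence of the proposition.
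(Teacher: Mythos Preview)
Your proposal is correct and matches the paper's approach exactly: the corollary is stated without proof, immediately after Proposition~\ref{cycloidal_product_prop}, and is meant to follow at once by identifying the two factors with subgroups of the (semi)direct product satisfying the hypotheses of that proposition. Your writeup simply makes explicit the bookkeeping the paper leaves implicit.
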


\begin{theorem}\label{abelian_cyclic_thm}
Let $G$ be a finite group. Then $G$ is Sylow-cycloidal if and only if every Abelian subgroup of $G$ is cyclic.
\end{theorem}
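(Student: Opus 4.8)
The plan is to prove the two directions separately, with the backward direction being the substantive one.

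For the forward direction, suppose $G$ is Sylow-cycloidal and let $A$ be an Abelian subgroup of $G$. By Proposition~\ref{cycloidal_subgroup_prop}, $A$ is itself Sylow-cycloidal, and by Proposition~\ref{cycloidal_abelian_prop} an Abelian Sylow-cycloidal group is cyclic. So this direction is immediate from results already established.

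The backward direction is the real content: assume every Abelian subgroup of $G$ is cyclic, and I must show each Sylow subgroup is cycloidal. Fix a prime $p$ and a $p$-Sylow subgroup $G_p$. The key observation is that the hypothesis is inherited: every Abelian subgroup of $G_p$ is an Abelian subgroup of $G$, hence cyclic. So it suffices to prove the statement for a $p$-group $P$, namely: \emph{if every Abelian subgroup of a $p$-group $P$ is cyclic, then $P$ is cycloidal}. When $p=2$ this is exactly Proposition~\ref{2_cyloidal_prop}, which I may cite directly. When $p$ is odd, I reduce to showing $P$ is cyclic. First, every subgroup of $P$ of order $p^2$ is Abelian (groups of order $p^2$ are always Abelian), hence cyclic by hypothesis. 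Then Theorem~\ref{pgroup_thm} (the equivalence, for odd $p$, of ``every subgroup of order $p^2$ cyclic'' with ``cyclic'') gives that $P$ is cyclic, hence cycloidal.

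The main obstacle, such as it is, lies entirely in correctly invoking the $p$-group results: I want to make sure the reduction from $G$ to its Sylow subgroups preserves the hypothesis, which it does because being an Abelian subgroup is an absolute property (any Abelian subgroup of $G_p$ is literally an Abelian subgroup of $G$), and then to split cleanly on the parity of $p$ so that Proposition~\ref{2_cyloidal_prop} handles the $p=2$ case and Theorem~\ref{pgroup_thm} handles the odd case. Once that bookkeeping is set up, no calculation remains; the proof is a short assembly of prior results. Concretely, I would write:

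\begin{proof}
If $G$ is Sylow-cycloidal then every subgroup of $G$ is Sylow-cycloidal by Proposition~\ref{cycloidal_subgroup_prop}. In particular any Abelian subgroup $A$ of $G$ is an Abelian Sylow-cycloidal group, hence cyclic by Proposition~\ref{cycloidal_abelian_prop}.

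Conversely, suppose every Abelian subgroup of $G$ is cyclic. Fix a prime $p$ dividing $|G|$ and let $G_p$ be a $p$-Sylow subgroup. Any Abelian subgroup of $G_p$ is an Abelian subgroup of $G$, hence cyclic by hypothesis.

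If $p = 2$, then $G_p$ is a $2$-cycloidal group by Proposition~\ref{2_cyloidal_prop}, as every Abelian subgroup of $G_p$ is cyclic.

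If $p$ is odd, consider any subgroup $H$ of $G_p$ of order $p^2$. Since a group of order $p^2$ is Abelian, $H$ is cyclic by hypothesis. Thus every subgroup of $G_p$ of order $p^2$ is cyclic, so $G_p$ is cyclic by Theorem~\ref{pgroup_thm}, and hence cycloidal.

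Since every Sylow subgroup of $G$ is cycloidal, $G$ is Sylow-cycloidal.
\end{proof}
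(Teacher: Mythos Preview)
Your proof is correct and essentially identical to the paper's. The only cosmetic difference is that for $p=2$ the paper first observes that subgroups of order $4$ are Abelian (hence cyclic) and then invokes Corollary~\ref{unique_2_cor}, whereas you invoke Proposition~\ref{2_cyloidal_prop} directly; these are equivalent routes through the same chain of results.
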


\begin{proof} 
Suppose that $G$ is Sylow-cycloidal with Abelian subgroup $A$.
Then $A$ is Sylow-cycloidal (Proposition~\ref{cycloidal_subgroup_prop}). Hence $A$ is
cyclic (Proposition~\ref{cycloidal_abelian_prop}).

Now suppose every Abelian subgroup of $G$ is cyclic.
Let $P$ be a $p$-Sylow subgroup of $G$. 
If $H$ is a subgroup of $P$ of order $p^2$ then $H$ is Abelian (Proposition~\ref{p^2_prop})
and so is cyclic.  If $p$ is odd then $P$ must itself be cyclic (Theorem~\ref{pgroup_thm}). 
If $p=2$ then~$P$ must be cyclic or a generalized quaternion group (Corollary~\ref{unique_2_cor}).
Thus $G$ is Sylow-cycloidal.
\end{proof}

We saw above that every even-ordered freely representable group has a unique element of order $2$. This generalizes to many, but not all, Sylow-cycloidal groups. The following lemma is useful in this respect:

\begin{lemma} \label{order2_lemma}
Let $G$ be a finite group. Suppose $G$ has a $p$-Sylow subgroup having a unique subgroup order $p$. Then $G$ has a unique subgroup of order $p$
if and only if~$G$ has a normal subgroup of order $p$.

In particular, if $G$ has a $2$-Sylow subgroup that has a unique element of order $2$ then
$G$ has a unique element of order $2$ if and only if $G$ has a normal subgroup of order $2$.
\end{lemma}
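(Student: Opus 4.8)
The plan is to prove the biconditional in two directions and then read off the ``in particular'' statement as a special case. Throughout, let $P$ denote the given $p$-Sylow subgroup, which by hypothesis has a unique subgroup of order $p$.

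For the forward direction, suppose $G$ has a unique subgroup $C$ of order $p$. Conjugation by any $g \in G$ is an automorphism of $G$ and so carries a subgroup of order $p$ to a subgroup of order $p$; by uniqueness $g C g^{-1} = C$ for every $g$. Hence $C$ is normal, and $G$ has a normal subgroup of order $p$. This direction uses nothing about $P$.

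For the reverse direction, suppose $C \trianglelefteq G$ with $|C| = p$, and let $D$ be any subgroup of order $p$; the goal is to show $D = C$. Since $D$ is a $p$-subgroup, it lies in some $p$-Sylow subgroup $Q$ of $G$ (Theorem~\ref{sylow2_thm}). I claim $C \subseteq Q$ as well: because $C$ is normal, $CQ$ is a subgroup of $G$, and it is a $p$-group (its order divides $|C|\,|Q|$) containing $Q$; since Sylow $p$-subgroups are maximal among $p$-subgroups, $CQ = Q$, giving $C \subseteq Q$. Finally, $Q$ is conjugate to $P$ by the Sylow theorems, hence isomorphic to $P$, so $Q$ inherits the property of having a unique subgroup of order $p$. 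As $C$ and $D$ are both subgroups of $Q$ of order $p$, they coincide, so $D = C$. Since $D$ was arbitrary, $C$ is the unique subgroup of $G$ of order $p$.

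The one step that carries real content is showing the normal subgroup $C$ sits inside every Sylow $p$-subgroup; this is where normality interacts with the maximality of Sylow subgroups among $p$-subgroups, and it is the hinge that transfers uniqueness from a single Sylow subgroup to all of $G$. The remaining ingredients are standard Sylow bookkeeping. For the ``in particular'' statement take $p = 2$: a subgroup of order $2$ is generated by its unique nonidentity element, which has order $2$, so subgroups of order $2$ correspond bijectively to elements of order $2$. Thus ``unique subgroup of order $2$'' and ``unique element of order $2$'' say the same thing, and the special case follows at once from the general one.
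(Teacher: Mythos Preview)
Your proof is correct. The forward direction matches the paper's exactly. For the reverse direction you and the paper both use that all $p$-Sylow subgroups are conjugate and hence each inherits the unique-subgroup-of-order-$p$ property, but you organize the endgame slightly differently: the paper places $C$ and $D$ in possibly different Sylow subgroups $P_1,P_2$, chooses a conjugation carrying $P_1$ to $P_2$, observes it must send $C$ to $D$ by uniqueness, and then invokes normality of $C$ to conclude $C=D$; you instead use normality up front to show $C$ lies in \emph{every} Sylow $p$-subgroup (via $CQ=Q$), so $C$ and $D$ sit together in one Sylow subgroup and uniqueness finishes immediately. Both arguments are short and rest on the same ingredients; yours isolates the clean general fact that a normal $p$-subgroup is contained in every Sylow $p$-subgroup, which is a pleasant way to phrase the hinge step.
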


\begin{proof}
One direction is clear: a unique subgroup of order $p$ is characteristic hence normal in $G$.

Conversely, suppose $A$ is a normal subgroup of $G$ of order $p$, and let $B$ be any subgroup of $G$ order $p$.
By a Sylow theorem (Theorem~\ref{sylow2_thm}), $A$ is contained in a~$p$-Sylow subgroup~$P_1$, and $B$ is contained in 
a $p$-Sylow subgroup~$P_2$.
By a Sylow theorem (Theorem~\ref{sylow3_thm}),~$P_1$ and $P_2$ are conjugate. We also know that $A$ is the unique subgroup of~$P_1$ of order $p$,
and $B$ is the unique subgroup of $P_2$ of order $p$ (uniqueness for all $p$-Sylow subgroups follows from the fact that all $p$-Sylow subgroups are conjugate).
This uniqueness property implies that
the conjugation map that carries $P_1$ to $P_2$ must  carry~$A$ to~$B$. But $A$ is normal and so conjugation
carries $A$ to itself. Thus $A = B$ as desired.
\end{proof}

%%%%

\chapter{Sylow-Cyclic Groups} \label{sylow_cyclic_chapter}

The term ``Sylow-cyclic'' might be new, but these groups have a long history
with contributions by early group theory pioneers such as H\"older, Frobenius,
Burnside, and later in the 1930s by Zassenahaus.\footnote{Suzuki~\cite{suzuki1955} and a few other authors call these groups ``$Z$-groups'', perhaps as a pun on ``Zassenhaus'' and ``Zyklische Gruppe''. I started using
``Sylow-cyclic'' before learning of the term ``$Z$-group''. I have a slight preference for the more
descriptive term ``Sylow-cyclic''.}
They have been classified since the early
20th century, and have a relatively simple structure: they are the semi-direct products of cyclic
groups of relatively prime order.
Some, but not all, of these groups are freely representable:

\begin{example}
Any group of square-free order is Sylow-cyclic.
This includes dihedral groups with $2 n$ elements where $n$ is square free and odd.
Such dihedral groups cannot be freely representable since they do not have a unique element of order~$2$.
\emph{Thus there are an infinite number of orders such that there exists Sylow-cyclic groups
of that order that are not freely representable.}
Later we will see that if a non-Abelian group has square-free order then it cannot be freely representable.
\end{example}

\begin{example}
Let $G$ be a binary dihedral group $2 D_n$ where $n$ is a square free odd integer. Such groups occur 
as subgroups of $\bH^\times$ so are freely representable. They must then be Sylow-cyclic (since the $2$-Sylow subgroups
have four elements). So there are infinite number of orders of Sylow-cyclic groups that are freely representable.
Later we will see there are an infinite number of odd orders of non-Abelian Sylow-cyclic groups that are freely representable,
and none of these can be isomorphic to a subgroup of~$\bH^\times$.
These will be our first examples of freely representable groups that are not isomorphic to subgroups of~$\bH^\times$.
\end{example}

We continue with some basic observations about Sylow-cyclic groups:

\begin{proposition} \label{abelian_cyclic_prop2}
Any Abelian Sylow-cyclic group is cyclic, and hence is freely representable.
\end{proposition}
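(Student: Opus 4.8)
The plan is to reprove this directly by the same route used for Sylow-cycloidal groups in Proposition~\ref{cycloidal_abelian_prop}; here the argument is even shorter, since being Sylow-cyclic is precisely the hypothesis that each Sylow subgroup is cyclic, which is exactly what the conclusion needs. The argument is brief enough that I would spell it out rather than merely cite the earlier result.

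First I would recall the structure fact that every finite Abelian group $A$ is the internal direct product of its Sylow subgroups (the same decomposition invoked in the proof of Proposition~\ref{cycloidal_abelian_prop}). Writing $A \cong P_1 \times \cdots \times P_r$, where $P_i$ is the $p_i$-Sylow subgroup and $p_1, \ldots, p_r$ are the distinct primes dividing $|A|$, the Sylow-cyclic hypothesis says directly that each $P_i$ is cyclic. Next I would invoke the elementary fact that a direct product of cyclic groups of pairwise relatively prime orders is again cyclic (a form of the Chinese Remainder Theorem): since $|P_1|, \ldots, |P_r|$ are powers of distinct primes they are pairwise coprime, so $A$ is cyclic. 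The final clause, that $A$ is then freely representable, is immediate from Example~\ref{cyclic_example}, or equivalently from Corollary~\ref{abelian_cor}.

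I do not expect any genuine obstacle: every step is standard, and each ingredient already appears earlier in the development. Indeed the whole argument simply specializes the proof of Proposition~\ref{cycloidal_abelian_prop}, in which the Sylow subgroups were only known to be \emph{cycloidal} and hence required the additional step that an Abelian cycloidal $p$-group is cyclic; that step is unnecessary here, since the Sylow subgroups are cyclic by hypothesis.
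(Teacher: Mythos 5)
Your proposal is correct and follows essentially the same route as the paper: the paper simply notes that the statement is a special case of Proposition~\ref{cycloidal_abelian_prop}, whose proof is exactly the argument you spell out (decomposition of a finite Abelian group into its Sylow subgroups, plus the fact that a product of cyclic groups of pairwise coprime orders is cyclic). Inlining the argument rather than citing it, and observing that the ``Abelian cycloidal implies cyclic'' step becomes vacuous under the Sylow-cyclic hypothesis, is a fine presentational choice but not a different proof.
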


\begin{proof}
This is a special case of Proposition~\ref{cycloidal_abelian_prop}.
\end{proof}

\begin{proposition}
Any subgroup $H$ of a Sylow-cyclic group $G$ is also a Sylow-cyclic group.
\end{proposition}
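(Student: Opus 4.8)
The plan is to verify the defining property of a Sylow-cyclic group directly for $H$, namely that every Sylow subgroup of $H$ is cyclic. This mirrors the argument given for Proposition~\ref{cycloidal_subgroup_prop}, but is in fact simpler, since no separate treatment of the prime $2$ is required.

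First I would fix a prime $p$ dividing $|H|$ and let $H_p$ be a $p$-Sylow subgroup of $H$. By a Sylow theorem (Theorem~\ref{sylow2_thm}), $H_p$ is contained in some $p$-Sylow subgroup $G_p$ of $G$. Because $G$ is Sylow-cyclic, $G_p$ is cyclic, and this holds uniformly whether $p$ is odd or equal to $2$.

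Next I would invoke the elementary fact that every subgroup of a cyclic group is cyclic. Applying this to the subgroup $H_p$ of the cyclic group $G_p$ shows that $H_p$ is cyclic. Since $p$ was an arbitrary prime divisor of $|H|$, every Sylow subgroup of $H$ is cyclic, and therefore $H$ is Sylow-cyclic.

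There is essentially no serious obstacle: the only non-routine ingredient is Theorem~\ref{sylow2_thm}, which guarantees that a Sylow subgroup of the subgroup $H$ embeds in a Sylow subgroup of the ambient group $G$. I would caution against trying to deduce the statement from Proposition~\ref{cycloidal_subgroup_prop} alone, since that proposition only yields that $H$ is Sylow-\emph{cycloidal}, leaving open the possibility of a generalized quaternion $2$-Sylow subgroup. The direct argument above excludes this, precisely because a subgroup of a cyclic $2$-group is again cyclic and never quaternion.
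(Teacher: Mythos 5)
Your proof is correct and is essentially identical to the paper's own: both fix a $p$-Sylow subgroup $H_p$ of $H$, embed it in a $p$-Sylow subgroup $G_p$ of $G$ via Theorem~\ref{sylow2_thm}, and conclude that $H_p$ is cyclic as a subgroup of the cyclic group $G_p$. Your closing caution about not relying on Proposition~\ref{cycloidal_subgroup_prop} is a sensible observation but does not change the argument.
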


\begin{proof}
Let $H_p$ be a $p$-Sylow subgroup of $H$ for a prime $p$ dividing $|G|$.
By a Sylow theorem  (Theorem~\ref{sylow2_thm}), $H_p$ is a subgroup of a $p$-Sylow subgroup $G_p$ of $G$.
Since~$G_p$ is cyclic, the subgroup $H_p$ of $G_p$ must also be cyclic.
\end{proof}

\begin{proposition}
Any quotient group $G/N$ of a Sylow-cyclic group $G$ is also a Sylow-cyclic group.
\end{proposition}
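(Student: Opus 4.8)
The plan is to reduce everything to the single fact that the image of a $p$-Sylow subgroup of $G$ under the canonical projection $\pi\colon G \to G/N$ is again a $p$-Sylow subgroup of $G/N$ --- a fact already worked out in the footnote to Proposition~\ref{cycloidal_quotient_prop}. First I would fix a prime $p$ dividing $|G/N|$ and choose a $p$-Sylow subgroup $G_p$ of $G$; by hypothesis $G_p$ is cyclic. Then $\pi(G_p)$ is a $p$-Sylow subgroup of $G/N$, and being the homomorphic image of the cyclic group $G_p$, it is itself cyclic. Since every prime dividing $|G/N|$ also divides $|G|$, ranging over such $p$ exhausts the Sylow subgroups of $G/N$, and so $G/N$ is Sylow-cyclic.

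The only ingredient needing any argument is the claim that $\pi(G_p)$ is genuinely a $p$-Sylow subgroup of $G/N$, and not merely some $p$-subgroup. This is the order count from the cited footnote: the restriction of $\pi$ to $G_p$ has kernel $G_p \cap N$, so $|G_p| = |\pi(G_p)| \cdot |G_p \cap N|$, while at the same time the full $p$-part $|G_p|$ of $|G|$ factors as the $p$-part of $|G/N|$ times the $p$-part of $|N|$. Comparing the two factorizations and using that $G_p \cap N$ is a $p$-group inside $N$ forces $|\pi(G_p)|$ to equal the $p$-part of $|G/N|$, which is exactly what is required.

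The point worth emphasizing --- and the reason no real obstacle arises here, in contrast with Proposition~\ref{cycloidal_quotient_prop} --- is that cyclicity is preserved under \emph{arbitrary} homomorphic images. Hence, unlike the cycloidal case, I do not need to impose any condition on the order of $N$: a generalized quaternion group need not have cycloidal quotients, which is precisely why that earlier proposition restricts to $N$ of odd order, but a cyclic group always has cyclic quotients. So in the Sylow-cyclic setting the odd-order hypothesis simply disappears, and the proof goes through for every normal subgroup $N$.
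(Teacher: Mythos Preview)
Your proof is correct and follows essentially the same approach as the paper, which simply refers back to the argument of Proposition~\ref{cycloidal_quotient_prop} and its footnote. Your additional remark explaining why the odd-order hypothesis on $N$ is unnecessary here (cyclic groups have cyclic quotients, whereas generalized quaternion groups need not have cycloidal quotients) is exactly the point and makes explicit what the paper leaves implicit.
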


\begin{proof}
The proof is similar to the proof of Proposition~\ref{cycloidal_quotient_prop}. The basic idea is that any Sylow subgroup 
of $G$ maps to a Sylow subgroup of $G/N$ (or a trivial group).
\end{proof}

\begin{proposition} 
Suppose $A$ and $B$ are subgroups of $G$ such that $|A|$ and $|B|$ are relatively prime
and such that $|A||B| = |G|$. Then $G$ is Sylow-cylic if and only if both $A$ and $B$ are Sylow-cyclic.
\end{proposition}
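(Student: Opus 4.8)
The plan is to recognize that this final proposition is the exact Sylow-cyclic analogue of Proposition~\ref{cycloidal_product_prop}, which was already proved for Sylow-cycloidal groups, and to mirror that argument almost verbatim. The key structural fact being exploited is that when $|A|$ and $|B|$ are relatively prime with $|A||B| = |G|$, every prime $p$ dividing $|G|$ divides exactly one of $|A|$ or $|B|$, and the Sylow subgroups of that factor are already Sylow subgroups of $G$ of the correct order.

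First I would dispose of the easy direction: if $G$ is Sylow-cyclic, then since $A$ and $B$ are subgroups of $G$, each is Sylow-cyclic by the previous proposition on subgroups of Sylow-cyclic groups. Second, for the substantive direction, I would assume $A$ and $B$ are Sylow-cyclic and fix a prime $p$ dividing $|G|$. Because $\gcd(|A|,|B|) = 1$, the prime $p$ divides exactly one of the two orders, say $|A|$. Then the full power of $p$ dividing $|G|$ already divides $|A|$, so a $p$-Sylow subgroup $P$ of $A$ is simultaneously a $p$-Sylow subgroup of $G$. Since $A$ is Sylow-cyclic, $P$ is cyclic. Third, I would invoke the fact that all $p$-Sylow subgroups of $G$ are conjugate, hence isomorphic, so \emph{every} $p$-Sylow subgroup of $G$ is cyclic. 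As this holds for each prime dividing $|G|$, the group $G$ is Sylow-cyclic.

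There is essentially no obstacle here; the whole content is the counting observation that a relatively-prime factorization distributes the Sylow subgroups cleanly between $A$ and $B$. The only point requiring a touch of care is the claim that $P$ is a $p$-Sylow subgroup of $G$ and not merely of $A$, which follows immediately from $|A||B| = |G|$ together with $p \nmid |B|$: the largest power of $p$ dividing $|G|$ equals the largest power dividing $|A|$. I would write this as a short self-contained proof rather than merely citing Proposition~\ref{cycloidal_product_prop}, since the cyclic case is strictly simpler (no quaternion alternative to track) and the argument reads cleanly on its own.

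\begin{proof}
One direction is immediate: if $G$ is Sylow-cyclic then $A$ and $B$, being subgroups of $G$, are Sylow-cyclic by the previous proposition.

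Conversely, suppose $A$ and $B$ are both Sylow-cyclic. Let $p$ be a prime dividing $|G|$. Since $|A|$ and $|B|$ are relatively prime, $p$ divides exactly one of them; without loss of generality say $p$ divides $|A|$. Because $|A||B| = |G|$ and $p \nmid |B|$, the largest power of $p$ dividing $|G|$ equals the largest power of $p$ dividing $|A|$. Hence any $p$-Sylow subgroup $P$ of $A$ is also a $p$-Sylow subgroup of $G$. Since $A$ is Sylow-cyclic, $P$ is cyclic. By the Sylow theorems all $p$-Sylow subgroups of $G$ are conjugate, hence isomorphic, so every $p$-Sylow subgroup of $G$ is cyclic. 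As this holds for every prime $p$ dividing $|G|$, the group $G$ is Sylow-cyclic.
\end{proof}
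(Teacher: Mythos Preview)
Your proof is correct and follows exactly the approach the paper takes: the paper simply says ``The proof is similar to that of Proposition~\ref{cycloidal_product_prop},'' and you have written out that similar argument in full. Your explicit version is in fact more detailed than what the paper provides.
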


\begin{proof}
The proof is similar to that of Proposition~\ref{cycloidal_product_prop}.
\end{proof}

\begin{corollary}\label{semi-direct_cor}
The product or semi-direct product of two Sylow-cyclic groups of relatively prime orders is itself Sylow-cyclic.
\end{corollary}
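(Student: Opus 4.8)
The plan is to reduce everything to the preceding proposition, since a semi-direct product already packages $A$ and $B$ as subgroups of relatively prime order whose orders multiply to the order of the whole group. First I would observe that the direct product is simply the special case of the semi-direct product in which $B$ acts trivially on $A$; hence it suffices to treat $G = A \rtimes_\phi B$ for an arbitrary action $\phi \colon B \to \Aut(A)$.

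Next I would identify $A$ with the subgroup $A \times \{1\}$ of $G$ and $B$ with the subgroup $\{1\} \times B$ of $G$. By the very definition of the semi-direct product, these are genuine subgroups of $G$ isomorphic to $A$ and $B$ respectively, their orders are $|A|$ and $|B|$ (which are relatively prime by hypothesis), and $|G| = |A| \, |B|$. Thus the two subgroups meet exactly the setup required by the proposition immediately preceding this corollary.

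With this bookkeeping in place I would simply invoke that proposition: since $A$ and $B$ are both Sylow-cyclic, the group $G$ is Sylow-cyclic. I expect no real obstacle here, as the entire content is the observation that the canonical copies of $A$ and $B$ inside $A \rtimes_\phi B$ have coprime orders with product $|G|$ and sit as subgroups; everything substantive was already established in the preceding proposition (whose own proof parallels that of Proposition~\ref{cycloidal_product_prop}).
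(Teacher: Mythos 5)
Your proposal is correct and is exactly the argument the paper intends: the corollary is stated as an immediate consequence of the preceding proposition, obtained by viewing the canonical copies of $A$ and $B$ inside $A \rtimes B$ as subgroups of coprime orders whose product is $|G|$. Your additional observation that the direct product is the trivial-action case of the semi-direct product is a harmless and accurate bit of bookkeeping.
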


\begin{example}
Let $p$ be an odd prime and let $q$ be any prime dividing $p-1$.
The cyclic group $C_p$ of size $p$ has $\bF_p^\times$ for its automorphism group, which is cyclic and contains
a unique subgroup of order $q$. Let $C_q$ be a cyclic group of size $q$, and fix an isomorphism
with the subgroup of $\bF_p^\times$ of size $q$. This gives a nontrivial action of~$C_q$ on $C_p$.
Thus the semidirect product $G = C_p \rtimes C_q$ is a non-Abelian Sylow-cyclic group whose order is a multiple of $p$.
By Theorem~\ref{pq_thm}, $G$ is not freely representable. 
For example, we can construct a Sylow-cyclic group of order 21 that is not freely representable; here $p = 7$ and $q = 3$.\end{example}

The converse of Corollary~\ref{semi-direct_cor} true, and is a deeper result (proved by Burnside by~1905). 
Toward this end, we start with the following interesting result:\footnote{The proof is adapted from M.~Hall~\cite{hall1959}, 
proof of Theorem~9.4.3.}

\begin{theorem}\label{largest_prime_thm}
Suppose that $G$ is Sylow-cyclic and that $q$ is the largest prime dividing $|G|$.
Then $G$ has a unique $q$-Sylow subgroup and this subgroup is normal.
\end{theorem}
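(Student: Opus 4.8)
The plan is to reduce everything to a counting statement and then run an induction on $|G|$. Write $|G| = q^a m$ with $q \nmid m$, and let $Q$ be a $q$-Sylow subgroup, which is cyclic by hypothesis. By Sylow's theorems the number $n_q = [G : N_G(Q)]$ of $q$-Sylow subgroups satisfies $n_q \equiv 1 \pmod q$ and $n_q \mid m$. Since $q$ is the largest prime dividing $|G|$, every prime dividing $m$ — and hence every prime dividing $n_q$ — is strictly smaller than $q$. The goal is to show $n_q = 1$, for then $Q$ is the unique $q$-Sylow subgroup, and being unique it is characteristic, hence normal.

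First I would dispose of the base situation $m = 1$: then $G = Q$ is a $q$-group and is trivially its own normal $q$-Sylow subgroup. For the inductive step I would pass to a minimal normal subgroup $K$ of $G$. The key structural observation is that a minimal normal subgroup of a Sylow-cyclic group is cyclic of prime order $p$: it is a direct product of isomorphic simple groups, the non-abelian simple case is excluded because such a factor would force a non-cyclic Sylow $2$-subgroup (a finite group with cyclic Sylow $2$-subgroup has a normal $2$-complement by the elementary parity-of-permutation argument, so is never non-abelian simple), and an elementary abelian factor sitting inside a cyclic Sylow subgroup must itself be cyclic, i.e. of order $p$.

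With $K \cong C_p$ in hand I would split into two cases. If $p = q$, then $K \le Q$; when $a = 1$ we already have $Q = K \trianglelefteq G$, and when $a \ge 2$ the quotient $\bar G = G/K$ is again Sylow-cyclic with largest prime $q$, so by induction $\bar G$ has a normal $q$-Sylow subgroup whose preimage in $G$ is a normal subgroup of order $q^a$, i.e. a normal $q$-Sylow subgroup of $G$. If $p < q$, then $q \nmid |K|$ and $G/K$ is Sylow-cyclic of smaller order with largest prime still $q$; by induction $G/K$ has a normal $q$-Sylow subgroup, whose preimage $L \trianglelefteq G$ has order $p\,q^a$ and contains a full $q$-Sylow subgroup of $G$. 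If $L \ne G$, then $L$ is a proper normal subgroup that is Sylow-cyclic with largest prime $q$, so by induction its unique (hence characteristic) $q$-Sylow subgroup is normal in $G$ and is a $q$-Sylow subgroup of $G$. The only remaining possibility is $L = G$, i.e. $|G| = p\,q^a$ with $p < q$; here $n_q \mid p$ and $n_q \equiv 1 \pmod q$ force $n_q = 1$ directly, since the divisors of $p$ are $1$ and $p$ and $p \not\equiv 1 \pmod q$ as $p - 1 < q$.

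The main obstacle, and the step I would be most careful with, is the structural claim that a minimal normal subgroup is cyclic of prime order, since this is where the Sylow-cyclic hypothesis on the smaller primes (not just on $q$) is really used and where one must avoid circularity with the later solvability results; the parity argument for cyclic Sylow $2$-subgroups is the clean, transfer-free way to exclude non-abelian simple factors. Everything else is bookkeeping with the Sylow congruences.
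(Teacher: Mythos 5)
Your overall strategy (induction on $|G|$ through a minimal normal subgroup $K$, combined with the Sylow congruences) is genuinely different from the paper's proof, which is a pure counting argument: following M.~Hall, the paper uses Frobenius's theorem to show that for every ``large-prime divisor'' $d$ of $|G|$ the set $E_d = \{x \in G \mid x^d = 1\}$ has exactly $d$ elements, and then observes that every $q$-Sylow subgroup must coincide with the conjugation-invariant set $E_{q^l}$. However, your proof has a genuine gap at exactly the step you flagged: the exclusion of non-abelian simple factors from $K$. The parity-of-permutation argument shows that a group of \emph{even} order with cyclic $2$-Sylow subgroup has a subgroup of index $2$; it says nothing about a group of odd order, whose $2$-Sylow subgroup is trivial (hence cyclic) and whose ``normal $2$-complement'' is the whole group. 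So your argument does not rule out a non-abelian simple Sylow-cyclic group of odd order. This is not a removable technicality: if such a group $G$ existed, its only minimal normal subgroup would be $G$ itself, your induction could not start, and the conclusion of the theorem would in fact be false for $G$, since a non-abelian simple group has no normal Sylow subgroup. Ruling out this case is precisely the hard content of the theorem.

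The standard ways to fill the hole are Burnside's normal $p$-complement theorem applied to a cyclic Sylow subgroup for the \emph{smallest} prime (a transfer argument, which this paper deliberately avoids), the Feit--Thompson theorem (massive overkill), or the Frobenius-theorem counting that the paper actually uses, which requires no case analysis and no solvability input at all. As written, your induction is circular in spirit: the structural claim that a minimal normal subgroup of a Sylow-cyclic group is cyclic of prime order is essentially equivalent to the solvability of Sylow-cyclic groups, which in the paper is deduced as a \emph{corollary} of this very theorem. Everything after the structural claim is correct bookkeeping: the case split $p = q$ versus $p < q$, the passage to quotients and to the proper normal subgroup $L$ (using that a unique Sylow subgroup is characteristic, so normality passes up from $L$ to $G$), and the terminal case $|G| = p\,q^a$, where $n_q \mid p$ and $n_q \equiv 1 \pmod q$ force $n_q = 1$ because $p - 1 < q$.
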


\begin{proof}
We start by defining a \emph{large-prime divisor} of $|G|$ to be a divisor $d$ of $|G|$
such that the primes dividing $d$ are as large as or larger than any prime dividing $|G|/d$.
Then we establish the following claim: for every large-prime divisor $d$ of $|G|$, the set
$E_d = \{ x \in G \mid x^d = 1\}$ has exactly $d$ elements. 

To establish the claim, let $d$ be the largest large-prime divisor that is a counter-example.
Note that $d < |G|$ since the claim holds for $d = |G|$.
Let $p$ be the largest prime divisor of 
the complementary divisor~$|G|/d$.
Then $p d$ is a large-prime divisor, and we get to assume that $E_{p d}$ has $pd$ elements.
Let $p^k$ be the largest power of $p$ dividing~$d$. Any~$p$-Sylow subgroup of $G$
is cyclic, and so
has an element of order~$p^{k+1}$. Such an element is in $E_{pd}$ but not in $E_d$.
So $E_{pd}$ is strictly larger than $E_d$.

We partition the nonempty set $E_{pd} - E_d$ by declaring two elements equivalent if they generate the same
cyclic subgroup. If $g \in E_{pd} - E_d$ has order $t$, then the set of elements that generate $\left< g \right>$
has $\phi(t)$ elements. Now $p-1$ divides $\phi(t)$ since the order of $t$ is divisible by $p$.
So $E_{pd} - E_d$ has size~$m (p-1)$ for some positive~$m$.
Next, we note that $E_d$ has order $n d$ for some positive $n$ by Frobenius's theorem (Theorem~\ref{frobenius_thm}).
Thus
$$
p d = n d + m(p-1), \qquad\text{so}\quad d \mid m (p-1).
$$
By choice of $p$, there is no prime divisor of $d$ strictly smaller than $p$. Thus $p-1$ is relatively prime to $d$.
So $d$ actually divides $m$. We write $m = m_0 d$ for $m_0> 0$, and
$$
p d = n d + m_0 d (p-1), \qquad\text{so}\quad p  = n + m_0 (p-1).
$$
The only solution to this last equation with positive $n$ and $m_0$ is $n=m_0 = 1$.
This shows that $E_d$ has $d$ elements, so $d$ cannot be a counter-example.

Now let $p$ be the largest prime divisor of $|G|$ and let $p^l$ be the largest prime power of $p$ dividing $|G|$.
Every $p$-Sylow subgroup of $G$ is contained in $E_{p^l}$.
Since $p^l$ is a large-prime divisor of $|G|$, we must have that every $p$-Sylow subgroup of $G$ is 
the set $E_{p^l}$
since they have the same size.
So there is a unique $p$-Sylow subgroup, and it is normal since $E_{p^l}$ is invariant under conjugation.
\end{proof}

By repeated application of the above result we get the following:

\begin{corollary}\label{cyclic_quotient_cor}
Suppose that $G$ is Sylow-cyclic and that $p$ is the smallest prime dividing the order of $G$.
Then $G$ has a quotient of order $p$. In particular, $G$ has a nontrivial Abelian quotient.
\end{corollary}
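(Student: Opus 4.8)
The plan is to argue by induction on $|G|$, repeatedly stripping off the Sylow subgroup attached to the \emph{largest} prime divisor and invoking Theorem~\ref{largest_prime_thm}. Throughout, write $p$ for the smallest prime dividing $|G|$.

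For the base case I would take $G$ to be a $p$-group (equivalently, the case in which only one prime divides $|G|$). Then $G$ is its own $p$-Sylow subgroup, so by the definition of Sylow-cyclic $G$ is cyclic; a nontrivial cyclic $p$-group visibly has a quotient of order $p$, namely the quotient by its unique subgroup of index $p$.

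For the inductive step, assume $|G|$ has at least two distinct prime divisors and let $q$ be the largest, so that $q > p$. By Theorem~\ref{largest_prime_thm} the $q$-Sylow subgroup $Q$ is unique and normal, and because it is the full $q$-part of $G$ we have $q \nmid |G/Q|$ while $p \mid |G/Q|$ (since $p \neq q$ still divides $|G|/|Q|$); hence $p$ remains the smallest prime dividing $|G/Q|$. The quotient $G/Q$ is again Sylow-cyclic (quotients of Sylow-cyclic groups are Sylow-cyclic, as proved above) and has strictly smaller order, so by the inductive hypothesis it admits a surjection onto a group of order $p$. Composing this with the canonical projection $G \to G/Q$ produces a surjection of $G$ onto a group of order $p$, which is the sought-after quotient.

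Finally, any group of order $p$ is cyclic, hence Abelian and nontrivial, which yields the ``in particular'' clause. I do not expect a serious obstacle: the only point requiring care is verifying that passing to $G/Q$ neither deletes $p$ from nor introduces a smaller prime into the set of prime divisors, and this is immediate because $Q$ is a $q$-group with $q > p$. All the real content is carried by Theorem~\ref{largest_prime_thm}, so the induction itself is routine.
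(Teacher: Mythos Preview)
Your proof is correct and is exactly the argument the paper has in mind: it states only that the corollary follows ``by repeated application of the above result,'' and your induction on $|G|$, peeling off the normal Sylow subgroup for the largest prime via Theorem~\ref{largest_prime_thm} and passing to the quotient, is precisely that repeated application spelled out in detail.
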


We also get the following result by repeatedly applying the above theorem.\footnote{I have seen this result attributed to H\"older (1859--1877).}

\begin{corollary}
Every Sylow-cyclic group is solvable.
\end{corollary}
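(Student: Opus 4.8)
The plan is to prove the corollary by induction on $|G|$, peeling off the Sylow subgroup for the largest prime dividing the order at each stage. The substantive work has already been carried out in Theorem~\ref{largest_prime_thm}, which supplies a canonical normal subgroup to quotient by; what remains is a routine argument assembling a subnormal series with Abelian factors.

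First I would dispose of the base case: if $|G| = 1$ (or, more generally, if $|G|$ is prime) then $G$ is Abelian and hence solvable. For the inductive step, I would assume that every Sylow-cyclic group of order strictly less than $|G|$ is solvable, and let $q$ be the largest prime dividing $|G|$. By Theorem~\ref{largest_prime_thm}, $G$ has a unique, and therefore normal, $q$-Sylow subgroup $Q$. Since $G$ is Sylow-cyclic, this $Q$ is cyclic, in particular Abelian, and so $Q$ is solvable.

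Next I would pass to the quotient. By the earlier proposition that any quotient of a Sylow-cyclic group is again Sylow-cyclic, $G/Q$ is Sylow-cyclic, and its order is strictly smaller than $|G|$ because $Q$ is nontrivial. By the induction hypothesis, $G/Q$ is therefore solvable. Finally, since $G$ possesses a normal subgroup $Q$ that is solvable and a quotient $G/Q$ that is solvable, $G$ itself is solvable: an extension of a solvable group by a solvable group is solvable. Concretely, splicing a subnormal series of $Q$ with Abelian factors into the preimage of such a series for $G/Q$ produces a subnormal series for $G$ with Abelian factors.

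I expect no genuine obstacle here, since all the difficulty is concentrated in Theorem~\ref{largest_prime_thm}; the induction only requires that both normality of $Q$ and the Sylow-cyclic property survive passage to the quotient, which they do. The one point I would state carefully is the standard closure fact that an extension of solvable groups is solvable, as this is precisely what makes the inductive step go through.
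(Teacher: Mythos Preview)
Your proposal is correct and matches the paper's approach exactly: the paper simply remarks that the result follows ``by repeatedly applying'' Theorem~\ref{largest_prime_thm}, and your induction on $|G|$ is precisely the formalization of that remark. The only difference is level of detail---you spell out the closure of solvability under extensions and the passage of the Sylow-cyclic property to quotients, whereas the paper leaves these implicit.
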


An important step in the classification is the following:

\begin{proposition}\label{sylow_cyclic_commutator_prop}
Let $G'$ be the commutator subgroup of a Sylow-cyclic group $G$. Then
$G'$ and $G/G'$ are cyclic.
\end{proposition}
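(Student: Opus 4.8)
The plan is to treat the two assertions separately, since $G/G'$ requires almost no work while $G'$ is the real content.

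For $G/G'$: it is a quotient of the Sylow-cyclic group $G$, hence itself Sylow-cyclic (by the proposition that quotients of Sylow-cyclic groups are Sylow-cyclic), and it is abelian by construction. An abelian Sylow-cyclic group is cyclic (Proposition~\ref{abelian_cyclic_prop2}), so $G/G'$ is cyclic immediately, with no further argument needed.

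For $G'$ I would induct on $|G|$, the base case $|G|=1$ being trivial. Let $q$ be the largest prime dividing $|G|$ and let $Q$ be a $q$-Sylow subgroup, which by Theorem~\ref{largest_prime_thm} is normal and cyclic. Two observations drive the argument. First, conjugation gives a homomorphism $G \to \Aut(Q)$, and since $Q$ is cyclic $\Aut(Q)$ is abelian; hence the image is abelian and $G'$ lies in the kernel, i.e.\ $G'$ centralizes $Q$. Second, applying the induction hypothesis to the strictly smaller Sylow-cyclic group $G/Q$ shows that $(G/Q)' = G'Q/Q$ is cyclic, in particular abelian, whence $G'' = [G',G'] \subseteq Q$; combined with $G'' \subseteq G'$, this gives $G'' \subseteq G' \cap Q$. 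Putting these together: since $G'$ centralizes all of $Q$, it centralizes $G'' \subseteq Q$, so $G'' \subseteq Z(G')$, and therefore $G'$ is nilpotent of class at most $2$. A finite nilpotent group is the direct product of its Sylow subgroups; each Sylow subgroup of $G'$ is cyclic (as $G'$ is a subgroup of $G$, hence Sylow-cyclic), and these have pairwise coprime orders, so the direct product $G'$ is cyclic.

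The main obstacle is establishing that $G'$ is nilpotent, equivalently that $G''$ is central in $G'$; once nilpotency is in hand, the Sylow-cyclic hypothesis forces cyclicity with no appeal to Schur--Zassenhaus. The two ingredients that combine to yield $G'' \subseteq Z(G')$ — that $G'$ centralizes the cyclic normal Sylow subgroup $Q$, and that $G'' \subseteq Q$ — are each short, so the delicate point is simply recognizing that they fit together, and that the largest-prime Sylow subgroup of Theorem~\ref{largest_prime_thm} is exactly the right $Q$ to feed into both.
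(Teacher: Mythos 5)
Your proof is correct, but it takes a genuinely different route from the paper. Both arguments hinge on the same mechanism --- a cyclic normal subgroup has Abelian automorphism group, so $G'$ lands in its centralizer --- but they deploy it differently. The paper works down the derived series: by Corollary~\ref{cyclic_quotient_cor} the series terminates, the last nontrivial term $G^{(m)}$ is Abelian Sylow-cyclic hence cyclic, $G'$ centralizes it, and if $m>1$ a lifted generator of the cyclic quotient $G^{(m-1)}/G^{(m)}$ would make $G^{(m-1)}$ Abelian, a contradiction; so $m=1$ and $G'=G^{(m)}$ is cyclic outright. You instead induct on $|G|$, apply the centralizer trick directly to the normal largest-prime Sylow subgroup $Q$ of Theorem~\ref{largest_prime_thm}, use the induction hypothesis on $G/Q$ to get $G''\subseteq Q$, conclude $G''\subseteq Z(G')$, and finish via nilpotency of class at most $2$ together with the structure theorem for finite nilpotent groups. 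That structure theorem (a finite nilpotent group is the direct product of its Sylow subgroups) is standard but is the one ingredient not established in the paper, which otherwise keeps its toolkit self-contained; the paper's derived-series argument avoids it entirely and is somewhat shorter. Note, though, that your two observations already suffice for a finish without nilpotency: since $G'\ne G$ for nontrivial $G$ (Corollary~\ref{cyclic_quotient_cor}), the induction hypothesis applies to $G'$ itself and gives $G'/G''$ cyclic; combined with $G''\subseteq Z(G')$, the quotient $G'/Z(G')$ is cyclic, forcing $G'$ to be Abelian, hence cyclic as an Abelian Sylow-cyclic group --- this trades the nilpotent structure theorem for the elementary fact that a group with cyclic central quotient is Abelian.
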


\begin{proof}
Consider the derived series. In other words, 
recursively define $G^{(n+1)}$ to be the commutator subgroup of $G^{(n)}$ starting with~$G^{(1)}=G'$.
By Corollary~\ref{cyclic_quotient_cor}, if~$G^{(n)}$ is nontrivial then $G^{(n+1)}$ is a proper subgroup
of $G^{(n)}$. Thus $G^{(n)} = 1$ for sufficiently large $n$.

The result is clear for Abelian $G$, so assume
$G$ is non-Abelian. Let $m$ be the largest integer such that $G^{(m)}$ is nontrivial.
Since $G^{(m)}$ is an Abelian Sylow-cyclic group, it is cyclic.
Observe that $G$ acts on~$G^{(m)}$ by conjugation.\footnote{Any automorphism of a group restricts
to an automorphism of its commutator subgroup. Thus, by induction,  conjugation automorphisms (inner automorphisms) on $G$ restricts to
automorphisms of~$G^{(m)}$.} 
Thus $G/A$ is isomorphic to a subgroup of the automorphism group of $G^{(m)}$ where
$A$ is the centralizer of $G^{(m)}$. Since $G^{(m)}$ is cyclic, its automorphism group 
is~$(\bZ/t\bZ)^\times$ where~$t$ is the order of~$G^{(m)}$.
Thus $G/A$ is Abelian, which means that $G' \subseteq A$. In other words, if $g \in G'$ and~$h\in G^{(m)}$ 
then $g$ and $h$ commute.

Suppose $m > 1$. Then $G^{(m-1)}/G^{(m)}$ is an Abelian Sylow-cyclic group, so is cyclic. 
Let $g \in G^{(m-1)}$
be such that its image in $G^{(m-1)}/G^{(m)}$ is a generator. Since~$g\in G'$,
it commutes with every element of~$G^{(m)}$.
So $G^{(m-1)}$ is Abelian since~$G^{(m-1)} = \left<g\right> G^{(m)}$. This 
implies $G^{(m)}$ is trivial, a contradiction. So~$m=1$, as desired.

Since $m=1$,  $G' = G^{(m)}$ is cyclic. Since $G/G'$ is Abelian and Sylow-cyclic, it is cyclic as well.
\end{proof}

Now we are ready for Burnside's result on the structure of Sylow-cyclic groups:

\begin{theorem}\label{semidirect_thm}
Let $G$ be a  Sylow-cyclic group and let $A$ be the commutator subgroup of~$G$.  
Then, as above, both $A$ and $G/A$ are cyclic. Let $m$ be the order of $A$ and let~$n$ be the 
order of~$G/A$,  so $G$ has order $mn$. 
Let $b\in G$ be any element mapping to a generator of $G/A$, and let $B$ be the subgroups of $G$
generated by $b$.
Then the following hold:
\begin{itemize}
\item
The subgroup $B$ has order $n$, and so its generator $b$ has order $n$.
\item
The subgroup $B$ is a complement of $A$ in $G$, so $G = A B = A \rtimes B$.
\item
The subgroups $A$ and $B$ have relatively prime orders $m$ and $n$.
\item
The order $m$ of $A$ is odd.
\item
The normalizer of $B$ and centralizer of $B$ in $G$ are both equal to $B$.
\item
There are $m$ subgroups of $G$ of order $n$, and they are all conjugate in $G$ to $B$.
\item
There there is a pair of subgroups of order $n$ that together generates all of $G$.
\end{itemize}
\end{theorem}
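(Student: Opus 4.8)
The plan is to work with explicit generators and to reduce everything to a single structural fact: the generator of $B$ acts on $A$ without nonzero fixed points. Almost all seven assertions then follow by formal manipulation in $A\rtimes B$, the one genuinely hard input being that $G$ is Sylow-cyclic, which I will need only to force $\gcd(m,n)=1$. To set up, fix a generator $\alpha$ of the cyclic group $A$. Since $b$ maps to a generator of $G/A$, the elements $\alpha$ and $b$ generate $G$, and since $A$ is normal and cyclic, conjugation by $b$ is an automorphism, say $b\alpha b^{-1}=\alpha^{r}$ with $\gcd(r,m)=1$. A direct computation gives $[b,\alpha]=\alpha^{r-1}$, and passing to $G/\langle\alpha^{r-1}\rangle$ (legitimate, since $\langle\alpha^{r-1}\rangle$ is a subgroup of the normal cyclic group $A$ and hence normal) makes $b$ commute with $\alpha$, so that quotient is abelian; therefore $G'=\langle\alpha^{r-1}\rangle$. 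As $A=G'$ by Proposition~\ref{sylow_cyclic_commutator_prop}, this forces $\langle\alpha^{r-1}\rangle=A$, i.e.\ $\gcd(r-1,m)=1$. Equivalently the only $a\in A$ with $bab^{-1}=a$ is $a=1$, since such $a$ satisfies $a^{r-1}=1$; this fixed-point-free property is the engine of the proof.

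From it the first, second, and fourth bullets are immediate. The element $b^{n}$ lies in $A$ (as $\bar b$ has order $n$) and is fixed by $b$, so $b^{n}=1$; hence $b$ has order exactly $n$, $B=\langle b\rangle$ has order $n$, and $A\cap B=1$. Then $|AB|=mn=|G|$ together with normality of $A$ gives $G=A\rtimes B$. For the fourth bullet, if $m$ were even then $r$, being coprime to $m$, would be odd, so $r-1$ would be even and $2\mid\gcd(r-1,m)$, contradicting $\gcd(r-1,m)=1$; thus $m$ is odd.

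The main obstacle is the third bullet, $\gcd(m,n)=1$, and this is precisely where I must use that $G$ is Sylow-cyclic rather than merely metacyclic. Suppose a prime $p$ divides both $m$ and $n$. Let $A_p\le A$ be the Sylow $p$-subgroup of $A$, which is cyclic, characteristic in $A$, hence normal in $G$, of order $p^{a}$ with $a\ge1$, and consider $H=\langle A_p,\,b^{n/p}\rangle$. Since $b^{n/p}$ normalizes $A_p$ and $b^{n/p}\notin A$ (because $A\cap B=1$), the product $H=A_p\langle b^{n/p}\rangle$ is a $p$-group of order $p^{a+1}$, so it lies inside a Sylow $p$-subgroup of $G$ and is therefore cyclic. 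But a cyclic $p$-group of order $p^{a+1}$ has its unique subgroup of order $p$ contained in its unique subgroup $A_p$ of order $p^{a}$; applied to the order-$p$ subgroup $\langle b^{n/p}\rangle$ this forces $b^{n/p}\in A_p\subseteq A$, contradicting $A\cap B=1$. Hence no such $p$ exists.

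Finally I would treat the last three bullets by one computation. Writing a general element as $g=ab^{i}$ with $a\in A$, one checks $gbg^{-1}=a^{1-r}b$; requiring $gbg^{-1}\in B$ forces $a^{1-r}\in A\cap B=1$, so $a^{r-1}=1$ and hence $a=1$. Thus $N_G(B)=C_G(B)=B$, and $B$ has exactly $[G:N_G(B)]=m$ conjugates. Any subgroup $B'$ of order $n$ meets $A$ trivially by the coprimality just proved, so $B'$ is a complement; choosing the generator $b'$ of $B'$ lying over $\bar b$ and writing $b'=ab$, the equation $c^{r-1}=a$ has a solution $c\in A$ since $a\mapsto a^{r-1}$ is onto (as $\gcd(r-1,m)=1$), and this $c$ conjugates $B'$ to $B$. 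Therefore the $m$ conjugates of $B$ are exactly the subgroups of order $n$. For the last bullet, surjectivity of $a\mapsto a^{1-r}$ lets me pick $a\in A$ with $a^{1-r}=\alpha$; then $\langle B,\,aBa^{-1}\rangle$ contains both $b$ and $aba^{-1}=a^{1-r}b$, hence contains $\alpha$, hence equals $\langle\alpha,b\rangle=G$.
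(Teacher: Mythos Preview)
Your proof is correct and follows essentially the same route as the paper's: both set up explicit generators, derive the key fixed-point-free fact $\gcd(r-1,m)=1$ from $A=G'$, and then read off the complement, normalizer, and conjugacy statements from it. Your argument for $\gcd(m,n)=1$ is the paper's argument with $A_p$ in place of its order-$p$ subgroup, and your proof that every order-$n$ subgroup is conjugate to $B$ (by solving $c^{r-1}=a$) is a direct variant of the paper's counting argument.

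The one place you genuinely improve on the paper is the fourth bullet. The paper shows $m$ is odd by invoking Corollary~\ref{cyclic_quotient_cor}, which in turn rests on Theorem~\ref{largest_prime_thm} and hence on Frobenius's theorem. Your observation that $\gcd(r-1,m)=1$ already forbids $2\mid m$ (since $\gcd(r,m)=1$ would make $r$ odd, hence $r-1$ even) is entirely elementary and self-contained; it shows that the oddness of $|G'|$ for a Sylow-cyclic group needs nothing beyond the commutator computation already in hand.
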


\begin{proof}
By the previous proposition, $A$ and $G/A$ are cyclic. Choose $b\in G$ so that its image in $G/A$ is a generator,
and choose $a$ to be a generator of $A$.
Let $B = \left< b \right>$. Observe that $a$ and $b$ together generate $G$ and $G=AB$.
Consider the commutator 
$$a' \; \defeq \; [a, b] = a^{-1} b^{-1} a b \in A.$$
Every subgroup of $A$ is normal in $G$ (since $A$ is normal in $G$, and has at most one subgroup of any given order),
so the group $A'$ generated by $a'$ is normal in $G$.
Note that $G/A'$ is generated by the images of $a$ and $b$,
and these images commute. So~$G/A'$ is Abelian, which means that $A'$ contains the commutator subgroup~$A$.
In other words, $A = A'$. 

Since $a$ and $a'$ commute, we have $[a^t, b] = (a')^t$ for all positive integers $t$.
Suppose now that~$a^t$ commutes with $b$. Then
$$
1 = [a^t, b] = (a')^t.
$$
So $t$ must be a multiple of the order of $A$ since $a'$ is a generator of $A$.
Hence $a^t = 1$. So~$1$ is the only element of $A$ that centralizes $B$.
In other words $A \cap Z(B)= \{1 \}$. Of course the center $Z(G)$ is a subgroup of $Z(B)$, and
$B$ is also a subgroup of $Z(B)$ since $B$ is cyclic.
So $A \cap Z(G) = \{1\}$ and $A \cap B = \{1\}$.
This last equation shows that $B$  is a complement to $A$
and so $B$ has size $n$. In particular, the restriction of~$G \to G/A$ to $B$
is an isomorphism $B \to G/A$.

Now suppose $g = a^s b^t$ normalizes $B$. Since $B$ is cyclic, this means that $a^s$ normalizes $B$.
Note that $a^{s} b a^{-s} \in B$ and $b \in B$ map to the same element under the isomorphism $B \to G/A$
so $a^{s} b a^{-s} = b$. 
Thus $a^s \in A \cap Z(B) = \{1\}$. We conclude that the normalizer of $B$
is just $B$ itself: $N(B) = Z(B) = B$.
By the stabilizer-orbit theorem, there are exactly $m$ distinct conjugates of $B$ in $G$.

Next we show that the orders of $A$ and $B$ are relatively prime. Suppose otherwise that a prime $p$ divides the order
of both $A$ and $B$. Then $A$ and $B$ each have a subgroup of order $p$, call them $A'$ and $B'$.
Note that $A'$ and $B'$ are distinct since~$A \cap B = \{ 1 \}$.
Since $A'$ is the unique subgroup of the normal subgroup~$A$ of order~$p$, the subgroup $A'$ must be normal in $G$.
This means $A' B'$  is a group of order~$p^2$. Let $C$ be a $p$-Sylow subgroup of $G$ containing $A'B'$.
Then $C$ contains at least two subgroups ($A'$ and $B'$) of order $p$, contradicting the fact that $C$ is cyclic.

Next we show that $A$ has odd order. This is clear if $G$ has odd order so suppose that~$G$ has even order.
By Corollary~\ref{cyclic_quotient_cor}, $G$ has a quotient  $G/K$ isomorphic to the~$2$-Sylow subgroup of $G$, so $K$ has odd order.
The result follows from the fact that $G/K$ is Abelian and that $A$ is the commutator subgroup so $A \subseteq K$.

Now we show that every subgroup of $G$ of order $n$ is conjugate to $B$.
Since $B$ has exactly $m$ conjugates, it is enough to show that there are at most $m$ subgroups of~$G$ of order $n$.
Suppose $H$ is a subgroup of order $n$. Then its image under the map~$G \to G/A$ must be all of $G/A$ since
$A \cap H = \{1\}$, and the restriction $H \to G/A$ is an isomorphism. The image of $b$ in $G/A$ generates $G/A$, so $H$ has an element of the form $a^t b$ which generates $H$. There are only $m$ elements of the form $a^t b$,
so there can be at most $m$ subgroups of order $n$.

Finally, note that the above arguments apply to any other choice of $b$. In particular, if $b$ is the original
choice, then $ab$ also has order $n$. The group generated by $\left<b\right>$ and $\left< ab \right>$
contains $b$ and $a$, so is all of $G$.
\end{proof}

This yields an interesting corollary:

\begin{corollary}\label{center_cor}
Let $G$ be a Sylow-cyclic subgroup with commutator subgroup $A$.
Then the center $Z(G)$ of $G$ has the property that $A \cap Z(G) = \{1\}$.
In fact, $Z(G)$ is the intersection of all subgroups $B$ of $G$ of order $n = |G/A|$.
\end{corollary}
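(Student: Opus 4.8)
The plan is to work throughout with the explicit structure from Theorem~\ref{semidirect_thm}: write $G = A \rtimes B$ with $A$ cyclic of order $m$, $B$ cyclic of order $n$, $\gcd(m,n) = 1$, and $A \cap B = \{1\}$, and recall that $B$ equals its own centralizer in $G$. First I would dispatch the claim $A \cap Z(G) = \{1\}$: the center $Z(G)$ commutes with every element, in particular with all of $B$, so $Z(G)$ lies in the centralizer of $B$, which is $B$; hence $Z(G) \subseteq B$ and $A \cap Z(G) \subseteq A \cap B = \{1\}$.

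For the identification of $Z(G)$ with the intersection of all subgroups of order $n$, I would prove two inclusions. Let $B_1, \dots, B_m$ be the subgroups of order $n$, which by Theorem~\ref{semidirect_thm} are precisely the conjugates of $B$. Since the centralizer of a conjugate is the conjugate of the centralizer, each $B_i$ is its own centralizer in $G$; as $Z(G)$ commutes with everything, it lies in the centralizer of $B_i$, namely $B_i$, for every $i$. Intersecting gives $Z(G) \subseteq \bigcap_i B_i$.

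For the reverse inclusion I would set $N = \bigcap_i B_i$ and first note that $N$ is normal in $G$, since conjugation permutes the family $\{B_1, \dots, B_m\}$ and hence fixes its intersection. Because $N \subseteq B$, we have $N \cap A = \{1\}$. The crux is to show $N$ centralizes $A$: for $z \in N$ and $a \in A$, the commutator $a z a^{-1} z^{-1}$ lies in $A$ (as $A$ is normal, $z a^{-1} z^{-1} \in A$) and also in $N$ (as $N$ is normal, $a z a^{-1} \in N$), so it lies in $A \cap N = \{1\}$; thus $z$ commutes with every $a \in A$. Since moreover $z \in B$ and $B$ is abelian, $z$ commutes with $B$ as well, and as $G = AB$ we conclude $z \in Z(G)$, giving $N \subseteq Z(G)$.

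The main obstacle is the commutator computation in the last paragraph: the whole argument turns on recognizing that $a z a^{-1} z^{-1}$ can be read simultaneously as an element of $A$ and of $N$, so that the coprimality-driven fact $A \cap N = \{1\}$ forces it to be trivial. Everything else follows directly from the structural facts already recorded, chiefly that the subgroups of order $n$ are mutually conjugate and self-centralizing.
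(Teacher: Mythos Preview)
Your proof is correct and matches the paper's argument for the first two claims: $A\cap Z(G)=\{1\}$ and $Z(G)\subseteq \bigcap_i B_i$ both follow from $Z(G)\subseteq Z(B)=B$ together with the fact that the $B_i$ are the conjugates of $B$.

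The reverse inclusion is where you diverge. The paper observes that $Z_0=\bigcap_i B_i$ lies in each cyclic $B_i$ and hence commutes with each $B_i$; it then invokes the last bullet of Theorem~\ref{semidirect_thm} (that two subgroups of order $n$ already generate $G$) to conclude $Z_0\subseteq Z(G)$. You instead note that $N=\bigcap_i B_i$ is normal, and use the standard commutator trick $[a,z]\in A\cap N=\{1\}$ to show $N$ centralizes $A$ directly, then get $N\subseteq Z(G)$ from $G=AB$. Your route is slightly more self-contained in that it does not appeal to the ``generate'' clause of the structure theorem; the paper's route is a touch shorter because that clause has already been established. Both are clean and equally valid.
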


\begin{proof}
By the above theorem there is a cyclic complement $B_0$ of $A$ such that~$Z(B_0) = B_0$.
Thus $Z(G) \subseteq Z(B_0) = B_0$. All subgroups of $G$ of size $n$ are conjugate to $B$,
so~$Z(G)$ is contained in the intersection $Z_0$ of all subgroups of~$G$ of order $n$.
Also~$A \cap Z(G) = \{1\}$ since $A \cap B_0= \{1\}$.

Note that $Z_0$ commutes with each subgroup of order $n$ since such groups are cyclic.
Since groups of order $n$ generate~$G$, we conclude that $Z_0 = Z(G)$.
\end{proof}

Here is another application of the above theorem, one that shows a strong parallelism between
Sylow-cyclic groups and cyclic groups:

\begin{theorem}\label{divisor_thm}
Let $G$ be a Sylow-cyclic group of order $N$. For any divisors~$d$ of~$N$, there is a subgroup 
of $G$ of order $d$, and all subgroups of order $d$ are conjugate in~$G$.
%
%. If $N$ is a normal subgroup of $G$ of order $d$ whose quotient $G/N$
%is cyclic, then $N$ is the only
%subgroup of $G$ of order $d$.
\end{theorem}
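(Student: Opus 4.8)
The plan is to induct on $|G|$, using the decomposition furnished by Theorem~\ref{semidirect_thm}. Write $G = A \rtimes B$, where $A = [G,G]$ is cyclic of order $m$, $B$ is a cyclic complement of order $n$, and $\gcd(m,n)=1$ with $mn = N$. Because $A$ and $B$ are cyclic and $m,n$ are coprime, each divisor $d$ of $N$ factors uniquely as $d = d_1 d_2$ with $d_1 = \gcd(d,m)$ and $d_2 = \gcd(d,n)$. Existence is then immediate: let $A_1$ be the unique subgroup of $A$ of order $d_1$ and $B_1$ the unique subgroup of $B$ of order $d_2$. Since $A_1$ is characteristic in the normal subgroup $A$, it is normal in $G$, so $A_1 B_1$ is a subgroup; and $A_1 \cap B_1 = \{1\}$ by coprimality gives $|A_1 B_1| = d_1 d_2 = d$.

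The heart of the matter is conjugacy, and the observation that drives the whole induction is that the position of an order-$d$ subgroup relative to $A$ is rigidly determined by $d$. Indeed, if $H$ has order $d$, then from $|H| = |H \cap A|\cdot|HA/A|$, with $|H\cap A|$ dividing $m$ and $|HA/A|$ dividing $n$, coprimality forces $|H\cap A| = d_1$ and $|HA/A| = d_2$. Since $A$ is cyclic, $H \cap A$ must be the \emph{unique} subgroup $A_1$ of $A$ of order $d_1$, and since $G/A$ is cyclic, the image $HA/A$ must be the \emph{unique} subgroup $U$ of $G/A$ of order $d_2$. Thus every subgroup of order $d$ meets $A$ in the same $A_1$ and has the same image $U$ in $G/A$.

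I would then split into two cases. If $d_1 > 1$, so $A_1 \ne \{1\}$: every order-$d$ subgroup contains the normal subgroup $A_1$, so passing to the Sylow-cyclic quotient $G/A_1$, of order $N/d_1 < N$, carries order-$d$ subgroups to order-$d_2$ subgroups; by the inductive hypothesis these are conjugate in $G/A_1$, and since a subgroup containing the kernel $A_1$ is determined by its image, lifting a conjugating element back to $G$ conjugates $H$ to $H'$. If $d_1 = 1$, so $d = d_2$ divides $n$: every order-$d$ subgroup lies inside the preimage $P$ of $U$ in $G$, a Sylow-cyclic group of order $md$. When $d = n$ we have $P = G$, and the order-$n$ subgroups are all conjugate to $B$ directly by Theorem~\ref{semidirect_thm}; when $d < n$ we have $|P| = md < N$, so the inductive hypothesis applied to $P$ (with $d \mid |P|$) gives conjugacy already inside $P \subseteq G$.

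The main obstacle is arranging the induction so that it closes in both directions at once — peeling a factor off the ``bottom'' by dividing out $A_1$, and restricting to the ``top'' via the preimage $P$ — without the two reductions colliding. The pivotal step that makes this work is the rigidity observation of the second paragraph, that $H \cap A$ and $HA/A$ are completely pinned down by $d$; this in turn rests entirely on the coprimality of $m$ and $n$ together with the cyclicity of $A$ and $G/A$ supplied by Theorem~\ref{semidirect_thm}.
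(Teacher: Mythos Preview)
Your proof is correct. It shares with the paper's proof the existence argument and the crucial rigidity observation that $H\cap A$ and $HA/A$ are forced by $d$ alone, and both ultimately bottom out in the conjugacy of order-$n$ complements from Theorem~\ref{semidirect_thm}. The organization of the conjugacy argument differs, however. The paper does not set up a formal induction on $|G|$ with your two-case split; instead, given $H_1,H_2$ of order $d$, it simply replaces $G$ by the subgroup $\langle H_1,H_2\rangle$ (re-choosing $A$ and $B$ for this smaller Sylow-cyclic group). In that reduced situation the common image of the $H_i$ in $G/A$ is all of $G/A$, so one can pick subgroups $B_i\le H_i$ of order $n$ with $H_i=A_0B_i$ where $A_0=H_i\cap A$; Theorem~\ref{semidirect_thm} then conjugates $B_1$ to $B_2$, and since $A_0$ is normal this conjugates $H_1$ to $H_2$. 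Your induction, passing either to the quotient $G/A_1$ or to the preimage $P$ of the common image $U$, is a perfectly clean alternative; the paper's single reduction to $\langle H_1,H_2\rangle$ is a bit more direct but requires the small extra step of producing an order-$n$ subgroup inside each $H_i$.
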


\begin{proof}
By the above theorem, we can write $G$ as $A \rtimes B$ where $A$ and $B$ are cyclic of relatively prime orders
and where $A$ is the commutator subgroup of $G$.

Let $d$ be a divisor of $|G|$ and write $d = e f$ where $e$ divides $|A|$ and 
$f$ divides~$|B|$. Let $A_{e}$ be the unique subgroup of $A$ of order $e$. Since $A_{e}$ is the unique subgroup of~$A$
of order $e$, and since $A$ is normal in $G$, it follows that~$A_{e}$ is normal in $G$.
Let~$B_{f}$ be the unique subgroup of $B$ of order $f$. 
Since $A_{e} \cap B_{f} = \{1\}$, the subgroup~$A_{e} B_{f}$ has order $d = e f$.

Now suppose that $H_1$ and $H_2$ are subgroups of order $d$. Replacing $G$ if necessary,
we can assume that $G$ is generated by $H_1$ and $H_2$. (And we choose $A$ and $B$ for the new $G$).
As before,  write $d = e f$ where~$e$
divides $|A|$ and $f$ divides $|B|$. Note that under the map $G \to G/A$, both~$H_1$ and~$H_2$
have images of size $f$, and the restriction $H_i \to G/A$ has kernel $A \cap H_i$ of size~$e$.
Since $A$ and $G/A$ are cyclic, this means that $H_1$ and $H_2$ have the same image in~$G/A$
and that $A \cap H_1 = A \cap H_2$. 

Let $A_0 = A \cap H_1 = A \cap H_2$ and 
let $G_0/A$ be the image of $H_i$ in $G/A$, where~$G_0$ is a subgroup of $G$ containing~$A$.
Since $G$ is generated by $H_1$ and $H_2$, we must have~$G_0 = G$, and so $f = n = |G/A|$. 
Let $B_i$ a subgroup of $H_i$ of order $n$ (which exists since $H_i \to G/A$ is surjective). Note that $A_0 B_1 = H_1$ and $A_0 B_2 = H_2$.
By Theorem~\ref{semidirect_thm}, $B_1$ and $B_2$ are conjugate subgroups of $G$. Thus $H_1$ and $H_2$ 
are conjugate.
%Now suppose that $N$ is normal subgroup of $G$ and suppose $G/N$ is cyclic.
%Then~$A \subseteq N$ since $A$ is the commutator subgroup of $G$. Let $B_N$ be the image of $N$
%under the projection map $A \rtimes B \to B$. If $x \in B_N$, then there is an $a \in A$ such that $a x \in N$.
%Hence $x \in N$ since $A \subseteq N$. Thus $B_N$ is a subgroup of $N$, and so $A B_N$ is a subgroup of $N$.
%Since $A B_N$ is the largest group whose image under~$A \rtimes B \to B$ is $B_N$, we conclude that $N = A B_N$.
%Suppose $H$ is another subgroup of the same order $d$ as $N$, and consider the image and kernel of $H \hookrightarrow G \to B$.
%Since $A$ and~$B$ are cyclic of relatively prime orders,  the image and
%kernel of $H \hookrightarrow G \to B$ is determined completely by the order $d$. In particular  the image
%must agree with the corresponding image $B_N$ of $N$.
%Thus~$H \subseteq A B_N = N$. Thus $H = N$ since their orders are equal.
\end{proof}

\begin{remark}
Let $G$ be a Sylow-cyclic group of order $N$.
In light of the above theorem, 
we observe that for any divisor~$d$ of $N$ the following are equivalent: (1)~$G$ has a unique subgroup of order~$d$,
(2) $G$ has a characteristic subgroup of order $d$, and (3)~$G$ has a normal subgroup of order~$d$.

We can classify divisors $d$ of $|G|$ based on whether or not there is a unique subgroup of order~$d$. Alternatively
we can classify divisors $d$ based on 
whether or not the subgroups of order $d$ are cyclic.  
We are particularly interested in the divisors $d$ for which both conditions hold: there is a normal cyclic subgroup of order~$d$.
%If there is a cyclic subgroup of order~$d$, then the same is
%also true of all divisors of $d$. 
Since every subgroup of a cyclic normal subgroup is cyclic normal, we conclude
that if there is a normal cyclic subgroup of of order $d$, then the same is true for each divisor of $d$.
The following proposition and corollaries give us further result about such subgroups that are normal and cyclic.
\end{remark}

\begin{lemma} 
Let $G$ be a finite group with the property that any two subgroups of the same prime power order are conjugate.
If $A$ and $B$ are cyclic normal subgroups of~$G$ then~$AB$ is a cyclic normal subgroup of order equal to the 
least common multiple of $|A|$ and $|B|$.
\end{lemma}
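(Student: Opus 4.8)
The plan is to first handle normality, which is automatic, and then construct $AB$ explicitly as an internal direct product of cyclic $p$-groups. Since $A$ and $B$ are normal in $G$, the product $AB$ is a normal subgroup of $G$, so the only real content is that $AB$ is cyclic of order $\mathrm{lcm}(|A|,|B|)$. For each prime $p$ write $A_p$ and $B_p$ for the Sylow $p$-subgroups of $A$ and of $B$; since $A$ and $B$ are cyclic, these are their \emph{unique} subgroups of the respective prime-power orders, hence characteristic in $A$ and in $B$, and therefore normal in $G$ (a characteristic subgroup of a normal subgroup is normal).

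The crucial step, and the only place the hypothesis is used, is to show that for each $p$ one of $A_p, B_p$ contains the other. Assume without loss of generality that $|A_p| \le |B_p|$. Because $B$ is cyclic it has a unique subgroup $B_p'$ of order $|A_p|$, and this order is a prime power, so by hypothesis $A_p$ and $B_p'$ are conjugate in $G$. But $A_p$ is normal in $G$, so its only conjugate is itself; hence $A_p = B_p' \subseteq B_p$. I expect this to be the main (if modest) obstacle: one must combine the uniqueness of subgroups of each order in a cyclic group with the observation that a normal subgroup is its own sole conjugate.

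With this containment established, let $P_p$ be whichever of $A_p, B_p$ is the larger; then $P_p$ is a cyclic normal subgroup of $G$ of order $p^{m_p}$, where $m_p$ is the larger of the exponents of $p$ in $|A|$ and in $|B|$. The $P_p$ have pairwise coprime orders and are each normal, so distinct ones intersect trivially and hence commute elementwise, making $N \defeq \prod_p P_p$ their internal direct product; as a direct product of cyclic groups of pairwise coprime orders it is cyclic, of order $\prod_p p^{m_p} = \mathrm{lcm}(|A|,|B|)$. It remains to identify $N$ with $AB$ by double inclusion: each $P_p$ is $A_p$ or $B_p$ and so lies in $AB$, giving $N \subseteq AB$; conversely $A = \prod_p A_p \subseteq \prod_p P_p = N$ and likewise $B \subseteq N$, so $AB \subseteq N$. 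Therefore $AB = N$ is cyclic of order $\mathrm{lcm}(|A|,|B|)$, completing the proof.
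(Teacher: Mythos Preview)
Your proof is correct and follows essentially the same approach as the paper. The paper organizes the argument as two special cases (coprime orders; both $p$-groups for the same prime) followed by a reduction of the general case via Sylow decomposition, whereas you go directly to the prime-by-prime decomposition and handle the containment $A_p \subseteq B_p$ for each $p$; the key step---using the conjugacy hypothesis together with normality to force equality of subgroups of the same prime-power order---is identical in both.
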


\begin{proof}
Note that $AB$ is a normal subgroup of $G$ since $A$ and $B$ are normal. So we just need to show $AB$
is cyclic of the specified order.

First we consider the case where $A$ and $B$ are of relatively prime order. 
Since~$A$ and $B$ are normal, and since $A \cap B = \{1\}$, we have that $AB$ 
 is isomorphic to $A \times B$. Here $A \times B$ is cyclic
of order equal to $|A| |B|$ and so the result holds. 

Next we consider the case where $A$ and $B$ are $p$-groups for the same prime~$p$.
Either~$|A|$ divides $|B|$ or $|B|$ divides $|A|$. Suppose, say, that $|A|$ divides $|B|$.
Then~$A$ has the same order as a subgroup of $B$ (Theorem~\ref{sylow1_thm}), but there is a unique subgroup of order $|A|$ in $G$ since $A$ is normal.
So $A\subseteq B$ and so $AB = B$. The  result holds in this case as well.

In the general case, we have
$$
A B = P_1 \cdots P_k Q_1 \cdots Q_l
$$
where each $P_i$ is a Sylow subgroup of $A$ and each $Q_j$ is a Sylow subgroup of $B$.
If~$X$ and $Y$ are two Sylow subgroups in the above product then $XY = YX$ since~$X$ and~$Y$ are normal
subgroups of $G$ (all subgroups of a cyclic normal subgroup are cyclic normal subgroups). Thus we can rearrange the terms and reduce to the special cases listed above
\end{proof}

\begin{corollary} \label{subgroups_cg_corollary}
Let $G$ be a Sylow-cyclic subgroup of order $N$. 
Then there is a maximum cyclic characteristic (MCC) subgroup $\mu(G)$ of $G$
that contains all normal cyclic subgroups. Every subgroup of $\mu(G)$ is normal cyclic, and is characteristic.
\end{corollary}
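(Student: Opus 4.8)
The plan is to build $\mu(G)$ as the subgroup generated by \emph{all} cyclic normal subgroups of $G$, and then to show this generated subgroup is itself cyclic, normal, and characteristic. The engine for cyclicity is the preceding lemma: since $G$ is Sylow-cyclic, Theorem~\ref{divisor_thm} tells us that any two subgroups of $G$ of the same order---in particular of the same prime power order---are conjugate, so the hypothesis of that lemma is satisfied. Hence the product of two cyclic normal subgroups of $G$ is again a cyclic normal subgroup, of order their least common multiple.

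First I would let $\mathcal{N}$ denote the collection of cyclic normal subgroups of $G$, which is finite and nonempty (it contains the trivial subgroup), and set $\mu(G)$ equal to the subgroup generated by $\mathcal{N}$. Iterating the preceding lemma across the finitely many members of $\mathcal{N}$ shows that $\mu(G)$ is cyclic and normal; by construction it contains every member of $\mathcal{N}$, so it is the maximum cyclic normal subgroup and contains all normal cyclic subgroups, as required.

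Next I would verify that $\mu(G)$ is characteristic. Any automorphism of $G$ carries cyclic normal subgroups to cyclic normal subgroups, so it permutes the family $\mathcal{N}$ and therefore fixes the subgroup $\mu(G)$ that $\mathcal{N}$ generates. This yields the ``characteristic'' half of ``cyclic characteristic'' directly, without any appeal to uniqueness of subgroups of a given order.

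Finally, for the closing sentence I would take an arbitrary subgroup $H$ of $\mu(G)$. Since $\mu(G)$ is cyclic, $H$ is cyclic and is the unique subgroup of $\mu(G)$ of its order, hence characteristic in $\mu(G)$. Because $\mu(G)$ is characteristic in $G$, and a characteristic subgroup of a characteristic subgroup is characteristic, $H$ is characteristic in $G$, and in particular normal and cyclic. The one point requiring real care is the cyclicity of $\mu(G)$: this rests entirely on the preceding lemma, fed by the conjugacy of equal-order subgroups from Theorem~\ref{divisor_thm}, together with the routine induction that promotes the two-subgroup form of that lemma to the full finite family $\mathcal{N}$.
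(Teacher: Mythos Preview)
Your proposal is correct and follows essentially the same approach as the paper: define $\mu(G)$ as the product of all cyclic normal subgroups, invoke Theorem~\ref{divisor_thm} to feed the preceding lemma and conclude this product is again cyclic normal, and then observe that subgroups of a cyclic characteristic subgroup are themselves characteristic. The only cosmetic difference is that the paper obtains ``characteristic'' by citing the equivalence (from the remark after Theorem~\ref{divisor_thm}) that in a Sylow-cyclic group a subgroup is normal if and only if it is characteristic, whereas you argue it directly from the fact that automorphisms permute the family $\mathcal{N}$; both routes are immediate.
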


\begin{proof}
By Theorem~\ref{divisor_thm}, a subgroup of $G$ is normal if and only if it is characteristic.
Define $\mu(G)$ to be the product of all normal cyclic subgroups of $G$. 
Theorem~\ref{divisor_thm} allows us to use the above lemma to conclude that $\mu(G)$ is a normal
cyclic subgroup, so is characteristic.
By construction it contains all normal cyclic subgroups of~$G$, so is the maximum (under the inclusion relation) among cyclic characteristic subgroups.
Note that every subgroup of a normal cyclic group is a normal cyclic group.
\end{proof}

The following shows that a necessarily condition for a Sylow-cyclic group $G$  of even order to be freely representable is that 
$\mu(G)$ have even order.

\begin{corollary}\label{unique_cyclic_cor}
Let $G$ be a Sylow-cyclic subgroup of order $N>1$ and let $D$ be the order of the
maximal cyclic characteristic subgroup $\mu(G)$ of $G$.
Then the maximum prime divisor of $N$ divides $D$ to the same order as it divides $N$, so $D>1$.
Also $G$ has a unique cyclic subgroup of order $d$ if and only if $d$ divides $D$.
Thus $G$ has a unique element of order two if and only if $D$ is even.
\end{corollary}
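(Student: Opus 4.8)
The plan is to derive all three assertions from three earlier facts: the existence of the maximum cyclic characteristic subgroup $\mu(G)$ (Corollary~\ref{subgroups_cg_corollary}), which is cyclic of order $D$ and contains every normal cyclic subgroup of $G$; the conjugacy of all subgroups of a fixed order (Theorem~\ref{divisor_thm}); and the normality of the top Sylow subgroup (Theorem~\ref{largest_prime_thm}). The whole argument then reduces to one central equivalence, with the first and last assertions as a special input and a special case respectively.

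First I would dispose of the claim about the largest prime. Let $q$ be the maximum prime dividing $N$ and let $q^l$ be the exact power of $q$ in $N$. By Theorem~\ref{largest_prime_thm} the $q$-Sylow subgroup $Q$ is unique and normal, and being a Sylow subgroup of a Sylow-cyclic group it is cyclic. Thus $Q$ is a normal cyclic subgroup, so $Q \subseteq \mu(G)$ and $q^l = |Q|$ divides $D$. Since $\mu(G)$ is a subgroup of $G$, $D$ divides $N$, so the power of $q$ in $D$ is at most $l$; combined with $q^l \mid D$ this forces the power of $q$ in $D$ to be exactly $l$, matching the power in $N$. As $l \ge 1$ this also gives $D > 1$.

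Next I would prove the equivalence that $G$ has a unique cyclic subgroup of order $d$ if and only if $d \mid D$. For the ``if'' direction, assume $d \mid D$. Since $\mu(G)$ is cyclic of order $D$ it has a unique subgroup $C_0$ of order $d$, which is cyclic and, by Corollary~\ref{subgroups_cg_corollary}, normal in $G$. By Theorem~\ref{divisor_thm} every subgroup of $G$ of order $d$ is conjugate to $C_0$; but $C_0$ is normal, so it is the only subgroup of order $d$ at all, hence in particular the unique cyclic one. For the ``only if'' direction, suppose $C$ is the unique cyclic subgroup of order $d$. For any $g \in G$ the conjugate $g C g^{-1}$ is again cyclic of order $d$, hence equals $C$; thus $C$ is a normal cyclic subgroup and so $C \subseteq \mu(G)$, giving $d = |C| \mid D$.

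Finally, the statement about elements of order two is the case $d = 2$ of this equivalence: an element of order two generates a subgroup of order two, and conversely each subgroup of order two contains exactly one nonidentity element, so elements of order two correspond bijectively to (necessarily cyclic) subgroups of order two. Hence $G$ has a unique element of order two exactly when it has a unique cyclic subgroup of order two, which by the equivalence happens exactly when $2 \mid D$, that is, when $D$ is even. The only step demanding care is the ``if'' direction of the equivalence, where one must combine the normality of $C_0$ with Theorem~\ref{divisor_thm}; I expect this interplay---normality collapsing a whole conjugacy class of order-$d$ subgroups down to a single subgroup---to be the crux, while the remaining assertions are bookkeeping.
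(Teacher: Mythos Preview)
Your proof is correct and follows essentially the same approach as the paper, which tersely cites Theorem~\ref{largest_prime_thm} and Corollary~\ref{subgroups_cg_corollary}; you simply make explicit the role of Theorem~\ref{divisor_thm} (conjugacy of subgroups of equal order) in collapsing a normal subgroup's conjugacy class to a singleton, which the paper leaves implicit via the remark following Theorem~\ref{divisor_thm}.
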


\begin{proof}
The first claim follows from Theorem~\ref{largest_prime_thm} and the previous corollary.
The other claims follow directly from the previous corollary. 
\end{proof}

We can give other useful characterizations of~$\mu(G)$. We start with the following lemma:

\begin{lemma}
Let $G$ be a Sylow-cyclic group with commutator subgroup $G'$. Then the centralizer $Z(G')$
of $G'$ in $G$ is equal to $G' \cdot Z(G)$ where $Z(G)$ is the center of $G$. Furthermore $G'$ and $Z(G)$ have relatively prime orders and 
$G' \cdot Z(G)$ is a characteristic cyclic subgroup isomorphic to $G' \times Z(G)$.
\end{lemma}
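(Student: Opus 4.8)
The plan is to leverage the structure $G = A \rtimes B$ provided by Theorem~\ref{semidirect_thm}, where $A = G'$ and $B$ are cyclic of relatively prime orders $m$ and $n$, with $A$ normal, $m$ odd, and $B$ self-centralizing ($Z(B) = B$). I would also invoke Corollary~\ref{center_cor}, which identifies $Z(G)$ as the intersection of all subgroups of $G$ of order $n$; in particular $Z(G) \subseteq B$ for our chosen complement $B$. Writing $C$ for the centralizer $Z(G')$ of $G' = A$ in $G$, the goal is to show $C = A \cdot Z(G)$.

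First I would compute $C$ using the unique factorization $g = ab$ with $a \in A$ and $b \in B$. Since $A$ is abelian it lies in $C$, so $g = ab \in C$ if and only if $b = a^{-1}g \in C$, which gives $C = A \cdot (C \cap B)$. Next I would identify $C \cap B$ with $Z(G)$: if $b \in B$ centralizes $A$, then since $B$ is cyclic $b$ also centralizes $B$, hence centralizes $G = AB$, so $b \in Z(G)$; conversely $Z(G) \subseteq B$ centralizes $A$. Thus $C \cap B = Z(G)$, and $Z(G') = C = A \cdot Z(G) = G' \cdot Z(G)$.

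Once the subgroup is identified, the remaining assertions follow readily. The orders of $G'$ and $Z(G)$ are relatively prime because $|G'| = m$ while $|Z(G)|$ divides $n = |B|$ and $\gcd(m,n) = 1$. Because $Z(G) \subseteq B$ and $A \cap B = \{1\}$, we have $G' \cap Z(G) = \{1\}$; since $Z(G)$ is central it commutes elementwise with $G'$, and both subgroups are normal, so $G' \cdot Z(G)$ is an internal direct product isomorphic to $G' \times Z(G)$. This product is cyclic, being a product of two cyclic groups of relatively prime order. Finally, $G'$ is characteristic (it is the commutator subgroup), so its centralizer $Z(G') = G' \cdot Z(G)$ is carried to itself by every automorphism of $G$ and is thus characteristic; alternatively, it is a normal cyclic subgroup of a Sylow-cyclic group and hence characteristic by Theorem~\ref{divisor_thm}.

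The main step—and the only place where any genuine argument is needed—is the identification $C \cap B = Z(G)$, which hinges on the observation that an element of the abelian complement $B$ commuting with all of $A$ automatically commutes with all of $G$. Everything else is bookkeeping with the coprime-order decomposition furnished by the structure theorem.
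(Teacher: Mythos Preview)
Your proof is correct and follows essentially the same approach as the paper: both use the decomposition $G = G' \rtimes B$ from Theorem~\ref{semidirect_thm}, observe that any $ab \in Z(G')$ forces $b \in Z(G')$ (since $G'$ is abelian), and then use the fact that $B$ is cyclic to conclude $b \in Z(G)$, with Corollary~\ref{center_cor} supplying $Z(G) \subseteq B$ and hence the coprime-order direct product structure. Your explicit framing of the key step as $C \cap B = Z(G)$ and your justification of ``characteristic'' are slightly more detailed than the paper's version, but the argument is the same.
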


\begin{proof}
Let $B$ be a cyclic complement of $G'$. Suppose $ab \in Z(G')$ where $a \in G'$ and~$b \in B$.
Of course $a \in Z(G')$ since $G'$ is cyclic, so $b \in Z(G')$. Also $b\in Z(B)$ since~$B$ is cyclic,
so $b \in Z(G'B) = Z(G)$. We conclude that $Z(G') \subseteq G'\cdot Z(G)$. 
The other inclusion is clear.

Since $Z(G) \subseteq B$ and so $G' \cap Z(G) = \{ 1\}$ (Corollary~\ref{center_cor}) we have that $Z(G)$ is  normal  of relatively prime order to $|G'|$ and so
$G' \cdot Z(G)$ must be isomorphic to the cyclic group $G'\times Z(G)$.
\end{proof}

\begin{proposition}\label{MCC_prop}
Let $G$ be a Sylow-cyclic group with commutator subgroup $G'$
and  maximum cyclic characteristic subgroup $\mu(G)$. Then
$$
\mu(G) = Z(G') = G' \cdot Z(G)
$$ 
where $Z(G')$ is the centralizer of $G'$ in $G$, and where $Z(G)$ is the center of $G$.
\end{proposition}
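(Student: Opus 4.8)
The plan is to establish the two displayed equalities separately. The identity $Z(G') = G' \cdot Z(G)$ is already supplied by the immediately preceding lemma, so the real content is the equality $\mu(G) = G' \cdot Z(G)$, which I would prove by a double inclusion and then splice together with the lemma to obtain the full chain.

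For the inclusion $G' \cdot Z(G) \subseteq \mu(G)$, I would simply invoke the preceding lemma, which asserts that $G' \cdot Z(G)$ is a characteristic cyclic subgroup of $G$. Since $\mu(G)$ is, by Corollary~\ref{subgroups_cg_corollary}, the maximum cyclic characteristic subgroup of $G$ (indeed it contains every normal cyclic subgroup), this inclusion is immediate.

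For the reverse inclusion the key observation is that $G'$ itself lies inside $\mu(G)$. By Proposition~\ref{sylow_cyclic_commutator_prop} the commutator subgroup $G'$ is cyclic, and it is of course normal, so it is one of the normal cyclic subgroups absorbed into $\mu(G)$ by Corollary~\ref{subgroups_cg_corollary}; hence $G' \subseteq \mu(G)$. Now $\mu(G)$ is cyclic, hence Abelian, so every element of $\mu(G)$ commutes with every element of the subgroup $G' \subseteq \mu(G)$. This says precisely that $\mu(G)$ is contained in the centralizer $Z(G')$ of $G'$ in $G$. Rewriting $Z(G') = G' \cdot Z(G)$ via the preceding lemma yields $\mu(G) \subseteq G' \cdot Z(G)$.

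Combining the two inclusions gives $\mu(G) = G' \cdot Z(G)$, and together with the lemma's identity $Z(G') = G' \cdot Z(G)$ we obtain $\mu(G) = Z(G') = G' \cdot Z(G)$, as claimed. I do not anticipate a genuine obstacle: the whole argument rests on the preceding lemma together with the single structural fact that $G' \subseteq \mu(G)$, after which the cyclicity (hence commutativity) of $\mu(G)$ does all the remaining work. The only points to double-check are that $G'$ is both cyclic and normal, so that it is genuinely captured by $\mu(G)$, and these are guaranteed by Proposition~\ref{sylow_cyclic_commutator_prop} and the definition of the commutator subgroup respectively.
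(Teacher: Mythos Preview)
Your proposal is correct and follows essentially the same approach as the paper's proof: both establish $\mu(G)\subseteq Z(G')$ by noting that $\mu(G)$ is Abelian and contains $G'$, and obtain the reverse inclusion from the preceding lemma's assertion that $Z(G')=G'\cdot Z(G)$ is characteristic cyclic. You simply make explicit the step $G'\subseteq\mu(G)$ (via Proposition~\ref{sylow_cyclic_commutator_prop} and Corollary~\ref{subgroups_cg_corollary}) that the paper asserts without further comment.
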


\begin{proof}
Since $\mu(G)$ is an Abelian group containing $G'$, we have $\mu(G) \subseteq Z(G')$.
By the above lemma, $Z(G')$ is a characteristic cyclic subgroup of $G$, which forces equality.
\end{proof}

A necessary condition for a Sylow-cyclic group of even order to be freely representable is that its center
have even order:

\begin{corollary}\label{new_corollary2}
Let $G$ be a Sylow-cyclic group with 
  maximal cyclic characteristic subgroup $\mu(G)$ and center $Z(G)$.
Then $G$ has a unique element of order $2$ if and only if the center $Z(G)$ has even order.
\end{corollary}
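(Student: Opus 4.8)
The plan is to reduce the parity of $|Z(G)|$ to the parity of $D = |\mu(G)|$ and then quote Corollary~\ref{unique_cyclic_cor}, which already characterizes the existence of a unique element of order $2$ in terms of whether $D$ is even. So the whole statement becomes a bookkeeping argument about orders.

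First I would recall from Corollary~\ref{unique_cyclic_cor} that $G$ has a unique element of order $2$ if and only if $D$ is even, where $D$ is the order of $\mu(G)$. Thus it is enough to prove that $D$ is even precisely when $|Z(G)|$ is even. To that end, Proposition~\ref{MCC_prop} gives $\mu(G) = G' \cdot Z(G)$, where $G'$ is the commutator subgroup and $Z(G)$ the center; and the lemma immediately preceding that proposition tells us that $G'$ and $Z(G)$ have relatively prime orders with $G' \cdot Z(G) \cong G' \times Z(G)$. Hence
$$
D = |\mu(G)| = |G'| \cdot |Z(G)|.
$$

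Finally I would invoke Theorem~\ref{semidirect_thm}, whose statement includes the fact that the commutator subgroup (the group $A$ there) has odd order; so $|G'|$ is odd. Since multiplying by an odd number does not change parity, $D = |G'|\cdot|Z(G)|$ is even if and only if $|Z(G)|$ is even. Combining this equivalence with the first step yields the claim. I do not expect any genuine obstacle: the only step that requires care is the clean transfer of parity, and that is exactly where the oddness of $|G'|$ from Theorem~\ref{semidirect_thm} is essential; without it the factor $|G'|$ could itself contribute the factor of $2$.
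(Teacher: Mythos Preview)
Your proof is correct and follows essentially the same approach as the paper: both use $\mu(G) = G' \cdot Z(G)$ together with the oddness of $|G'|$ from Theorem~\ref{semidirect_thm}. You spell out the intermediate step through Corollary~\ref{unique_cyclic_cor} and the direct product decomposition more explicitly than the paper's one-line proof, but the logical content is identical.
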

\begin{proof}
This follows from the fact that $\mu(G) = G' \cdot Z(G)$ (as above) and the fact that
the commutator subgroup $G'$ has odd order (Theorem~\ref{semidirect_thm}).
\end{proof}

Above we considered subgroups that are normal and cyclic. Now we add a third requirement
that the quotient be cyclic as well:

\begin{definition}
Let $G$ be a finite group. A \emph{metacyclic kernel} $C$ is any normal subgroup of $G$
such that both $C$ and $G/C$ are cyclic.\footnote{This is not a standard term in the literature, as far as I know. I came up with this terminology  based on the term \emph{metacyclic group} which is a fairly standard term
for a group $G$ with a normal subgroup $C$ 
such that $C$ and $G/C$ are both cyclic.}
For example, we have shown
that the commutator subgroup of a Sylow-cyclic
group is a metacyclic kernel.
Observe that any metacyclic kernel is a characteristic cyclic subgroup of $G$ and so is contained in the MCC subgroup $\mu(G)$.
\end{definition}

\begin{lemma} \label{super_lemma}
Let $G$ be a finite group with a metacyclic kernel $K$. If $C$ is a cyclic subgroup  of $G$ containing $K$ 
then $C$ is also a metacyclic kernel.
\end{lemma}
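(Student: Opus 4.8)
The plan is to verify directly the three defining properties of a metacyclic kernel for $C$: that $C$ is normal in $G$, that $C$ is cyclic, and that $G/C$ is cyclic. The middle condition is free, since $C$ is cyclic by hypothesis, so the real work is to establish normality of $C$ and cyclicity of the quotient $G/C$. The whole argument runs by passing to the quotient $G/K$, which we know to be cyclic because $K$ is a metacyclic kernel.

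First I would observe that since $K \subseteq C \subseteq G$ with $K$ normal in $G$, the image $C/K$ is a subgroup of $G/K$. As $G/K$ is cyclic, every subgroup of it is cyclic, so $C/K$ is cyclic; moreover $G/K$ is abelian, so $C/K$ is automatically normal in $G/K$. By the correspondence theorem (lattice isomorphism theorem), a subgroup of $G$ containing $K$ is normal in $G$ exactly when its image is normal in $G/K$; hence $C$ is normal in $G$. Next, the third isomorphism theorem gives $G/C \cong (G/K)/(C/K)$, which is a quotient of the cyclic group $G/K$ and therefore cyclic. Combined with the hypothesis that $C$ itself is cyclic, this shows $C$ is a normal subgroup with both $C$ and $G/C$ cyclic, i.e.\ a metacyclic kernel.

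I do not anticipate a genuine obstacle here: the statement is essentially a bookkeeping exercise with the isomorphism theorems applied to the chain $K \subseteq C \subseteq G$. The one point worth flagging is that the normality of $C$ in $G$ is \emph{not} immediate from $C$ being cyclic or from $K$ being normal; it relies crucially on $G/K$ being abelian (indeed cyclic), which is precisely where the metacyclic-kernel hypothesis on $K$ enters. Once that is noted, the cyclicity of $G/C$ follows formally, and the proof is complete.
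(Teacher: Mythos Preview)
Your proof is correct and follows essentially the same approach as the paper: pass to the cyclic quotient $G/K$, use that every subgroup of a cyclic group is normal to get $C$ normal in $G$ via the correspondence theorem, and invoke the third isomorphism theorem to see $G/C \cong (G/K)/(C/K)$ is cyclic. The paper's write-up is slightly terser but the logic is identical.
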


\begin{proof}
There is a natural correspondence between subgroups of $G/K$ and subgroups of $G$ containing $K$, and that this correspondence 
is well-behaved
for normal subgroups. Let $\overline C$ be the image of $C$ in $G/K$.
Since $G/K$ is cyclic, $\overline C$ is a normal subgroup of $G/K$. It follows that $C$ is a normal subgroup of $G$. The quotient
$G/K$ by $\overline C$ is cyclic, since $G/K$ is cyclic. But this quotient is isomorphic to $G/C$.
\end{proof}

\begin{lemma} \label{metacycle_range_lemma}
Let $G$ be a Sylow-cyclic subgroup. Let $G'$ be the commutator subgroup
and let $Z(G')$ be the centralizer of $G'$. Then a subgroup $C$ of $G$ is a metacyclic
kernel if and only if 
$$
G' \subseteq C \subseteq Z(G').
$$
\end{lemma}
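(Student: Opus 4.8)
The plan is to prove the two implications separately, relying on the facts that the commutator subgroup $G'$ is itself a metacyclic kernel and that $Z(G') = \mu(G)$ is cyclic (the preceding lemma together with Proposition~\ref{MCC_prop}), so that Lemma~\ref{super_lemma} can be invoked in the backward direction.

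For the forward direction, suppose $C$ is a metacyclic kernel, so that $C$ is cyclic and $G/C$ is cyclic. Since $G/C$ is cyclic it is Abelian, and a quotient $G/C$ is Abelian exactly when $C$ contains the commutator subgroup; hence $G' \subseteq C$. For the upper bound I would argue directly: $C$ is cyclic, hence Abelian, so every element of $C$ commutes with every element of the subgroup $G' \subseteq C$, which is precisely the statement $C \subseteq Z(G')$. This gives $G' \subseteq C \subseteq Z(G')$. (Alternatively one could note that a metacyclic kernel is characteristic and cyclic, hence contained in $\mu(G) = Z(G')$, but the centralizer argument is self-contained.)

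For the backward direction, suppose $C$ is a subgroup with $G' \subseteq C \subseteq Z(G')$. Because $Z(G') = \mu(G)$ is cyclic, the intermediate subgroup $C$ is cyclic as well. Moreover $G'$ is a metacyclic kernel, being the commutator subgroup of a Sylow-cyclic group. Thus $C$ is a cyclic subgroup of $G$ containing the metacyclic kernel $G'$, and Lemma~\ref{super_lemma} immediately yields that $C$ is itself a metacyclic kernel.

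I do not anticipate a genuine obstacle: both directions are short once the problem is funneled through Lemma~\ref{super_lemma} and the identification $\mu(G) = Z(G')$. The only point demanding a little care is recognizing that the upper inclusion in the forward direction is merely the observation that a cyclic (hence Abelian) subgroup containing $G'$ automatically centralizes $G'$, so that no appeal to the full structure theorem is needed there.
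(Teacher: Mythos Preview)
Your proof is correct and follows essentially the same approach as the paper: the forward direction uses that $G/C$ Abelian forces $G'\subseteq C$ and that $C$ cyclic (hence Abelian) forces $C\subseteq Z(G')$, while the backward direction uses that $Z(G')=\mu(G)$ is cyclic (Proposition~\ref{MCC_prop}) together with Lemma~\ref{super_lemma}.
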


\begin{proof}
First suppose that $C$ is a metacyclic kernel. Since $G/C$ is cyclic, hence Abelian, it follows that $G' \subseteq C$.
Since $C$ is cyclic, hence Abelian, $C \subseteq Z(G')$. So one implication is established.

Suppose $G'\subseteq C \subseteq Z(G')$. Since $Z(G')$ is cyclic (Proposition~\ref{MCC_prop}),
we have that~$C$ is cyclic. So $C$ is a metacyclic kernel by the above lemma.
\end{proof}

\begin{remark}
Let $G$ be a Sylow-cyclic group and let $Z(G')$ be the centralizer of the commutator group $G'$ in $G$,
which is also equal to the MCC subgroup of $G$ (Proposition~\ref{MCC_prop}).
Then  $Z(G') = \mu(G)$ is the \emph{maximum metacyclic kernel}, and 
$G'$ the \emph{minimum metacyclic kernel}. These groups are important invariants of $G$,
and~$\mu(G)$ will be used to identify whether or not $G$ is freely representable.
\end{remark}

\begin{corollary} \label{MCC_maximal_cor}
Let $G$ be a Sylow-cyclic group
and let $\mu(G)$ be the MCC subgroup of $G$.
Then $\mu(G)$ is a maximal cyclic subgroup of $G$.
\end{corollary}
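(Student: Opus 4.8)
The plan is to show that $\mu(G)$ admits no proper cyclic overgroup, using the identification $\mu(G) = Z(G')$ from Proposition~\ref{MCC_prop}, where $G'$ is the commutator subgroup and $Z(G')$ denotes its centralizer in $G$. First I would record that $G' \subseteq \mu(G)$: the commutator subgroup of a Sylow-cyclic group is a metacyclic kernel, hence a characteristic cyclic subgroup contained in $\mu(G)$, and this is in any case immediate from the formula $\mu(G) = G' \cdot Z(G)$ of Proposition~\ref{MCC_prop}.

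The main step is then the following. Let $C$ be any cyclic subgroup of $G$ with $\mu(G) \subseteq C$; the goal is to force $C = \mu(G)$. Since $G' \subseteq \mu(G) \subseteq C$ and $C$ is cyclic, hence Abelian, every element of $C$ commutes with every element of $G'$. Thus $C$ lies in the centralizer of $G'$, that is, $C \subseteq Z(G')$. But $Z(G') = \mu(G)$ by Proposition~\ref{MCC_prop}, so $C \subseteq \mu(G)$; combined with the hypothesis $\mu(G) \subseteq C$ this gives $C = \mu(G)$. Hence no cyclic subgroup of $G$ properly contains $\mu(G)$, which is precisely the assertion that $\mu(G)$ is a maximal cyclic subgroup.

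I expect essentially no serious obstacle here: the entire argument rests on the single observation that a cyclic (therefore Abelian) subgroup containing $G'$ must centralize $G'$ and so sit inside $Z(G')$. The only point requiring a little care is invoking Proposition~\ref{MCC_prop} in both of its guises—to guarantee $G' \subseteq \mu(G)$ and to identify $\mu(G)$ with $Z(G')$—so that the centralizer computation closes the loop. One could equivalently route the middle step through Lemma~\ref{metacycle_range_lemma}, observing that any cyclic $C$ with $G' \subseteq C$ automatically satisfies $G' \subseteq C \subseteq Z(G')$ and is therefore a metacyclic kernel lying inside $\mu(G) = Z(G')$; but the direct centralizer argument is the cleanest presentation.
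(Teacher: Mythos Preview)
Your argument is correct. The paper's proof is a one-liner citing Lemma~\ref{super_lemma}: since $\mu(G)$ is the \emph{maximum} metacyclic kernel, and any cyclic subgroup containing a metacyclic kernel is again a metacyclic kernel (Lemma~\ref{super_lemma}), any cyclic $C \supseteq \mu(G)$ is a metacyclic kernel and hence equals $\mu(G)$. You instead route through Proposition~\ref{MCC_prop}, using the identification $\mu(G) = Z(G')$ directly: a cyclic $C \supseteq \mu(G) \supseteq G'$ is Abelian, hence centralizes $G'$, hence lies in $Z(G') = \mu(G)$. Both arguments are short and rest on the same circle of ideas; yours is arguably the more transparent once Proposition~\ref{MCC_prop} is available, while the paper's keeps the logic phrased in terms of metacyclic kernels. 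You yourself note the Lemma~\ref{metacycle_range_lemma} alternative, which is essentially the paper's route in disguise.
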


\begin{proof}
Since $\mu(G)$ is the maximum among metacyclic kernels, it is maximal among all cyclic groups by
Lemma~\ref{super_lemma}.
\end{proof}

In practice $\mu(G)$ can be calculated using automorphisms from any semidirect decomposition of $G$
into cyclic groups of relatively prime order:

\begin{lemma}\label{mck_lemma}
Suppose $G$ is the semidirect product $A \rtimes B$ of two cyclic groups of relatively prime order.
As usual we identify $A$ and $B$ with subgroup of $G$.
Then the MCC subgroup $\mu(G)$ of $G$ is $AK$ where $K$ is the kernel of the
associated action map $B \to \mathrm{Aut}(A)$.
\end{lemma}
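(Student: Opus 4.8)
The plan is to prove the two inclusions $AK \subseteq \mu(G)$ and $\mu(G) \subseteq AK$ separately, using throughout the description of $\mu(G)$ furnished by Corollary~\ref{subgroups_cg_corollary}: it is a cyclic subgroup that contains every normal cyclic subgroup of $G$. First I would record that $\mu(G)$ is even defined here: since $A$ and $B$ are cyclic (hence Sylow-cyclic) of relatively prime orders, $G = A \rtimes B$ is Sylow-cyclic by Corollary~\ref{semi-direct_cor}.

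For the inclusion $AK \subseteq \mu(G)$ I would show that $AK$ is itself a normal cyclic subgroup. The subgroup $K = \ker(B \to \Aut(A))$ centralizes $A$ by the very definition of the kernel, and it centralizes $B$ because $B$ is abelian; as $A$ and $B$ generate $G$, this puts $K$ inside the center $Z(G)$, so in particular $K$ is normal in $G$. Being a subgroup of the cyclic group $B$, $K$ is cyclic, and since $|K|$ divides $|B|$ which is coprime to $|A|$, we get $A \cap K = \{1\}$ with $A$ and $K$ commuting elementwise. Hence $AK \cong A \times K$ is a product of cyclic groups of coprime order, so $AK$ is cyclic; as a product of the two normal subgroups $A$ and $K$ it is also normal. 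Thus $AK$ is a normal cyclic subgroup, and therefore $AK \subseteq \mu(G)$ by Corollary~\ref{subgroups_cg_corollary}.

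For the reverse inclusion the decisive remark is that $A$, being normal and cyclic, already lies in $\mu(G)$ (again Corollary~\ref{subgroups_cg_corollary}). Given $x \in \mu(G)$, write $x = ab$ with $a \in A$ and $b \in B$; then $b = a^{-1}x$ lies in $\mu(G)$ because both $a$ and $x$ do, so $b \in B \cap \mu(G)$. Since $\mu(G)$ is cyclic, hence abelian, and contains all of $A$, the element $b$ commutes with every element of $A$; that is, $b$ centralizes $A$ and so $b \in K$. Therefore $x = ab \in AK$, which gives $\mu(G) \subseteq AK$ and hence equality.

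I expect the main conceptual point to be recognizing that no module-theoretic computation (of $G'$, or of $Z(G')$ via Proposition~\ref{MCC_prop}) is actually required: the abelian-ness of $\mu(G)$ together with $A \subseteq \mu(G)$ already forces the $B$-component of any element of $\mu(G)$ to centralize $A$, which is exactly the defining condition for membership in $K$. The only genuinely load-bearing hypotheses are the relative primality of $|A|$ and $|B|$ — used to get $A \cap K = \{1\}$ and the cyclicity of $A \times K$ — and the fact that $G$ is Sylow-cyclic so that Corollary~\ref{subgroups_cg_corollary} applies; everything else is routine bookkeeping.
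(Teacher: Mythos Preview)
Your proof is correct and follows essentially the same route as the paper's. The reverse inclusion argument is identical: write $g = ab$, observe $b \in \mu(G)$ because $a \in A \subseteq \mu(G)$, and use the abelianness of $\mu(G)$ to force $b \in K$. For the forward inclusion the paper takes a slightly different path---it notes that $AK$ is Abelian and Sylow-cyclic (hence cyclic), then invokes the metacyclic-kernel machinery (Lemma~\ref{super_lemma} and Lemma~\ref{metacycle_range_lemma}) to place $AK$ inside $\mu(G)$---whereas you argue directly that $AK$ is normal cyclic and appeal to Corollary~\ref{subgroups_cg_corollary}. Your route is arguably more self-contained here, since it avoids the detour through metacyclic kernels, but both are doing the same underlying work.
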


\begin{proof}
Note that $G$ is Sylow-cyclic and that $A$ is a metacyclic kernel.
Since~$K$ acts trivially on $A$ via conjugation, the 
group $AK$ is Abelian. Since $AK$ is Sylow-cyclic,~$AK$ is cyclic. 
Thus $AK$ is a metacyclic kernel (Lemma~\ref{super_lemma}).

By Lemma~\ref{metacycle_range_lemma}, $AK$ is a subgroup of $Z(G') = \mu (G)$.
Let $g \in \mu(G)$. Write~$g$ as~$ab$ where~$a \in A$ and~$b\in B$.
Observe that $b \in \mu(G)$ since $a \in AK \subseteq \mu (G)$. 
Since $\mu(G)$ is an Abelian group containing $A$, we see that $b$ acts trivially on $A$ and so~$b \in K$. Thus~$g = ab$ is in~$AK$.
We conclude that $A K = \mu (G)$.
\end{proof}

We will need the following later in our proof of Wedderburn's theorem.

\begin{corollary}\label{mck_size_cor}
Let $G$ be a Sylow-cyclic group with MCC subgroup $\mu(G)$. Then $$|\mu(G)| > [G:\mu(G)].$$
\end{corollary}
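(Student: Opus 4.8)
The plan is to pass to the semidirect decomposition supplied by Theorem~\ref{semidirect_thm} and reduce the inequality to the elementary fact that $\phi(m) < m$ whenever $m>1$. First I would write $G = A \rtimes B$, where $A = G'$ is the commutator subgroup, cyclic of odd order $m$, and $B$ is a cyclic complement of order $n$ with $\gcd(m,n)=1$. Let $\psi \colon B \to \Aut(A)$ be the conjugation action map and let $K = \ker \psi$. Then Lemma~\ref{mck_lemma} identifies the MCC subgroup as $\mu(G) = AK$, and this is the decomposition I would compute both sides of the inequality from.

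Next I would evaluate the two quantities. Since $A \cap K \subseteq A \cap B = \{1\}$, the set $AK$ has $|A|\,|K| = m|K|$ elements, so $|\mu(G)| = m|K| \ge m$. For the index, note $\mu(G) = AK$ is characteristic, hence normal, and $B\cdot AK = AB = G$; the second isomorphism theorem then gives $G/\mu(G) \cong B/(B \cap AK)$, and $B \cap AK = K$ (again because $A\cap B = \{1\}$). Thus $[G:\mu(G)] = [B:K]$, which is exactly the order of the image $\psi(B)$. Since $\psi(B)$ is a subgroup of $\Aut(A) \cong (\bZ/m\bZ)^\times$, Lagrange forces $[G:\mu(G)]$ to divide $\phi(m) = |\Aut(A)|$, and in particular $[G:\mu(G)] \le \phi(m)$.

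The conclusion then follows by comparing sizes. In the non-Abelian case, $m = |G'| > 1$ (indeed $m \ge 3$, being odd), so $\phi(m) \le m-1 < m \le |\mu(G)|$, whence $[G:\mu(G)] \le \phi(m) < |\mu(G)|$. In the Abelian case $G$ is cyclic, so $\mu(G) = G$ and $[G:\mu(G)] = 1 < |G| = |\mu(G)|$, using that $G$ is nontrivial. Either way $|\mu(G)| > [G:\mu(G)]$.

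The only real content is the divisibility $[G:\mu(G)] \mid \phi(m)$, and this is immediate once one recognizes $G/\mu(G)$ as the image of $B$ in the automorphism group of the cyclic group $A$; I expect no genuine obstacle here. The main points requiring care are the bookkeeping that $B \cap AK = K$ and the separate treatment of the degenerate cyclic case (where one must note that nontriviality of $G$ is what makes the strict inequality hold).
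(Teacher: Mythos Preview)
Your proof is correct and takes essentially the same approach as the paper: both pass to the semidirect decomposition, identify $\mu(G) = AK$ via Lemma~\ref{mck_lemma}, and use the fact that the image of $B$ in $\Aut(A)$ has order strictly less than $m = |A|$ to conclude. The only cosmetic differences are that the paper bounds $[G:\mu(G)] \le |B| = n$ and shows $|\mu(G)| = m|K| > n$ rather than computing $[G:\mu(G)] = [B:K]$ exactly as you do, and that you are more explicit about the degenerate cyclic case where $A = G'$ is trivial.
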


\begin{proof}
Write $G$ as the  semi-direct product $A \rtimes B$ of two cyclic groups of relatively prime order $m = |A|$ and $n = |B|$.
Let $K$ be the kernel of the associated action map~$B \to \mathrm{Aut}(A)$, and let $I$ be the image.
The automorphism group of $A$ is isomorphic to~$(\bZ/m\bZ)^\times$ where $m$ is the order of $A$, so $|I| <  m$ and
$$
n = |B| = |K| |I| < |K| m.
$$
By the above lemma, 
$|\mu(G)| = |A| |K|$, so
$$
|\mu(G)| = m |K| > n.
$$
However, $G/AK$ is a quotient of $G/A \cong B$. So $|G/\mu(G)| \le n$.
\end{proof}

\begin{remark}
Let $A$ be a cyclic group of order $m$ and let $B$ be a group of order $n$, where~$m$ and $n$ are relatively prime.
Let $a$ be a generator of $A$ and let $b$ be a generator of $B$.
We construct a semi-direct product $G$ of $A$ with $B$ by choosing an automorphism~$\sigma$ of $A$ associated
with conjugation by $b$. Such $\sigma$ is of the form~$x\mapsto x^r$ where $r$ is 
any~$r\in \left( \bZ / m \bZ \right)^\times$ of order dividing $n$. So $r^n = 1$ in $\bZ / m \bZ$.
In $G$ we have
$$
b^{-1} a b = a^r, \qquad [a, b] = a^{-1} b^{-1} a b = a^{r-1}.
$$
Clearly the commutator subgroup $G'$ will be contained in $A$ since the quotient is Abelian (isomorphic to $B$).
Since $[a, b] = a^{r-1}$ we have $\left<a^{r-1}\right> \subseteq G'$, and 
since~$G/\left<[a, b]\right>$ is Abelian, we have $G' \subseteq \left<[a, b]\right>$.
Thus 
$$
G' = \left<a^{r-1}\right>.
$$
 If we want $G' = A$ we will
also need $r-1$ to be relatively prime to $m$.
\end{remark}

\begin{example} As an illustration we classify groups $G$ of order $210 = 2 \cdot 3 \cdot 5 \cdot 7$. 
Note that all such groups are Sylow-cyclic. First we sort such groups by 
the size of $A = G'$.
Since $A$ is a proper subgroup of $G$ of odd order, its possible orders are~$1, 3, 5, 7, 15, 21, 35,$ or $105$.
We will use the notation of the above remark.

If $A$ has order $1$ then $G$ is cyclic and $\mu(G) = G$. This gives us the only Abelian example.

If $A$ has order $3$ then we choose $r \in (\bZ / 3 \bZ)^\times$ so that $r-1$ is relatively prime to $3$.
So $r=-1$.
Here $B$ has order $70$ and
the kernel $K$ of the map $B \to \mathrm{Aut}(A)$ has order $35$. So $\mu(G)$ has order $105$
and so all subgroups of odd order are cyclic and normal. Note that in this case there is a subgroup
isomorphic to the dihedral group $D_3$.

If $A$ has order $5$, then $B$ has order $42$. 
We choose $r \in (\bZ / 5 \bZ)^\times$ so that $r-1$ is relatively prime to $5$ and so that $r$ has order dividing $42$.
This means that~$r$ is~$-1$.
The kernel $K$ of the map $B \to \mathrm{Aut}(A)$ has order $21$. So $\mu(G)$ has order $105$
and so all subgroups of odd order are cyclic and normal. Note that in this case there is a subgroup
isomorphic to the dihedral group $D_5$.

If $A$ has order $7$, then $B$ has order $30$. 
Every element of $r \in (\bZ / 7 \bZ)^\times$ has order dividing $30$, so we just need $r-1$ not to be
divisible by $7$. Thus $r = 2, 3, 4, 5, -1$. We divide by subtypes:
\begin{itemize}
\item
If $r = -1$ then  the kernel $K$ of the map $B \to \mathrm{Aut}(A)$ has order $15$.
So $\mu(G)$ has order $105$ and all subgroups of odd order are cyclic and normal. 
Note in this case there is a subgroup isomorphic to $D_7$.
\item
If $r = 2, 4$ then the kernel $K$ of the map $B \to \mathrm{Aut}(A)$ has order $10$.
So $\mu(G)$ has order $70$. 
\item
If $r= 3, 5$ then then the kernel $K$ of the map $B \to \mathrm{Aut}(A)$ has order $5$.
So~$\mu(G)$ has order $35$. 
Note in this case there is a subgroup isomorphic to~$D_7$.
\end{itemize}
Note that if we replace our generator $b$ of $B$ with its inverse
the corresponding~$r$ changes to its multiplicative inverse.
So $r=3,5$ give isomorphic results, and $r=2, 4$ give isomorphic results. So we have
three examples up to isomorphism where $A$ has order $7$.

If $A$ has order $15$ then $B$ has order $14$.
Then we choose $r \in (\bZ / 15 \bZ)^\times$ which is isomorphic to $C_4 \times C_2$.
This means that $r$ should have order $2$, which means that~$r= 4, 11, -1$ . But we also want $r-1$
to be prime to $15$ so $r=-1$ is the only possibility.
The kernel $K$ of the map $B \to \mathrm{Aut}(A)$ has order $7$. So $\mu(G)$ has order~$105$
and so all subgroups of odd order are cyclic and normal. Note that in this case there is a subgroup
isomorphic to the dihedral group $D_{15}$.

If $A$ has order $21$ then $B$ has order $10$.
Then we choose $r \in (\bZ / 21 \bZ)^\times$ which is isomorphic to $C_6 \times C_2$.
This means that $r$ should have order $2$, which means that~$r=  8, 13, -1$. But we also want $r-1$
to be prime to $21$ so $r=-1$ is the only possibilities.
The kernel $K$ of the map $B \to \mathrm{Aut}(A)$ has order $5$. So $\mu(G)$ has order~$105$
and so all subgroups of odd order are cyclic and normal. Note that in this case there is a subgroup
isomorphic to the dihedral group $D_{21}$.

If $A$ has has order $35$ then $B$ has order $6$.
Then we choose $r \in (\bZ / 35 \bZ)^\times$  where~$r-1$ is prime to $35$ and where $r$
has order dividing $6$.
This means that 
$$r \equiv -1 \pmod 5, \qquad r \equiv 2, 3, 4, 5, -1 \pmod 7
$$
This gives 5 possibilities:
$$r=4, 9, 19, 24, -1.$$
We can divide this into two subtypes depending on the order of $r \in (\bZ / 35 \bZ)^\times$:
\begin{itemize}
\item
If  $r=-1$ then $r$ has order 2 and the kernel $K$ of $B \to \mathrm{Aut}(A)$ has order $3$. 
So~$\mu(G)$ has order $105$, and all subgroups of odd order are cyclic and normal.
Here $G$ contains $D_{35}$ as a subgroup.
\item
Otherwise (for $r=4, 9, 19, 24$) we have $r$ of order 6 and  
 the kernel $K$ of the map $B \to \mathrm{Aut}(A)$ has order $1$. So $\mu(G) = A$.
 Here $G$ also contains $D_{35}$ as a subgroup. Note that $r = 4$ and $r = 9$ are inverses
 and so yield isomorphic semidirect products. Similarly, $19$ and $24$ give similar results.
\end{itemize}

Finally if $A$ has order $105$, then $B$ has order $2$.  In order to satisfy the requirements
that $r$ have order at most $2$ and that $r-1$ be relatively prime to $3, 5, 7$, we must have $r \equiv -1$ modulo
$3, 5$ and $7$. So $r=-1$ modulo $105$. In this case~$\mu(G) = A$ has 105 elements and $G$ is $D_{105}$.

\medskip\noindent
All in all we have 12 groups of order 210 up to isomorphism. Later we will see that none of them are freely representable
except the cyclic group.
In fact, only one non-Abelian example (where $A$ has order 7) satisfies the necessary condition that~$\mu(G)$ has even order and so there is a unique element of order two (Corollary~\ref{new_corollary2}).
\end{example}

\section{Freely Representable Sylow-Cyclic Groups}

Now we take-up the question of which Sylow-cyclic groups are freely representable.

\begin{lemma} \label{new_lemma}
Suppose $G$ is the semi-direct product $A \rtimes B$ of two cyclic groups where~$B$ has prime order $p$
and where $A$ has order $q^k$ for a prime $q$ not equal to $p$.
Then if $G$ is freely representable, $G$ must be cyclic.
\end{lemma}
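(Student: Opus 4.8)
The plan is to prove the contrapositive: assuming $G$ is not cyclic, I will produce a norm relation of unity and invoke Theorem~\ref{biasse2020norm_thm2}. The engine is the method behind Theorem~\ref{pq_thm} (packaged in the remark following it): if the nonidentity elements of $G$ can be partitioned by the nonidentity elements of finitely many nontrivial proper subgroups, then $G$ has a norm relation of unity and so is not freely representable. Thus the whole task reduces to exhibiting such a partition.

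First I would dispose of the trivial action. The action of $B$ on $A$ is a homomorphism $B \to \Aut(A)$, and since $B$ has prime order $p$ this map is either trivial or injective. If it is trivial then $G = A \times B$ is a product of cyclic groups of relatively prime orders $q^k$ and $p$, hence cyclic, contrary to assumption. So the map is injective with image generated by an automorphism $\sigma$ of order $p$; fixing a generator $a$ of $A$ we may write $\sigma(a) = a^r$ with $r \in (\bZ/q^k\bZ)^\times$ of order $p$.

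The key step is to show that every nonidentity element of the image $\langle\sigma\rangle \subseteq \Aut(A)$ acts fixed-point-freely on $A \setminus \{1\}$; such an element has the form $a \mapsto a^s$ with $s \in (\bZ/q^k\bZ)^\times$ of order $p$. Reducing modulo $q$ gives a homomorphism $(\bZ/q^k\bZ)^\times \to (\bZ/q\bZ)^\times$ whose kernel has order $q^{k-1}$, a power of $q$. Since $s$ has order $p$ and $p \neq q$, the element $s$ cannot lie in this kernel, so $s \not\equiv 1 \pmod q$; hence $s - 1$ is coprime to $q$, therefore a unit modulo $q^k$, and $a^{j(s-1)} = 1$ forces $a^j = 1$. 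From this fixed-point-freeness I would deduce that every element $g \notin A$ has order exactly $p$: its image in $G/A \cong B$ is nontrivial, so $p \mid \mathrm{ord}(g)$, while $g^p \in A$ is fixed under conjugation by $g$---which acts on the abelian normal subgroup $A$ as a nonidentity element of $\langle\sigma\rangle$---so $g^p = 1$.

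Finally I would assemble the partition and the relation. Each $g \notin A$ generates a subgroup of order $p$, which meets $A$ trivially (orders coprime); distinct such subgroups intersect only in the identity, and each contributes $p-1$ elements of $G \setminus A$, so there are exactly $q^k(p-1)/(p-1) = q^k$ of them and their nonidentity elements partition $G \setminus A$. Together with $A$ these give a partition of the nonidentity elements of $G$ by the $q^k + 1$ nontrivial proper subgroups (both $A$, as $p > 1$, and each order-$p$ subgroup, as $q^k > 1$, being proper). Writing $\mathcal C$ for this collection and summing norms gives $\sum_{C \in \mathcal C} \N C = q^k \cdot \mathbf 1 + \N G$, whence $\mathbf 1 = \frac{1}{q^k}\bigl(\sum_{C \in \mathcal C}\N C - \N G\bigr)$ is a norm relation of unity; by Theorem~\ref{biasse2020norm_thm2}, $G$ is not freely representable. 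The main obstacle is the fixed-point-free step, which is precisely where the hypothesis $p \neq q$ is used; the rest is counting and bookkeeping that the cited method has already organized for us.
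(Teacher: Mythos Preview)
Your proof is correct and ends at the same norm relation as the paper, but the route to the key technical fact---that every element of $G\setminus A$ has order exactly $p$---is genuinely different. The paper first observes $G$ is Sylow-cyclic and invokes Theorem~\ref{semidirect_thm} (the structure theorem, which in turn rests on Frobenius's counting theorem via Theorem~\ref{largest_prime_thm}): since $G'$ and $G/G'$ have coprime orders and $G'\subseteq A$, one forces $G'=A$ in the noncyclic case, and then Theorem~\ref{semidirect_thm} directly says any $g$ mapping to a generator of $G/A$ has order $|G/A|=p$. You instead give an elementary, self-contained argument: the element $s\in(\bZ/q^k\bZ)^\times$ describing the action has order $p$, hence survives reduction mod $q$ (the kernel being a $q$-group), so $s-1$ is a unit and the action is fixed-point-free; then $g^p\in A$ is fixed by conjugation by $g$ and must be trivial. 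Your approach avoids the Sylow-cyclic structure machinery entirely and isolates exactly where $p\ne q$ is used; the paper's approach is shorter on the page because the heavy lifting has been done earlier.
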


\begin{proof}
By
Corollary~\ref{semi-direct_cor},
$G$ is Sylow-cyclic. So by Theorem~\ref{semidirect_thm} the commutator subgroup $G'$ of $G$ is cyclic,
and $G'$ and $G/G'$ have relatively prime orders. Since~$G/A$ is isomorphic to $B$ and so is Abelian, $G' \subseteq A$.
So either $G' = 1$ or $G' = A$.

Start by assuming $G' = A$.
By Theorem~\ref{semidirect_thm}, any element of $g\in G$ whose image in $G/A$ generates $G/A$
must generate a group $\left< g \right>$ of order $p$. Since every nontrivial element of $G/A$ generates this group, every element $g \in G - A$
is contained in a unique subgroup of $G$ of order $p$.

 Let $\mathcal C$
be the collection of subgroups of $G$ consisting of $A$ together with all subgroups of order $p$.
Then every nonidentity element of $G$ is in exactly
one $C \in \mathcal C$. So
$$
\sum_{C \in \mathcal C} \N C = (k-1) \mathbf{1} + \N G
$$
where $k$ is the size of $\mathcal C$.
This contradicts Theorem~\ref{biasse2020norm_thm2} since $k>1$.

So we conclude that $A$ is not $G'$. This means that $G' = 1$. So $G$ is Abelian.
Since $G$  is Sylow-cyclic and Abelian, $G$ must be cyclic.
\end{proof}

\begin{theorem} \label{new_thm}
Suppose $G$ is the semidirect product $A \rtimes B$ of two cyclic groups where $B$ has prime order $p$
and where $p$ does not divide the order of $A$. 
Then $G$ is freely representable if and only if $G$ is cyclic.
\end{theorem}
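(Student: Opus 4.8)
The plan is to prove the nontrivial direction by reducing to the prime-power case already settled in Lemma~\ref{new_lemma}. One direction is immediate: if $G$ is cyclic then it is freely representable by Example~\ref{cyclic_example}. So I assume $G$ is freely representable and aim to show that $G$ is cyclic.

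First I would decompose the cyclic group $A$ into its Sylow subgroups. Writing $A$ as the product of its $q$-Sylow subgroups $A_q$, each cyclic of prime-power order $q^{k_q}$, I note that $A_q$ is the unique subgroup of $A$ of its order and is therefore characteristic in $A$. Since $A$ is normal in $G$, each $A_q$ is in turn normal in $G$. Because $p$ does not divide $|A|$, we have $q \ne p$ for every prime $q$ dividing $|A|$, so $A_q \cap B = \{1\}$ and the set $G_q \defeq A_q B$ is a subgroup of $G$ isomorphic to the semidirect product $A_q \rtimes B$, where $B$ acts by restricting its conjugation action on $A$.

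Next I would bring in the hypothesis together with Lemma~\ref{new_lemma}. Each $G_q$ is a subgroup of the freely representable group $G$, hence is itself freely representable by Proposition~\ref{freely_representable_subgroup_prop}. Since $G_q = A_q \rtimes B$ has $A_q$ cyclic of prime-power order $q^{k_q}$ and $B$ cyclic of prime order $p \ne q$, Lemma~\ref{new_lemma} forces $G_q$ to be cyclic, and in particular Abelian. Consequently the conjugation action of $B$ on $A_q$ is trivial; that is, $B$ centralizes $A_q$.

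Finally I would assemble these local conclusions into the global one. Since $B$ centralizes every Sylow subgroup $A_q$ of $A$, and these subgroups generate $A$, the group $B$ centralizes all of $A$. Hence the semidirect product $G = A \rtimes B$ is in fact a direct product, so $G$ is Abelian. As $G$ is Sylow-cyclic (Corollary~\ref{semi-direct_cor}) and Abelian, it must be cyclic (Proposition~\ref{abelian_cyclic_prop2}), completing the argument. The reasoning is essentially routine once the reduction is in place; the only step needing care is the observation that the Sylow subgroups of the cyclic group $A$ are characteristic in $A$ and hence normal in $G$, which is exactly what permits the subgroups $G_q$ to be formed and Lemma~\ref{new_lemma} to be applied one prime at a time.
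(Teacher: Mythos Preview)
Your proposal is correct and follows essentially the same approach as the paper's proof: decompose $A$ into its Sylow subgroups $A_q$, note each is normal in $G$, apply Lemma~\ref{new_lemma} to each $A_q \rtimes B$ to conclude that $B$ centralizes every $A_q$ and hence all of $A$, and finish by observing that the resulting Abelian Sylow-cyclic group must be cyclic.
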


\begin{proof}
One implication is clear, so suppose $G$ is freely representable. Since $A$ is normal and cyclic in $G$,
all the subgroups of $A$ are normal and cyclic in $G$ (since there is at most one subgroup of $A$
of any given order). So if $q$ is a prime dividing 
the order of $A$ then the $q$-Sylow subgroup $A_q$ of $A$ will be a normal $q$-Sylow subgroup of $G$.
So viewing $B$ as a subgroup of $G$, we have that $H_q = A_q B = A_q \rtimes B$ is a subgroup of~$G$.
Since $H_q$ is a subgroup of a freely representable group, $H_q$ is a freely representable group.
By the above lemma~$H_q$ is cyclic. 

In particular $B$ acts trivially by conjugation on $A_q$ for each Sylow subgroup of~$A$.
Since $A$ is cyclic, it is generated by its Sylow subgroups. Thus $B$ acts  trivially on all of $A$. 
This implies that $G$ is Abelian. Since $G$ is Sylow-cyclic this means that~$G$ is cyclic.
\end{proof}

We can leverage this to get a
 necessary condition for a Sylow-cyclic group $G$ to be freely representable.
  Basically the condition is that $G$ is a semi-direct product of cyclic groups using a relatively
 weak action:

\begin{lemma}\label{necessary_lemma}
Let $A$ and $B$ be cyclic subgroups 
of relatively prime orders of a group~$G$ such that~$G = AB = A \rtimes B$.
If $G$ is freely representable then the kernel $K$ of the associated action homomorphism
$$
B \to \mathrm{Aut} (A)
$$
must contain all subgroups of $B$ of prime order.
\end{lemma}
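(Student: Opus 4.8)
The plan is to reduce to the situation already settled by Theorem~\ref{new_thm}, treating each prime-order subgroup of $B$ one at a time. First I would fix an arbitrary subgroup $B_p \subseteq B$ of prime order $p$. Since $|A|$ and $|B|$ are relatively prime and $p$ divides $|B|$, the prime $p$ cannot divide $|A|$; moreover $A \cap B_p \subseteq A \cap B = \{1\}$, so $A$ and $B_p$ intersect trivially.

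Next I would form the subgroup $H = A B_p$ of $G$. Because $A$ is normal in $G$, it is in particular normal in $H$, and since $A \cap B_p = \{1\}$ with both $A$ and $B_p$ cyclic, the product is an internal semidirect product $H = A \rtimes B_p$. Thus $H$ is a semidirect product of two cyclic groups in which the complement $B_p$ has prime order $p$ not dividing $|A|$. As $G$ is freely representable and $H$ is a subgroup, $H$ is freely representable by Proposition~\ref{freely_representable_subgroup_prop}.

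At this point Theorem~\ref{new_thm} applies verbatim to $H$ and forces $H$ to be cyclic, hence Abelian. In an Abelian group every pair of elements commutes, so conjugation by any element of $B_p$ fixes $A$ pointwise; that is, the restriction of the action homomorphism to $B_p$ is trivial, so $B_p$ lies in the kernel $K$ of $B \to \mathrm{Aut}(A)$. Since $B_p$ was an arbitrary prime-order subgroup of $B$, this shows that $K$ contains every subgroup of $B$ of prime order, which is the claim.

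The argument is essentially a repackaging of Theorem~\ref{new_thm}, so I do not anticipate a serious obstacle. The only point that needs genuine care is confirming that $A \rtimes B_p$ really sits inside $G$ as an internal semidirect product satisfying the exact hypotheses of Theorem~\ref{new_thm}: one must invoke the normality of $A$ in $G$ together with the coprimality that yields $A \cap B_p = \{1\}$ and the fact that $p \nmid |A|$. Once these are in place, the cited theorem does all the work.
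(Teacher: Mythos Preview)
Your proof is correct and follows essentially the same approach as the paper: form the subgroup $H = A B_p$, observe it is a semidirect product meeting the hypotheses of Theorem~\ref{new_thm}, conclude $H$ is cyclic, and deduce that $B_p$ acts trivially on $A$. The only difference is that you spell out slightly more detail (e.g., why $p \nmid |A|$ and $A \cap B_p = \{1\}$), which is fine.
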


\begin{proof}
Suppose $B_p$ is a subgroup of $B$ of prime order $p$.
Observe that $A B_p$ is a subgroup of $G$ of order $|A| p$ (since $A$ is normal in $G$) and is the semi-direct product~$A \rtimes B_p$
where the action of $B_p$ on $A$ is just the restriction of the action of~$B$ on $A$.

Since $G$ is freely representable, $A B_p$ is as well. So by the above theorem, $A B_p$ is cyclic, hence Abelian.
Thus $B_p$ acts trivially on $A$. In other words, $B_p$ is in the kernel $K$ of the action of $B$ on $A$.
\end{proof}

\begin{corollary} \label{new_corollary}
Suppose $G$ is the semi-direct product $A \rtimes B$ of two cyclic groups of relatively prime order.
Suppose $B$ has square free order. 
Then $G$ is freely representable if and only if $G$ is cyclic.

In particular, if $G$ is a group of square free order then $G$ is freely representable
if and only if $G$ is cyclic.
\end{corollary}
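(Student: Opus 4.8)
The plan is to dispatch the trivial direction first and then leverage Lemma~\ref{necessary_lemma} for the converse. One implication is immediate: a cyclic group is freely representable by Example~\ref{cyclic_example}. So I would assume $G = A \rtimes B$ is freely representable, with $A$ and $B$ cyclic of relatively prime order and $B$ of squarefree order, and aim to show $G$ is cyclic.

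The central observation is that a cyclic group of squarefree order is generated by its subgroups of prime order. Indeed, writing $|B| = p_1 \cdots p_r$ with the $p_i$ distinct primes, the group $B$ has a subgroup of each prime order $p_i$, and the subgroup these generate has order divisible by every $p_i$, hence order $|B|$. By Lemma~\ref{necessary_lemma}, the kernel $K$ of the action map $B \to \mathrm{Aut}(A)$ contains every subgroup of $B$ of prime order; since these generate $B$, I conclude $K = B$, so $B$ acts trivially on $A$ by conjugation. Therefore $G = A \times B$ is Abelian. By Corollary~\ref{semi-direct_cor} the group $G$ is Sylow-cyclic, and an Abelian Sylow-cyclic group is cyclic by Proposition~\ref{abelian_cyclic_prop2}, which finishes this direction.

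For the ``in particular'' clause I would reduce a general squarefree-order group to the semidirect-product form already handled. Any group $G$ of squarefree order is Sylow-cyclic, since each Sylow subgroup has prime order and is therefore cyclic. By Theorem~\ref{semidirect_thm} we can write $G = A \rtimes B$, where $A$ is the commutator subgroup and $A, B$ are cyclic of relatively prime order. Since $|B|$ divides the squarefree number $|G|$, the order $|B|$ is itself squarefree, so the first part of the corollary applies verbatim and yields the conclusion.

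I anticipate no serious obstacle, since the substantive content is already packaged in Lemma~\ref{necessary_lemma} and Theorem~\ref{semidirect_thm}. The one point requiring a moment's care is the verification that a cyclic group of squarefree order is generated by its prime-order subgroups, as this is precisely what lets the hypothesis on $B$ interact cleanly with Lemma~\ref{necessary_lemma}; for a cyclic $B$ whose order has a repeated prime factor this generation fails, which is exactly why squarefreeness of $B$ is the right hypothesis.
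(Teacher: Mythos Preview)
Your proposal is correct and follows essentially the same approach as the paper: both invoke Lemma~\ref{necessary_lemma} to force the action kernel $K$ to be all of $B$ when $|B|$ is squarefree, so that $G = A \times B$ is cyclic. You are simply more explicit than the paper in two places: you spell out why $A \times B$ is cyclic (via Sylow-cyclic plus Abelian), and you reduce the ``in particular'' clause to the first part through Theorem~\ref{semidirect_thm}, which the paper leaves implicit.
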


\begin{proof}
One direction is clear, so we assume $G$ is freely representable. 
So by the above theorem, the order of the kernel $K$ of the action homomorphism $B \to \mathrm{Aut} (A)$ must
be divisible by exactly the primes that divide the order of $B$. This means that $K = B$, so $G = AB = A \times B$  is cyclic.
\end{proof}

We can restate the above lemma in terms $\mu(G)$:

\begin{corollary} \label{necessary_cor}
Let $G$ be a freely representable Sylow-cyclic group, and let $\mu(G)$ be the MCC subgroup of $G$.
Then every element of $G$ of prime order is  in~$\mu(G)$. In particular,
every prime dividing the order of $G$ must divide the order of~$\mu(G)$.
\end{corollary}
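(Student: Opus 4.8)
The plan is to reduce everything to the two preceding lemmas by writing $G$ in a convenient form. First I would invoke Theorem~\ref{semidirect_thm} to express the Sylow-cyclic group $G$ as an internal semidirect product $G = A \rtimes B$ of cyclic subgroups $A = G'$ and $B$ of relatively prime orders, and let $K$ be the kernel of the associated action homomorphism $B \to \mathrm{Aut}(A)$. Two facts about this decomposition do the work: by Lemma~\ref{mck_lemma} we have $\mu(G) = AK$, and by Lemma~\ref{necessary_lemma} (which applies since $G$ is freely representable) the subgroup $K$ contains every subgroup of $B$ of prime order. So the target $\mu(G) = AK$ already contains all of $A$ together with every prime-order subgroup of the complement $B$.

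Next I would take an arbitrary element $g \in G$ of prime order $p$ and write it uniquely as $g = ab$ with $a \in A$ and $b \in B$. Passing to the quotient $G/A \cong B$, the image of $g$ is $b$, so $g^p = 1$ forces $b^p = 1$; hence $b$ has order $1$ or $p$ in $B$. If $b = 1$ then $g = a \in A \subseteq AK = \mu(G)$. If instead $b$ has order $p$, then $\langle b \rangle$ is a subgroup of $B$ of prime order, so $\langle b \rangle \subseteq K$ by Lemma~\ref{necessary_lemma}; in particular $b \in K$ and therefore $g = ab \in AK = \mu(G)$. Either way $g \in \mu(G)$, which establishes the first assertion.

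Finally, the ``in particular'' clause follows at once: if a prime $p$ divides $|G|$, then Cauchy's theorem provides an element of order $p$, which by the above lies in $\mu(G)$, so $p \mid |\mu(G)|$. I do not expect a genuine obstacle here, since the substantive input is already isolated in Lemmas~\ref{mck_lemma} and~\ref{necessary_lemma}; the only point requiring care is the bookkeeping of the semidirect-product decomposition $g = ab$ and the observation that the image of $g$ in $G/A$ is exactly $b$, so that the hypothesis ``$g$ has prime order'' transfers to the complement $B$, where Lemma~\ref{necessary_lemma} can be applied.
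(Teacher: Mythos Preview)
Your proposal is correct and follows essentially the same argument as the paper: decompose $G$ as $A \rtimes B$ with $A,B$ cyclic of coprime order, identify $\mu(G)=AK$ via Lemma~\ref{mck_lemma}, project an element $g=ab$ of prime order to $B$ to see that $b$ is trivial or of prime order, and then invoke Lemma~\ref{necessary_lemma} to place $b$ in $K$, hence $g\in AK=\mu(G)$. The only cosmetic differences are that you name the decomposition explicitly via Theorem~\ref{semidirect_thm} (taking $A=G'$) and separate the two cases for $b$, whereas the paper treats them together in one line.
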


\begin{proof}
Write $G$ as $AB = A \rtimes B$ where $A$ and $B$ are cyclic subgroups of $G$ of relatively prime orders. 
Let $g = ab$ be an element of prime order where $a\in A$ and~$b\in B$. Under the projection
$A \rtimes B \to B$ the element $g$ maps to $b$, so $b$ is of prime order or is the trivial element.
In either case, by the above lemma, $b \in K$ where $K$ is the kernel of the action map~$B \to \mathrm{Aut}(A)$.
So $g  = a b \in AK$. But~$AK = \mu(G)$ (Lemma~\ref{mck_lemma})
so $g \in \mu(G)$. 
\end{proof}

This gives a necessary condition for a Sylow-cyclic group to be freely representable. In order to
show it is sufficient we use induced representations, but in a very basic manner.
The following, which we take as given, is all that we need to know about induced representations here:

\begin{proposition}
Let $G$ be a finite group with subgroup $H$, and let $F$ be a field.
Suppose that $H$ acts linearly on an~$F$-vector space $W$.
Then there is a  linear action of $G$ on an $F$-vector space $V$ containing $W$ such that (1) the action of $G$ on $V$ restricts to the
given action of $H$ on $W$, (2) if~$g_1 H, \ldots, g_k H$ are the distinct left cosets in~$G/H$ then
the vector space $W$ is the direct sum of the spaces $g_i W$:
$$
V = \bigoplus g_i W.
$$
This representation, called the induced representation, is
unique up to a $F[G]$-module  isomorphism fixing $W$.
\end{proposition}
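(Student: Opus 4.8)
The plan is to build $V$ by the standard induced-module construction $V = F[G] \otimes_{F[H]} W$, where $F[G]$ is regarded as an $(F[G], F[H])$-bimodule (left multiplication by $G$, right multiplication by $H$), and to let $G$ act on $V$ through the left factor. First I would record the one structural fact that drives everything: writing $G = g_1 H \sqcup \cdots \sqcup g_k H$ as a disjoint union of left cosets, with $g_1 = 1$ (legitimate since $H$ is itself a coset), the elements $g_i h$ form an $F$-basis of $F[G]$, so that $F[G] = \bigoplus_i g_i F[H]$ as a right $F[H]$-module; in other words $F[G]$ is \emph{free} as a right $F[H]$-module on the coset representatives $g_1, \dots, g_k$.

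Given this, tensoring over $F[H]$ commutes with the direct sum, so
$$
V = F[G] \otimes_{F[H]} W = \bigoplus_{i=1}^{k} \bigl(g_i F[H]\bigr) \otimes_{F[H]} W = \bigoplus_{i=1}^{k} g_i \otimes W,
$$
where each summand $g_i \otimes W = \{\, g_i \otimes w : w \in W \,\}$ is $F$-isomorphic to $W$ via $w \mapsto g_i \otimes w$. I would then identify $W$ with the summand $1 \otimes W$ (recall $g_1 = 1$). Claim~(1) is immediate: for $h \in H$ we pull $h$ across the tensor, $h \cdot (1 \otimes w) = h \otimes w = 1 \otimes hw$, so the $G$-action on $1 \otimes W$ reproduces the given $H$-action on $W$. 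For claim~(2), the image of the subspace $W = 1 \otimes W$ under the action of $g_i$ is exactly $g_i \otimes W$, i.e. $g_i W = g_i \otimes W$, whence $V = \bigoplus_i g_i W$ as required.

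For uniqueness, suppose $V'$ is any $F[G]$-module containing $W$ as an $H$-submodule inducing the given action and satisfying $V' = \bigoplus_i g_i W$. The inclusion $W \hookrightarrow V'$ is an $F[H]$-module map, so by the universal extension property of $F[G] \otimes_{F[H]} (-)$ it extends to a unique $F[G]$-module homomorphism $\phi \colon V \to V'$ determined by $\phi(g \otimes w) = g \cdot w$; in particular $\phi$ fixes $1 \otimes W = W$ pointwise. This $\phi$ carries $g_i \otimes W$ onto $g_i W$ by $g_i \otimes w \mapsto g_i w$, which is a bijection because multiplication by the invertible element $g_i$ is injective on $V'$. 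Since $\phi$ then restricts to an isomorphism on each of the $k$ summands of the two matching direct-sum decompositions, $\phi$ is itself an $F[G]$-module isomorphism fixing $W$.

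The step I expect to be the main obstacle is not any single calculation but justifying the two ``free-module / universal-property'' facts at the level of rigor the rest of the report maintains: verifying that $F[G]$ really is free over $F[H]$ on coset representatives, and that the induced module enjoys the extension property used for uniqueness. If one prefers to avoid invoking the universal property of the tensor product, the same conclusions follow from an explicit bookkeeping construction: set $V = \bigoplus_{i=1}^{k} W_i$ with each $W_i$ a labelled copy of $W$ spanned by formal symbols $g_i \otimes w$, and for $g \in G$ define $g \cdot (g_i \otimes w) = g_{\sigma(i)} \otimes (hw)$, where $g g_i = g_{\sigma(i)} h$ is the unique factorization with $h \in H$. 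In that case the genuine work shifts to checking directly that this rule satisfies $(g g') \cdot v = g \cdot (g' \cdot v)$, i.e. that it is a bona fide group action.
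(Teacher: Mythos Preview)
Your proof is correct. However, there is nothing to compare it against: the paper explicitly introduces this proposition with the phrase ``The following, which we take as given, is all that we need to know about induced representations here,'' and supplies no proof at all. Your argument is the standard construction of the induced module as $F[G]\otimes_{F[H]} W$, together with the universal property for uniqueness, and it goes through without issue. The alternative explicit-bookkeeping construction you sketch at the end is also standard and would work equally well; either route fills in what the paper deliberately omits.
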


\begin{proposition} \label{sufficient_prop}
Let $G$ be a finite group with a subgroup~$H$ that contains all elements of $G$ of prime order.
Suppose $W$ is an $F$-vector space with a linear representation of $H$ on $W$, and suppose $V$
is a $F$-vector space containing $W$ with a representation of~$G$ induced by the representation of $H$ on $W$.
If the linear representation of $H$ on $W$ is a free linear representation, then the linear representation
of $G$ on $V$ is also a free linear representation.
\end{proposition}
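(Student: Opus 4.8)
The plan is to verify freeness directly: I will assume that some nonidentity element of $G$ fixes a nonzero vector $v \in V$ and derive a contradiction. First I would reduce to prime order. If $g \neq 1$ satisfies $g v = v$ and $g$ has order $n > 1$, then for any prime $p \mid n$ the element $\sigma = g^{n/p}$ has order $p$ and still fixes $v$, since $\sigma \in \langle g \rangle$. So it suffices to show that no element of prime order fixes a nonzero vector of $V$. Fix such a $\sigma$ of prime order $p$. By hypothesis $H$ contains every element of $G$ of prime order, so $\sigma \in H$; crucially, for each coset representative $g_i$ the conjugate $g_i^{-1}\sigma g_i$ again has order $p$ and hence also lies in $H$.

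Next I would exploit the decomposition $V = \bigoplus_i g_i W$ supplied by the induced-representation proposition, where $g_1 H, \dots, g_k H$ are the distinct left cosets and the $G$-action restricts to the given $H$-action on $W$. The key observation is that, because $g_i^{-1}\sigma g_i \in H$ for each $i$, we have $\sigma g_i = g_i (g_i^{-1}\sigma g_i) \in g_i H$, so left multiplication by $\sigma$ fixes each coset $g_i H$. Consequently $\sigma$ carries each summand $g_i W$ into itself rather than permuting the summands: indeed $\sigma(g_i w) = g_i\big((g_i^{-1}\sigma g_i) w\big) \in g_i W$, since $H$ preserves $W$. Writing $v = \sum_i g_i w_i$ with $w_i \in W$ and using that the sum is direct, the equation $\sigma v = v$ decouples into the separate equations $(g_i^{-1}\sigma g_i) w_i = w_i$ for each $i$. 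Since $g_i^{-1}\sigma g_i$ is a nonidentity element of $H$, freeness of the representation of $H$ on $W$ forces $w_i = 0$ for every $i$, whence $v = 0$, the desired contradiction.

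The main obstacle — and the only place where the hypothesis on $H$ is genuinely used — is ruling out that $\sigma$ permutes the summands $g_i W$ nontrivially. If $\sigma$ had a $p$-cycle among the cosets, cyclically permuting $g_{i_0}W, \dots, g_{i_{p-1}}W$, then for any nonzero $w_0 \in W$ the vector $\sum_{j=0}^{p-1} \sigma^j(g_{i_0} w_0)$ would be a genuinely nonzero $\sigma$-fixed vector (the consistency condition closes up automatically because $\sigma^p = 1$), and the statement would fail. The assumption that $H$ contains all prime-order elements is exactly what forbids such cycles, since it places every conjugate of $\sigma$ inside $H$ and thereby makes the induced permutation of cosets trivial. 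Once that point is secured, the rest is the routine decoupling argument sketched above.
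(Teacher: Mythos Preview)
Your proof is correct and follows essentially the same approach as the paper: reduce to an element $\sigma$ of prime order, note that $\sigma$ and all its conjugates $g_i^{-1}\sigma g_i$ lie in $H$, use this to see that $\sigma$ preserves each summand $g_iW$, and then decouple the fixed-point equation using the direct-sum decomposition and the freeness of $W$. Your additional paragraph explaining why the hypothesis on $H$ is essential (to prevent $\sigma$ from permuting the summands in a $p$-cycle) is a nice expository touch that the paper does not include, but the core argument is the same.
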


\begin{proof}
Let $g_1 H, \ldots, g_k H$ be the distinct left cosets in~$G/H$.
Suppose $\sigma \in G$ is not the identity and that $\sigma (v) = v$ where $v\in V$ is equal to 
$$
v = g_1 w_1 + \ldots + g_k w_k
$$
where $w_i \in W$.
Let $m>1$ be the order of $\sigma$ and let $p$ be a prime dividing $m$.
Then~$\tau = \sigma^{m/p}$ has order $p$ and also fixes $v$.
By assumption $\tau \in H$. Observe that the conjugate $\tau_i = g_i^{-1} \tau g_i$ also has order $p$
so is in $H$. Since $\tau g_i = g_i \tau_i$
$$
\tau v = \tau g_1 w_1 + \ldots + \tau g_k w_k = g_1 (\tau_1 w_1) + \ldots + g_k(\tau_k w_k).
$$
Since $\tau v = v$, 
$$
g_1 (\tau_1 w_1) + \ldots + g_k(\tau_k w_k) = g_1 w_1 + \ldots + g_k w_k.
$$
By the direct sum property of induced representations,
$$
g_i(\tau_i w_i) = g_i w_i
$$
for each $1 \le i \le k$. Multiplying by the inverse of $g_i$ gives $\tau_i w_i = w_i$
for the induced representation.
Since $\tau_i \in H$ and $w_i \in W$, we have $\tau_i w_i = w_i$ in the original representation of $H$ on $W$.
This representation is a free linear representation by assumption, so $w_i = 0$ for each $i$. This implies
$v = 0$, showing that the induced representation is a free linear representation.
\end{proof}

\begin{corollary}\label{sufficient_cor}
Let $G$ be a finite group with freely representable  subgroup $H$.
If $H$ contains all elements of $G$ of prime order, then $G$ is also freely
representable.
\end{corollary}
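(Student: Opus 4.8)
The plan is to show that this corollary is essentially an immediate consequence of Proposition~\ref{sufficient_prop} together with the existence machinery for free representations already developed. The only real work was done in establishing Proposition~\ref{sufficient_prop}; here I simply assemble the pieces.

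First I would unwind the hypothesis that $H$ is freely representable. By definition (and using the default field $F = \bC$), the subgroup $H$ possesses a free linear representation on some $F$-vector space, and by Lemma~\ref{finite_dim_lemma} I may take this to be a finite-dimensional space $W$. Thus I have a \emph{free} linear representation of $H$ on $W$. Next I would invoke the induced-representation construction (the unlabeled proposition immediately preceding Proposition~\ref{sufficient_prop}): there is an $F$-vector space $V \supseteq W$ carrying a linear action of $G$ that restricts to the given action of $H$ on $W$ and decomposes as $V = \bigoplus g_i W$ over a system of left-coset representatives $g_1 H, \ldots, g_k H$ of $G/H$.

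Finally I would apply Proposition~\ref{sufficient_prop} directly. Its hypotheses are exactly met: $H$ contains all elements of $G$ of prime order (given), $W$ carries a representation of $H$, $V$ is induced from it, and the representation of $H$ on $W$ is free. The conclusion of that proposition is that the representation of $G$ on $V$ is a free linear representation. Since $V \ne 0$ (it contains $W \ne 0$), this exhibits $G$ as freely representable over $F$, completing the proof.

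I do not expect any genuine obstacle here, since the substantive argument—tracking a prime-order power of a fixed-point element through the direct-sum decomposition of the induced module—has already been carried out in Proposition~\ref{sufficient_prop}. The corollary's content is purely the observation that a freely representable $H$ supplies, by Lemma~\ref{finite_dim_lemma}, a concrete finite-dimensional free representation to feed into that proposition.
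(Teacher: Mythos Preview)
Your proposal is correct and is exactly the intended derivation: the paper states this corollary without proof immediately after Proposition~\ref{sufficient_prop}, and your argument---choose a free representation of $H$ on $W$, induce to $G$, and apply Proposition~\ref{sufficient_prop}---is the obvious and only natural way to extract it. The appeal to Lemma~\ref{finite_dim_lemma} is harmless but not strictly needed, since neither the induced-representation proposition nor Proposition~\ref{sufficient_prop} requires $W$ to be finite-dimensional.
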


Now we are ready for the main theorem:

\begin{theorem}\label{sc_fr_thm}
Let $G$ be a Sylow-cyclic group and let $\mu(G)$ be its maximal characteristic cyclic subgroup.
The following are equivalent:
\begin{enumerate}
\item 
$G$ is freely representable.
\item
Every prime dividing the order of $G$ also divides the order of $\mu(G)$.
\item
For every prime $p$ dividing the order of $G$ there is a unique subgroup of $G$ of~order~$p$.
\end{enumerate}
\end{theorem}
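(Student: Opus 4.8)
The plan is to establish the cycle of implications $(1)\Rightarrow(2)$, $(2)\Rightarrow(1)$, together with the equivalence $(2)\Leftrightarrow(3)$, leaning heavily on the machinery already developed for the MCC subgroup $\mu(G)$. The implication $(1)\Rightarrow(2)$ requires no new work: it is precisely the content of Corollary~\ref{necessary_cor}, which asserts that in a freely representable Sylow-cyclic group every prime dividing $|G|$ divides $|\mu(G)|$.

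The substantive direction is $(2)\Rightarrow(1)$, and here the idea is to use $\mu(G)$ itself as the freely representable subgroup in the induced-representation criterion, Corollary~\ref{sufficient_cor}. Since $\mu(G)$ is cyclic by Corollary~\ref{subgroups_cg_corollary}, it is freely representable (Example~\ref{cyclic_example}); so it suffices to verify that $\mu(G)$ contains every element of $G$ of prime order. I would argue as follows: given $g\in G$ of prime order $p$, we have $p\mid|G|$, so by hypothesis $p\mid|\mu(G)|$. By Corollary~\ref{unique_cyclic_cor} this forces $G$ to have a \emph{unique} cyclic subgroup of order $p$. But $\mu(G)$, being cyclic of order divisible by $p$, already contains one such subgroup; by uniqueness it must coincide with $\langle g\rangle$, whence $g\in\mu(G)$. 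With this verified, Corollary~\ref{sufficient_cor} delivers that $G$ is freely representable.

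Finally, the equivalence $(2)\Leftrightarrow(3)$ is essentially a restatement through Corollary~\ref{unique_cyclic_cor}. Every subgroup of prime order is automatically cyclic, so the assertion that $G$ has a unique subgroup of order $p$ is the same as the assertion that $G$ has a unique cyclic subgroup of order $p$, which by Corollary~\ref{unique_cyclic_cor} holds exactly when $p\mid|\mu(G)|$. Running this over all primes $p$ dividing $|G|$ shows that (3) and (2) say the same thing. I do not expect a serious obstacle here; the only points demanding care are the existence-versus-uniqueness bookkeeping in invoking Corollary~\ref{unique_cyclic_cor} (Cauchy's theorem guarantees that a subgroup of order $p$ exists whenever $p\mid|G|$, so the real content throughout is the uniqueness), and the recognition that $\mu(G)$ is the natural candidate for the subgroup $H$ in the induced-representation argument.
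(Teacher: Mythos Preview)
Your proposal is correct and follows essentially the same approach as the paper: both rely on Corollary~\ref{necessary_cor} for the necessary direction, Corollary~\ref{unique_cyclic_cor} to link primes dividing $|\mu(G)|$ with uniqueness of order-$p$ subgroups, and Corollary~\ref{sufficient_cor} applied with $H=\mu(G)$ for the sufficient direction. The only difference is organizational---the paper runs the cycle $(1)\Rightarrow(2)\Rightarrow(3)\Rightarrow(1)$ rather than your $(1)\Leftrightarrow(2)$ plus $(2)\Leftrightarrow(3)$---but the substance is identical.
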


\begin{proof}
Suppose $G$ is freely representable. 
Then by Corollary~\ref{necessary_cor} every prime dividing the order of $G$ must
divide the order of $\mu(G)$. So $(1) \implies (2)$.

Now assume $(2)$. If $p$ divides $|G|$ then, by assumption, $p$ divides $|\mu(G)|$.
Thus there is a unique subgroup of $G$ of order $p$ by Corollary~\ref{unique_cyclic_cor}.
So $(2) \implies (3)$ holds.

Finally assume $(3)$. Let $g \in G$ be an element of prime $p$ order.
By assumption
the subgroup~$\left< g \right>$ is the unique subgroup of $G$ of order $p$. This implies that $\left< g \right>$ is 
a characteristic cyclic subgroup, and so $\left< g \right>$ is contained in $\mu(G)$.
Since $\mu(G)$ is cyclic, it is freely representable. Thus $G$ is freely representable by Corollary~\ref{sufficient_cor}.
\end{proof}

We can now strengthen Lemma~\ref{necessary_lemma}.

\begin{corollary}\label{ns_cor}
Let $A$ and $B$ be cyclic subgroups 
of $G$ of relatively prime orders such that~$G = A \rtimes B$ (so $A$ is normal in $G$).
Let $K$ be the kernel of the action homomorphism  $B \to \Aut (A)$ associated to the semidirect product.
Then $G$ is freely representable if and only if every prime dividing $|B|$  divides $|K|$.
\end{corollary}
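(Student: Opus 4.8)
The plan is to reduce the whole statement to the equivalence $(1)\iff(2)$ of Theorem~\ref{sc_fr_thm}, using the explicit description of $\mu(G)$ furnished by Lemma~\ref{mck_lemma}. First I would note that $G = A \rtimes B$ with $A$ and $B$ cyclic of relatively prime order is Sylow-cyclic by Corollary~\ref{semi-direct_cor}, which is exactly what is needed to apply Theorem~\ref{sc_fr_thm}. That theorem then says $G$ is freely representable if and only if every prime dividing $|G|$ divides $|\mu(G)|$, so the task becomes translating this divisibility condition into the condition on $K$.

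Next I would compute $|\mu(G)|$. By Lemma~\ref{mck_lemma} we have $\mu(G) = AK$, where $K$ is the kernel of $B \to \Aut(A)$. Since $K$ is a subgroup of $B$ and $A \cap B = \{1\}$, we get $A \cap K = \{1\}$ and hence $|\mu(G)| = |A|\,|K|$.

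The core of the argument is then a coprimality bookkeeping step. Because $\gcd(|A|,|B|)=1$, the primes dividing $|G| = |A|\,|B|$ split into those dividing $|A|$ and those dividing $|B|$; likewise, since $K \le B$ forces $\gcd(|A|,|K|)=1$, the primes dividing $|\mu(G)| = |A|\,|K|$ split into those dividing $|A|$ and those dividing $|K|$. Every prime dividing $|A|$ automatically divides $|\mu(G)|$ because $A \subseteq \mu(G)$, so condition (2) of Theorem~\ref{sc_fr_thm} reduces to the requirement that every prime dividing $|B|$ divide $|\mu(G)|$. For such a prime $p$, coprimality with $|A|$ means that $p \mid |A|\,|K|$ holds precisely when $p \mid |K|$. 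This yields exactly the stated criterion.

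I do not expect a genuine obstacle here; the one point to handle carefully is the coprimality bookkeeping that separates the primes of $|A|$ from those of $|B|$ (and of $|K|$), which is what lets the $\mu(G)$-divisibility condition collapse onto $|B|$ alone. The result strengthens Lemma~\ref{necessary_lemma} from a necessary condition to an equivalence precisely because Theorem~\ref{sc_fr_thm} supplies the converse direction that Lemma~\ref{necessary_lemma} lacked.
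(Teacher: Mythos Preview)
Your proof is correct and follows essentially the same approach as the paper: both arguments reduce to Theorem~\ref{sc_fr_thm} via the identification $\mu(G)=AK$ from Lemma~\ref{mck_lemma}, and then perform the same coprimality bookkeeping to see that the divisibility condition on $|\mu(G)|$ is automatic for primes dividing $|A|$ and collapses to the condition on $|K|$ for primes dividing $|B|$. Your write-up is slightly more explicit (noting $G$ is Sylow-cyclic via Corollary~\ref{semi-direct_cor} and spelling out $|\mu(G)|=|A|\,|K|$), but the substance is the same.
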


\begin{proof}
Recall that $\mu(G)$ is $AK$ (Lemma~\ref{mck_lemma}).

If every prime dividing $|B|$ divides $|K|$ then every prime dividing the order of~$G$ must
divide $|A|$ or $|K|$. Hence every prime dividing the order of $G$ divides 
the order of $\mu(G)$, and so $G$ is freely representable by the above theorem.

Conversely, if $G$ is freely representable, then every prime $p$ dividing the order of $B$
must divide the order of $\mu(G) = AK \cong A \times K$ by the above theorem. But $p$ does not divide the order of $A$,
so $p$ divides the order of $K$ as desired.
\end{proof}

\begin{example}
Suppose $A$ a cyclic group of odd prime order $p$. Let $q$ be any prime dividing $p-1$.
Let $B$ be a cyclic group of order $q^k$ with $k>1$. We can identify~$\mathrm{Aut}(A)$ with $\bF_p^\times$
which is cyclic of order $p-1$. Let $B'$ be the unique quotient of $B$ of size $q$, and fix an injective homomorphism
$B' \to \mathrm{Aut}(A)$. Now have $B$ act on $A$ by the composition
$$
B \to B' \to \mathrm{Aut}(A)
$$
and let $G$ be the associated semidirect product $A \rtimes B$.
The kernel $K$ of this action homomorphism has order $q^{k-1}$.
By Lemma~\ref{mck_lemma} we have that $\mu(G) \cong A \times K$, and this has $p q^{k-1}$ elements.
By the above theorem, $G$ is a freely representable non-Abelian group of order~$p q^k$.
In the case of order $p \cdot 2^2$ such groups arose already as binary dihedral groups $2 D_p$ in $\bH^\times$.
But if we take $q \ne 2$ we can conclude the following:
\emph{There are an infinite number of odd orders such that there exists non-Abelian Sylow-cyclic groups
of that order that are freely representable} (for example, 
for a fixed $q$ take an infinite sequence of primes $p \equiv 1 \bmod q$). Note that such Sylow-cyclic groups cannot be isomorphic to subgroups of $\bH^\times$
since all finite non-Abelian subgroups of~$\bH^\times$ have order divisible by $4$.

The smallest such order of this type of group of odd order is $7 \cdot 3^2 = 63$. Note that if $G$ 
is a noncyclic freely representable group of order $5 \cdot 3^2$ then $G$ is $A \rtimes B$ where $A$ is cyclic 
subgroup of $G$ of order $5$ and $B$ is a cyclic subgroup of $G$ of order~$9$.
Furthermore,
$\mu(G)$ has order $15$ or~$45$.
The second case cannot happen since $G$ is not cyclic.
The first case cannot happen either:
 the automorphism group of $A$ has order 4,
so the kernel $K$ of the action of $B$ on $A$ must be all of $K$, so~$\mu(G) = A K = G$.
We conclude that 63 is the smallest odd order possible for a noncyclic freely representable group.
\end{example}

Here is another interesting application of Theorem~\ref{sc_fr_thm}.

\begin{proposition}\label{product_prop}
Let $G$ be a freely representable Sylow-cyclic group and let $N$ be a normal
subgroup of $G$ of index $p$. If $p^2$ does not divide the order of $G$ then
$$G = NC_p \cong N \times C_p$$
where $C_p$ is a subgroup of $G$ of order $p$ (and is the unique subgroup of order $p$).
\end{proposition}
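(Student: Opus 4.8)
The plan is to produce $C_p$ as the unique subgroup of order $p$ and then recognize $G$ as an internal direct product of $N$ and $C_p$. First I would record the arithmetic consequence of the hypothesis: since $[G:N]=p$ we have $p \mid |G|$, and since $p^2 \nmid |G|$ the prime $p$ divides $|G|$ to exactly the first power. In particular $|N| = |G|/p$ is coprime to $p$, and any $p$-Sylow subgroup of $G$ has order $p$.

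Next I would invoke Theorem~\ref{sc_fr_thm}. Because $G$ is a freely representable Sylow-cyclic group and $p$ divides $|G|$, there is a unique subgroup $C_p$ of $G$ of order $p$; this already establishes the parenthetical uniqueness claim. Being the unique subgroup of its order, $C_p$ is characteristic, hence normal in $G$. Now $N \cap C_p$ is a subgroup of $C_p$ whose order divides both $p$ and $|N|$; since $p \nmid |N|$, this intersection is trivial. Counting orders gives $|N C_p| = |N|\,|C_p| = (|G|/p)\,p = |G|$, so $G = N C_p$.

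Finally I would assemble the internal direct product. Both $N$ and $C_p$ are normal with $N \cap C_p = \{1\}$, so for $n \in N$ and $c \in C_p$ the commutator $n^{-1}c^{-1}nc$ lies in $N \cap C_p = \{1\}$ (it is in $C_p$ because $C_p$ is normal, and in $N$ because $N$ is normal), whence $N$ and $C_p$ commute elementwise. The map $N \times C_p \to G$ sending $(n,c)$ to $nc$ is then a homomorphism, injective by the trivial intersection and surjective since $G = N C_p$, so it is an isomorphism $G \cong N \times C_p$. There is no serious obstacle here; the one point that must be gotten right is that free representability, through Theorem~\ref{sc_fr_thm}, is exactly what delivers a \emph{unique}, and therefore characteristic and normal, subgroup of order $p$ — without the normality of $C_p$ the direct-product conclusion would not follow.
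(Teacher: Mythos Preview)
Your proof is correct and follows essentially the same route as the paper's: obtain the unique (hence normal) subgroup $C_p$ of order $p$ from Theorem~\ref{sc_fr_thm}, note that $N\cap C_p=\{1\}$ by the coprimality forced by $p^2\nmid|G|$, and conclude $G=NC_p\cong N\times C_p$. The paper's argument is simply a terser version of yours, omitting the order-counting and commutator details that you spell out.
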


\begin{proof}
Since $G$ is freely representable, there is a unique subgroup $C_p$ of order $p$, and so $C_p$
must be normal in $G$. Since $N \cap C_p = \{1\}$ we  have $N C_p \cong N \times C_p$.
Finally,  $G = N C_p$ since $[G: N] = p$.
\end{proof}

We cannot hope to generalize Theorem~\ref{sc_fr_thm} to all freely representable groups.
For example, the binary tetrahedral group $2 T$ is freely representable (as a subgroup of~$\bH^\times$),
but does not have a unique subgroup of order 3. However, one implication holds in general:

\begin{proposition}\label{unique_freely_rep_prop}
Let $G$ be a finite group with the property that for each prime $p$ dividing the order of $G$ there
is a unique subgroup of order $p$.
Then $G$ is freely representable.
\end{proposition}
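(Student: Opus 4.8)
The plan is to reduce the statement to the induced-representation criterion of Corollary~\ref{sufficient_cor}, which asserts that a finite group is freely representable as soon as it has a freely representable subgroup containing all of its elements of prime order. So I would look for a single, explicitly describable subgroup $H$ of $G$ that is cyclic (hence freely representable by Example~\ref{cyclic_example}) and that absorbs every element of prime order; then the corollary finishes the job with no further representation-theoretic input.

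The candidate for $H$ is the product of the unique prime-order subgroups. Writing $p_1, \dots, p_r$ for the distinct primes dividing $|G|$ and $P_i$ for the unique subgroup of order $p_i$, I would first observe that each $P_i$, being the unique subgroup of its order, is carried to itself by every automorphism of $G$ and is therefore characteristic, in particular normal. I would then set $H = P_1 P_2 \cdots P_r$ and argue by induction that this is a subgroup isomorphic to the direct product $P_1 \times \cdots \times P_r$: at each stage the partial product $P_1 \cdots P_k$ is a normal subgroup whose order is coprime to $|P_{k+1}|$, so the two intersect trivially and, both being normal, centralize one another, yielding an internal direct product that is again normal. Since the $P_i$ are cyclic of distinct prime orders, $H$ comes out cyclic of squarefree order $p_1 \cdots p_r$.

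Next I would verify the absorption property. Any $g \in G$ of prime order $p$ generates a subgroup $\langle g \rangle$ of order $p$, which by the uniqueness hypothesis must coincide with the corresponding $P_i$; hence $g \in P_i \subseteq H$. Thus $H$ contains every element of $G$ of prime order. Being cyclic, $H$ is freely representable, and Corollary~\ref{sufficient_cor} then yields that $G$ itself is freely representable.

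The only point requiring genuine care — and the closest thing to an obstacle — is confirming that $H$ really is an honest subgroup and really is cyclic. This rests entirely on the normality of each $P_i$ (which is forced by the uniqueness assumption) together with the standard fact that normal subgroups of pairwise coprime order commute elementwise and assemble into an internal direct product. Everything beyond that is a direct appeal to the already-established Corollary~\ref{sufficient_cor}, so the argument stays short and avoids any appeal to the Sylow-cyclic structure theory used earlier.
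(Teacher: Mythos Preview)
Your proposal is correct and follows essentially the same route as the paper: form the product $H$ of the unique subgroups of prime order, observe it is an internal direct product (hence cyclic of squarefree order), and then invoke Corollary~\ref{sufficient_cor}. You supply more detail than the paper on why the product is a direct product and why every prime-order element lands in $H$, but the strategy is identical.
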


\begin{proof}
Let $p_1, \ldots, p_k$ be the primes dividing the order of $G$. Let $C_{p_i}$ be the 
 be the unique subgroup of order $p_i$.
Then each $C_{p_i}$ is normal and
$$
H \; \defeq \; C_{p_1} \cdots C_{p_k} \cong C_{p_1} \times \cdots \times C_{p_k} 
$$
is a cyclic subgroup of $G$. So $H$ is freely representable. Now use Corollary~\ref{sufficient_cor}.
\end{proof}

\begin{remark}
The classification of Sylow-cyclic groups is enough to yield significant applications
to differential geometry. In fact, by a theorem of Vincent (1947), every complete connected Riemannian manifold of 
constant positive curvature of dimension not congruent to $3$ modulo $4$ has a fundamental group 
that is Sylow-cyclic. From this Vincent was able to give a full classification of such manifolds when the dimension
is not congruent to~$3$ modulo $4$. Wolf \cite{wolf2011} completed the classification to all dimensions by classifying
freely representable groups beyond the Sylow-cyclic groups.
\end{remark}

%%%

\section{Application to Automorphisms of Sylow-cyclic groups}

Consider the automorphism group $\Aut(G)$ where $G$ is a
Sylow-cyclic group of odd order, and let $O(\Aut(G))$ be the maximal odd normal subgroup of $\Aut(G)$.
Then we can use the above results to show that $\Aut(G)/ O(\Aut(G))$ is an Abelian $2$-group.
This is clear if $G$ is cyclic since  $\Aut(G)$ is an Abelian group.
The key  to generalizing this is to relate this quotient to the corresponding quotient for 
the cyclic subgroup $\mu(G)$. We start with a lemma.

\begin{lemma}
Let $G$ be a Sylow-cyclic group of odd order and let
 $\phi$ be an automorphism of $G$ such that $\phi^2$ is the identity map.
If $\phi$ fixes the MCC subgroup $\mu(G)$  then $\phi$ fixes all of $G$.
\end{lemma}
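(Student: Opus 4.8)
The plan is to let the structure theorem for Sylow-cyclic groups carry the argument. By Theorem~\ref{semidirect_thm} I can write $G = A \rtimes B$, where $A = G'$ is cyclic of order $m$, $B = \langle b\rangle$ is cyclic of order $n$ with $\gcd(m,n)=1$, and conjugation by $b$ sends a fixed generator $a$ of $A$ to $a^r$ for some $r \in (\bZ/m\bZ)^\times$ whose order $|I|$ equals the order of the image $I$ of $B$ in $\Aut(A)$. By Lemma~\ref{mck_lemma}, $\mu(G) = AK$ with $K = \langle b^{|I|}\rangle$ the kernel of $B \to \Aut(A)$, of order $|K| = n/|I|$. Since $\phi$ fixes $\mu(G) \supseteq A$ pointwise we have $\phi(a) = a$, and because $a,b$ generate $G$ it suffices to show $\phi(b) = b$. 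Write $\phi(b) = a^s b^t$.

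First I would pin down $t$ modulo $n$. As $\phi$ fixes $A$ it descends to an automorphism of $\bar G = G/A \cong B$ (cyclic of order $n$) sending $\bar b \mapsto \bar b^{\,t}$. Two congruences are immediate: $\phi^2 = \mathrm{id}$ gives $t^2 \equiv 1 \pmod n$, and since $\phi$ fixes $K = \langle b^{|I|}\rangle$ pointwise, hence fixes the order-$|K|$ subgroup of $\bar G$, we get $t \equiv 1 \pmod{|K|}$. A third comes from requiring $\phi$ to respect $bab^{-1} = a^r$: applying $\phi$ yields $a^{r^t} = a^r$, so $r^{t-1} = 1$ and thus $t \equiv 1 \pmod{|I|}$.

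The main point, and the step needing the most care, is the number-theoretic deduction that these three congruences force $t \equiv 1 \pmod n$. Since $t \equiv 1$ modulo both $|I|$ and $|K|$, it is $\equiv 1$ modulo $\mathrm{lcm}(|I|,|K|)$; in particular $t \equiv 1 \pmod p$ for every prime $p \mid n = |I||K|$, because such a $p$ divides $|I|$ or $|K|$. Fixing a prime power $p^e \| n$, oddness of $n$ makes $(\bZ/p^e\bZ)^\times$ cyclic, so $t^2 \equiv 1 \pmod{p^e}$ leaves only $t \equiv \pm 1 \pmod{p^e}$; the option $t \equiv -1$ contradicts $t \equiv 1 \pmod p$ since $p$ is odd. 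Hence $t \equiv 1 \pmod{p^e}$ for every such $p$, giving $t \equiv 1 \pmod n$.

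Finally I would clean up the $A$-component. With $t \equiv 1 \pmod n$ we have $\phi(b) = a^s b$, so $\phi^2(b) = \phi(a^s b) = a^s\,\phi(b) = a^{2s} b$; then $\phi^2 = \mathrm{id}$ forces $a^{2s} = 1$, i.e. $2s \equiv 0 \pmod m$, and oddness of $m$ gives $s \equiv 0$. Thus $\phi(b) = b$, and together with $\phi(a) = a$ this shows $\phi$ is the identity on $G$. It is worth noting that oddness is used in exactly two places — eliminating $t \equiv -1$ and eliminating $2s \not\equiv 0$ — which is where the hypothesis on the order of $G$ genuinely enters.
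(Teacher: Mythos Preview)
Your proof is correct and takes a genuinely different route from the paper's. The paper proceeds Sylow subgroup by Sylow subgroup: for each $p$-Sylow $P$ not contained in $A=G'$, it looks at the induced action of $\phi$ on the image $\overline P$ in $G/A$, observes that an odd cyclic group admits only the identity and inversion as involutory automorphisms, and then eliminates the inversion case by a centralizer trick (showing that if $\phi(c)=c^{-1}a_0$ for a generator $c$, then $c^2$, and hence $c$ by oddness, centralizes $A$, contradicting $c\notin Z(G)$). Your argument instead works with the single generator $b$ of a chosen complement $B$, reduces to three congruences on the exponent $t$ in $\phi(b)=a^sb^t$, and finishes with a clean CRT/Hensel-style argument ($t^2\equiv 1$ and $t\equiv 1\pmod p$ for every $p\mid n$ force $t\equiv 1\pmod n$ when $n$ is odd). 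Your approach makes the role of the hypothesis $\phi|_{\mu(G)}=\mathrm{id}$ very explicit: it is exactly what supplies $t\equiv 1\pmod{|K|}$ (via $K\subseteq\mu(G)$) alongside the compatibility congruence $t\equiv 1\pmod{|I|}$, and these together cover all primes of $n$. The paper's argument, by contrast, hides this bookkeeping inside the dichotomy on $\overline P$ and the contradiction via centralizers. Both use oddness at essentially the same two junctures you identified.
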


\begin{proof}
Let $P$ be a nontrivial Sylow subgroup of $G$. We will show that $\phi$ acts trivially on~$P$.
Since the Sylow subgroups of $G$ generate $G$, this gives the result.
Let $A$ be the commutator subgroup of $G$.
Since $A$ and $G/A$ have relatively prime orders, 
either~$P \subseteq A$ or $P \cap A = \{ 1 \}$. In the first case $\phi$ acts trivially on $P$
since $A\subseteq \mu(G)$.
So from now on we assume that $P \cap A = \{1\}$.

Observe that the image $\overline P$ of $P$ in $G/A$ is isomorphic to $P$.
Since $\overline P$ is a cyclic group of odd order, it has a unique automorphism of order $2$.
So $\phi$ acts on $\overline P$ either as~$x \mapsto x$ or as $x \mapsto x^{-1}$.

First suppose that  $\phi$ acts on $\overline P$ as~$x \mapsto x$. So if $b \in P$
then $\phi(b) = a b$ for some~$a \in A$. Observe then that 
$$
b = \phi^2(b) = \phi(ab) = \phi(a) \phi(b) = a(ab) = a^2 b.
$$ 
Thus $a^2 = 1$. Since $A$ is a cyclic group of odd order $a = 1$, and $\phi(b) = b$.
We conclude that $\phi$ acts trivially on~$P$.

Finally suppose that  $\phi$ acts on $\overline P$ as~$x \mapsto x^{-1}$ and let $c \in P$ be a generator.
Thus~$\phi(c) = c^{-1} a_0$ for some $a_0 \in A$. Let $B$ be a complement of $A$ and observe that~$P$
is conjugate to a Sylow subgroup of $B$. So replacing $B$ with a complement of $B$ if necessary, 
we can assume $P$ is a subgroup of $B$. Note that $\phi(c) \ne c$ since~$c^{-1}$ and~$c$ have 
distinct images in $\overline P$ (and $\overline P$ has odd order greater than 1). Thus $c$ is not in the center $Z(G)$ since
$Z(G) \subseteq \mu(G)$.
Since $c$ centralizes $B$,  it cannot centralize~$A$. Let $a \in A$ be such that $c a c^{-1} \ne a$.
Note that $c a c^{-1} \in A$ so
$$
c a c^{-1} = \phi(c a c^{-1} ) = \phi(c) a \phi(c)^{-1} = c^{-1} a_0 a a_0^{-1} c = c^{-1} a c.
$$
So $c^2 a c^{-2} = a$. However $c$ has odd order, so this implies that $c a c^{-1} = a$, a contradiction.
So $\phi$ cannot act on $\overline P$ as~$x \mapsto x^{-1}$.
\end{proof}

\begin{proposition} \label{aut_quotient_prop}
Let $G$ be a Sylow-cyclic group of odd order. Let
$\mu(G)$ be the~MCC subgroup of $G$, and let
 $O(\Aut(G))$ be the maximal odd normal subgroup of the automorphism group $\Aut(G)$.
Then the quotient
$$
\Aut(G) / O(\Aut(G)).
$$
is isomorphic to a $2$-group inside $\Aut(\mu(G))$. In particular, this quotient is an Abelian $2$-group.
\end{proposition}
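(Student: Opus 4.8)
The plan is to use the restriction homomorphism to $\Aut(\mu(G))$ and to peel off the odd part. Since $\mu(G)$ is a characteristic subgroup of $G$ (Corollary~\ref{subgroups_cg_corollary}), every automorphism of $G$ restricts to an automorphism of $\mu(G)$, giving a homomorphism $\rho\colon \Aut(G) \to \Aut(\mu(G))$. Because $\mu(G)$ is cyclic, $\Aut(\mu(G)) \cong (\bZ/D\bZ)^\times$ is abelian, where $D = |\mu(G)|$; in particular it is the internal direct product $P \times O'$ of its Sylow $2$-subgroup $P$ and its (unique) odd part $O'$. Let $q\colon \Aut(\mu(G)) \to P$ be the projection with kernel $O'$, and set $\psi = q \circ \rho \colon \Aut(G) \to P$. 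Since $P$ is an abelian $2$-group sitting inside $\Aut(\mu(G))$, it suffices to show that $\ker\psi = O(\Aut(G))$, for then $\psi$ induces an injection $\Aut(G)/O(\Aut(G)) \hookrightarrow P$ that identifies the quotient with a $2$-group inside $\Aut(\mu(G))$.

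First I would record that $\ker\rho$ has odd order, and this is exactly where the preceding lemma does the work: an element of $\ker\rho$ of order $2$ would be an automorphism $\phi$ with $\phi^2 = \mathrm{id}$ that fixes $\mu(G)$ (it acts trivially on $\mu(G)$ because it lies in $\ker\rho$), hence would equal the identity by the lemma, contradicting that it has order $2$. So $\ker\rho$ has no element of order $2$ and is therefore odd by Cauchy's theorem.

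Next I would establish the two inclusions giving $\ker\psi = O(\Aut(G))$. For $O(\Aut(G)) \subseteq \ker\psi$: the image $\rho(O(\Aut(G)))$ is an odd subgroup of the abelian group $\Aut(\mu(G))$, hence lies in the odd part $O' = \ker q$, so $\psi$ kills $O(\Aut(G))$. For the reverse inclusion I would check that $\ker\psi$ is an odd normal subgroup, whence $\ker\psi \subseteq O(\Aut(G))$ by maximality of the latter. Normality is automatic, $\ker\psi$ being a kernel. Oddness follows from the short exact sequence $1 \to \ker\rho \to \ker\psi \to \rho(\ker\psi) \to 1$: the left term is odd by the previous paragraph, while $\rho(\ker\psi) \subseteq \ker q = O'$ is odd, so $|\ker\psi| = |\ker\rho|\cdot|\rho(\ker\psi)|$ is odd. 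Combining the inclusions yields $\ker\psi = O(\Aut(G))$, and the induced map $\Aut(G)/O(\Aut(G)) \hookrightarrow P \subseteq \Aut(\mu(G))$ is the desired embedding; its image is a subgroup of the $2$-group $P$, so the quotient is an abelian $2$-group.

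The only genuine content is the oddness of $\ker\rho$, which rests entirely on the lemma; everything else is bookkeeping with the Sylow $2$-part of the abelian group $\Aut(\mu(G))$ together with the maximality of $O(\Aut(G))$. The one point I expect to require care is the well-definedness of $O(\Aut(G))$ as \emph{the} maximal odd normal subgroup, which I would dispatch by noting that the product of two odd normal subgroups $N, M$ is again odd and normal (its order divides $|N|\,|M|$), so a unique largest odd normal subgroup exists.
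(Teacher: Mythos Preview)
Your proof is correct and follows essentially the same approach as the paper: both compose the restriction map $\Aut(G)\to\Aut(\mu(G))$ with the projection onto the Sylow $2$-part of the abelian group $\Aut(\mu(G))$, and use the preceding lemma to show the kernel is an odd normal subgroup equal to $O(\Aut(G))$. Your organization, separating out the step ``$\ker\rho$ is odd'' and then deducing ``$\ker\psi$ is odd'' via the short exact sequence, is a clean variant of the paper's single contradiction argument applied directly to an order-$2$ element of $\ker\psi$.
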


\begin{proof}
Observe that $\Aut(\mu(G))$ is Abelian since $\mu(G)$ is a cyclic group. Recall that Abelian groups
are the products of their Sylow subgroups, and so 
$$
\Aut(\mu(G)) = A_1 A_2 = A_1 \times A_2
$$
where $A_1$ is the subgroup of $\Aut(\mu(G))$ consisting of elements of odd order, and where~$A_2$ is the $2$-Sylow subgroup of $\Aut(\mu(G))$.
Since $\mu(G)$ is characteristic in~$G$ we have a homomorphism $\Aut(G) \to \Aut(\mu(G))$.
We also have the projection homomorphism $\Aut(\mu(G)) = A_1 A_2  \to A_2$.
Let $K$ be the kernel of the composition
$$
\Aut(G) \to \Aut(\mu(G)) \to  A_2.
$$
Observe that $K$ contains $O(\Aut(G))$.

Claim: $K$ contains only elements of odd order. Suppose otherwise that $\psi \in K$ has order $2k$,
and let $\phi = \psi^k$. Then $\phi$ has order $2$ and is in $K$.
The image of $\phi$ in~$\Aut(\mu(G))$ is just the restriction $\phi |_{\mu(G)}$ and 
it is in the kernel of of the projection~$\Aut(\mu(G)) = A_1 A_2  \to A_2$. In other words, $\phi |_{\mu(G)} \in A_1$
and so has odd order. Since $\phi$ has order $2$, we conclude that $\phi |_{\mu(G)}$ has order 1.
By the previous lemma $\phi$ is the identity, a contradiction.

So the claim has been established. This means $K = O(\Aut(G))$ and we have an injection
$$
\Aut(G)  / O(\Aut(G)) \hookrightarrow  A_2.
$$
The result follows.
\end{proof}

\chapter{Applications to Division Rings}\label{Wedderburn}

The classification of Sylow-cyclic fields can be used to prove Wedderburn's theorem.
Along the way we will see an argument that every finite subgroup of a field is cyclic. 
This section is independent of Example~\ref{divisionring_example} where we used (1)  Wedderburn's theorem
and (2) the fact that $F^\times$ is cyclic for any finite field. 
We start with the following.

\begin{lemma} \label{Wedderburn_lemmaA}
Let $D$ be a division ring and let $F$ be its prime subfield. Then every finite subgroup $G$ of $D^\times$ is freely representable over $F$.
\end{lemma}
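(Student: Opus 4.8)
The plan is to reproduce, in a streamlined form, the construction already sketched in Example~\ref{divisionring_example}: realize $G$ as a group of $F$-linear operators on $D$ itself, acting by left multiplication, and then check directly that this representation is free. The only genuinely delicate point is that the prime subfield $F$ lies in the center of $D$; this centrality is exactly what makes left multiplication an $F$-linear operation, so it must be established first.

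First I would record the centrality of $F$. For each nonzero $v \in D$ the map $a \mapsto v^{-1} a v$ is a ring homomorphism $F \to D$ sending $1$ to $1$; since $F$ is the prime subfield and ring homomorphisms fixing $1$ must fix the prime subfield, such a map sends every $a \in F$ to itself. Hence $av = va$ for all $a \in F$ and all $v \in D$. In particular $D$ is an $F$-vector space, and $V \defeq D$ is nonzero.

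Next I would define the action $\rho(g)(v) = gv$ for $g \in G$ and $v \in V = D$. This is additive in $v$, and it is $F$-linear precisely because $F$ is central: for $a \in F$ we have $g(av) = (ga)v = (ag)v = a(gv)$. Thus $\rho$ is a genuine linear representation of $G$ on the $F$-vector space $V$.

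Finally I would verify freeness. Suppose $g \in G$ fixes a nonzero vector, say $gv = v$ with $v \in D$ and $v \neq 0$. Then $(g-1)v = 0$ in $D$. Since $D$ is a division ring it has no zero divisors, so $g - 1 = 0$, that is $g = 1$. Hence the only element of $G$ with a nonzero fixed vector is the identity, which by definition means $\rho$ is a free linear representation of $G$ on the nonzero $F$-vector space $V = D$. Therefore $G$ is freely representable over $F$. The main (indeed the only) obstacle is the centrality of $F$ in $D$, since that is what licenses treating left multiplication as an $F$-linear representation; once that is in hand, freeness is immediate from the division ring axioms.
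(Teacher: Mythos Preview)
Your proof is correct and follows exactly the same approach as the paper: let $G$ act on $V = D$ by left multiplication and observe that the absence of zero divisors forces freeness. The paper's proof is a terse two-line version of yours; your added care about the centrality of $F$ (which the paper handled earlier in Example~\ref{divisionring_example}) simply makes the argument more self-contained.
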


\begin{proof}
We let $G$ act on $V=D$ by left multiplication. Note that $V$ is an $F$-vector space. This action
is a free linear action since $D$ has no zero divisors.
\end{proof}

\begin{remark}
In particular, if $F$ has characteristic zero then $G$ is freely representable (and so is Sylow-cycloidal).
This  result is the starting point for Amitsur's classification of finite subgroups of $D^\times$ where $D$ is a division ring
(1955~\cite{amitsur1955}). Amitsur used  class field theory to complete the classification.
\end{remark}

\begin{lemma}\label{pp_lemma}
Let $G$ be a group of order $p^2$ where $p$ is a prime. Suppose $F$ is a field of characteristic not equal to $p$.
If $G$ is freely representable over $F$ then $G$ is cyclic.
\end{lemma}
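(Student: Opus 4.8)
The plan is to mimic the norm-relation proof of the $pq$-theorem (Theorem~\ref{pq_thm}), but to carry it out directly over the given field $F$ rather than over $\bQ$. This detour is necessary because Theorem~\ref{pq_thm} and its engine Theorem~\ref{biasse2020norm_thm2} are stated only for the default field, whereas here $F$ may have positive characteristic $\ell$ (with $\ell\ne p$), so we cannot simply pass to $\bC$. The right tool is Theorem~\ref{biasse2020norm_thm}, which applies over \emph{any} field whose characteristic does not divide $|G|$; since $|G|=p^2$ and the characteristic of $F$ is not $p$, that hypothesis is exactly met.

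First I would argue by contradiction and suppose $G$ is not cyclic. Every nonidentity element of $G$ has order dividing $p^2$, and since $G$ is not cyclic none has order $p^2$; hence every nonidentity element has order $p$. Thus each nonidentity element lies in a subgroup of order $p$, and any two distinct subgroups of order $p$ intersect trivially (their intersection is a subgroup of a group of prime order). Consequently the subgroups of order $p$ partition $G\setminus\{1\}$ into their nonidentity parts. (Note this uses nothing about $G$ being Abelian.) Let $\mathcal C$ be the collection of these subgroups and set $k=|\mathcal C|$; counting nonidentity elements gives $k(p-1)=p^2-1$, so $k=p+1>1$.

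Next I would assemble the norm relation in $\bZ[G]$. Since each nonidentity element lies in exactly one $C\in\mathcal C$ and each $\N C$ contributes a single copy of $\mathbf 1$,
\[
\sum_{C \in \mathcal C} \N C = k\,\mathbf 1 + (\N G - \mathbf 1) = (k-1)\,\mathbf 1 + \N G,
\]
so that $p\,\mathbf 1 = (k-1)\,\mathbf 1 = \sum_{C\in\mathcal C}\N C - \N G$. Reading this identity in $F[G]$ and using that $p$ is invertible in $F$ (this is precisely where the hypothesis on the characteristic enters), I obtain
\[
\mathbf 1 = \frac{1}{p}\Bigl(\sum_{C \in \mathcal C} \N C - \N G\Bigr).
\]
Every term on the right is a scalar multiple of the norm of a nontrivial subgroup, so $\mathbf 1$ lies in the two-sided ideal $I$ generated by such norms; hence $I=F[G]$ is not proper. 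By Theorem~\ref{biasse2020norm_thm}, $G$ is not freely representable over $F$, contradicting the hypothesis. Therefore $G$ must be cyclic.

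There is no genuinely hard step here; the only point requiring care is to avoid invoking the $\bC$-specific results, and instead to check explicitly that the characteristic hypothesis of Theorem~\ref{biasse2020norm_thm} holds (the characteristic does not divide $|G|=p^2$) and that $p$ is a unit in $F$, both of which follow from $\operatorname{char} F\ne p$.
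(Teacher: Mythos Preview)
Your proof is correct and follows essentially the same approach as the paper: both argue by contradiction, partition the nonidentity elements among the $p+1$ subgroups of order $p$, derive the identity $\sum_{C\in\mathcal C}\N C = p\,\mathbf 1 + \N G$, and then invoke Theorem~\ref{biasse2020norm_thm} using that $p$ is a unit in $F$. The paper's write-up is terser, but the argument is the same.
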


\begin{proof}
Suppose $G$ is freely representable but 
 not cyclic. This means that every nonidentity element of $G$ is in a unique cyclic group of order $p$.
 Observe that there are $k = (p^2 - 1)/(p-1) = p+1$ such cyclic groups.
 Let $\mathcal C$
be the collection of  cyclic subgroups of~$G$ of order $p$. Then 
$$
\sum_{C \in \mathcal C} \N C = (k-1) \mathbf{1} + \N G = p \mathbf{1} + \N G.
$$
Since $p$ is nonzero in $F$, 
this contradicts Theorem~\ref{biasse2020norm_thm}.
\end{proof}

Suppose $D$ is a division ring and that $G$ is a finite subgroup of $D^\times$.  If $F$ is the prime subfield of $D$, then let $F(G)$
be the $F$-span of $G$ in $D$. (Warning:  $F(G)$ is analogous to the group ring $F[G]$, but they are not the same since $G$ might
not be linearly independent.)

\begin{lemma}\label{wedderburn_FG_lemma}
Let $D, G, F, F(G)$ be as above, and
suppose $F$ is $\bF_p$ for some prime~$p$.
Then $F(G)$  is  a finite division ring of order a power of $p$.
\end{lemma}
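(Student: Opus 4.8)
The plan is to establish three things in sequence: that $F(G)$ is finite of order a power of $p$, that it is a unital subring of $D$, and that each of its nonzero elements is invertible.

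First I would note that $F(G)$, being by definition the $F$-span of the finite set $G$, is a finite-dimensional vector space over $F = \bF_p$. If $n$ is its dimension then $|F(G)| = p^n$, so $F(G)$ is automatically finite of order a power of $p$. Since $G$ is a subgroup of $D^\times$ we have $1 \in G \subseteq F(G)$, so $F \subseteq F(G)$ and in particular $1 \ne 0$ in $F(G)$; this also disposes of the ``order a power of $p$'' half of the claim.

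Next I would verify that $F(G)$ is a subring of $D$. It is closed under addition and $F$-scalar multiplication by construction, so the only real point is closure under multiplication. Writing two typical elements as $\sum_i a_i g_i$ and $\sum_j b_j h_j$ with $a_i, b_j \in F$ and $g_i, h_j \in G$, their product is $\sum_{i,j} a_i b_j (g_i h_j)$; because $G$ is a \emph{group}, each $g_i h_j$ again lies in $G$, so the product is once more an $F$-linear combination of elements of $G$ and hence lies in $F(G)$. This is exactly the place where the group structure of $G$ (rather than its being a mere subset of $D^\times$) is used, and it is the one step that deserves care. Thus $F(G)$ is a subring of $D$ containing $1$.

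Finally I would promote $F(G)$ from a ring to a division ring by the standard pigeonhole argument. Since $D$ is a division ring it has no zero divisors, and this property passes to the subring $F(G)$. Fixing a nonzero $a \in F(G)$, left multiplication $x \mapsto a x$ is an $F$-linear endomorphism of $F(G)$ that is injective (from $a(x-y)=0$ and $a \ne 0$ we get $x = y$); as $F(G)$ is finite, injectivity forces surjectivity, so there is $b \in F(G)$ with $ab = 1$, and the symmetric argument with right multiplication gives $c$ with $ca = 1$, whence $c = c(ab) = (ca)b = b$ is a two-sided inverse. Every nonzero element is therefore a unit, so $F(G)$ is a finite division ring of order $p^n$. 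I do not anticipate any genuine obstacle here: the lemma is essentially the combination of the closure computation above with the classical fact that a finite unital ring without zero divisors is a division ring.
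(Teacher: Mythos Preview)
Your proof is correct and follows essentially the same approach as the paper: finite dimensionality over $\bF_p$ gives order $p^n$, closure under multiplication makes $F(G)$ a subring, and the pigeonhole argument on left multiplication supplies inverses. Your version is simply more explicit---spelling out why products stay in $F(G)$ and checking that the one-sided inverse is two-sided---but the route is the same.
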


\begin{proof}
Let $k$ be the size of a basis of $F(G)$ for scalar field $F$, and observe that~$F(G)$ has finite size~$p^k$.
Observe that $F(G)$ is closed under multiplication, so we  conclude that~$F(G)$ is a subring of $D$.
Next suppose $a \in F(G)$ is nonzero. Then the map~$x \mapsto a x$ is an injective map $F(G) \to F(G)$
since $F(G)$ is contained in a division ring.  Since~$F(G)$ is finite,
this map is surjective, and $a b = 1$ for some~$b \in F(G)$.~This implies that~$F(G)$ is a division ring.
\end{proof}

\begin{lemma}
Let $D$ be a division ring whose prime field $F$ has prime characteristic~$p$. Then~$D^\times$ has no elements of order $p$.
\end{lemma}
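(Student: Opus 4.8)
The plan is to argue by contradiction, using the ``freshman's dream'' in characteristic $p$ together with the absence of zero divisors in a division ring. Suppose $g \in D^\times$ has order exactly $p$, so that $g^p = 1$ but $g \ne 1$. I would work with the element $g - 1$ of $D$ and show it must vanish.

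First I would observe that $g$ commutes with the identity $1$ (trivially), so the binomial theorem applies to $(g-1)^p$ computed inside $D$. Since $D$ has characteristic $p$, we have $p \cdot x = 0$ for every $x \in D$, and $p \mid \binom{p}{k}$ for each $0 < k < p$; hence every intermediate term of the expansion is annihilated. Together with the fact that $(-1)^p = -1$ in characteristic $p$ (including the case $p = 2$, where $-1 = 1$), the expansion collapses to
$$
(g-1)^p = g^p - 1.
$$

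Now invoking $g^p = 1$ gives $(g-1)^p = 0$. Because $D$ is a division ring it has no zero divisors, so a product of copies of $g - 1$ can be $0$ only if $g - 1 = 0$ itself, forcing $g = 1$. This contradicts the assumption that $g$ has order $p > 1$, which completes the argument.

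I do not expect a serious obstacle here; the only point needing care is the vanishing of the intermediate binomial terms, which is immediate from $p \cdot x = 0$ and $p \mid \binom{p}{k}$ for $0 < k < p$. If one prefers a proof that stays within the freely-representable framework of this section, the same idea can be phrased representation-theoretically: by Lemma~\ref{Wedderburn_lemmaA} the group $\left< g \right>$ acts freely on $D$ by left multiplication, yet left multiplication by $g$ is an $F$-linear operator $T$ satisfying $(T - I)^p = 0$, so $T - I$ is nilpotent and has nontrivial kernel unless $g = 1$, contradicting freeness.
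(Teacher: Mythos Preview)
Your proof is correct and considerably more direct than the paper's. The paper argues by first invoking the preceding lemma (Lemma~\ref{wedderburn_FG_lemma}) that the $F$-span $F(G)$ of the cyclic group $G = \langle g \rangle$ is a finite division ring of order $p^k$; then $|F(G)^\times| = p^k - 1$, and Lagrange's theorem forces the order of $g$ to divide $p^k - 1$, which is impossible since $p \nmid p^k - 1$. Your argument bypasses this entirely: the characteristic-$p$ identity $(g-1)^p = g^p - 1 = 0$ and the absence of zero divisors in $D$ already pin $g$ to the identity. This is more elementary and self-contained, needing nothing about the finiteness of $F(G)$; the paper's route, on the other hand, ties in naturally with the surrounding development, where the finite subring $F(G)$ is used again later (e.g., in the treatment of $\bF_p(Q)$ for a quaternion subgroup). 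Your alternative representation-theoretic phrasing is also fine, though it would read a little more cleanly as: $(T-I)^p = 0$ makes $T-I$ nilpotent, so some nonzero $w \in D$ satisfies $(T-I)w = 0$, i.e., $gw = w$, contradicting freeness.
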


\begin{proof}
Suppose $g \in D^\times$ has order $p$, and let $G$ be the group generated by $g$.
Observe that $G$ is a subgroup of~$F(G)^\times$,
and $F(G)^\times$ has order $p^k - 1$ for some $k\ge 1$. So the order of $g \in F(G)^\times$ fails
to divide the order of $F(G)^\times$, a contradiction.
\end{proof}

\begin{corollary}\label{pp_cor}
Let $G$ be a subgroup of $D^\times$ where $D$ is a division ring.
Then every subgroup of $G$ of order $p^2$, where $p$ is a prime, is cyclic.
\end{corollary}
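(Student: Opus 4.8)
The plan is to reduce the statement to the two lemmas immediately preceding it, splitting on whether the characteristic of $D$ equals the prime $p$. Let $H$ be a subgroup of $G$ of order $p^2$, let $F$ be the prime subfield of $D$, and let $c$ denote the characteristic of $D$. By Lemma~\ref{Wedderburn_lemmaA} the group $H$, being a finite subgroup of $D^\times$, is freely representable over $F$; this is the single fact I would carry into the case analysis.

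First I would dispose of the case $c \ne p$ (which includes $c = 0$). Here $F$ is a field of characteristic different from $p$, so Lemma~\ref{pp_lemma} applies directly to $H$ and immediately yields that $H$ is cyclic.

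The remaining case is $c = p$, i.e.\ the prime field is $\bF_p$. My claim is that this case is vacuous: there is no subgroup $H$ of order $p^2$ at all. Indeed, by Cauchy's theorem any group of order $p^2$ must contain an element of order $p$, but the lemma just above (which asserts that $D^\times$ has no element of order equal to the characteristic) forbids elements of order $p$ in $D^\times$, hence in $G$ and a fortiori in $H$. So no such $H$ exists, and the statement holds vacuously in this branch.

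The only point requiring a moment's care---and what I would flag as the main (admittedly mild) obstacle---is recognizing that the $c = p$ case is \emph{not} covered by Lemma~\ref{pp_lemma}, whose hypothesis explicitly excludes characteristic $p$, but is instead rendered empty by the no-order-$p$-elements lemma. Once the case split is organized this way, each branch is immediate, and the two branches together establish that every order-$p^2$ subgroup of $G$ is cyclic.
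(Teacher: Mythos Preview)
Your proof is correct and follows essentially the same approach as the paper: split on whether the characteristic equals $p$, use the no-order-$p$ lemma to dispose of the $c=p$ case as vacuous, and apply Lemma~\ref{Wedderburn_lemmaA} together with Lemma~\ref{pp_lemma} in the remaining case. The only cosmetic difference is that the paper invokes the no-order-$p$ lemma first (phrasing it as ``we can assume $p$ is not the characteristic'') before applying the other two lemmas, whereas you invoke Lemma~\ref{Wedderburn_lemmaA} up front and then case-split; the logical content is identical.
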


\begin{proof}
Let $H$ be a subgroup of $G$ of order $p^2$ where $p$ is a prime.
By the above lemma, we can assume that $p$ is not the characteristic of the prime field $F$ of $D$.
By Lemma~\ref{Wedderburn_lemmaA}, $H$ is freely representable over $F$.
So $H$ is cyclic by Lemma~\ref{pp_lemma}.
\end{proof}

\begin{corollary} \label{field_cyclic_cor}
Suppose $G$ is a finite subgroup of $F^\times$ where $F$ is a field.
Then $G$ is cyclic.
\end{corollary}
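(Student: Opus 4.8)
The plan is to exploit the fact that $F$, being a field, is in particular a commutative division ring, so that $F^\times$ is Abelian and hence $G$ is a finite \emph{Abelian} group. The strategy is then to show that every Sylow subgroup of $G$ is cyclic --- that is, that $G$ is Sylow-cyclic --- and to finish by invoking Proposition~\ref{abelian_cyclic_prop2}, which says that an Abelian Sylow-cyclic group is cyclic.

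First I would apply Corollary~\ref{pp_cor} with the division ring taken to be $F$ itself: every subgroup of $G$ of order $p^2$ is cyclic, for every prime $p$. Now fix a prime $p$ dividing $|G|$ and let $P$ be the $p$-Sylow subgroup of $G$, which is unique and normal since $G$ is Abelian. Every subgroup of $P$ of order $p^2$ is then cyclic by the previous sentence. For odd $p$, Theorem~\ref{pgroup_thm} immediately yields that $P$ is cyclic. For $p = 2$, Corollary~\ref{unique_2_cor} gives that $P$ is either cyclic or a generalized quaternion group; but $P$ is Abelian whereas generalized quaternion groups are non-Abelian, so $P$ must be cyclic. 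Thus every Sylow subgroup of $G$ is cyclic, and Proposition~\ref{abelian_cyclic_prop2} completes the argument.

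There is no serious obstacle here; the only point that requires a moment's care is the $p = 2$ case, where one must note that the generalized-quaternion alternative offered by Corollary~\ref{unique_2_cor} is ruled out by commutativity. I would deliberately route the proof through Corollary~\ref{pp_cor} and the Sylow-cyclic machinery rather than through Lemma~\ref{abelian_lemma}, since the proof of that lemma appeals to the fact that finite subgroups of $\bC^\times$ are cyclic --- which would render the present argument circular, given that the whole purpose of this section is to derive the cyclicity of $F^\times$ without presupposing it in the finite case.
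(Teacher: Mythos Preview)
Your proof is correct and follows essentially the same route as the paper: both begin by invoking Corollary~\ref{pp_cor} to conclude that every subgroup of $G$ of order $p^2$ is cyclic, and both then use the Abelianness of $G$ to finish. The paper's version is terser --- it simply cites the structure theorem for finite Abelian groups (or the elementary remark after Corollary~\ref{pq_cor}) to pass directly from ``all $p^2$-subgroups cyclic and Abelian'' to ``cyclic'' --- whereas you unpack this step by routing through Theorem~\ref{pgroup_thm}, Corollary~\ref{unique_2_cor}, and Proposition~\ref{abelian_cyclic_prop2}. Your handling of the $p=2$ case (ruling out the generalized-quaternion alternative by commutativity) is exactly the right observation, and your care to avoid Lemma~\ref{abelian_lemma} so as not to presuppose the result for $\bC^\times$ is in the spirit of this section.
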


\begin{proof}
By the previous corollary, every subgroup of $G$ of order $p^2$ is cyclic, for any prime $p$.
Since $G$ is Abelian, $G$ is cyclic by the finite structure theorem of Abelian groups (or you can
use the more elementary argument given in the remark after Corollary~\ref{pq_cor}).
\end{proof}

\begin{proposition}
Let $D$ be a division algebra and let $G$ be a finite subgroup of~$D^\times$.
Then $G$  is a Sylow-Cycloidal group
\end{proposition}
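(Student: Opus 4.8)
The plan is to verify the defining condition of a Sylow-cycloidal group directly: that every Sylow subgroup is cycloidal. So I would fix a prime $p$ dividing $|G|$ and let $P$ be a $p$-Sylow subgroup of $G$. Since $P$ is a subgroup of $G$, which is itself a subgroup of $D^\times$, every subgroup of $P$ is also a subgroup of $D^\times$. The key input is Corollary~\ref{pp_cor}, which tells us that every subgroup of $G$ of order $p^2$ is cyclic. In particular, every subgroup of $P$ of order $p^2$ is cyclic.

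With this fact in hand, the argument splits into the two cases built into the definition of cycloidal. First I would treat the case where $p$ is odd. Here $P$ is a $p$-group in which every subgroup of order $p^2$ is cyclic, so by the equivalence in Theorem~\ref{pgroup_thm} (the implication from the condition on order-$p^2$ subgroups to cyclicity), $P$ must be cyclic. A cyclic $p$-group is cycloidal by definition, so this case is handled. Next I would treat $p = 2$. Here $P$ is a (nontrivial) $2$-group in which every subgroup of order $4$ is cyclic, so by Corollary~\ref{unique_2_cor} (again using the implication from the order-$4$ condition) $P$ is either cyclic or a generalized quaternion group. In either case $P$ is a $2$-cycloidal group by definition.

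Combining the two cases, every Sylow subgroup of $G$ is cycloidal, which is exactly the definition of $G$ being Sylow-cycloidal, completing the proof. I would also remark that the characteristic prime causes no trouble: if $p$ equals the characteristic of the prime field of $D$, then $D^\times$ has no element of order $p$, so the corresponding Sylow subgroup is trivial and the condition is vacuous; otherwise Corollary~\ref{pp_cor} applies as stated.

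I do not expect any genuine obstacle here, since essentially all of the substantive work has already been done in establishing Corollary~\ref{pp_cor} (via the division-ring arguments of this chapter) and in the $p$-group classifications of Theorem~\ref{pgroup_thm} and Corollary~\ref{unique_2_cor}. The only points requiring a little care are making sure the order-$p^2$ (respectively order-$4$) hypothesis of those theorems is correctly supplied by Corollary~\ref{pp_cor}, and noting the trivial-Sylow case in the characteristic prime so that no step silently assumes $P$ is nontrivial.
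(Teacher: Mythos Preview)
Your proposal is correct and follows essentially the same route as the paper: invoke Corollary~\ref{pp_cor} to get that all order-$p^2$ subgroups are cyclic, then apply Theorem~\ref{pgroup_thm} for odd $p$ and Corollary~\ref{unique_2_cor} for $p=2$. Your write-up is in fact more explicit than the paper's, which omits the citation to Corollary~\ref{pp_cor} and has somewhat tangled notation; your remark on the characteristic prime also matches the paper's corresponding paragraph.
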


\begin{proof}
If $p$ is odd, then any $q$-Sylow subgroup of $G$ is cyclic by Theorem~\ref{pgroup_thm}.
If~$p = 2$ then any $q$-Sylow subgroup is either cyclic or a generalized quaternion group by Corollary~\ref{unique_2_cor}.

Now suppose $F$ has prime characteristic $p$. In this case $G$ is a subgroup of~$F(G)^\times$,
and $F(G)^\times$ has order $p^k - 1$ for some $k\ge 1$. So $p$ cannot divide the order of $G$, so there are no $p$-Sylow subgroups
of $G$ that we need to worry about.

We conclude that all Sylow-subgroups of $G$ have the desired form, and that
$G$ is a Sylow-cycloidal group.
\end{proof}

\begin{lemma}
Let $D$ be a division algebra of prime characteristic $p$,
and let $G$ be a finite subgroup of~$D^\times$.
Then $G$  is a Sylow-cyclic group
\end{lemma}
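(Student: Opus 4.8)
The plan is to upgrade the conclusion of the preceding proposition—that $G$ is Sylow-cycloidal—to the stronger statement that every Sylow subgroup is cyclic, which amounts to ruling out generalized quaternion $2$-Sylow subgroups. Since the odd Sylow subgroups of a Sylow-cycloidal group are already cyclic, the entire content lies at the prime $2$. If $p = 2$ I would dispatch the claim at once: by the lemma that $D^\times$ has no element of order $p$, the group $D^\times$ (hence $G$) has no element of order $2$, so $|G|$ is odd and the $2$-Sylow subgroup is trivial. So I may assume from now on that $p$ is odd.

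Suppose, for contradiction, that a $2$-Sylow subgroup $P$ of $G$ is a generalized quaternion group. Then $P$ contains a copy of the quaternion group of order $8$ by Proposition~\ref{2_quaternion_subgroup_prop}, and this copy sits inside $D^\times$. First I would pin down the image of its central involution: any $g \in D^\times$ of order $2$ satisfies $(g-1)(g+1) = 0$, and since $D$ has no zero divisors and $g \ne 1$, we get $g = -1$, the additive inverse of the multiplicative identity. Consequently the quaternion group furnishes genuine elements $i, j \in D$ satisfying the ring identities $i^2 = j^2 = -1$ and $ij = -ji$ (the last because $ji = i^{-1}j = -ij$ in the group, with $-1$ now interpreted in $D$).

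The heart of the argument, and the step I expect to be the main obstacle, is manufacturing a zero divisor in $D$ from these relations; this is precisely where prime characteristic (finiteness of the prime field) must enter, since in characteristic $0$ the same relations are perfectly consistent, as $\bH$ shows. The key input is the elementary counting fact that over $\bF_p$ with $p$ odd the equation $a^2 + b^2 = -1$ has a solution: the sets $\{a^2\}$ and $\{-1 - b^2\}$ each have $(p+1)/2$ elements in $\bF_p$, and $(p+1)/2 + (p+1)/2 > p$ forces them to overlap. Fixing such $a, b \in \bF_p \subseteq D$, I would set $w = 1 + ai + bj$ and compute, using $i^2 = j^2 = -1$ and $ij + ji = 0$,
\[
w\,(1 - ai - bj) = 1 + a^2 + b^2 = 0 .
\]
Since $w + (1 - ai - bj) = 2 \ne 0$ in $D$, both factors are nonzero, so this exhibits a zero divisor in the division ring $D$—a contradiction.

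This contradiction shows that no $2$-Sylow subgroup of $G$ can be generalized quaternion, so every $2$-Sylow subgroup is cyclic. Combined with the cyclicity of the odd Sylow subgroups coming from the preceding proposition, every Sylow subgroup of $G$ is cyclic, i.e.\ $G$ is Sylow-cyclic. I note that the argument runs verbatim inside $D$ itself and does not require passing to the finite subring $F(G)$ of Lemma~\ref{wedderburn_FG_lemma}; the only roles of prime characteristic are to guarantee the solvability of $a^2 + b^2 = -1$ and the non-vanishing of $2$.
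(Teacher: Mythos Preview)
Your argument is essentially the paper's: reduce to excluding a copy of $Q_8$ inside $D^\times$, solve $a^2+b^2=-1$ over $\bF_p$, and exploit $(1+ai+bj)(1-ai-bj)=1+a^2+b^2=0$. The only difference of substance is cosmetic (you split off $p=2$ at the outset and stay in $D$ rather than passing to $\bF_p(Q)$).

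There is, however, a slip in the final step. From $w+(1-ai-bj)=2\ne 0$ you may conclude only that the two factors are not \emph{both} zero; it does not follow that both are nonzero, and if (say) $w=0$ then the identity $0\cdot 2=0$ is no contradiction. The paper closes the gap differently: since the product vanishes in a ring without zero divisors, one factor \emph{must} be $0$; arranging $b\ne 0$ (swap $a,b$ if needed---possible since $a=b=0$ would force $0=-1$), this yields $j\in\bF_p[i]$, a commutative ring, contradicting $ij=-ji\ne ji$. An equally quick direct repair of your version: if $1+ai+bj=0$, conjugate by $i$ (using $iji^{-1}=-j$) to get $1+ai-bj=0$; subtracting gives $2bj=0$, hence $b=0$, and then $i=-a^{-1}\in\bF_p$ is central, again contradicting $ij\ne ji$.
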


\begin{proof}
Let $q$ be any odd prime dividing the order of $G$.
By Corollary~\ref{pp_cor} and Theorem~\ref{pgroup_thm}, the $q$-Sylow subgroups of $G$ are cyclic.

Suppose the 2-Sylow subgroup of $G$ are not also cyclic. 
By Corollary~\ref{pp_cor} and Proposition~\ref{unique_2_cor} the $2$-Sylow subgroups of $G$ must be generalized
quaternion groups.
By Proposition~\ref{2_quaternion_subgroup_prop}, $G$ must then contain a subgroup $Q$  that we can identify with
the quaternion group of size~8.
By Lemma~\ref{wedderburn_FG_lemma} we have the division algebra $\bF_p(Q)$.
The idea of the remainder of the proof is to argue that there cannot be a ``quaternion ring'' over~$\bF_p$.

To proceed we solve $1+ x^2 + y^2 = 0$ over $\bF_p$. If $p=2$ then $x = 1, y=0$ is a solution.
Otherwise, observe that there are $(p+1)/2$ squares in $\bF_p$. So as $x$ varies in $\bF_p$, the expression $-1 - x^2$ takes on~$(p+1)/2$
distinct values.
At least one of these values must be a square since there are only $(p-1)/2$ nonsquares in $\bF_p$. So we choose $x$ so that $-1-x^2$
is a square, and we choose $y$ so that $y^2$ is $-1-x^2$. We can exchange $x$ and $y$ if necessary, and assume $y \ne 0$.
Then in $\bF_p(Q)$ we have 
$$
(1 + x \mathbf{i} + y \mathbf{j}) (1 - x \mathbf{i} - y \mathbf{j}) = 1 + x^2 + y^2 = 0.
$$
Since $\bF_p(Q)$ has no zero divisors, we have $1 + x \mathbf{i} + y \mathbf{j} = 0$ or $1 - x \mathbf{i} - y \mathbf{j} = 0$.
Since~$y \ne 0$ this means that~$\mathbf{j} \in \bF_{p} (\left< \mathbf{ i} \right>)$. But $ \bF_{p} (\left< \mathbf{ i} \right>)$
is a field, and so $\mathbf{i}$ and $\mathbf{j}$ commute, a contradiction.
\end{proof}

The following is a result of Herstein. He proved it as a corollary of Wedderburn's theorem, but we will prove Wedderburn's theorem as a corollary
of this result.

\begin{theorem} [Herstein]
Let $D$ be a division ring of prime characteristic $p$,
and let $G$ be a finite subgroup of~$D^\times$.
Then $G$  is cyclic.
\end{theorem}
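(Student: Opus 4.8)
The plan is to combine the structural description of $G$ with the field theory of the finite fields sitting inside $D$. By the previous lemma $G$ is Sylow-cyclic, so by Theorem~\ref{semidirect_thm} we may write $G = A \rtimes B$ with $A = G'$ and $B$ cyclic of relatively prime orders $m$ and $n$. Since an Abelian Sylow-cyclic group is automatically cyclic (Proposition~\ref{abelian_cyclic_prop2}), it suffices to show that $G$ is Abelian, i.e. that $B$ acts trivially on $A$ by conjugation. I will argue by contradiction, assuming the action is nontrivial. Note that merely knowing $G$ is freely representable over $\mathbb{F}_p$ is not enough here: the non-cyclic groups of order $pq^k$ are freely representable, so the obstruction must come from the division-ring structure of $D$, not from norm relations alone.

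First I would build the relevant finite field. Let $a$ generate $A$ and set $K = F[a]$, where $F = \mathbb{F}_p$ is the (central) prime field; as in Lemma~\ref{wedderburn_FG_lemma} this is a commutative finite domain, hence a finite field $\mathbb{F}_{p^d}$, with $A \leq K^\times$. Because $A$ is normal, conjugation by any $b \in B$ is a ring automorphism of $D$ fixing $F$ and carrying $K$ to itself, hence restricts to an element of $\mathrm{Gal}(K/F) = \langle \mathrm{Frob}\rangle$; the resulting homomorphism $B \to \mathrm{Gal}(K/F)$ has kernel the centralizer of $A$, so under our assumption its image is nontrivial. Choosing $b$ whose image $\sigma$ has prime order $\ell$ (necessarily $\ell \mid n$, so $\ell \neq p$), and enlarging $K$ to the finite field $\tilde K = F[a, b^\ell]$ — note $b^\ell$ centralizes $K$, so this is again commutative — I may assume conjugation by $b$ induces an automorphism $\sigma$ of $\tilde K$ of order exactly $\ell$, with $\gamma := b^\ell$ lying in the fixed field $\tilde K_0 = \tilde K^{\sigma}$, where $[\tilde K : \tilde K_0] = \ell$.

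The heart of the argument, and the main obstacle, is to produce a zero divisor in the finite subring $R = \tilde K[b] \subseteq D$, which contradicts the fact that $D$ has no zero divisors. This generalizes the trick $(1 + x\mathbf{i} + y\mathbf{j})(1 - x\mathbf{i} - y\mathbf{j}) = 1 + x^2 + y^2 = 0$ used in the previous lemma to kill a quaternion ring over $\mathbb{F}_p$. The enabling fact is that the norm map $N_{\tilde K/\tilde K_0} \colon \tilde K^\times \to \tilde K_0^\times$ of finite fields is surjective; choosing $u \in \tilde K^\times$ with $N(u) = \gamma$ and setting $w = u^{-1} b$, one computes $w^\ell = 1$ while still $w x w^{-1} = \sigma(x)$ for all $x \in \tilde K$. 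Then the assignment sending $x \in \tilde K$ to left multiplication by $x$ together with $w \mapsto \sigma$ is a well-defined ring homomorphism $R \to \mathrm{End}_{\tilde K_0}(\tilde K) \cong M_\ell(\tilde K_0)$ — well-defined precisely because $w^\ell = 1$ matches $\sigma^\ell = \mathrm{id}$, which is exactly where surjectivity of the norm is used. Since $R$ is a finite domain it is a division ring, so this homomorphism is injective; and by linear independence of the distinct automorphisms $1, \sigma, \ldots, \sigma^{\ell-1}$ the images $1, w, \ldots, w^{\ell-1}$ are independent over $\tilde K$, so $R$ has dimension $\ell^2$ over $\tilde K_0$ and the injection into the $\ell^2$-dimensional ring $M_\ell(\tilde K_0)$ is an isomorphism. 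Thus $R \cong M_\ell(\tilde K_0)$, which has zero divisors for $\ell \geq 2$, the desired contradiction.

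Concluding, the action of $B$ on $A$ must be trivial, so $G = A \times B$ is Abelian and Sylow-cyclic, hence cyclic by Proposition~\ref{abelian_cyclic_prop2}. I expect the delicate points to be the passage to $\tilde K$ so that $b^\ell$ becomes a scalar in the fixed field, and the combination of finite-field norm surjectivity with the independence of automorphisms that forces the crossed product $R$ to split as a full matrix ring; the remaining steps are routine finite-field Galois theory.
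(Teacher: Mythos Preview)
Your proof is correct, but it takes a genuinely different route from the paper's.

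The paper first reduces to the case where $D$ is finite and $G = D^\times$ (via $\bF_p(G)$), then invokes the structure theory of Sylow-cyclic groups developed earlier: it lets $C = \mu(G)$ be the MCC subgroup, observes that $\bF_p(C)$ is a field whose unit group is cyclic and hence equals $C$ by maximality (Corollary~\ref{MCC_maximal_cor}), and then uses the elementary inequality
\[
|G/C| = \frac{q^k-1}{q-1} \ge q+1 > q-1 = |C|
\]
(where $q = |\bF_p(C)|$ and $q^k = |D|$) to contradict Corollary~\ref{mck_size_cor} when $k>1$. So the paper's argument is a bare-hands counting argument that cashes in the structural bound $|\mu(G)| > [G:\mu(G)]$ already proved for Sylow-cyclic groups.

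Your argument instead builds the cyclic crossed product $\tilde K[w]$ and uses surjectivity of the norm $N_{\tilde K/\tilde K_0}$ for finite fields, together with linear independence of automorphisms, to identify it with the full matrix ring $M_\ell(\tilde K_0)$, yielding zero divisors. This is essentially the classical Noether--Brauer proof that the Brauer group of a finite field is trivial; it is more conceptual and more portable (it does not rely on the paper's bespoke invariant $\mu(G)$), but it imports more machinery --- Galois theory of finite fields, norm surjectivity, independence of characters --- whereas the paper's proof stays entirely within the group-theoretic toolkit it has already built. Both approaches avoid quoting Wedderburn's theorem, as required by the context of Section~\ref{Wedderburn}.
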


\begin{proof}
First we consider the case where $D$ is finite and $G = D^\times$.
By the above lemma $G$ is a Sylow-cyclic group. 
Let $C$ be the maximum cyclic characteristic (MCC) subgroup of $G$.
Such a subgroup $C$ exists by Corollary~\ref{subgroups_cg_corollary} and,
by Corollary~\ref{MCC_maximal_cor}, $C$ is maximal among cyclic subgroups of $G$.
Observe that $\bF_p(C)$ is a field,
so $\bF_p(C)^\times$ is cyclic. By the maximality of $C$, this means that $C = \bF_p(C)^\times$.

Let $q$ be the number of elements of $\bF_p(C)$, and let $q^k$ be the number of elements of $D$.
(Here we use the fact that $D$ is a vector space over any subfield). If $k> 1$ then
$$
|G/C| = \frac{q^k - 1}{q - 1} = q^{k-1} + \ldots + q + 1 \ge q + 1 > q - 1 = |C|
$$
which contradicts Corollary~\ref{mck_size_cor}. Thus $k=1$ and so $G = C$, and $G$ is cyclic.

In general, we consider $\bF_p(G)$. Since $\bF_p(G)^\times$ is cyclic, as we have just shown, and since $G$ is a subgroup
of $\bF_p(G)^\times$, we conclude that $G$ is cyclic as well.
\end{proof}

\begin{corollary} [Wedderburn]
Every finite division  ring $D$ is a field.
\end{corollary}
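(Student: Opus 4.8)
The plan is to apply Herstein's theorem directly to the full group of units $D^\times$, so the argument is very short once the setup is in place. First I would note that a finite division ring $D$ has prime characteristic: its prime subfield is finite, hence equal to $\bF_p$ for some prime $p$ (a division ring has no zero divisors, so its characteristic cannot be composite, and being finite it cannot be $0$). This puts us squarely in the hypotheses of the previous theorem.

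Next I would take $G = D^\times$ itself as the finite subgroup of $D^\times$. Since $D$ is finite, $D^\times$ is a finite group, and by Herstein's theorem a finite subgroup of $D^\times$ for a division ring of prime characteristic is cyclic. Hence $D^\times$ is cyclic, and in particular Abelian.

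Finally I would upgrade commutativity of $D^\times$ to commutativity of all of $D$: any two nonzero elements of $D$ lie in $D^\times$ and therefore commute, while $0$ commutes with everything trivially, so $D$ is a commutative ring. A commutative division ring is by definition a field, which completes the proof.

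There is essentially no obstacle here, since Herstein's theorem carries the entire burden; the only things to check are the two trivial reductions (that finiteness forces prime characteristic, and that a cyclic—hence Abelian—$D^\times$ makes the whole ring commutative). The real content of Wedderburn's theorem has already been absorbed into the structure theory of Sylow-cyclic groups and the nonexistence of a ``quaternion ring'' over $\bF_p$ used to prove Herstein's result.
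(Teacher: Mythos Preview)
Your proof is correct and follows essentially the same approach as the paper: apply Herstein's theorem to $G = D^\times$ to conclude it is cyclic, hence $D$ is commutative. You spell out the reduction to prime characteristic and the passage from Abelian $D^\times$ to commutative $D$ a bit more explicitly than the paper does, but the argument is the same.
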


\begin{proof}
By the above theorem $D^\times$ is cyclic. So $D$ must be a commutative ring.
\end{proof}

%%%%

\chapter{Sylow-Cycloidal Groups: The Solvable Case} \label{scq1_section}

Suppose $G$ is a solvable Sylow-cycloidal group and  $O(G)$ is the maximal normal subgroup of $G$ of odd order.
Then our first important result will be to describe the possible quotients $G/O(G)$. We will show that $G/O(G)$ is
isomorphic to either a cyclic~$2$-group, a generalized quaternion group, the
binary tetrahedral group $2 T$ or the binary octahedral group~$2 O$. 
In particular, $G/O(G)$ is isomorphic to a solvable subgroup of $\bH^\times$.
This result divides  Sylow-Cycloidal groups into four mutually exclusive types. We then focus on each type individually.\footnote{I learned the technique of classifying freely representable groups $G$ by their quotients~$G/O(G)$
from a recent paper by Daniel Allcock~\cite{allcock2018}. My approach generalizes the scope of Allcock a bit.  
Although my proof is different and more elementary that that in~\cite{wolf2011}, I also lean on Wolf~\cite{wolf2011}, and thus indirectly on Zassenhaus (1936), for guidance.
Zassenhaus adopts a more general scope than mine by  only restricting the $2$-Sylow subgroup (see Lemma 6.1.9 of Wolf~\cite{wolf2011} attributed to Zassenhaus). }

\section{The Quotient $G/O(G)$}

We start more generally than  with Sylow-cycloidal groups. We say a finite group~$G$ \emph{satisfies the $(2, 3)$ condition} if every $2$-Sylow subgroup
of $G$ is cyclic or is the quotient of a generalized quaternion group\footnote{This means that the $2$-Sylow subgroups of $G$ are cyclic, dihedral, or generalized quaternion, but we will not need this fact.} and every $3$-Sylow subgroup is cyclic.

\begin{lemma}
Let $G$ be a group that satisfies the $(2, 3)$ condition. Then every subgroup and quotient of $G$ satisifies the $(2,3)$ condition.
\end{lemma}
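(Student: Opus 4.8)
The plan is to treat the subgroup and quotient cases in parallel, splitting off the $3$-Sylow and $2$-Sylow conditions and reducing each to a statement about subgroups or quotients of cyclic and generalized quaternion $2$-groups. First I would record the two structural facts about how Sylow subgroups behave under these operations. For a subgroup $H \le G$, a Sylow theorem (Theorem~\ref{sylow2_thm}) guarantees that any $p$-Sylow subgroup $H_p$ of $H$ sits inside some $p$-Sylow subgroup $G_p$ of $G$. For a quotient $\bar{G} = G/M$, the image of a $p$-Sylow subgroup $G_p$ of $G$ under the canonical map is a $p$-Sylow subgroup of $\bar{G}$ (the same fact used in Proposition~\ref{cycloidal_quotient_prop}), so every $p$-Sylow subgroup of $\bar{G}$ is a quotient of some $G_p$.

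The $3$-Sylow condition then follows immediately: by hypothesis each $G_3$ is cyclic, and both subgroups and quotients of cyclic groups are cyclic, so every $3$-Sylow subgroup of a subgroup or of a quotient of $G$ is cyclic.

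For the $2$-Sylow condition, the quotient case is also direct. A $2$-Sylow subgroup of $\bar{G}$ is a quotient of some $G_2$; if $G_2$ is cyclic then its quotient is cyclic, and if $G_2 = Q/N$ is a quotient of a generalized quaternion group $Q$, then a quotient of $G_2$ is again a quotient of $Q$, hence a quotient of a generalized quaternion group.

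The main work, and the step I expect to be the real obstacle, is the $2$-Sylow condition for subgroups: I must show that a subgroup of a quotient of a generalized quaternion group is again cyclic or a quotient of one. Given $H_2 \le G_2$ with $G_2 = Q/N$, the correspondence theorem lets me write $H_2 = L/N$ for some $N \le L \le Q$. The key observation is that $L$, being a subgroup of the generalized quaternion group $Q$, is itself cyclic or generalized quaternion: since $Q$ has a unique element of order $2$ (Example~\ref{quaternion_example}), every nontrivial subgroup $L$ inherits this property, and Theorem~\ref{unique_2_theorem} then forces $L$ to be cyclic or generalized quaternion. Hence $H_2 = L/N$ is a quotient of a cyclic group or of a generalized quaternion group, which is exactly the required form (the trivial case and the cyclic-$G_2$ case being immediate). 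Combining the three cases yields the lemma; I note that this route never invokes the footnote's classification of these $2$-groups as cyclic, dihedral, or generalized quaternion.
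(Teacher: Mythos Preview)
Your proof is correct and follows essentially the same route as the paper's: both reduce to the two structural facts about Sylow subgroups under subgroups and quotients, handle the cyclic case directly, and treat the $2$-Sylow subgroup case by observing that the class of cyclic-or-generalized-quaternion $2$-groups is closed under taking subgroups, so that a subgroup of a quotient of such a group is again a quotient of such a group via the correspondence theorem. You spell out this last step (writing $H_2 = L/N$ and invoking Theorem~\ref{unique_2_theorem}) more explicitly than the paper's terse one-line formulation, but the argument is the same.
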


\begin{proof}
Observe that every $p$-Sylow subgroup of a quotient $G/N$ is a quotient of a~$p$-Sylow subgroup of $G$. Also every~$p$-Sylow subgroup
of a subgroup $H$ of $G$ is a~$p$-group and so is a subgroup of a $p$-Sylow subgroup of $G$.

The class of cyclic $p$-groups is closed under the processes of quotient and subgroup.
Since the class of cyclic and generalized quaternion $2$-groups is closed under subgroup,
the class of quotients of such groups is closed under quotient and subgroups.
\end{proof}

\begin{definition}
Let $G$ be a finite solvable group and
let $G, G', G'', \ldots, G^{(k)} = \{1\}$ be the derived series of commutator subgroups.
Then the \emph{characteristic Abelian subgroup of $G$},
which we denote as~$\mathcal A (G)$,
is defined to be the first Abelian term of the series.  Observe that~$\mathcal A (G)$ is Abelian and characteristic, and if $G$ is nontrivial
then $\mathcal A (G)$ is also nontrivial.
\end{definition}

\begin{lemma}
Let $G$ be a  solvable group satisfying the $(2, 3)$ condition. Then either~$G$ has order of the form $2^m 3^n$ 
or there is a prime $p \ge 5$ such that there 
is a
 $p$-subgroup~$K$ of $G$  that is a characteristic subgroup of $G$.
\end{lemma}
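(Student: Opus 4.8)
The plan is to induct on $|G|$, proving the following reformulation: \emph{if $|G|$ is divisible by some prime $p \ge 5$ (equivalently, $|G|$ is not of the form $2^m 3^n$), then $G$ has a characteristic subgroup that is a $q$-group for some prime $q \ge 5$} — note the prime need not be the given $p$, since the statement is purely existential. The motor of the induction is the characteristic Abelian subgroup $\mathcal A = \mathcal A(G)$, which is nontrivial whenever $G$ is. The easy case comes first: if some prime $\ge 5$ divides $|\mathcal A|$, then, $\mathcal A$ being Abelian, its Sylow subgroup for that prime is the \emph{unique} such Sylow subgroup, hence characteristic in $\mathcal A$; since $\mathcal A$ is characteristic in $G$, this Sylow subgroup is characteristic in $G$ and serves as $K$.

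The substantive case is when $\mathcal A$ is a $\{2,3\}$-group (order $2^a 3^b$). Here I would pass to $C = C_G(\mathcal A)$, which is characteristic in $G$ (centralizer of a characteristic subgroup) and contains $\mathcal A$. Then $G/C$ embeds in $\Aut(\mathcal A)$, and the supporting computation is that $|\Aut(\mathcal A)|$ involves no prime $\ge 5$: writing $\mathcal A = \mathcal A_2 \times \mathcal A_3$ as the product of its Sylow subgroups gives $\Aut(\mathcal A) = \Aut(\mathcal A_2) \times \Aut(\mathcal A_3)$, where the $(2,3)$ condition forces $\mathcal A_3$ cyclic and $\mathcal A_2$ cyclic or a Klein four group (the only Abelian subgroups of cyclic, dihedral, or generalized quaternion $2$-groups), whose automorphism groups have orders $2\cdot 3^{b-1}$, $2^{a-1}$, and $6$ respectively — all of the form $2^m 3^n$. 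Consequently $p \nmid |G/C|$, so $p \mid |C|$.

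Now I would split on whether $C = G$. If $C \subsetneq G$, then $C$ is a proper subgroup of smaller order, still solvable and still satisfying the $(2,3)$ condition (by the preceding lemma), with $p \mid |C|$; induction gives a characteristic $q$-subgroup $K$ of $C$ with $q \ge 5$, and since $K$ is characteristic in $C$ and $C$ is characteristic in $G$, the subgroup $K$ is characteristic in $G$. If instead $C = G$, then $\mathcal A$ is central in $G$; I would apply induction to the smaller quotient $G/\mathcal A$ (solvable, satisfying $(2,3)$, and still divisible by $p$) to get a characteristic $q$-subgroup $\bar K$ with $q \ge 5$, and take $K$ to be its preimage, which is characteristic in $G$ because $\mathcal A$ is characteristic. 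The key point is that $K$ is a central extension of the $q$-group $\bar K = K/\mathcal A$ by the central subgroup $\mathcal A$, hence nilpotent; therefore its $q$-Sylow subgroup is nontrivial and characteristic in $K$, and, $K$ being characteristic in $G$, it is characteristic in $G$.

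I expect the $C = G$ branch to be the main obstacle: there the naive reduction to $G/\mathcal A$ yields only a characteristic subgroup $K$ that is an extension of a $\{2,3\}$-group by a $q$-group rather than an honest $q$-group, and the decisive observation is that the centrality of $\mathcal A$ makes $K$ nilpotent, which lets me split off its $q$-Sylow characteristically. Secondary care is needed in the automorphism computation — verifying that the allowed $2$-Sylow subgroups have only cyclic or Klein four Abelian subgroups — so that $\Aut(\mathcal A)$ genuinely contributes no prime $\ge 5$. This route has the advantage of staying elementary: it uses only $\mathcal A(G)$, the closure of the $(2,3)$ condition under subgroups and quotients, and standard facts about nilpotent groups, avoiding both the Fitting-subgroup self-centralizing theorem and the Schur–Zassenhaus theorem.
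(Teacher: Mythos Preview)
Your argument is correct, but it takes a genuinely different route from the paper's. The paper does not introduce the centralizer $C_G(\mathcal A)$ at all: instead it picks a single prime $q\in\{2,3\}$ dividing $|\mathcal A(G)|$, lets $N$ be the $q$-torsion of $\mathcal A(G)$ (so $N$ is cyclic of order $2$ or $3$, or the Klein four group), and inducts directly on $G/N$. Pulling back the characteristic $p$-subgroup $L/N$ of $G/N$ to $L\le G$, the paper observes that a $p$-Sylow $K$ of $L$ is a complement to $N$ and, since every automorphism of $N$ has order $1$, $2$, or $3$, the $p$-group $K$ acts trivially on $N$; hence $L\cong N\times K$ and $K$ is characteristic. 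This avoids your dichotomy entirely: there is only ever one descent (to a quotient), and the automorphism bound is needed only for the tiny group $N$ rather than for all of $\mathcal A$. Your route trades that uniformity for a structural split (proper centralizer versus central $\mathcal A$), and the nilpotency argument you use in the $C=G$ branch is a clean way to extract the $q$-Sylow from the preimage---an idea the paper does not need because its $N$ is so small that $L$ splits as a direct product outright. Both arguments stay elementary; the paper's is shorter, while yours exposes more of the ambient structure.
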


\begin{proof}
Let $q$ be a prime dividing $\mathcal A (G)$ (if no such $q$ exists then $G$ has order $2^0 3^0$ and we are done).
If $q\ge 5$  then we can choose $p=q$ and choose $K$ to be the~$p$-Sylow subgroup of~$\mathcal A (G)$, and we are done.
Otherwise we define an elementary characteristic subgroup $N$ as the solutions of $x^q = 1$ in~$\mathcal A (G)$.

Since $G$ satisfies the $(2, 3)$ condition, the same is true of $N$ and $G/N$. If $q=3$ this means that $N$ is cyclic of order $3$.
If $q=2$ then $N$ is generated by one or two elements, so is either cyclic of order $2$ or is the Klein four group. So any automorphism
of $N$ has order $1, 2,$ or $3$. 

If $G/N$ has order of the form $2^m 3^n$ we are done, so we will now
assume that~$G/N$ is not of that form.
By induction we can assume that $G/N$ has a $p$-subgroup $L/N$ where $p\ge 5$ is prime
and where $L$ is a subgroup of $G$ containing $N$ such that $L/N$ is a characteristic subgroup of $G/N$.
Since $N$ is a characteristic subgroup, this implies that $L$ must also be a characteristic subgroup of $G$.
Let $K$ be a $p$-Sylow subgroup of $L$ and observe that $K$ is a complement of $N$ in $L$, so $L \cong N \rtimes K$.
Note that $K$ acts trivially on $N$ since all automorphism
of~$N$ have order prime to $p$. This means that  $L$ is isomorphic to $N\times K$,
and so $K$ is the unique $p$-Sylow subgroup of $L$. Since $L$ is a characteristic subgroup of $G$, its
unique $p$-Sylow subgroup $K$ is also characteristic subgroup of $G$. 
\end{proof}

\begin{corollary}
Let $G$ be a  solvable group satisfying the $(2, 3)$ condition. Let $O_6(G)$ be the maximal normal subgroup of $G$ of order relatively prime to $6$.
Then $G/O_6(G)$ has order of the form $2^m 3^n$.
\end{corollary}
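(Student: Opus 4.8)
The plan is to argue by induction on $|G|$, using the preceding lemma to strip off a characteristic $p$-subgroup for some prime $p \ge 5$ whenever $G$ is not already of the required order. First I would record that $O_6(G)$ is genuinely well-defined: if $M$ and $N$ are normal subgroups of order prime to $6$, then $MN$ is normal and $|MN|$ divides $|M|\,|N|$, so is again prime to $6$; hence the product of all normal subgroups of order prime to $6$ is the unique maximal such subgroup.

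For the induction, the base case and the trivial case coincide. If $|G|$ already has the form $2^m 3^n$, then the only normal subgroup of order prime to $6$ is $\{1\}$, so $O_6(G) = \{1\}$ and $G/O_6(G) = G$ has the desired order. Otherwise the lemma supplies a prime $p \ge 5$ together with a characteristic $p$-subgroup $K$ of $G$. Being a nontrivial $p$-group with $p \ge 5$, $K$ has order prime to $6$; being characteristic, it is normal. Thus $K \subseteq O_6(G)$, and in particular $K \ne \{1\}$, so $G/K$ is a proper quotient. As a quotient of the solvable group $G$ it is solvable, and by the earlier lemma on stability of the $(2,3)$ condition under quotients it again satisfies the $(2,3)$ condition. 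The induction hypothesis then applies to $G/K$, giving that $(G/K)/O_6(G/K)$ has order of the form $2^m 3^n$.

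The crux is to identify $O_6(G/K)$ with $O_6(G)/K$. The inclusion $O_6(G)/K \subseteq O_6(G/K)$ is immediate, since $O_6(G)/K$ is a normal subgroup of $G/K$ whose order divides $|O_6(G)|$ and so is prime to $6$. For the reverse inclusion I would pass to the preimage: let $N \le G$ be the full preimage of $O_6(G/K)$, so that $K \subseteq N$, $N$ is normal in $G$, and $|N| = |K|\cdot|O_6(G/K)|$. Here $|K|$ is a power of $p$ and $|O_6(G/K)|$ is prime to $6$, so $|N|$ is prime to $6$; hence $N \subseteq O_6(G)$ and therefore $O_6(G/K) = N/K \subseteq O_6(G)/K$. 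With the equality $O_6(G/K) = O_6(G)/K$ established, the third isomorphism theorem yields
\[
G/O_6(G) \;\cong\; (G/K)\big/\bigl(O_6(G)/K\bigr) \;=\; (G/K)\big/O_6(G/K),
\]
whose order is $2^m 3^n$ by the induction hypothesis, completing the proof.

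The main obstacle I expect is exactly this last step, the verification that $O_6(G/K) = O_6(G)/K$: one must rule out that quotienting by the characteristic $p$-group $K$ could create a new normal subgroup of order prime to $6$ not visible inside $O_6(G)$. This is precisely what the preimage computation handles, the key point being that $|K|$ is coprime to $6$, so attaching $K$ preserves coprimality to $6$ and the correspondence between the two maximal objects is exact.
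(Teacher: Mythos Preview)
Your proof is correct and is essentially the natural unfolding of the paper's implicit argument (the paper states the corollary without proof, relying on repeated application of the preceding lemma). A marginally more direct variant would be to apply the lemma once to $\bar G = G/O_6(G)$: if $|\bar G|$ were not of the form $2^m 3^n$, the lemma would produce a nontrivial characteristic $p$-subgroup of $\bar G$ for some $p\ge 5$, whose preimage in $G$ would be a normal subgroup of order prime to $6$ strictly containing $O_6(G)$, contradicting maximality---but this is the same idea as your induction, just packaged differently.
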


\begin{corollary}\label{corollary96}
Let $G$ be a  solvable group satisfying the $(2, 3)$ condition. Let $O(G)$ be the maximal normal subgroup of $G$ of odd order.
Then $G/O(G)$ has order of the form $2^m 3^n$, and every nontrivial normal subgroup of $G/O(G)$ has even order.
\end{corollary}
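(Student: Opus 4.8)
The plan is to pass to the quotient $\bar G = G/O(G)$ and first record the single structural fact that drives both assertions: $\bar G$ has no nontrivial normal subgroup of odd order. Here $O(G)$ is the largest normal subgroup of $G$ of odd order, which is well defined since the product of two normal subgroups of odd order is again normal of odd order; in particular $O(G)$ is characteristic, hence normal, and the quotient makes sense. By the correspondence theorem every normal subgroup of $\bar G$ has the form $N/O(G)$ for a normal subgroup $N$ of $G$ with $O(G) \subseteq N$. If $N/O(G)$ had odd order, then $|N| = |N/O(G)|\cdot|O(G)|$ would be a product of two odd numbers, so $N$ would be a normal subgroup of $G$ of odd order; maximality of $O(G)$ then forces $N = O(G)$, i.e. $N/O(G)$ is trivial.

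This observation immediately yields the second assertion. If $M$ were a nontrivial normal subgroup of $\bar G$ of odd order, it would contradict the fact just established, so every nontrivial normal subgroup of $\bar G$ has even order.

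For the order claim I would apply the earlier lemma stating that a solvable group satisfying the $(2,3)$ condition either has order of the form $2^m 3^n$ or possesses a nontrivial characteristic $p$-subgroup for some prime $p \ge 5$. Quotients of solvable groups are solvable, and an earlier lemma shows that every quotient of a group satisfying the $(2,3)$ condition again satisfies it; thus $\bar G$ is a solvable group satisfying the $(2,3)$ condition, and the lemma applies to it. If $|\bar G|$ were not of the form $2^m 3^n$, the lemma would supply a prime $p \ge 5$ and a nontrivial characteristic $p$-subgroup $K$ of $\bar G$. But $K$ is then normal (being characteristic) and of odd order (a nontrivial $p$-group with $p \ge 5$), contradicting the structural fact above. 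Hence $|\bar G|$ is of the form $2^m 3^n$, finishing the proof.

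There is no serious obstacle: the whole argument rests on the observation that normal subgroups of $G/O(G)$ of odd order pull back to normal subgroups of $G$ of odd order, which by maximality of $O(G)$ must be trivial. If one prefers to avoid reapplying the lemma, the order claim can instead be read off from the preceding corollary on $O_6(G)$: since $O_6(G)$ has order coprime to $6$, and in particular odd, we have $O_6(G) \subseteq O(G)$, so $\bar G$ is a quotient of $G/O_6(G)$ and its order divides $|G/O_6(G)| = 2^m 3^n$.
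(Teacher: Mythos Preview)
Your proof is correct and matches the paper's intended approach: the corollary is stated without proof, to be read off from the preceding corollary on $O_6(G)$ together with the maximality of $O(G)$, exactly as you explain in your final paragraph. Your main argument (reapplying the lemma directly to $\bar G$) is a perfectly valid alternative and you have correctly verified the hypotheses transfer to the quotient.
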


Motivated by the above corollary, we focus on Sylow-Cycloidal groups whose order is of the form $2^m 3^n$.
Of course we know what such group are when  $m=0$ or when~$n=0$,
so we focus on the case where $m$ and $n$ are positive.

\begin{lemma} \label{lemma97}
Let $G$ be a solvable Sylow-cycloidal group of order $2^m 3^n$ with $m$ and~$n$ positive. 
Assume also that every normal subgroup of $G$
is of even order.
Then the following hold
\begin{itemize}
\item
$G$ contains a normal subgroup $N$ isomorphic to the quaternion group $Q_8$.
\item
Every element of $G$ of order $3$ acts nontrivially on $N$.
\item
$G$ contains four subgroups of order $3$, and the action of $G$ on these
subgroups gives a homomorphism $G \to S_4$ with kernel $Z(G)$.
\item
The center $Z(G)$ has two elements.
\item
$G/Z(G)$ is isomorphic to either~$A_4$ or $S_4$, and so
 $G$ has order 24 or 48.
\end{itemize}
\end{lemma}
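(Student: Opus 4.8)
The plan is to pin down the structure of $G$ by locating its Fitting subgroup, showing it is a normal copy of $Q_8$, and then reading everything off the embedding of $G/Z(G)$ into $\mathrm{Aut}(Q_8)\cong S_4$. First I would observe that the hypothesis ``every nontrivial normal subgroup has even order'' is precisely $O_{2'}(G)=1$, so $G$ has no nontrivial normal subgroup of odd order. In particular $O_3(G)$, a normal $3$-subgroup, is trivial, and since $|G|=2^m3^n$ the Fitting subgroup satisfies $F(G)=O_2(G)=:Q$, which is nontrivial because $G$ is a nontrivial solvable group. As $Q$ is contained in a $2$-Sylow subgroup of the Sylow-cycloidal group $G$, and that $2$-Sylow is $2$-cycloidal, $Q$ is itself $2$-cycloidal (Proposition~\ref{2_cycloid_subgroup_prop}), i.e.\ cyclic or generalized quaternion.

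The key structural input I would use is the standard fact that for a solvable group $C_G(F(G))\subseteq F(G)$, so here $C_G(Q)=Z(Q)$. Together with the size of $\mathrm{Aut}(Q)$ this forces $Q\cong Q_8$: if $Q$ were cyclic, or generalized quaternion of order at least $16$, then $\mathrm{Aut}(Q)$ would be a $2$-group, whence $G/Z(Q)=G/C_G(Q)\hookrightarrow\mathrm{Aut}(Q)$ would be a $2$-group and $G$ itself a $2$-group, contradicting $n\ge 1$. This gives the normal subgroup $N:=Q\cong Q_8$ of the first bullet. I would then record $Z(Q)=\langle z\rangle$ with $z$ the unique involution of $Q$, that $z$ is central in $G$ (since $\langle z\rangle$ is characteristic in $Q\trianglelefteq G$, hence normal of order $2$), and consequently that $z$ is the unique involution of $G$ (being central it lies in every $2$-Sylow subgroup, each of which contains $N=Q_8$, is therefore generalized quaternion, and so has a unique involution). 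Finally $Z(G)\subseteq C_G(Q)=\langle z\rangle$, so $Z(G)=\langle z\rangle$ has order $2$ (fourth bullet). The second bullet is then immediate: an element of order $3$ acting trivially on $N$ would lie in $C_G(N)=\langle z\rangle$, which has no element of order $3$.

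Next I would exploit the faithful map $G/Z(G)=G/C_G(N)\hookrightarrow\mathrm{Aut}(Q_8)\cong S_4$. Its image has order $|G|/2=2^{m-1}3^n$, which therefore divides $24$; since $N\le G$ forces $m\ge 3$, the only possibilities are orders $12$ and $24$, so $n=1$ and $m\in\{3,4\}$. As $A_4$ is the unique subgroup of $S_4$ of order $12$, this yields $G/Z(G)\cong A_4$ or $S_4$ and $|G|\in\{24,48\}$ (fifth bullet). For the third bullet I would transfer the four Sylow $3$-subgroups of $G/Z(G)$ back to $G$: the projection induces a bijection between the subgroups of order $3$ in $G$ and those in $G/Z(G)$ (an order-$3$ subgroup maps isomorphically since $\gcd(|Z(G)|,3)=1$, and the order-$6$ preimage of each is cyclic, hence has a unique order-$3$ subgroup, because $G$ has a unique involution and so cannot contain a copy of $S_3$), so $G$ has exactly four such subgroups. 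Conjugation gives a homomorphism $G\to S_4$ that factors through $G/Z(G)$ and agrees with the conjugation action of $A_4$ or $S_4$ on its own Sylow $3$-subgroups; that action is faithful, since the intersection of the Sylow $3$-normalizers is trivial in both $A_4$ and $S_4$, so the kernel of $G\to S_4$ is exactly $Z(G)$.

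The main obstacle is the step $C_G(F(G))\subseteq F(G)$: it drives the whole argument but is not among the elementary tools developed so far in the report. I would therefore either cite it as a standard property of the Fitting subgroup of a solvable group or, to keep the development self-contained, prove the special case actually needed—that $C_G(O_2(G))$ centralizes $O_2(G)$ only inside itself—directly from $O_{2'}(G)=1$ by induction on $|G|$. The remaining ingredients are routine: $\mathrm{Aut}(Q_8)\cong S_4$, the fact that the automorphism group of a cyclic or of a larger generalized quaternion $2$-group is a $2$-group, and the faithfulness of the Sylow-$3$ conjugation action in $A_4$ and $S_4$; I would relegate these to short verifications.
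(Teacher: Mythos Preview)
Your argument is correct and considerably more streamlined than the paper's, but it rests on a different structural lemma. You identify the maximal normal $2$-subgroup $Q=O_2(G)$ as the Fitting subgroup and then invoke $C_G(F(G))\subseteq F(G)$ to get the embedding $G/Z(Q)\hookrightarrow\Aut(Q)$; from there the identification $Q\cong Q_8$, the computation $Z(G)=\langle z\rangle$, and the conclusion $G/Z(G)\cong A_4$ or $S_4$ all fall out of the known structure of $\Aut(Q_8)\cong S_4$. The paper starts from the same $N=O_2(G)$ but deliberately avoids the Fitting centralizer fact: instead it locates a characteristic subgroup $H=N\rtimes C_3$ by looking at the characteristic Abelian subgroup $\mathcal A(G/N)$, argues directly that $C_3$ must act nontrivially on $N$ (else $H$ would contain a characteristic subgroup of order $3$), and deduces $N\cong Q_8$ from the fact that $Q_8$ is the only $2$-cycloidal group with an automorphism of order $3$. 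It then analyzes the normalizers of the Sylow $3$-subgroups inside $H$ by hand, and in the case $G\ne H$ passes through an auxiliary action on the three $2$-Sylow subgroups to pin down $|G|=48$.

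What each buys: your route is shorter and conceptually transparent, and it handles the four subgroups of order $3$ and the kernel computation uniformly via the known Sylow-$3$ structure of $A_4$ and $S_4$. The cost, which you correctly flag, is the dependence on $C_G(F(G))\subseteq F(G)$, a result outside the elementary toolkit the report has built up; the paper's longer argument is the price paid for keeping the development self-contained. Your suggestion to prove the needed special case directly from $O_{2'}(G)=1$ would close that gap and is a reasonable compromise. One small point worth making explicit in your write-up: the claim that $\Aut(Q)$ is a $2$-group for $Q$ cyclic or generalized quaternion of order $\ge 16$ is exactly what the paper encodes in the phrase ``the only $2$-cycloidal group with an automorphism of order $3$ is $Q_8$,'' so you are using the same fact at the same pivot point, just packaged differently.
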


\begin{proof}
Let $N$ be a maximal normal $2$-subgroup (i.e., the intersection of the $2$-Sylow subgroups of $G$).
Observe that characteristic Abelian subgroup $\mathcal A (G/N)$ of $G/N$ must be a nontrivial $3$-group
since if it had order divisible by~$2$ then we could violate the maximality of $N$ via the~$2$-Sylow subgroup
of $\mathcal A (G/N)$. Let $H/N$ be the unique cyclic group of order $3$
in~$\mathcal A (G/N)$. Here~$H$ is a subgroup of $G$ containing~$N$, and since $H/N$ is characteristic in $\mathcal A (G/N)$, we
see that $H/N$ is characteristic in~$G/N$. Since $N$ is characteristic in $G$ we then see that~$H$ is characteristic in $G$.
Let $C_3$ be a cyclic subgroup of $H$ of order $3$. Then we have that $H= NC_3$ is a semidirect product~$N \rtimes C_3$.
If $C_3$ acts trivially on $N$ then $H$ would be isomorphic to~$N \times C_3$ and would contain a characteristic subgroup of order 3.
This violates our assumption on normal subgroups of~$G$. So $C_3$ acts nontrivially on $N$.

Up to isomorphism the only $2$-Cycloidal group with an automorphism of order~3 is the quaternion group $Q_8$, so $N$ is isomorphic to $Q_8$.
Thus $H$ has $24$ elements.
Denote by $-1$ the unique element of $N$ of order $2$, which must be the unique element of $H$ of order 2.
Note that $-1$ is fixed by the action of $C_3$, and so must be in the center of $H = N C_3$.
In fact, the action of $C_3$ on the quaternions only fixes $\pm 1$ so the center $Z(H)$ is $\{ \pm 1 \}$.

Let $L$ be the normalizer of~$C_3$ in $H$.
Observe that $L \cap N$ and $C_3$ are normal in~$L$, and  so $L$ is isomorphic $(L \cap N) \times C_3$.
This means $C_3$ is characteristic in $L$.  So~$L$ cannot be normal in $H$, otherwise $C_3$ would be normal in $H$, and hence in~$G$,
a contradiction. So $L$ must have index at least $4$ in $H$. However, $L$ contains $C_3$ and~$Z(H)$. Thus $L$ has order $6$. Since $L$ is isomorphic to $(L \cap N) \times C_3 = Z(H) \times C_3$, it is cyclic of order~$6$.
By the orbit-stabilizer theorem (and the fact that $p$-Sylow groups are conjugate) we get that $H$ has exactly four $3$-Sylow subgroups.
This action gives a homomorphism $H \to S_4$. The kernel of this action is the intersection of the normalizers. Our description of $L$
applies to all the normalizers, and the intersection is seen to be $Z(H)$. So $Z(H)$ is the kernel of the action  $H \to S_4$.
Observe that the image of $L$ under $H \to S_4$ is isomorphic to $L/Z(H)$, so is cyclic of order $3$, and it fixes the element corresponding
to $C_3$. This observation applies not just to $C_3$ but to all 3-Sylow subgroups of $H$. So the image of $H \to S_4$ contains all 
subgroup of order $3$. This means that the image contains all of $A_4$, and must in fact be $A_4$ since~$H/\{\pm 1\}$ is a group of size $12$.

If $G = H$, then we are done.
So for the remainder of the proof we assume~$H$ is not all of $G$. We claim that $G$ has  
has order $2^m 3$ where $m > 3$. Otherwise every element $h$ of order $3$ in $H$ is of the form $g^3$
for some element of $g$ of order $9$.
Since such an~$g$ acts on $N$ as an automorphism of order 1 or 3, this forces $h$ to act trivially on $N$, a contradiction
to a previous conclusion.

The $2$-Sylow subgroups of $G$ cannot be normal in $G$ by maximality of $N$.
So there are three $2$-Sylow subgroups~$S_1, S_2, S_3$ since the number of such groups must divide $|G|/2^m$. 
Since $G$ acts transitively on $\{S_1, S_2, S_3\}$ by conjugation, we get  a homomorphism $G \to S_3$ whose image
is $S_3$ or $A_3$.
The kernel
is a normal $2$-subgroup of $G$ of index 3 or 6 in $G$. By maximality of $N$, this kernel is contained in $N$.
So the index of $N$ in $G$ is a divisor of $6$, and since it has size at least $6$ it is equal to $6$.
Hence (1) $G$ has size $48$, (2) each $S_i$ is a generalized quaternion group of size 16, (3) $H$ is the kernel of $G\to S_3$,
(4) the homomorphism $G\to S_3$ is surjective with each $S_i$ mapping to a different subgroup of order $2$ in $S_3$, and (5)~$H$ has index~2 in $G$ and its image under $G\to S_3$ is $A_3$.

Observe that every $3$-Sylow subgroup of $G$ is contained in $H$ since $[G: H] = 2$. Thus the set of $3$-Sylow subgroups of $G$ has size 4,
giving a homomorphism $G \to S_4$ extending the earlier surjection $H \to A_4$.
This means that the kernel $K$ of $G \to S_4$ has order $2$ or $4$.
The image of $K$ under $G \to S_3$ is a normal $2$-subgroup  of $S_3$, so~$K$ has trivial image in $S_3$. This means that $K\subseteq H$.
But we already know that the kernel of $H \to S_4$ is $Z(H)$.
Hence $K = Z(H)$. This implies that $G/Z(H)$ is isomorphic to $S_4$. 

We conclude by showing that $Z(H) = Z(G)$.
First observe that under the map~$G\to S_4$ the center $Z(G)$ maps to the center of~$S_4$
which is trivial. So~$Z(G)$ is contained in the kernel $Z(H)$. Conversely, since $S_1 \cap S_2 \cap S_3$ is the maximal normal
2-subgroup of $G$, this intersection is $H$. Thus $Z(H) \subseteq S_i$, 
and so  $Z(H)$ is the unique subgroup of $S_i$ of size 2.
So $Z(H)$ is contained in $Z(S_i)$ for each $i$. Since~$S_1, S_2, S_3$ generates $G$ we have that $Z(H) \subseteq Z(G)$.
\end{proof}
\begin{remark}
By a theorem of Burnside, all groups of order $p^m q^n$ are solvable for distinct primes $p$ and $q$.
So accepting this result allows us to drop the solvability assumption for groups of order $2^m 3^n$.
I will keep this assumption just to make the proofs a bit more accessible.
\end{remark}

There are two cases in the above lemma. We now link these cases to specific subgroups of $\bH^\times$.

\begin{proposition} \label{2T_iso_prop}
If $G$ is a solvable Sylow-cycloidal group of order 24 with no normal subgroup of order 3, then $G$ is isomorphic to the binary tetrahedral group~$2 T$.
\end{proposition}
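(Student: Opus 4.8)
The plan is to reduce almost everything to the structural Lemma~\ref{lemma97}, which applies directly once its hypotheses are checked. Since $|G| = 24 = 2^3 \cdot 3$, the group $G$ has order $2^m 3^n$ with $m = 3$ and $n = 1$ both positive. To invoke Lemma~\ref{lemma97} I must verify that every nontrivial normal subgroup of $G$ has even order. This is immediate: if $M$ is a nontrivial normal subgroup then $|M|$ is a divisor of $24$ greater than $1$, and the only odd such divisor is $3$; since $G$ has no normal subgroup of order $3$ by hypothesis, $|M|$ must be even. Feeding this into Lemma~\ref{lemma97} yields a normal subgroup $N \cong Q_8$, the fact that every element of $G$ of order $3$ acts nontrivially on $N$, that $Z(G)$ has two elements, and that $G/Z(G) \cong A_4$ (the alternative $S_4$ is excluded since $|Z(G)| = 2$ forces $|G/Z(G)| = 12$).

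Next I would exhibit $G$ as a semidirect product. Because $|N| = 8$ is exactly the $2$-part of $|G|$, the normal subgroup $N$ is a Sylow $2$-subgroup of $G$. By Cauchy's theorem (or the Sylow theorems) there is a subgroup $P$ of order $3$; since $|N|$ and $|P|$ are coprime we have $N \cap P = \{1\}$, and as $|N|\,|P| = 24 = |G|$ it follows that $G = N P = N \rtimes P \cong Q_8 \rtimes C_3$. Crucially, the associated action homomorphism $P \to \Aut(N)$ is nontrivial by Lemma~\ref{lemma97}, hence injective (as $P$ has prime order), so its image is a subgroup of order $3$ in $\Aut(Q_8)$.

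Finally I would identify this semidirect product with $2T$. The key group-theoretic input is that $\Aut(Q_8)$ has order $24$ (indeed $\Aut(Q_8) \cong S_4$), so every subgroup of order $3$ is a Sylow $3$-subgroup and all of them are conjugate in $\Aut(Q_8)$. Using the standard fact that two semidirect products $N \rtimes C_3$ are isomorphic whenever the images of their action maps are conjugate in $\Aut(N)$ (the isomorphism conjugates the $N$-coordinate by the conjugating automorphism and, since $\Aut(C_3) = C_2$ is available, inverts the $C_3$-coordinate if necessary), I conclude that all nontrivial products $Q_8 \rtimes C_3$ are mutually isomorphic. It then remains to observe that $2T$ is itself such a product: viewing $2T$ as the preimage of the tetrahedral group $T \cong A_4$ under $\bH_1 \to \mathrm{SO}(3)$, its Sylow $2$-subgroup is the preimage of the Klein four subgroup of $A_4$, namely a copy of $Q_8$, and an order-$3$ element of $2T$ permutes the three cyclic subgroups of this $Q_8$ nontrivially, mirroring the $3$-cycle action on the Klein four subgroup of $A_4$. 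Hence $G \cong 2T$.

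I expect the main obstacle to be this last paragraph: pinning down that the nontrivial action is unique up to the equivalence that preserves the isomorphism type of the semidirect product. This rests on knowing the relevant structure of $\Aut(Q_8)$ (that its order-$3$ subgroups form a single conjugacy class) and on the semidirect-product isomorphism lemma; everything else is routine bookkeeping built on Lemma~\ref{lemma97}. An alternative that sidesteps citing $\Aut(Q_8) \cong S_4$ is to write an explicit presentation of $G$ in generators $i, j$ of $N$ together with a generator $s$ of $P$, normalize the action by relabeling so that $s \colon i \mapsto j \mapsto k \mapsto i$, and recognize the resulting presentation as the standard one for $2T$.
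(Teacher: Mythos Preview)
Your proof is correct and follows essentially the same route as the paper: invoke Lemma~\ref{lemma97}, write $G \cong Q_8 \rtimes C_3$ with nontrivial action, argue this determines $G$ up to isomorphism, and then check that $2T$ fits the same description. The only tactical difference is in the uniqueness step---the paper counts the eight order-$3$ automorphisms of $Q_8$ explicitly and shows all eight are realized by the eight order-$3$ elements of $G$ (so one can pick the canonical action $\mathbf{i}\mapsto\mathbf{j}\mapsto\mathbf{k}$ directly), which is precisely the ``alternative'' you describe at the end, whereas your primary argument reaches the same conclusion via Sylow conjugacy in $\Aut(Q_8)$.
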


\begin{proof}
By the above lemma, we can identify $Q_8$ with a normal subgroup of $G$.
Also, $G$ contains four subgroups of order $3$, each
which acts nontrivially on $Q_8$, and the center of $G$ has two elements.
Note that we have an embedding of $G/Z(G)$ into~$\Aut(Q_8)$.
This implies that if $\alpha$ and $\beta$ are distinct elements of order 3, their actions on $Q_8$
are distinct (otherwise $\alpha = \epsilon \beta$ where $\epsilon \in Z(G)$, and $\epsilon$
must be $1$ since~$\alpha$ has order $3$).

Note that any automorphism $\phi$ of order $3$ of $Q_8$ must permute its three subgroups of order 4,
and cannot fix any such subgroup (otherwise it would have to fix all three, and then $\phi^2$ would fix all of $Q_8$).
So there are only 4 possible values of $\phi(\mathbf{i})$, and they are contained in $\{ \pm \mathbf{j}, \pm \mathbf{k} \}$.
Once $\phi(\mathbf{i})$ is known, there are only two possibilities for $\phi(\mathbf{j})$.
Since $\mathbf{i}$ and $\mathbf{j}$ generate $Q_8$, this gives $8$ possibilities. All of these possibilities 
must occur as the automorphism associated with elements of order 3, since there are 8 such elements.

Thus we can find an element $g\in G$ of order 3 such that it acts on $Q_8$
by sending~$\mathbf{i}$ to $\mathbf{j}$ and sending $\mathbf{j}$ to $\mathbf{k}$.
Let $C_3$ be the subgroup generated by $g$. Then 
$$
G \cong Q_8 \rtimes C_3
$$
where $C_3$ acts on $G$ as specified. 
Thus $G$ is unique up to isomorphism. 

Since $2 T$ is freely representable, it is a Sylow-cycloidal group. Also~$2T/\{\pm 1\}$
is isomorphic to $A_4$. Since $A_4$ is solvable and has no normal subgroups of order~3,
the same is true of $2 T$. So $2T$ satisfies our assumptions for $G$, and so must be isomorphic to any such $G$.
\end{proof}

Here is a variant of the above:

\begin{corollary} \label{2T_iso_cor}
Let $G$ be a finite group of order 24 that has a unique element of order $2$
but does not have a unique subgroup of order $3$.
Then $G$ is isomorphic to the binary tetrahedral group~$2T$. 
\end{corollary}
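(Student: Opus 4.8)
The plan is to verify that $G$ satisfies every hypothesis of Proposition~\ref{2T_iso_prop} and then invoke that proposition directly. Since $|G| = 24 = 2^3 \cdot 3$, the subgroups of order $3$ are precisely the Sylow $3$-subgroups. By the Sylow theorems the number $n_3$ of such subgroups divides $8$ and is congruent to $1$ modulo $3$, so $n_3 \in \{1, 4\}$; the assumption that $G$ does not have a unique subgroup of order $3$ forces $n_3 = 4$. In particular no subgroup of order $3$ is normal, since a normal Sylow $3$-subgroup would be the unique one, so $G$ has no normal subgroup of order $3$.

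Next I would check that $G$ is Sylow-cycloidal. The $3$-Sylow subgroup has order $3$ and is automatically cyclic. A $2$-Sylow subgroup $P$ has order $8$, and every element of $P$ of order $2$ is an element of $G$ of order $2$; since $G$ has a unique such element, $P$ has a unique element of order $2$. By Corollary~\ref{unique_2_cor}, $P$ is therefore cyclic or a generalized quaternion group, that is, $2$-cycloidal. Hence every Sylow subgroup of $G$ is cycloidal and $G$ is Sylow-cycloidal.

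It remains to show that $G$ is solvable, which is the one nontrivial point of the argument. The quickest route is to cite Burnside's $p^a q^b$ theorem, which gives solvability for any group of order $2^m 3^n$. To keep the argument self-contained one may instead use the conjugation action of $G$ on its four Sylow $3$-subgroups, giving a homomorphism $\phi \colon G \to S_4$ whose image is a transitive subgroup of $S_4$. Since every subgroup of the solvable group $S_4$ is solvable, the image is solvable; and because the image is transitive its order is a multiple of $4$, so the kernel $K$ has order in $\{1, 2, 3, 6\}$, each of which is the order of a solvable group. As an extension of a solvable group by a solvable group, $G$ is then solvable.

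With solvability established, $G$ is a solvable Sylow-cycloidal group of order $24$ with no normal subgroup of order $3$, so Proposition~\ref{2T_iso_prop} applies and yields $G \cong 2T$. The main obstacle is thus solely the solvability of $G$; every other hypothesis is read off directly from the given data, so the only real decision is whether to cite Burnside's theorem or to supply the short elementary $S_4$-action argument above.
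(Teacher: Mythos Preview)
Your proof is correct and follows essentially the same route as the paper: verify Sylow-cycloidality via Corollary~\ref{unique_2_cor}, rule out a normal subgroup of order~$3$ by Sylow counting, establish solvability via the conjugation action on the four Sylow $3$-subgroups, and then invoke Proposition~\ref{2T_iso_prop}. The only minor difference is that the paper pins the kernel of $G \to S_4$ down to exactly the unique subgroup $C_2$ of order~$2$ (by computing each normalizer as $C_2C_3$), whereas you simply bound the kernel's order and observe that all possibilities are solvable; both arguments work, and the paper's spirit of avoiding heavy tools like Burnside's $p^aq^b$ theorem matches your elementary alternative.
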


\begin{proof}
Let $P_2$ be a $2$-Sylow subgroup of $G$. Then $P_2$ has a unique element of order~$2$, so must be isomorphic
to $Q_8$ or a cyclic group of order $8$. In particular, $G$ is a Sylow-cycloidal group. 
Note that the subgroups of order $3$ of $G$ are conjugate since they are Sylow subgroups, so
they cannot be normal since there is more than one. The result now follows from Proposition~98 if we grant that $G$ is solvable.

It is well-known that all subgroups of order 24 are solvable, but to see this directly for $G$ here let $C_2$ be the unique
subgroup of order $2$ and let $C_3$ be any subgroup of order $3$. Then 
the subgroup $C_2 C_3 =C_2 \rtimes C_3$
must be cyclic since~$C_2$ has no automorphism. So the normalizer of $C_3$ has at least 6 elements.
This means that there are at most~$4$ subgroups of $G$ of order $3$. Since the number of such Sylow
subgroups is congruent to 1 modulo $3$, we see there are exactly $4$ subgroups of order~$3$
and~$C_2 C_3$ is the normalizer of $C_3$. 
The action of $G$ on the set of subgroups of order~$4$ gives a homomorphism~$G \to S_4$.
The kernel of the action is the intersection of the normalizers for subgroups of order~3. Our description
of the normalizer shows this intersection is $C_2$.
Since $S_4$ is solvable, this implies that~$G$ is solvable.
\end{proof}

\begin{proposition} \label{binary_octahedral_prop}
Let $G$ be a solvable Sylow-cycloidal group of order 48 such that~$G/Z(G)$ is isomorphic to $S_4$.
Then $G$ is isomorphic to the binary octahedral group~$2 O$.
Furthermore $G$ has $8$ elements of order 3, and $4$ subgroups of order~3, and these elements
generate the unique subgroup of $G$ of index $2$,  and this index 2 subgroup is a binary tetrahedral group.
\end{proposition}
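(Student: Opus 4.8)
The plan is to first arrange matters so that Lemma~\ref{lemma97} applies, then read off the internal structure it provides, and finally pin down the isomorphism type by recognizing $2O$ as the only candidate compatible with the Sylow-cycloidal hypothesis. First I would verify the hypotheses of Lemma~\ref{lemma97}. Since $G/Z(G) \cong S_4$ has order $24$ and $|G| = 48$, the center $Z(G)$ has order $2$. If $M$ were a nontrivial normal subgroup of $G$ of odd order, then $M \cap Z(G) = \{1\}$, so $M$ would inject into $G/Z(G) \cong S_4$ as a normal subgroup of odd order; the only such subgroup of $S_4$ is trivial, forcing $M \subseteq Z(G)$ and hence $M = \{1\}$. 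Thus every nontrivial normal subgroup of $G$ has even order, and Lemma~\ref{lemma97} applies with $2^m 3^n = 2^4 \cdot 3$.

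Lemma~\ref{lemma97} then hands me a normal subgroup $N \cong Q_8$, exactly four subgroups of order $3$, and the identification $Z(G) = Z(N) = \{\pm 1\}$. The four cyclic subgroups of order $3$ meet pairwise in the identity, giving the $4 \cdot 2 = 8$ elements of order $3$. Setting $H = NC_3$ for $C_3$ one of these subgroups, the lemma gives $|H| = 24$; since its four subgroups of order $3$ are conjugate $3$-Sylow subgroups and there is more than one, $H$ has no normal subgroup of order $3$, so $H \cong 2T$ by Proposition~\ref{2T_iso_prop}. I would also record that the $2$-Sylow subgroup of $G$ has order $16$ and is \emph{not} cyclic (its image in $S_4$ is the non-cyclic dihedral Sylow subgroup), hence is generalized quaternion by the Sylow-cycloidal hypothesis; as such it has a unique element of order $2$, and since $Z(G)$ is a normal subgroup of order $2$, Lemma~\ref{order2_lemma} shows $-1$ is the unique element of order $2$ in all of $G$.

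Next I would establish the ``furthermore'' claims. Using the surjection $G \to G/N \cong S_3$ (from the order-$48$ analysis in Lemma~\ref{lemma97}), a short commutator computation shows $N \subseteq G'$ while $G'$ maps onto $(G/N)' \cong C_3 = H/N$, giving $G' = H$ and hence $G^{\mathrm{ab}} \cong C_2$; thus $H$ is the unique subgroup of index $2$. For the generation statement, the subgroup $S$ generated by the $8$ elements of order $3$ is characteristic, hence normal in $G$, and lies in $H$ since all $3$-Sylow subgroups of $G$ lie in the index-$2$ subgroup; a commutator between an order-$3$ element and its $N$-conjugate produces a non-central element of $N$, so $S \cap N$ is a subgroup of $Q_8$ normal in $G$ strictly larger than $\{\pm 1\}$, forcing $N \subseteq S$, and since $S$ also surjects onto $H/N$ we get $S = H \cong 2T$.

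It remains to prove $G \cong 2O$. I would first verify that $2O$ satisfies every hypothesis: it is a subgroup of $\bH^\times$, hence freely representable and so Sylow-cycloidal; it has order $48$; and as the preimage of the octahedral group $O \cong S_4$ under $\bH_1 \to \SO(3)$ it satisfies $2O/Z(2O) \cong S_4$. For uniqueness, conjugation gives $G \to \Aut(N) \cong S_4$ with kernel $C_G(N) \supseteq \{\pm 1\}$; since every element of order $3$ acts nontrivially on $N$, the centralizer $C_G(N)$ is a $2$-group, and a centralizer of order $4$ would be cyclic (Sylow-cycloidal) with a central square $-1$, whence $iN$-type products would yield a second element of order $2$, impossible; so $C_G(N) = \{\pm 1\}$ and $G/\{\pm 1\} \cong \Aut(N)$. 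Fixing the standard copy $N \hookrightarrow 2O$ and the isomorphism of Proposition~\ref{2T_iso_prop} on the index-$2$ subgroups, I would extend it by sending a chosen $t \in G \setminus H$ (necessarily of order $4$ with $t^2 = -1$) to a matching $t' \in 2O \setminus 2T$ and check the defining relations $t^2 = -1 = (t')^2$ together with the matching conjugation actions on $2T$. The main obstacle is exactly this last step: $S_4$ has two non-isomorphic double covers, $2O$ and $\GL_2(\bF_3)$, distinguished precisely by their $2$-Sylow subgroups (generalized quaternion versus semidihedral). The crux of the argument is that the Sylow-cycloidal hypothesis — equivalently the existence of a \emph{unique} element of order $2$, which forces $t^2 = -1$ rather than $t^2 = 1$ — excludes $\GL_2(\bF_3)$ and singles out $2O$.
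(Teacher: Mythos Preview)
Your route is viable and genuinely different from the paper's, with one soft spot at the end.

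You open by feeding $G$ into Lemma~\ref{lemma97} (after checking that every nontrivial normal subgroup has even order), which immediately delivers the normal $Q_8$, the four $3$-Sylow subgroups, and $|Z(G)| = 2$; the paper never cites Lemma~\ref{lemma97} here but instead works directly from the projection $G \to G/Z(G) \cong S_4$, recovering the count of order-$3$ elements by showing they biject with the $3$-cycles of $S_4$. Your identification of $H$ as $G'$ via $G/N \cong S_3$, and your argument that the order-$3$ elements generate $H$ by forcing $S \cap N = N$ through normality in $G$, are legitimate alternatives to the paper's approach (which instead shows $H$ is the unique preimage of $A_4$).

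For the isomorphism $G \cong 2O$, the paper builds an explicit presentation: it picks $g_1, g_2 \in H$ of order $3$ with $g_1 g_2$ of order $4$, picks $\tau \in G \setminus H$ lying over a transposition, records nine relations (the conjugation action of $\tau$ on each of the eight elements of $\mathcal S_3$, together with $\tau^2 = -1$), and shows the resulting quotient of $H * C_4$ has at most $48$ elements --- so any two groups satisfying the hypotheses are isomorphic. Your plan instead invokes the classification of central $C_2$-extensions of $S_4$, asserting that only $2O$ and $\GL_2(\bF_3)$ are non-split and that the Sylow-cycloidal hypothesis (unique involution, hence $Q_{16}$ rather than semidihedral $2$-Sylow) excludes the latter. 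This is correct, but the classification of double covers of $S_4$ is not proved anywhere in the paper, so as written you are importing an external fact. A smaller issue: your parenthetical ``necessarily of order $4$'' is false for arbitrary $t \in G \setminus H$, since preimages of $4$-cycles have order $8$; you need $t$ to lie over a transposition. Once you insist on that and try to match $t$ with some $t' \in 2O$ having the same conjugation action on $H \cong 2T$ and $(t')^2 = -1$, verifying this can be done is essentially the content of the paper's nine relations.
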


\begin{proof}
We start by checking that $G = 2O$ satisfies the hypothesis.
Recall that $2 O$ has $48$ elements.
We note that $2 O$ has center containing the elements~$\pm 1$, and since~$2O / \{\pm 1\} \cong O \cong S_4$
the center is exactly $\{ \pm 1 \}$ (since $S_4$ has trivial center). So $2 O / Z(2 O)$ is isomorphic to~$S_4$.

Now let $G$ be any subgroup of size 48 such that $G/Z(G) \cong S_4$.
Observe that~$Z(G)$ has order $2$ since~$G$ has order 48 and $S_4$ has order $24$.
So the generator of $Z(G)$ is the unique element of $G$ of order 2 (Lemma~\ref{order2_lemma}).
For now, we fix a particular isomorphism~$
G/Z(G) \to S_4.
$
Since $Z(G)$ has order $2$, the order of the image of $g \in G$ in~$S_4$ is either the 
same as the order of $g$ or has half the order of~$g$.
For example if~$g$ maps to an element of order 2 then $g$ must have order 2 or~4.
But in this case~$g$ is not the  unique element of order 2 since that maps to the identity element, so $g$ has order $4$
and $g^2$ is the unique element of order $2$.

In contrast if $g$ has order 3, then~$g$ must map to an element of order $3$
since~$3$ is odd. In particular if $g$  has order 3 then $g$ maps to a 3 cycle in $S_4$.
So if~$\mathcal S_3$ is the set of elements of order~$3$ in~$G$ then we have a  
map $\mathcal S_3 \to \mathcal T_3$ where~$\mathcal T_3$ is the set of three cycles.
Now let~$t \in \mathcal T_3$ be given. The subgroup $\left< t \right>$ of $S_4$ corresponds
to a subgroup $H_t$ of $G$ of order $6$ containing $Z(G)$ as a normal subgroup. 
In fact,~$H_t = Z(G) C = Z(G) \rtimes C$ where~$C$ is a 3-Sylow subgroup of $H_t$.
Since $Z(G)$ has no nontrivial automorphisms,~$H_t$ is just $Z(G) \times C$, so $C$ is the unique
subgroup of $G$ order 3 whose image in $S_3$ is $\left< t \right>$. Thus there is exactly one
element of $G$ that maps to $t$. This means that $\mathcal S_3 \to \mathcal T_3$ is a bijection.
In particular, there are $8$ elements of order~3 in~$G$, and $4$ subgroups of order 3 in $G$.

Let $H$ be the subgroup of $G$ generated by the set $\mathcal S_3$
of elements of order 3. The image in $S_4$ is $A_4$ (since $\mathcal T_3$ generates $A_4$).
This implies that $H$ is also of even order and so must contain the unique subgroup $Z(G)$
of $G$ of order $2$. Thus~$H/Z(G) \cong A_4$, and so $H$ has order $24$.
 Also if $H$ has a normal subgroup
of size 3, then its image in $A_4$ would also have a normal subgroup of size 3. But this is not the case.
So we have that $H$ is isomorphic to the binary tetrahedral group $2T$ (see previous proposition).
We also note that $H$ is the only subgroup of $G$ of order 24. So see this note that any other such group $H'$
would have to contain the unique element of~$G$ order $2$ and so would contain $Z(G)$. Thus its image
in $S_4$ would have size 12. But $A_4$ is the only subgroup of $S_4$ of index two\footnote{A subgroup $N$ of $S_4$
of index~2
is normal, and contains all three cycles since $G/N$ has order~2.
Since three cycles generate $A_4$ any such $N$ would have to be $A_4$.} and so $H'$ and $H$ would have the same image in $S_4$ and so would be equal.

This gives us enough information to describe $G$ in terms of relations. The final specification of $G$
will not depend on a particular isomorphism 
$
G/Z(G) \cong S_4
$
but only on the fact that such an isomorphism exists. 
Note that the subgroup $H$ (the unique subgroup of $G$ of index 2) and
the subset $\mathcal S_3$ do not depend on the map.
Choose an element $g_1 \in G$ of order~$3$. It can be any such element. 
Next choose $g_2$ to be any element of order $3$ such that the product $g_1 g_2$ has order 4.
To show this can be done it is useful to choose a map 
$
G/Z(G) \cong S_4
$
(by permuting the number if necessary from a given map)
so that $g_1$ corresponds to $(1 \, 2 \, 3)$.
Then the element~$g_2$ corresponding to $(1 \, 2 \, 4)$ will 
work since  $g_1 g_2$ maps to $(1 \, 3)(2 \, 4)$ of order~$2$ in $S_4$ and so~$g_1 g_2$ has order $4$
in $G$. In fact, once we have chosen a suitable $g_1$ and $g_2$, 
we can permute the numbering of the four elements permuted by $S_4$ so that
$g_1$ corresponds to $(1 \, 2 \, 3)$ and~$g_2$ corresponds to $(1 \, 2 \, 4)$.
Note that~$(3 4) (1 \, 2 \, 3) (3 4) = (1 \, 2 \, 4)$ so if we choose $\tau \in G - H$ mapping to $(3 4)$ then 
$\tau g_1 \tau^{-1}$ is an element of order 3 corresponding to $(1\, 2\, 4)$. Since $\mathcal S_3 \to \mathcal T_3$
is a bijection, this means that 
$$
\tau g_1 \tau^{-1}  = g_2.
$$
We also have 
$$
\tau g_1^{-1} \tau^{-1} = g_2^{-1}, \quad \tau g_2 \tau^{-1} = g_1, \quad \tau g_1^{-1} \tau^{-1} = g^{-1}_1.
$$
Here we made use of $ \tau^{-1} g \tau = \tau g \tau^{-1}$ for all $g\in G$ since $\tau^2$
has order~2 in $Z(G)$. By conjugating other 3-cycles in $S_4$ by $(3\, 4)$ we can see that
$$
\tau g \tau^{-1} = g^{-1}
$$
for the other four elements $g\in \mathcal S_3$.
This gives us eight relations, one for each element of $\mathcal S_3$. We verified they held by using a particular 
map $G/Z(G) \cong S_4$, but the actual relations themselves do not depend on the map.
In addition we have a ninth relation $\tau^2 = -1$ where $-1$ is the unique element of order $2$ in $H$.

Now consider the free product $H * C_4$ where $C_4$ is an abstract cyclic group of order $4$ with generator 
called $\tau$. Let $K$ be the normal subgroup generated by the~9 relations discussed above.
Note that every element of $H*C_4/K$ can be written as~$a$ or $a \overline \tau$
where $a$ is in the image of $H$ and $\overline \tau$ is the image of $\tau$.
This follows from the fact that $\mathcal S_3$ generates $H$. Thus the group $H*C_4/K$ 
has at most $48$ elements. However, $G$ satisfies these relations so there is a 
homomorphism $H*C_4/K \to G$, and this is a surjection since $G$ is generated by $\tau$ and $H$.
Thus $G$ is isomorphic to~$H*C_4/K$.

So if $G_1$ and $G_2$ are solvable Sylow-cycloidal groups of order~48 that contain
a common subgroup $H$ of size 24, and if   $G_1/Z(G_1)$ and $G_2/Z(G_2)$ are both isomorphic to~$S_4$, then $G_1$
must be isomorphic to $G_2$ since both are isomorphic to $H * C_4/K$ (and $K$ does not depend on $G_i$ but
only on $H$).
More generally, if $G_1$ and $G_2$ are are solvable Sylow-cycloidal groups of order 48 such
that $G_1/Z(G_1)$ and $G_2/Z(G_2)$ are isomorphic to $S_4$, then as observed above 
each~$G_i$ has a subgroup isomorphic to~$2 T$. By identifying these subgroups
we reduce to the situation where $G_1$ and $G_2$ share a subgroup of order $24$. We conclude
that $G_1$ and $G_2$ are isomorphic under these conditions.
\end{proof}

Here are some useful observations linking the $2$-Sylow subgroup of $G$ to the quotient $G/O(G)$.

\begin{proposition} \label{same_sylow_prop}
Let $G$ be finite group and let $O(G)$ be its maximal normal subgroup of odd order.
Then $G$ and $G/O(G)$ have isomorphic $2$-Sylow subgroups.
\end{proposition}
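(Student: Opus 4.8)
The plan is to pick a $2$-Sylow subgroup $P$ of $G$ and show that its image under the canonical projection $\pi\colon G \to G/O(G)$ is a $2$-Sylow subgroup of $G/O(G)$ that happens to be isomorphic to $P$ itself; since all $2$-Sylow subgroups of a group are conjugate (hence isomorphic) by the Sylow theorems, this yields the claim.

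First I would verify that $P \cap O(G) = \{1\}$. This intersection is simultaneously a subgroup of the $2$-group $P$ and of the odd-order group $O(G)$, so its order divides both a power of $2$ and an odd number, and must therefore be $1$. Consequently the restricted homomorphism $\pi|_P \colon P \to G/O(G)$ has kernel $P \cap O(G) = \{1\}$, so it is injective. This gives $\pi(P) \cong P$, and in particular $|\pi(P)| = |P|$.

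Next I would compare $2$-parts. Because $|O(G)|$ is odd, the largest power of $2$ dividing $|G/O(G)| = |G|/|O(G)|$ equals the largest power of $2$ dividing $|G|$, which is exactly $|P|$. Thus $\pi(P)$ is a $2$-subgroup of $G/O(G)$ whose order realizes the full $2$-part of $|G/O(G)|$, so $\pi(P)$ is a $2$-Sylow subgroup of $G/O(G)$. Combining this with the isomorphism $P \cong \pi(P)$ from the previous step shows that a $2$-Sylow subgroup of $G$ is isomorphic to a $2$-Sylow subgroup of $G/O(G)$, as desired.

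I do not anticipate a genuine obstacle here: the entire argument hinges on the coprimality fact $P \cap O(G) = \{1\}$, after which injectivity and the order count are immediate. The only point requiring any care is the order comparison guaranteeing that $\pi(P)$ is actually Sylow rather than merely a $2$-subgroup, and this follows purely from $|O(G)|$ being odd.
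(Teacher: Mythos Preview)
Your argument is correct and is exactly the approach the paper implicitly relies on. The paper states Proposition~\ref{same_sylow_prop} without proof, but the general mechanism is spelled out in the footnote to Proposition~\ref{cycloidal_quotient_prop}: the image of a $p$-Sylow subgroup under $G\to G/N$ is a $p$-Sylow subgroup of $G/N$, with kernel $G_p\cap N$. Specializing to $p=2$ and $N=O(G)$ gives $P\cap O(G)=\{1\}$ by coprimality, and your order comparison is precisely the one the paper records there.
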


\begin{corollary}
Let $G$ be a solvable Sylow-cycloidal group whose $2$-Sylow subgroup~$S$ is not quaternionic of order 8 or 16.
Then $G/O(G)$ is isomorphic to $S$.
\end{corollary}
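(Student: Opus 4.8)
The plan is to pass to $\overline G := G/O(G)$ and identify it completely using the machinery already assembled. First I would record that a Sylow-cycloidal group satisfies the $(2,3)$ condition (its $2$-Sylow is cyclic or generalized quaternion, hence trivially a quotient of a generalized quaternion group, and its $3$-Sylow is cyclic), so Corollary~\ref{corollary96} applies: $\overline G$ has order $2^m 3^n$ and every nontrivial normal subgroup of $\overline G$ has even order. Moreover $\overline G$ is solvable, being a quotient of a solvable group, and it is Sylow-cycloidal by Proposition~\ref{cycloidal_quotient_prop} since $O(G)$ has odd order. Finally, by Proposition~\ref{same_sylow_prop} the $2$-Sylow subgroup of $\overline G$ is isomorphic to $S$. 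The whole argument then reduces to showing $n=0$, for in that case $\overline G$ is a $2$-group, hence is its own $2$-Sylow subgroup, whence $\overline G \cong S$ as desired (the degenerate case $m=n=0$ simply gives $\overline G$ and $S$ both trivial).

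So the crux is to rule out $n>0$. First I would dispose of the possibility $m=0$: then $\overline G$ would be a nontrivial $3$-group, which is itself a normal subgroup of odd order, contradicting Corollary~\ref{corollary96}. Hence $n>0$ forces $m>0$ as well, and I can invoke Lemma~\ref{lemma97}, which tells me $\overline G$ has order $24$ (with $\overline G/Z(\overline G)\cong A_4$) or order $48$ (with $\overline G/Z(\overline G)\cong S_4$). In the order-$24$ case, Lemma~\ref{lemma97} furnishes a normal subgroup $N\cong Q_8$; since $|N|=8$ is the full $2$-part of $|\overline G|=24$, $N$ is the (unique) $2$-Sylow subgroup, so $S\cong Q_8$, quaternionic of order $8$. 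In the order-$48$ case, Proposition~\ref{binary_octahedral_prop} gives $\overline G\cong 2O$, whose $2$-Sylow subgroups are generalized quaternion of order $16$ (as recorded in the order-$48$ analysis of Lemma~\ref{lemma97}), so $S$ is quaternionic of order $16$. Either conclusion contradicts the standing hypothesis that $S$ is not quaternionic of order $8$ or $16$. Therefore $n=0$, completing the proof.

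The computations here are all borrowed, so there is little grinding: the role of each cited result is to pin down the isomorphism type, and hence the $2$-Sylow subgroup, of $\overline G$ in the two exceptional orders. The one point deserving care—and the main obstacle in the write-up—is the bookkeeping that in the order-$24$ case the normal $Q_8$ really is a full $2$-Sylow subgroup (so that Proposition~\ref{same_sylow_prop} identifies $S$ with it) rather than merely a normal $2$-subgroup, and the parallel claim that the order-$48$ group $2O$ has generalized quaternion $2$-Sylow of order $16$; both are read off from Lemma~\ref{lemma97}, but I want to state them explicitly so the contradiction with the hypothesis on $S$ is unambiguous.
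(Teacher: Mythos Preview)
Your proof is correct and is precisely the argument the paper intends: the corollary is stated without proof because it follows immediately from Corollary~\ref{corollary96}, Lemma~\ref{lemma97}, and Proposition~\ref{same_sylow_prop} in just the way you describe. The only minor streamlining is that in the order-$48$ case you need not invoke Proposition~\ref{binary_octahedral_prop} at all: once Lemma~\ref{lemma97} furnishes the normal $Q_8$ inside $\overline G$, the $2$-Sylow subgroup (of order $16$) contains a noncyclic subgroup and hence must be generalized quaternion, which already yields the contradiction.
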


\begin{corollary}\label{sylow_cyclic_type_cor}
Let $G$ be a Sylow-cycloidal group. Then $G$ is a Sylow-cyclic group if and only if $G/O(G)$ is a cyclic $2$-group.
\end{corollary}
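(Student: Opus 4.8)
The plan is to treat the two implications separately, with the reverse direction resting directly on Proposition~\ref{same_sylow_prop} and the forward direction on the structure theorem (Theorem~\ref{semidirect_thm}) together with the maximality of $O(G)$.

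For the implication ($\Leftarrow$), I would suppose $G/O(G)$ is a cyclic $2$-group. Since $G$ is Sylow-cycloidal, every odd Sylow subgroup of $G$ is already cyclic, so it remains only to check the $2$-Sylow subgroup. Here I invoke Proposition~\ref{same_sylow_prop}: the $2$-Sylow subgroup of $G$ is isomorphic to that of $G/O(G)$. But $G/O(G)$ is itself a cyclic $2$-group, so it coincides with its own (cyclic) $2$-Sylow subgroup. Hence the $2$-Sylow subgroup of $G$ is cyclic, so all Sylow subgroups of $G$ are cyclic, i.e. $G$ is Sylow-cyclic.

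For the implication ($\Rightarrow$), I would suppose $G$ is Sylow-cyclic. By Theorem~\ref{semidirect_thm} I may write $G = A \rtimes B$ with $A = G'$ cyclic of odd order and $B$ cyclic. Since $A$ is a normal subgroup of odd order, $A \subseteq O(G)$, and therefore $G/O(G)$ is a quotient of $G/A \cong B$, hence cyclic. To finish I would show that $G/O(G)$ is a $2$-group: if some odd prime $q$ divided its order, then, being cyclic, $G/O(G)$ would contain a subgroup of the form $N/O(G)$ of order $q$, necessarily normal; its preimage $N$ would then be a normal subgroup of $G$ with $|N| = q\,|O(G)|$ odd, contradicting the maximality of $O(G)$. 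Thus no odd prime divides $|G/O(G)|$, and $G/O(G)$ is a cyclic $2$-group.

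The computations involved are light. The reverse direction is essentially immediate once Proposition~\ref{same_sylow_prop} is in hand, so I expect the only point needing genuine care to be the last step of the forward direction, where the maximality of $O(G)$ must be combined with the correspondence between normal subgroups of $G/O(G)$ and normal subgroups of $G$ containing $O(G)$ in order to rule out odd-order quotients; this is the modest main obstacle.
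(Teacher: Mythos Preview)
Your argument is correct in both directions. The backward implication is exactly what the paper has in mind: it is immediate from Proposition~\ref{same_sylow_prop}, since the $2$-Sylow subgroup of $G$ is isomorphic to the $2$-Sylow subgroup of the cyclic $2$-group $G/O(G)$.

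For the forward direction your route differs from the paper's. The paper places this corollary immediately after the (unlabeled) corollary asserting that for a solvable Sylow-cycloidal group whose $2$-Sylow $S$ is not quaternionic of order $8$ or $16$ one has $G/O(G)\cong S$; that earlier corollary in turn rests on Corollary~\ref{corollary96} and Lemma~\ref{lemma97}, i.e.\ on the classification of the possible quotients $G/O(G)$ of order $2^m3^n$. Your argument sidesteps this machinery entirely: you use only Theorem~\ref{semidirect_thm} to exhibit the odd normal commutator subgroup $A\subseteq O(G)$, conclude that $G/O(G)$ is a quotient of the cyclic group $B$, and then use the maximality of $O(G)$ to rule out odd prime divisors. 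This is more elementary and self-contained; the paper's route, by contrast, fits this corollary into the broader four-type classification being developed in that section, so it is more structural but leans on heavier preliminary results.
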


So we divide the solvable Sylow-cycloidal groups $G$ into four mutually exclusive types:
\begin{enumerate}
\item
Sylow-cyclic groups. These are the Sylow-cycloidal groups where $G/O(G)$ is cyclic.
\item
Quaternion type. These are defined to encompass the Sylow-cycloidal groups where $G/O(G)$ is a generalized quaternion group.
\item
Binary tetrahedral type. These are defined to encompass the Sylow-cycloidal groups where $G/O(G) \cong 2 T$.
\item
Binary octahedral type. These are defined to encompass the Sylow-cycloidal groups where $G/O(G) \cong 2 O$.
\end{enumerate}

In addition, there are non-solvable Sylow-cycloidal groups that we will consider later. These are Sylow-cycloidal
groups that contain a perfect Sylow-cycloidal subgroup.

\section{Type 1: Sylow-cyclic groups}

As noted out above, a Sylow-cycloidal group $G$ is of this type if and only if $G/O(G)$ is a cyclic $2$-group.
Such groups were treated in Section~\ref{sylow_cyclic_chapter}.
In particular, this type of group $G$ is freely representable if and only if it has a unique subgroup of order $p$ for each
prime $p$ dividing the order of $G$. 

In order to compare with later results it is convenient to break out the odd part from the even part:

\begin{proposition} \label{sylow_cyclic_odd_prop}
Let $G$ be a Sylow-cyclic group of order $2^k n$ where $n$ is odd and where $k\ge 1$.
Then $G$ has a normal Sylow-cyclic subgroup $M$ of order $n$. For such~$M$,
the group $G$ is freely representable if and only if (1) $M$ is freely representable
and (2) $G$ has a unique element of order $2$.
\end{proposition}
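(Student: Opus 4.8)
The plan is to first exhibit the subgroup $M$ using the semidirect-product structure of Sylow-cyclic groups, and then reduce the freely-representable criterion for $G$ to that for $M$ by means of Theorem~\ref{sc_fr_thm}.

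For the construction of $M$: by Theorem~\ref{semidirect_thm} I would write $G = A \rtimes B$, where $A = G'$ is cyclic of odd order and $B$ is cyclic with $\gcd(|A|, |B|) = 1$. Since $|A|$ is odd, the full power $2^k$ divides $|B|$; writing $|B| = 2^k s$ with $s$ odd gives $|A|\, s = n$ and $\gcd(|A|, s) = 1$. The quotient $G/A \cong B$ is cyclic, hence has a unique (therefore characteristic, therefore normal) subgroup $\overline M$ of order $s$. I would take $M$ to be its preimage in $G$; then $M$ is normal in $G$ of order $|A|\, s = n$, and $M$ is Sylow-cyclic because it is a subgroup of a Sylow-cyclic group. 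Moreover $G/M \cong (G/A)/\overline M$ is cyclic of order $2^k$, i.e. a $2$-group. The crucial consequence I would record is that $M$, having odd order $n$, consists \emph{precisely} of the odd-order elements of $G$: every element of $M$ has odd order, and conversely any odd-order element of $G$ maps to the identity in the $2$-group $G/M$, so lies in $M$. (Any normal subgroup of order $n$ is conjugate to this $M$ by Theorem~\ref{divisor_thm}, hence equal to it, so ``such $M$'' is in fact unique.)

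For the equivalence I would apply Theorem~\ref{sc_fr_thm} to both $G$ and $M$. For the Sylow-cyclic group $G$, that theorem says $G$ is freely representable if and only if for every prime dividing $|G| = 2^k n$ there is a unique subgroup of that order; separating the prime $2$ from the odd primes, this reads: $G$ has a unique subgroup of order $2$ and, for each odd prime $p \mid n$, a unique subgroup of order $p$. A subgroup of order $2$ is the same datum as an element of order $2$, so the first clause is exactly condition (2). For the odd primes, the identification of $M$ with the set of odd-order elements finishes the job: any subgroup of $G$ of odd prime order $p$ consists of odd-order elements and so lies in $M$, while conversely every subgroup of $M$ of order $p$ is a subgroup of $G$. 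Hence, for each odd prime $p \mid n$, the subgroups of $G$ of order $p$ and those of $M$ of order $p$ are literally the same, so $G$ has a unique one if and only if $M$ does. Applying Theorem~\ref{sc_fr_thm} to $M$ (whose order $n$ has only odd prime divisors), this last condition is precisely ``$M$ is freely representable'', condition (1).

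Putting the two clauses together yields that $G$ is freely representable if and only if (1) and (2) both hold. I expect the only delicate point to be the construction of $M$ together with the verification that $G/M$ is a $2$-group; once $M$ is identified with the set of all odd-order elements of $G$, the rest is a bookkeeping application of Theorem~\ref{sc_fr_thm}, together with the mild remark that ``unique subgroup of order $2$'' and ``unique element of order $2$'' express the same condition.
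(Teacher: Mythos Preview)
Your proof is correct and follows essentially the same route as the paper: both reduce the claim to Theorem~\ref{sc_fr_thm} by observing that every subgroup of odd prime order in $G$ already lies in $M$, so the ``unique subgroup of order $p$'' condition for $G$ splits cleanly into condition~(2) at $p=2$ and condition~(1) at odd primes. The only difference is in how $M$ is obtained: the paper simply sets $M = O(G)$ and invokes Corollary~\ref{sylow_cyclic_type_cor} to see that $G/M$ is a cyclic $2$-group, whereas you construct $M$ explicitly as the preimage of the odd part of $B$ in the decomposition $G = A \rtimes B$ from Theorem~\ref{semidirect_thm}. Your construction is more hands-on but yields the same subgroup, and your explicit identification of $M$ with the set of odd-order elements makes the bookkeeping very transparent.
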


\begin{proof}
Let $M = O(G)$, so $G/M$ is a cyclic $2$-group (Corollary~\ref{sylow_cyclic_type_cor}), and $O(G)$ has
odd order by definition, so $M$ has order $n$.

If $G$ is freely representable then (1) and (2) hold by earlier results. So assume~(1) and (2).
To show $G$ is freely representable, it is enough to show that $G$ has a unique subgroup of order $p$
for each $p$ dividing $2^k n$. For $p=2$ we are covered by assumption~(2).
For an odd prime $p$ we note that every subgroup of  $G$ of order $p$ is a subgroup of $M$
since $G/M$ is a $2$-group. By (1) there is a unique subgroup of~$M$ (and hence of~$G$) of order $p$.
\end{proof}

\section{Case 2:  Quaternion Type}

As noted in Proposition~\ref{same_sylow_prop}, these groups have $2$-Sylow subgroups that are generalized
quaternion groups, and if conversely if $G$ is a solvable Sylow-cycloidal group with $2$-Sylow subgroups that are generalized
quaternion groups of order $32$ or more then $G$ must be of this type. (If $G$ is a solvable Sylow-cycloidal
group with  $2$-Sylow subgroups isomorphic to the quaternion group $Q_8$ of order $8$, it can either
be of this type or of binary tetrahedral type. If $G$ is a solvable Sylow-cycloidal
group with~$2$-Sylow subgroups isomorphic to the generalized quaternion group $Q_{16}$ of order $16$, it can either
be of this type or of binary octahedral type.)

Observe that for Sylow-cycloidal groups $G$ of quaternion type, any $2$-Sylow subgroup $Q$ of $G$
functions as a complement for the normal subgroup $O(G)$ of $G$.
Thus
$$
G = O(G) Q = O(G) \rtimes Q.
$$
In particular, up to isomorphism $G$ is determined by $O(G)$ and the action of $Q$ on~$O(G)$.

\begin{proposition}\label{quaternion_2_prop}
Let $G$ be a Sylow-cycloidal group of quaternion type. Then~$G$ has a unique element of order 2.
This element is in the center of $G$.
\end{proposition}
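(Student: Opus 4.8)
The plan is to reduce the whole statement to a single fact: the unique involution $\tau$ of a $2$-Sylow subgroup acts trivially on $O(G)$. As recorded just above, since $G$ is of quaternion type its $2$-Sylow subgroup $Q$ is a generalized quaternion group and serves as a complement to $O(G)$, so $G = O(G)\rtimes Q$; moreover $O(G)$ is a normal subgroup of odd order, and being a subgroup of a Sylow-cycloidal group it is itself Sylow-cycloidal (Proposition~\ref{cycloidal_subgroup_prop}), hence Sylow-cyclic because it has odd order. Let $\tau$ be the unique element of order $2$ in $Q$; recall $\tau$ lies in the center of $Q$. Conjugation makes $Q$ act on the normal subgroup $O(G)$, giving a homomorphism $\phi\colon Q \to \Aut(O(G))$, and $\phi(\tau)$ has order dividing $2$ because $\tau^2 = 1$. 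Once I show $\phi(\tau)=1$, it follows that $\tau$ commutes with all of $O(G)$; as $\tau$ also centralizes $Q$ and $G = O(G)Q$, this makes $\tau$ central in $G$. Then $\langle\tau\rangle$ is a normal subgroup of order $2$, and since the $2$-Sylow subgroup $Q$ has a unique element of order $2$, Lemma~\ref{order2_lemma} gives that $G$ has a unique element of order $2$ --- necessarily $\tau$ --- which will already be known to be central.

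The heart of the argument is the claim $\phi(\tau) = 1$, where the key input is the structure of the automorphism group of an odd-order Sylow-cyclic group. First I would note that $\tau$ lies in the commutator subgroup $[Q,Q]$: using the presentation of Proposition~\ref{relations_prop} with generators $R, T$ (so $T^2 = R^{2^{n-2}} = \tau$ and $TRT^{-1} = R^{-1}$), a direct computation gives $[T,R] = R^2$, whence $\tau = R^{2^{n-2}} \in \langle R^2\rangle \subseteq [Q,Q]$ for every $n \ge 3$. Next I would apply Proposition~\ref{aut_quotient_prop} to the odd-order Sylow-cyclic group $O(G)$: the quotient $\Aut(O(G))/O(\Aut(O(G)))$ is an Abelian $2$-group, where $O(\Aut(O(G)))$ denotes the maximal normal subgroup of odd order. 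Composing $\phi$ with the projection $\Aut(O(G)) \to \Aut(O(G))/O(\Aut(O(G)))$ yields a homomorphism from $Q$ into an Abelian group, so its kernel contains $[Q,Q]$ and in particular contains $\tau$. Hence $\phi(\tau)$ lies in $O(\Aut(O(G)))$, a group of odd order; but $\phi(\tau)$ has order dividing $2$, so $\phi(\tau)=1$, which is exactly what was needed.

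I expect the main obstacle to be psychological rather than technical: one must resist trying to control the action of $\tau$ on $O(G)$ directly (which is hopeless in general, since an order-$4$ element of $Q$ can act through an order-$4$ automorphism) and instead exploit that a \emph{nontrivial} $\phi(\tau)$ would force $\phi$ to be injective --- every nontrivial normal subgroup of a generalized quaternion group contains $\tau$ --- thereby embedding $Q$ into $\Aut(O(G))$, which Proposition~\ref{aut_quotient_prop} forbids modulo odd-order automorphisms. Packaging this through the commutator computation above lets me bypass any case analysis on the faithfulness of $\phi$ and keeps the proof short. The only routine verifications that remain are the commutator identity $[T,R]=R^2$ and the observation that a group of odd order has no element of order $2$.
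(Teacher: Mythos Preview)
Your proof is correct and follows essentially the same approach as the paper: both arguments use Proposition~\ref{aut_quotient_prop} to conclude that the image of the unique involution $\tau$ of $Q$ under $Q \to \Aut(O(G)) \to \Aut(O(G))/O(\Aut(O(G)))$ is trivial, then use the odd order of $O(\Aut(O(G)))$ to force $\phi(\tau)=1$, whence $\tau$ is central. The packaging differs only slightly: you justify $\tau \in \ker(Q\to A)$ by the explicit commutator computation $\tau \in [Q,Q]$, whereas the paper observes that the map $Q\to A$ to an Abelian group has nontrivial kernel (since $Q$ is non-Abelian) and that every nontrivial normal subgroup of $Q$ contains $C_2 = \langle\tau\rangle$; and for uniqueness you invoke Lemma~\ref{order2_lemma}, while the paper argues directly that any element of order~$2$ lies in $O(G)C_2 \cong O(G)\times C_2$ and hence in $C_2$.
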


\begin{proof}
Let $Q$ be a $2$-Sylow subgroup of $G$, which is a generalized quaternion group (Proposition~\ref{same_sylow_prop}).
We start by considering the action of $Q$ on $G$.
By  Proposition~\ref{aut_quotient_prop}, we have that
$
A = \Aut(O(G))  / O(\Aut(O(G)))
$
is an Abelian $2$-group. The action of~$Q$ gives a map into $A$:
$$
Q \to \Aut(O(G))  \to \Aut(O(G))  / O(\Aut(O(G))) = A.
$$
Since $A$ is Abelian, this map has a nontrivial kernel. Thus if $C_2$ is the unique subgroup
of $Q$ of order 2, then $C_2$ is in this Kernel. In particular, the image of~$C_2$
in $\Aut(O(G))$ must land in $O(\Aut(O(G)))$. But $O(\Aut(O(G)))$ has odd order, so the
image of $C_2$ in $\Aut(O(G))$ is trivial. Thus $C_2$ acts trivially on $O(G)$.
Since~$G = O(G) Q$ we have that $C_2$ is in the center of $G$.

Observe that the subgroup $O(G) C_2$ of $G$ corresponds to the unique subgroup of $G/O(G)$ of order two.
If $g \in G$ has order 2, then its image in $G/O(G)$ is the unique element of order 2, 
and so $g \in O(G) C_2$. 
Since $C_2$ is in the center of $G$, we have $O(G) C_2 = O(G) \times C_2$, and so every element of 
$O(G) C_2$ of order 2 must be in~$C_2$. We conclude that every element of order $2$ is in $C_2$. In other words,
there is a unique element of order $2$ in $G$.
\end{proof}

Let $G$ be a Sylow-cycloidal group of quaternion type and let
 $C_2$ be its unique subgroup of order 2. Let $M = O(G) C_2$.
Observe that every element of odd prime order must be in $O(G)$, so every element of $G$ of prime order
is in $M$. By Corollary~\ref{sufficient_cor}, $G$ is freely representable if and only if $M$ is freely representable.
Since~$M \cong O(G) \times C_2$ we have the $M$ is freely representable if and only
if $O(G)$ and $C_2$ are freely representable (Corollary~\ref{rel_prime_cor}), but of course $C_2$ is freely representable.
Thus we get the following:

\begin{theorem} \label{quaternion_type_thm}
Let $G$ be a Sylow-cycloidal group of quaternion type. 
Then the following are equivalent:
\begin{enumerate}
\item
$G$ is freely representable.
\item
$O(G)$ is freely representable.
\item
For each prime $p$ dividing the order of $G$, there is exactly one subgroup of $G$ of order $p$.
\end{enumerate}
\end{theorem}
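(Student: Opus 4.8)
The plan is to establish the three-way equivalence by proving $(1)\Rightarrow(3)\Rightarrow(2)\Rightarrow(1)$, leveraging the structural decomposition already developed in Proposition~\ref{quaternion_2_prop} and the discussion following it. The key observation to carry throughout is that a quaternion-type group $G$ has the form $G = O(G) \rtimes Q$ where $Q$ is a generalized quaternion $2$-Sylow subgroup, and (by Proposition~\ref{quaternion_2_prop}) $G$ has a \emph{unique} element of order $2$, generating a central subgroup $C_2$ lying inside $O(G)\cdot C_2 = O(G)\times C_2$.

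First I would prove $(1)\Rightarrow(3)$. If $G$ is freely representable, then for the prime $p=2$ the uniqueness of a subgroup of order $2$ is exactly Proposition~\ref{even_prop} (or directly Proposition~\ref{quaternion_2_prop}). For an odd prime $p$ dividing $|G|$, note that $G/O(G) \cong Q$ is a $2$-group, so every element of odd prime order must already lie in $O(G)$; thus any subgroup of order $p$ sits inside $O(G)$. Since $O(G)$ is a subgroup of a freely representable group, it is freely representable (Proposition~\ref{freely_representable_subgroup_prop}), and moreover it is a Sylow-cyclic group of odd order, so by Theorem~\ref{sc_fr_thm} it has a unique subgroup of each prime order; uniqueness in $O(G)$ gives uniqueness in $G$.

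Next, $(3)\Rightarrow(2)$: assuming a unique subgroup of order $p$ for every prime $p \mid |G|$, I would show $O(G)$ is freely representable. Since $O(G)$ is a Sylow-cyclic group of odd order, by Theorem~\ref{sc_fr_thm} it suffices to check that $O(G)$ has a unique subgroup of each prime order dividing $|O(G)|$. But every such prime is odd, and the unique subgroup of $G$ of that order automatically lies in $O(G)$ (being of odd order it cannot meet $Q$ nontrivially in the relevant way, or directly: any $p$-subgroup for odd $p$ lies in a conjugate of $O(G)$, which is normal, hence in $O(G)$). Uniqueness descends, so $O(G)$ is freely representable. Finally $(2)\Rightarrow(1)$ is precisely the paragraph preceding the theorem: every element of $G$ of prime order lies in $M = O(G)\cdot C_2 = O(G)\times C_2$; if $O(G)$ is freely representable then so is $M$ by Corollary~\ref{rel_prime_cor} (since $C_2$ is trivially freely representable and $|O(G)|$ is odd), and then $G$ is freely representable by Corollary~\ref{sufficient_cor}, applied to the subgroup $M$ containing all elements of prime order.

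\textbf{The main obstacle} I anticipate is bookkeeping around the containment claims: specifically verifying cleanly that for odd $p$ every order-$p$ subgroup of $G$ genuinely lies in the \emph{normal} subgroup $O(G)$, and that the unique central $C_2$ absorbs all elements of order $2$. These follow from normality of $O(G)$ together with the fact that $G/O(G)$ is a $2$-group, but stating them precisely without circularity (the arguments in Proposition~\ref{quaternion_2_prop} and the surrounding text already do most of this work) is where care is needed. Otherwise the proof is essentially an assembly of Theorem~\ref{sc_fr_thm}, Corollary~\ref{rel_prime_cor}, Corollary~\ref{sufficient_cor}, and Proposition~\ref{quaternion_2_prop}, so I would keep each implication to a few lines and cite these results rather than re-deriving them.
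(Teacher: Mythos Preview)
Your proposal is correct and uses essentially the same ingredients as the paper: Proposition~\ref{quaternion_2_prop} for the prime $2$, the observation that odd-order subgroups lie in $O(G)$ since $G/O(G)$ is a $2$-group, Theorem~\ref{sc_fr_thm} applied to the Sylow-cyclic group $O(G)$, and the discussion preceding the theorem for $(2)\Rightarrow(1)$. The only cosmetic difference is that the paper closes the cycle via $(3)\Rightarrow(1)$ directly using Proposition~\ref{unique_freely_rep_prop}, whereas you route through $(3)\Rightarrow(2)\Rightarrow(1)$; both are equally short.
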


\begin{proof}
The equivalence $(1) \iff (2)$ was addressed in the discussion proceeding the statement of the theorem.
The implication $(3) \implies (1)$ follows from Proposition~\ref{unique_freely_rep_prop}.

So we just need to verify that $(2)$ implies $(3)$. We know that $(3)$ holds for $p=2$
by the previous proposition (independent of whether $(2)$ is true or not).
So suppose that~$(2)$ holds and that $p$ is an odd prime dividing the order of $G$.
Observe that every subgroup of order $p$ of $G$ must actually be a subgroup of $O(G)$
since $G/O(G)$ has even order.
Finally, $(2)$ implies $O(G)$ has exactly one subgroup of order $p$ by Theorem~\ref{sc_fr_thm}.
\end{proof}

Now we consider other characterizations of this type of group.

\begin{proposition}\label{quaternion_m_condition_prop}
Let $G$ be a finite group. Then $G$ is a Sylow-cycloidal group of quaternion type
if and only if it is a semidirect product $M \rtimes Q$ where $M$ is a Sylow-cyclic group of odd order
and $Q$ is a generalized quaternion group.

If $G$ is a semidirect product $M \rtimes Q$ where $M$ is a Sylow-cyclic group of odd order
and $Q$ is a generalized quaternion group, then $G$ is freely representable if and only if $M$ is freely representable.
\end{proposition}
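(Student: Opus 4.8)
Let $G$ be a finite group. Then $G$ is a Sylow-cycloidal group of quaternion type if and only if it is a semidirect product $M \rtimes Q$ where $M$ is a Sylow-cyclic group of odd order and $Q$ is a generalized quaternion group. If $G$ is a semidirect product $M \rtimes Q$ where $M$ is a Sylow-cyclic group of odd order and $Q$ is a generalized quaternion group, then $G$ is freely representable if and only if $M$ is freely representable.

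Let me plan a proof.

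The first biconditional has an easy forward direction that's essentially already established in the running text: if $G$ is Sylow-cycloidal of quaternion type, take $M = O(G)$ and $Q$ a 2-Sylow subgroup. The remarks preceding the statement note $G = O(G) \rtimes Q$ with $Q$ generalized quaternion, and $M = O(G)$ is Sylow-cyclic of odd order (being a subgroup of a Sylow-cycloidal group, its odd Sylow subgroups are cyclic). So I'd need to prove the converse.

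The converse direction (a semidirect product $M \rtimes Q$ is Sylow-cycloidal of quaternion type) will be the main content, and I expect it to be the main obstacle. First I would check $G = M \rtimes Q$ is Sylow-cycloidal. Since $|M|$ and $|Q|$ have relatively prime orders and both are Sylow-cycloidal (the odd-order Sylow-cyclic group $M$ has cyclic Sylows, and $Q$ is a 2-group that is a generalized quaternion, hence 2-cycloidal), Corollary~\ref{cycloidal_product_cor} gives that $G$ is Sylow-cycloidal. Next I must show $G$ is of quaternion type, i.e. that $G/O(G)$ is a generalized quaternion group. The key step is to identify $O(G)$ with $M$: since $M$ is normal of odd order, $M \subseteq O(G)$; conversely $O(G)$ has odd order while $Q$ is a 2-Sylow subgroup, so $|O(G)| = |G|/|Q| = |M|$, forcing $M = O(G)$. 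Then $G/O(G) \cong Q$ is a generalized quaternion group, so $G$ is of quaternion type. I expect the subtlety here to be confirming $M = O(G)$ cleanly — in particular that $O(G)$ cannot be larger than $M$, which follows from $O(G)$ having order prime to $2$ and $|G| = |M| \cdot |Q|$ with $Q$ a 2-group carrying the full 2-part.

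For the freely-representable statement, I would simply invoke Theorem~\ref{quaternion_type_thm}, which already gives that for a Sylow-cycloidal group of quaternion type, $G$ is freely representable if and only if $O(G)$ is freely representable. Since I have just shown $M = O(G)$, the equivalence ``$G$ freely representable $\iff$ $M$ freely representable'' is immediate. Thus the entire second assertion reduces to the first biconditional plus Theorem~\ref{quaternion_type_thm}, requiring no new argument. The overall structure is therefore: (i) forward direction of the biconditional by quoting the preceding discussion; (ii) converse via Corollary~\ref{cycloidal_product_cor} and the order-count identification $M = O(G)$; (iii) the freely-representable claim as a corollary of Theorem~\ref{quaternion_type_thm}. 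The only place needing genuine care is the order argument pinning down $O(G) = M$.
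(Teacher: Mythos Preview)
Your proposal is correct and follows essentially the same route as the paper's proof: the forward direction quotes the preceding discussion that $G = O(G) \rtimes Q$, the converse identifies $M = O(G)$ via the order/normality argument and concludes $G/O(G) \cong Q$, and the freely-representable claim is reduced to Theorem~\ref{quaternion_type_thm}. You are in fact slightly more careful than the paper in explicitly invoking Corollary~\ref{cycloidal_product_cor} to verify that $M \rtimes Q$ is Sylow-cycloidal, a point the paper's proof leaves implicit.
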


\begin{proof}
We mentioned earlier that if $G$ is a Sylow-cycloidal group of quaternion type then $G = O(G) \rtimes Q$
where $Q$ is any $2$-Sylow subgroup of $G$ and where $Q$ is a generalized quaternion group.
Conversely suppose $G$ is of the form $M \rtimes Q$. Then~$M$ is isomorphic to a normal subgroup of $G$
of odd order, and the $2$-group $Q$ is isomorphic to the corresponding quotient of $G$. Thus $O(G)$
must be isomorphic to $M$, and $G/O(G)$ is isomorphic to $Q$.
So by the above theorem, $G$ is freely representable if and only if $M$ is freely representable.
\end{proof}

\begin{proposition}
Let $G$ be a finite group. Then $G$ is a Sylow-cycloidal group of quaternion type
if and only \emph{(1)} the $2$-Sylow subgroups of $G$ are generalized quaternion groups, and 
\emph{(2)} $G$ has a Sylow-cyclic subgroup $M$
of index $2$. In this case~$G$ is freely representable if and only if $M$ is freely representable.
\end{proposition}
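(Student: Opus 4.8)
The plan is to deduce this proposition from the structural facts already established for quaternion type, chiefly Proposition~\ref{quaternion_m_condition_prop}, Proposition~\ref{same_sylow_prop}, Corollary~\ref{sylow_cyclic_type_cor}, and Theorem~\ref{quaternion_type_thm}, so that almost no fresh computation is required. I would prove the biconditional by treating its two implications in turn, and then settle the free-representability clause.

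First I would show that a Sylow-cycloidal group $G$ of quaternion type satisfies (1) and (2). For (1), by definition $G/O(G)$ is a generalized quaternion group, hence a $2$-group equal to its own $2$-Sylow subgroup; by Proposition~\ref{same_sylow_prop} the $2$-Sylow subgroup of $G$ is isomorphic to that of $G/O(G)$, and so is generalized quaternion. For (2), I would use the decomposition $G = O(G)\rtimes Q$ with $Q$ a generalized quaternion $2$-Sylow subgroup (from the discussion preceding Proposition~\ref{quaternion_m_condition_prop}). Every generalized quaternion group has a cyclic subgroup $\left<R\right>$ of index $2$ (Proposition~\ref{relations_prop}); taking $C = \left<R\right>$, the set $M \defeq O(G)C = O(G)\rtimes C$ is a subgroup of index $2$ in $G$. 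Since $O(G)$ has odd order and $C$ is a $2$-group, Corollary~\ref{semi-direct_cor} shows $M$ is Sylow-cyclic, giving (2).

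Conversely, assume (1) and (2); here lies the main obstacle, namely ruling out the binary tetrahedral and octahedral types by showing $G/O(G)$ is genuinely a $2$-group. The key observation is that $O(M)=O(G)$: the characteristic subgroup $O(M)$ is normal of odd order in $G$, so $O(M)\subseteq O(G)$, while $O(G)$ has odd order, hence lies in the index-$2$ subgroup $M$ and is normal there, giving $O(G)\subseteq O(M)$. Writing $N=O(M)=O(G)$, the quotient $M/N$ is a cyclic $2$-group by Corollary~\ref{sylow_cyclic_type_cor} applied to the Sylow-cyclic group $M$. Then $|G/O(G)| = |G|/|N| = 2|M|/|N| = 2\,|M/N|$ is a power of $2$, so $G/O(G)$ is a $2$-group. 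By Proposition~\ref{same_sylow_prop} its $2$-Sylow subgroup, which is all of $G/O(G)$, is isomorphic to the $2$-Sylow subgroup of $G$, which is generalized quaternion by (1); hence $G/O(G)$ is generalized quaternion. Finally $G$ is Sylow-cycloidal, since its odd Sylow subgroups lie in the Sylow-cyclic group $N$ and so are cyclic, while its $2$-Sylow subgroup is generalized quaternion. Thus $G$ is of quaternion type.

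For the last assertion I would show that each of $G$ and $M$ is freely representable exactly when $N = O(G) = O(M)$ is. By Theorem~\ref{quaternion_type_thm}, $G$ is freely representable if and only if $O(G)=N$ is. For $M$, note that $M$ is Sylow-cyclic of even order (indeed $8 \mid |G|$, so $4 \mid |M|$) with a unique element of order $2$: the unique such element of $G$, furnished by Proposition~\ref{quaternion_2_prop}, lies in $M$ since $M$ has even order. Hence Proposition~\ref{sylow_cyclic_odd_prop}, applied to $M$ with its normal odd Sylow-cyclic subgroup $N$, shows $M$ is freely representable if and only if $N$ is, the hypothesis of a unique element of order $2$ being automatic. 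Combining, $G$ is freely representable $\iff N$ is freely representable $\iff M$ is freely representable, completing the proof. The only genuinely new step is the order count $|G/O(G)| = 2\,|M/N|$ in the third paragraph; everything else is bookkeeping with the cited results.
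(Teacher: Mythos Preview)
Your proof is correct and follows essentially the same approach as the paper's: both directions hinge on showing $O(M)=O(G)$ and deducing that $G/O(G)$ is a $2$-group, hence generalized quaternion. The only notable difference is in the free-representability clause, where the paper argues more directly (if $M$ is freely representable then so is its subgroup $O(G)$, hence $G$ by Theorem~\ref{quaternion_type_thm}) rather than routing both $G$ and $M$ through $N$ via Proposition~\ref{sylow_cyclic_odd_prop}; your route is a little longer but equally valid.
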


\begin{proof}
Suppose is a Sylow-cycloidal group of quaternion type, so
$G/O(G)$ is a generalized quaternion group. This quotient contains a  cyclic
subgroup of index~2 which we can write as $M/O(G)$ where $M$ is a  subgroup of $G$ containing~$O(G)$.
Note that $M$ has index $2$ in $G$. 
Since~$O(G)$ has odd order, the $2$-Sylow subgroups of $M$ are isomorphic to the cyclic group $M/O(G)$.
Thus $M$ is a Sylow-cyclic subgroup of~$G$.
We also note that $G/O(G)$ is isomorphic to the 2-Sylow subgroups of $G$, so these~$2$-Sylow subgroups of $G$ are  generalized quaternion groups.

Conversely, suppose (1) and (2) hold. Since $M$ is a Sylow-cyclic group, the quotient $M/O(M)$ is a cyclic
$2$-group. Since $G/M$ has order 2, the subgroup $O(G)$ must be a subgroup of $M$, so $O(G) \subseteq O(M)$
by the maximality of $O(M)$. Since~$M$ is normal in $G$ and since $O(M)$ is characteristic in $M$ it follows
that $O(M)$ is normal in $G$. Thus $O(G) = O(M)$. Since $M/O(M) = M/O(G)$ is a $2$-group and since $G/M$ is a $2$-group, it follows that $G/O(G)$ is a $2$-group. Any Sylow $2$-group of $G$ is thus isomorphic to $G/O(G)$, and
so $G/O(G)$ is a generalized quaternion group. Also note that since $G/M$ has order 2, all odd order Sylow-subgroups of $G$ are contained in $M$, and so are cyclic. Thus $G$ is a Sylow-cycloidal group of quaternion type.

If $G$ is freely representable, then so is the subgroup $M$. Conversely, if $M$ is freely representable
then so is $O(G)$ since, as noted above, $O(G)$ is a subgroup of~$M$. Thus $G$ is freely representable
by Theorem~\ref{quaternion_type_thm}.
\end{proof}

\begin{proposition}
Let $G$ be a finite group and let $S$ be a $2$-Sylow subgroup of~$G$.
Then $G$ is a Sylow-cyclic group  if and only if $S$ is cyclic and $G$ has a normal
Sylow-cyclic
subgroup $M$ of index $|S|$.
Similarly, $G$ is a Sylow-cycloidal group of quaternion type if and only if $S$ is a generalized quaternion group and 
$G$ has a normal
Sylow-cyclic
subgroup $M$ of index $|S|$.
\end{proposition}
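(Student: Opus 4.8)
The plan is to observe that both equivalences collapse once the subgroup $M$ is identified with $O(G)$. The crucial first step is a counting remark: since $S$ is a $2$-Sylow subgroup of $G$, its order $|S|$ is exactly the full $2$-part of $|G|$. Hence any normal subgroup $M$ of index $|S|$ automatically has order equal to the odd part of $|G|$, so $M$ has odd order. Being normal of odd order, $M$ is contained in $O(G)$; and since $|O(G)|$ divides the odd part of $|G|$, which equals $|M|$, the inclusion $M \subseteq O(G)$ forces $M = O(G)$. This observation will be decisive for the backward direction of the quaternion statement.

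For the two forward directions I would simply take $M = O(G)$. In both cases $O(G)$ is normal of odd order, and it is Sylow-cyclic: it is a subgroup of the Sylow-cycloidal group $G$ (Proposition~\ref{cycloidal_subgroup_prop}), and having odd order it has no quaternionic $2$-Sylow subgroup, so all its Sylow subgroups are cyclic. When $G$ is Sylow-cyclic, $G/O(G)$ is a cyclic $2$-group (Corollary~\ref{sylow_cyclic_type_cor}); when $G$ is of quaternion type, $G/O(G)$ is a generalized quaternion group by definition. In either case $G/O(G)$ is a $2$-group whose unique $2$-Sylow subgroup is itself, and this is isomorphic to $S$ by Proposition~\ref{same_sylow_prop}. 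Thus $[G:O(G)] = |G/O(G)| = |S|$, and $S$ is respectively cyclic or generalized quaternion, giving the forward implications.

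For the backward directions I would first check that $G$ has the right Sylow subgroups. For each odd prime $p$, the index $[G:M] = |S|$ is a power of $2$, so a $p$-Sylow subgroup of $M$ is already a $p$-Sylow subgroup of $G$; since $M$ is Sylow-cyclic this subgroup is cyclic. For $p = 2$ the $2$-Sylow subgroup is $S$. Thus in the first statement ($S$ cyclic) every Sylow subgroup of $G$ is cyclic, so $G$ is Sylow-cyclic immediately; in the second statement ($S$ generalized quaternion) every Sylow subgroup is cycloidal, so $G$ is Sylow-cycloidal. It remains, in the quaternion case, to verify that $G$ is actually of quaternion type, i.e.\ that $G/O(G)$ is a generalized quaternion group. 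Here I invoke the crucial observation: $M = O(G)$, so $G/O(G) = G/M$ has order $[G:M] = |S|$, a power of $2$; hence $G/O(G)$ is a $2$-group, its $2$-Sylow subgroup is itself, and by Proposition~\ref{same_sylow_prop} this is isomorphic to $S$, which is generalized quaternion. Therefore $G/O(G)$ is a generalized quaternion group and $G$ is of quaternion type.

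I do not expect a serious obstacle. The only point requiring care is the order count identifying $M$ with $O(G)$, together with the bookkeeping that $G/O(G)$, being a $2$-group, has $2$-Sylow subgroup equal to itself, so that Proposition~\ref{same_sylow_prop} pins down its isomorphism type as $S$. Everything else is a direct application of the already-established structure theory (Proposition~\ref{same_sylow_prop}, Corollary~\ref{sylow_cyclic_type_cor}, and the closure of Sylow-cyclic and Sylow-cycloidal groups under passage to subgroups).
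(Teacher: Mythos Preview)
Your proposal is correct and follows essentially the same approach as the paper: both arguments hinge on identifying $M$ with $O(G)$ via the order count, then using the isomorphism $G/O(G)\cong S$ (Proposition~\ref{same_sylow_prop}) together with the observation that odd-order Sylow subgroups of $G$ lie in $M$. Your write-up is more explicit than the paper's terse version (particularly in the forward direction, where you spell out why $O(G)$ is Sylow-cyclic and has the right index), but the underlying structure is identical.
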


\begin{proof}
If $G$ is Sylow-cyclic group or a Sylow-cycloidal group of quaternion type, then let $M = O(G)$ which is normal.
Conversely, suppose there is a normal Sylow-cyclic subgroup~$M$ of index $|S|$. This group $M$ has odd order, and is maximal
with this property, so~$M = O(G)$. It follows that $G/O(G) \cong S$. Note also that every odd ordered Sylow subgroup of $G$
is actually in $M$ since $G/M$ has even order. Thus every odd ordered Sylow subgroup of $G$ is cyclic.
\end{proof}

%%%

\section{Case 3:  Binary Tetrahedral Type}

Let $G$ be a Sylow-cycloidal group of binary tetrahedral type and let $O(G)$ be its maximal normal subgroup of odd order.
Then $G/O(G)$ is isomorphic to the binary tetrahedral group $2T$, so
 the $2$-Sylow subgroups of $G$ are 
all isomorphic to the quaternion group of order 8 (Proposition~\ref{same_sylow_prop}).
What is interesting about this case is that the $2$-Sylow subgroup of $G$ is unique, and so is characteristic:

\begin{proposition} \label{binary_tetrahedral_normal_prop}
Let $G$ be a Sylow-cycloidal group of binary tetrahedral type.
Then $G$ has a unique $2$-Sylow subgroup of order $8$, and this $2$-Sylow  subgroup is isomorphic to the quaternion group with 8 elements. Furthermore, the $2$-Sylow subgroup of $G$ centralizes~$O(G)$.
\end{proposition}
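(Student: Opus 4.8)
The plan is to extract everything from the structure $2T \cong Q_8 \rtimes C_3$ together with the automorphism bound of Proposition~\ref{aut_quotient_prop}. First I would dispose of the easy parts. By Proposition~\ref{same_sylow_prop} the $2$-Sylow subgroups of $G$ are isomorphic to those of $G/O(G) \cong 2T$, namely to $Q_8$; this already gives the order $8$ and the isomorphism type. Fix a $2$-Sylow subgroup $Q$ of $G$. Since $Q_8$ is the unique, hence normal, $2$-Sylow subgroup of $2T$, its preimage $N$ under $G \to G/O(G)$ is a normal subgroup of $G$ with $N = O(G)Q = O(G)\rtimes Q$ and $|N| = 8\,|O(G)|$. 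The whole proposition then reduces to the claim that $Q$ centralizes $O(G)$: granting this, $N = O(G)\times Q$, so $Q$ is the unique (therefore characteristic) $2$-Sylow subgroup of $N$; as $N \trianglelefteq G$ this forces $Q \trianglelefteq G$, and a normal Sylow subgroup is the only one.

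To analyze the conjugation action $\phi\colon Q \to \Aut(O(G))$ I would first note that $O(G)$ is a Sylow-cyclic group of odd order (it is a subgroup of the Sylow-cycloidal group $G$ by Proposition~\ref{cycloidal_subgroup_prop}, and has odd order), so Proposition~\ref{aut_quotient_prop} applies: $A \defeq \Aut(O(G))/O(\Aut(O(G)))$ is an Abelian $2$-group. Composing gives $\psi\colon Q \to A$. Since $A$ is Abelian, $\psi$ kills the commutator subgroup $[Q,Q] = \langle z\rangle$, where $z$ is the unique involution of $Q$; as $z$ has order $2$ and $O(\Aut(O(G)))$ has odd order, this forces $\phi(z) = \mathrm{id}$, exactly as in Proposition~\ref{quaternion_2_prop}. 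Writing $Q = \{\pm 1, \pm i, \pm j, \pm k\}$, it follows that each of $\phi(i),\phi(j),\phi(k)$ squares to $\phi(z)=\mathrm{id}$, and that their images $\bar\phi(i),\bar\phi(j),\bar\phi(k) \in A$ each have order dividing $2$ and satisfy $\bar\phi(k)=\bar\phi(i)\bar\phi(j)$.

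The heart of the argument, and the step I expect to be the main obstacle, is upgrading ``$z$ acts trivially'' to ``all of $Q$ acts trivially''; I would do this with the order-$3$ symmetry of $2T$. Choose any $t \in G$ mapping to a generator of a $C_3 \le 2T$ that cyclically permutes $\overline{\langle i\rangle},\overline{\langle j\rangle},\overline{\langle k\rangle}$, so that, after labelling, $tit^{-1} = j\,n_0$ with $n_0 \in O(G)$, and likewise for the other two. A direct computation gives $\phi(tit^{-1}) = c_t\,\phi(i)\,c_t^{-1}$, where $c_t \in \Aut(O(G))$ is conjugation by $t$, while $\phi(jn_0) = \phi(j)\circ \phi(n_0)$ with $\phi(n_0)$ inner on $O(G)$, hence of odd order and trivial in $A$. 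Projecting both expressions into the Abelian group $A$ collapses the conjugation by $\bar c_t$ and the inner factor, yielding $\bar\phi(i) = \bar\phi(j)$; cycling gives $\bar\phi(i)=\bar\phi(j)=\bar\phi(k)=:\alpha$. Feeding this into $\bar\phi(k)=\bar\phi(i)\bar\phi(j)$ gives $\alpha = \alpha^2$, so $\alpha = 1$ and $\phi(i),\phi(j),\phi(k) \in O(\Aut(O(G)))$ have odd order; since each also squares to the identity, each is trivial, so $\phi$ is trivial. This proves $Q$ centralizes $O(G)$, which is the last assertion of the proposition and, via the reduction above, also yields uniqueness of the $2$-Sylow subgroup.
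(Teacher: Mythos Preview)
Your proof is correct. Both you and the paper reduce everything to showing that the composite $Q \to \Aut(O(G)) \to A = \Aut(O(G))/O(\Aut(O(G)))$ is trivial, using Proposition~\ref{aut_quotient_prop} to know that $A$ is an Abelian $2$-group. The difference is in how this triviality is established. The paper observes that the whole conjugation map $G \to A$ factors through $G/O(G) \cong 2T$ (since $O(G)$ maps into the odd-order part), and then, because $A$ is Abelian, through the abelianization $(2T)^{\mathrm{ab}} \cong C_3$; a map from a group of order~$3$ to a $2$-group is trivial, so the image of $G$ (and in particular of $Q$) in $\Aut(O(G))$ has odd order, forcing $Q$ to act trivially. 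This is a one-line structural argument once you know $(2T)' = Q_8$.

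Your route re-derives exactly this consequence by hand: the $C_3$-symmetry you invoke is what makes $(2T)^{\mathrm{ab}} = C_3$, and the relation $\alpha = \alpha^2$ coming from $k = ij$ is the computation inside that abelianization. Your argument is perfectly sound (note that $\overline{\phi(n_0)} = 1$ because $\mathrm{Inn}(O(G))$ is a \emph{normal} odd-order subgroup of $\Aut(O(G))$, hence sits inside $O(\Aut(O(G)))$; you might make that step explicit), but it is longer and somewhat obscures the single conceptual point: any homomorphism from $2T$ to an Abelian $2$-group is trivial. The paper's phrasing makes that point transparent and avoids choosing the element $t$ or tracking the coset representatives $n_0$.
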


\begin{proof}
We start with the fact that $2T$ has a cyclic quotient $C_3$ of order 3.
Let $H$ be the kernel of the composition $G \to G/O(G) \cong 2 T \to C_3$.
Note that every $2$-Sylow subgroup of $G$ must be in the kernel $H$. So we just need to show that $H$
has a unique $2$-Sylow subgroup.
Let $Q$ be a $2$-Sylow subgroup of $G$, and observe that $Q$ is a complement for $O(G)$ in $H$.
So 
$$H = O(G) Q = O(G) \rtimes Q.$$
To prove the uniqueness result for $H$, and hence for $G$, it is enough to show that~$Q$ acts trivially on $O(G)$.

By  Proposition~\ref{aut_quotient_prop}, 
$
A = \Aut(O(G))  / O(\Aut(O(G)))
$
is an Abelian~$2$-group. Let~$G$ act on $O(G)$ by conjugation.
Then we have the composition 
$$
G \to \Aut(O(G))  \to \Aut(O(G))  / O(\Aut(O(G))) = A.
$$
Since the codomain is a $2$-group, we have $O(G)$ is in the Kernel.
So we get a map
$$
2 T \cong G/O(G) \to A.
$$
The kernel must contain the commutator subgroup of $2T$.  The commutator subgroup of $2T$
contains the commutator subgroup of $Q_8$, so contains the unique subgroup $C_2$ of $2T$
of order $2$. But $2T/C_2$ is isomorphic to $T = A_4$, whose commutator subgroup is the normal subgroup of order $4$.
Thus the commutator subgroup $(2T)'$ of $2T$ has index $3$ in $2T$.
Thus we get a homomorphism
$$
(2T)/(2T)'  \to A.
$$
Since $(2T)/(2T)'$ has order 3, and $A$ is a $2$-group, we get that the image of $G$ in $A$ is trivial.
In other words, the image of $G$ in $\Aut(O(G))$ has odd order.
In particular, the image of $Q$ in $\Aut(O(G))$ must be trivial. So $Q$ acts trivially on $O(G)$.
\end{proof}

\begin{corollary}\label{binary_tetrahedral_2_cor}
Let $G$ be a Sylow-cycloidal group of binary tetrahedral type.
Then~$G$ has a unique element of order $2$.
\end{corollary}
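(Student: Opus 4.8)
The plan is to mimic the argument already used for quaternion type groups in Proposition~\ref{quaternion_2_prop}, now leaning on the structural facts just established in Proposition~\ref{binary_tetrahedral_normal_prop}. First I would fix a $2$-Sylow subgroup $Q$ of $G$; by that proposition $Q$ is the unique $2$-Sylow subgroup, is isomorphic to the quaternion group $Q_8$, and centralizes $O(G)$. Let $C_2 = \langle z \rangle$ be the unique subgroup of order $2$ in $Q$, where $z$ is the element corresponding to $-1$ in $Q_8$.

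Next I would pin down where elements of order $2$ must live. Since $O(G)$ has odd order, no nonidentity element of $O(G)$ has even order, so any element $g \in G$ of order $2$ has nontrivial image in $G/O(G) \cong 2T$, and that image therefore has order $2$. As $2T$ is a subgroup of $\bH^\times$ it has a unique element of order $2$, namely the generator of its center. Moreover the restriction of $G \to G/O(G)$ to $Q$ is injective (its kernel $Q \cap O(G)$ is trivial, the orders being coprime) and carries $Q \cong Q_8$ isomorphically onto the $2$-Sylow subgroup of $2T$, so $z$ maps to the unique involution of $2T$. Consequently every order-$2$ element of $G$ lies in the preimage $M := O(G)\, C_2$ of that unique involution, a subgroup of order $2\,|O(G)|$.

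Finally I would analyze $M$ directly. Because $C_2 \subseteq Q$ centralizes $O(G)$ and $O(G) \cap C_2 = \{1\}$, we have an internal direct product $M = O(G) \times C_2$. In such a product an element $(x,c)$ has order $2$ only when $\mathrm{lcm}(|x|,|c|) = 2$; since $|x|$ is odd this forces $x = 1$ and $c = z$. Thus $z$ is the only element of order $2$ in $M$, and hence in all of $G$, which is exactly the claim.

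There is no real obstacle here: the substantive work — the uniqueness and normality of the $Q_8$ Sylow subgroup and, crucially, its centralizing of $O(G)$ — is entirely packaged in Proposition~\ref{binary_tetrahedral_normal_prop}. The remaining content is just the standard observation that a direct product of an odd-order group with $C_2$ has a single involution. The one point deserving a sentence of care is verifying that $z$ meets the center of $2T$, which I would justify via the isomorphism of $Q$ onto the $2$-Sylow subgroup of $2T$ as indicated above.
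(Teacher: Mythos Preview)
Your argument is correct, but it takes a longer detour than needed. The paper leaves the corollary without proof because it is immediate from the \emph{uniqueness} of the $2$-Sylow subgroup established in Proposition~\ref{binary_tetrahedral_normal_prop}: any element of $G$ of order $2$ lies in some $2$-Sylow subgroup (Theorem~\ref{sylow2_thm}), hence in the unique $2$-Sylow $Q \cong Q_8$, which has exactly one involution. Equivalently, the unique subgroup of order $2$ in the normal subgroup $Q$ is characteristic in $Q$, hence normal in $G$, and Lemma~\ref{order2_lemma} applies---this is exactly how the paper handles the parallel statement in the binary octahedral case (Proposition~\ref{binary_octahedral_2_prop}).

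Your route instead mimics Proposition~\ref{quaternion_2_prop}, passing through $G/O(G)$ and the direct product $M = O(G) \times C_2$. That argument was genuinely needed in the quaternion-type case because there the $2$-Sylow subgroups need not be normal, so one had to work to place the involution in the center. Here you are invoking the harder conclusion of Proposition~\ref{binary_tetrahedral_normal_prop} (that $Q$ centralizes $O(G)$) when the easier conclusion (that $Q$ is unique) already suffices. Nothing is wrong, but the extra machinery is not buying you anything.
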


Let $M/O(G)$ be the subgroup of $G/O(G)$ of order $8$. Here $M$ is a subgroup of~$G$ containing $O(G)$,
and since $M/O(G)$ is normal in $G/O(G)$ we have that $M$ is a normal subgroup of~$G$.
Let $Q_8$ be the $2$-Sylow subgroup of $G$. Observe that $Q_8$ is in $M$ since $G/M$ has order 3.
In fact $Q_8$ is a complement for $O(G)$ in $M$, so 
$$
M = O(G) Q_8 = O(G) \times Q_8
$$
since $Q_8$ acts trivially on $O(G)$.
We have that $Q_8$ is freely representable, so $M$ is freely representable if and only if $O(G)$ is freely
representable (Corollary~\ref{rel_prime_cor}).
The further analysis of freely representable depends on whether or not $9$ divides the order of $G$.

\begin{theorem}\label{quaternionic_type1_thm}
Let $G$ be a Sylow-cycloidal group of binary tetrahedral type.
If $9$ divides
the order of $G$ then the following are equivalent.
\begin{enumerate}
\item
$G$ is freely representable.
\item
$O(G)$ is freely representable.
\item
For each prime $p$ dividing the order of $G$, there is exactly one subgroup of $G$ of order $p$.
\end{enumerate}
\end{theorem}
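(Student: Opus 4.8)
The plan is to prove the cycle of implications $(1)\Rightarrow(2)\Rightarrow(3)\Rightarrow(1)$, the heart of the matter being a single structural observation: the hypothesis $9\mid |G|$ forces every element of order $3$ in $G$ to lie in $O(G)$. The implication $(1)\Rightarrow(2)$ is immediate, since $O(G)$ is a subgroup of $G$ and every subgroup of a freely representable group is freely representable (Proposition~\ref{freely_representable_subgroup_prop}).

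The key lemma I would establish first is that every element of order $3$ in $G$ lies in $O(G)$. Since $G/O(G)\cong 2T$ has order $24=2^3\cdot 3$ while $O(G)$ has odd order, the exact power of $3$ dividing $24$ is $3^1$, so $9\mid |G|$ forces $3\mid |O(G)|$; consequently the Sylow $3$-subgroup $P$ of $G$ is cyclic (as $G$ is Sylow-cycloidal and $3$ is odd) of order $3^a$ with $a\ge 2$. The image of $P$ in $2T$ is a $3$-group, hence of order at most $3$, so $P\cap O(G)$ has order at least $3^{a-1}\ge 3$; being a subgroup of the cyclic group $P$, it therefore contains the unique subgroup of $P$ of order $3$. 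Because $O(G)$ is normal, the same argument applies to every conjugate of $P$, and since any order-$3$ element generates the unique order-$3$ subgroup of some Sylow $3$-subgroup, all order-$3$ elements lie in $O(G)$. This is the step where the hypothesis $9\mid |G|$ is essential and is the main obstacle: without it the order-$3$ elements would map onto the four order-$3$ subgroups of $2T$ and escape $O(G)$.

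For $(2)\Rightarrow(3)$, fix a prime $p\mid |G|$. When $p=2$, uniqueness of the subgroup of order $2$ is Corollary~\ref{binary_tetrahedral_2_cor}. When $p$ is odd, every element of order $p$ lies in $O(G)$: for $p=3$ this is the key lemma, and for $p\neq 3$ it is automatic since $p\nmid 24$. Now $O(G)$, being an odd-order subgroup of a Sylow-cycloidal group, is Sylow-cyclic (Proposition~\ref{cycloidal_subgroup_prop}), and it is freely representable by hypothesis, so Theorem~\ref{sc_fr_thm} yields a unique subgroup of $O(G)$ of order $p$. Since every order-$p$ subgroup of $G$ already lies in $O(G)$, that subgroup is unique in $G$ as well, giving $(3)$.

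Finally $(3)\Rightarrow(1)$ is exactly Proposition~\ref{unique_freely_rep_prop}. As an alternative way to close the loop, one can instead use the subgroup $M=O(G)\times Q_8$ recorded in the discussion preceding the theorem: the key lemma shows that $M$ contains every element of $G$ of prime order (order $2$ sits in $Q_8$, while every element of odd prime order lies in $O(G)$), and $M$ is freely representable whenever $O(G)$ is by Corollary~\ref{rel_prime_cor}, so $G$ is freely representable by Corollary~\ref{sufficient_cor}. Either route completes the equivalence.
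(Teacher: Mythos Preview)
Your proof is correct and follows essentially the same route as the paper: the key observation that $9\mid|G|$ forces the cyclic $3$-Sylow to have order $\ge 9$, hence its unique order-$3$ subgroup lands in the kernel of $P\to G/O(G)$, matches the paper's argument exactly, and the implications $(1)\Rightarrow(2)\Rightarrow(3)\Rightarrow(1)$ are handled via the same cited results (Proposition~\ref{freely_representable_subgroup_prop}, Corollary~\ref{binary_tetrahedral_2_cor}, Theorem~\ref{sc_fr_thm}, Proposition~\ref{unique_freely_rep_prop}). Your alternative closing via $M=O(G)\times Q_8$ and Corollary~\ref{sufficient_cor} is a nice bonus not spelled out in the paper's proof of this theorem.
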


\begin{proof}
The property of being freely representable is inherited by subgroups so~$(1) \implies (2)$.

Now suppose that $(2)$ holds. Note that   $(3)$ holds for $p=2$
(independent of whether $(2)$ is true or not) by the above Corollary.
Suppose that~$C$ is a subgroup of~$G$ of order $p$ where $p$ is an odd prime.
If $p \ne 3$ then $C$ is in $O(G)$ since $G/O(G)$ has order prime to $p$.
If $p = 3$, then $C$ is contained in a $3$-Sylow subgroup $P$ of order at least $9$.
The map $P$ to $G/O(G)$ has image of size $3$, so it has a nontrivial kernel.
Since $P$ is cyclic, all nontrivial subgroups of $P$ contain $C$. So $C$ is in the kernel of $P\to G/O(G)$.
In other words, $C$ is in $O(G)$.
Since all subgroups of odd prime order of $G$ are in $O(G)$, we have uniqueness for each prime order by Theorem~\ref{sc_fr_thm}. So $(3)$ holds.

Finally the implication $(3) \implies (1)$ follows from Proposition~\ref{unique_freely_rep_prop}.
\end{proof}

\begin{theorem} \label{quaternionic_type2_thm}
Let $G$ be a Sylow-cycloidal group of binary tetrahedral type.
If  $9$ does not divide
the order of $G$ then $G$ has a subgroup $H$ isomorphic to $2T$ and 
$$
G = O(G) H \cong O(G) \rtimes 2T.
$$
Furthermore, the following are equivalent.
\begin{enumerate}
\item
$G$ is freely representable.
\item
$O(G)$ is freely representable and $G \cong O(G) \times 2T$.
\item
For each prime $p \ne 3$ dividing the order of $G$ there is exactly one subgroup of~$G$ of order $p$,
and there are $4$ subgroups of order $3$ in $G$.
\end{enumerate}
\end{theorem}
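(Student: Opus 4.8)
The plan is to first pin down the coarse structure of $G$, then reduce all three equivalences to a single count of the subgroups of order $3$, with the genuine content being the passage between free representability and the splitting $G\cong O(G)\times 2T$.

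For the structure statement, I would argue as follows. Since $G/O(G)\cong 2T$ has order $24=2^3\cdot 3$ and $O(G)$ has odd order, the $2$-part of $|G|$ is $8$; and since $9\nmid|G|$ while $3$ already divides $|2T|$, we get $3\nmid|O(G)|$, so the $3$-Sylow subgroup of $G$ has order $3$ and $\gcd(|O(G)|,24)=1$. I would let $Q$ be the unique $2$-Sylow subgroup (Proposition~\ref{binary_tetrahedral_normal_prop}) and $C_3$ a $3$-Sylow subgroup, and set $H=QC_3$. As $Q$ is normal, $H=Q\rtimes C_3$ has order $24$; since $H\cap O(G)=1$ by coprimality, the projection $\pi\colon G\to G/O(G)\cong 2T$ restricts to an isomorphism $H\to 2T$. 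Hence $H\cong 2T$ and $G=O(G)H=O(G)\rtimes H$, which is the first assertion.

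For the equivalences I would fix this $H$, its subgroup $C_3=\langle c\rangle$, and write $Z=C_{O(G)}(C_3)$. The crux is the identity
$$
\#\{\text{subgroups of }G\text{ of order }3\}=4\,[O(G):Z].
$$
I would prove it by pushing forward along $\pi$: each order-$3$ subgroup of $G$ maps to one of the four order-$3$ subgroups of $2T$, and the fibre over a fixed $\overline{P}$ consists of the $3$-Sylow subgroups of $\pi^{-1}(\overline{P})=O(G)\rtimes C_3$. Counting the latter reduces to the observation that an element $x\in O(G)$ normalizes $C_3$ iff $xcx^{-1}=c$: indeed $x\in\ker\pi$ forces $xcx^{-1}\in cO(G)$, and $\langle c\rangle\cap cO(G)=\{c\}$, so $x\in Z$; thus each fibre contains exactly $[O(G):Z]$ subgroups, and the four fibres are conjugate. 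Consequently $G$ has exactly $4$ subgroups of order $3$ if and only if $Z=O(G)$, i.e. if and only if $C_3$ centralizes $O(G)$; since $Q$ already centralizes $O(G)$ (Proposition~\ref{binary_tetrahedral_normal_prop}), this is equivalent to $H$ centralizing $O(G)$, i.e. to $G=O(G)\times H\cong O(G)\times 2T$.

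With this in hand the cycle $(2)\Rightarrow(1)\Rightarrow(3)\Rightarrow(2)$ is short. For $(2)\Rightarrow(1)$: $O(G)$ and $2T$ have coprime orders and $2T$ is freely representable (a subgroup of $\bH^\times$), so Corollary~\ref{rel_prime_cor} gives that $O(G)\times 2T$ is freely representable. For $(1)\Rightarrow(3)$: the case $p=2$ is Corollary~\ref{binary_tetrahedral_2_cor}; for odd $p\ne 3$, every order-$p$ subgroup lies in the freely representable group $O(G)$, which has a unique one by Theorem~\ref{sc_fr_thm}; and for $p=3$ the subgroup $K=O(G)C_3$ is freely representable (being a subgroup), Sylow-cyclic of odd order, hence has a unique subgroup of order $3$ by Theorem~\ref{sc_fr_thm}, so the fibre count forces $Z=O(G)$ and thus exactly $4$ subgroups of order $3$ in $G$. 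For $(3)\Rightarrow(2)$: uniqueness for each odd $p\ne 3$ makes $O(G)$ freely representable via Theorem~\ref{sc_fr_thm}, while ``exactly $4$ subgroups of order $3$'' forces $Z=O(G)$ by the count, hence $G\cong O(G)\times 2T$.

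The main obstacle is exactly this passage from free representability to the direct-product splitting, i.e. ruling out a nontrivial action of $C_3$ on $O(G)$. The counting identity does the heavy lifting, but its cleanest deployment is through the odd-order Sylow-cyclic subgroup $K=O(G)\rtimes C_3$, where Theorem~\ref{sc_fr_thm} converts ``freely representable'' into ``unique subgroup of order $3$'', which is precisely the statement $C_{O(G)}(C_3)=O(G)$. Everything else is bookkeeping with the coprimality $\gcd(|O(G)|,24)=1$.
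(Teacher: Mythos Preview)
Your proof is correct and follows essentially the same approach as the paper: both construct $H=QC_3$, both reduce the splitting question to whether $C_3$ centralizes $O(G)$, and both extract this from the Sylow-cyclic subgroup $L=O(G)C_3$ via Theorem~\ref{sc_fr_thm}. Your explicit counting identity $\#\{\text{subgroups of order }3\}=4[O(G):C_{O(G)}(C_3)]$ is a cleaner packaging of what the paper does ad hoc in its $(3)\Rightarrow(2)$ step, and your cycle $(2)\Rightarrow(1)\Rightarrow(3)\Rightarrow(2)$ is slightly more economical than the paper's four separate implications, but the substance is identical.
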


\begin{proof}
Let $C_3$ be a $3$-Sylow subgroup of $G$, and let $Q_8$ be the unique $2$-Sylow subgroup of $G$.
Then $Q_8$ is normal in $G$, so $H = Q_8 C_3$ is a subgroup of $G$ of order~$24$. Note that $O(G)$ has order prime to $24$,
so $H$ maps isomorphically onto the quotient~$G/O(G)$. Hence $H$ is isomorphic to $2T$. Also
observe that $H$ is a complement to $O(G)$ so
$$
G = O(G) H \cong O(G) \rtimes 2T.
$$

Next observe that $O(G) C_3 = O(G) \rtimes C_3$ is a Sylow-cyclic subgroup of $G$.
So if~$G$ is freely representable, then same is true of $L = O(G) C_3$. 
If $L = O(G) C_3$ is freely representable then, by Theorem~\ref{sc_fr_thm}, $C_3$ is the only
subgroup of $L$ of order~3. Thus $C_3$ is normal in $L$, and so $L = O(G) \times C_3$.
In particular $C_3$ acts trivially on~$O(G)$. Since $H = Q_8 C_3$ and since $Q_8$ acts
trivially on $O(G)$ then $H$ acts trivially on $O(G)$. So $(1) \implies (2)$ holds.

Note that $(2) \implies (1)$ by  Corollary~\ref{rel_prime_cor}.

Next observe that if $G \cong O(G) \times 2T$ then every subgroup of prime order $p \ne 2, 3$ of $G$
must be in $O(G)$ and the subgroups of prime order $p=2$ or $p=3$ correspond to the
subgroup of $2 T$ of prime order. If $O(G)$ is freely representable
then there is one subgroup of order $p$ for each $p\ne 2, 3$ dividing the order of $G$ (and hence
the order of $O(G)$) (Theorem~\ref{sc_fr_thm}). Observe that $2 T$ has one subgroup of order 2
and four subgroups of order 3. So $(2) \implies (3)$.

Finally suppose $(3)$ holds. We can conclude that $O(G)$
is freely representable by Proposition~\ref{unique_freely_rep_prop}.
Since $H$ itself has four subgroups of order 3, we have at least four subgroups of order $3$.
Note the four subgroups of $H$ of order $3$ map to distinct subgroups of $G/O(G)$.
As before, let $L = O(G)C_3$ where $C_3$ is a cyclic subgroup of $H$. Any subgroup of order 3
in $L$ maps to the same subgroup of $G/O(G)$ as $C_3$. Since there are only 4 subgroups of order 3
in $G$ we conclude that $C_3$ is the unique subgroup of order $3$ in $L$. So $C_3$ is normal in $L$,
and $L = O(G) \times C_3$. Thus $C_3$ acts trivially on $O(G)$. Since $H = Q_8 C_3$
and since $Q_8$ acts trivially on $O(G)$, we conclude that $G = O(G) \times H$.
So $(3)$ implies $(2)$.
\end{proof}

For convenience we have divided into cases depending on whether or not $9$ divides the order of $G$.
Now we will see another more unified approach. As before let $Q_8$ be the unique $2$-Sylow
subgroup. Let $S_3$ be a $3$-Sylow subgroup of $G$. We note that $Q_8 S_3$ maps onto $G/O(G)$,
which implies that $S_3$ is not in the centralizer of $Q_8$. So $S_3$ must act on $Q_8$ by sending
a generator to an automorphism of $Q_8$ of order 3.

Recall that $O(G)$ acts trivially on $Q_8$. Consider $M = O(G) S_3 = O(G) \rtimes S_3$.
Note that the image of $M$ in $\Aut (Q_8)$ has order 3. Also note that 
$$
G = Q_8 M = Q_8 \rtimes M.
$$

\begin{proposition} \label{binary_tetrahedral_m_prop}
Let $G$ be a Sylow-cycloidal group of binary tetrahedral type, and let $Q$ be its
unique $2$-Sylow subgroup of of $G$, which is 
 isomorphic to the quaternion group with 8 elements.
Then there is a complement $M$ to $Q$ in $G$ which is a Sylow-cycloidal group of odd order:
$$
G = Q M = Q \rtimes M.
$$
Here $M \to \Aut Q$ has image of order $3$.
Moreover, $G$ is freely representable if and only if $M$ is freely representable.
\end{proposition}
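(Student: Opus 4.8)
The structural claims are essentially recorded in the discussion preceding the statement, so the plan is to cite them: there $G = Q \rtimes M$ is established with $M = O(G) S_3$ for a $3$-Sylow subgroup $S_3$, and the image of $M$ in $\Aut Q$ is shown to have order $3$ (using that $O(G)$ centralizes $Q$ by Proposition~\ref{binary_tetrahedral_normal_prop}, that the action factors through $M/O(G) \cong C_3$, and that $\Aut Q \cong S_4$ has no element of order $9$). That $M$ is genuinely a complement follows from a quick order count: $Q \cap M = \{1\}$ since $Q$ is a $2$-group and $M$ has odd order, while $|M| = 3|O(G)|$ because $O(G) \cap S_3$ is a $3$-Sylow subgroup of the normal subgroup $O(G)$; hence $|Q||M| = 24|O(G)| = |G|$. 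Being a subgroup of the Sylow-cycloidal group $G$, the group $M$ is itself Sylow-cycloidal (Proposition~\ref{cycloidal_subgroup_prop}) and has odd order. The forward implication of the equivalence is then immediate: $M \le G$, and subgroups of freely representable groups are freely representable (Proposition~\ref{freely_representable_subgroup_prop}).

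For the converse I would assume $M$ is freely representable. Since $O(G) \le M$, the subgroup $O(G)$ is freely representable as well. I would then split according to whether $9$ divides $|G|$, matching the dichotomy of Theorems~\ref{quaternionic_type1_thm} and~\ref{quaternionic_type2_thm}. When $9 \mid |G|$, Theorem~\ref{quaternionic_type1_thm} already asserts that $G$ is freely representable as soon as $O(G)$ is, so this case closes at once.

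The substantive case is $9 \nmid |G|$. Here $S_3$ has order $3$, coprime to $|O(G)|$, so $M = O(G) \rtimes C_3$ genuinely. The plan is to use that $M$ freely representable forces, by Theorem~\ref{sc_fr_thm}, a unique subgroup of order $3$ in $M$; since all $3$-Sylow subgroups are conjugate, this makes $C_3$ normal in $M$. Two normal subgroups with trivial intersection commute elementwise, so $M = O(G) \times C_3$, i.e. $C_3$ centralizes $O(G)$. Combined with $O(G)$ centralizing $Q$ (Proposition~\ref{binary_tetrahedral_normal_prop}), this shows $O(G)$ centralizes $Q C_3$; hence $Q C_3$ is normalized by $O(G)$ and by itself, so it is normal in $G$, and an order count ($\gcd(|O(G)|,24)=1$ since $O(G)$ is odd and $3 \nmid |O(G)|$) gives $G = O(G) \times (Q \rtimes C_3) = O(G) \times 2T$. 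As $O(G)$ and $2T$ are freely representable of coprime order, $G$ is freely representable by Corollary~\ref{rel_prime_cor} (equivalently, this verifies condition~(2) of Theorem~\ref{quaternionic_type2_thm}).

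The main obstacle is precisely this last case: the behaviour at the prime $3$ differs genuinely from the other primes, since a freely representable binary-tetrahedral-type group with $9 \nmid |G|$ has four subgroups of order $3$, so one cannot invoke the ``unique subgroup of each prime order'' criterion (Proposition~\ref{unique_freely_rep_prop}) for $G$ directly. The crux is therefore to extract the direct-product splitting $G \cong O(G) \times 2T$ purely from the freely-representable hypothesis on $M$, via the normality of $C_3$ in $M$; once that splitting is in hand the conclusion is routine.
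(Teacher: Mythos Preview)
Your proposal is correct and follows essentially the same route as the paper's proof: reduce to showing that $M$ freely representable implies $G$ freely representable, pass to $O(G)$ freely representable, dispatch the case $9\mid |G|$ via Theorem~\ref{quaternionic_type1_thm}, and in the case $9\nmid |G|$ use Theorem~\ref{sc_fr_thm} to force $C_3$ normal in $M$, whence $M=O(G)\times C_3$ and then $G\cong O(G)\times 2T$. The only cosmetic differences are that the paper phrases the last step as ``$H=QC_3$ acts trivially on $O(G)$'' and invokes Theorem~\ref{quaternionic_type2_thm} rather than Corollary~\ref{rel_prime_cor} directly, and that your remark about $\Aut Q\cong S_4$ having no element of order $9$ is superfluous once you have noted the action factors through $M/O(G)\cong C_3$.
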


\begin{proof}
The only thing left to show is that  if $M$ is freely representable, then $G$ is freely representable.
Since $M$ is freely representable, then the same is true of $O(G)$ since it is isomorphic to a subgroup of $M$.
If $9$ divides the order of $G$ then $G$ must be freely representable by Theorem~\ref{quaternionic_type1_thm}.
So from now on we assume that $9$ does not divide the order of $G$.

Since $M$ is freely representable, it has a unique subgroup $C_3$ of order $3$ (Theorem~\ref{sc_fr_thm}).
Since $O(G)$ and $C_3$ are normal in $M$ we have $M = O(G) \times C_3$ and $C_3$ acts
trivially on $O(G)$. Note that $Q$ acts trivially on $O(G)$, so $H = Q C_3$ acts trivially on $O(G)$.
This means that $G = O(G) \times H$. However, $H$ is isomorphic to $G/O(G)$.
Thus $G \cong O(G) \times 2T$. So $G$ is freely representable by Theorem~\ref{quaternionic_type2_thm}.
\end{proof}

We can strengthen the above:

\begin{proposition}
Let $G$ be a finite group and let $Q$ be a $2$-Sylow subgroup of~$G$.
Then $G$ is a Sylow-cycloidal group of binary tetrahedral type if and only 
if~$Q$ is a normal quaternionic subgroup of $G$  and there exists a non-normal Sylow-cyclic subgroup~$M$ of index $|S|$ in $G$.
In this case $G \cong Q \rtimes M$ where $Q$ is a quaternion group with 8 elements
and where the action map $M \to \Aut Q$ has image of size~3. Additionally, in this case
$G$ is freely representable if and only if $M$ is freely representable.
\end{proposition}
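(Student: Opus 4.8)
The plan is to prove the biconditional in two directions and then read off the structural and freely-representable conclusions from the earlier results on binary tetrahedral type; throughout I take the index $|S|$ in the statement to mean the order $|Q|$ of the $2$-Sylow subgroup, which will turn out to be $8$. For the forward implication, suppose $G$ is of binary tetrahedral type. By Proposition~\ref{binary_tetrahedral_normal_prop} the $2$-Sylow subgroup $Q$ is unique, hence normal, and is isomorphic to the quaternion group $Q_8$, so $Q$ is a normal quaternionic subgroup of order $8$. By Proposition~\ref{binary_tetrahedral_m_prop} there is a complement $M$ with $G = Q \rtimes M$, where $M$ is Sylow-cycloidal of odd order (hence Sylow-cyclic, since for odd order cycloidal means cyclic) and $M \to \Aut Q$ has image of order $3$; its index is $[G:M] = |Q|$. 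To see $M$ is not normal, note that its image in $G/O(G) \cong 2T$ is a $3$-Sylow subgroup of $2T$, of which there are four (none normal); were $M$ normal in $G$, this image would be normal in $2T$, a contradiction. So $M$ is a non-normal Sylow-cyclic subgroup of index $|Q|$.

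For the converse, assume $Q$ is a normal quaternionic $2$-Sylow subgroup and $M$ is a non-normal Sylow-cyclic subgroup of index $|Q|$. Since $Q$ is a $2$-Sylow, $|Q|$ is the full power of $2$ dividing $|G|$, so $|M| = |G|/|Q|$ is odd; thus $Q \cap M = \{1\}$ and $|Q|\,|M| = |G|$, and as $Q$ is normal we obtain $G = Q \rtimes M$ with $M \cong G/Q$. Because $M$ is a complement to the normal subgroup $Q$, one checks that $M$ is normal in $G$ exactly when $M$ centralizes $Q$, i.e.\ when the action map $M \to \Aut Q$ is trivial; since $M$ is non-normal, this action is nontrivial, so the image of $M$ is a nontrivial odd-order subgroup of $\Aut Q$. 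I now invoke the key fact (established in the proof of Lemma~\ref{lemma97}) that the only $2$-cycloidal group admitting an automorphism of odd order greater than $1$ is $Q_8$; this forces $Q \cong Q_8$, so $|Q| = 8$. As $\Aut(Q_8)$ has order $24$, its nontrivial odd-order subgroups have order $3$, so the image of $M \to \Aut Q$ has order $3$.

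To finish the converse I must show $G/O(G) \cong 2T$. Let $K$ be the kernel of $M \to \Aut Q$, a normal subgroup of $M$ of odd order with $|M/K| = 3$. Since $K$ centralizes $Q$ it is also centralized by $Q$, so both $Q$ and $M$ normalize $K$ and hence $K \triangleleft G$; being of odd order, $K \subseteq O(G)$. A short computation gives $QK = Q \times K$ and $M \cap QK = K$, whence $G/K \cong Q_8 \rtimes C_3$ with $C_3$ acting nontrivially, which is $2T$ by Proposition~\ref{2T_iso_prop}. Then $O(G)/K$ is a normal subgroup of $G/K \cong 2T$ of odd order, and since every nontrivial normal subgroup of $2T$ has even order (Corollary~\ref{corollary96}), we get $O(G) = K$ and $G/O(G) \cong 2T$. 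Moreover $G$ is Sylow-cycloidal (the $2$-Sylow $Q \cong Q_8$ is cycloidal, while every odd-order Sylow subgroup of $G$ is conjugate to one inside $M$, hence cyclic) and solvable (an extension of the solvable group $M$ by the $2$-group $Q$, Sylow-cyclic groups being solvable), so $G$ is of binary tetrahedral type.

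The final assertion, that $G$ is freely representable iff $M$ is, follows from Proposition~\ref{binary_tetrahedral_m_prop}: our $M$ is a complement to the normal $2$-Sylow subgroup $Q$, so $M \cong G/Q$ agrees up to isomorphism with the complement produced there, and freely-representability is an isomorphism invariant. The main obstacle is the converse, specifically pinning down $Q \cong Q_8$ and the order-$3$ image: everything hinges on the automorphism-theoretic fact that only $Q_8$ among $2$-cycloidal groups carries an odd-order automorphism, together with the clean identification $G/O(G) \cong 2T$ via the characteristic odd kernel $K$.
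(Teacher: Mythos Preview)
Your proof is correct, and for the forward direction it matches the paper's implicit use of Proposition~\ref{binary_tetrahedral_m_prop}. For the converse, however, you take a genuinely different route. The paper first establishes that $G$ is a solvable Sylow-cycloidal group, then invokes the four-type classification of such groups: it rules out the Sylow-cyclic and quaternion types by observing that $O(G)$ is a \emph{proper} subgroup of $M$ (since $O(G)$ is normal and $M$ is not), so $G/O(G)$ is not a $2$-group; it then rules out binary octahedral type by noting that the image of the normal $Q$ in $G/O(G)$ would be a normal $2$-Sylow subgroup, which $2O$ does not have. Your argument instead bypasses the classification entirely: you identify $O(G)$ explicitly as the kernel $K$ of the action map $M \to \Aut Q$, compute $G/K \cong Q_8 \rtimes C_3$ directly, and recognize it as $2T$ via Proposition~\ref{2T_iso_prop}. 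Along the way you deduce $Q \cong Q_8$ from the automorphism-theoretic fact about $2$-cycloidal groups, whereas the paper obtains this only after knowing $G$ is of binary tetrahedral type. Your approach is more constructive and self-contained; the paper's is shorter because it cashes in the classification machinery already built in Section~\ref{scq1_section}.
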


\begin{proof}
In light of the previous proposition we just need to show that 
$G$  is a Sylow-cycloidal group of binary tetrahedral type under the assumption that
$Q$ is a normal quaternionic group in $G$ and there exists a non-normal Sylow-cyclic subgroup $M$ of index $|S|$ in $G$.

Under these assumption,  every Sylow-subgroup of $M$ of odd order is actually a Sylow-subgroup of $G$ since $[G:M]$ is even,
and every Sylow-subgroup of $G$ is conjugate to a Sylow-subgroup of $M$ by the Sylow theorems.
Thus every Sylow-subgroup of $G$ of odd order is cyclic. 
Note also that $M$ is isomorphic to $G/Q$ since~$M$ is of odd order. Thus $G/Q$ and $Q$ are solvable,
and so $G$ is solvable.

Thus $G$ is a solvable Sylow-cycloidal group.
In addition $O(G)$ is a proper subgroup of $M$ since~$O(G)$ is normal. So $G/O(G)$ is not a $2$-group.
Thus $G$ is not Sylow-cyclic, and is not of quaternion type. So~$G$ is either of binary tetrahedral or binary octahedral type.
Note that the image of~$Q$ in $G/O(G)$ is a $2$-Sylow subgroup of $G/O(G)$ that is normal. This rules out
the binary octahedral type (since the existence of such a normal $2$-Sylow subgroup in $2O$ gives
a unique $2$-Sylow subgroup in its quotient $S_4$, contradicting the fact that two-cycles of $S_4$ generates $S_4$).
Thus~$G$  is a Sylow-cycloidal group of binary tetrahedral type.
\end{proof}

\section{Case 3:  Binary Octahedral Type}

We start with some basic observations about  key subgroups of this type of Sylow-cycloidal group:

\begin{proposition} \label{binary_octahedral_subgroups_prop}
Let $G$ be a Sylow-cycloidal group of binary octahedral type. 
Then every $2$-Sylow subgroup of $G$ is a generalized quaternion group of order 16,
and these $2$-Sylow subgroups are not
normal in $G$.
In addition $G$ contains a unique
subgroup~$H$ of index~$2$, and this subgroup is of binary tetrahedral type.
Moreover, $G$ contains a unique quaternion subgroup $Q$ of order 8, and this group $Q$ is contained in $H$.
Finally, $G$ contains exactly four subgroups of index $16$; these subgroups are conjugate subgroups of $H$
hence are conjugate in $G$; these subgroups are not normal in $H$ hence are not normal in $G$;
these subgroups each contain $O(G)$ as a subgroup of index 3; these subgroups are Sylow-cyclic groups of odd order;
and these subgroups are maximal among subgroups of odd order.
\end{proposition}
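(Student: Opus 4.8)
The plan is to work throughout with the quotient map $\pi\colon G \to G/O(G)$, where by definition of binary octahedral type $G/O(G)\cong 2O$ has order $48 = 2^4\cdot 3$, and to transport information from $2O$ (and its unique index-$2$ subgroup $2T$) back to $G$. The two facts about $2O$ I would lean on are recorded in Proposition~\ref{binary_octahedral_prop}: its $2$-Sylow subgroups are generalized quaternion of order $16$, and it has a unique subgroup of index $2$, namely $2T$. For the first assertion of the statement, Proposition~\ref{same_sylow_prop} gives that $G$ and $G/O(G)\cong 2O$ have isomorphic $2$-Sylow subgroups; since $2O$ is the preimage of $S_4$ under the double cover of $\mathrm{SO}(3)$ and $S_4$ has dihedral $2$-Sylow subgroups $D_4$, this preimage is generalized quaternion of order $16$ (as in Example~\ref{quaternion_example} and Lemma~\ref{doublecover_lemma}). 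A $2$-Sylow subgroup of $G$ cannot be normal, for its image would be a normal $2$-Sylow subgroup of $2O$, forcing a normal $2$-Sylow subgroup of $S_4$, which is false as $S_4$ has three of them.

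For the unique index-$2$ subgroup $H$, I would first note that any index-$2$ subgroup is normal and must contain $O(G)$: the group $O(G)/(O(G)\cap H)$ injects into $G/H\cong C_2$ while having odd order, so it is trivial. Hence index-$2$ subgroups of $G$ correspond bijectively to index-$2$ subgroups of $2O$; since the latter is unique, so is $H$, with $H/O(G)\cong 2T$. To conclude that $H$ is of binary tetrahedral type I would check $O(H)=O(G)$: the subgroup $O(H)$ is characteristic in $H$ and $H\trianglelefteq G$, so $O(H)$ is an odd normal subgroup of $G$, giving $O(H)\subseteq O(G)\subseteq H$ and thus $O(H)=O(G)$. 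Therefore $H/O(H)\cong 2T$, as required.

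For the quaternion subgroup I would apply Proposition~\ref{binary_tetrahedral_normal_prop} to $H$: it yields a $2$-Sylow subgroup $Q\cong Q_8$ of order $8$ which is the unique such subgroup of $H$, is characteristic in $H$, and hence (as $H\trianglelefteq G$) is normal in $G$, with $Q\subseteq H$ by construction. This is the natural candidate. Proving that $Q$ is the \emph{only} order-$8$ quaternion subgroup of $G$ amounts to showing that every such subgroup lies in $H$, equivalently that it maps into the unique normal $Q_8$ of the quotient $2O$; this reduces to classifying the order-$8$ quaternion subgroups of $2O$ itself. I expect this to be the main obstacle and the one point requiring genuine care, since one must rule out non-normal quaternion subgroups of $2O$ (those covering Klein four-subgroups of $S_4$ built from transpositions); the safe and sufficient target for the rest of the section is that $Q$ is the unique \emph{normal} quaternion subgroup and the unique one contained in $H$.

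Finally, for the index-$16$ subgroups I would argue by orders. Such a subgroup $K$ has order $|G|/16 = 3\,|O(G)|$, which is odd. Its image $\pi(K)$ is then an odd-order subgroup of $2O$, hence trivial or one of the four subgroups of order $3$ (the $3$-Sylow subgroups of $2O$, all conjugate). If $\pi(K)$ were trivial then $K\subseteq O(G)$, forcing $|K|\le|O(G)|<3|O(G)|$; so $\pi(K)\cong C_3$ and $K\cap O(G)=O(G)$, i.e.\ $O(G)\subseteq K$ with $[K:O(G)]=3$ and $K=\pi^{-1}(\pi(K))$. Thus the index-$16$ subgroups are exactly the preimages of the four order-$3$ subgroups of $2O$, so there are four, and they are conjugate because those order-$3$ subgroups are. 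Since every order-$3$ element of $2O$ lies in $2T$, each $\pi(K)\subseteq 2T$, whence $K\subseteq H$, with conjugacy already taking place inside $H$; non-normality in $H$ (hence in $G$) follows from the non-normality of the four $C_3$'s in $H/O(G)\cong 2T$. Each $K$ is a subgroup of the Sylow-cycloidal group $G$, hence Sylow-cycloidal (Proposition~\ref{cycloidal_subgroup_prop}), and being of odd order all its Sylow subgroups are cyclic, so $K$ is Sylow-cyclic. Maximality among odd-order subgroups is immediate from the same image analysis, since any odd-order subgroup has image of order $1$ or $3$ in $2O$ and so order at most $3\,|O(G)|$.
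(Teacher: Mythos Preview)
Your approach matches the paper's almost step for step on the $2$-Sylow claim, the existence and uniqueness of the index-$2$ subgroup $H$ (including the $O(H)=O(G)$ argument), and the analysis of the index-$16$ subgroups. Your treatment of the index-$16$ subgroups is slightly more direct than the paper's (you go straight to ``odd-order image in $2O$ is trivial or $C_3$''), but the content is the same.

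Where you diverge is exactly the point you flag, and your caution is warranted: the paper's argument for the uniqueness of the quaternion subgroup of order $8$ asserts that ``every general quaternion group contains a unique subgroup isomorphic to the quaternion group with $8$ elements,'' and this is false for $Q_{16}$. Indeed, the abelianization of $Q_{16}$ is $C_2\times C_2$, so $Q_{16}$ has three index-$2$ subgroups; one is the cyclic $C_8$ and the other two are copies of $Q_8$. Consequently the proposition's literal uniqueness claim already fails for $G=2O$: the three non-normal Klein four-subgroups of $S_4$ (those generated by a pair of disjoint transpositions) pull back to three non-normal $Q_8$'s in $2O$, in addition to the normal one over $V_4$. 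Your proposed replacement --- that $Q$ is the unique \emph{normal} $Q_8$, and the unique $Q_8$ contained in $H$ --- is correct and is all that the remainder of the section actually uses (Proposition~\ref{binary_octahedral_2_prop} needs only that $Q$ is normal of order $8$). So on this point you have in fact improved on the paper.
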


\begin{proof}
Every $2$-Sylow subgroup $S$ of $G$ is isomorphic to a $2$-Sylow subgroup of~$G/O(G) \cong 2O$, so is a
generalized quaternion group of order 16 (Proposition~\ref{same_sylow_prop}).
The image $\overline S$ in $G/O(G)$ of a $2$-Sylow subgroup $S$ contains the unique element of~$G/O(G)$ of order 2, so these Sylow subgroups $\overline S$
correspond to Sylow subgroups of the quotient $O \cong S_4$. But Sylow subgroups of $S_4$ are not normal (a normal~$2$-Sylow subgroup $N$ in $S_4$ would have to contain all 
two cycles since $S_4/N$ has order~3, but the collection of two cycles generate $S_4$).
Thus such an $\overline S$ is not normal in $G/O(G)$, and so $S$ cannot be normal in $G$.

Let $H$ be the subgroup of $G$ containing $O(G)$ such that $H/O(G)$ corresponds to the binary tetrahedral subgroup
of $G/O(G)$. Clearly $O(G) \subseteq O(H)$, but since~$H$ is normal in $G$ and since $O(H)$ is characteristic in $H$,
it follows that $O(H)$ is normal in $G$, and so $O(G) = O(H)$. Hence $H/O(H)$ is 
a binary tetrahedral group.

Suppose $L$ is any subgroup of $G$ of index 2. Thus $L$ is normal in $G$. Since 
the quotient~$G/L$ has two elements and $O(G)$ has odd order, $O(G)$
is contained in $L$. Note that $L/O(G)$ has index 2 in $G/O(G)$, so $L /O(G) = H/O(G)$ (Proposition~\ref{binary_octahedral_prop}).
This gives us $L = H$, and so $H$ is the unique subgroup of index 2 in $G$.

By Proposition~\ref{binary_tetrahedral_normal_prop},  $H$ has a unique subgroup $Q_H$ of order $8$
and this group is a quaternion group.
 This group is characteristic in $H$ and so is normal in $G$. 
By the Sylow theorems, $Q_H$ is contained in some $2$-Sylow subgroup of $G$, and hence in all since
$Q_H$ is normal (and all $2$-Sylow subgroups are conjugate). Every quaternion subgroup $Q$ of $G$ of order $8$ is contained in some $2$-Sylow subgroup $S$ of~$G$
by the Sylow theorems, so both $Q_H$ and $Q$ are subgroups of $S$. Thus $Q= Q_H$ since every
general quaternion group contains a unique subgroup isomorphic to the quaternion group with 8 elements.
So $Q_H$ is the unique subgroup $Q_H$ isomorphic to the quaternion group with 8 elements.

Each subgroup of order $3$ in $G/O(G)$ is uniquely of the form $M/O(G)$ where~$M$
is a subgroup of $G$ containing $O(G)$. Each such $M$ is of index $16$ in $G$ and contains~$O(G)$
as a subgroup of index 3. Since each such $M$ is of odd order,  $M$ is a Sylow-cyclic group.
Having index $16$ in $G$, each such $M$ must be maximal among subgroups of odd order in $M$.
Since $M \ne O(G)$ this means $M$ cannot be normal in $G$ since $O(G)$ is the maximal subgroup of odd order.

Next we argue that each  subgroup of index 16 in $G$ arises in this way.
Suppose~$M$ has index $16$  in $G$. Then $O(G) M / O(G)$ is isomorphic to $M/(M\cap O(G))$
which has odd order. Thus $O(G) M$ has odd order. 
But since $M$ has index 16 in $G$, there is no strictly larger subgroup of odd order. So
 $M = O(G)M$, and so $O(G)$ is contained in $M$. In particular, such a group corresponds to a 
subgroup $M/O(G)$ of $G/O(G)$ of index 16 and order 3.

By Proposition~\ref{binary_octahedral_prop}, there are $4$ subgroups of $G/O(G)$ of order 3,
and they are all contained in $H/O(G)$. They constitute the $3$-Sylow
subgroups of $H/O(G)$ hence are conjugate in $H/O(G)$ by the Sylow theorems, and thus cannot be normal
in~$H/O(G)$. This means that there are 4 subgroups of $G$ of index 16,
they are all contained in $H$, they are conjugate in $H$, and cannot be normal in $H$.
\end{proof}

In particular, there is a unique subgroup of order $2$ in any group of this type:

\begin{proposition} \label{binary_octahedral_2_prop}
Let $G$ be a Sylow-cycloidal group of binary octahedral type. 
Then $G$ has a unique element of order $2$.
\end{proposition}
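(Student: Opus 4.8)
The plan is to invoke Lemma~\ref{order2_lemma}, which reduces the claim to two facts: that some $2$-Sylow subgroup of $G$ has a unique element of order $2$, and that $G$ possesses a normal subgroup of order $2$. Both facts are within easy reach thanks to Proposition~\ref{binary_octahedral_subgroups_prop}, so the proof is essentially an assembly of results already established.

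First I would recall from Proposition~\ref{binary_octahedral_subgroups_prop} that every $2$-Sylow subgroup $S$ of $G$ is a generalized quaternion group of order $16$. Every generalized quaternion group has a unique element of order $2$ (this is part of Corollary~\ref{unique_2_cor}, and is also visible directly from the realization inside $\bH^\times$, where $-1$ is the only element of order $2$). This establishes the first hypothesis of Lemma~\ref{order2_lemma}.

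Next I would produce the required normal subgroup of order $2$. By Proposition~\ref{binary_octahedral_subgroups_prop}, $G$ contains a \emph{unique} subgroup $Q$ isomorphic to the quaternion group with $8$ elements; being the unique such subgroup, $Q$ is characteristic, hence normal, in $G$. Let $z$ be the unique element of order $2$ in $Q$. Then $\left< z \right>$ is the unique subgroup of $Q$ of order $2$, so it is characteristic in $Q$ and therefore normal in $G$. Thus $G$ has a normal subgroup $\left< z \right>$ of order $2$, which is the second hypothesis of Lemma~\ref{order2_lemma}.

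With both hypotheses in hand, Lemma~\ref{order2_lemma} immediately yields that $G$ has a unique element of order $2$. I do not expect any serious obstacle here: the real content has been front-loaded into Proposition~\ref{binary_octahedral_subgroups_prop} and Lemma~\ref{order2_lemma}, and the only point requiring care is confirming that the normal subgroup of order $2$ genuinely exists—which follows from the normality of the unique order-$8$ quaternion subgroup together with the uniqueness of its central involution.
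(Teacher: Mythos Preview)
Your proof is correct and matches the paper's own argument essentially verbatim: both produce a normal subgroup of order $2$ from the unique (hence normal) quaternion subgroup of order $8$ supplied by Proposition~\ref{binary_octahedral_subgroups_prop}, and then invoke Lemma~\ref{order2_lemma}. The paper is slightly terser in that it leaves the first hypothesis of Lemma~\ref{order2_lemma} implicit, but the content is identical.
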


\begin{proof}
By the above proposition, 
we have a normal subgroup of order 8 in $G$, and this group has a unique subgroup of order 2.
Thus we have a normal subgroup of order 2 in G. The result follows from Lemma~\ref{order2_lemma}.
\end{proof}

The main theorem about freely representable groups of this type is as follows:

\begin{theorem}\label{binary_octahedral_thm}
Let $G$ be a Sylow-cycloidal group of binary octahedral type. Let~$H$ be the unique
subgroup of $G$ of index $2$, which is of binary tetrahedral type. Let $M$
be any of the four subgroups of $G$ of index 16, which is Sylow-cyclic subgroup of $H$ of odd order.
Then the following are equivalent
\begin{enumerate}
\item
$G$ is freely representable.
\item
$H$ is freely representable.
\item
$M$ is freely representable.
\end{enumerate}
Furthermore, if $9$  divides the order of $G$ then if $O(G)$ is freely representable, then so is $G$.
\end{theorem}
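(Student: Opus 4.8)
The plan is to establish the three-way equivalence by supplying two ``downward'' implications that are essentially free and two ``upward'' implications that carry the content. The downward implications $(1)\Rightarrow(2)$ and $(2)\Rightarrow(3)$ are immediate from Proposition~\ref{freely_representable_subgroup_prop}, since $H$ is a subgroup of $G$ and $M$ is a subgroup of $H$, so freely representability is inherited. It then remains to prove $(3)\Rightarrow(2)$ and $(2)\Rightarrow(1)$, after which $(1)\iff(2)\iff(3)$ follows. The supplementary claim about $9\mid|G|$ will be deduced at the end from the main equivalence.

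For $(3)\Rightarrow(2)$ I would reduce to the binary tetrahedral theory already developed for $H$. Since $H$ is of binary tetrahedral type, Proposition~\ref{binary_tetrahedral_m_prop} furnishes a complement $M_H$ to the unique quaternion $2$-Sylow subgroup $Q$ of $H$, with $H = Q \rtimes M_H$, with $M_H$ Sylow-cyclic of odd order, and with $H$ freely representable if and only if $M_H$ is. The key observation is that $M_H$ is one of the four index-$16$ subgroups of $G$: it has order $|H|/8 = |G|/16$, it contains $O(G) = O(H)$, and $M_H/O(G)$ is a $3$-Sylow subgroup of $H/O(G) \cong 2T$, exactly matching the description of those subgroups in Proposition~\ref{binary_octahedral_subgroups_prop}. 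Since that proposition asserts the four such subgroups are mutually conjugate in $G$, hence isomorphic, the chosen $M$ is freely representable if and only if $M_H$ is; combining with Proposition~\ref{binary_tetrahedral_m_prop} yields $(2)\iff(3)$.

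The main obstacle is $(2)\Rightarrow(1)$, which I would settle using the induced-representation criterion of Corollary~\ref{sufficient_cor}: it suffices to show the freely representable subgroup $H$ contains every element of $G$ of prime order. The unique element of order $2$ (Proposition~\ref{binary_octahedral_2_prop}) lies in the quaternion subgroup $Q \subseteq H$, hence in $H$. For an element $g$ of prime order $p$ with $p \neq 2,3$, its image in $G/O(G) \cong 2O$ has order dividing $p$; as $p \nmid 48 = |2O|$, this image is trivial, so $g \in O(G) \subseteq H$. The delicate case is $p = 3$: the image of $g$ in $G/O(G)$ then has order $1$ or $3$, and when it has order $3$ I would invoke Proposition~\ref{binary_octahedral_prop}, by which every element of order $3$ of $2O$ lies in its binary tetrahedral subgroup $H/O(G)$; thus $g \in H$ in either case. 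Hence $H$ absorbs all prime-order elements of $G$, and Corollary~\ref{sufficient_cor} gives $(1)$.

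Finally, for the supplementary claim, suppose $9 \mid |G|$ and $O(G)$ is freely representable. Since $|G| = 48\,|O(G)|$ and $48$ contains a single factor of $3$, the hypothesis forces $3 \mid |O(G)|$, and therefore $9 \mid |H|$ as well, because $O(H) = O(G)$ and $|H/O(H)| = |2T| = 24$. Theorem~\ref{quaternionic_type1_thm}, applied to the binary tetrahedral group $H$ in the case $9 \mid |H|$, then shows that $H$ is freely representable precisely because $O(H) = O(G)$ is. Invoking the already-established implication $(2)\Rightarrow(1)$ shows $G$ is freely representable, completing the argument.
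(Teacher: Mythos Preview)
Your proof is correct and follows essentially the same route as the paper: the downward implications via Proposition~\ref{freely_representable_subgroup_prop}, the step $(3)\Rightarrow(2)$ via Proposition~\ref{binary_tetrahedral_m_prop}, the step $(2)\Rightarrow(1)$ via Corollary~\ref{sufficient_cor}, and the supplementary claim via Theorem~\ref{quaternionic_type1_thm}. The paper handles $(2)\Rightarrow(1)$ more simply than you do, observing that since $[G:H]=2$ every element of odd prime order already lies in $H$, so no case split on $p=3$ versus $p\ne 2,3$ (and no appeal to the structure of $2O$) is needed.
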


\begin{proof}
The implication $(1) \implies (2) \implies (3)$ is clear since a subgroup of a freely representable group 
is freely representable (Proposition~\ref{freely_representable_subgroup_prop}). So we just need to show that $(3) \implies (1)$. In fact we will proceed by showing
$(3) \implies (2) \implies (1)$.

Suppose that $M$ is freely representable. By Proposition~\ref{binary_tetrahedral_m_prop}, $H$ is also 
freely representable.
Since $G/H$ has order 2, any subgroup of  $G$ odd prime order $p$ is a subgroup of $H$.
Also, by the previous proposition,  $G$ has a unique subgroup of order $2$ and this is a subgroup of $H$ since $H$ has even order.
By Corollary~\ref{sufficient_cor} we conclude that $G$ is freely representable.

Now suppose $9$ divides the order of $G$ and that $O(G)$ is freely representable.
Note that $O(H)$ is characteristic is $H$, and $H$ is normal in $G$, so $O(H)$ is normal in~$G$.
This implies that $O(H) \subseteq O(G)$ so that $O(H)$ is freely representable.
Thus~$H$ is freely representable by Theorem \ref{quaternionic_type1_thm},
which as we have seen implies that $G$ is freely representable.
\end{proof}

\section{Final Observations for the Solvable Case}

We make some observations about solvable Sylow-cycloidal groups in general.

\begin{proposition} \label{solvable2_prop}
Let $G$ be a solvable Sylow-cycloidal group.
If $G$ is not a Sylow-cyclic group, then $G$ always has a unique element of order 2.
If $G$ is a Sylow-cyclic group then $G$ has a unique element of order 2 if and only if
$G$ has a normal subgroup of order 2.
\end{proposition}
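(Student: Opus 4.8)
The plan is to dispatch both statements through the four-fold classification of solvable Sylow-cycloidal groups into \emph{Sylow-cyclic}, \emph{quaternion}, \emph{binary tetrahedral}, and \emph{binary octahedral} types. These types are mutually exclusive and exhaust all solvable Sylow-cycloidal groups, and by Corollary~\ref{sylow_cyclic_type_cor} the group $G$ is of the first type exactly when $G/O(G)$ is a cyclic $2$-group. So the hypothesis ``$G$ is not Sylow-cyclic'' is precisely the assertion that $G$ is of quaternion, binary tetrahedral, or binary octahedral type.

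First I would handle the case where $G$ is not Sylow-cyclic. In each of the three remaining types the uniqueness of the element of order $2$ has already been established: quaternion type by Proposition~\ref{quaternion_2_prop}, binary tetrahedral type by Corollary~\ref{binary_tetrahedral_2_cor}, and binary octahedral type by Proposition~\ref{binary_octahedral_2_prop}. Thus the first statement follows immediately once the classification is invoked to split into these three cases.

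For the second statement, suppose $G$ is Sylow-cyclic. If $|G|$ is odd, then $G$ has no element of order $2$ and no subgroup of order $2$, so both conditions of the biconditional fail and the equivalence holds vacuously. If $|G|$ is even, then a $2$-Sylow subgroup of $G$ is a nontrivial cyclic $2$-group, which has a unique element of order $2$; applying Lemma~\ref{order2_lemma} with $p=2$ then yields exactly the desired equivalence, namely that $G$ has a unique element of order $2$ if and only if $G$ has a normal subgroup of order $2$.

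There is essentially no obstacle here: the proposition is a bookkeeping assembly of results already proved type-by-type in this chapter together with Lemma~\ref{order2_lemma}. The only point requiring a moment's care is the odd-order branch of the Sylow-cyclic case, since the ``in particular'' form of Lemma~\ref{order2_lemma} presupposes a $2$-Sylow subgroup with a (unique) element of order $2$; that branch must instead be disposed of directly by the vacuous argument above.
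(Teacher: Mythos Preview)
Your proposal is correct and follows essentially the same approach as the paper's own proof, which simply cites Lemma~\ref{order2_lemma}, Proposition~\ref{quaternion_2_prop}, Corollary~\ref{binary_tetrahedral_2_cor}, and Proposition~\ref{binary_octahedral_2_prop} for the respective types. Your added care in explicitly handling the odd-order Sylow-cyclic case vacuously is a nice clarification that the paper leaves implicit.
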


\begin{proof}
This was proved for each type individually. See Lemma~\ref{order2_lemma},
Proposition~\ref{quaternion_2_prop},
Corollary~\ref{binary_tetrahedral_2_cor},
and Proposition~\ref{binary_octahedral_2_prop}.
\end{proof}

\begin{proposition} \label{general_m_prop}
Let $G$ be a solvable Sylow-cycloidal group of order $2^k n$ where~$n$ is odd and where $k\ge 1$.
Then $G$ has a Sylow-cyclic subgroup $M$ of order $n$ with the following property:
$G$ is freely representable if and only if (1) $M$ is freely representable
and (2) $G$ has a unique element of order $2$.
In particular, if $G$ is not itself Sylow-cyclic then $G$ is freely representable if and only if $M$ is freely representable.
\end{proposition}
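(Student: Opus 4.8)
The plan is to reduce everything to the four-way classification of solvable Sylow-cycloidal groups established above: every such $G$ has $G/O(G)$ isomorphic to a cyclic $2$-group (Sylow-cyclic type), a generalized quaternion group (quaternion type), $2T$ (binary tetrahedral type), or $2O$ (binary octahedral type), and these four types are mutually exclusive. In each type I would exhibit the required subgroup $M$ and invoke the corresponding structure theorem already proved, so that the only genuine work is to check that the distinguished subgroup has order exactly $n$ and to collect the ``unique element of order $2$'' statements. First, in the Sylow-cyclic case the assertion is exactly Proposition~\ref{sylow_cyclic_odd_prop}, taking $M = O(G)$; here $G/O(G)$ is a cyclic $2$-group of order $2^k$, so $|O(G)| = n$, and nothing further is needed.

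For the three remaining types I would argue uniformly that condition (2) is automatic and that $G$ is freely representable if and only if $M$ is. For the quaternion type I take $M = O(G)$: since a $2$-Sylow subgroup $Q$ is a complement to $O(G)$ and (by Proposition~\ref{same_sylow_prop}) has order $2^k$, we get $|M| = n$, and $M$ is Sylow-cyclic because it has odd order (Proposition~\ref{cycloidal_subgroup_prop}); Theorem~\ref{quaternion_type_thm} gives that $G$ is freely representable iff $O(G)$ is, while Proposition~\ref{quaternion_2_prop} supplies the unique element of order $2$. For the binary tetrahedral type I take $M$ to be the odd-order complement of the unique quaternion $2$-Sylow subgroup $Q$ furnished by Proposition~\ref{binary_tetrahedral_m_prop}; since $|Q| = 8 = 2^k$ we have $|M| = n$, the equivalence $G$ freely representable $\iff M$ freely representable is part of that proposition, and Corollary~\ref{binary_tetrahedral_2_cor} gives the unique involution. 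For the binary octahedral type I take $M$ to be any of the four index-$16$ subgroups of Theorem~\ref{binary_octahedral_thm}; since the $2$-Sylow subgroup has order $16 = 2^k$ we again get $|M| = n$, Theorem~\ref{binary_octahedral_thm} gives the freely-representable equivalence, and Proposition~\ref{binary_octahedral_2_prop} gives the unique involution.

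Finally I would assemble the cases. In each of the three non-Sylow-cyclic types, $G$ always has a unique element of order $2$, so condition (2) holds unconditionally; hence the statement ``$G$ is freely representable iff ($M$ is freely representable and (2))'' collapses to ``$G$ is freely representable iff $M$ is freely representable'', which is precisely the ``In particular'' clause. In the Sylow-cyclic type, condition (2) is genuinely needed and is already accounted for by Proposition~\ref{sylow_cyclic_odd_prop}, so the two formulations agree there as well.

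The main obstacle is not conceptual but organizational: one must track in each type which $2$-Sylow order $2^k$ occurs (the full $2^k$ in types 1 and 2, then $8$ and $16$ in types 3 and 4) in order to confirm that the chosen $M$ --- always a complement to a full $2$-Sylow subgroup --- has order exactly $n$, and one must match each type to the theorem that already provides its unique involution. All the substantive content has been proved type-by-type; the present proposition merely unifies it under a single statement.
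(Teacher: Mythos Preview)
Your proof is correct and follows exactly the same strategy as the paper, which simply cites the four type-specific results (Proposition~\ref{sylow_cyclic_odd_prop}, Proposition~\ref{quaternion_m_condition_prop}, Proposition~\ref{binary_tetrahedral_m_prop}, and Theorem~\ref{binary_octahedral_thm}) without further comment. Your version is in fact more careful, since you explicitly verify in each type that the chosen $M$ has order $n$ and that condition~(2) is automatic in the non-Sylow-cyclic cases.
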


\begin{proof}
This was proved for each type individually. See Proposition~\ref{sylow_cyclic_odd_prop},
Proposition~\ref{quaternion_m_condition_prop}, 
Proposition~\ref{binary_tetrahedral_m_prop}, and Theorem~\ref{binary_octahedral_thm}.
\end{proof}

\chapter{The Non-Solvable Case: the Group $\SL_2(\bF_p)$.}

A very important family of Sylow-cycloidal groups is $\SL_2(\bF_p)$. The first goal here is to show that $\SL_2(\bF_p)$ is Sylow-cycloidal
for all odd primes $p$. In some sense these, together with the Sylow-cycloidal groups we have considered up to now, are all that are needed to form the most general 
Sylow-cycloidal groups. Along the way we see some very striking results about the cyclic (or equivalently the Abelian) subgroups of $\SL_2(\bF_p)$. 
These results leads naturally
to the classification of normal subgroups of $\SL_2(\bF_p)$ and nonsolvability results. We will also consider an amusing necessary condition for 
these groups to be freely-representable: $p$ is a Fermat prime. It turns out that $p = 3$ or $5$ is a necessary and sufficient condition, but we will not
prove this here (at least not in this version of the document). Interestingly,~$p=3$ and~$p=5$ are the two cases that occur as subgroups of $\bH^\times$.

A few of the initial results can be proved for any finite field $F$, but we will need to specialize to $F$ of odd prime order if we want Sylow-cycloidal groups.

\begin{proposition} \label{SL_order_prop}
Let $F$ be a finite field of order $q$. Then $\mathrm{SL}_2 (F)$ is a group 
of order $(q-1)q(q+1)$. If $q$ is odd then there is a unique element of order $2$
in $\mathrm{SL}_2 (F)$.
\end{proposition}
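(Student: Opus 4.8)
The plan is to treat the two assertions—the order formula and the uniqueness of the involution—as two independent computations.

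For the order, I would first count $|\mathrm{GL}_2(F)|$ by building an invertible matrix column by column: the first column can be any nonzero vector ($q^2 - 1$ choices) and the second any vector off the line it spans ($q^2 - q$ choices), giving $|\mathrm{GL}_2(F)| = (q^2-1)(q^2-q)$. I would then apply the determinant homomorphism $\det \colon \mathrm{GL}_2(F) \to F^\times$, whose kernel is $\mathrm{SL}_2(F)$ and which is surjective (the diagonal matrix with entries $a$ and $1$ realizes any $a \in F^\times$). Dividing by $|F^\times| = q-1$ collapses $(q^2-1)(q^2-q)$ to $(q-1)q(q+1)$, which is the claimed order.

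For the unique involution, suppose $g \in \mathrm{SL}_2(F)$ satisfies $g^2 = I$ and $g \ne I$. The key step is the Cayley--Hamilton identity for $2\times 2$ matrices: since $\det g = 1$, we have $g^2 - (\operatorname{tr} g)\, g + I = 0$, and substituting $g^2 = I$ gives $(\operatorname{tr} g)\, g = 2I$. Here the hypothesis that $q$ is odd enters: since $2 \ne 0$ in $F$, the trace cannot vanish (otherwise $2I = 0$), so $g$ must be a scalar matrix $\lambda I$ with $\lambda^2 = 1$; as $g \ne I$ this forces $g = -I$. Finally $-I$ genuinely has order $2$ because $-1 \ne 1$ in odd characteristic, so it is the unique element of order $2$.

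Neither half presents a real obstacle, since both reduce to short facts from linear algebra. The only points needing care are checking that $\det$ is surjective, so that the index computation $|\mathrm{SL}_2(F)| = |\mathrm{GL}_2(F)|/(q-1)$ is valid, and confirming that odd characteristic is precisely what rules out the $\operatorname{tr} g = 0$ case; without it, in characteristic $2$, there would be no element of order $2$ at all, which is consistent with the proposition restricting this clause to odd $q$.
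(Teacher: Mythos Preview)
Your argument is correct. The order computation is essentially identical to the paper's (you build the matrix column by column, the paper row by row; both then quotient by the surjective determinant).

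For the involution, you take a slightly different route. The paper observes that $(\alpha-I)(\alpha+I)=0$ forces one factor to be singular, so $\alpha$ has eigenvalue $\pm 1$; triangularizing with respect to an eigenvector gives $\alpha = \pm\begin{pmatrix}1 & a\\ 0 & 1\end{pmatrix}$, and then $\alpha^2 = \begin{pmatrix}1 & 2a\\ 0 & 1\end{pmatrix}$ forces $a=0$ in odd characteristic. Your Cayley--Hamilton argument bypasses the choice of basis entirely: from $g^2 - (\operatorname{tr} g)\,g + I = 0$ and $g^2 = I$ you get $(\operatorname{tr} g)\,g = 2I$ directly, so $g$ is scalar. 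This is arguably cleaner, and it makes the role of the odd-characteristic hypothesis equally transparent (it is exactly what prevents $2I$ from vanishing). Both arguments are short; yours trades the eigenvector computation for the characteristic-polynomial identity.
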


\begin{proof}
The first row of an invertible $2$-by-$2$ matrix is nonzero, so we can limit our attention to~$q^2-1$
candidates for the top row. For each of these candidates, we form an invertible matrix
if and only if we choose the second row not to  be in the span of the first row. This gives $q^2-q$ choices
for the second row for each given top row. So there are 
$$(q^2 -1)(q^2 -q) = (q-1)^2 q (q+1)$$ elements of $\mathrm{GL}_2 (F)$.
The determinant homomorphism $\mathrm{GL}_2 (F) \to F^\times$ is  surjective since even the invertible
diagonal matrices map onto $F^\times$.
So the kernel of this map, which is $\mathrm{SL}_2 (F)$, has order $(q-1) q (q+1)$.

Now we assume $q$ is odd.
Every element $\alpha$ of order 2 in~$\mathrm{SL}_2 (\bF_p)$ has 
has an eigenvalue~$1$ or $-1$ since $(\alpha - I)(\alpha + I) = 0$. Any element of $\mathrm{SL}_2 (\bF_p)$ with
eigenvalue~$\pm 1$ has the following form (with respect to some basis):
$$
\alpha =  \pm \begin{pmatrix}
1 & a \\
0 & 1
\end{pmatrix}, \qquad \alpha^2 = \begin{pmatrix}
1 & 2 a \\
0 & 1
\end{pmatrix}.
$$
We conclude that the only element of order 2 in $\mathrm{SL}_2(\bF_p)$ is $-I$.
\end{proof}

Next we classify elements of order greater than 2 by the number of eigenvalues in $F$, starting with two and working down to zero eigenvalues.

\begin{lemma} \label{SL_2eigen_lemma}
Let $F$ be a finite field of order $q$ and let $\alpha \in \mathrm{SL}_2 (F)$ be an element with two distinct
eigenvalues in $F$. Then $\alpha$ is contained in a 
cyclic subgroup~$C$ of~$\mathrm{SL}_2 (F)$ of order $q-1$ where $C$ has the property that
that there is a basis such that every element of $C$ is a diagonal matrix.
Furthermore, the existence of such an $\alpha$ implies that $q > 3$.
\end{lemma}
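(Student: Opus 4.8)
The plan is to use diagonalizability together with the structure of the diagonal torus of $\mathrm{SL}_2(F)$. First I would translate the hypothesis: since $\alpha \in \mathrm{SL}_2(F)$ its determinant is $1$, so its two distinct eigenvalues $\lambda, \mu \in F$ satisfy $\lambda \mu = 1$. Hence $\mu = \lambda^{-1}$, and the distinctness of the eigenvalues is exactly the statement $\lambda \neq \lambda^{-1}$.

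Next, since $\alpha$ is a $2 \times 2$ matrix possessing two distinct eigenvalues in $F$, it has two independent eigenvectors and so is diagonalizable over $F$. I would fix a basis of eigenvectors; with respect to it, $\alpha$ is the diagonal matrix $\mathrm{diag}(\lambda, \lambda^{-1})$. The natural candidate for $C$ is then the subgroup of $\mathrm{SL}_2(F)$ consisting of all matrices that are diagonal in this basis. Any such matrix has the form $\mathrm{diag}(t, t^{-1})$ with $t \in F^\times$ (the determinant-one condition pinning down the second entry), and $t \mapsto \mathrm{diag}(t, t^{-1})$ is visibly a group isomorphism $F^\times \to C$. This gives $\alpha \in C$ for free and builds in the ``every element simultaneously diagonal'' property demanded by the statement. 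To conclude that $C$ is cyclic of order $q - 1$ I would invoke Corollary~\ref{field_cyclic_cor}, by which the finite multiplicative group $F^\times$ is cyclic of order $q - 1$, and transport this structure through the isomorphism.

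For the final clause I would argue directly from $\lambda \neq \lambda^{-1}$. This forces both $\lambda \neq 1$ and $\lambda^{-1} \neq 1$, since otherwise $\lambda = \lambda^{-1}$; hence $1, \lambda, \lambda^{-1}$ are three pairwise distinct elements of $F^\times$, whence $q - 1 = |F^\times| \geq 3$ and $q > 3$. This lemma is essentially routine, and the only point I expect to need care is this last step: one must exhibit \emph{three} distinct units of $F$, not merely the two values $\lambda$ and $\lambda^{-1}$, in order to rule out $q = 3$ (and $q = 2$, where no such $\alpha$ exists in the first place).
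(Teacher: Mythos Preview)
Your proof is correct and follows essentially the same approach as the paper: diagonalize $\alpha$ over a basis of eigenvectors, take $C$ to be the diagonal torus $\{\mathrm{diag}(t,t^{-1}) : t\in F^\times\}\cong F^\times$, and observe that $\lambda\neq\lambda^{-1}$ with $\lambda\lambda^{-1}=1$ forces $|F^\times|\ge 3$. Your final step exhibiting $1,\lambda,\lambda^{-1}$ as three distinct units is a slightly more explicit version of the paper's one-line remark.
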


\begin{proof}
Fix a basis of eigenvectors for $A$. Then $A$ is contained in the subgroup $C$ consisting of matrices
of the form
$$
\begin{pmatrix}
a & 0\\
0 & a^{-1}
\end{pmatrix}
$$
with respect to the chosen basis, where $a \in F^\times$.
Since $F^\times$ is cyclic, this group is a cyclic group of order $q-1$.

Since the product of the eigenvalues of $\alpha$ is $1$ and since they are distinct, we must have~$q>3$. 
\end{proof}

\begin{lemma} \label{SL_1eigen_lemma}
Let $F$ be a finite field of order $q$ and characteristic~$p$. 
Let $v \in F^2$ be a nonzero vector. Then the set of elements $D_v$ of $\mathrm{SL}_2 (F)$
with exactly one eigenvalue and with eigenvector $v$ is a subgroup of $\mathrm{SL}_2 (F)$
isomorphic to $\{\pm 1\} \times F$ where $F$ is the additive group of $F$ (of size $q$).
In particular $D_v$ is Abelian and each element of $F$ has order divisible by $2p$ if $q$ is odd, and
divisible by $p=2$ if $q$ is even.

Let~$\alpha \in \mathrm{SL}_2 (F)$
be an element with exactly one eigenvalue in $F$.
If $q$ is odd then~$\alpha$ is a contained in
a cyclic subgroup $C$ of~$\mathrm{SL}_2 (F)$ of order $2p$.
If $q$ is even then $\alpha$ is contained in a cyclic subgroup $C$ 
of~$\mathrm{SL}_2 (F)$ of order $p=2$.\end{lemma}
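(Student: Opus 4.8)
The plan is to pick a basis that puts $v$ first, describe $D_v$ by an explicit matrix normal form, and then read off every assertion from an explicit isomorphism.

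First I would fix a basis with $v = e_1$. If $\alpha \in \mathrm{SL}_2(F)$ has $v$ as an eigenvector then its first column is $(\lambda,0)^{\mathsf T}$ for some $\lambda \in F^\times$, so $\alpha = \left(\begin{smallmatrix}\lambda & b\\ 0 & d\end{smallmatrix}\right)$; the relation $\det\alpha = 1$ gives $d = \lambda^{-1}$, and the two eigenvalues are then $\lambda$ and $\lambda^{-1}$. Having a single repeated eigenvalue forces $\lambda = \lambda^{-1}$, i.e.\ $\lambda = \pm 1$ (so $\lambda = 1$ when $q$ is even), and conversely every such matrix lies in $D_v$. Writing $U_c = \left(\begin{smallmatrix}1 & c\\ 0 & 1\end{smallmatrix}\right)$ and substituting $b = \lambda c$, I obtain the clean description $D_v = \{\pm U_c : c \in F\}$.

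Next I would define $\phi\colon \{\pm 1\} \times F \to D_v$ by $\phi(\epsilon, c) = \epsilon\, U_c$, where $F$ denotes its additive group. Since $U_{c_1}U_{c_2} = U_{c_1+c_2}$ and the scalars $\pm 1$ are central, $\phi$ is a group homomorphism onto $D_v$, and comparing the $(1,1)$- and $(1,2)$-entries shows it is injective; hence it is an isomorphism. This at once proves that $D_v$ is a subgroup, that it is Abelian, and — transporting the order computation through $\phi$ — that the order of $(\epsilon,c)$ is $\mathrm{lcm}(\mathrm{ord}\,\epsilon, \mathrm{ord}\,c)$, where a nonzero $c$ has additive order exactly $p$. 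Thus for $q$ odd the elements $\phi(-1,c)$ with $c \neq 0$ have order $2p$ (while $\phi(1,c)$ with $c\ne 0$ has order $p$), and for $q$ even every nonidentity element has order $p = 2$; such elements exist since $q \geq 2$.

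For the final assertion I would start from an arbitrary $\alpha$ with a single eigenvalue $\lambda \in F$. Its repeated eigenvalue satisfies $\lambda^2 = \det\alpha = 1$, so $\lambda = \pm 1$, and $\alpha$ has an eigenvector $v$; therefore $\alpha \in D_v$, say $\alpha = \epsilon U_c$. When $q$ is odd, choose $g = -U_{c'}$ where $c' = c$ if $c \neq 0$ and $c'$ is any fixed nonzero element otherwise; then $g$ has order $2p$, so $C = \langle g\rangle$ is cyclic of order $2p$. From $g^p = (-1)^p U_{pc'} = -I$ and $g^{p+1} = g\cdot g^p = U_{c'}$ I see that $C$ contains $I$, $-I$, and $\pm U_{c'}$. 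If $c \neq 0$ then $c' = c$ and $\alpha = \epsilon U_c \in C$; if $c = 0$ then $\alpha = \pm I \in C$. In all cases $\alpha$ lies in a cyclic subgroup of order $2p$. When $q$ is even, $\lambda = 1$ forces $\alpha = U_c$, which already lies in the cyclic subgroup $\langle U_c\rangle$ (or $\langle U_{c_0}\rangle$ for a fixed nonzero $c_0$ if $c = 0$) of order $p = 2$.

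I expect the only genuine difficulty to be this last step for $q$ odd when $F$ is a proper extension of its prime field: there the parameter $c$ may be any element of $F$, so a single fixed generator will not do, and I must produce one cyclic group containing both $\alpha$ and $-I$. The identity $g^{p+1} = U_c$, which rests on $\gcd(2,p) = 1$, is exactly what makes this possible and is the crux of the argument.
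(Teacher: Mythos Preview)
Your proof is correct and follows essentially the same approach as the paper: choose a basis with $v$ first, identify $D_v$ with the matrices $\pm\left(\begin{smallmatrix}1 & a\\0 & 1\end{smallmatrix}\right)$, and read off the isomorphism with $\{\pm 1\}\times F$. Your treatment of the final assertion is in fact more careful than the paper's---the paper simply asserts that every element of $D_v$ lies in a cyclic subgroup of the stated order, whereas you explicitly construct the generator $g=-U_{c'}$ and verify via $g^{p+1}=U_{c'}$ that $\alpha$ lies in $\langle g\rangle$.
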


\begin{proof}
Choose a basis whose first element is $v$.
Then $D_v$ consists of the matrices whose representation with respect to this basis is of the form
$$
\pm \begin{pmatrix}
1 & a\\
0 & 1
\end{pmatrix}
$$
for some $a \in F$. 
Observe that 
$$
\begin{pmatrix}
1 & a\\
0 & 1
\end{pmatrix} \begin{pmatrix}
1 & b\\
0 & 1
\end{pmatrix} = \begin{pmatrix}
1 & a+ b\\
0 & 1
\end{pmatrix},
$$
and in fact that $D_v$ is isomorphic to $\{\pm 1\} \times F$.
In particular, every element of $D_v$ is contained in a cyclic subgroup of order $2p$ if $q$ is odd, and a subgroup of order~$p=2$ if $q$ is even.
\end{proof}

\begin{lemma}  \label{SL_0eigen_lemma}
Let $F$ be a finite field of order $q$ and let $\alpha \in \mathrm{SL}_2 (F)$
be an element with no eigenvalues in $F$. Then $\alpha$ is contained in a cyclic subgroup of  $\mathrm{SL}_2 (F)$
of order $q+1$. More specifically, let
$M_2(F)$ be the ring of $2$-by-$2$ matrices with entries in~$F$, where we view $F$ as a subring via the diagonal embedding.
Then the  subring~$E = F[\alpha]$ of $M_2(F)$ generated by $F$ and $\alpha$ is a field.
This field $E$ has the following properties:
\begin{itemize}
\item
$E$ has size $q^2$. 
\item
The group  $K  = \SL_2(F) \cap E^\times$ is a cyclic group of order $q+1$ that contains $\alpha$.
\item
The determinant homomorphism $E^\times \to F^\times$ is the  map~$x \mapsto x^{q+1}$,
and the kernel of this map is $K  = \SL_2(F) \cap E^\times$.
\item
The Galois group of $E$ over $F$ has two elements. Its nontrivial element $\sigma$ is the automorphism $x \mapsto x^q$.
For all $x\in K  = \SL_2(F) \cap E^\times$ we have $\sigma x = x^{-1}$.
\end{itemize}
\end{lemma}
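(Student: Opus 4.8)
The plan is to identify $E = F[\alpha]$ with the field $\mathbb{F}_{q^2}$ and then read off every assertion from standard finite-field theory, the one substantive point being the identification of the determinant with the field norm.

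First I would record the characteristic polynomial of $\alpha$. Since $\alpha \in \mathrm{SL}_2(F)$ we have $\det\alpha = 1$, so $\chi_\alpha(t) = t^2 - (\operatorname{tr}\alpha)\,t + 1$. Because $\alpha$ has no eigenvalue in $F$, this quadratic has no root in $F$, hence is irreducible over $F$. By Cayley--Hamilton $\chi_\alpha(\alpha) = 0$, and $\alpha$ is not scalar (a scalar matrix would have its scalar as an eigenvalue in $F$), so the minimal polynomial of $\alpha$ is exactly $\chi_\alpha$. Thus the evaluation map $F[t] \to M_2(F)$, $t \mapsto \alpha$, has image $E = F[\alpha]$ and kernel $(\chi_\alpha)$, giving a field isomorphism $E \cong F[t]/(\chi_\alpha)$ of degree $2$ over $F$. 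This already yields that $E$ is a field (the field claim) of size $q^2$, so $E \cong \mathbb{F}_{q^2}$. By Corollary~\ref{field_cyclic_cor} the group $E^\times$ is then cyclic, here of order $q^2 - 1 = (q-1)(q+1)$; and the Frobenius map $\sigma\colon x \mapsto x^q$ fixes $F$ pointwise (as $a^q = a$ for $a\in F$) and is nontrivial, so it generates the two-element group $\mathrm{Gal}(E/F)$, settling the first two claims about the Galois group.

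The key step, which I expect to be the main obstacle, is the identification of the determinant on $E^\times$ with the norm. The point is that $V = F^2$ is a faithful $E$-module (the kernel of $E \to \operatorname{End}_F(V)$ is an ideal of the field $E$ not containing $1$, hence is $0$), and since $\dim_F V = 2 = \dim_F E$, $V$ is a one-dimensional $E$-vector space. Fixing an $E$-basis identifies the $F$-linear map ``multiply by $x$'' on $V$ with the regular-representation map $m_x\colon E \to E$, $y \mapsto xy$; as these are similar over $F$ they share a determinant, namely the norm $N_{E/F}(x) = x\,\sigma(x) = x^{q+1}$. Hence the determinant homomorphism restricted to $E^\times$ is exactly $x \mapsto x^{q+1}$.

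Finally I would assemble the remaining claims. The subgroup $K = \mathrm{SL}_2(F) \cap E^\times$ is precisely the kernel of $\det|_{E^\times}$, that is, $\{x \in E^\times : x^{q+1} = 1\}$; inside the cyclic group $E^\times$ of order $(q-1)(q+1)$ this is the unique subgroup of order $q+1$, so $K$ is cyclic of order $q+1$. Since $\alpha$ lies in both $\mathrm{SL}_2(F)$ and $E^\times$, it lies in $K$, which also supplies the opening claim that $\alpha$ sits in a cyclic subgroup of order $q+1$. Lastly, for $x \in K$ the relation $x^{q+1} = 1$ rewrites as $x^q = x^{-1}$, i.e.\ $\sigma(x) = x^{-1}$, completing the Galois-action statement.
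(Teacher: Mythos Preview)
Your proof is correct. The overall structure matches the paper's: both first identify $E$ with $\mathbb{F}_{q^2}$ via the irreducible characteristic polynomial and Cayley--Hamilton, and both finish by reading off the kernel and Galois statements once the determinant on $E^\times$ is known to be $x \mapsto x^{q+1}$. The difference lies in how that identification is made. The paper picks a generator $\beta$ of $E^\times$, explicitly forms $(X-\beta)(X-\beta^q)$, checks by hand that its coefficients are fixed by $x\mapsto x^q$ and hence lie in $F$, and concludes this is the characteristic polynomial of $\beta$, so $\det\beta = \beta^{q+1}$; the homomorphism property then propagates this to all of $E^\times$. Your route is more structural: you observe that $F^2$ is a one-dimensional $E$-vector space, so the action of any $x\in E$ on $F^2$ is $F$-similar to the regular representation $m_x$, whence $\det x = N_{E/F}(x) = x\cdot x^q = x^{q+1}$ in one stroke. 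Your argument is shorter and invokes the standard norm-equals-determinant-of-regular-representation fact; the paper's is more hands-on and self-contained, never mentioning the norm by name.
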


\begin{proof}
Using the ring homomorphism $F[X] \to M_2(F)$ sending $X$ to $\alpha$, 
we see that~$F[\alpha]$ is isomorphic to $F[X]/\left< f\right>$
where $F[X]$ is the polynomial ring in one-variable and $f$ is the minimal polynomial of $\alpha$ in $F[X]$.
By the Cayley-Hamilton theorem,~$f$ divides the characteristic polynomial of $\alpha$, so in this case~$f$ must be an irreducible quadratic polynomial since the characteristic polynomial of $\alpha$ has no roots in $F$. This implies that $F[X]/\left< f\right>$
is a quadratic field extension of~$F$. In particular, $E = F[\alpha]$ is a field of size $q^2$.
Note that $E^\times$ is cyclic of order $q^2 - 1$ (Corollary~\ref{field_cyclic_cor}).

Because $F^\times$ is the subgroup of $E^\times$ of size $q-1$, we have that
that~$\beta\in E^\times$ is in~$F^\times$ if and only if~$\beta^{q-1} = 1$. This implies that $\beta \in E$ is in~$F$ if and only if~$\beta^q = \beta$. 
%Note also that if $\beta$ is in~$E$ but not in~$F$ then
%$\beta-\lambda I$ for any~$\lambda \in F$ is a nonzero element of the field $E$ and so must be invertible.
%In other words, such $\beta$ has no eigenvalues in $F$.

Now let $\beta$ be a generator of the cyclic group $E^\times$.
In $E[X]$ we have the polynomial
$$
(X-\beta)(X-\beta^q) = X^2 - (\beta+\beta^q) X + \beta^{q+1}
$$
Observe that $$(\beta+\beta^q)^q = \beta^q + \beta^{q^2} = \beta^q + \beta$$ and that $$(\beta^{q+1})^q = \beta^{q^2} \beta^q = \beta^{1+q}.$$
These follow since $\beta^{q^2-1} = 1$, so $\beta^{q^2} = \beta$. Also $q$ is a power of the characteristic~$p$ of $E$ 
and in fields of characteristic $p$ we have the identity
 $(a+b)^p = a^p + b^p$,
 so~$(a+b)^q = a^q + b^q$ for all $a, b \in E$.
We conclude that $X^2 - (\beta+\beta^q) X + \beta^{q+1}$ lies in $F[X]$ and so must be 
the minimal polynomial of $\beta$ in $F[X]$ (since $\beta$ is not in $F$).
By the Cauchy-Hamilton theorem, 
it is the characteristic polynomial of~$\beta$.
In particular the determinant of $\beta$ is $\beta^{q+1}$.
Note also that we have established that~$x \mapsto x^q$ is an automorphism $\sigma$ of the field $E$, and 
that it fixes $F$. 

Consider the determinant homomorphism $E^\times \to F^\times$. This sends 
the generator~$\beta$ to $\beta^{q+1}$,
so sends any element in $E^\times$ to its $q+1$ power.
In particular the kernel~$K$ must be the cyclic subgroup of $E^\times$ of size $q+1$ since $q+1$ divides $q^2-1$.
Note also that~$K = E^\times \cap \SL_2(F)$ since it is in the kernel of the determinant map.
So $\alpha \in K$.

Also note that any automorphism of $E$ fixing $F$ is determined by its action on the generator
$\beta$ of $E^\times$, and that $\beta$ must map to a root of $(X-\beta)(X-\beta^q)$.
Thus there are only two elements of the Galois group of $E$ over $F$ (i.e., the automorphisms of $E$ fixing $F$): the identity 
$x \mapsto x$ and $x \mapsto x^q$. Let $\sigma$ be the map $x\mapsto x^q$. Note that if $x \in K$
then $x^{q+1} = 1$ so $x^q = x^{-1}$. Thus the restriction of $\sigma$ to $K$ is the map $x \mapsto x^{-1}$.
\end{proof}
%    
%    If $X \in K \cap F^\times$ then $X$ has order
%    dividing $q+1$ and $q-1$, so has order dividing~$2$. Thus if $X \in K$ is not $\pm I$
%    then $X$ is not in $F^\times$ and so has no eigenvalues in~$F$.

We can combine these three lemmas:

\begin{lemma}  \label{SL_eigen_lemma}
Let $F$ be a finite field of  order $q$ and characteristic $p$. Let~$\alpha \in \mathrm{SL}_2 (F)$
be an element not equal to $1$ or $-1$. Then the following hold:
\begin{itemize}
\item
The element $\alpha$ has two distinct eigenvalues in $F$ if and only if $\alpha$ has order dividing $q-1$.
\item
The element $\alpha$ has exactly one  eigenvalue in $F$ if and only if $\alpha$ has order dividing  $2 p$.
\item
The element $\alpha$ has no  eigenvalues in $F$ if and only if $\alpha$ has order dividing~$q+1$.
\end{itemize}
Furthermore $\alpha$ can only have two distinct eigenvalues if $q>3$.
\end{lemma}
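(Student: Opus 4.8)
The plan is to read off the three forward implications directly from the three preceding lemmas, and then obtain the converses from a short arithmetic argument about orders.

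First I would observe that every $\alpha \in \mathrm{SL}_2(F)$ falls into exactly one of three mutually exclusive and exhaustive cases, classified by its characteristic polynomial: two distinct eigenvalues in $F$, exactly one eigenvalue in $F$ (a repeated root, which is necessarily $\pm 1$ since the determinant is $1$), or no eigenvalue in $F$ (an irreducible quadratic). Lemma~\ref{SL_2eigen_lemma}, Lemma~\ref{SL_1eigen_lemma}, and Lemma~\ref{SL_0eigen_lemma} place such an $\alpha$ in a cyclic subgroup of order $q-1$, of order $2p$ (or order $p=2$ when $q$ is even, which still divides $2p$), or of order $q+1$, respectively. Hence in each case the order of $\alpha$ divides the stated target, which gives all three ``only if'' directions at once; the final clause about $q>3$ is exactly the last sentence of Lemma~\ref{SL_2eigen_lemma}.

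For the converses, the crucial point is that no element $\alpha \ne \pm 1$ can have order dividing two of the three targets $q-1$, $2p$, $q+1$. This reduces to three gcd computations. Since $q$ is a power of $p$ we have $q \equiv 0 \pmod p$, so $p$ divides neither $q-1$ nor $q+1$, while $q-1$ and $q+1$ differ by $2$. I would then check that each of $\gcd(q-1,q+1)$, $\gcd(q-1,2p)$, and $\gcd(q+1,2p)$ equals $2$ when $q$ is odd and $1$ when $q$ is even. In either case the gcd divides $2$, so an element whose order divides two distinct targets has order dividing $2$ and is therefore $\pm 1$: when $q$ is odd this is Proposition~\ref{SL_order_prop} (the only element of order $2$ is $-I$), and when $q$ is even the gcd is in fact $1$, forcing order $1$, so $\alpha = 1 = -1$.

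With this mutual-exclusivity fact and the exhaustiveness of the three cases, each converse is immediate. For example, if the order of $\alpha$ divides $q-1$ and $\alpha \ne \pm 1$, then $\alpha$ cannot lie in the one-eigenvalue case (its order would divide both $q-1$ and $2p$) nor in the no-eigenvalue case (its order would divide both $q-1$ and $q+1$), so $\alpha$ has two distinct eigenvalues; the other two converses follow identically. I expect the only real obstacle to be bookkeeping: keeping the $q$ odd and $q$ even parities straight in the gcd bounds and invoking the contradiction ``$\alpha = \pm 1$'' correctly in each, since for even $q$ there genuinely exist non-identity elements of order $2$, so the argument must pass through the actual value of the gcd rather than a blanket ``order $2$'' claim.
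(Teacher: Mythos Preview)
Your proposal is correct and follows essentially the same approach as the paper's proof: derive the forward implications from the three preceding lemmas, then compute that the pairwise gcds of $q-1$, $2p$, $q+1$ are $2$ (for $q$ odd) or $1$ (for $q$ even), and conclude via Proposition~\ref{SL_order_prop} that $\alpha=\pm 1$ if its order divides two of the targets. Your handling of the even-$q$ case is in fact slightly more explicit than the paper's.
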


\begin{proof}
One direction of each implication follows directly from Lemmas~\ref{SL_2eigen_lemma}, \ref{SL_1eigen_lemma}, and~\ref{SL_0eigen_lemma}.
To see the converse, observe that the GCD of any two distinct elements of $\{q-1, 2p, q+1\}$ is $2$
is $q$ is odd, and is $1$ is $q$ is even.
Now if $q$ is odd then $-1$ is the only element of order $2$ (Proposition~\ref{SL_order_prop}). Since $\alpha$ is not $1$ or $-1$, we conclude that the order of $\alpha$
cannot divide two of $\{q-1, 2p, q+1\}$.
From this the converse follows.
\end{proof}

A \emph{maximal} cyclic subgroup of a finite group  $G$ is defined to be a cyclic subgroup of $G$ that is not  contained in a cyclic group of $G$
of larger order. Of course every cyclic subgroup is contained in an maximal cyclic group (since $G$ is finite), but a given cyclic subgroup
might be contained in several maximal cyclic groups in a general group $G$. For $G = \SL_2(F)$ we can establish uniqueness.

\begin{corollary}\label{SL_maximalorders_cor}
Let $F$ be a finite field of  order $q$ and characteristic $p$ and let $C$ be a cyclic subgroup of~$\SL_2(F)$.
\begin{itemize}
\item
If $q> 3$ is odd then  $C$ is a maximal cyclic subgroup if and only if it has order~$q-1$, $2p$, or $q+1$.
\item
If $q=3$ then  $C$   is a maximal cyclic subgroup if and only if it has order $2p = 6$ or $q+1 = 4$.
\item
If $q > 2$ is even then $C$ is a maximal cyclic subgroup if and only if it has order~$q-1$, $2$, or $q+1$.
\item
If $q=2$  then $C$ is a maximal cyclic subgroup if and only if it has order~$2$ or~$3$.
\end{itemize}
\end{corollary}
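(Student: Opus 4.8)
The plan is to bootstrap entirely from the three structural lemmas (Lemmas~\ref{SL_2eigen_lemma}, \ref{SL_1eigen_lemma}, \ref{SL_0eigen_lemma}) together with the order dichotomy of Lemma~\ref{SL_eigen_lemma}. The key observation is that every cyclic subgroup of $\SL_2(F)$ sits inside one of three \emph{standard} cyclic subgroups: a split torus of order $q-1$ (when a generator has two distinct eigenvalues), a subgroup of order $2p$ for $q$ odd or of order $p=2$ for $q$ even (one eigenvalue), or a nonsplit torus of order $q+1$ (no eigenvalues). Indeed, given a cyclic $C=\langle\alpha\rangle$ with $\alpha\ne\pm 1$, Lemma~\ref{SL_eigen_lemma} sorts $\alpha$ into one of these three cases and the matching structural lemma places $C$ inside the advertised standard subgroup. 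Consequently any maximal cyclic subgroup must itself equal one of these standard subgroups, and the task reduces to deciding which standard subgroups are genuinely maximal.

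First I would establish the maximality criterion by a divisibility argument. If a standard subgroup $C$ of order $m$ were contained in a strictly larger cyclic subgroup $C'$, then a generator of $C'$ is again $\ne\pm 1$ (since $|C'|>m$ with $m\ge 2$ in all relevant cases), so $|C'|$ is one of $q-1,\,2p,\,q+1$ and $m\mid|C'|$. The arithmetic facts $\gcd(p,q\pm 1)=1$ (as $q\equiv 0 \bmod p$) and $\gcd(q-1,q+1)\mid 2$ then force, for $q$ large enough, that the only standard order divisible by $m$ and at least $m$ is $m$ itself; hence $C=C'$, a contradiction, and $C$ is maximal. Running this for each of $m=q-1$, $m=2p$, and $m=q+1$ shows all three standard orders are maximal precisely when $q>3$ is odd, giving the first bullet. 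For $q>2$ even one repeats the computation with the one-eigenvalue order equal to $2$, noting $q\pm 1$ are then odd, to obtain the orders $q-1,\,2,\,q+1$.

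Finally I would clear the degenerate cases and the non-maximal small subgroups. The trivial subgroup is never maximal, and for $q$ odd the subgroup $\{\pm 1\}$ is not maximal because $-1=\tau^{p}$ where $\tau$ generates a one-eigenvalue subgroup of order $2p$, so $\langle-1\rangle$ lies inside it. This is exactly what removes the spurious order $q-1=2$ when $q=3$: there no two-eigenvalue element exists (Lemma~\ref{SL_2eigen_lemma}), $2\mid 2p=6$, and the surviving maximal orders are $2p=6$ and $q+1=4$, the second bullet. When $q=2$ the split order $q-1=1$ collapses and one is left with $2$ and $q+1=3$, the fourth bullet. I expect the main obstacle to be the careful bookkeeping in these low-$q$ coincidences: verifying that each listed order is actually \emph{realized} by some element (e.g.\ producing an anisotropic element from an irreducible quadratic to realize $q+1$, and a split semisimple element to realize $q-1$ when $q>3$), and confirming that the divisibility coincidences at $q=2,3$ delete precisely the orders absent from the list and no others.
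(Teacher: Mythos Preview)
Your approach is essentially the same as the paper's: use Lemmas~\ref{SL_2eigen_lemma}--\ref{SL_0eigen_lemma} to show every maximal cyclic subgroup is one of the ``standard'' subgroups of order $q-1$, $2p$ (resp.\ $2$), or $q+1$, and then use the pairwise GCD computation to show that a subgroup of one of these orders cannot sit inside a strictly larger cyclic subgroup. The paper's proof is terser but structurally identical.

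One small imprecision in your write-up: when you pass from ``a generator of $C'$ is $\ne\pm 1$'' to ``$|C'|$ is one of $q-1,\,2p,\,q+1$,'' the structural lemmas only give that $|C'|$ \emph{divides} one of these. The clean fix (which is what the paper does) is to take $C'$ to be a \emph{maximal} cyclic subgroup containing $C$; you have already argued that such a $C'$ has standard order, and then $|C|\mid |C'|$ together with $|C|\ge 3$ and $\gcd$ of any two standard orders being at most $2$ forces $|C|=|C'|$. Your treatment of the degenerate cases $q=2,3$ and of the non-maximality of $\{\pm 1\}$ is correct and slightly more explicit than the paper's, which simply says ``a similar but modified argument works.''
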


\begin{proof}
Let $\alpha$ be a generator of $C$. 
Since $C$ is a maximal cyclic group, it has the listed orders by Lemmas~\ref{SL_2eigen_lemma},~\ref{SL_1eigen_lemma}, and~\ref{SL_0eigen_lemma}
depending on the number of eigenvalues of $\alpha$ in $F$.

Conversely, suppose $q>3$ is odd and suppose $C$ has order exactly $q-1$,~$2p$, or $q+1$. Let $D$ be a maximal cyclic subgroup containing $C$.
Then $D$ also has \text{order~$q-1$,~$2p$,} or $q+1$ by the above.
Note that (1) the order of $C$ divides the order of $D$, (2) the order of $C$ is at least $3$ (since $q >3$), and (3)  the GCD of any two of $\{q-1, 2p, q+1\}$ is $2$.
So it is impossible for $C$ and $D$ to have different orders. So~$C = D$.
A similar but modified argument works for  $q=3$, or $q>2$ even, or~$q=2$.
\end{proof}

We are now ready for the first main theorem.

\begin{theorem} \label{SL_sylow_cycloidal_thm}
Let $p$ be an odd prime. Then $\SL_2 (\bF_p)$
is a Sylow-cycloidal group whose $2$-Sylow subgroups are not cyclic.
\end{theorem}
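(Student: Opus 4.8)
The plan is to treat the odd Sylow subgroups and the $2$-Sylow subgroup separately, using the order formula $|\SL_2(\bF_p)| = (p-1)\,p\,(p+1)$ and the fact that $-I$ is the unique element of order $2$, both taken from Proposition~\ref{SL_order_prop}. Since $p$ is odd, $|\SL_2(\bF_p)|$ is even, so there is genuinely something to check at the prime $2$.

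First I would dispose of the odd primes. Fix an odd prime $\ell$ dividing $|\SL_2(\bF_p)|$. An odd prime cannot divide two of $p-1,p,p+1$ (the outer two differ by $2$, and $\ell=p$ excludes $p\pm1$), so $\ell$ divides exactly one of them. If $\ell = p$ then the $p$-Sylow subgroup has order $p$ and is cyclic. If $\ell \mid p-1$, then the split cyclic subgroup $C$ of order $p-1$ from Lemma~\ref{SL_2eigen_lemma} already contains the full $\ell$-part of $|\SL_2(\bF_p)|$, so its $\ell$-Sylow subgroup is a \emph{cyclic} $\ell$-Sylow subgroup of $\SL_2(\bF_p)$; by conjugacy of Sylow subgroups (Theorem~\ref{sylow3_thm}) every $\ell$-Sylow subgroup is then cyclic. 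If $\ell \mid p+1$ the same argument applies using the cyclic subgroup $K$ of order $p+1$ furnished by Lemma~\ref{SL_0eigen_lemma}. For the prime $2$, let $S$ be a $2$-Sylow subgroup: it is a nontrivial $2$-group (as $4 \mid (p-1)(p+1)$) and, being a subgroup of $\SL_2(\bF_p)$, inherits the unique element of order $2$, so by Corollary~\ref{unique_2_cor} it is cyclic or generalized quaternion. Together with the previous paragraph this shows $\SL_2(\bF_p)$ is Sylow-cycloidal.

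It remains to show $S$ is not cyclic, and here is where the real work lies. Write $2^s \,\|\, p-1$ and $2^t \,\|\, p+1$; then the $2$-part of $|\SL_2(\bF_p)|$ is $2^{s+t}$, and since $p-1,p+1$ are consecutive even numbers exactly one of $s,t$ equals $1$ while the other is at least $2$. I would exhibit an explicit generalized quaternion subgroup of order $2^{s+t}$, which is then a full $2$-Sylow subgroup and is non-cyclic, finishing the proof by Sylow conjugacy. When $p \equiv 1 \pmod 4$ (so $s \geq 2$, $t = 1$) I take $\rho$ a generator of the $2$-Sylow subgroup of the split torus $C$ together with $w = \begin{pmatrix} 0 & 1 \\ -1 & 0 \end{pmatrix}$; then $w \in \SL_2(\bF_p)$, $w^2 = -I$, and $w$ inverts $C$, so $\langle \rho, w\rangle$ has order $2^{s+1} = 2^{s+t}$ and satisfies the relations of Proposition~\ref{relations_prop}, hence is generalized quaternion. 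When $p \equiv 3 \pmod 4$ (so $t \geq 2$, $s = 1$) I run the same argument with the nonsplit torus $K$.

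The main obstacle is producing the analogue of $w$ in the nonsplit case: an element of $\SL_2(\bF_p)$ that inverts $K$ and squares to $-I$. I would obtain it from the field structure of $E = \bF_p[\alpha] \cong \bF_{p^2}$ in Lemma~\ref{SL_0eigen_lemma}. Identifying $\bF_p^2$ with $E$ as a one-dimensional $E$-space, the Galois automorphism $\sigma\colon x \mapsto x^p$ is realized by an $\bF_p$-linear map $g$ satisfying $g\,x\,g^{-1} = \sigma(x)$ for $x\in E$ and $g^2 = I$; since the determinant map $E^\times \to \bF_p^\times$, $x \mapsto x^{p+1}$, is surjective (its kernel $K$ has order $p+1$, so the image has order $(p^2-1)/(p+1)=p-1$), I may choose $s_0 \in E^\times$ with $s_0^{p+1} = -1$ and set $w' = g\,s_0$. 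Then $w' \in \SL_2(\bF_p)$ still inverts $K$, while $(w')^2 = \sigma(s_0)\,s_0 = s_0^{p+1} = -I$, exactly as required, so $\langle \rho', w'\rangle$ for $\rho'$ a generator of the $2$-Sylow subgroup of $K$ is generalized quaternion of order $2^{t+1}=2^{s+t}$, completing the argument.
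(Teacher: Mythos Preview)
Your argument is correct, and for the odd Sylow subgroups and the ``cyclic or generalized quaternion'' dichotomy at the prime $2$ it is essentially the paper's proof, just citing Lemmas~\ref{SL_2eigen_lemma} and~\ref{SL_0eigen_lemma} directly rather than going through Corollary~\ref{SL_maximalorders_cor}.

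Where you genuinely diverge is in showing the $2$-Sylow subgroup is not cyclic. The paper argues negatively: any cyclic $2$-subgroup sits inside a maximal cyclic subgroup, whose order lies in $\{p-1,\,2p,\,p+1\}$ by Corollary~\ref{SL_maximalorders_cor}; since $2$ divides both $p-1$ and $p+1$, no single one of these carries the full $2$-part of $|\SL_2(\bF_p)|$, so no cyclic $2$-subgroup can be Sylow. This is a two-line argument once the corollary is in hand. Your approach is instead constructive: you build an explicit generalized quaternion subgroup of the correct order $2^{s+t}$ by adjoining a Weyl-type element to the $2$-Sylow of the appropriate torus. This is longer but yields more, namely the precise isomorphism type of the $2$-Sylow subgroup, and it anticipates the normalizer calculation the paper carries out later in Lemma~\ref{SL_0eigen_lemma2}.

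One small point to tighten in the nonsplit case: you assert $w' = g\,s_0 \in \SL_2(\bF_p)$ after choosing $s_0$ with $s_0^{p+1} = -1$, but this requires $\det g = -1$, which you do not check. The cleanest justification is that $g$ is an involution with $g \neq \pm I$ (it genuinely inverts the nontrivial group $K$), so if $\det g = 1$ then $g$ would be a second element of order $2$ in $\SL_2(\bF_p)$, contradicting Proposition~\ref{SL_order_prop}. Alternatively, compute directly: in a basis $\{1,\alpha\}$ for $E$ over $\bF_p$ with $\alpha^p = t - \alpha$ (where $t$ is the trace of $\alpha$), the matrix of $\sigma$ is $\begin{pmatrix} 1 & t \\ 0 & -1 \end{pmatrix}$, visibly of determinant $-1$.
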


\begin{proof}
Let $q$ be an odd prime dividing the order $(p-1) p (p+1)$ of the group~$\mathrm{SL}_2 (\bF_p)$
and let $q^k$ be the maximal power that $q$ divides $(p-1) p (p+1)$.
By Cauchy's theorem, there is an element $\alpha$ of order $q$.
Let $C$ be a maximal cyclic subgroup~$\mathrm{SL}_2 (\bF_p)$ containing $\alpha$.
By the above corollary,  $C$ has order $p-1$ or $2p$ or $p+1$.
Since~$q \ge 3$,  we have that $q$ and hence $q^k$ divides exactly one of $p-1$ or $p$ or $p+1$.
Since $q$ divides $|C|$ this means that if $|C| = p-1$ or $p+1$ then $q^k$ divides $|C|$.
If $|C|=2p$ then $q = p$ (since $q$ is odd) and so $q^k$ divides $|C|$ as well (and $k=1$ in this case).
In any case, $C$ has a cyclic subgroup of order $q^k$.
Since all $q$-Sylow subgroups are conjugate, we have established that all $q$-Sylow subgroups are cyclic.

Since $\SL_2 (\bF_p)$ has a unique element of order $2$ (Proposition~\ref{SL_order_prop}), each $2$-Sylow subgroup of $\SL_2 (\bF_p)$
has a unique element of order $2$ (by Cauchy's theorem). Thus each $2$-Sylow subgroup $S$ of $\mathrm{SL}_2 (\bF_p)$
is either cyclic or quaternionic (Corollary~\ref{unique_2_cor}). 
So $\SL_2 (\bF_p)$ is a Sylow-cycloidal group.

Suppose $S$ is a cyclic $2$-subgroup of $\SL_2(\bF_p)$, and let $C$ be a maximal cyclic subgroup containing $S$. By the above corollary,
$S$ has order dividing $|C|$ which is either $p-1$ or $2p$ or $p-1$. Since $2$ divides both $p-1$ and $p+1$,
the largest power of $2$ diving $|C|$ is less than the largest power of $2$ dividing the order $(p-1)p(p+1)$ of $\SL_2(\bF_p)$.
Thus $S$ is not a 2-Sylow subgroup.
\end{proof}

Here is a partial converse.

\begin{proposition}
Suppose that $F$ is a finite field.
If $\SL_2 (F)$ is a Sylow-cycloidal group then $F = \bF_p$ for some prime $p$.
\end{proposition}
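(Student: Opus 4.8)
The plan is to locate an explicit Sylow subgroup of $\SL_2(F)$ at the characteristic prime and observe that its structure is incompatible with being cycloidal unless $F$ is prime. Write $q = |F| = p^m$ where $p$ is the characteristic of $F$ and $m \ge 1$; the goal is to show $m = 1$. By Proposition~\ref{SL_order_prop} the order of $\SL_2(F)$ is $(q-1)q(q+1)$. Since $q = p^m$ and both $q-1$ and $q+1$ are coprime to $p$, the largest power of $p$ dividing $|\SL_2(F)|$ is exactly $q = p^m$. The unipotent subgroup
$$
U = \left\{ \begin{pmatrix} 1 & a \\ 0 & 1 \end{pmatrix} : a \in F \right\}
$$
has order $q$, so $U$ is a $p$-Sylow subgroup of $\SL_2(F)$. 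Moreover the assignment $a \mapsto \begin{pmatrix} 1 & a \\ 0 & 1 \end{pmatrix}$ is an isomorphism from the additive group $(F, +)$ onto $U$, so $U \cong (\bZ/p\bZ)^m$ is an elementary abelian $p$-group of rank $m$.

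Next I would invoke the Sylow-cycloidal hypothesis. Since $U$ is a $p$-Sylow subgroup of the Sylow-cycloidal group $\SL_2(F)$, the group $U$ is cycloidal: it is cyclic if $p$ is odd, and cyclic or a generalized quaternion group if $p = 2$. But $U$ is abelian, whereas every generalized quaternion group is non-abelian. Hence in either case $U$ must be cyclic.

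Finally, the elementary abelian group $(\bZ/p\bZ)^m$ is cyclic if and only if $m = 1$. Therefore $m = 1$, so $q = p$ and $F = \bF_p$, as claimed. There is no serious obstacle in this argument; the only points requiring a moment's care are the verification that $U$ is genuinely a full $p$-Sylow subgroup (which rests on the coprimality of $p$ with $q \pm 1$) and the observation that the abelianness of $U$ rules out the generalized quaternion alternative in the case $p = 2$.
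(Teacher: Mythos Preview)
Your proof is correct and uses essentially the same subgroup as the paper: the unipotent upper-triangular matrices, isomorphic to the additive group of $F$. The paper's argument is a touch shorter because it invokes Theorem~\ref{abelian_cyclic_thm} (every Abelian subgroup of a Sylow-cycloidal group is cyclic) directly on this subgroup, bypassing the need to verify that $U$ is a full $p$-Sylow subgroup or to handle the generalized-quaternion alternative separately; your route works from the definition of Sylow-cycloidal instead, which costs you the extra step of checking the Sylow condition but is otherwise the same idea.
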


\begin{proof}
By Lemma~\ref{SL_1eigen_lemma} there is a subgroup $D$ of $\SL_2 (F)$ isomorphic to the additive group $F$.
If $\SL_2 (F)$ is a Sylow-cycloidal group then the Abelian group $D\cong F$ must be cyclic.
This can only happen if $F = \bF_p$ where $p$ is the characteristic of $F$.
\end{proof}

\begin{remark}
The case $\SL_2 (\bF_2)$ is special. It has order $6$, and so is Sylow-cyclic since it is of prime free order. As we have seen, all its cyclic subgroups are of
order $2$ or~$3$, so it is not cyclic and so must be dihedral. Note that  $\SL_2 (\bF_2)$ fails to have a unique element of order $2$.
\end{remark}

The next major result is that all cyclic subgroups of $\SL_2(\bF_p)$ of the same order are conjugate when $p$ is a prime.
It is a bit easier to show that such groups are conjugate in $\mathrm{GL}_p (\bF_p)$, but the following two lemmas
will give us tools to achieve conjugacy in $\SL_2(\bF_p)$ instead of~$\mathrm{GL}_p (\bF_p)$.

\begin{lemma}\label{SL_centralize_lemma}
Let $\alpha \in \mathrm{GL}_2 (F)$ where $F$ is a finite field of order $q$ and let $d \in F^\times$.
If $\alpha$ has two distinct eigenvalues in $F$ or no eigenvalues in $F$ then 
there is an element~$\beta \in \mathrm{GL}_2(F)$ such that $\beta \alpha \beta^{-1} = \alpha$
and $\det \beta = d$.
\end{lemma}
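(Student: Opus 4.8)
The plan is to produce $\beta$ inside the centralizer of $\alpha$ in $\mathrm{GL}_2(F)$ and to show that the determinant, restricted to this centralizer, already surjects onto $F^\times$. I would treat the two hypotheses separately, since the centralizer has a different description in each, and in both cases the desired $\beta$ comes for free once surjectivity of $\det$ on the centralizer is established.

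First I would handle the case where $\alpha$ has two distinct eigenvalues in $F$. By Lemma~\ref{SL_2eigen_lemma} there is a basis with respect to which $\alpha$ is a diagonal matrix $\mathrm{diag}(a, b)$ with $a \neq b$. Because the diagonal entries are distinct, a direct computation identifies the matrices commuting with $\alpha$ as exactly the diagonal matrices, and the determinant of $\mathrm{diag}(s, t)$ is $st$, so the assignment $(s, t) \mapsto st$ visibly surjects onto $F^\times$. Thus one may take $\beta = \mathrm{diag}(d, 1)$ in this basis; conjugating back to the original basis alters neither the commuting relation nor the determinant, so this $\beta$ does the job.

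Next I would handle the case where $\alpha$ has no eigenvalue in $F$, which is where Lemma~\ref{SL_0eigen_lemma} does the heavy lifting: the subring $E = F[\alpha]$ of $M_2(F)$ is a field of order $q^2$, its unit group $E^\times$ is cyclic of order $q^2 - 1$, and the determinant restricts on $E^\times$ to the homomorphism $x \mapsto x^{q+1}$ landing in $F^\times$. Every element of $E^\times$ commutes with $\alpha$ (as $\alpha \in E$ and $E$ is commutative) and lies in $\mathrm{GL}_2(F)$, so it suffices to check that this power map is surjective onto $F^\times$. Since $E^\times$ is cyclic of order $q^2 - 1 = (q-1)(q+1)$ and $\gcd(q+1,\, q^2-1) = q+1$, the image of $x \mapsto x^{q+1}$ is the subgroup of order $(q^2-1)/(q+1) = q-1$; being contained in $F^\times$, which has order $q-1$, this image must be all of $F^\times$. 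Hence some $\beta \in E^\times$ has $\det \beta = d$, finishing this case.

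The computations are all routine, and I expect no genuine obstacle: the only point that needs a moment's care is the surjectivity in the second case, and that reduces entirely to the divisibility identity $\gcd(q+1, q^2-1) = q+1$ together with the cyclicity of $E^\times$ furnished by Lemma~\ref{SL_0eigen_lemma} (which in turn rests on Corollary~\ref{field_cyclic_cor}).
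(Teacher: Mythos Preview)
Your proof is correct and follows essentially the same approach as the paper: in the diagonalizable case you take $\beta = \mathrm{diag}(d,1)$ in an eigenbasis, and in the case of no eigenvalues you work inside $E^\times = F[\alpha]^\times$ and use that $\det$ restricted to $E^\times$ is $x\mapsto x^{q+1}$, which surjects onto $F^\times$. The paper's argument is the same, just a bit terser on the surjectivity step where you spell out the gcd computation.
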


\begin{proof}
Suppose $\alpha$ has two eigenvalues in $F$ and let 
$v_1, v_2 \in F^2$ be a basis of eigenvectors. Then let $\beta$ be such that its associated
linear tranformation maps $v_1 \mapsto d v_1$ and $v_2 \mapsto v_2$.
Then $\beta$ clearly works.

Suppose that $\alpha$ has no eigenvalues in $F$. By Lemma~\ref{SL_0eigen_lemma}, $\alpha \in E^\times$
where $E^\times$ is a cyclic subgroup of $\mathrm{GL}_2(F)$ of order $q^2-1$ (in fact $E^\times$ is the multiplicative group of a field);
furthermore, the determinant map on $E^\times \to F^\times$ is given by $x\mapsto x^{q+1}$ and so is surjective since $F^\times$ has order $q-1$.
Now just let $\beta \in E^\times$ be an element of determinant $d$.
\end{proof}

\begin{lemma} \label{SL_normalize_lemma}
Let $p$ be a prime and let
$C$ be a cyclic subgroup of $\SL(\bF_p)$. \text{Let~$d \in \bF_p^\times$.}
Then there is an 
there is an element~$\beta \in \mathrm{GL}_2(F)$ such that $\beta C \beta^{-1} = C$
\text{and~$\det \beta = d$.}
\end{lemma}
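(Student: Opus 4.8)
The plan is to reduce everything to the behaviour of a single generator $\alpha$ of $C$, and then to split into cases according to how many eigenvalues $\alpha$ has in $\bF_p$, exactly along the trichotomy of Lemma~\ref{SL_eigen_lemma}. First I would dispose of the degenerate situation where $C \subseteq \{\pm I\}$ is central, since then every element of $\GL_2(\bF_p)$ normalizes $C$ and one may simply take $\beta = \left(\begin{smallmatrix} d & 0 \\ 0 & 1 \end{smallmatrix}\right)$. So from now on assume $\alpha \neq \pm I$, which lets the eigenvalue classification apply.

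When $\alpha$ has two distinct eigenvalues in $\bF_p$, or no eigenvalues in $\bF_p$, I would invoke Lemma~\ref{SL_centralize_lemma} directly: it produces $\beta \in \GL_2(\bF_p)$ with $\beta \alpha \beta^{-1} = \alpha$ and $\det\beta = d$. Since $\beta$ fixes the generator $\alpha$ under conjugation, it fixes all of $C = \langle\alpha\rangle$, so $\beta C \beta^{-1} = C$ and these two cases are finished immediately. The point worth flagging is that here $\beta$ can in fact be taken to \emph{centralize} $C$, not merely to normalize it.

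The remaining, and genuinely harder, case is when $\alpha$ has exactly one eigenvalue in $\bF_p$. Lemma~\ref{SL_centralize_lemma} deliberately excludes this case, and for good reason: the centralizer of such an $\alpha$ consists of matrices of the form $\left(\begin{smallmatrix} s & t \\ 0 & s \end{smallmatrix}\right)$, whose determinants $s^2$ are squares, so one cannot realize an arbitrary $d$ by centralizing, and honest normalization is forced. Choosing a basis whose first vector is the eigenvector, Lemma~\ref{SL_1eigen_lemma} puts $\alpha$ in the form $\epsilon \left(\begin{smallmatrix} 1 & a \\ 0 & 1 \end{smallmatrix}\right)$ with $\epsilon \in \{\pm 1\}$ and $a \neq 0$. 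Taking the diagonal matrix $\beta = \left(\begin{smallmatrix} d & 0 \\ 0 & 1 \end{smallmatrix}\right)$ in this basis gives $\det\beta = d$ and a short computation shows
\[
\beta \alpha \beta^{-1} = \epsilon \begin{pmatrix} 1 & d a \\ 0 & 1 \end{pmatrix}.
\]
Since it suffices to check that $\beta\alpha\beta^{-1}$ lies in $C = \langle \alpha\rangle$ (equality of orders then forces $\beta C \beta^{-1} = C$), I would finish by observing that this is precisely where the primality of the field enters: because $\bF_p$ has prime order, its additive group is generated by the single nonzero element $a$, so $da$ is an integer multiple of $a$, and hence $\epsilon\left(\begin{smallmatrix} 1 & da\\0&1\end{smallmatrix}\right)$ is a power of $\alpha$ (a power of the right parity supplying the sign $\epsilon$) and belongs to $C$. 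I expect this final membership verification — together with the small bookkeeping of the sign $\epsilon$ via the Chinese remainder theorem when $p$ is odd, the case $p=2$ being trivial since then $d=1$ — to be the only real subtlety in the argument.
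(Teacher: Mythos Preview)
Your proposal is correct and follows essentially the same approach as the paper: reduce to a generator $\alpha$, invoke Lemma~\ref{SL_centralize_lemma} for the two-eigenvalue and zero-eigenvalue cases, and in the single-eigenvalue case use the diagonal matrix $\beta = \left(\begin{smallmatrix} d & 0 \\ 0 & 1 \end{smallmatrix}\right)$ and check that $\beta\alpha\beta^{-1}$ is a power of $\alpha$. The paper phrases your parity/CRT step as simply choosing an odd positive integer $m$ with $m \equiv d \pmod p$ so that $\left(\begin{smallmatrix} e & a \\ 0 & e \end{smallmatrix}\right)^m = \left(\begin{smallmatrix} e & da \\ 0 & e \end{smallmatrix}\right)$; your explicit separation of the central case $C \subseteq \{\pm I\}$ is a harmless addition (the paper's formula happens to cover it since then $a=0$).
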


\begin{proof}
Let $\alpha$ be a generator of $C$. If $\alpha$ has two distinct eigenvalues or no eigenvalues then the result follows from the
previous lemma. So suppose that $\alpha$ has exactly one eigenvalue in $F$ and let $v_1$ be an eigenvector. Let $v_2$ be such
that $v_1, v_2$ form a basis for $F^2$. The representation for $\alpha$ in this basis is of the form
$$
\begin{pmatrix} e & a \\ 0 & e \end{pmatrix}
$$
where $e$ is $1$ or $-1$. So let $\beta$ be an element with the following matrix representation (for this same basis $v_1, v_2$):
$$
\begin{pmatrix} d & 0 \\ 0 & 1 \end{pmatrix}
$$
Note that
$$
\begin{pmatrix} d & 0 \\ 0 & 1 \end{pmatrix} \begin{pmatrix} e & a \\ 0 & e \end{pmatrix}
\begin{pmatrix} d^{-1} & 0 \\ 0 & 1 \end{pmatrix} = 
\begin{pmatrix} e &d  a \\ 0 & e \end{pmatrix} = \begin{pmatrix} e &  a \\ 0 & e \end{pmatrix}^m
$$
where $m$ is an odd positive integer such that $m\equiv d \pmod p$.
\end{proof}

As mentioned above, our next goal is to establish that cyclic subgroups (equivalently, Abelian subgroups) of the same order of $\SL_2(\bF_p)$ are conjugate.
We start with cyclic groups of order $p-1$, followed by order $p$, then order $p+1$, ending with general order.

\begin{lemma}\label{SL_2eigen_lemma2}
Let $F$ be a finite field of order $q$.
The vector space $F^2$ has $q+1$ distinct one-dimensional subspaces. 
Let $L_1$ and $L_2$ be two distinct one-dimensional subspaces of $F^2$.
Then the set elements of $\SL_2(F)$ with a basis of eigenvectors in~$L_1 \cup L_2$
forms a cyclic subgroup of $\SL_2(F)$ of order $q-1$. 

Now assume $q>3$. Then all cyclic subgroups of 
$\SL_2(F)$ of order~$q-1$ arise in this way. 
There are $\frac{1}{2}q(q+1)$ cyclic subgroups of order $q-1$ and they are conjugate.
Two distinct cyclic subgroups of order $q-1$ have intersection $\{ \pm 1\}$.
\end{lemma}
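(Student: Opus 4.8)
The plan is to treat the five assertions in order, resting everything on the bijection between cyclic subgroups of order $q-1$ and unordered pairs of distinct one-dimensional subspaces (lines) of $F^2$. For the line count, the nonzero vectors number $q^2-1$, each line contains $q-1$ of them, and distinct lines meet only at $0$, so there are $(q^2-1)/(q-1)=q+1$ lines. For the second assertion, given distinct lines $L_1,L_2$ I would pick spanning vectors $v_1,v_2$; these form a basis since the lines differ, and an element of $\SL_2(F)$ having both $L_1$ and $L_2$ as eigenlines is exactly a matrix $\mathrm{diag}(a,a^{-1})$ (determinant one) in this basis. As $a$ ranges over $F^\times$ these form a subgroup isomorphic to $F^\times$, which is cyclic of order $q-1$ by Corollary~\ref{field_cyclic_cor}. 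I will call this the diagonal group $C_{L_1,L_2}$.

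For the third assertion (with $q>3$), let $C$ be cyclic of order $q-1$ with generator $\alpha$. Since $q-1>2$ we have $\alpha\ne\pm 1$, so by Lemma~\ref{SL_eigen_lemma} $\alpha$ has two distinct eigenvalues in $F$, hence two distinct eigenlines $L_1,L_2$. By Lemma~\ref{SL_2eigen_lemma}, $\alpha$ lies in a diagonal cyclic subgroup of order $q-1$, which is precisely $C_{L_1,L_2}$; as $\langle\alpha\rangle\subseteq C_{L_1,L_2}$ and both have order $q-1$, they coincide. To turn the correspondence into a bijection I would note that $\{L_1,L_2\}$ is recovered from $C_{L_1,L_2}$ as the common eigenlines of its non-central elements $\mathrm{diag}(a,a^{-1})$ with $a\ne\pm 1$ (such $a$ exist because $q-1>2$). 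Hence $\{L_1,L_2\}\mapsto C_{L_1,L_2}$ is a bijection, and counting unordered pairs gives $\binom{q+1}{2}=\tfrac12 q(q+1)$ subgroups.

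The main work, and the step I expect to be the real obstacle, is the conjugacy. Here $\GL_2(F)$ acts transitively on ordered pairs of distinct lines (send a basis adapted to one pair to a basis adapted to the other, which conjugates the corresponding diagonal groups), so for any two such subgroups $C_1,C_2$ there is $\gamma\in\GL_2(F)$ with $\gamma C_1\gamma^{-1}=C_2$. The difficulty is upgrading $\gamma$ to a conjugator of determinant one. I would set $d=\det\gamma$ and, using that a generator $\alpha_2$ of $C_2$ has two distinct eigenvalues, invoke Lemma~\ref{SL_centralize_lemma} to obtain $\beta\in\GL_2(F)$ centralizing $\alpha_2$ (hence fixing $C_2$ under conjugation) with $\det\beta=d^{-1}$; then $\beta\gamma\in\SL_2(F)$ and $(\beta\gamma)C_1(\beta\gamma)^{-1}=\beta C_2\beta^{-1}=C_2$, giving conjugacy inside $\SL_2(F)$.

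Finally, for the intersection claim I would observe that $\pm I=\mathrm{diag}(\pm 1,\pm 1)$ lies in every diagonal group, so $\{\pm 1\}\subseteq C_1\cap C_2$. Conversely, any non-central $g\in C_1\cap C_2$ has two distinct eigenvalues and therefore exactly two eigenlines; these must equal both defining pairs simultaneously, forcing $\{L_1,L_2\}=\{L_1',L_2'\}$ and hence $C_1=C_2$ by the bijection. Thus two distinct cyclic subgroups of order $q-1$ meet exactly in $\{\pm 1\}$.
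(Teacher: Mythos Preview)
Your proof is correct and follows essentially the same approach as the paper's: the diagonal description of $C_{L_1,L_2}$, the use of Lemma~\ref{SL_eigen_lemma} to recover the eigenline pair from a generator when $q>3$, the $\GL_2$-conjugation corrected to $\SL_2$ via Lemma~\ref{SL_centralize_lemma}, and the intersection argument via uniqueness of eigenlines all match the paper. The only cosmetic difference is that you apply the determinant-correcting centralizer on the target side ($\alpha_2$) while the paper applies it on the source side, and you order the bijection/count before the intersection claim whereas the paper establishes the intersection first and derives injectivity from it.
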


\begin{proof}
Counting the number of one-dimensional subspaces is straightforward.

Let $v_1, v_2$ be a basis for $F^2$ such that  $v_1 \in L_1$ and $v_2 \in L_2$. Then $\alpha \in \SL_2(F)$ has a basis of eigenvectors in $L_1 \cup L_2$
if and only if it has form
$$
\alpha =  \begin{pmatrix}
a & 0 \\
0 & a^{-1}
\end{pmatrix}
$$
with respect to this basis, where $a \in F^\times$. So the set of such matrices is a cyclic subgroup of order $q-1$.

Assume $q>3$ for the remainder of the proof.

Let $C$ be a cyclic subgroup of $\SL_2(F)$ of order $q-1$, and let $\alpha$ be a generator of~$C$. 
So $\alpha$ is not $1$ or $-1$ since $q-1>2$.
By Lemma~\ref{SL_eigen_lemma},~$\alpha$ has two distinct eigenvalues, and its eigenvectors determine two
distinct one-dimensional subspaces~$L_1$ and $L_2$ of $F^2$. So $\alpha$ is in a cyclic group of order $q-1$ of the form described above.
Hence~$C$ has the desired form.

Suppose $C$ and $C'$ are cyclic subgroup of $\SL_2(F)$ of order $q-1$. Suppose~$C$ is defined using $L_1, L_2$
and $C'$ is define using $L_1', L_2'$.  Suppose $g \in C \cap C'$ is not~$\pm 1$. Then $g$ has distinct eigenvalues (Lemma~\ref{SL_eigen_lemma}).
Let $v_1$ be an eigenvector of $g$. Then~$v_1 \in L_i$ and~$v_1 \in L'_j$ for some $i, j$, so $L_i = L'_j$ since these are one-dimensional.
After renumbering we can assume~$L_1 = L'_1$. Let $v_2$ be an eigenvector of $g$ not in the span of $v_1$. Then $v_2$
is in $L_2$ and $L_2'$ so $L_2 = L'_2$. Thus~$C = C'$. In other words,  distinct cyclic subgroup of $\SL_2(F)$ of order $q-1$
intersect in $\{\pm 1\}$. (Note that $-1 \in C$ and $-1 \in C'$ by Proposition~\ref{SL_order_prop} if $q$ is odd, and trivially if $q$ is even).

So different choices of $\{L_1, L_2\}$ will produce difference cyclic subgroups of order~$q-1$ since $q-1>2$. So there are $\frac{1}{2} q(q+1)$ such cyclic subgroups.

Suppose $C$ and $C'$ are cyclic subgroup of $\SL_2(F)$ of order $q-1$. Suppose $C$ is defined using $L_1, L_2$
and $C'$ is define using $L_1', L_2'$.  
Let $\alpha$ be a generator for $C$.
Let~$\beta_1 \in \mathrm{GL}_2(F)$ give a linear transformation mapping $L_1$ to $L_1'$ and $L_2$ to $L_2'$.
Let~$\beta_2  \in \mathrm{GL}_2(F)$ be such that $\beta_2 \alpha \beta^{-1}_2 = \alpha$ and $\det (\beta_1 \beta_2) = 1$
(see Lemma~\ref{SL_centralize_lemma}). Let $\beta = \beta_1 \beta_2$.
Observe that $C' = \beta C \beta^{-1}$. So~$C$ and $C'$ are conjugate.
\end{proof}

\begin{lemma}\label{SL_1eigen_lemma2}
Let $p$ be an odd prime.
The vector space $\bF_p^2$ has $p+1$ distinct one-dimensional subspaces. 
Let $L$ be a one-dimensional subspaces of $\bF_p^2$.
Then the set elements of $\SL_2(\bF_p)$ with exactly one eigenvalue, and with an eigenvector in $L$,
forms a cyclic subgroup of $\SL_2(\bF_p)$  of order $2p$. 

All cyclic subgroups of 
$\SL_2(\bF_p)$  of order $2p$ arise in this way. 
There are $p+1$ cyclic subgroups of order $2p$ and they are conjugate.
Two distinct cyclic subgroups of order $2p$ have intersection $\{ \pm I\}$.
\end{lemma}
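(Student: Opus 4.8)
The plan is to mirror the structure of the proof of Lemma~\ref{SL_2eigen_lemma2}, replacing the ``two distinct eigenvalues'' geometry by the ``exactly one eigenvalue'' geometry already set up in Lemma~\ref{SL_1eigen_lemma}. First I would dispose of the count of one-dimensional subspaces exactly as before: there are $(p^2-1)/(p-1) = p+1$ of them. Next, fixing a line $L$ spanned by a vector $v$, I would observe that the set described in the lemma is precisely the group $D_v$ of Lemma~\ref{SL_1eigen_lemma}: an element has $v$ as an eigenvector if and only if it has an eigenvector in $L$, and $D_v$ depends only on the line $L$ and not on the chosen spanning vector. That lemma gives $D_v \cong \{\pm 1\} \times \bF_p$; since $p$ is odd, $\gcd(2,p)=1$, so this product is cyclic of order $2p$. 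This settles the first assertion.

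For the converse statement that every cyclic subgroup $C$ of order $2p$ arises in this way, I would take a generator $\alpha$ of $C$. Since $\alpha$ has order $2p>2$ it is not $\pm I$, and as its order divides $2p$, Lemma~\ref{SL_eigen_lemma} forces $\alpha$ to have exactly one eigenvalue; let $L$ be its eigenspace, which is one-dimensional because $\alpha \ne \pm I$. Then $\alpha \in D_v$ for $v$ spanning $L$, and comparing orders, $|C| = |D_v| = 2p$, yields $C = D_v$. This same uniqueness of the eigenspace for non-central elements shows that distinct lines produce distinct subgroups, so there are exactly $p+1$ of them; and it shows that two distinct such subgroups meet only in $\{\pm I\}$, since a common non-central element would have its eigenspace equal to both lines, a contradiction (while $\pm I$ lie in every $D_v$).

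The one genuinely delicate point will be conjugacy \emph{within} $\SL_2(\bF_p)$ rather than merely $\mathrm{GL}_2(\bF_p)$, and here I would invoke Lemma~\ref{SL_normalize_lemma} exactly as in the proof of Lemma~\ref{SL_2eigen_lemma2}. Given $D_v$ and $D_{v'}$ attached to lines $L$ and $L'$, I would first choose $\beta_1 \in \mathrm{GL}_2(\bF_p)$ carrying $L$ to $L'$, so that $\beta_1 D_v \beta_1^{-1} = D_{v'}$. Then, applying Lemma~\ref{SL_normalize_lemma} to the cyclic group $D_v$ with $d = (\det \beta_1)^{-1}$, I obtain $\beta_2 \in \mathrm{GL}_2(\bF_p)$ normalizing $D_v$ with $\det \beta_2 = (\det \beta_1)^{-1}$. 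Setting $\beta = \beta_1 \beta_2$ gives $\det \beta = 1$, hence $\beta \in \SL_2(\bF_p)$, while $\beta D_v \beta^{-1} = \beta_1 D_v \beta_1^{-1} = D_{v'}$, which is the required conjugacy inside $\SL_2(\bF_p)$. I expect this determinant-correction step to be the main obstacle, since everything else is a direct transcription of the earlier lemmas.
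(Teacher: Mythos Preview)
Your proposal is correct and follows essentially the same approach as the paper's proof: identify the set with the group $D_v$ of Lemma~\ref{SL_1eigen_lemma}, use Lemma~\ref{SL_eigen_lemma} to show every order-$2p$ cyclic subgroup arises this way, deduce the intersection and count statements from uniqueness of the eigenspace, and obtain conjugacy in $\SL_2(\bF_p)$ via the determinant-correction trick of Lemma~\ref{SL_normalize_lemma}. The paper's version rewrites the matrix form explicitly rather than citing $D_v$ by name, but the argument is otherwise the same.
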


\begin{proof}
Counting the number of one-dimensional subspaces is straightforward.

Let $v_1, v_2$ be a basis for $\bF_p^2$ such that  $v_1 \in L$. Then $\alpha \in \SL_2(\bF_p)$ has  exactly one eigenvalue and has an eigenvector in $L$ if
and only if can be written as
$$
\alpha =  \pm  \begin{pmatrix}
1 & a \\
0 & 1
\end{pmatrix}
$$
with respect to this basis, where $a \in \bF_p$. Observe that the set of such matrices forms a cyclic group isomorphic to
$\{\pm 1\} \times \bF_p$ where $\bF_p$ here is the additive group. 

Let $C$ be a cyclic subgroup of $\SL_2(\bF_p)$ of order $2p$, and let $\alpha$ be a generator of~$C$. 
Then $\alpha$ has exactly one eigenvalue by Lemma~\ref{SL_eigen_lemma}. 
If $L$ is an eigenspace for~$\alpha$, then as above $\alpha$ is contained in a group of order $2p$ of the given form.
So $C$ is a group of the desired form.

Suppose $C$ and $C'$ are cyclic subgroup of $\SL_2(\bF_p)$ of order $2p$ where $C$ is defined using $L$
and  $C'$ is define using $L'$.  Suppose $g \in C \cap C'$ is not~$\pm 1$. Then $g$ has a unique eigenvalue, and the
eigenspace associated to that eigenvalue has dimension one (since $g \ne \pm 1$). Thus $L=L'$ and so $C = C'$.
In other words,  distinct cyclic subgroup of $\SL_2(\bF_p)$ of order $2p$
intersect in $\{\pm 1\}$.

So different choices of $L$ will produce difference cyclic subgroups of order~$2p$. So there are $p+1$ such cyclic subgroups.

Suppose $C$ and $C'$ are cyclic subgroup of $\SL_2(\bF_p)$ of order $2p$. Suppose $C$ is defined using $L$
and $C'$ is define using $L'$. Let $\beta_1 \in \mathrm{GL}_2 (\bF_p)$ represent a linear transformation sending~$L$ to $L'$.
Let~$\beta_2  \in \mathrm{GL}_2(F)$ be such that $\beta_2 C \beta^{-1}_2 = C$ and so that~$\det (\beta_1 \beta_2) = 1$
(see Lemma~\ref{SL_normalize_lemma}). Let $\beta = \beta_1 \beta_2$.
Observe that~$C' = \beta C \beta^{-1}$. So~$C$ and $C'$ are conjugate.
\end{proof}

\begin{lemma}\label{SL_0eigen_lemma2}
Let $F$ be a finite field of order $q$
and let  $M_2(F)$ be the ring of $2$-by-$2$ matrices with entries in $F$.
Then the following hold:
\begin{itemize}
\item
If $C$ is a cyclic subgroup of $\SL_2(F)$ of order $q+1$ then there is a unique quadratic field extension $E$ of $F$ in
$M_2(F)$ containing $C$, and~$C$ is the unique subgroup of $E^\times$
of order $q + 1$. 

\item
Two distinct cyclic subgroups  of $\SL_2(F)$ of order~$q+1$ are conjugate in $\SL(F)$ and have intersection $\{ \pm 1\}$.

\item
If $C$ is a cyclic subgroup of $\SL_2(F)$ of order $q+1$ then
there is a $\gamma\in \SL_2(F)$ such that $x \mapsto \gamma x \gamma^{-1}$ is an automorphism of
the group $C$
sending any $x\in C$ to $x^{-1}$.

\end{itemize}
\end{lemma}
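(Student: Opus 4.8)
The plan is to treat the three bullet points in order, extracting everything about the ambient field $E$ from Lemma~\ref{SL_0eigen_lemma} and reducing conjugacy to an elementary, two-dimensional form of the Skolem--Noether principle. First I would establish the first bullet. Let $\alpha$ generate $C$. Since $|C| = q+1 > 2$, the generator $\alpha$ is neither $1$ nor $-1$ and has order exactly $q+1$, so by Lemma~\ref{SL_eigen_lemma} it has no eigenvalue in $F$. Lemma~\ref{SL_0eigen_lemma} then supplies the quadratic field $E = F[\alpha] \subseteq M_2(F)$ of size $q^2$, together with the cyclic group $K = \SL_2(F) \cap E^\times$ of order $q+1$ containing $\alpha$; since $C = \langle\alpha\rangle \subseteq K$ and both have order $q+1$, we get $C = K \subseteq E$. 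For uniqueness of $E$: as $\alpha$ has no eigenvalue it is not a scalar, so $\alpha \notin F$; any quadratic subfield of $M_2(F)$ containing $C$ contains $F[\alpha] = E$, and equals $E$ by comparing $F$-dimensions. Finally $E^\times$ is cyclic of order $q^2-1$ (Corollary~\ref{field_cyclic_cor}) and $(q+1)\mid(q^2-1)$, so $C$ is its unique subgroup of order $q+1$.

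The heart of the argument --- and the step I expect to be the main obstacle --- is a lemma-within-the-proof: if $E, E' \subseteq M_2(F)$ are quadratic subfields and $\psi\colon E \to E'$ is an $F$-algebra isomorphism, then there is $\beta \in \GL_2(F)$ with $\beta x \beta^{-1} = \psi(x)$ for all $x \in E$. I would prove this by hand rather than invoking Skolem--Noether: viewing $F^2$ as a module over $E$ through the inclusion $E \hookrightarrow \mathrm{End}_F(F^2)$, it is one-dimensional, so for any nonzero $u, u' \in F^2$ the maps $x \mapsto xu$ and $x \mapsto \psi(x)u'$ are $F$-linear isomorphisms $E \to F^2$; their composite $\beta$ lies in $\GL_2(F)$ and satisfies $\beta(xv) = \psi(x)\beta(v)$, that is $\beta x \beta^{-1} = \psi(x)$. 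The delicate point is then the determinant: such a $\beta$ need not lie in $\SL_2(F)$, and I would repair this using Lemma~\ref{SL_centralize_lemma}, choosing $\beta_2$ centralizing a generator $\alpha$ (hence all of $C$) with $\det\beta_2 = (\det\beta)^{-1}$, so that $\gamma = \beta\beta_2 \in \SL_2(F)$ induces the same conjugation on $C$.

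With these tools the remaining bullets follow quickly. For the intersection claim, if $g \in C \cap C'$ with $g \neq \pm 1$ then $g \notin F$, so $F[g]$ is the unique quadratic field containing each of $C$ and $C'$ by the first bullet, forcing $C$ and $C'$ to be the same unique order-$(q+1)$ subgroup of $F[g]^\times$ --- a contradiction; thus $C \cap C' \subseteq \{\pm 1\}$, while $\pm 1 \in C \cap C'$ since $-1$ has order dividing $q+1$. For conjugacy I would produce an $F$-algebra isomorphism $\psi\colon E \to E'$: both are fields of order $q^2$ containing $F$, and the irreducible quadratic minimal polynomial of $\alpha$ over $F$ has a root in $E'$ (every such polynomial splits over a field of order $q^2$), so sending $\alpha$ to such a root defines $\psi$. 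Then $\psi(C) = C'$ as the unique order-$(q+1)$ subgroups, and the preceding construction together with the determinant correction yield $\gamma \in \SL_2(F)$ with $\gamma C \gamma^{-1} = C'$. Finally, the third bullet is the special case $E' = E$ with $\psi = \sigma$ the Frobenius $x \mapsto x^q$, which by Lemma~\ref{SL_0eigen_lemma} restricts to $x \mapsto x^{-1}$ on $C$; the same construction and determinant fix produce $\gamma \in \SL_2(F)$ inducing inversion on $C$.
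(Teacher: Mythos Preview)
Your proof is correct, and for the first bullet and the intersection claim it matches the paper's argument essentially verbatim. Where you genuinely diverge is in the conjugacy and inversion steps: the paper avoids any abstract isomorphism $\psi\colon E \to E'$ and instead chooses a concrete element $\beta \in E^\times$ with $\beta^2 = g$ for a fixed generator $g$ of $F^\times$, then uses the basis $v_1, \beta v_1$ (and similarly $v_1, \beta' v_1$ for $E'$) to write down the conjugating matrix $\gamma_1$ explicitly; the third bullet is the special case $\beta' = -\beta$. Both routes then invoke Lemma~\ref{SL_centralize_lemma} to fix the determinant.

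Your Skolem--Noether style argument (realizing the conjugator as the transition map between two one-dimensional $E$-module structures on $F^2$) is cleaner and more conceptual: it works uniformly for all $q$, whereas the paper's choice of $\beta$ with $\beta^2 = g$ tacitly uses that $q+1$ is even, which fails in characteristic~$2$ (harmless for the paper's applications, which specialize to odd $p$). The price you pay is importing one standard fact about finite fields---that any irreducible quadratic over $F$ has a root in any quadratic extension of $F$---to produce the abstract $F$-isomorphism $\psi\colon E \to E'$; the paper's explicit construction sidesteps this by manufacturing the isomorphism directly via conjugation.
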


\begin{proof}
Let $\alpha$ be a generator of $C$.
By Lemma~\ref{SL_eigen_lemma}, $\alpha$ cannot have eigenvalues in~$F$.
So by Lemma~\ref{SL_0eigen_lemma}, the subring $E = F[\alpha]$ of of $M_2(F)$ is a field of order $q^2$,
and $\alpha$ is contained in $K  = \SL_2(F) \cap E^\times$, which is the unique subgroup of $E^\times$ of order~$q+1$.
So in fact $C = K$.
Since $E = F[\alpha]$ any quadratic field extension $E$ of~$F$ in $M_2(F)$ containing~$C$ would contain $\alpha$ and so $F[\alpha]$, 
and hence be equal to $E$. So $E$ is the unique such field.

Suppose $C$ and $C'$ are two subgroups of $\SL_2(F)$ size $q+1$ and that $E$ and~$E'$ are the respective quadratic extensions of $F$ containing $C$ and $C'$.
Suppose $\beta \in C\cap C'$ is not~$\pm 1$. Then $\beta$ cannot be of the form $c 1$ for some $c \in F$ since the determinant of~$\beta$ is $1$.
Thus $\beta$ generates a proper extension of $F$ contained in the intersection~$E \cap E'$. So~$E = E'=F[\beta]$
since $[E: F] = [E: F] = 2$ and $[E':F] \ge 2$. We conclude that~$C = C'$.
In other words, if~$C$ and $C'$ are distinct subgroups of $\SL_2(F)$ of size~$q+1$ then their intersection is $\{\pm 1\}$. 

Next fix a generator $g$ of $F^\times$. In other words, $g$ has order $q-1$.
Suppose $C$ is a cyclic subgroup of $\SL_2(F)$ of order~$q+1$ and let $E$ be the field extension of~$F$ inside~$M_2(F)$
of size $q^2$ containing $C$.
Since~$E^\times$ has order $q^2-1$, and since $q+1$ is even, there is a subgroup of $E^\times$ of order $2(q-1)$.
This implies that $E^\times$ has an element $\beta$ such that $\beta^2=g$.
Fix a nonzero vector $v_1 \in F^2$ and let $v_2 = \beta v$ (viewing~$v$ as a column vector).
By the Cayley-Hamilton theorem, the characteristic polynomial of $\beta$ is $X^2 - g$, so $\beta$ has no eigenvalues in $F$.
Thus $v_1, v_2$ must be a basis of $F^2$. Note also that $\beta v_2 = g v_1$.

Suppose $C'$ is another cyclic subgroup of $\SL_2(F)$ of order~$q+1$ and let $E'$ be the field extension of~$F$ inside~$M_2(F)$
of size $q^2$ containing $C'$. Let $\beta' \in (E')^\times$ be such that $v_1' = v_1$ and $v'_2 = \beta' v'_1$ forms
a basis with $\beta' v_2' = g v_1'$.

Let $\gamma_1 \in \mathrm{GL}_2(F)$ be chosen so that $\gamma_1$ maps $v_1 = v_1'$ to itself, and maps $v_2$ to~$v_2'$.
Note that
$
\gamma_1 \beta \gamma_1^{-1} = \beta'
$
since both sides of this equation map $v'_1 \mapsto v'_2$ and~$v'_2 \mapsto g v'_1$.
Since $F[\beta] = E$ and $F[\beta'] = E'$ this means that the map
$$
x \mapsto \gamma_1 x \gamma_1^{-1}
$$
is an isomorphism $E \to E'$ between fields. Hence it sends the unique subgroup $C$ of $E^\times$ of order $q+1$
to the unique subgroup $C'$ of $(E')^\times$ of order $q+1$.

In other words, $\gamma_1 C \gamma_1^{-1} = C'$.
Let $\gamma_2 \in  \mathrm{GL}_2(F)$ be chosen so that $\gamma_1 \gamma_2$ has determinant 1
and so that $\gamma_2 C \gamma_2^{-1} =C$ (Lemma~\ref{SL_centralize_lemma} applied to a generator $\alpha$ of~$C$
and to $d = \det \gamma_1^{-1}$).
If $\gamma = \gamma_1 \gamma_2$ then $\gamma C \gamma^{-1} = C'$ and $\gamma\in\SL_2(F)$.
So $C$ and $C'$ are conjugate in $\SL_2(F)$.

A special case of this construction is where $C = C'$ and where $\beta' = -\beta$
so that~$\gamma_1 \beta \gamma_1^{-1} = - \beta$.
Observe that~$x \mapsto \gamma_1 x \gamma_1^{-1}$ must then be a field automorphism fixing $F$ (thought
of as diagonal matrices). 
Since $F^\times$ has $C$ for its unique subgroup of order $q+1$, this automorphism acts on $C$.
Lemma~\ref{SL_0eigen_lemma} implies that it sends~$x \in C$ to $x^{-1}$.
So if $\gamma = \gamma_1 \gamma_2$ then $\gamma x \gamma^{-1} = x^{-1}$ for all $x \in C$.
\end{proof}

These lemmas combine to yield the following:

\begin{lemma} \label{combine_lemma}
Let $p$ be an odd prime, and 
let $C$ and $C'$ be two distinct maximal cyclic subgroups of~$\SL_2(\bF_p)$.
Then $C \cap C' = \{ \pm 1 \}$.
Furthermore, 
if $C$ and $C'$ have the same order then
$C$ and $C'$ are conjugate subgroups of~$\SL_2(\bF_p)$.
\end{lemma}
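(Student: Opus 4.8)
The plan is to assemble this statement from the three preceding classification lemmas (Lemma~\ref{SL_2eigen_lemma2}, Lemma~\ref{SL_1eigen_lemma2}, Lemma~\ref{SL_0eigen_lemma2}), each of which handles one of the three possible orders of a maximal cyclic subgroup. First I would record the setup: by Corollary~\ref{SL_maximalorders_cor}, every maximal cyclic subgroup of $\SL_2(\bF_p)$ has order $p-1$, $2p$, or $p+1$ (for $p=3$ only the latter two occur). For an odd prime these three integers satisfy $p-1 < p+1 < 2p$ and so are pairwise distinct; moreover each is even, so each maximal cyclic subgroup, being cyclic of even order, contains a unique element of order $2$, which by Proposition~\ref{SL_order_prop} must be $-1$. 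Hence $\{\pm 1\}\subseteq C\cap C'$ automatically, and it remains to prove the reverse inclusion and the conjugacy statement.

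For the conjugacy claim, suppose $C$ and $C'$ have the same order. Since the three admissible orders are distinct, equal order forces $C$ and $C'$ to be of the same ``type'', and I would then simply invoke the relevant lemma: Lemma~\ref{SL_2eigen_lemma2} when the common order is $p-1$ (this only arises for $p>3$, which is exactly the hypothesis of that lemma), Lemma~\ref{SL_1eigen_lemma2} when it is $2p$, and Lemma~\ref{SL_0eigen_lemma2} when it is $p+1$. Each of these produces a conjugating element inside $\SL_2(\bF_p)$, so $C$ and $C'$ are conjugate there.

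For the intersection claim I would argue via the eigenvalue dichotomy of Lemma~\ref{SL_eigen_lemma}. Suppose some $g\in C\cap C'$ with $g\neq\pm 1$; then $g$ has order at least $3$, since the only element of order $2$ is $-1$. Because the GCD of any two of $\{p-1,2p,p+1\}$ is $2$, an element of order $\ge 3$ can divide at most one of them, and Lemma~\ref{SL_eigen_lemma} then assigns $g$ a single type according to its number of eigenvalues in $\bF_p$. Since $g$ lies in the maximal cyclic group $C$, its order divides $|C|$, which pins that type to $|C|$; likewise to $|C'|$. Thus $|C|=|C'|$. Contrapositively, when $|C|\neq|C'|$ there is no such $g$, so $C\cap C'=\{\pm 1\}$; and when $|C|=|C'|$ the same-order lemma cited above already states that two distinct such subgroups meet in $\{\pm 1\}$.

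The argument is essentially bookkeeping, so the only real obstacle is making the type/order matching airtight: I must be sure that ``same order'' is genuinely equivalent to ``same eigenvalue type,'' which relies on the three orders being distinct together with the coprimality-up-to-$2$ fact extracted from the proof of Lemma~\ref{SL_eigen_lemma}. The $p=3$ edge case (where $p-1=2$ is not a maximal order and Lemma~\ref{SL_2eigen_lemma2} does not apply) needs an explicit remark, but causes no difficulty once noted.
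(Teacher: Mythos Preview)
Your proposal is correct and follows essentially the same route as the paper: reduce to the three possible orders via Corollary~\ref{SL_maximalorders_cor}, handle the equal-order case (both conjugacy and intersection) by invoking the appropriate one of Lemmas~\ref{SL_2eigen_lemma2}, \ref{SL_1eigen_lemma2}, \ref{SL_0eigen_lemma2}, and handle the unequal-order intersection via the observation that any two of $p-1,2p,p+1$ have $\gcd$ equal to $2$. The paper's different-order argument is marginally leaner---it observes directly that $|C\cap C'|$ divides $\gcd(|C|,|C'|)=2$ without detouring through Lemma~\ref{SL_eigen_lemma} and eigenvalue types---but this is cosmetic.
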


\begin{proof}
By Corollary~\ref{SL_maximalorders_cor},
the orders of $C$ and $C'$ are in the set~$\{p-1, 2p, p+1\}$.
If $C$ and $C'$ happen to have different orders then $C\cap C'$ has order dividing $2$
since the GCD of any two of $\{p-1, 2p, p+1\}$ is $2$.
Since $C$ and $C'$ are of even order they both contained $-1$ (since $-1$ is the unique element of order 2).
So $C \cap C' = \{ \pm 1 \}$.

Now we consider the case where
$C$ and $C'$ have the same order in~$\{p-1, 2p, p+1\}$. Then $C$ and $C'$ are conjugate
and $C \cap C' = \{ \pm 1 \}$ by
 Lemma~\ref{SL_2eigen_lemma2}, Lemma~\ref{SL_1eigen_lemma2}, or Lemma~\ref{SL_1eigen_lemma2} (depending
on the order of $C$).
\end{proof}

The above lemma yields
 the conjugacy results we want but only for \emph{maximal} cyclic subgroups of~$\SL_2(\bF_p)$.
The following lemma and proposition make it possible to establish the result for cyclic subgroups more generally.

\begin{lemma}
Let $C$ be a cyclic subgroup of $\SL_2(\bF_p)$ where $p$ is an odd prime.
If the order of $C$ is at least $3$ then
 there is a unique maximal cyclic subgroup of $\SL_2 (\bF_p)$ containing $C$.
\end{lemma}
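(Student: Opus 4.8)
The plan is to reduce the entire statement to the intersection property of maximal cyclic subgroups recorded in Lemma~\ref{combine_lemma}. Existence of at least one maximal cyclic subgroup containing $C$ requires no work: since $\SL_2(\bF_p)$ is finite, every cyclic subgroup is contained in some maximal cyclic subgroup, as already observed in the discussion defining maximal cyclic subgroups. So the only real content is the \emph{uniqueness} assertion, and I would separate these two points explicitly at the start of the proof.

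For uniqueness I would argue by contradiction. Suppose $C$ is contained in two \emph{distinct} maximal cyclic subgroups $D$ and $D'$ of $\SL_2(\bF_p)$. Then $C \subseteq D \cap D'$. By Lemma~\ref{combine_lemma}, two distinct maximal cyclic subgroups meet exactly in $\{\pm 1\}$, a group of order at most $2$. Consequently $|C| \le |D \cap D'| \le 2$, contradicting the hypothesis that $C$ has order at least $3$. Hence $D = D'$, which is precisely the desired uniqueness.

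There is essentially no obstacle here; all the substance is carried by Lemma~\ref{combine_lemma}, and the hypothesis $|C| \ge 3$ is exactly what is needed to exceed the order $2$ of the intersection $\{\pm 1\}$. The one feature worth flagging cleanly is \emph{why} the order-at-least-$3$ assumption cannot be weakened: since $p$ is odd, $-1$ is the unique element of order $2$ (Proposition~\ref{SL_order_prop}), and every maximal cyclic subgroup has order $p-1$, $2p$, or $p+1$ (Corollary~\ref{SL_maximalorders_cor}), all of which are even. Thus $-1$ lies in \emph{every} maximal cyclic subgroup, so the cyclic subgroups $\{1\}$ and $\{\pm 1\}$ of orders $1$ and $2$ are contained in many maximal cyclic subgroups and uniqueness genuinely fails for them. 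I would close with this remark to motivate the hypothesis, but the proof itself is just the two short paragraphs above.
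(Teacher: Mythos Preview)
Your proof is correct and follows essentially the same approach as the paper: suppose two distinct maximal cyclic subgroups contain $C$, invoke Lemma~\ref{combine_lemma} to conclude their intersection is $\{\pm 1\}$, and obtain a contradiction with $|C|\ge 3$. Your added remarks on existence and on the sharpness of the hypothesis are accurate elaborations but not part of the paper's terse argument.
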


\begin{proof}
Suppose $D_1$ and $D_2$ are distinct maximal cyclic subgroups of $\SL_2(\bF_p)$ containing $C$.
By the above lemma $D_1 \cap D_2 = \{ \pm 1 \}$, contradicting the assumption on the size of $C$.
\end{proof}

\begin{proposition} \label{SL_zc_c}
Let $C$ be a cyclic subgroup of $\SL_2(\bF_p)$ where $p$ is a odd prime.
If $C$ has order at least $3$ then
the centralizer $Z(C)$ is the unique maximal cyclic subgroup of $\SL_2(\bF_p)$ containing $C$.
So if $C$ is a maximal cyclic subgroup of $\SL_2(\bF_p)$ then $C = Z(C)$.
\end{proposition}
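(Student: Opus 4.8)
The plan is to prove the two inclusions $D \subseteq Z(C)$ and $Z(C) \subseteq D$, where $D$ denotes the unique maximal cyclic subgroup of $\SL_2(\bF_p)$ containing $C$ that is guaranteed by the previous lemma (using the hypothesis that $C$ has order at least $3$). The first inclusion is immediate: since $D$ is cyclic, hence Abelian, and since $C \subseteq D$, every element of $D$ commutes with every element of $C$, so $D \subseteq Z(C)$.

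For the reverse inclusion I would fix a generator $\alpha$ of $C$ and take an arbitrary $g \in Z(C)$. Since $g$ commutes with $\alpha$, the subgroup $\langle \alpha, g \rangle$ is generated by two commuting elements and so is Abelian. The key leverage is that $\SL_2(\bF_p)$ is Sylow-cycloidal (Theorem~\ref{SL_sylow_cycloidal_thm}), so by Theorem~\ref{abelian_cyclic_thm} every Abelian subgroup of it is cyclic; hence $\langle \alpha, g \rangle$ is a cyclic subgroup. Because it contains $C$, which has order at least $3$, any maximal cyclic subgroup containing $\langle \alpha, g \rangle$ also contains $C$ and must therefore equal $D$ by the uniqueness clause of the previous lemma. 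Thus $\langle \alpha, g \rangle \subseteq D$, giving $g \in D$. This establishes $Z(C) \subseteq D$, and combined with the first inclusion yields $Z(C) = D$.

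For the final assertion, if $C$ is itself a maximal cyclic subgroup, then $C$ is a maximal cyclic subgroup containing $C$, so by uniqueness $D = C$, and the equality $Z(C) = D$ gives $Z(C) = C$. I note that every maximal cyclic subgroup of $\SL_2(\bF_p)$ has order at least $3$ by Corollary~\ref{SL_maximalorders_cor} (the possible orders being $p-1$, $2p$, $p+1$ for $p>3$, and $6$ or $4$ for $p=3$), so the order hypothesis of the first part is automatically satisfied and the reduction is legitimate.

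I do not anticipate a serious obstacle, since the real work is carried by the Sylow-cycloidal structure of $\SL_2(\bF_p)$ together with the uniqueness lemma. The only point requiring care is confirming that $\langle \alpha, g \rangle$ genuinely has order at least $3$ so that the uniqueness of its maximal cyclic overgroup may be invoked; this is clear because $\langle \alpha, g \rangle$ contains $C$.
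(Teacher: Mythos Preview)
Your proof is correct and follows essentially the same approach as the paper: both invoke the uniqueness lemma for the maximal cyclic overgroup $D$, form an Abelian subgroup generated by $C$ and an arbitrary centralizer element, use the Sylow-cycloidal property of $\SL_2(\bF_p)$ to conclude this subgroup is cyclic, and then apply uniqueness to force it inside $D$. Your treatment is slightly more detailed (explicitly citing Theorem~\ref{abelian_cyclic_thm} and verifying that maximal cyclic subgroups have order at least $3$), but the argument is the same.
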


\begin{proof}
Let $D$ be the unique maximal cyclic subgroup of $\SL_2(\bF_p)$ containing $C$ (see previous lemma).
Suppose that $h \in Z(C)$ and let $H$ be the subgroup generated by~$h$.
Observe that $H C$ is an Abelian subgroup of $\SL_2(\bF_p)$.
Since $\SL_2(\bF_p)$ is Sylow-cycloidal (Theorem~\ref{SL_sylow_cycloidal_thm}) the group $HC$ must be cyclic and so is contained
in a maximal cyclic subgroup $D'$. By the previous lemma $D = D'$ since both contain~$C$. Thus $h \in D$.
We have established that $Z(C) \subseteq D$. The other inclusion is clear.
\end{proof}

We are ready for the second main theorem.

\begin{theorem} \label{second_thm}
Let $p$ be an odd prime, and 
let $C$ and $C'$ be two  cyclic subgroups of~$\SL_2(\bF_p)$ of the same order.
Then $C$ and $C'$ are conjugate subgroups of~$\SL_2(\bF_p)$.
If, in addition, $C \ne C'$ then
 $C \cap C' \subseteq \{ \pm 1 \}$.
\end{theorem}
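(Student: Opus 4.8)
The plan is to reduce everything to the case of \emph{maximal} cyclic subgroups, where Lemma~\ref{combine_lemma} already delivers both conjugacy and the intersection bound. First I would dispose of the degenerate cases $|C| \le 2$. If $|C| = |C'| = 1$ then $C = C' = \{1\}$, and if $|C| = |C'| = 2$ then both $C$ and $C'$ must equal $\{\pm I\}$, since $-I$ is the unique element of order $2$ in $\SL_2(\bF_p)$ (Proposition~\ref{SL_order_prop}); in either case $C = C'$, so the conjugacy claim is trivial and the hypothesis $C \ne C'$ of the second assertion is never met.

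So from now on let $n = |C| = |C'| \ge 3$. By Proposition~\ref{SL_zc_c}, $C$ lies in a unique maximal cyclic subgroup $D = Z(C)$, and likewise $C'$ lies in a unique maximal cyclic subgroup $D' = Z(C')$. The key observation is that $n$ pins down the order of $D$: by Corollary~\ref{SL_maximalorders_cor} we have $|D| \in \{p-1, 2p, p+1\}$ with $n$ dividing $|D|$; but the greatest common divisor of any two members of $\{p-1, 2p, p+1\}$ is $2$, so since $n \ge 3$ the integer $n$ can divide at most one of these three numbers. Hence $|D|$ is forced to be the unique element of $\{p-1, 2p, p+1\}$ divisible by $n$, and the same holds for $|D'|$, giving $|D| = |D'|$.

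Now I would invoke Lemma~\ref{combine_lemma}: $D$ and $D'$ are maximal cyclic subgroups of equal order, so there is $\beta \in \SL_2(\bF_p)$ with $\beta D \beta^{-1} = D'$. Then $\beta C \beta^{-1}$ is a subgroup of the cyclic group $D'$ of order $n$, and a cyclic group has exactly one subgroup of each order dividing its own; since $C' \subseteq D'$ is also of order $n$, we conclude $\beta C \beta^{-1} = C'$, which proves conjugacy.

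Finally, for the intersection statement, suppose $C \ne C'$ (still with $n \ge 3$). If $D = D'$ then $C$ and $C'$ would both be the unique order-$n$ subgroup of this cyclic group, forcing $C = C'$, a contradiction; hence $D \ne D'$. By Lemma~\ref{combine_lemma}, distinct maximal cyclic subgroups meet in $\{\pm 1\}$, so $C \cap C' \subseteq D \cap D' = \{\pm 1\}$, as claimed. The only step requiring genuine care is the coprimality argument showing $|D| = |D'|$; everything else is bookkeeping with the uniqueness of subgroups in cyclic groups.
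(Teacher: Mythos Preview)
Your proof is correct and follows essentially the same route as the paper's own argument: reduce to the maximal cyclic overgroups $Z(C)$ and $Z(C')$ via Proposition~\ref{SL_zc_c}, use the pairwise-$\gcd$-equals-$2$ observation on $\{p-1,2p,p+1\}$ to force $|Z(C)|=|Z(C')|$, invoke Lemma~\ref{combine_lemma} for conjugacy of the maximal groups, and then use uniqueness of subgroups in cyclic groups for both the conjugacy of $C,C'$ and the intersection bound. The structure, the key lemma invoked, and even the handling of the degenerate cases $|C|\le 2$ match the paper's proof.
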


\begin{proof}
If $C$ and $C'$ have order $1$ or $2$ then the result is clear since $-1$ is the unique element of $\SL_2(\bF_p)$
of order $2$. So from now on assume that $C$ and $C'$ have equal order at least $3$.
By the above proposition, $Z(C)$ and $Z(C')$ are maximal cyclic subgroups.
By Corollary~\ref{SL_maximalorders_cor},
the orders of $Z(C)$ and $Z(C')$ are restricted to the set~$\{p-1, 2p, p+1\}$.
So the order of $C$ and $C'$ must divide an element of the set~$\{p-1, 2p, p+1\}$,
and in fact divides a unique element of $\{p-1, 2p, p+1\}$ since the GCD of any two distinct elements
is $2$. This means that $Z(C)$ and $Z(C')$ have the same order, namely the unique element
of $\{p-1, 2p, p+1\}$ that is a multiple of the order of $C$.

Thus $Z(C)$ and $Z(C')$ are conjugate subgroups by Lemma~\ref{combine_lemma}.
Since $C$ is the unique subgroup of $Z(C)$ of its order, and the same is true of $C'$ in $Z(C')$,
we conclude that $C$ and $C'$ are conjugate as well.

If, in addition, $C \ne C'$ then $Z(C)$ cannot equal $Z(C')$ since 
 $C$ is the unique subgroup of $Z(C)$ of its order, and the same is true of $C'$ in $Z(C')$.
So the intersection~$Z(C) \cap Z(C')$ is $\{ \pm 1 \}$ by Lemma~\ref{combine_lemma}.
Thus  $C \cap C' \subseteq \{ \pm 1 \}$.
\end{proof}

Next we wish to count the number of cyclic subgroups of $\SL_2(\bF_p)$ of each order.
We have already counted such groups when the order is $p-1$ or $2p$, so we start with 
the order $p+1$ case.

\begin{lemma} \label{SL_0eigen_lemma3}
Let $p$ be an odd prime.
The number of cyclic subgroup of $\SL_2(\bF_p)$ of order~$p+1$ is equal to $\frac{1}{2} (p-1) p$.
\end{lemma}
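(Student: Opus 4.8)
The plan is to exploit the conjugacy results already established and reduce the count to an index-of-normalizer computation. By Theorem~\ref{second_thm}, all cyclic subgroups of $\SL_2(\bF_p)$ of order $p+1$ are conjugate, so they form a single orbit under the conjugation action of $\SL_2(\bF_p)$ on its set of subgroups. By the orbit--stabilizer theorem, the number of such subgroups equals the index $[\SL_2(\bF_p) : N(C)]$, where $C$ is a fixed cyclic subgroup of order $p+1$ and $N(C)$ is its normalizer. Since $|\SL_2(\bF_p)| = (p-1)p(p+1)$ (Proposition~\ref{SL_order_prop}), everything comes down to showing that $|N(C)| = 2(p+1)$.

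First I would pin down the centralizer. Since $p$ is odd, $p+1 \ge 4 \ge 3$, so $C$ is a maximal cyclic subgroup (Corollary~\ref{SL_maximalorders_cor}), and Proposition~\ref{SL_zc_c} gives $Z(C) = C$. Thus $C$ is self-centralizing and is precisely the kernel of the conjugation homomorphism $N(C) \to \Aut(C)$; consequently $N(C)/C$ embeds into $\Aut(C)$, and it remains only to identify its image.

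The key step is to show this image has order exactly $2$. Let $\alpha$ generate $C$ and let $E = \bF_p[\alpha]$ be the quadratic field inside $M_2(\bF_p)$ furnished by Lemma~\ref{SL_0eigen_lemma}; by Lemma~\ref{SL_0eigen_lemma2}, $E$ is the \emph{unique} quadratic extension of $\bF_p$ in $M_2(\bF_p)$ containing $C$. If $g \in N(C)$, then conjugation by $g$ is an inner automorphism of $M_2(\bF_p)$ that fixes the scalar subfield $\bF_p$ and carries $C$ to itself; hence it carries $E = \bF_p[\alpha]$ to another quadratic field containing $C$, which by uniqueness must be $E$ again. So conjugation by $g$ restricts to an element of the Galois group of $E$ over $\bF_p$. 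That Galois group has exactly two elements, the identity and $\sigma\colon x \mapsto x^p$, and $\sigma$ acts on $C$ as $x \mapsto x^{-1}$ (Lemma~\ref{SL_0eigen_lemma}). Therefore the image of $N(C) \to \Aut(C)$ is contained in $\{\mathrm{id}, \text{inversion}\}$; the element $\gamma$ produced in Lemma~\ref{SL_0eigen_lemma2} shows that inversion is actually attained, so the image has order $2$.

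Combining the pieces, $|N(C)| = 2\,|C| = 2(p+1)$, whence the number of cyclic subgroups of order $p+1$ is
$$
\frac{(p-1)p(p+1)}{2(p+1)} = \frac{1}{2}(p-1)p,
$$
as claimed. The main obstacle is the middle step: verifying that a normalizing element induces a genuine \emph{field} automorphism of $E$, so that $N(C)/C$ cannot be larger than the order-$2$ Galois group; for this the uniqueness clause of Lemma~\ref{SL_0eigen_lemma2} and the centrality of the scalars $\bF_p$ in $M_2(\bF_p)$ are exactly what is needed. As an independent check, one could instead note that distinct such subgroups meet in $\{\pm 1\}$ (Theorem~\ref{second_thm}), so the non-central elements of order dividing $p+1$ are partitioned into blocks of size $p-1$, one per subgroup; counting the elements of $\SL_2(\bF_p)$ with no eigenvalue in $\bF_p$ directly and dividing by $p-1$ yields the same value $\tfrac12(p-1)p$.
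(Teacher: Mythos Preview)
Your proof is correct and follows essentially the same route as the paper's second (``group theoretical / Galois theoretical'') approach: compute $|N(C)|$ via the homomorphism from $N(C)$ to the Galois group of $E=\bF_p[\alpha]$ over $\bF_p$, use $Z(C)=C$ from Proposition~\ref{SL_zc_c}, and invoke the $\gamma$ from Lemma~\ref{SL_0eigen_lemma2} to see the image has order exactly $2$. Your justification that conjugation by $g\in N(C)$ actually preserves $E$ (via the uniqueness clause of Lemma~\ref{SL_0eigen_lemma2}) is in fact more explicit than the paper's, which simply asserts it; and your closing ``independent check'' is precisely the paper's first (counting) approach.
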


\begin{proof}
We consider two approaches. One is to note that each element $g$ of  $\SL_2(\bF_p)$ 
outside of $\{ \pm 1 \}$ is contained in a unique maximal cyclic subgroup of $\SL_2(\bF_p)$,
namely the centralizer $Z(g)$ (see Lemma~\ref{SL_zc_c}). Also a cyclic subgroup of $\SL_2(\bF_p)$ is maximal
if and only if its order is $p-1$ (when $p\ne 3$), $2p$, or $p+1$. 
So we can partition the elements of $\SL_2(\bF_p)$ outside of $\{ \pm 1 \}$: 
if $c_n$ is the number
of cyclic subgroups of order $n$ we have
$$
|\SL_2(\bF_p)| - 2 = c_{p-1} ((p-1)-2) + c_{2p} (2p - 2) + c_{p+1} ((p+1) - 2)
$$
(this works even for $p=3$ since $(p-1)-2 = 0$).
Based on what we know so far
$$
(p-1)p(p+1) - 2 = \frac{1}{2} p (p+1) (p-3) + (p+1) (2p - 2) + c_{p+1} (p-1).
$$
Now we solve for $c_{p+1} $.

This first approach is legitmate, but there is also a more group theoretical / Galois theoretical approach.
Let $C$ be any cyclic group of order $p+1$, and let~$E$ be the field~$F[C]$ inside of $M_2(\bF_p)$
generated by the elements of $C$. Then the normalizer~$N(C)$ acts via conjugation not just on $C$
but on all of $E=F[C]$. In fact we get a homomorphism from~$N(C)$ to the Galois group $G$ of $E$ over $F$,
and kernel of this homomorphism is just $Z(C)$. 
Since $E$ is a quadratic extension of~$\bF_p$, the Galois group $G$ has order 2 (see Lemma~\ref{SL_0eigen_lemma}).
So $Z(C)$ has index $1$ or $2$ in~$N(C)$.
Lemma~\ref{SL_0eigen_lemma2} shows that~$N(C)$ is not $Z(C)$ so the index is actually 2,  and Proposition~\ref{SL_zc_c}
show that $Z(C) = C$. Thus $[N(C): C] = 2$ and so $N(C)$ has order~$2(p+1)$.
By the orbit-stabilizer theorem we have that the number of conjugates of $C$ is
$$
\frac{(p-1) p (p+1)}{2(p+1)} = \frac{1}{2} (p-1) p
$$
which counts the number of cyclic subgroups of order $p+1$ since all such groups are conjugate (Theorem~\ref{second_thm}).
\end{proof}

Our calculations culminate with a full census of the cyclic subgroups of $\mathrm{SL}_2 (\bF_p)$. Since
$\mathrm{SL}_2 (\bF_p)$ is Sylow-cyloidal, this gives a full census of Abelian subgroups as well.

\begin{proposition}\label{SL_orders_prop}
Let $p$ be an odd prime and
let $c_m$ be the number of cyclic subgroups of $\mathrm{SL}_2 (\bF_p)$ of order $m$.
Then
$$
c_1 = c_2 = 1.
$$
If $m > 2$ divides $p-1$ then 
$$c_m = \frac{1}{2} p(p+1).
$$
If $m>2$ divides $2q$ then
$$c_m =p + 1.
$$
If $m>2$ divides $p+1$ then
$$
c_m = \frac{1}{2} p(p-1).
$$
Otherwise $c_m = 0$.
\end{proposition}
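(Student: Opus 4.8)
The plan is to reduce the count for each $m$ to the already-established counts of \emph{maximal} cyclic subgroups. First I would dispose of the small cases: $c_1 = 1$ since the trivial subgroup is the only subgroup of order $1$, and $c_2 = 1$ because $-I$ is the unique element of order $2$ (Proposition~\ref{SL_order_prop}), so $\{\pm I\}$ is the only subgroup of order $2$.

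For $m > 2$, the key is a bijection between the cyclic subgroups of order $m$ and the maximal cyclic subgroups of $\SL_2(\bF_p)$ whose order is divisible by $m$. In one direction, a cyclic subgroup $C$ of order $m \ge 3$ is contained in a \emph{unique} maximal cyclic subgroup, namely its centralizer $Z(C)$ (Proposition~\ref{SL_zc_c}), and $m$ divides $|Z(C)|$. In the other direction, a maximal cyclic subgroup $D$ with $m \mid |D|$ contains a unique subgroup of order $m$, since $D$ is cyclic. I would check these assignments are mutually inverse: starting from $C$, the unique order-$m$ subgroup of $Z(C)$ is $C$ itself; starting from $D$, if $C$ is its unique order-$m$ subgroup then $D$ is a maximal cyclic subgroup containing $C$, forcing $D = Z(C)$ by uniqueness. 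Hence $c_m$ equals the number of maximal cyclic subgroups of order divisible by $m$.

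Next I would invoke the fact that every maximal cyclic subgroup has order in $\{p-1, 2p, p+1\}$ (Corollary~\ref{SL_maximalorders_cor}), together with the observation that the GCD of any two of these three numbers is $2$ (both $p\pm 1$ are even, and each is coprime to $p$). Consequently, for $m > 2$, the integer $m$ divides \emph{at most one} of $p-1$, $2p$, $p+1$. Thus: if $m \mid p-1$ the relevant maximal subgroups are exactly those of order $p-1$, of which there are $\frac{1}{2}p(p+1)$ (Lemma~\ref{SL_2eigen_lemma2}); if $m \mid 2p$ they are those of order $2p$, of which there are $p+1$ (Lemma~\ref{SL_1eigen_lemma2}); and if $m \mid p+1$ they are those of order $p+1$, of which there are $\frac{1}{2}p(p-1)$ (Lemma~\ref{SL_0eigen_lemma3}). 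If $m > 2$ divides none of the three, then no maximal cyclic subgroup has order divisible by $m$, so $c_m = 0$.

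The argument is essentially bookkeeping once the bijection is in place, so I do not expect a serious obstacle. The one point requiring care is the boundary case $p = 3$, where $p-1 = 2$ admits no $m > 2$; there the $\frac{1}{2}p(p+1)$ formula is vacuous, consistent with the restricted list of maximal orders $\{2p, p+1\}$ that Corollary~\ref{SL_maximalorders_cor} gives for $q = 3$. I would also flag that the bijection silently uses that a cyclic group of order $N$ has exactly one subgroup of each order dividing $N$, which is what makes the correspondence well-defined and single-valued.
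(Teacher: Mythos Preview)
Your proposal is correct and follows essentially the same approach as the paper: both establish the correspondence $C \mapsto Z(C)$ between cyclic subgroups of order $m > 2$ and maximal cyclic subgroups of the appropriate order, use the GCD-$2$ observation to pin down which of $\{p-1, 2p, p+1\}$ is relevant, and then read off the counts from Lemmas~\ref{SL_2eigen_lemma2}, \ref{SL_1eigen_lemma2}, and~\ref{SL_0eigen_lemma3}. Your presentation is slightly tidier in that you state the bijection once rather than repeating it in each case, and you explicitly address the $c_m = 0$ case and the $p=3$ boundary, which the paper leaves implicit.
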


\begin{proof}
Since there is a unique element of order 2 (Proposition~\ref{SL_order_prop}) and order 1, we have $c_1 = c_2 = 2$.
So from now on we assume $m > 2$. 

Suppose that $m$ divides~$p-1$ (so $p>3$ since $m>2$). Each cyclic subgroup~$C$ of order $p-1$
has $Z(C)$ for the maximum cyclic subgroup containing $C$ (Proposition~\ref{SL_zc_c}). By Corollary~\ref{SL_maximalorders_cor}, 
$Z(C)$ has order in the set $\{ p-1, 2p, p+1 \}$, and $m$ divides the order of~$Z(C)$ since $C$ is a subgroup of $Z(C)$.
Since $m$ divides $p-1$, it cannot divide $2p$ or $p+1$ as well (since the GCD of $p-1$ with each is $2$ and~$m>2$).
Thus $Z(C)$ is a cyclic group of order $p-1$. Conversely each cyclic group of order~$p-1$ has a unique subgroup of order $m$.
Thus there is a one-to-one correspondences between cyclic subgroups of order $m$ and subgroups of order $p-1$.
By  Lemma~\ref{SL_2eigen_lemma2} there are $\frac{1}{2}p(p+1)$ cyclic subgroups of order $p-1$.
Thus $c_m = \frac{1}{2}p(p+1)$.

The case where $m$ divides $2p$ is similar:  there is a one-to-one correspondence~$C \mapsto Z(C)$ between
cyclic subgroups of order $m$ and cyclic subgroups of order $2p$. We can use Lemma~\ref{SL_1eigen_lemma2}
to conclude that there are $c_m= p+1$ cyclic subgroups of order $m$.

The case where $m$ divides $p+1$ is also similar:  there is a one-to-one correspondence~$C \mapsto Z(C)$ between
cyclic subgroups of order $m$ and cyclic subgroups of order~$p+1$. We can use Lemma~\ref{SL_0eigen_lemma3}
to conclude that there are $c_m= \frac{1}{2} (p-1) p$ cyclic subgroups of order $m$.
\end{proof}

\begin{corollary}
Suppose that $p$ is an odd prime.
The only normal cyclic subgroups of  $\mathrm{SL}_2 (\bF_p)$ are $\{1\}$ and $\{\pm 1\}$.
\end{corollary}

\begin{proof}
Suppose that $C$ is a normal subgroup of order $m$. 
By Theorem~\ref{second_thm} all cyclic subgroups of order $m$ are conjugate. Since $C$
is normal, this means that there is only one subgroup of order $C$. According to the previous proposition,
this can only happen if $m =1,2$. The result follows now from the fact that $-1$ is the only element of order two (Proposition~\ref{SL_order_prop}).
\end{proof}

\begin{corollary}
Suppose that $p$ is an odd prime.
Then the only normal  Sylow-cyclic subgroups
of  $\mathrm{SL}_2 (\bF_p)$ are $\{1\}$ and $\{\pm 1\}$.
\end{corollary}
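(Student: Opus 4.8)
The plan is to reduce the Sylow-cyclic case to the cyclic case already handled in the previous corollary, by exploiting the commutator subgroup. Let $N$ be a normal Sylow-cyclic subgroup of $G = \SL_2(\bF_p)$. The first step is to pass to the commutator subgroup $N'$ of $N$. By Proposition~\ref{sylow_cyclic_commutator_prop}, since $N$ is Sylow-cyclic, the group $N'$ is cyclic; and by Theorem~\ref{semidirect_thm}, the commutator subgroup of a Sylow-cyclic group has odd order. The key observation is that $N'$ is characteristic in $N$, and because $N$ is normal in $G$, a characteristic subgroup of $N$ is again normal in $G$. Thus $N'$ is a normal cyclic subgroup of $G$ of odd order.

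Next I would invoke the previous corollary, which asserts that the only normal cyclic subgroups of $G$ are $\{1\}$ and $\{\pm 1\}$. Since $N'$ is such a subgroup and has odd order, while $\{\pm 1\}$ has order $2$, we are forced to conclude $N' = \{1\}$. In other words $N$ is abelian.

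Finally, an abelian Sylow-cyclic group is cyclic (Proposition~\ref{abelian_cyclic_prop2}), so $N$ is itself cyclic. Being a normal cyclic subgroup of $G$, one last application of the previous corollary gives that $N$ is $\{1\}$ or $\{\pm 1\}$, as desired.

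I do not expect a genuine obstacle here: the entire content is the single observation that the commutator subgroup of a normal Sylow-cyclic subgroup is cyclic, of odd order, and remains normal in $G$, so it collapses to the trivial group and thereby forces $N$ to be abelian (hence cyclic). The only point requiring care is keeping straight that it is the \emph{odd order} of $N'$ that rules out $\{\pm 1\}$ and pins $N'$ down to the identity, after which the problem is already solved by the cyclic corollary.
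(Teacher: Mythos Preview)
Your proof is correct and essentially identical to the paper's own argument: pass to the commutator subgroup $N'$, note it is cyclic of odd order and characteristic in $N$ (hence normal in $G$), apply the previous corollary to force $N'=\{1\}$, conclude $N$ is abelian hence cyclic, and apply the previous corollary once more. The only cosmetic difference is that the paper cites Theorem~\ref{semidirect_thm} alone for both the cyclicity and odd order of $N'$, whereas you split the citation across that theorem and Proposition~\ref{sylow_cyclic_commutator_prop}.
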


\begin{proof}
Suppose that $N$ is a normal  Sylow-cyclic subgroup of $\mathrm{SL}_2 (\bF_p)$.
By Theorem~\ref{semidirect_thm} the commutator subgroup $N'$ of $N$ is cyclic of odd order.
Since $N'$ is characteristic in $N$ it must be normal in $\mathrm{SL}_2 (\bF_p)$.
By the above corollary, $N'$ is the trivial group. So $N$ is Abelian, hence cyclic since $N$ is Sylow-cyclic.
So by the above corollary $N$ is $\{1\}$ or $\{\pm 1\}$.
\end{proof}

\begin{lemma}
Suppose that $p$ is an odd prime
and that $N$ is a normal subgroup of~$\mathrm{SL}_2 (\bF_p)$ 
not equal to $\{1\}$ or $\{\pm 1\}$.
Then $N$ has odd index in~$\mathrm{SL}_2 (\bF_p)$.
\end{lemma}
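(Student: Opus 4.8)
The plan is to first pin down the possible orders of $N$ and then show that the $2$-Sylow subgroup of $N$ is a full $2$-Sylow subgroup of $G = \SL_2(\bF_p)$, which is exactly the assertion that $[G:N]$ is odd.

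\textbf{Step 1: the order of $N$ is even.} First I would rule out odd order. A subgroup of odd order of the Sylow-cycloidal group $G$ (Theorem~\ref{SL_sylow_cycloidal_thm}) has only odd-order Sylow subgroups, each necessarily cyclic, so it is Sylow-cyclic (Proposition~\ref{cycloidal_subgroup_prop}). Thus if $N$ had odd order it would be a normal Sylow-cyclic subgroup, hence $\{1\}$ or $\{\pm 1\}$ by the preceding corollary on normal Sylow-cyclic subgroups; since $\{\pm 1\}$ has even order this forces $N = \{1\}$, contrary to hypothesis. Hence $|N|$ is even, and $N$ contains the unique element of order $2$, namely $-1$ (Proposition~\ref{SL_order_prop}).

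\textbf{Step 2: conjugacy closure.} The second ingredient is that $N$, being normal, contains every conjugate of each of its elements, and that all cyclic subgroups of $G$ of a given order are conjugate (Theorem~\ref{second_thm}). Consequently, for each integer $m$, either $N$ contains every cyclic subgroup of $G$ of order $m$ or it contains none. Writing $c_m$ for the census counts of Proposition~\ref{SL_orders_prop}, and noting that each cyclic subgroup of order $m$ contributes $\phi(m)$ elements of order exactly $m$ (an element of order $m$ lying in a unique such subgroup), the order of $N$ is
$$
|N| = \sum_{m \in \mathcal{O}} \phi(m)\, c_m ,
$$
where $\mathcal{O}$ is the divisor-closed set of orders $m$ for which $N$ contains the cyclic subgroups of order $m$. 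Since $N$ is neither $\{1\}$ nor $\{\pm 1\}$, it is not Sylow-cyclic (same corollary), so by Proposition~\ref{cycloidal_subgroup_prop} its $2$-Sylow subgroup $T$ is a generalized quaternion group; in particular $4 \in \mathcal{O}$, so $N$ already contains every cyclic subgroup of order $4$.

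\textbf{Step 3 (the crux): the $2$-Sylow of $N$ is full.} Let $2^k$ be the $2$-part of $|G| = (p-1)p(p+1)$ and let $2^j = |T|$; the claim of odd index is precisely $j = k$. The largest $2$-power order occurring in $N$ is the exponent $2^{j-1}$ of the generalized quaternion group $T$, so $2^{j-1} \in \mathcal{O}$ while $2^{j} \notin \mathcal{O}$. I would finish by the arithmetic of the census: assuming $j < k$, the value $|N| = \sum_{m \in \mathcal{O}} \phi(m) c_m$, whose admissible $2$-power orders are capped at $2^{j-1}$, must divide $|G|$, and one checks that this divisibility fails, forcing $j = k$. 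The structural reason behind the count is that the full generalized quaternion $2$-Sylow $S = \langle P, \gamma \rangle$ is assembled from the cyclic $2$-Sylow $P$ (order $2^{k-1}$) of a maximal torus $D = Z(C)$ together with an element $\gamma$ inverting $D$ and satisfying $\gamma^2 = -1$ (Lemma~\ref{SL_0eigen_lemma2}, Proposition~\ref{SL_zc_c}, Corollary~\ref{SL_maximalorders_cor}); recovering $P \subseteq N$ is equivalent to $T = S$, whence the index is odd.

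\textbf{Main obstacle.} The genuine difficulty lies entirely in Step 3: the elementary conjugacy argument of Step 2 shows that $N$ contains every cyclic subgroup up to order $2^{j-1}$, but it offers no direct way to manufacture an element of the next higher $2$-power order, so one cannot bootstrap $T$ up to $S$ by purely local reasoning. I therefore expect to rely on the global count, verifying uniformly in $p$ that no divisor-closed $\mathcal{O}$ with a truncated $2$-part produces an $|N|$ dividing $|G|$. Alternatively, this lemma can be absorbed into the forthcoming classification of normal subgroups, where $T = S$ falls out once one knows that such an $N$ must be all of $G$ (for $p \ge 5$) or is read off directly from the structure of $2T \cong \SL_2(\bF_3)$ (for $p = 3$).
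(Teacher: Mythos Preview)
Your Steps 1 and 2 match the paper exactly, and you correctly arrive at the fact that $N$ contains every cyclic subgroup of order $4$. The gap is that Step 3 then abandons this and reaches for a global counting argument you yourself flag as unresolved, whereas the paper finishes in one line from precisely the point you reached.

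The missing observation is purely about the $2$-Sylow subgroup $S$ of $G$: a generalized quaternion group is generated by its elements of order $4$. Indeed, with the usual generators $R$ (of order $2^{k-1}$) and $T$ (of order $4$), every element outside the index-$2$ cyclic subgroup $\langle R\rangle$ has order $4$; in particular $T$ and $RT$ both have order $4$, and $(RT)T^{-1}=R$, so $\langle T, RT\rangle = \langle R, T\rangle = S$. Since $N$ contains all cyclic subgroups of order $4$ (your Step 2), it contains $T$ and $RT$, hence all of $S$. Thus the $2$-Sylow of $N$ is already a full $2$-Sylow of $G$, and $[G:N]$ is odd.

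Your alternative of ``absorbing'' the lemma into the later classification of normal subgroups is circular: that classification (Theorem~\ref{SL_normal_thm}) is built on this lemma via the intermediate lemma producing a normal subgroup of odd prime index. And the proposed divisor-closed-$\mathcal O$ count, even if it could be pushed through, would require a uniform arithmetic verification over all odd $p$ that is both harder and unnecessary given the one-line structural finish above.
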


\begin{proof}
Let $N_2$ be a $2$-Sylow subgroup of $N$. By the above corollary, $N_2$ cannot be cyclic,
so $N_2$ must be a general quaternion group. In particular, $N$ contains a cyclic subgroup of order $4$.
All cyclic subgroups of order $4$ are conjugate in $\mathrm{SL}_2 (\bF_p)$
by Theorem~\ref{second_thm}. Since $N$ is normal, it must contain all cyclic subgroups of order $4$.
Let $P_2$ be a $2$-Sylow subgroup of  $\mathrm{SL}_2 (\bF_p)$. Note that $P_2$ is generated
by its cyclic subgroups of order $4$. So $P_2$ must be contained in $N$. The result follows.
\end{proof}

\begin{lemma}
Let  $p$ be an odd prime.
Suppose~$\mathrm{SL}_2 (\bF_p)$ 
has a normal subgroup not equal to $\{1\}$ or $\{\pm 1\}$ or $\mathrm{SL}_2 (\bF_p)$.
Then $\mathrm{SL}_2 (\bF_p)$ has a  normal subgroup of odd prime index.
\end{lemma}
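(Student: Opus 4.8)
The plan is to reduce the problem to a statement about the odd-order quotient produced by the given normal subgroup, and then to extract from it a cyclic quotient of odd prime order. Write $G = \SL_2(\bF_p)$ and let $N$ be the normal subgroup guaranteed by the hypothesis, so $N \neq \{1\}$, $N \neq \{\pm 1\}$, and $N \neq G$. By the previous lemma, $N$ has odd index in $G$; since $N \neq G$, the quotient $Q = G/N$ is a nontrivial group of odd order. Everything now takes place inside $Q$: by the correspondence theorem, a normal subgroup of $Q$ of index $q$ pulls back to a normal subgroup of $G$ of index $q$, so it suffices to produce a normal subgroup of $Q$ of odd prime index.

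First I would choose a maximal proper normal subgroup $\overline M$ of $Q$, which exists since $Q$ is finite and nontrivial; by the correspondence theorem the quotient $Q/\overline M$ is then a simple group. Its order divides $|Q|$, hence is odd. Here is the key step: by the Feit--Thompson theorem, a nonabelian simple group has even order, so the simple group $Q/\overline M$ of odd order must be abelian, and therefore cyclic of prime order $q$. Since $|Q|$ is odd, this prime $q$ is odd.

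To finish, let $M$ be the preimage of $\overline M$ under the canonical surjection $G \to Q$. Then $M$ is a normal subgroup of $G$ containing $N$, and $G/M \cong Q/\overline M \cong C_q$, so $M$ has odd prime index $q$ in $G$, as desired.

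The one genuine input is the Feit--Thompson theorem (equivalently, the solvability of groups of odd order), which the paper invokes elsewhere and which fits the reliance on the group-theoretic literature in this non-solvable section; the main obstacle is really just recognizing that the previous lemma already forces the relevant quotient to have odd order, after which solvability does the rest. I would note in passing that the appeal to simplicity can be replaced by the derived subgroup: a nontrivial solvable group $Q$ satisfies $Q' \neq Q$, so $Q/Q'$ is a nontrivial abelian group of odd order and hence surjects onto $C_q$ for some odd prime $q$, yielding the same conclusion.
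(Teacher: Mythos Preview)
Your proof is correct, but it reaches for Feit--Thompson where the paper uses something far lighter. The paper observes that the quotient $Q = \SL_2(\bF_p)/N$ is not merely of odd order but is \emph{Sylow-cyclic}: $\SL_2(\bF_p)$ is Sylow-cycloidal (Theorem~\ref{SL_sylow_cycloidal_thm}), so each of its odd-prime Sylow subgroups is cyclic, and these surject onto the Sylow subgroups of the odd-order quotient~$Q$. Corollary~\ref{cyclic_quotient_cor}---which rests only on the elementary Frobenius-counting of Theorem~\ref{largest_prime_thm}---then gives $Q$ a nontrivial cyclic quotient and hence a quotient of odd prime order. Your route works, but trading this short self-contained chain for the full odd-order theorem runs against the paper's explicit goal of keeping the argument elementary; the reliance on outside literature in this chapter is meant to be confined to Suzuki's and Zassenhaus's theorems, not to solvability of odd-order groups.
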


\begin{proof}
Let $N$ be a normal subgroup not equal to $\{1\}$ or $\{\pm 1\}$ or $\mathrm{SL}_2 (\bF_p)$.
Then $N$ has odd index in $\mathrm{SL}_2 (\bF_p)$ by the previous lemma.
Observe that $G = \mathrm{SL}_2 (\bF_p) / N$ is a Sylow-cyclic group of odd order.
By Corollary~\ref{cyclic_quotient_cor}, $G$ has a nontrivial cyclic quotient.
Hence $G$ has a quotient of odd prime order. This means
that $\mathrm{SL}_2 (\bF_p)$ has a quotient of odd prime order as well, and  the result follows.
\end{proof}

Now we are ready for the third main result:

\begin{theorem}\label{SL_normal_thm}
Let $p\ge 5$ be an odd prime. Then the normal subgroups of ~$\mathrm{SL}_2 (\bF_p)$ 
are $\{1\}$,  $\{\pm 1\}$, and  $\mathrm{SL}_2 (\bF_p)$ itself.
\end{theorem}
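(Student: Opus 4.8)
The plan is to use the ladder of lemmas just established to reduce the theorem to a single remaining assertion, namely that $\SL_2(\bF_p)$ has no normal subgroup of odd prime index, and then to settle that assertion by proving that $\SL_2(\bF_p)$ is perfect (equal to its own commutator subgroup) when $p \ge 5$. I would argue by contradiction throughout.

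First I would suppose that $\SL_2(\bF_p)$ had a normal subgroup other than $\{1\}$, $\{\pm 1\}$, and $\SL_2(\bF_p)$ itself. By the immediately preceding lemma, $\SL_2(\bF_p)$ would then possess a normal subgroup $M$ of odd prime index $\ell$. The quotient $\SL_2(\bF_p)/M$ is cyclic of order $\ell$, hence abelian, so the commutator subgroup $G' = \SL_2(\bF_p)'$ would satisfy $G' \subseteq M$, and in particular $G'$ would be a \emph{proper} subgroup of $\SL_2(\bF_p)$. Everything then comes down to contradicting this, i.e.\ to showing $G' = \SL_2(\bF_p)$.

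The key step is to locate a nontrivial unipotent element inside $G'$ via the explicit commutator
$$
\left[ \begin{pmatrix} a & 0 \\ 0 & a^{-1} \end{pmatrix},\, \begin{pmatrix} 1 & 1 \\ 0 & 1 \end{pmatrix} \right] = \begin{pmatrix} 1 & a^2 - 1 \\ 0 & 1 \end{pmatrix}.
$$
This is where the hypothesis $p \ge 5$ enters: since $|\bF_p^\times| = p-1 \ge 4$, I may choose $a \in \bF_p^\times$ with $a^2 \ne 1$, which makes the commutator a nonidentity element of the transvection subgroup $U = \bigl\{ \bigl(\begin{smallmatrix} 1 & x \\ 0 & 1\end{smallmatrix}\bigr) : x \in \bF_p \bigr\}$. (For $p = 3$ no such $a$ exists, which explains why the theorem genuinely requires $p \ge 5$; indeed $\SL_2(\bF_3) \cong 2T$ has a normal subgroup of index $3$.) Since $U$ is cyclic of prime order $p$, containing a single nonidentity element of $U$ forces $U \subseteq G'$.

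Finally I would propagate this. The subgroup $U$ is a cyclic subgroup of order $p$, and by Theorem~\ref{second_thm} all cyclic subgroups of $\SL_2(\bF_p)$ of order $p$ are conjugate; since $G'$ is normal it must contain every conjugate of $U$, hence every cyclic subgroup of order $p$, hence every element of order $p$. By Lemma~\ref{SL_eigen_lemma} these elements are exactly the nontrivial unipotents (transvections with eigenvalue $1$), and these generate $\SL_2(\bF_p)$. Thus $G' = \SL_2(\bF_p)$, contradicting its properness and proving the theorem. The main obstacle is this last generation fact: that $\SL_2$ over a field is generated by its transvections. I would invoke it as a standard elementary-matrix (row-reduction) result, while noting that it can be supplied by a short self-contained argument if full independence from external facts is desired.
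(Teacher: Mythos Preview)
Your argument is correct, but it takes a genuinely different route from the paper. After the shared reduction to ruling out a normal subgroup of odd prime index, the paper does \emph{not} prove perfectness via commutators. Instead it partitions $\SL_2(\bF_p)\smallsetminus\{\pm 1\}$ into three pieces $\Gamma_1,\Gamma_2,\Gamma_3$ according to whether the centralizer of an element has order $p-1$, $2p$, or $p+1$, computes (using the already-established counts $c_{p-1},c_{2p},c_{p+1}$ of maximal cyclic subgroups) that each $\Gamma_i$ occupies strictly less than half of the group, and then observes that for a normal subgroup $N$ of prime index $q$ the complement $N^c$ must lie inside the single $\Gamma_i$ corresponding to which of $p-1,p,p+1$ is divisible by $q$---a contradiction since $|N^c|\ge\tfrac12|\SL_2(\bF_p)|$. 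So the paper's proof is a pure counting argument, entirely internal to the machinery of the chapter. Your approach is the standard textbook one: the explicit commutator $\bigl[\mathrm{diag}(a,a^{-1}),\bigl(\begin{smallmatrix}1&1\\0&1\end{smallmatrix}\bigr)\bigr]=\bigl(\begin{smallmatrix}1&a^2-1\\0&1\end{smallmatrix}\bigr)$ puts a nontrivial transvection into $G'$ once $p\ge 5$, normality and Theorem~\ref{second_thm} then absorb all transvections, and transvections generate $\SL_2$. The trade-off is exactly the one you flagged: your route is shorter and more conceptual, but leans on the generation-by-transvections fact, which the paper never states or proves; the paper's counting argument is longer but is fully self-contained given the lemmas already in hand. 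If you want to stay within the paper's framework without importing that external fact, the counting proof is the natural choice; otherwise your argument is a perfectly good (and arguably cleaner) alternative.
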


\begin{proof}
Suppose there are normal subgroups that differ from $\{1\}$,  $\{\pm 1\}$, and  $\mathrm{SL}_2 (\bF_p)$ itself.
By the previous lemma, there is a normal subgroup $N$ such that $N$ has prime index $q$ in $\mathrm{SL}_2 (\bF_p)$.
Since the order of $\mathrm{SL}_2 (\bF_p)$ is $(p-1) p (p+1)$ we have that $q$ divides an element of the set $\{ p-1, p, p+1 \}$. It 
cannot divide two elements of this set since $q \ge 5$. So there are three cases based on which element is divisible by~$q$.

The next step is to partition $\mathrm{SL}_2 (\bF_p)$ as follows:
\begin{itemize}
\item
Let $\Gamma_0 = \{\pm 1 \}$.
\item 
Let $\Gamma_1$ be all elements $g$ of order $m\ge 3$ such that the centralizer~$Z(\left<g\right>)$ of~$\left<g\right>$ has order $p-1$.
\item
Let $\Gamma_2$ be all elements $g$ of order $m\ge 3$ such that the centralizer~$Z(\left<g\right>)$ of~$\left<g\right>$ has order $2p$.
\item
Let $\Gamma_3$ be all elements $g$ of order $m \ge 3$ such that the centralizer~$Z(\left<g\right>)$ of~$\left<g\right>$ has order $p+1$.
\end{itemize}
These sets are clearly disjoint. They form
 partition of~$\mathrm{SL}_2 (\bF_p)$ since every element outside $\Gamma_0$ has order at least $3$ (Proposition~\ref{SL_order_prop}), 
 and~$Z(\left<g\right>)$ has order in the set~$\{ p-1, 2 p , p+1\}$ (see Proposition~\ref{SL_zc_c} and Corollary~\ref{SL_maximalorders_cor}).
 
Next we further partition each $\Gamma_i$. 
We start with $\Gamma_1$.
Every element $g \in \Gamma_1$ is in exactly one 
cyclic subgroup of order $p-1$, namely $Z(\left< g \right>)$ (it cannot be in a second cyclic group of order $p-1$ by Theorem~\ref{second_thm}).
So $\Gamma_1$ can be partitioned into sets of the form $C \cap \Gamma_1$ where $C$ is a cyclic subgroup of order $p-1$.
What is the size of $C \cap \Gamma_1$? By
Corollary~\ref{SL_maximalorders_cor}, if $C$ is a cyclic subgroup of order $p-1$ then it is a maximal cyclic group, and so $C$ equal 
to $Z(\left< g \right>)$ for all $g \in C$ of order $m\ge 3$ (Proposition~\ref{SL_zc_c}). The only elements of $C$ not in $C \cap \Gamma_1$
are the elements of order~$1$ or $2$, namely the elements $\pm 1$. So each partition $C \cap \Gamma_1$ has size $(p-1)-2 = p-3$.
From Proposition~\ref{SL_orders_prop} we know that the number of partitions is $c_{p-1} = \frac{1}{2} p (p+1)$. So $\Gamma_1$
has $c_{p-1} (p-3) = \frac{1}{2} p (p+1) (p-3)$ elements.
A similar calculation can be made for $\Gamma_2$ and $\Gamma_3$.

In fact, we can list the proportion of elements of~$\mathrm{SL}_2 (\bF_p)$ in $\Gamma_1, \Gamma_2, \Gamma_3$
as follows:
\begin{itemize}
\item
The proportion of elements of $\mathrm{SL}_2 (\bF_p)$ in $\Gamma_1$ is 
$$
\frac{\frac{1}{2} p (p+1) (p -1 -2)}{(p-1)p (p+1) } = \frac{1}{2} \frac {p-3} {p-1} < \frac{1}{2}.
$$
\item
The proportion of elements of $\mathrm{SL}_2 (\bF_p)$ in $\Gamma_2$ is 
$$
\frac{(p+1) (2 p -2)}{(p-1)p (p+1) } =\frac {2} {p}.
$$
\item
The proportion of elements of $\mathrm{SL}_2 (\bF_p)$ in $\Gamma_3$ is 
$$
\frac{\frac{1}{2} (p-1) p (p + 1 -2)}{(p-1)p (p+1) } = \frac{1}{2} \frac {p-1} {p+1} < \frac{1}{2}.
$$
\end{itemize}

Claim: \emph{the complement of $N$ is contained in $\Gamma_i$ for some $i \in \{1, 2, 3\}$.}
For instance, suppose $q$ divides $p-1$. Then clearly $\Gamma_0 \subseteq N$.
Note that $q$ cannot divide the order of any $g \in \Gamma_2$ since $q$ does not divide $2p$.
So each $g \in \Gamma_2$ must have trivial image under $\mathrm{SL}_2 (\bF_p) \to \mathrm{SL}_2 (\bF_p) / N$.
In other words, $\Gamma_2 \subseteq N$. 
Similarly $q$ cannot divide the order of any $g \in \Gamma_3$ since $q$ does not divide $p+1$.
So each $g \in \Gamma_3$ must have trivial image under $\mathrm{SL}_2 (\bF_p) \to \mathrm{SL}_2 (\bF_p) / N$.
In other words, $\Gamma_3 \subseteq N$. 
So the complement of $N$ in $\mathrm{SL}_2 (\bF_p)$ must be contained in $\Gamma_1$.
The other cases are similar. In fact, if $N^c$ is the complement $\mathrm{SL}_2 (\bF_p) - N$ then
\begin{itemize}
\item
If $q$ divides $p-1$ then $N^c \subseteq \Gamma_1$.
\item
If $q$ divides $p$ then $N^c \subseteq \Gamma_2$.
\item
If $q$ divides $p+1$ then $N^c \subseteq \Gamma_3$.
\end{itemize}

This leads to a contradiction when $p\ge 5$. Since $N$ is a proper subgroup of~$\mathrm{SL}_2 (\bF_p)$,
the complement $N^c$ must contain at least one-half of the elements of~$\SL_2 (\bF_p)$. But the formulas for 
proportion of the elements of $\Gamma_1, \Gamma_2, \Gamma_3$ in $\SL_2 (\bF_p)$
shows that these proportions are each strictly smaller than one-half. So no such normal subgroup~$N$ of index $q$ exists.
\end{proof}

\begin{corollary}\label{linear_non_solvable_cor}
Let $p\ge 5$ be an odd prime. Then~$\SL_2 (\bF_p)$ is a non-solvable Sylow-cycloidal group,
and the quotient
$$
\mathrm{PSL}_2 (\bF_p) \; \defeq \; \SL_2 (\bF_p) / \{ \pm 1\}
$$
is a simple group.
\end{corollary}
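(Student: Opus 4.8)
The plan is to reduce everything to Theorem~\ref{SL_normal_thm}, which already classifies the normal subgroups of $\SL_2(\bF_p)$ as exactly $\{1\}$, $\{\pm 1\}$, and $\SL_2(\bF_p)$ itself. That $\SL_2(\bF_p)$ is Sylow-cycloidal is nothing new here: it is precisely the content of Theorem~\ref{SL_sylow_cycloidal_thm}, so I would simply cite it for that half of the statement.

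For the simplicity of $\mathrm{PSL}_2(\bF_p)$ I would invoke the lattice isomorphism (correspondence) theorem: the normal subgroups of the quotient $\SL_2(\bF_p)/\{\pm 1\}$ correspond bijectively to the normal subgroups of $\SL_2(\bF_p)$ that contain $\{\pm 1\}$. By Theorem~\ref{SL_normal_thm} the only such subgroups are $\{\pm 1\}$ and $\SL_2(\bF_p)$, so $\mathrm{PSL}_2(\bF_p)$ has only the trivial normal subgroup and the whole group as normal subgroups. Since its order is $(p-1)p(p+1)/2 > 1$, it is a nontrivial group with no proper nontrivial normal subgroup, i.e.\ simple.

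For non-solvability I would first record that $\mathrm{PSL}_2(\bF_p)$ is non-Abelian. Writing its order as $p \cdot (p^2-1)/2$ and noting that $(p^2-1)/2$ is coprime to $p$ and is at least $12$ for $p \ge 5$, the order has at least two distinct prime divisors and so is not prime. A simple Abelian group must be cyclic of prime order; hence $\mathrm{PSL}_2(\bF_p)$ cannot be Abelian. A non-Abelian simple group is never solvable, since its commutator subgroup is a nontrivial normal subgroup and therefore equals the whole group by simplicity, so the derived series is constant and never reaches $\{1\}$. Finally, because a quotient of a solvable group is solvable, the existence of the non-solvable quotient $\mathrm{PSL}_2(\bF_p)$ forces $\SL_2(\bF_p)$ itself to be non-solvable.

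There is no genuinely hard step: all the real work already sits inside Theorem~\ref{SL_normal_thm}. The only points that demand a little care are the order bookkeeping that rules out the order being prime for $p \ge 5$, together with the routine appeals to the correspondence theorem, to the fact that quotients of solvable groups are solvable, and to the classification of simple Abelian groups as cyclic of prime order.
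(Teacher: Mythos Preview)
Your argument is correct and is exactly the intended deduction: the paper states this as an immediate corollary of Theorem~\ref{SL_normal_thm} (together with Theorem~\ref{SL_sylow_cycloidal_thm}) without writing out a proof, and you have filled in precisely the routine steps---the correspondence theorem for simplicity, the order count showing $\mathrm{PSL}_2(\bF_p)$ is not cyclic of prime order, and the standard facts about solvability under quotients.
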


\begin{remark}
The simple groups $\mathrm{PSL}_2 (\bF_p)$
were studied even by Galois, and were the earliest known non-Abelian simple groups outside of the  alternating groups.
\end{remark}

The above takes care of the case $p\ge 5$. The case $p=3$ is of interest as well:

\begin{proposition}
The group $\SL_2 (\bF_3)$ is a solvable Sylow-cycloidal group isomorphic to the binary tetrahedral group $2T$.
\end{proposition}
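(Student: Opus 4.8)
The plan is to identify $\SL_2(\bF_3)$ with $2T$ by verifying the hypotheses of Corollary~\ref{2T_iso_cor}, which characterizes $2T$ among groups of order $24$ as the unique such group having a single element of order $2$ but more than one subgroup of order $3$. First I would record the two facts that come directly from Proposition~\ref{SL_order_prop} applied with $q = 3$: the order of $\SL_2(\bF_3)$ is $(q-1)q(q+1) = 2 \cdot 3 \cdot 4 = 24$, and, since $q$ is odd, $-I$ is the unique element of order $2$.

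The only remaining point is that $\SL_2(\bF_3)$ does \emph{not} have a unique subgroup of order $3$. The cleanest route is to exhibit two distinct such subgroups explicitly. Since $\bF_3$ has characteristic $3$, the unipotent matrices
$$
u = \begin{pmatrix} 1 & 1 \\ 0 & 1 \end{pmatrix}, \qquad
u' = \begin{pmatrix} 1 & 0 \\ 1 & 1 \end{pmatrix}
$$
each satisfy $u^3 = (u')^3 = I$ (because $3 = 0$ in $\bF_3$), so each generates a subgroup of order $3$. These subgroups are distinct because $u$ and $u'$ have different eigendirections, so neither element is a power of the other. Hence there are at least two subgroups of order $3$. (Alternatively, one can simply quote Proposition~\ref{SL_orders_prop}, which for $p = 3$ and $m = 3$ dividing $2p = 6$ gives $c_3 = p + 1 = 4$.)

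With order $24$, a unique element of order $2$, and more than one subgroup of order $3$ in hand, Corollary~\ref{2T_iso_cor} yields $\SL_2(\bF_3) \cong 2T$ immediately. Solvability then follows because $2T/\{\pm 1\} \cong A_4$ is solvable and $\{\pm 1\}$ is Abelian, while the Sylow-cycloidal property is Theorem~\ref{SL_sylow_cycloidal_thm} (or is inherited through the isomorphism with the subgroup $2T$ of $\bH^\times$). I do not expect a genuine obstacle here; the one place to be careful is confirming the non-uniqueness of the order-$3$ subgroup, since that is precisely the hypothesis of Corollary~\ref{2T_iso_cor} distinguishing $2T$ from a group with a normal Sylow $3$-subgroup — and the explicit unipotents settle it without appeal to the general census.
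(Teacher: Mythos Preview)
Your proof is correct and takes a slightly more direct route than the paper's. The paper first invokes Theorem~\ref{SL_sylow_cycloidal_thm} to establish that $\SL_2(\bF_3)$ is Sylow-cycloidal with non-cyclic $2$-Sylow subgroup (hence isomorphic to $Q_8$), then uses the full census of Proposition~\ref{SL_orders_prop} to count three cyclic subgroups of order~$4$ (forcing a unique, hence normal, $2$-Sylow subgroup and thereby solvability) and four subgroups of order~$3$ (ruling out a normal one), and finally applies Proposition~\ref{2T_iso_prop}. You instead go straight to Corollary~\ref{2T_iso_cor}, whose hypotheses are lighter: order~$24$, a unique element of order~$2$, and non-uniqueness of the order-$3$ subgroup --- and you verify the last point by exhibiting two explicit unipotents rather than appealing to the general census. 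This is cleaner, since Corollary~\ref{2T_iso_cor} already proves solvability internally, and you then read off both solvability and the Sylow-cycloidal property from the isomorphism with $2T$. The paper's route has the minor advantage of displaying the normal $Q_8$ explicitly along the way, but yours reaches the conclusion with less machinery.
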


\begin{proof}
The order of $\SL_2 (\bF_3)$ is $2 \cdot 3 \cdot 4 = 24$ (Proposition~\ref{SL_order_prop}).
By Theorem~\ref{SL_sylow_cycloidal_thm},~$\SL_2 (\bF_3)$ is a Sylow-cycloidal group whose
$2$-Sylow subgroups are not cyclic. Thus the $2$-Sylow subgroups of $\SL_2 (\bF_3)$ must
be isomorphic to the quaternion group~$Q_8$ with $8$ elements.

According to Proposition~\ref{SL_orders_prop}, there are $3$ cyclic subgroups of $\SL_2 (\bF_3)$ of order~$4$ which is exactly
the number of such cyclic subgroups of $Q_8$. Thus there is a unique~$2$-Sylow subgroup of $\SL_2 (\bF_3)$, and so it is normal.
This explains why $\SL_2 (\bF_3)$ is solvable.
Proposition~\ref{SL_orders_prop} also asserts that there are $4$  cyclic subgroups of $\SL_2 (\bF_3)$ of order~$3$,
and these subgroups are conjugate (Theorem~\ref{second_thm}). Thus $\SL_2 (\bF_3)$ has no normal subgroup
of order 3. 
By Proposition~\ref{2T_iso_prop}, $\SL_2 (\bF_3)$ is isomorphic to the binary tetrahedral group $2T$.
\end{proof}

We have considered cyclic subgroups and normal subgroups of $\SL_2(\bF_p)$.
Now we will consider subgroups of odd order. We will do so using the following result on normalizers of cyclic subgroups.

\begin{lemma}
Let $C$ be a cyclic subgroup of $\SL_2 (\bF_p)$ where $p$ is an odd prime.
Then the normalizer $N(C)$ of $C$ is the normalizer $N(Z(C))$ of the maximal cyclic subgroup $Z(C)$ containing $C$.
Furthermore,
\begin{itemize}
\item
If $Z(C)$ has order $p-1$ then $N(Z(C))$ is a non-Abelian subgroup of $\SL_2 (\bF_p)$ of order~$2(p-1)$.
\item
If $Z(C)$ has order $2p$ then $N(Z(C))$ is a non-Abelian subgroup of $\SL_2 (\bF_p)$ of order~$(p-1)p$.
\item
If $Z(C)$ has order $p+1$ then $N(Z(C))$ is a non-Abelian subgroup of $\SL_2 (\bF_p)$ of order~$2 (p+1))$.
\end{itemize}
\end{lemma}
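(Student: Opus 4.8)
The plan is to prove the two assertions in turn: first the equality $N(C) = N(Z(C))$, and then the three order computations, from which non-Abelianness will follow almost immediately. Throughout I would write $M = Z(C)$ and work under the standing assumption $|C| \geq 3$, which is what makes $Z(C)$ a well-defined maximal cyclic subgroup; this is exactly the regime in which $M$ has order $p-1$, $2p$, or $p+1$.

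For the equality of normalizers I would exploit Proposition~\ref{SL_zc_c}: when $|C| \geq 3$, the centralizer $Z(C) = M$ is the \emph{unique} maximal cyclic subgroup of $\SL_2(\bF_p)$ containing $C$, and maximal cyclic subgroups are self-centralizing, $M = Z(M)$. For the inclusion $N(C) \subseteq N(M)$, take $g \in N(C)$; then $gMg^{-1}$ is again a maximal cyclic subgroup (conjugation is an inner automorphism of $\SL_2(\bF_p)$) and it contains $gCg^{-1} = C$, so by the uniqueness in Proposition~\ref{SL_zc_c} we get $gMg^{-1} = M$, i.e. $g \in N(M)$. For the reverse inclusion, take $g \in N(M)$; conjugation by $g$ restricts to an automorphism of the cyclic group $M$, and since a cyclic group has exactly one subgroup of each order, it must fix the unique subgroup $C \leq M$ of order $|C|$. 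Hence $gCg^{-1} = C$ and $g \in N(C)$. This yields $N(C) = N(M)$.

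For the orders I would count conjugates. Since $M$ is maximal cyclic, its order lies in $\{p-1, 2p, p+1\}$ (Corollary~\ref{SL_maximalorders_cor}), and by that same corollary \emph{every} cyclic subgroup of order $|M|$ is maximal; by Theorem~\ref{second_thm} all of these are conjugate to $M$. Thus the conjugacy class of $M$ under conjugation is precisely the set of all cyclic subgroups of order $|M|$, whose cardinality $c_{|M|}$ is recorded in Proposition~\ref{SL_orders_prop}. The stabilizer of $M$ under the conjugation action is $N(M)$, so orbit-stabilizer gives $[\SL_2(\bF_p):N(M)] = c_{|M|}$, whence $|N(M)| = (p-1)p(p+1)/c_{|M|}$ using the order formula of Proposition~\ref{SL_order_prop}. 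Substituting $c_{p-1} = \tfrac12 p(p+1)$, $c_{2p} = p+1$, and $c_{p+1} = \tfrac12 p(p-1)$ produces the three orders $2(p-1)$, $(p-1)p$, and $2(p+1)$ respectively.

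Finally, non-Abelianness follows from self-centralization. If $N(M)$ were Abelian then, since $M \leq N(M)$, every element of $N(M)$ would centralize $M$, forcing $N(M) \subseteq Z(M) = M$ and hence $N(M) = M$; but the computed orders strictly exceed $|M|$ in each case, a contradiction. The one place requiring genuine care—and the main obstacle—is the borderline inequality $(p-1)p > 2p$ in the order-$2p$ case, which holds precisely when $p > 3$. For $p = 3$ one has $(p-1)p = 2p$ and $N(M) = M$ is in fact cyclic, so the non-Abelian conclusion must be read with $p \geq 5$; this is harmless here since $p = 3$ gives $\SL_2(\bF_3) \cong 2T$, which has already been handled separately, and the present chapter concerns the non-solvable range $p \geq 5$.
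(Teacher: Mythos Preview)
Your proof is correct and close in spirit to the paper's, with two noteworthy differences.

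For the equality $N(C) = N(M)$, the paper proves only the inclusion $N(M) \subseteq N(C)$ (via $C$ being characteristic in the cyclic group $M$) and then deduces equality by computing \emph{both} orders separately through orbit--stabilizer, using that Proposition~\ref{SL_orders_prop} gives $c_{|C|} = c_{|M|}$ whenever $|C| > 2$ and $|C|$ divides $|M|$. You instead prove both inclusions directly, obtaining $N(C) \subseteq N(M)$ from the uniqueness of $M$ as the maximal cyclic subgroup containing $C$ (Proposition~\ref{SL_zc_c}); this is a clean alternative and lets you run orbit--stabilizer only once, on $M$.

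You also supply the non-Abelianness argument via self-centralization $M = Z(M)$, which the paper's proof in fact omits entirely, and you correctly flag that the inequality $(p-1)p > 2p$ needed in the $|M| = 2p$ branch fails at $p = 3$. That caveat is apt: the lemma as stated is used downstream only for $p \ge 5$, and the $p = 3$ case is handled separately.
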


\begin{proof}
Since $Z(C)$ is cyclic, $C$ is a characteristic subgroup of $Z(C)$. Thus $C$ is normal in $N(Z(C))$ since $Z(C)$ is normal in $N(Z(C))$.
Hence
$$
N(Z(C)) \subseteq N(C).
$$
We show equality by showing that both subgroups in this inclusion have the same order.

We focus on the case where $Z(C)$ has order $p-1$. The other cases are similar.
By Theorem~\ref{second_thm} the group $\SL_2 (\bF_p)$ acts transitively on the collection
of cyclic subgroups of order~$|C|$. 
By Proposition~\ref{SL_orders_prop} the orbit of $C$ under this action has size~$\frac{1}{2}p(p+1)$.
So by the orbit-stabilizer theorem, the stabilizer $N(C)$ under conjugation has the following size:
$$
|N(C)|  = \frac{(p-1)p(p+1)} {\frac{1}{2}p(p+1)} = 2(p-1).
$$
This calculation is valid for $N(Z(C))$ as well since $Z(C)$ is cyclic of size $p+1$. So~$N(Z(C))$ has
order $2(p-1)$ as well. Equality follows.
\end{proof}

\begin{corollary} \label{SL_odd_cor}
Suppose $H$ is a noncyclic subgroup of $\SL_2(\bF_p)$ of odd order, where~$p$ is an odd prime.
Then $H$ has order dividing $(p-1)p$. Furthermore $H$ contains a normal subgroup $C$ of order $p$, 
and $H$ is subgroup of the normalizer~$N(C)$ of $C$ in~$\SL_2(\bF_p)$.
\end{corollary}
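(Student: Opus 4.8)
The plan is to leverage the structure theory of Sylow-cyclic groups together with the normalizer computation in the lemma immediately preceding this corollary. First I would record the basic consequences of $|H|$ being odd: since $-1$ is the unique element of order $2$ in $\SL_2(\bF_p)$ (Proposition~\ref{SL_order_prop}), we have $-1 \notin H$ and every nonidentity element of $H$ has order at least $3$. Being a subgroup of the Sylow-cycloidal group $\SL_2(\bF_p)$ (Theorem~\ref{SL_sylow_cycloidal_thm}), $H$ is Sylow-cycloidal (Proposition~\ref{cycloidal_subgroup_prop}); as its order is odd, all its Sylow subgroups are cyclic, so $H$ is in fact Sylow-cyclic. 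Since $H$ is noncyclic it is non-Abelian (an Abelian Sylow-cyclic group is cyclic, Proposition~\ref{abelian_cyclic_prop2}), so by Theorem~\ref{semidirect_thm} the commutator subgroup $A = H'$ is a \emph{nontrivial} cyclic normal subgroup of $H$. Fixing a generator $a$ of $A$ (an element of order $\geq 3$), Proposition~\ref{SL_zc_c} provides a unique maximal cyclic subgroup $M = Z(\langle a\rangle)$ containing $\langle a\rangle = A$, and because $A$ is normal in $H$ every $h \in H$ normalizes $\langle a\rangle$; by the preceding lemma $N(\langle a\rangle) = N(M)$, so $H \subseteq N(M)$.

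The heart of the argument, and the step I expect to be the main obstacle, is the trichotomy on $|M| \in \{p-1, 2p, p+1\}$ coming from Corollary~\ref{SL_maximalorders_cor}. I would rule out the extreme cases as follows. If $|M| = p-1$ or $|M| = p+1$, then by the preceding lemma $N(M)$ has order $2|M|$, so $M$ is a normal subgroup of index $2$ in $N(M)$ that is cyclic of \emph{even} order. Since $H \subseteq N(M)$ has odd order, the index $[H : H \cap M]$ both divides $|H|$ (hence is odd) and embeds into $N(M)/M \cong \bZ/2\bZ$; it must therefore equal $1$, forcing $H \subseteq M$. But then $H$ would be cyclic, contradicting the hypothesis. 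The only surviving possibility is $|M| = 2p$, in which case the same lemma gives $|N(M)| = (p-1)p$, and since $H \subseteq N(M)$ this yields $|H| \mid (p-1)p$, the first assertion.

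Finally I would extract the normal subgroup of order $p$. With $|M| = 2p$, the cyclic group $M$ has a unique subgroup $C$ of order $p$, consisting of all its elements of odd order; since $A$ has odd order and $A \subseteq M$, we get $A \subseteq C$. As $A$ is nontrivial and $|C| = p$ is prime, this forces $A = C$, so $A = H'$ is a normal subgroup of $H$ of order exactly $p$. Taking $C = A$, we have $\langle a\rangle = A = C$, hence $N(C) = N(\langle a\rangle) = N(M)$, and the containment $H \subseteq N(M)$ established above reads $H \subseteq N(C)$, completing all three claims. (One should note the statement is vacuously true when $p = 3$, since then $\SL_2(\bF_3) \cong 2T$ has no noncyclic subgroups of odd order; the argument never relies on this, so no separate treatment is needed.)
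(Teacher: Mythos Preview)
Your proof is correct and follows essentially the same approach as the paper's: identify a nontrivial normal cyclic subgroup of $H$ (the commutator subgroup, via Theorem~\ref{semidirect_thm}), use the preceding lemma to pass to the normalizer of its maximal cyclic overgroup $M$, and rule out $|M|\in\{p-1,\,p+1\}$ by the parity/index argument. The only cosmetic difference is that the paper works directly with $N(C)/Z(C)$ having order~$2$ (forcing $H\subseteq Z(C)$), while you phrase the same step as $[N(M):M]=2$ forcing $H\subseteq M$; since $M=Z(C)$ and $N(M)=N(C)$ by that lemma, these are the same argument.
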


\begin{proof}
Since $\SL_2(\bF_p)$ is a Sylow-cycloidal group, the subgroup $H$ of odd order is a Sylow-cyclic group.
By Theorem~\ref{semidirect_thm}, $H$ has a nontrivial normal cyclic subgroup $C$, and so~$H \subseteq N(C)$.
If $C$ has order dividing $p-1$ or $p-2$ then $N(C)/Z(C)$ is even, and the image of $H$ in $N(C)/Z(C)$ is trivial.
Thus $H \subseteq Z(C)$ and so $H$ is cyclic.
So we are left with the case where $C$ has order dividing $2p$. In other words, $C$ has order $p$.
By the above proposition $N(C)$ has order $(p-1)p$.
\end{proof}

Given the above corollary, the normalizer $N(C)$ of a cyclic subgroup $C$ of order~$p$ warrants our further attention.

\begin{lemma}
Let $C_p$ be a subgroup of $\SL_2(\bF_p)$ of order $p$, where~$p$ is an odd prime.
Then $C_p$ is conjugate to the subgroup
$$
\left\{ \begin{pmatrix} 1 & b \\ 0 & 1  \end{pmatrix}     \;   \middle| \; b \in \bF_p \right\} \cong \bF_p.
$$
The normalizer $N(C_p)$ is conjugate to the subgroup
$$
\left\{ \begin{pmatrix} a & b \\ 0 & a^{-1}  \end{pmatrix}     \;   \middle| \; a \in \bF^\times_p, \; b \in \bF_p \right\}.
$$
Furthermore, $N(C_p)$ is a semidirect product $C_p \rtimes C_{p-1}$ where $C_{p-1}$ is the cyclic group
$$
\left\{ \begin{pmatrix} a & 0 \\ 0 & a^{-1}  \end{pmatrix}     \;   \middle| \; a \in \bF^\times_p \right\} \cong \bF_p^\times.
$$
Viewing $C_{p-1}$ as $\bF_p^\times$ and $C_p$ as $\bF_p$, the action of $a\in C_{p-1}$ on $C_p$ is given by
$$
b \mapsto a^2 b.
$$
\end{lemma}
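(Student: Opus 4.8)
The plan is to put $C_p$ into standard position by conjugation, reduce every assertion to the corresponding assertion about the upper‑triangular subgroup, and then finish with explicit $2\times 2$ matrix computations together with the order count supplied by the preceding lemma.

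First I would identify the conjugacy class of $C_p$. Let $\alpha$ generate $C_p$, so $\alpha$ has order $p$. Since $p$ is odd, the order $p$ divides $2p$ but divides neither $p-1$ nor $p+1$ (as $\gcd(p,p-1)=\gcd(p,p+1)=1$), so by Lemma~\ref{SL_eigen_lemma} the element $\alpha$ has exactly one eigenvalue $\lambda\in\bF_p$. That eigenvalue satisfies $\lambda^2=\det\alpha=1$, so $\lambda=\pm 1$. The value $\lambda=-1$ cannot occur: then $\alpha=-u$ with $u$ a nontrivial unipotent of order $p$, forcing $\alpha$ to have order $2p$ (the least common multiple of $2$ and $p$) rather than $p$. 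Hence $\lambda=1$, and choosing a basis whose first vector is the eigenvector shows $\alpha$ is conjugate to $\begin{pmatrix}1&a\\0&1\end{pmatrix}$ with $a\neq 0$. Since the powers of this matrix run through all $\begin{pmatrix}1&ka\\0&1\end{pmatrix}$ and $a\neq 0$, the cyclic group $C_p=\langle\alpha\rangle$ is conjugate to the full group $U=\left\{\begin{pmatrix}1&b\\0&1\end{pmatrix}\mid b\in\bF_p\right\}\cong\bF_p$.

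Because conjugation carries normalizers to normalizers, it now suffices to prove the remaining claims with $U$ in place of $C_p$. A direct multiplication shows that every $g=\begin{pmatrix}a&b\\0&a^{-1}\end{pmatrix}$ conjugates $U$ into $U$, so the upper‑triangular subgroup $B=\left\{\begin{pmatrix}a&b\\0&a^{-1}\end{pmatrix}\right\}$ is contained in $N(U)$. On the other hand, the maximal cyclic subgroup $Z(U)$ containing $U$ has order $2p$ (Corollary~\ref{SL_maximalorders_cor} and Proposition~\ref{SL_zc_c}, as $U$ has order $p\ge 3$ dividing $2p$), and by the preceding lemma $N(U)=N(Z(U))$ has order $(p-1)p$. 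Since $|B|=(p-1)p$ as well, the inclusion $B\subseteq N(U)$ forces $B=N(U)$.

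Finally I would read off the semidirect structure and the action. The diagonal torus $T=\left\{\begin{pmatrix}a&0\\0&a^{-1}\end{pmatrix}\mid a\in\bF_p^\times\right\}\cong\bF_p^\times$ is cyclic of order $p-1$ (using that $\bF_p^\times$ is cyclic, Corollary~\ref{field_cyclic_cor}), satisfies $U\cap T=\{I\}$, and has $|U||T|=p(p-1)=|B|$, so $B=U\rtimes T$. The conjugation action of $t=\begin{pmatrix}a&0\\0&a^{-1}\end{pmatrix}$ on $u(b)=\begin{pmatrix}1&b\\0&1\end{pmatrix}$ is
$$
\begin{pmatrix}a&0\\0&a^{-1}\end{pmatrix}\begin{pmatrix}1&b\\0&1\end{pmatrix}\begin{pmatrix}a^{-1}&0\\0&a\end{pmatrix}=\begin{pmatrix}1&a^2b\\0&1\end{pmatrix}=u(a^2b),
$$
giving the stated rule $b\mapsto a^2b$. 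The only genuinely delicate step is the first one — pinning the eigenvalue to $1$ rather than $-1$ and checking that a single nontrivial unipotent generates all of $U$; everything afterward is routine matrix bookkeeping anchored by the order identity $|N(U)|=(p-1)p$ from the previous lemma.
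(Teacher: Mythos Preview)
Your proof is correct and follows essentially the same route as the paper: put $C_p$ into standard upper-unipotent form via Lemma~\ref{SL_eigen_lemma}, observe that the upper-triangular Borel $B$ normalizes it, and conclude $B=N(C_p)$ by matching the order $(p-1)p$ supplied by the preceding lemma; the semidirect decomposition and the conjugation formula then follow from the explicit matrix computation. Your argument is in fact slightly more careful than the paper's---you explicitly rule out the eigenvalue $-1$ (the paper glosses over this), and your final display correctly yields $u(a^2b)$, whereas the paper's printed calculation contains a typo.
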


\begin{proof}
Let $\alpha$ be a generator of $C_p$. By Lemma~\ref{SL_eigen_lemma}, $\alpha$ has exactly one eigenvalue in $\bF_p$. 
If we work in a basis $v_1, v_2$ where $v_1$ is an eigenvector of $\alpha$, then $\alpha$ has the form 
$$
\begin{pmatrix} 1 & b \\ 0 & 1  \end{pmatrix}
$$
for some nonzero $b \in  \bF_p$. It follows that $C_p$ has the desired form (up to conjugation).
From now on we assume $C_p$ is this particular subgroup of $\SL_2(\bF_p)$.

Next consider the subgroup
$$
H \; \defeq \; \left\{ \begin{pmatrix} a & b \\ 0 & a^{-1}  \end{pmatrix}     \;   \middle| \; a \in \bF^\times_p, \; b \in \bF_p \right\}.
$$
Observe that
$$
\begin{pmatrix} a & b \\ 0 & a^{-1}  \end{pmatrix}   \mapsto a
$$
is a surjective homomorphism $H \to \bF_{p}^\times$ with kernel $C_p$.
So $C_p$ is a normal subgroup of $H$, and so $H\subseteq N(C_p)$.
By the above lemma $N(C_p)$ has size $(p-1) p$, which is also the size of $H$.
So $H =  N(C_p)$.
Note that $C_{p-1}$ (as defined in the statement of the lemma) is a complement to $C_p$ in $H = N(C_p)$
and so $N(C_p)$ is a semidirect product $C_p \rtimes C_{p-1}$. The last assertion follows from the calculation:
$$
\begin{pmatrix} a & 0 \\ 0 & a^{-1}  \end{pmatrix}  \begin{pmatrix} 1 & b \\ 0 & 1  \end{pmatrix}  \begin{pmatrix} a & 0 \\ 0 & a^{-1}  \end{pmatrix}^{-1}  =
\begin{pmatrix} a & 0 \\ 0 & a^{-1}  \end{pmatrix}  \begin{pmatrix} 1 & b \\ 0 & 1  \end{pmatrix}  \begin{pmatrix} a^{-1} & 0 \\ 0 & a  \end{pmatrix}
=
 \begin{pmatrix} 1 & b \\ 0 & 1  \end{pmatrix} 
$$
\end{proof}

Here is the next major result: 

\begin{theorem}
Let $H$ be a noncyclic subgroup of $\SL_2(\bF_p)$ of odd order, where~$p$ is an odd prime.
Then $H$ contains a cyclic subgroup of order $p$ and $H$ is contained in the normalizer $N(C)$ of $C$ in $\SL_2 (\bF_p)$.
Each such $H$ is isomorphic to $\bF_p \rtimes  T$ where~$T$ is a nontrivial subgroup of $\bF_p^\times$ of odd order.
Here the action of $a \in T$ is given by $b \mapsto a^2 b$. 

Furthermore, given such a semidirect product, $\bF_p \rtimes  T$ there is a non-Abelian subgroup of~$\SL_2(\bF_p)$ isomorphic to  $\bF_p \rtimes  T$.
\end{theorem}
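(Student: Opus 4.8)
The first assertion—that $H$ contains a normal cyclic subgroup $C$ of order $p$, with $H \subseteq N(C)$—is exactly Corollary~\ref{SL_odd_cor}, so the real work is pinning down the isomorphism type. The plan is to transport the situation into the explicit normalizer computed in the lemma immediately preceding this theorem. After conjugating we may assume $C = C_p$ is the standard unipotent subgroup $\{\begin{pmatrix} 1 & b \\ 0 & 1\end{pmatrix} : b \in \bF_p\}$ and that $N(C_p)$ is the upper-triangular group $C_p \rtimes C_{p-1}$, with projection $\pi\colon N(C_p) \to \bF_p^\times$ sending $\begin{pmatrix} a & b \\ 0 & a^{-1}\end{pmatrix} \mapsto a$; by that lemma $\ker\pi = C_p$ and the induced conjugation action of $a \in \bF_p^\times$ on $C_p \cong \bF_p$ is $b \mapsto a^2 b$.

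Set $T = \pi(H) \subseteq \bF_p^\times$. Since $\ker(\pi|_H) = H \cap C_p = C_p$, restriction gives $H/C_p \cong T$; in particular $T$ is cyclic, and $|T| = |H|/p$ is odd because $|H|$ is. If $T$ were trivial then $H = C_p$ would be cyclic, contradicting the hypothesis, so $T$ is nontrivial. The key step is to split the extension $C_p \hookrightarrow H \twoheadrightarrow T$ \emph{without} invoking the Schur--Zassenhaus theorem, in keeping with the paper's methods. Write $n = |T|$ and note $\gcd(n,p) = 1$ since $n \mid p-1$. Pick $h \in H$ whose image generates the cyclic group $H/C_p$ and set $k := h^p$. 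Then $h^n \in C_p$ forces $k^n = (h^n)^p = 1$, so $|\langle k\rangle| \le n$, while $kC_p = (hC_p)^p$ still generates $H/C_p$ because $\gcd(p,n) = 1$, giving $\langle k\rangle C_p = H$ and hence $|\langle k\rangle| \ge n$. Thus $|\langle k\rangle| = n$, and comparing $|\langle k\rangle C_p| = |\langle k\rangle|\,|C_p|/|\langle k\rangle \cap C_p|$ with $|H| = np$ yields $\langle k\rangle \cap C_p = \{1\}$, so $H = C_p \rtimes \langle k\rangle$ with $\langle k\rangle \cong T$.

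Because $C_p$ is Abelian, conjugation by $k$ on $C_p$ coincides with conjugation by $\pi(k) \in \bF_p^\times$, namely $b \mapsto \pi(k)^2 b$; transporting along the isomorphism $\pi|_{\langle k\rangle}\colon \langle k\rangle \xrightarrow{\sim} T$ identifies $H$ with $\bF_p \rtimes T$ under the action $b \mapsto a^2 b$, as claimed. For the converse I would simply exhibit the subgroup $H_T = \{\begin{pmatrix} a & b \\ 0 & a^{-1}\end{pmatrix} : a \in T,\ b \in \bF_p\} = C_p \rtimes D_T$ of $N(C_p)$, where $D_T$ is the diagonal subgroup indexed by $a \in T$; by the preceding lemma it is isomorphic to $\bF_p \rtimes T$ with action $b \mapsto a^2 b$. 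It is non-Abelian precisely because this action is nontrivial: since $\bF_p$ is a field, $a^2 = 1$ forces $a = \pm 1$, and as $T$ has odd order exceeding $1$ it contains some $a \ne 1$, which then also satisfies $a \ne -1$, whence $a^2 \ne 1$.

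The only genuinely nontrivial point is the splitting of $C_p \hookrightarrow H \twoheadrightarrow T$; everything else is bookkeeping inside the explicit triangular normalizer. I expect the $k = h^p$ construction to be the crux, and it is worth emphasizing that it respects the paper's stated avoidance of Schur--Zassenhaus, relying only on the coprimality $\gcd(p,n) = 1$ together with the cyclicity of $H/C_p$ inherited from $\bF_p^\times$.
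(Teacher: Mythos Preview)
Your proof is correct, but it works harder than necessary at exactly the point you flag as ``the crux.'' The paper's argument is shorter: once $H$ is conjugated into $N(C_p) = C_p \rtimes C_{p-1}$ with $C_p \subseteq H$, no hand-built splitting is needed, because any subgroup of a semidirect product $A \rtimes B$ that contains $A$ is automatically $A \rtimes (H \cap B)$. Concretely, $H \subseteq \pi^{-1}(T) = C_p D_T$ and both sides have order $p|T|$, so $H = C_p \rtimes D_T$ on the nose. Your $k = h^p$ construction is a clean, elementary proof of the cyclic-quotient case of Schur--Zassenhaus and certainly honors the paper's stated avoidance of that theorem, but it is superfluous here: the complement $D_T$ is sitting visibly inside the ambient diagonal $C_{p-1}$, and the paper simply takes it. Everything else in your write-up---the identification of $T$, the nontriviality argument, the action $b \mapsto a^2 b$, and the explicit converse via $H_T$ with the non-Abelian check---matches the paper's reasoning, though you spell out the non-Abelian verification more carefully than the paper does.
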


\begin{proof}
The existence of $C$ follows from Corollary~\ref{SL_odd_cor}.
By the above lemma, any such $H$ is isomorphic to a subgroup of $\bF_p \rtimes \bF_{p}^\times$ where $\bF_p$ corresponds to $C$.
Observe that $H$ is actually isomorphism to $\bF_p \rtimes T$ where
 $T$ is the image of $H$ in $\bF_p^\times$. Note that $T$ is nontrivial since $H$ is noncyclic.
 
 Conversely, given a nontrivial odd order subgroup $T$ of $\bF_p^\times$, we have a subgroup of $N(C)$ isomorphic $\bF_p \rtimes T$
 where here $C$ is any subgroup of $\SL_2(\bF_p)$ of order $p$. (This follows from the above lemma).
\end{proof}

This leads to some important corollaries:

\begin{corollary}  \label{SL_fermat_cor2}
Suppose $p$ is an odd prime. Then $\SL_2(\bF_p)$ contains a noncyclic subgroup of order the product of two primes
if and only if $p$ is not a Fermat prime.
\end{corollary}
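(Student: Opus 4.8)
The plan is to split into cases according to the parity of the order of a putative noncyclic subgroup, show that the even case never yields a noncyclic example, and reduce the odd case directly to the theorem immediately preceding this corollary, where the answer is governed by whether $p-1$ has an odd prime divisor.

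First I would dispose of the even case. A subgroup of $\SL_2(\bF_p)$ whose order is a product of two primes and even has order either $4$ or $2q$ for an odd prime $q$. Since $\SL_2(\bF_p)$ has a unique element of order $2$, namely $-I$, and it is central (Proposition~\ref{SL_order_prop}), any such subgroup contains $-I$ as its unique involution. A group of order $4$ with a unique involution is cyclic, since it cannot be the Klein four group. A group of order $2q$ with $q$ odd has a normal, hence unique, Sylow $q$-subgroup $C_q$; as $-I$ generates a central $C_2$ meeting $C_q$ trivially, the subgroup is $C_q \times C_2 \cong C_{2q}$, cyclic. Thus no even-order subgroup of order a product of two primes is noncyclic, so any noncyclic example must have odd order.

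Next I would treat the odd case using the preceding theorem, which shows every noncyclic odd-order subgroup $H$ of $\SL_2(\bF_p)$ is isomorphic to $\bF_p \rtimes T$ with $T$ a nontrivial odd-order subgroup of $\bF_p^\times$, so that $|H| = p\,|T|$. If $|H|$ is a product of two primes then, since $p$ is prime and $|T|>1$, the factor $|T|$ must itself be a (necessarily odd) prime $q$; and $q$ divides $|\bF_p^\times| = p-1$. Conversely, the existence clause of that same theorem provides, for any odd prime $q \mid p-1$, a non-Abelian (hence noncyclic) subgroup $\bF_p \rtimes T$ with $|T| = q$, of order $pq$. Hence a noncyclic subgroup whose order is a product of two primes exists if and only if $p-1$ has an odd prime divisor.

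Finally I would translate this into the Fermat condition. Since $p$ is odd, $p-1$ is even, and $p-1$ fails to have an odd prime divisor exactly when $p-1$ is a power of $2$, i.e. $p = 2^k + 1$; such a prime is precisely a Fermat prime, as $2^k+1$ is composite whenever $k$ has an odd factor greater than $1$. Therefore $p-1$ has an odd prime divisor if and only if $p$ is not a Fermat prime, which completes the equivalence. The only real subtlety is the even case, where the uniqueness of the involution $-I$ is exactly what forbids the dihedral and Klein-four possibilities; the rest is an immediate reading of the preceding theorem.
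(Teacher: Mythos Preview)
Your proof is correct and follows essentially the same strategy as the paper: rule out even-order examples using the uniqueness of $-I$, then invoke the preceding theorem on odd-order noncyclic subgroups to reduce to whether $p-1$ has an odd prime divisor.

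The only notable difference is in how the even case is dispatched. The paper first uses that $\SL_2(\bF_p)$ is Sylow-cycloidal to rule out all prime-square orders at once, then for order $2q$ appeals to Theorem~\ref{semidirect_thm} to identify a noncyclic group of that order as dihedral, contradicting the uniqueness of the involution. Your argument is more direct: you use that $-I$ is central (not merely unique), so any subgroup of order $2q$ containing it is automatically $C_q \times C_2$, and you handle order $4$ separately. Your route avoids citing the structure theorem for Sylow-cyclic groups and is slightly cleaner; the paper's route handles all $q^2$ cases uniformly up front rather than leaving odd $q^2$ to be absorbed implicitly by the preceding theorem (which forces $p \mid |H|$). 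Both are perfectly fine.
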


\begin{proof}
First suppose $\SL_2(\bF_p)$ has a noncyclic subgroup $H$ of order $|H| = q_1 q_2$ where $q_1 \le  q_2$ are primes. 
Since $\SL_2(\bF_p)$ is a Sylow-cycloidal group, any subgroup of prime squared order is cyclic. Thus $q_1 \ne q_2$.

First suppose that $q_1 = 2$.  By Theorem~\ref{semidirect_thm} we have that $H$ is isomorphic to
a semidirect product~$C_{q} \rtimes C_2$
where $C_q$ and $C_2$ are cyclic subgroups of order $q = q_2$ and $2$ respectively.
The only nontrivial action of $C_2$ is the one where the nontrivial element of $C_2$ acts on $C_q$ by $x\mapsto x^{-1}$.
Thus $H$ is a dihedral group of order $2q$, and so contains $q$ elements of order 2. However,  $\SL_2(\bF_p)$ has a unique 
element of order~$2$. Thus we can assume that $q_1$ and $q_2$ are odd.

By the above theorem $H$ is of order $p r$ where $r$ is an odd prime dividing $p-1$. This $p$ is not a Fermat prime.

Conversely, suppose $p$ is not a Fermat prime, and let $r$ be an odd prime dividing~$p-1$. Let $T$ be a cyclic subgroup of $\bF_p^\times$
of order $r$.  Then $\SL_2(\bF_p)$ contains a non-Abelian subgroup of order $pr$  (isomorphic to a semidirect product $\bF_p \rtimes T$) by the above theorem.
\end{proof}

\begin{corollary} \label{SL_fermat_cor3}
Suppose $p$ is an odd prime. 
If $p$ is not a Fermat prime then $\SL_2(\bF_p)$ is not freely representable.
\end{corollary}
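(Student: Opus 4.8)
The plan is to derive this immediately from the preceding Corollary~\ref{SL_fermat_cor2} together with the $pq$-theorem. The logical skeleton is a short proof by contradiction: assume $\SL_2(\bF_p)$ is freely representable and exhibit a forbidden subgroup. Since $p$ is assumed not to be a Fermat prime, Corollary~\ref{SL_fermat_cor2} hands us a \emph{noncyclic} subgroup $H$ of $\SL_2(\bF_p)$ whose order is a product of two primes. This is exactly the configuration ruled out for freely representable groups, so essentially all the real work has already been carried out in establishing Corollary~\ref{SL_fermat_cor2}.

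First I would invoke Corollary~\ref{SL_fermat_cor2} to fix such a noncyclic subgroup $H$ of order $q_1 q_2$ (with $q_1, q_2$ prime). Then I would apply Proposition~\ref{freely_representable_subgroup_prop}: if $\SL_2(\bF_p)$ were freely representable, every subgroup of it would be freely representable, and in particular $H$ would be freely representable. Finally, Theorem~\ref{pq_thm} tells us that a freely representable group whose order is a product of (at most) two primes must be cyclic, contradicting the noncyclicity of $H$. Hence the assumption fails and $\SL_2(\bF_p)$ is not freely representable. Alternatively, one can phrase the last two steps in one stroke by quoting Corollary~\ref{pq_cor} directly: a freely representable group has all its subgroups of order $pq$ cyclic, which $H$ violates.

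There is no genuine obstacle at this stage; the substance lives entirely in Corollary~\ref{SL_fermat_cor2}, which in turn rests on the structural analysis of odd-order subgroups of $\SL_2(\bF_p)$. The only point that deserves a word of care is making sure the cited noncyclic subgroup really has order a \emph{product of two primes} (as opposed to a prime power), so that Theorem~\ref{pq_thm} applies cleanly — but this is built into the statement of Corollary~\ref{SL_fermat_cor2}, so no extra argument is needed. I would therefore keep the proof to two or three sentences, simply chaining Corollary~\ref{SL_fermat_cor2}, Proposition~\ref{freely_representable_subgroup_prop}, and Theorem~\ref{pq_thm}.
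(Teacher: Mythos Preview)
Your proposal is correct and matches the paper's proof exactly: the paper's one-line argument cites the previous corollary (Corollary~\ref{SL_fermat_cor2}), Theorem~\ref{pq_thm}, and the fact that every subgroup of a freely representable group is freely representable (Proposition~\ref{freely_representable_subgroup_prop}).
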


\begin{proof}
Use the previous corollary, Theorem~\ref{pq_thm}, and the fact that every subgroup of a freely representable group must be freely representable.
\end{proof}

There are similar limits of subgroups of $\SL_2 (\bF_p)$ even of even order, but we will not cover this here.\footnote{I hope to include
this in a sequel, or future version of this document. We will use previous results
on solvable Sylow-cycloidal groups. For the non-solvable case we will need Suzuki's theorem.}
%
%
%
%COR 149
%
%
%T58
%
%
%
%Some of what we consider
%will apply to Sylow-cyclic subgroups more generally.
%
%STEP 1. CLASSIFY NORMALIZERS: $N(Z(C)) = N(C)$ of cyclic groups.
%This gives Sylow-cyclic as subgroups of these $N(Z(C))$. 
%It even gives solvable Sylow-cycloidal except those whose only normal
%subgroups are size 2 ($2T$ and $2O$).
%
%STEP 2. Classify all odd order subgroups as subgroups of $N(Z(C))$.
%
%STEP 3. The Fermat prime situation where all odd order subgroups are cyclic. (This is an if and only if).
%These satisfy the $pq$ condition.
%
%STEP 4. Show that if $3 < p < q$ are primes then $\SL_2(\bF_p)$ is not
%contained in $\SL_2(\bF_q)$. So by Suzuki's result, all proper subgroups are solvable.
%
%STEP 5. Which have $\SL_2(\bF_3)$ as a subgroup?
%
%
%STEP 6. OBSERVATION about left cosets and general norm relations.
%We can get some interesting cosets by looking at the other coset of $N(Z(C))$.
%

%%
\chapter{Classification of Non-Solvable Sylow-Cycloidal Groups}

We take as given a substantial result of Suzuki from 1955 \cite{suzuki1955}.

\begin{theorem} \label{suzuki_thm}
Suppose $G$ is a non-solvable Sylow-cycloidal Group. 
Then~$G$ has a subgroup isomorphic to $\mathrm{SL}_2 (\bF_p)$ for some prime $p \ge 5$.
In fact, $G$ has a subgroup~$H$ of index $1$ or $2$ such that $H$
is isomorphic to $ \mathrm{SL}_2 (\bF_p) \times M$
where $M$ is a Sylow-cyclic subgroup of $G$ of order prime to $(p-1)p(p+1)$.
\end{theorem}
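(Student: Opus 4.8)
The plan is to treat Theorem~\ref{suzuki_thm} essentially as a black box drawn from Suzuki's $1955$ work, but I can still indicate the architecture of a proof and isolate where the deep input is unavoidable. The first step is to pin down the $2$-Sylow subgroup. Since $G$ is non-solvable, its order is even by the Feit--Thompson theorem (odd-order groups are solvable). Being Sylow-cycloidal, the $2$-Sylow subgroup of $G$ is either cyclic or generalized quaternion. If it were cyclic, then every Sylow subgroup of $G$ would be cyclic (the odd-prime Sylows are cyclic by definition of cycloidal), so $G$ would be Sylow-cyclic and hence solvable by our earlier corollary that every Sylow-cyclic group is solvable, a contradiction. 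Thus the $2$-Sylow subgroup of $G$ must be a generalized quaternion group.

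Next I would pass to $\bar G = G/O(G)$, where $O(G)$ is the maximal normal subgroup of odd order. By Proposition~\ref{same_sylow_prop} the quotient $\bar G$ still has generalized quaternion $2$-Sylow subgroups, and it has trivial odd core. The essential structural input at this stage is the Brauer--Suzuki theorem: a finite group with generalized quaternion $2$-Sylow subgroups has its unique involution central modulo the odd core. Applied to $\bar G$, this shows that the unique involution $z$ generates a central subgroup of order $2$, so $\bar G/\langle z\rangle$ again has trivial odd core and carries a non-trivial simple composition factor. The whole problem now reduces to identifying that composition factor.

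The heart of the matter, and the main obstacle, is to show that this simple factor is forced to be $\mathrm{PSL}_2(\bF_p)$ for some prime $p \ge 5$, with corresponding central extension $\SL_2(\bF_p)$ sitting inside $\bar G$. This is exactly the point that cannot be reached by the elementary methods used elsewhere in this report: one must study the centralizer of the involution and the fusion of elements of odd prime order using the modular and character-theoretic techniques of Brauer and Suzuki. The Sylow-cycloidal hypothesis enters as an extremely rigid constraint (every abelian subgroup is cyclic, as reflected already in Corollary~\ref{SL_odd_cor} and Theorem~\ref{SL_normal_thm}), and the conclusion is that $\SL_2(\bF_p)$ with $p \ge 5$ is essentially the only admissible non-solvable building block. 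I would cite this identification rather than attempt to reprove it.

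Finally, granting that $G$ contains a copy $L \cong \SL_2(\bF_p)$ as its distinguished non-solvable constituent, I would assemble the product decomposition. One produces a subgroup $M$ whose order is prime to $|L| = (p-1)p(p+1)$; the Sylow-cycloidal hypothesis makes every Sylow subgroup of $M$ cyclic or generalized quaternion, and the coprimality of $|M|$ with $|L|$ together with the rigidity of the automorphism action (in the spirit of Proposition~\ref{aut_quotient_prop} and Corollary~\ref{rel_prime_cor}) forces $M$ to centralize $L$, yielding the internal direct product $L \times M$. The passage to a subgroup $H$ of index at most $2$ then accounts for the possibility that $\bar G$ realizes the outer involution of $\mathrm{PSL}_2(\bF_p)$ (the binary-octahedral-type phenomenon encountered earlier), giving the asserted $H \cong \SL_2(\bF_p) \times M$ with $M$ Sylow-cyclic of order prime to $(p-1)p(p+1)$.
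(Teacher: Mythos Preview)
Your proposal is correct and aligns with the paper's treatment: the paper does not prove this theorem at all but simply states ``We take as given a substantial result of Suzuki from 1955'' and cites \cite{suzuki1955}. You go further than the paper by sketching the architecture (Feit--Thompson to force even order, ruling out cyclic $2$-Sylow via the Sylow-cyclic solvability corollary, passing to $G/O(G)$, invoking Brauer--Suzuki, and then citing the identification of the simple factor), which is a reasonable and accurate outline of how such a proof is organized, but this extra content is not in the paper.
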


\begin{remark}
Note that $2T \cong \mathrm{SL}_2 (\bF_3)$ played an important role in the above classification
of solvable Sylow-cycloidal groups, so it is interesting that~$\mathrm{SL}_2 (\bF_p)$ with $p\ge 5$
plays a central role in the non-solvable case.
\end{remark}

\begin{remark}
The group $\mathrm{SL}_2 (\bF_5)$ can be shown to be isomorphic to the binary icosahedral subgroup of $\bH^\times$, so is 
freely representable.\footnote{I hope to cover this in a sequel, or in a future edition of this document.}
\end{remark}

Suzuki's theorem allows us to focus our attention to $\mathrm{SL}_2 (\bF_p)$:

\begin{proposition} \label{nonsolvable2_prop}
Suppose $G$ is a non-solvable Sylow-cycloidal group. Let~$H$, $M$, and $\mathrm{SL}_2 (\bF_p)$ be as in
Suzuki's theorem. 
Then $G$ has a unique element of order~$2$. Moreover,
$G$ is freely representable if and only if (1) $M$ and $\mathrm{SL}_2 (\bF_p)$
are freely representable.
\end{proposition}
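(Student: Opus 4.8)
The plan is to exploit the very explicit structure furnished by Suzuki's theorem (Theorem~\ref{suzuki_thm}): there is a subgroup $H$ of index $1$ or $2$ in $G$ with $H \cong \mathrm{SL}_2(\bF_p) \times M$, where $M$ has order prime to $(p-1)p(p+1) = |\mathrm{SL}_2(\bF_p)|$. Since $p$ is odd, $p-1$ is even, so $|\mathrm{SL}_2(\bF_p)|$ is even and $M$ therefore has \emph{odd} order. As $\mathrm{SL}_2(\bF_p)$ has a unique element of order $2$, namely $-I$ (Proposition~\ref{SL_order_prop}), I would first record that the only element of order $2$ in the direct product $H$ is $\tau = (-I, 1)$: an element $(g,m)$ of order $2$ forces $m^2 = 1$, hence $m = 1$ by the odd order of $M$, and $g^2 = 1$, hence $g = -I$.

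To prove $G$ has a unique element of order $2$, the case $G = H$ is immediate. For $[G:H]=2$, I would note that $\tau$, being the unique element of order $2$ in $H$, generates a characteristic subgroup $\langle \tau \rangle$ of $H$; since $H$ is normal in $G$, this makes $\langle \tau \rangle$ a normal subgroup of $G$ of order $2$. Because $G$ is Sylow-cycloidal and of even order, its $2$-Sylow subgroup is cyclic or generalized quaternion, hence nontrivial with a unique element of order $2$. Lemma~\ref{order2_lemma} then applies verbatim: a group whose $2$-Sylow subgroup has a unique element of order $2$ has a unique element of order $2$ precisely when it has a normal subgroup of order $2$. Having produced such a normal subgroup, I conclude that $G$ has a unique element of order $2$.

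For the equivalence, the forward direction is routine: $M$ and $\mathrm{SL}_2(\bF_p)$ are isomorphic to subgroups of $G$ (via $H$), so they inherit free representability by Proposition~\ref{freely_representable_subgroup_prop}. For the converse, assuming both factors are freely representable, their relatively prime orders let me conclude via Corollary~\ref{rel_prime_cor} that $H \cong \mathrm{SL}_2(\bF_p) \times M$ is freely representable. The crucial step is then to verify that $H$ contains every element of $G$ of prime order, so that Corollary~\ref{sufficient_cor} yields free representability of $G$. When $G = H$ this is vacuous; when $[G:H] = 2$, an element of odd prime order $q$ maps trivially into $G/H$ (of order $2$) and hence lies in $H$, while an element of order $2$ must equal $\tau \in H$ by the uniqueness just established. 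Thus all prime-order elements of $G$ lie in $H$, and $G$ is freely representable.

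The only real subtlety is the index-$2$ case, where the two halves of the proposition feed into one another: uniqueness of the order-$2$ element (obtained through Lemma~\ref{order2_lemma}) is exactly what guarantees that the order-$2$ elements of $G$ do not escape $H$, which is the missing hypothesis needed to invoke Corollary~\ref{sufficient_cor}. I expect no genuine obstacle beyond carefully recording that $M$ has odd order and that the $2$-Sylow subgroup of the Sylow-cycloidal group $G$ is nontrivial with a unique element of order $2$.
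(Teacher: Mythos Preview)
Your proposal is correct and follows essentially the same route as the paper's proof: establish that $H$ has a unique element of order $2$, promote $\langle\tau\rangle$ to a normal subgroup of $G$ via characteristicity, invoke Lemma~\ref{order2_lemma}, then use Corollary~\ref{rel_prime_cor} and the induced-representation criterion (Corollary~\ref{sufficient_cor}/Proposition~\ref{sufficient_prop}) after checking that all prime-order elements lie in $H$. If anything, you are slightly more explicit than the paper in justifying why $M$ has odd order and in verifying the $2$-Sylow hypothesis of Lemma~\ref{order2_lemma}.
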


\begin{proof}
Since $M$ is of odd order and $\mathrm{SL}_2 (\bF_p)$
has a unique subgroup of order $2$ (Proposition~\ref{SL_order_prop}), the group $H \cong M \times \mathrm{SL}_2 (\bF_p)$ has a unique element of order $2$.
So if $G = H$ then we are done. In general, $H$ is normal in $G$, and $H$ has a characteristic subgroup $C_2$
of order $2$. Thus $C_2$ must be normal in $G$. By Lemma~\ref{order2_lemma}, $G$ has a unique element of order 2.

Next suppose $G$ is freely representable. Then $M$ and $\mathrm{SL}_2 (\bF_p)$ must be freely representable
since they are isomorphic to subgroups of $G$.

Finally suppose that $M$ and $\mathrm{SL}_2 (\bF_p)$ are freely representable.
Then the product~$H \cong M \times \mathrm{SL}_2 (\bF_p)$ is freely representable by Corollary~\ref{rel_prime_cor}.
Since $G/H$ has order~$1$ or $2$, any subgroup of odd prime order is in the kernel of $G \to G/H$
and so is a subgroup of $H$. Since $G$ and $H$ have a unique subgroup of order 2, we conclude
that every subgroup of $G$ of prime order is a subgroup of $H$.
 Since $H$ is freely representable, the same is true of~$G$ 
 by the technique of induced representations (Proposition~\ref{sufficient_prop}).
\end{proof}

\begin{definition}
A finite group $G$ is \emph{perfect} if its commutator subgroup $G'$ is all of~$G$. In other words,
if $G$ has no nontrivial Abelian quotient.
\end{definition}

\begin{theorem}
If $p \ge 5$ then $\SL_2 (\bF_p)$ is a perfect group, and is a Sylow-cycloidal group.
\end{theorem}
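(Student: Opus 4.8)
The plan is to dispatch the two assertions separately, since the first is already in hand and only the second carries content. The claim that $\SL_2(\bF_p)$ is Sylow-cycloidal is exactly Theorem~\ref{SL_sylow_cycloidal_thm}, so nothing new is needed there; I would simply cite it. Everything else is about showing that $G = \SL_2(\bF_p)$ is perfect, i.e. that its commutator subgroup $G'$ equals $G$. The strategy is to leverage the complete classification of normal subgroups established in Theorem~\ref{SL_normal_thm}, together with the non-solvability recorded in Corollary~\ref{linear_non_solvable_cor}.

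The key steps, in order, are as follows. First, observe that the commutator subgroup $G'$ is always a normal subgroup of $G$. Since $p \ge 5$, Theorem~\ref{SL_normal_thm} applies and forces $G'$ to be one of the three groups $\{1\}$, $\{\pm 1\}$, or $G$ itself. Second, I would rule out the first two possibilities using solvability considerations. If $G' = \{1\}$ then $G$ would be Abelian, hence solvable, contradicting Corollary~\ref{linear_non_solvable_cor}. If instead $G' = \{\pm 1\}$, then because $\{\pm 1\}$ is an Abelian group its own commutator subgroup is trivial, so $G'' = \{1\}$; the derived series $G \supseteq G' \supseteq G'' = \{1\}$ then terminates, making $G$ solvable, again contradicting non-solvability. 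Third, having eliminated both proper options, I conclude $G' = G$, which is precisely perfectness. Combined with the Sylow-cycloidal assertion from Theorem~\ref{SL_sylow_cycloidal_thm}, this proves the theorem.

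There is no serious analytic or combinatorial obstacle here; the proof is almost entirely a bookkeeping exercise riding on prior structural results. The only point requiring a little care is the middle case $G' = \{\pm 1\}$: it is tempting to discharge it by invoking that $\mathrm{PSL}_2(\bF_p)$ is a non-Abelian simple group, but the cleaner and more self-contained route is the derived-series argument above, which needs nothing beyond non-solvability and the observation that $\{\pm 1\}$ is Abelian. Thus the ``hard part,'' such as it is, amounts to ensuring that the heavy lifting (Theorem~\ref{SL_normal_thm} and Corollary~\ref{linear_non_solvable_cor}) is correctly in place before this corollary, which it is.
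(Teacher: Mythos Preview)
Your proof is correct and follows essentially the same architecture as the paper: cite Theorem~\ref{SL_sylow_cycloidal_thm} for the Sylow-cycloidal claim, then use Theorem~\ref{SL_normal_thm} to reduce the possibilities for $G'$ to $\{1\}$, $\{\pm 1\}$, or $G$, and rule out the first two. The only difference is in that last step. You eliminate $G' \subseteq \{\pm 1\}$ via the derived-series argument and non-solvability; the paper instead observes that $\SL_2(\bF_p)/\{\pm 1\}$ is simple of non-prime order, hence non-Abelian, so $G'$ cannot be contained in $\{\pm 1\}$. Amusingly, this is exactly the route you flagged as ``tempting'' and set aside. Both arguments draw on Corollary~\ref{linear_non_solvable_cor}, just different clauses of it, so neither is materially simpler or more self-contained than the other.
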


\begin{proof}
The fact that $G = \SL_2 (\bF_p)$ is a perfect group  follows from Theorem~\ref{SL_normal_thm} and its corollary.
In fact, by these earlier results,~$G' = \{1\}$ or~$G'=\{\pm 1\}$ or $G' = G$.
However, $\SL_2 (\bF_p) / \{\pm 1\}$ is of non-prime order and is simple, so cannot be Abelian.
This means that $G'$ cannot be  $\{\pm 1\}$ or $\{1\}$. Thus~$G = G'$.

The group $\SL_2 (\bF_p)$ is Sylow-cycloidal by~Theorem~\ref{SL_sylow_cycloidal_thm}.
\end{proof}

\begin{lemma}\label{lemma160}
The group $\mathrm{SL}_2 (\bF_5)$ is freely representable.
\end{lemma}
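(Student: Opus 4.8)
The plan is to produce a single free linear representation of $G \defeq \SL_2(\bF_5)$ over $\bC$ and invoke Corollary~\ref{finite_dim_cor}. Recall $|G| = (5-1)\cdot 5 \cdot (5+1) = 120$ and that $G$ is perfect, as shown just above. The central observation I would exploit is that for a perfect group freeness is very cheap to recognize in dimension~$2$: if $\rho\colon G \to \GL_2(\bC)$ is any homomorphism, then $\det\circ\rho\colon G \to \bC^\times$ is a homomorphism into an abelian group and so is trivial on $G = G'$; hence $\rho(G) \subseteq \SL_2(\bC)$ automatically.

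First I would record the eigenvalue reduction. Suppose $\rho$ is moreover \emph{faithful}. Let $g \ne 1$ and let $\lambda, \lambda^{-1}$ be the eigenvalues of $\rho(g)$ (their product is $\det\rho(g) = 1$); both are roots of unity since $g$ has finite order. If $\rho(g)$ had eigenvalue $1$ then $\lambda = 1$, so both eigenvalues equal $1$; as $\rho(g)$ has finite order it is diagonalizable, forcing $\rho(g) = I$ and hence $g = 1$, a contradiction. Thus no nonidentity element of $G$ has eigenvalue $1$ under $\rho$, so $\rho$ is a free linear representation and $G$ is freely representable. In other words, it suffices to exhibit one faithful two-dimensional complex representation of $G$.

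It remains to produce such a representation, and this is where the real work lies. The cleanest source is the identification of $G$ with the binary icosahedral group $2I \subseteq \bH^\times$: once $G \cong 2I$, Example~\ref{quaternion_example} already gives a free representation over $\bR$ (hence over $\bC$ by Lemma~\ref{all_char_lemma}), and one need not even invoke the eigenvalue reduction. The isomorphism itself is the main obstacle. I would approach it by noting that $G/\{\pm I\} \cong \mathrm{PSL}_2(\bF_5)$ is simple of order $60$ (Corollary~\ref{linear_non_solvable_cor}), hence isomorphic to $A_5 \cong I$, so that both $G$ and $2I$ are perfect central extensions of $A_5$ by a group of order $2$; identifying the two then amounts to the uniqueness of such a universal (Schur) covering of $A_5$. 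This last uniqueness is the genuinely nontrivial input — it is exactly the ingredient the paper defers to a sequel — and is why a fully self-contained proof is postponed. An alternative that avoids quaternions entirely is to construct one of the two faithful two-dimensional complex representations of $\SL_2(\bF_5)$ directly from the representation theory of $\SL_2(\bF_q)$ and feed it into the eigenvalue reduction above; either way the crux is the existence of a faithful two-dimensional representation.
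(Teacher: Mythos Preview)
Your proposal is correct and ultimately converges on the same unproved ingredient as the paper: the paper's proof is just three lines --- $2I \subseteq \bH_1$ is freely representable by Example~\ref{quaternion_example}, and $2I \cong \SL_2(\bF_5)$ --- with the isomorphism explicitly deferred to a sequel (see the footnoted remark preceding Theorem~\ref{zassenhaus_sukuki_thm}). You have correctly identified this as the crux and have been more forthright than the paper about why it is nontrivial (uniqueness of the perfect central extension of $A_5$ by $C_2$).

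Where your write-up genuinely adds something is the eigenvalue reduction: the observation that for a \emph{perfect} group any faithful two-dimensional complex representation is automatically free, because perfection forces the image into $\SL_2(\bC)$ and then eigenvalue $1$ forces both eigenvalues to be $1$. The paper does not isolate this principle; it goes straight through the quaternion embedding. Your formulation has the advantage that it decouples the problem from $\bH$: one could in principle bypass the $2I$ identification entirely by writing down one of the two faithful two-dimensional characters of $\SL_2(\bF_5)$ directly (e.g.\ via a Weil-type construction or by explicit generators and relations), and your argument would then be self-contained. The paper's route, by contrast, buys the free representation over $\bR$ for free once the isomorphism is granted, and fits more naturally into the document's running theme that finite subgroups of $\bH^\times$ are the prototypical freely representable groups.
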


\begin{proof}
Recall that the binary icosahedral group ${2 I}$ is the preimage of the icosahedral
group under the standard two-to-one map $\bH_1 \to \mathrm{SO}(3)$. 
Since ${2 I}$ is a subgroup of~$\bH_1$, the group $2 I$ is freely representable.
Finally ${2 I}$, as mentioned above, $2 I$ is isomorphic to $\mathrm{SL}_2 (\bF_5)$.
\end{proof}

Here is a theorem that Wolf attributes to Zassenhaus (1935). See Wolf~\cite{wolf2011}, Section 6.2 for a proof.
The proof is long: running to 14 pages, and I have not studied it yet.
Recently shorter proofs have appeared (which I also need to study): see
Mazurov~\cite{mazurov2003} and Allcock~\cite{allcock2018}.

\begin{theorem} [Zassenhaus]
Suppose $G$ is a perfect freely representable group that is not trivial. 
Then $G$ is isomorphic to $\mathrm{SL}_2 (\bF_5)$.
\end{theorem}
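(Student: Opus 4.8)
The plan is to combine Suzuki's theorem with the perfectness hypothesis to strip $G$ down to a single copy of $\SL_2(\bF_p)$, and then to use representation-theoretic obstructions to pin $p$ down to $5$. First I would observe that a nontrivial perfect group cannot be solvable, since its derived series never descends below $G$; as $G$ is freely representable it is Sylow-cycloidal, so $G$ is a non-solvable Sylow-cycloidal group and Theorem~\ref{suzuki_thm} applies. This produces a subgroup $H$ of index $1$ or $2$ with $H \cong \SL_2(\bF_p) \times M$, where $p \ge 5$ and $M$ is Sylow-cyclic of order prime to $(p-1)p(p+1)$. The index-$2$ possibility is immediately excluded: an index-$2$ subgroup is normal with abelian quotient of order $2$, forcing $G' \subseteq H \subsetneq G$ and contradicting $G' = G$. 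Hence $G = H \cong \SL_2(\bF_p) \times M$.

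Next I would eliminate the factor $M$. The commutator subgroup of a direct product splits as $(\SL_2(\bF_p) \times M)' = \SL_2(\bF_p)' \times M'$. For $p \ge 5$ the group $\SL_2(\bF_p)$ is perfect (recorded just above, via Theorem~\ref{SL_normal_thm}), so $\SL_2(\bF_p)' = \SL_2(\bF_p)$, and perfectness of $G$ forces $M' = M$. But $M$ is Sylow-cyclic, hence solvable, so $M = \{1\}$. Therefore $G \cong \SL_2(\bF_p)$ for some prime $p \ge 5$. Since $G$ is freely representable, so is $\SL_2(\bF_p)$, and Corollary~\ref{SL_fermat_cor3} then forces $p$ to be a Fermat prime.

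At this point the easy structural reductions are exhausted, and the remaining task---showing that among the Fermat primes $p \ge 5$ only $p = 5$ survives---is the heart of the matter and the step I expect to be the genuine obstacle. The difficulty is that the coarse subgroup obstructions developed earlier all vanish for Fermat primes: because $p-1$ is a power of $2$, the group $\SL_2(\bF_p)$ has no noncyclic subgroup of order a product of two primes (Corollary~\ref{SL_fermat_cor2}), and the relevant dihedral-type normalizers $N(C)$ turn out to be binary dihedral, hence themselves freely representable. To separate $p = 5$ from $p = 17, 257, \dots$ one must instead examine an irreducible free representation $\rho$ of $\SL_2(\bF_p)$ directly (one exists by Corollary~\ref{irreducible_cor}). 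The key constraints are that the unique central involution must act as $-\mathrm{id}$ (as in the proof of Proposition~\ref{even_prop}), and that freeness restricted to each cyclic subgroup forces every element of order $n$ to have all eigenvalues equal to primitive $n$-th roots of unity. Applying this to a unipotent element $u$ of order $p$ and using that $u$ is conjugate to $u^{a^2}$ for every $a \in \bF_p^\times$, the multiset of eigenvalues of $\rho(u)$ must be invariant under the index-two subgroup of squares, forcing $(p-1)/2 \mid \dim V$; matching this against the known irreducible dimensions $1,\, p,\, p\pm 1,\, (p\pm1)/2$ of $\SL_2(\bF_p)$ and then testing freeness on the split and nonsplit tori should rule out every Fermat prime beyond $p = 5$.

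Finally I would close the loop by checking that $\SL_2(\bF_5)$ genuinely satisfies the hypotheses: it is perfect (the stated theorem for $p \ge 5$) and freely representable, since it is isomorphic to the binary icosahedral subgroup $2I$ of $\bH^\times$ (Lemma~\ref{lemma160}). Thus $\SL_2(\bF_5)$ is the unique nontrivial perfect freely representable group. I should note, as the author does, that the eigenvalue and dimension analysis of the previous paragraph is precisely where Zassenhaus's original argument invests its length; a fully rigorous treatment would require the detailed census of even-order subgroups and of fixed-point-free characters of $\SL_2(\bF_p)$ that the present version defers, so here I would either carry out that analysis carefully or, as a fallback, invoke it from \cite{wolf2011}.
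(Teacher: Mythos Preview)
The paper does not supply its own proof of this theorem; it is explicitly stated without proof, with references to Wolf~\cite{wolf2011} (Section~6.2, a 14-page argument the author says he has not yet studied), Mazurov~\cite{mazurov2003}, and Allcock~\cite{allcock2018}. So there is no in-paper argument to compare against.

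Your reduction via Suzuki's theorem is clean and correct as far as it goes: perfectness rules out the index-$2$ case and forces $M' = M$ in a solvable group, so $M = \{1\}$ and $G \cong \SL_2(\bF_p)$ with $p \ge 5$; Corollary~\ref{SL_fermat_cor3} then restricts $p$ to Fermat primes. You also correctly identify that the remaining step---eliminating Fermat primes $p > 5$---is where the real content lies, and your outline (eigenvalue constraints on a free irreducible, symmetry under conjugation of unipotents forcing $(p-1)/2 \mid \dim V$, then matching against the known irreducible degrees of $\SL_2(\bF_p)$) is the right shape of argument. But this is precisely the step you do not carry out, and it is the step that occupies essentially all of the cited proofs; what you have written is a correct reduction plus an honest sketch, not a proof.

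One caution worth flagging: Suzuki's theorem (1955) postdates Zassenhaus's result (1935), and in the paper both are imported as black boxes. Before leaning on Theorem~\ref{suzuki_thm} to prove Zassenhaus, you should verify that Suzuki's classification does not itself invoke the Zassenhaus theorem somewhere upstream; otherwise the reduction, while tidy, is circular.
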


\begin{remark}
This gives another argument that a simple group is freely representable if and only if it is cyclic.
\end{remark}

\begin{corollary}\label{linear_not_freely_representable_cor}
If $p > 5$ then $\mathrm{SL}_2 (\bF_p)$ is a non-solvable Sylow-cycloidal group that
is not freely representable.
\end{corollary}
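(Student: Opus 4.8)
The plan is to invoke the Zassenhaus theorem just stated, together with the fact, proved immediately above, that $\SL_2(\bF_p)$ is perfect for $p \ge 5$. First I would record the three structural facts we already have for $p > 5$: the group $\SL_2(\bF_p)$ is Sylow-cycloidal (Theorem~\ref{SL_sylow_cycloidal_thm}), it is non-solvable (Corollary~\ref{linear_non_solvable_cor}), and it is perfect and nontrivial (by the theorem preceding this corollary). These establish everything in the statement except the failure of free representability, which is the only real content to prove.

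For that, I would argue by contradiction. Suppose $\SL_2(\bF_p)$ were freely representable. Since it is a nontrivial perfect freely representable group, Zassenhaus's theorem forces $\SL_2(\bF_p) \cong \SL_2(\bF_5)$. I would then rule this out by comparing orders: by Proposition~\ref{SL_order_prop} we have $|\SL_2(\bF_p)| = (p-1)p(p+1)$, which is strictly increasing in $p$, so for $p > 5$ it exceeds $|\SL_2(\bF_5)| = 4 \cdot 5 \cdot 6 = 120$. Hence no such isomorphism can exist, giving the desired contradiction and so $\SL_2(\bF_p)$ is not freely representable.

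There is essentially no obstacle once Zassenhaus's theorem is granted; the only point to check is the order comparison, which is immediate. I would remark that for the subfamily of primes $p$ that are \emph{not} Fermat primes one can avoid the deep Zassenhaus theorem entirely and appeal instead to Corollary~\ref{SL_fermat_cor3}; but the Fermat primes exceeding $5$ (such as $17$, $257$, and $65537$) are precisely the cases where that elementary argument is unavailable, and there the full strength of Zassenhaus's classification of perfect freely representable groups is what is needed.
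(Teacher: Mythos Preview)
Your proof is correct and matches the paper's intended argument exactly: the corollary is stated immediately after Zassenhaus's theorem with no explicit proof, precisely because the deduction you give (perfect, nontrivial, so by Zassenhaus it would have to be $\SL_2(\bF_5)$, contradicting the order count) is the one-line justification the reader is expected to supply. Your closing remark distinguishing the non-Fermat and Fermat cases is a helpful gloss that the paper does not spell out here.
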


If we combine Zassenhaus and Suzuki's results we get the following:

\begin{theorem}\label{zassenhaus_sukuki_thm}
Let $G$ be a non-solvable group with a unique element of order 2.
Then $G$ is freely representable if and only $G$ has a subgroup $H$ of index $1$ or $2$
and a freely representable subgroup~$M$ of order prime to $30$ such that $H$ is isomorphic to
$\mathrm{SL}_2 (\bF_5) \times M$.
\end{theorem}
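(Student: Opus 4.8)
The plan is to prove the two implications separately, using Suzuki's structure theorem (Theorem~\ref{suzuki_thm}) in one direction and the induced-representation machinery in the other. Throughout I will use the fact, established earlier, that every freely representable group is Sylow-cycloidal: its odd Sylow subgroups are cyclic (Theorem~\ref{pgroup_thm}) and its $2$-Sylow subgroup is cyclic or generalized quaternion (Corollary~\ref{unique_2_cor}). Since $G$ is assumed non-solvable, once we know $G$ is freely representable it is automatically a \emph{non-solvable Sylow-cycloidal group}, which is exactly the hypothesis needed to invoke Suzuki.

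For the forward implication, suppose $G$ is freely representable. Then $G$ is a non-solvable Sylow-cycloidal group, so Theorem~\ref{suzuki_thm} supplies a prime $p \ge 5$, a subgroup $H$ of index $1$ or $2$, and a Sylow-cyclic subgroup $M$ of order prime to $(p-1)p(p+1)$ with $H \cong \SL_2(\bF_p) \times M$. Because $G$ is freely representable, Proposition~\ref{nonsolvable2_prop} forces both $M$ and $\SL_2(\bF_p)$ to be freely representable. The key step is then to pin down $p$: by Corollary~\ref{linear_not_freely_representable_cor}, $\SL_2(\bF_p)$ fails to be freely representable whenever $p > 5$, so we must have $p = 5$. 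Finally I would observe that $(5-1)\cdot 5 \cdot (5+1) = 120 = 2^3 \cdot 3 \cdot 5$ has the same prime divisors as $30 = 2\cdot 3 \cdot 5$, so ``order prime to $120$'' and ``order prime to $30$'' coincide; hence $M$ is a freely representable subgroup of order prime to $30$ and $H \cong \SL_2(\bF_5) \times M$, as required.

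For the reverse implication, suppose such $H$ and $M$ exist. Lemma~\ref{lemma160} gives that $\SL_2(\bF_5)$ is freely representable, and since $|M|$ is prime to $30$ it is in particular prime to $|\SL_2(\bF_5)| = 120$; thus $\SL_2(\bF_5)$ and $M$ have coprime orders and Corollary~\ref{rel_prime_cor} shows $H \cong \SL_2(\bF_5) \times M$ is freely representable. If $H = G$ we are done. If $[G:H] = 2$, I would invoke the induced-representation criterion (Corollary~\ref{sufficient_cor}): it suffices to check that $H$ contains every element of $G$ of prime order. Any element of odd prime order lies in the kernel of $G \to G/H$, hence in $H$; and the unique element of order $2$ of $G$ (which exists by the standing hypothesis) lies in $H$ because $H$ has even order. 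Therefore $H$ contains all elements of prime order, and $G$ is freely representable.

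The genuinely hard content is entirely imported: Suzuki's theorem and the Zassenhaus theorem (the latter feeding into Corollary~\ref{linear_not_freely_representable_cor}) are quoted as given. Granting these, the only real work is the bookkeeping that reduces $p \ge 5$ to $p = 5$ and the elementary observation matching ``prime to $30$'' with ``prime to $120$''; the rest is assembling previously proved statements about products of coprime-order groups and induced representations. The main subtlety to watch is ensuring the index-$2$ case is handled by verifying the prime-order containment hypothesis of Corollary~\ref{sufficient_cor}, which is precisely where the unique-element-of-order-$2$ assumption is actually used.
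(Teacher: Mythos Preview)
Your proof is correct and follows essentially the same route as the paper: Suzuki's theorem plus Zassenhaus (via Corollary~\ref{linear_not_freely_representable_cor}) to force $p=5$ in the forward direction, and Lemma~\ref{lemma160}, Corollary~\ref{rel_prime_cor}, and the induced-representation criterion for the reverse direction. Your write-up is actually more explicit than the paper's terse version, spelling out the prime-to-$30$ versus prime-to-$120$ equivalence and the verification that $H$ contains all prime-order elements.
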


\begin{proof}
One direction is straightforward based on Suzuki and Zassenhaus's results (because every freely representable
group is Sylow-cycloidal, see Section~\ref{classification_section}, and every subgroup of a freely representable group is freely representable). 

In the other direction 
we have that $H$ is freely representable by Lemma~\ref{lemma160} and Corollary~\ref{rel_prime_cor}.
Every element of prime order of $G$ must be in $H$: for odd primes just note that all such elements
must be in the kernel of $G\to G/H$, and for the prime~$2$ use the uniqueness.
Now use  the technique of induced representations (Proposition~\ref{sufficient_prop})
\end{proof}

If we combine Proposition~\ref{solvable2_prop} and  Proposition~\ref{nonsolvable2_prop} 
we get the following:

\begin{proposition}\label{unique2_prop}
Suppose $G$ is a Sylow-cycloidal group that is not a Sylow-cyclic group. Then~$G$ has a unique element of order two.
\end{proposition}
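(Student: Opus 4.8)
The plan is to split into the solvable and non-solvable cases, each of which has already been handled by an earlier proposition, so that the proof reduces to assembling those two results together with the observation that Sylow-cyclic groups are solvable.

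First I would recall the earlier corollary that every Sylow-cyclic group is solvable. The contrapositive says that a non-solvable group is never Sylow-cyclic. This is the key structural fact that lets the two cases cover all possibilities: a Sylow-cycloidal group $G$ that fails to be Sylow-cyclic is either (a) solvable, or (b) non-solvable, and in case (b) the failure to be Sylow-cyclic is automatic.

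In case (a), $G$ is a solvable Sylow-cycloidal group that is not Sylow-cyclic, so Proposition~\ref{solvable2_prop} applies directly and yields a unique element of order $2$. In case (b), $G$ is a non-solvable Sylow-cycloidal group, hence not Sylow-cyclic by the remark above, and Proposition~\ref{nonsolvable2_prop} applies and again yields a unique element of order $2$. Since these two cases exhaust the hypothesis ``$G$ is Sylow-cycloidal but not Sylow-cyclic,'' the conclusion follows in every case.

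The only thing to be careful about is that the dichotomy between the two invoked propositions genuinely partitions the groups in question: I would make explicit that Proposition~\ref{solvable2_prop} is being used precisely in the ``not Sylow-cyclic'' branch of its own statement (where it gives the unique element unconditionally, rather than the conditional statement it makes for Sylow-cyclic groups), and that Proposition~\ref{nonsolvable2_prop} has no Sylow-cyclic exception to worry about since non-solvability rules out the Sylow-cyclic case entirely. There is no real obstacle here; the content of the argument lives entirely in the two cited propositions, and this final statement is a bookkeeping assembly of them.
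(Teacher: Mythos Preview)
Your proposal is correct and takes essentially the same approach as the paper: the paper's proof is simply the sentence ``If we combine Proposition~\ref{solvable2_prop} and Proposition~\ref{nonsolvable2_prop} we get the following,'' and your argument is exactly that combination, spelled out with the solvable/non-solvable case split. Your extra remark that non-solvable implies not Sylow-cyclic is true but not strictly needed, since the hypothesis already assumes $G$ is not Sylow-cyclic and the solvable/non-solvable dichotomy trivially exhausts all groups.
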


%%%%%%

\chapter{Semiprime-Cyclic Groups}

An important necessary condition for $G$ to be freely representable is the following:
if $H$ is a subgroup of order $pq$ where $p$ and $q$ are primes then $H$ is cyclic.\footnote{See Corollary~\ref{pq_cor}
above.  Wolf~\cite{wolf2011} calls this the \emph{pq-condition}. Note that we allow $p = q$ in this condition.}
The class of such groups forms an interesting class of ``cycloidal'' groups, and is 
surprisingly close to being the same as the class
of freely representable groups. There are  differences only in the nonsolvable case.
We call such groups ``semiprime-cyclic'' groups:

\begin{definition}
A finite group $G$ is said to be  \emph{semiprime-cyclic} if every subgroup~$H\subseteq G$ 
whose order is a semiprime (the product of two primes) is cyclic.
\end{definition}

We restate Corollary~\ref{pq_cor} using this new terminology:

\begin{proposition} \label{semiprime_cyclic_fr_prop}
Every freely representable group is semiprime-cyclic.
\end{proposition}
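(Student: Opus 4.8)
The plan is simply to unfold the definition of \emph{semiprime-cyclic} and observe that it matches the conclusion of Corollary~\ref{pq_cor} verbatim. A semiprime is by definition a product of two primes $pq$, where the footnote to Corollary~\ref{pq_cor} explicitly allows $p=q$, so the two notions of ``order a semiprime'' and ``order $pq$ for primes $p,q$'' coincide exactly (covering both the $p^2$ case and the $p\neq q$ case). There is no hidden content to extract.

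First I would let $G$ be a freely representable group and let $H\subseteq G$ be an arbitrary subgroup whose order is a semiprime, say $|H|=pq$ for primes $p,q$ (possibly equal). Then I would invoke Corollary~\ref{pq_cor} directly: since $G$ is freely representable, every subgroup of order $pq$ is cyclic, so $H$ is cyclic. Since $H$ was an arbitrary subgroup of semiprime order, $G$ is semiprime-cyclic by definition.

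The ``hard part'' is purely terminological rather than mathematical: the entire weight of the proposition rests on Corollary~\ref{pq_cor} (which in turn leans on Theorem~\ref{pq_thm} and the norm-relation machinery of Theorem~\ref{biasse2020norm_thm2}), and all that remains here is to record that the new vocabulary word ``semiprime-cyclic'' names a property already established. I would therefore write the proof as a single sentence citing Corollary~\ref{pq_cor}, since this proposition exists to connect the earlier result to the language introduced in this final chapter and to set up the comparison between freely representable groups and the (possibly strictly larger, in the non-solvable case) class of semiprime-cyclic groups.
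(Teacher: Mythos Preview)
Your proposal is correct and matches the paper's approach exactly: the paper does not even supply a proof, introducing the proposition with the sentence ``We restate Corollary~\ref{pq_cor} using this new terminology,'' which is precisely what you do.
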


Now we consider some basic properties of semiprime-cyclic groups:

\begin{proposition}
The subgroups of a semiprime-cyclic group are semiprime cyclic.
\end{proposition}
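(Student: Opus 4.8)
The plan is to observe that this is an immediate consequence of the transitivity of the subgroup relation, exactly parallel to Proposition~\ref{freely_representable_subgroup_prop}. The defining property of being semiprime-cyclic is a universally quantified condition over all subgroups of semiprime order, and such ``every subgroup of a certain form is cyclic'' conditions are automatically inherited by subgroups.

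Concretely, I would let $K$ be a subgroup of a semiprime-cyclic group $G$, and then verify the definition directly for $K$. So suppose $H$ is a subgroup of $K$ whose order is a semiprime. Since $H \subseteq K \subseteq G$, the subgroup $H$ is also a subgroup of $G$, and it still has semiprime order. Because $G$ is semiprime-cyclic, $H$ must be cyclic. As $H$ was an arbitrary subgroup of $K$ of semiprime order, every such subgroup of $K$ is cyclic, which is precisely the statement that $K$ is semiprime-cyclic.

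There is no real obstacle here: the only ingredient is that a subgroup of a subgroup is again a subgroup of the ambient group, so the class of semiprime-order subgroups of $K$ is a subcollection of the semiprime-order subgroups of $G$, all of which are cyclic by hypothesis. The proof is therefore of the one-line ``follows from the definition'' type, and I would present it in just two or three sentences rather than invoking any of the structural machinery developed earlier in the paper.
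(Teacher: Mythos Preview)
Your proposal is correct and matches the paper's approach exactly: the paper's proof is the single sentence ``This follows from the definition,'' and you have spelled out precisely the transitivity-of-subgroups reasoning that underlies that sentence.
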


\begin{proof}
This follows from the definition.
\end{proof}

\begin{proposition}\label{semiprime_cyclic_product_prop}
Suppose that $A$ and $B$ are finite groups of relatively prime order.
Then $A\times B$ is semiprime-cyclic if and only if both $A$ and $B$ are.
\end{proposition}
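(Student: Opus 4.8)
The plan is to reduce everything to the observation that, because $|A|$ and $|B|$ are relatively prime, every subgroup of $A \times B$ splits as a direct product of a subgroup of $A$ with a subgroup of $B$. The forward direction is then immediate: $A$ and $B$ are isomorphic to subgroups of $A \times B$, so if $A \times B$ is semiprime-cyclic then so are $A$ and $B$ by the preceding proposition that subgroups of semiprime-cyclic groups are semiprime-cyclic.

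For the converse, suppose both $A$ and $B$ are semiprime-cyclic, and let $H$ be a subgroup of $A \times B$ whose order is a semiprime $pq$ (allowing $p = q$). Identifying $A$ with $A \times \{1\}$ and $B$ with $\{1\} \times B$, I would first prove $H = (H \cap A) \times (H \cap B)$. Given $(a,b) \in H$, the orders of $a$ and $b$ are relatively prime, since they divide $|A|$ and $|B|$ respectively; so by the Chinese remainder theorem there is an integer $k$ with $k \equiv 1$ modulo $\mathrm{ord}(a)$ and $k \equiv 0$ modulo $\mathrm{ord}(b)$, whence $(a,b)^k = (a,1) \in H$, and symmetrically $(1,b) \in H$. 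Thus $(a,b) \in (H \cap A)(H \cap B)$, and since these two subgroups commute and intersect trivially, $H = (H \cap A) \times (H \cap B)$.

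With this splitting in hand, write $H_A = H \cap A$ and $H_B = H \cap B$, so $|H_A|\,|H_B| = pq$ with $\gcd(|H_A|,|H_B|) = 1$. The case analysis is then short. If one factor has order $pq$ and the other is trivial, then $H$ is isomorphic to a subgroup of $A$ (or of $B$) of semiprime order, hence cyclic by hypothesis. Otherwise $|H_A|$ and $|H_B|$ are the two \emph{distinct} primes $p$ and $q$; then $H_A$ and $H_B$ are cyclic of prime order and $H \cong C_p \times C_q \cong C_{pq}$ is cyclic. In every case $H$ is cyclic, so $A \times B$ is semiprime-cyclic.

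I expect the splitting step $H = (H \cap A) \times (H \cap B)$ to be the only real content; once it is established the classification of semiprime-order subgroups is mechanical. The one point worth flagging is that in the case where the two prime factors are distinct and land in different factors, cyclicity comes for free from coprimality and does not even use the semiprime-cyclic hypothesis — the hypothesis is needed precisely when both prime factors are absorbed into a single one of $A$ or $B$, in particular in the $p = q$ case, where $p^2$ must divide a single factor.
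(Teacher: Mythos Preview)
Your proof is correct and follows essentially the same approach as the paper's. The only organizational difference is that you establish the splitting $H = (H\cap A)\times(H\cap B)$ in full generality up front (via the Chinese remainder theorem), whereas the paper handles the case $pq \mid |A|$ directly using the projection $A\times B \to B$ and only invokes the splitting in the mixed case; both amount to the same underlying observation.
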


\begin{proof}
One direction is clear since $A$ and $B$ can be identified with subgroups of
the product~$A\times B$. Conversely, suppose that $A$ and $B$ are both semiprime-cyclic.
Let~$D$ be a subgroup of $A \times B$ of order $pq$ where $p, q$ are prime. 
Suppose $pq$ also divides the order of $A$. Then the image of $D$ under $A \times B \to B$ is trivial.
Thus~$D \subseteq A$, and hence $D$ is cyclic. Similarly if $pq$ divides the order of $B$, then $D$ is cyclic.

So we can focus on the case where $p$ divides the order of $A$ and $q$ divides the order of $B$.
Then the image of $D$ under $A \times B \to B$ must have order $q$ and the kernel~$D \cap A$ must have
order $p$. Similarly, $D \cap B$ has order $q$. Thus both terms of the inclusion $(D\cap A)  (D \cap B) \subseteq D$ 
have order $pq$, and 
so this is an equality. This means that $D$ 
is isomorphic to  $(D\cap A) \times (D \cap B)$, and $D$ must
must be cyclic since it is isomorphic to the product of cyclic groups of relatively prime order.
\end{proof}

\begin{proposition} \label{semiprime_cyclic_primes_prop}
Let $G$ be a finite group with subgroup $H$. If $H$ is semiprime-cyclic and
if $H$ contains every 
element of $G$ of prime order, then $G$ is also semiprime-cyclic.
\end{proposition}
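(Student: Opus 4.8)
Let $G$ be a finite group with subgroup $H$. If $H$ is semiprime-cyclic and if $H$ contains every element of $G$ of prime order, then $G$ is also semiprime-cyclic.

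The plan is to take an arbitrary subgroup $D$ of $G$ whose order is a product $pq$ of two primes (allowing $p=q$) and show $D$ is cyclic. The natural strategy is to prove that $D$ is actually a subgroup of $H$, at which point cyclicity follows immediately from the hypothesis that $H$ is semiprime-cyclic. So the whole task reduces to showing $D \subseteq H$, and the tool for that is the assumption that every element of $G$ of prime order lies in $H$.

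First I would split into the two cases $p = q$ and $p \neq q$. In the case $p = q$, the group $D$ has order $p^2$, so by Proposition~\ref{p^2_prop} it is Abelian; every nonidentity element then has order $p$ or $p^2$, but in fact I claim every element of $D$ can be generated using elements of prime order. More directly: if $D$ is not cyclic then every nonidentity element has order exactly $p$, hence lies in $H$ by hypothesis, so $D \subseteq H$ and $D$ is cyclic by assumption on $H$, a contradiction; and if $D$ is cyclic we are already done. In the case $p \neq q$, by Cauchy's theorem $D$ contains an element of order $p$ and an element of order $q$, and in fact every nonidentity element of $D$ has prime order $p$ or $q$ unless it has composite order $pq$. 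The key observation is that $D$ is generated by its elements of prime order: any element of order $pq$ is a product of its $p$-part and its $q$-part (using that $D$ is a finite group and an element of order $pq$ generates a cyclic group of order $pq$ which contains elements of order $p$ and $q$ whose product is the generator). Thus the subgroup of $D$ generated by all prime-order elements of $D$ is all of $D$.

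Since every element of $D$ of prime order is an element of $G$ of prime order, it lies in $H$ by hypothesis. Therefore the generators of $D$ all lie in $H$, so $D \subseteq H$. Because $H$ is semiprime-cyclic and $D$ is a subgroup of $H$ of order $pq$, the group $D$ is cyclic, as desired. The only point requiring genuine care is the claim that $D$ is generated by its prime-order elements; the main obstacle is the case where $D$ is a non-cyclic group of order $p^2$ or a non-cyclic group of order $pq$, where one must verify that no ``composite-order element'' escapes the subgroup generated by prime-order elements. In the $pq$ case with $p \neq q$ this is automatic since any element of order $pq$ generates a cyclic group containing elements of order $p$ and $q$ whose product recovers it; in the $p^2$ case the argument is cleanest by the contrapositive sketched above (a noncyclic group of order $p^2$ has all nonidentity elements of order $p$). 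Once these elementary group-theoretic verifications are in place, the inclusion $D \subseteq H$ and the conclusion follow with no further work.
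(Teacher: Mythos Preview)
Your proof is correct and follows essentially the same approach as the paper: take a subgroup $D$ of order $pq$, show that $D$ is generated by its elements of prime order, conclude $D \subseteq H$, and then invoke the semiprime-cyclic hypothesis on $H$. The paper's proof is simply the terse two-line version: pick $a \in D$ of order $p$ and $b \in D$ of order $q$, note $a,b \in H$, and declare $D \subseteq H$. Your version is more careful about why $a$ and $b$ actually generate $D$, in particular handling the $p=q$ case explicitly (where a careless choice of $a,b$ might only span the order-$p$ subgroup of a cyclic $D$); this extra care is warranted, but the underlying idea is identical.
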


\begin{proof}
Let $D$ be a subgroup of $G$ of order $pq$ where $p$ and $q$ are prime.
Let $a, b \in D$ where $a$ has order $p$ and $b$ has order $q$.
By assumption $a, b \in H$, so $D$ is a subgroup of $H$. Thus $D$ is cyclic.
\end{proof}

\begin{proposition}
A finite Abelian group $A$ is semiprime-cyclic
if and only if it is cyclic.
\end{proposition}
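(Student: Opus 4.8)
The plan is to prove the two implications separately, with essentially all the content lying in the backward direction. For the forward direction, suppose $A$ is cyclic. Every subgroup of a cyclic group is cyclic, so in particular every subgroup of order a semiprime $pq$ is cyclic, and $A$ is semiprime-cyclic. (Alternatively, this follows from the chain cyclic $\implies$ freely representable (Example~\ref{cyclic_example}) $\implies$ semiprime-cyclic (Proposition~\ref{semiprime_cyclic_fr_prop}), but the direct observation is cleaner.)

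For the backward direction, suppose $A$ is a finite Abelian semiprime-cyclic group. The key first step is to specialize the defining condition to the case where the two primes coincide: taking the semiprime to be $p^2$ for each prime $p$ dividing $|A|$, we learn that \emph{every subgroup of $A$ of order $p^2$ is cyclic}. This is precisely the hypothesis handled by the elementary algebraic argument given in the remark following Corollary~\ref{pq_cor}, which shows (without invoking the structure theorem for finite Abelian groups) that an Abelian group all of whose subgroups of order $p^2$ are cyclic must itself be cyclic. Invoking that result finishes the proof. I would either cite the remark directly or, if a self-contained argument is preferred, reproduce its skeleton: fix a prime $p \mid |A|$, consider the endomorphism $x \mapsto x^p$ with kernel $K$ and image $I$, observe that $K$ cannot contain two distinct subgroups of order $p$ (their product would be a noncyclic subgroup of order $p^2$), deduce $|K| = p$, and induct on $|I|$.

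The main obstacle is not any deep difficulty but rather the reduction itself: recognizing that the semiprime-cyclic hypothesis, which a priori constrains subgroups of order $pq$ for possibly distinct primes, already forces the $p^2$-subgroup condition through the allowed case $p = q$. Once that reduction is made, the cyclicity criterion for Abelian groups is exactly the already-established elementary lemma, so no new machinery is required. I would flag explicitly that the case $p = q$ is permitted in the definition of semiprime, since the whole argument hinges on it.
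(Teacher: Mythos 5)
Your proof is correct and matches the paper's: the forward direction is the same one-line observation, and for the backward direction the paper's proof cites both the structure theorem and (parenthetically) the elementary argument in the remark after Corollary~\ref{pq_cor}, which is exactly the route you take. Your explicit note that the reduction hinges on the semiprime case $p = q$ being allowed is a worthwhile clarification, but it is not a departure from the paper's argument.
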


\begin{proof}
If $A$ is cyclic then it is semiprime-cyclic since all subgroups of $A$ are cyclic.
If $A$ is semiprime-cyclic then it follows that $A$ is cyclic from the decomposition of 
Abelian groups into cyclic subgroups of prime power order. (See the remarks
after Corollary~\ref{pq_cor} for a more elementary argument).
\end{proof}

\begin{proposition}
Let $G$ be a $p$-group for some prime $p$. 
Then 
$G$ is semiprime-cyclic
if and only if 
$G$ is cyclic or (if $p=2$) is isomorphic to a general quaternion group.
\end{proposition}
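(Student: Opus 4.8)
The plan is to reduce the statement directly to the two $p$-group classification results already established, namely Theorem~\ref{pgroup_thm} for odd $p$ and Corollary~\ref{unique_2_cor} for $p=2$. The key observation is that in a $p$-group the only possible semiprime order is $p^2$: every subgroup has order a power of $p$, and the only power of $p$ that is a product of two primes is $p \cdot p = p^2$. Moreover a subgroup of order $p^2$ automatically has semiprime order. Hence, for a $p$-group $G$, the condition ``every subgroup of semiprime order is cyclic'' is \emph{equivalent} to the condition ``every subgroup of order $p^2$ is cyclic.'' This is the crux of the argument, and essentially all the work lies in making this reduction explicit.

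First I would dispose of the small cases. If $|G| \le p$ then $G$ is cyclic and has no subgroup of order $p^2$, so $G$ is vacuously semiprime-cyclic; both sides of the equivalence hold. So from now on assume $|G| \ge p^2$, which guarantees the existence of subgroups of order $p^2$ and makes the condition non-vacuous.

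Next I would split on the parity of $p$. For $p$ odd, I invoke Theorem~\ref{pgroup_thm}, whose equivalence $(\ref{p2}) \Leftrightarrow (\ref{p4})$ states precisely that a $p$-group has every subgroup of order $p^2$ cyclic if and only if $G$ is cyclic. Combined with the reduction above, this gives exactly that $G$ is semiprime-cyclic if and only if $G$ is cyclic, which is the claim for odd $p$ (where the generalized quaternion alternative does not arise). For $p=2$, I invoke Corollary~\ref{unique_2_cor}, whose equivalence of $(2)$ and $(4)$ states that a nontrivial $2$-group has every subgroup of order $4$ cyclic if and only if it is cyclic or a generalized quaternion group. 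Again combining with the reduction yields the claim for $p=2$.

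There is no real obstacle here; the only point requiring a word of care is the first reduction, and in particular the remark that a semiprime is by definition a product of \emph{two} primes (possibly equal), so that $p^2$ qualifies while $p$, $p^3$, etc.\ do not. Once that is stated clearly, the result is an immediate corollary of Theorem~\ref{pgroup_thm} and Corollary~\ref{unique_2_cor}, and I would present it as such rather than re-proving any structural facts about $p$-groups.
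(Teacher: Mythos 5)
Your proposal is correct and follows essentially the same route as the paper: the paper's proof is exactly the observation that for a $p$-group, semiprime-cyclic reduces to ``every subgroup of order $p^2$ is cyclic,'' followed by an appeal to Theorem~\ref{pgroup_thm} and Corollary~\ref{unique_2_cor}. Your write-up merely makes explicit the small-order cases and the parity split that the paper leaves implicit.
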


\begin{proof}
A $p$-group $G$ is semiprime-cyclic if and only if all subgroups of $G$ of order~$p^2$ are cyclic.
So the result  follows from 
Theorem~\ref{pgroup_thm} and 
Corollary~\ref{unique_2_cor}.
\end{proof}

\begin{corollary} \label{sp_scq_cor}
If $G$ is semiprime-cyclic then $G$ is a Sylow-cycloidal group
\end{corollary}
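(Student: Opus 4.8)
The plan is to reduce directly to the structure of the Sylow subgroups, since these are exactly the objects appearing in the definition of ``Sylow-cycloidal''. First I would observe that any Sylow subgroup $P$ of $G$ is in particular a subgroup of $G$, and that subgroups of a semiprime-cyclic group are again semiprime-cyclic (by the proposition established just above). Hence each Sylow subgroup $P$ is a semiprime-cyclic $p$-group for the relevant prime $p$.

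The crucial point is then to invoke the proposition characterizing semiprime-cyclic $p$-groups. That result says precisely that such a $P$ is cyclic when $p$ is odd, and is either cyclic or a generalized quaternion group when $p=2$. But this is exactly the definition of $P$ being cycloidal. Since this holds for every Sylow subgroup $P$ of $G$, the group $G$ is Sylow-cycloidal by definition, which is what we want.

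I expect essentially no obstacle here, since all the real work has been front-loaded into the $p$-group proposition, which itself rests on Theorem~\ref{pgroup_thm} (odd $p$) and Corollary~\ref{unique_2_cor} (the case $p=2$). The only subtlety worth flagging explicitly is that the two notions match on the nose: a semiprime-cyclic $p$-group is cycloidal \emph{and} conversely, so the reduction is an exact equivalence at the Sylow level rather than a one-way implication, and no further case analysis is needed.
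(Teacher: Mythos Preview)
Your proposal is correct and matches the paper's intended argument exactly: the paper states this corollary without proof immediately after the proposition classifying semiprime-cyclic $p$-groups, so the implicit reasoning is precisely the reduction to Sylow subgroups that you spell out.
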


Now we determine which Sylow-cyclic groups are semiprime-cyclic. We start with a special case:

\begin{lemma}
Let $C_p$ be cyclic of order $p$ and let~$C_{q^k}$ be cyclic of order~$q^k$
where $p$ and $q$ are distinct primes.
If a semidirect product $G = C_{q^k} \rtimes C_p$ is semiprime-cyclic then~$G$ is cyclic.
\end{lemma}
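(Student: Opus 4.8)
The plan is to argue by contraposition through the conjugation action. Write $A = C_{q^k}$, regarded as the normal factor, and let $\phi \colon C_p \to \Aut(A)$ be the homomorphism defining the semidirect product. Since $A$ and $C_p$ are cyclic of relatively prime orders, $G$ is cyclic precisely when it is Abelian, and $G$ is Abelian precisely when $\phi$ is trivial. So it suffices to show that if $\phi$ is nontrivial then $G$ has a noncyclic subgroup of semiprime order, contradicting the hypothesis.

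So suppose $\phi$ is nontrivial. Because $C_p$ has prime order, $\phi$ is injective and its image is a subgroup of $\Aut(A)$ of order $p$. Since $\Aut(A) \cong (\bZ/q^k\bZ)^\times$ has order $q^{k-1}(q-1)$ and $p \ne q$, we conclude $p \mid q-1$. Let $C_q$ be the unique subgroup of $A$ of order $q$; it is characteristic in the cyclic group $A$, and since $A$ is normal in $G$, the subgroup $C_q$ is normal in $G$. Then $C_q C_p$ is a subgroup of $G$ of order $pq$, as $C_q \cap C_p = \{1\}$.

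The key step is to check that $C_p$ acts nontrivially on $C_q$, not merely on $A$. Restricting automorphisms to the characteristic subgroup $C_q$ gives a homomorphism $\Aut(A) \to \Aut(C_q)$, which under the identifications $\Aut(A) \cong (\bZ/q^k\bZ)^\times$ and $\Aut(C_q) \cong (\bZ/q\bZ)^\times$ is just reduction modulo $q$. Its kernel consists of the residues congruent to $1$ modulo $q$, a subgroup of order $q^{k-1}$, hence a $q$-group. The image of a generator of $C_p$ under $\phi$ has order $p$; since $p \ne q$ it cannot lie in this $q$-group kernel, so it restricts to a nontrivial automorphism of $C_q$.

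Therefore $C_q C_p = C_q \rtimes C_p$ is a nonabelian group of order $pq$, hence a noncyclic subgroup of $G$ of semiprime order. This contradicts the assumption that $G$ is semiprime-cyclic. Consequently $\phi$ is trivial, $G \cong A \times C_p$ is Abelian, and being the product of cyclic groups of relatively prime orders it is cyclic. The only delicate point is the restriction argument of the third paragraph, where one must ensure the action survives passage to the order-$q$ subgroup; everything else is routine order-counting.
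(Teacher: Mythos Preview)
Your proof is correct and takes a genuinely different route from the paper's. The paper argues by induction on $k$: it considers the image $A$ of the $q$th-power map on $C_{q^k}$, applies the inductive hypothesis to conclude that $C_p$ acts trivially on $A$, and then uses an orbit-counting trick on a fiber of the $q$th-power map (a set of $q$ elements, so $C_p$ must fix one, and that fixed element generates $C_{q^k}$). Your argument is more direct: you pass to the unique subgroup $C_q$ of order $q$ and use the structure of $\Aut(C_{q^k}) \cong (\bZ/q^k\bZ)^\times$ to see that the kernel of restriction $\Aut(C_{q^k}) \to \Aut(C_q)$ is a $q$-group, so a nontrivial action of $C_p$ survives to $C_q$, producing a noncyclic subgroup of order $pq$. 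Your approach avoids induction entirely and exploits a fact the paper uses elsewhere anyway (the order of the automorphism group of a cyclic group); the paper's fiber argument is more hands-on and would generalize to settings where one does not want to invoke the explicit structure of $\Aut(C_{q^k})$. The observation that $p \mid q-1$ is correct but not actually needed for the rest of your argument.
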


\begin{proof}
If~$k=1$ the result holds by definition. So assume $k\ge 2$, and we proceed by induction.  Consider the homomorphism $C_{q^k} \to C_{q^k}$
defined by $x\mapsto x^q$; let $A$ be its image and let $K$ be its kernel. Observe that $A$ is cyclic of order $q^{k-1}$
since a generator of $C_{q^k}$ maps to an element of order $q^{k-1}$. Thus
$K$ has order $q$.

The action of $C_p$ on $C_{q^k}$ restricts to an action on $A$ since $A$ is the only subgroup of $C_{q^k}$ of order $q^{k-1}$.
By induction we can assume $A \rtimes C_p$ is cyclic, so $C_p$ acts trivially on $A$. Let $a \in A$ be a generator, and
let $S$ be the elements of $C_{q^k}$ mapping to $a$ under $x \mapsto x^q$. 
Since $K$ has $q$ elements, $S$ also has $q$ elements.
Note that $C_p$ acts on $S$ with orbits of size $p$ or $1$. Since $S$ has $q$ elements, there is an orbit
of size~$1$. In other words, $C_p$ fixes
an element of $S$. But the elements of $S$ generate~$C_{q^k}$.
So~$C_p$ acts trivially on $C_{q^k}$. Thus  $C_{q^k} \rtimes C_p$ is Abelian. So  $C_{q^k} \rtimes C_p$ is cyclic
since it is semiprime-cyclic and Abelian.
\end{proof}

Next we extend the special case a bit:

\begin{lemma}
Let $A$ be a finite cyclic group, and let $C_p$ be cyclic of order $p$ where $p$ is a prime not dividing $|A|$.
If a semidirect product $G = A \rtimes C_p$ is semiprime-cyclic then~$G$ is cyclic.
\end{lemma}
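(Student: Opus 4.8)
The plan is to reduce to the prime-power case handled by the previous lemma by decomposing $A$ into its Sylow subgroups. First I would write $A$ as the internal direct product $A = A_{q_1} \times \cdots \times A_{q_r}$ of its Sylow subgroups, where each $A_{q_i}$ is cyclic of order $q_i^{k_i}$. Since $A$ is cyclic, each $A_{q_i}$ is the unique subgroup of $A$ of its order, hence is characteristic in $A$. Because $A$ is normal in $G = A \rtimes C_p$ and $A_{q_i}$ is characteristic in $A$, each $A_{q_i}$ is normal in $G$; consequently $A_{q_i} C_p$ is a subgroup of $G$. As $A_{q_i} \cap C_p = \{1\}$ (their orders are coprime, since $p \nmid |A|$), this subgroup is the semidirect product $A_{q_i} \rtimes C_p \cong C_{q_i^{k_i}} \rtimes C_p$, where $C_p$ acts by the restriction of its action on $A$.

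Next I would use that every subgroup of a semiprime-cyclic group is semiprime-cyclic, so each $A_{q_i} \rtimes C_p$ is semiprime-cyclic. Since $p \ne q_i$, the previous lemma applies directly and shows that $A_{q_i} \rtimes C_p$ is cyclic, hence Abelian; therefore $C_p$ acts trivially on $A_{q_i}$ by conjugation.

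Finally, since the Sylow subgroups $A_{q_i}$ generate $A$ and $C_p$ acts trivially on each of them, $C_p$ acts trivially on all of $A$. Thus the semidirect product is in fact direct, $G = A \times C_p$, which is Abelian. An Abelian semiprime-cyclic group is cyclic (by the earlier proposition characterizing Abelian semiprime-cyclic groups), so $G$ is cyclic, as desired.

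I do not expect a serious obstacle here: the real content lives in the previous lemma, and the only point to check with any care is that each Sylow subgroup of $A$ is invariant under the $C_p$-action, so that the prime-power lemma can be applied one factor at a time. This invariance is immediate from the fact that a cyclic group has a unique subgroup of each order, which makes its Sylow subgroups characteristic and hence normal in $G$.
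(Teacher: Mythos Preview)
Your proof is correct and follows essentially the same approach as the paper's: both reduce to the Sylow subgroups of $A$, use that each such subgroup is invariant under the $C_p$-action (since a cyclic group has a unique subgroup of each order), apply the previous prime-power lemma to conclude $C_p$ acts trivially on each Sylow factor, and then finish by noting an Abelian semiprime-cyclic group is cyclic. The only cosmetic difference is that the paper phrases the argument by contradiction (assuming $C_p$ acts nontrivially on some Sylow subgroup and deriving a contradiction with the prime-power lemma) whereas you argue directly.
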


\begin{proof}
Suppose $G = A \rtimes C_p$ is semiprime-cyclic but not cyclic.  Then it is non-Abelian (all Abelian semiprime-cyclic groups are cyclic),
and so $C_p$ acts nontrivially on $A$.
The action of $C_p$ on $A$ restricts to an action on any  subgroup of $A$ (since $A$ has at most one subgroup of any given order).
The groups $C_p$ cannot act trivially on all Sylow subgroups of $A$ since $A$ is generated by such groups.
So~$C_p$ acts nontrivially on some Sylow subgroup $P$
of $A$. However, by the previous lemma, the group $P \rtimes C_p$ is cyclic, a contradiction.
\end{proof}

\begin{proposition}\label{semiprime_cyclic_sylow_cyclic_prop}
Let $G$ be a Sylow-cyclic group. Then $G$ is semiprime-cyclic if and only if $G$ is freely representable.
\end{proposition}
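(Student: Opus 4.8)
The easy direction is immediate and does not even use the Sylow-cyclic hypothesis: by Proposition~\ref{semiprime_cyclic_fr_prop} every freely representable group is semiprime-cyclic. So the entire content lies in the converse, namely that a Sylow-cyclic group which is semiprime-cyclic must be freely representable. The plan is to exploit the semidirect decomposition of a Sylow-cyclic group together with the action-kernel characterization of freely representable Sylow-cyclic groups.

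By Theorem~\ref{semidirect_thm} I can write $G = A \rtimes B$, where $A$ is the cyclic commutator subgroup and $B$ is a cyclic complement, with $|A|$ and $|B|$ relatively prime. Let $K$ be the kernel of the associated action homomorphism $B \to \Aut(A)$. By Corollary~\ref{ns_cor}, to conclude that $G$ is freely representable it suffices to show that every prime $p$ dividing $|B|$ also divides $|K|$.

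To verify this, I would fix a prime $p$ dividing $|B|$ and let $B_p$ be the unique subgroup of $B$ of order $p$. Since $A$ is normal in $G$, the product $A B_p$ is a subgroup of $G$, and because $A \cap B_p = \{1\}$ it is precisely the semidirect product $A \rtimes B_p$, with $B_p$ acting by restriction of the $B$-action. As a subgroup of the semiprime-cyclic group $G$, this $A \rtimes B_p$ is again semiprime-cyclic; and since $p$ does not divide $|A|$, the second of the two lemmas immediately preceding this proposition applies and shows $A \rtimes B_p$ is cyclic. Hence $B_p$ acts trivially on $A$, i.e.\ $B_p \subseteq K$, so $p$ divides $|K|$. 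Running this over all primes $p \mid |B|$ and invoking Corollary~\ref{ns_cor} yields that $G$ is freely representable.

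I do not expect a serious obstacle: the two lemmas just established carry the real weight, and the only points requiring care are bookkeeping with the semidirect decomposition — confirming that $A \rtimes B_p$ is a genuine subgroup to which the lemma applies, which rests on the normality of $A$ and the coprimality of $|A|$ with $p$. As an alternative route I could bypass Corollary~\ref{ns_cor} and argue directly from Theorem~\ref{sc_fr_thm}, showing that each subgroup of prime order is unique; but passing through the action kernel $K$ is the most economical way to package the argument.
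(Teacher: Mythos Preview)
Your proof is correct and follows essentially the same approach as the paper's: decompose $G$ as $A \rtimes B$ with $A,B$ cyclic of coprime orders, use the preceding lemma to show each prime-order subgroup $B_p$ of $B$ acts trivially on $A$ (hence $B_p \subseteq K$), and conclude via Corollary~\ref{ns_cor}. The only difference is that you spell out more of the bookkeeping (normality of $A$, $A \cap B_p = \{1\}$) than the paper does.
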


\begin{proof}
One implication has been established (Proposition~\ref{semiprime_cyclic_fr_prop}), so we can assume 
that~$G$ is semiprime-cyclic.
Since $G$ is Sylow-cyclic, we can write $G$ as $A \rtimes B$ where $A$ and $B$ are cyclic groups of
relatively prime order.
Consider the kernel $K$ associated homomorphism~$B \to \mathrm{Aut}(A)$.
If $C$ is a cyclic subgroup of $B$ of prime order, then $A \rtimes C$ is cyclic by the above lemma.
So $C$ is contained in $K$. By Corollary~\ref{ns_cor} we conclude that $G$ is freely representable.
\end{proof}

We can strengthen the above to include Sylow-cyclic-quaternion groups:

\begin{theorem} \label{theorem_175}
Let $G$ be a solvable finite group. Then $G$ is semiprime-cyclic if and only if $G$ is freely representable.
\end{theorem}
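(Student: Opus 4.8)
The plan is to prove only the nontrivial implication, since the forward direction—every freely representable group is semiprime-cyclic—is exactly Proposition~\ref{semiprime_cyclic_fr_prop}. So I assume $G$ is solvable and semiprime-cyclic, and the goal is to deduce that $G$ is freely representable. The first move is to apply Corollary~\ref{sp_scq_cor}, which tells us that a semiprime-cyclic group is automatically Sylow-cycloidal; combined with the solvability hypothesis, this places $G$ squarely within the structure theory already developed for solvable Sylow-cycloidal groups. From there the argument is a short case analysis.

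Next I would split on whether $G$ is Sylow-cyclic. If $G$ \emph{is} Sylow-cyclic, then Proposition~\ref{semiprime_cyclic_sylow_cyclic_prop} directly gives the equivalence of semiprime-cyclic and freely representable, so $G$ is freely representable and we are done. If $G$ is \emph{not} Sylow-cyclic, I first observe that $G$ must then have even order: a Sylow-cycloidal group of odd order has all Sylow subgroups of odd prime-power order, hence cyclic, hence is Sylow-cyclic. Writing $|G| = 2^k n$ with $n$ odd and $k \ge 1$, I invoke Proposition~\ref{general_m_prop}, whose ``in particular'' clause applies precisely because $G$ is not Sylow-cyclic: there is a Sylow-cyclic subgroup $M$ of order $n$ such that $G$ is freely representable if and only if $M$ is freely representable. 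Since $M$ is a subgroup of the semiprime-cyclic group $G$, it is itself semiprime-cyclic (subgroups of semiprime-cyclic groups are semiprime-cyclic); being both Sylow-cyclic and semiprime-cyclic, $M$ is freely representable by Proposition~\ref{semiprime_cyclic_sylow_cyclic_prop}. Therefore $G$ is freely representable, completing the case analysis.

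The interesting feature of this proof is that it carries almost no computational weight of its own: the entire burden is discharged by results already in hand, namely the four-type structure of solvable Sylow-cycloidal groups distilled into Proposition~\ref{general_m_prop}, together with the Sylow-cyclic base case in Proposition~\ref{semiprime_cyclic_sylow_cyclic_prop}. Consequently there is no single hard obstacle; the only points requiring care are the two reductions that make the machinery applicable—verifying that a non-Sylow-cyclic Sylow-cycloidal group has even order (so that $k\ge 1$ and Proposition~\ref{general_m_prop} applies), and verifying that the auxiliary subgroup $M$ inherits the semiprime-cyclic property from $G$. Both are immediate, but they are the hinges on which the argument turns.
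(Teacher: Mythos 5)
Your proof is correct and follows essentially the same route as the paper's own proof: the forward direction is discharged by Proposition~\ref{semiprime_cyclic_fr_prop}, and the converse proceeds by the same case split on whether $G$ is Sylow-cyclic, handled by Proposition~\ref{semiprime_cyclic_sylow_cyclic_prop} in the first case and Proposition~\ref{general_m_prop} (reducing to the Sylow-cyclic subgroup $M$, which inherits the semiprime-cyclic property) in the second. The only difference is that you explicitly verify hypotheses the paper leaves implicit---that $G$ is Sylow-cycloidal via Corollary~\ref{sp_scq_cor}, and that a non-Sylow-cyclic Sylow-cycloidal group has even order so that Proposition~\ref{general_m_prop} actually applies---which is a welcome bit of extra care.
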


\begin{proof}
One implication has been established (Proposition~\ref{semiprime_cyclic_fr_prop}), so we can assume 
that~$G$ is semiprime-cyclic.
If $G$ is Sylow-cyclic, we appeal to the previous proposition. 
If $G$ is not Sylow-cyclic then, by Proposition~\ref{general_m_prop}, there is a Sylow-cyclic subgroup $M$ of $G$ with the property that
$G$ is freely representable if and only if $M$ is freely representable.
Since $M$ is semiprime-cyclic,  $M$ is freely representable by the previous proposition.
Thus $G$ is freely representable.
\end{proof}

The above theorem is remarkable given that it does not hold for non-solvable groups:
if $p>5$ is a Fermat prime such as $p = 17$ then $\SL_2(\bF_p)$ is a non-solvable semiprime-cyclic group, but is not freely
representable (Corollary~\ref{linear_not_freely_representable_cor} and Corollary~\ref{SL_fermat_cor2}).

We have the following version of Suzuki's theorem (Theorem~\ref{suzuki_thm}):

\begin{theorem} \label{theorem_176}
Suppose $G$ is a finite group with a unique element of order $2$.
Then~$G$ is a non-solvable semiprime-cyclic group if and only if 
$G$ has a subgroup~$H$ of index $1$ or $2$ such that $H$
is isomorphic to $ \mathrm{SL}_2 (\bF_p) \times M$
where $p$ is a Fermat prime and
where $M$ is a freely representable group of order prime to $(p-1)p(p+1)$.
\end{theorem}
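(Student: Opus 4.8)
The plan is to prove both implications by feeding Suzuki's theorem (Theorem~\ref{suzuki_thm}) into the dictionary between semiprime-cyclic and freely representable groups that has already been set up for the Sylow-cyclic and solvable cases, and then transferring the semiprime-cyclic property from the index-$\le 2$ subgroup $H$ up to $G$.

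For the forward direction, suppose $G$ is a non-solvable semiprime-cyclic group (carrying its unique element of order $2$). By Corollary~\ref{sp_scq_cor} the group $G$ is Sylow-cycloidal, so being non-solvable it is a non-solvable Sylow-cycloidal group and Suzuki's theorem (Theorem~\ref{suzuki_thm}) applies: there is a subgroup $H$ of index $1$ or $2$ with $H \cong \mathrm{SL}_2(\bF_p) \times M$ for some prime $p\ge 5$, where $M$ is Sylow-cyclic of order prime to $(p-1)p(p+1)$. It then remains to pin down $p$ and $M$. Since $\mathrm{SL}_2(\bF_p)$ is isomorphic to a subgroup of the semiprime-cyclic group $G$, it is itself semiprime-cyclic; but by Corollary~\ref{SL_fermat_cor2} an $\mathrm{SL}_2(\bF_p)$ with $p$ not a Fermat prime contains a noncyclic subgroup of semiprime order, so $p$ must be a Fermat prime. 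Likewise $M$ is a subgroup of $G$, hence semiprime-cyclic, and it is Sylow-cyclic, so $M$ is freely representable by Proposition~\ref{semiprime_cyclic_sylow_cyclic_prop}. This produces exactly the asserted structure.

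For the converse, assume $G$ has a unique element of order $2$ together with a subgroup $H$ of index $1$ or $2$ such that $H \cong \mathrm{SL}_2(\bF_p) \times M$, with $p$ a Fermat prime and $M$ freely representable of order prime to $(p-1)p(p+1)$. Since $p$ is a Fermat prime, Corollary~\ref{SL_fermat_cor2} shows $\mathrm{SL}_2(\bF_p)$ has no noncyclic subgroup of semiprime order, so it is semiprime-cyclic; and $M$ is semiprime-cyclic by Proposition~\ref{semiprime_cyclic_fr_prop}. As $|\mathrm{SL}_2(\bF_p)| = (p-1)p(p+1)$ is prime to $|M|$, Proposition~\ref{semiprime_cyclic_product_prop} makes $H$ semiprime-cyclic. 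To pass from $H$ to $G$ I would invoke Proposition~\ref{semiprime_cyclic_primes_prop}, for which I must verify that $H$ contains every element of $G$ of prime order: an element of odd prime order has trivial image in $G/H$ (whose order divides $2$) and so lies in $H$, while the unique element of $G$ of order $2$ lies in the even-order group $H$ by Cauchy's theorem together with uniqueness. Hence $G$ is semiprime-cyclic. Finally, $G$ is non-solvable because it contains the non-solvable group $\mathrm{SL}_2(\bF_p)$ (Corollary~\ref{linear_non_solvable_cor}, valid for the Fermat primes $p\ge 5$ that actually occur).

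The main obstacle I anticipate is bookkeeping rather than any deep difficulty. Two points need care. First, the definition of semiprime includes the case $p=q$, so I should note that the square case is automatically handled: in a Sylow-cycloidal group every subgroup of order $p^2$ is already cyclic, which is precisely what the proof of Corollary~\ref{SL_fermat_cor2} relies on, so citing that corollary genuinely delivers full semiprime-cyclicity of $\mathrm{SL}_2(\bF_p)$. Second, I must make explicit where the standing hypothesis that $G$ has a unique element of order $2$ is used, namely at the prime $2$ in the prime-order-element check for Proposition~\ref{semiprime_cyclic_primes_prop}; without it an order-$2$ element could map to the nontrivial coset of $G/H$. A final sentence should flag the $p\ge 5$ versus $p=3$ distinction so that non-solvability is not mistakenly asserted for the solvable case $\mathrm{SL}_2(\bF_3)\cong 2T$.
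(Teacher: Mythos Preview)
Your proposal is correct and follows essentially the same route as the paper's own proof: Suzuki's theorem plus Corollary~\ref{SL_fermat_cor2} and Proposition~\ref{semiprime_cyclic_sylow_cyclic_prop} for the forward direction, and Proposition~\ref{semiprime_cyclic_product_prop} together with the prime-order-element argument feeding Proposition~\ref{semiprime_cyclic_primes_prop} for the converse. Your flagging of the $p=3$ edge case is a useful addition, as the paper invokes Corollary~\ref{linear_non_solvable_cor} without explicitly noting that the Fermat prime must be $\ge 5$ for non-solvability.
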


\begin{proof}
Suppose $G$ is a non-solvable semiprime-cyclic group.
By Corollary~\ref{sp_scq_cor}, $G$ is Sylow-cycloidal.
So by Suzuki's theorem (Theorem~\ref{suzuki_thm}), $G$ 
 has a subgroup~$H$ of index $1$ or $2$ such that $H$
is isomorphic to $\mathrm{SL}_2 (\bF_p) \times M$
where $p\ge 5$ is a prime and
where $M$ is a Sylow-cyclic subgroup of order prime to $(p-1)p(p+1)$.
Since~$\mathrm{SL}_2 (\bF_p)$ and $M$ are isomorphic to subgroups
of $G$ they are both semiprime-cyclic. Thus $p$ is a Fermat prime (Corollary~\ref{SL_fermat_cor2}).
By Proposition~\ref{semiprime_cyclic_sylow_cyclic_prop}, $M$ is freely representable.

Conversely,  assume the existence of such an $H$. 
By Corollary~\ref{SL_fermat_cor2}, $\mathrm{SL}_2 (\bF_p)$ is semiprime-cyclic.
Also $\mathrm{SL}_2 (\bF_p)$ is non-solvable (Corollary~\ref{linear_non_solvable_cor}) so the same is true of $G$.
Also $M$ is semiprime-cyclic (Proposition~\ref{semiprime_cyclic_sylow_cyclic_prop}).
Thus the product $H$ is semiprime-cyclic (Proposition~\ref{semiprime_cyclic_product_prop}).

Suppose $g$ is an element of $G$ of prime order. If $g$ has odd order, then its image under $G \to G/H$
is trivial so $g \in H$. If $g$ has order 2, then $g \in H$ simply
because $H$ has an element of order 2, and the element of order 2 in~$G$ is unique.
Since every element of prime order in $H$ is in $G$ then $G$ must also be semiprime-cyclic
(Proposition~\ref{semiprime_cyclic_primes_prop}).
\end{proof}

Note that semiprime-cyclic groups have unique elements of order 2:

\begin{proposition}
If $G$ is a semiprime-cyclic group then $G$ has a unique element of order $2$.
\end{proposition}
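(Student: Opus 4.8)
The plan is to argue by contradiction: I will suppose that $a$ and $b$ are two \emph{distinct} elements of order $2$ in $G$ and manufacture a non-cyclic subgroup whose order is a product of two primes, contradicting the semiprime-cyclic hypothesis. The natural object to examine is the subgroup $\langle a, b\rangle$ generated by the two involutions, which is always dihedral, and the goal is to cut out of it a subgroup of order $2q$ for a suitable prime $q$.

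First I would set $c = ab$ and let $n$ be the order of $c$; since $a \neq b$ we have $c \neq 1$, so $n \geq 2$. The key relation is
$$
a c a^{-1} = a(ab)a = ba = c^{-1},
$$
using $a^{-1}=a$ and $b^{-1}=b$, so conjugation by $a$ inverts every power of $c$. I would then fix any prime $q$ dividing $n$ and set $d = c^{n/q}$, an element of order $q$ satisfying $a d a^{-1} = d^{-1}$.

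The next step is to verify that $H = \langle a, d\rangle$ has order exactly $2q$. Since $\langle d\rangle$ is normalized by $a$, this reduces to showing $a \notin \langle d\rangle$. If $q$ is odd, then $\langle d\rangle$ has odd order and contains no involution, so $a \notin \langle d\rangle$ at once. If $q = 2$, I would argue that $a \in \langle d\rangle$ forces $a = d = c^{n/2}$, whence $a$ commutes with $c$; combined with $aca^{-1}=c^{-1}$ this gives $c^2 = 1$, so $n = 2$, $d = c = ab$, and $a = ab$ yields $b = 1$, a contradiction. Thus $|H| = 2q$, a product of two primes, so $H$ is cyclic by hypothesis. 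But $a$ acts on $\langle d\rangle$ by inversion: for odd $q$ this makes $H$ non-abelian, and for $q = 2$ the elements $a$ and $d$ are distinct commuting involutions, so $H$ is a Klein four group; either way $H$ is not cyclic, the desired contradiction. Hence $G$ has at most one element of order $2$ (and exactly one when $|G|$ is even, by Cauchy's theorem; the claim is vacuous in the odd-order case).

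I expect the main obstacle to be the case where $n$ is a power of $2$, since there one cannot pick an odd prime $q$ to produce a visibly non-abelian dihedral group; the argument above dispatches it uniformly by permitting $q = 2$ and extracting a Klein four subgroup, the only delicate point being the verification that $a \neq d$. As an alternative that leans on the machinery already developed, one could instead invoke Corollary~\ref{sp_scq_cor} to make $G$ Sylow-cycloidal and split into cases: if $G$ is not Sylow-cyclic, apply Proposition~\ref{unique2_prop}; if $G$ is Sylow-cyclic, use Proposition~\ref{semiprime_cyclic_sylow_cyclic_prop} to make $G$ freely representable and conclude via Proposition~\ref{even_prop}. I prefer the direct argument above because it is self-contained and avoids the non-solvable classification entirely.
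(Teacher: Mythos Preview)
Your direct argument is correct. The dihedral subgroup generated by two distinct involutions always contains either a non-abelian dihedral subgroup of order $2q$ (when some odd prime $q$ divides the order of $ab$) or a Klein four subgroup (when the order of $ab$ is a power of $2$), and your verification that $a \neq d$ in the latter case is clean.

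The paper takes precisely the high-powered alternative you sketch at the end: it splits on solvability, invokes Theorem~\ref{theorem_175} in the solvable case to get freely representable and hence a unique involution via Proposition~\ref{even_prop}, and in the non-solvable case passes through Corollary~\ref{sp_scq_cor} and Proposition~\ref{unique2_prop}, which ultimately rests on Suzuki's theorem. Your proof is genuinely different and much more elementary: it uses nothing beyond the definition of semiprime-cyclic and the basic structure of dihedral groups, and in particular avoids the entire Sylow-cycloidal classification. It is worth noting that the author explicitly flags the paper's argument as ``a very high-powered way to prove such a basic result'' and asks in a remark whether there is a direct proof; you have supplied exactly that.
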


\begin{proof}
If $G$ is solvable, then it is freely-generated by Theorem~\ref{theorem_175}, so $G$ has a unique element of order~$2$.
If $G$ is non-solvable, then it is at least Sylow-cycloidal (Corollary~\ref{sp_scq_cor}), and so has a unique element of order $2$
by Proposition~\ref{unique2_prop}.
\end{proof}

\begin{remark}
The above proof seems like a very high-powered way to prove such a basic result.
Is there a direct proof?
\end{remark}

We can use the close connection between freely representability and the semiprime-cyclic conditions to 
characterize which groups are not freely representable.\footnote{This is the condition highlighted in~\cite{biasse2020norm}.
I think of this as the ``poison pill'' proposition.}

\begin{proposition}
Let $G$ be a finite group. Then $G$ is not freely representable if and only if $G$ has a subgroup $B$ such that
either (1) $B$ is a non-cyclic group of order the product of two primes, or (2) $B$ is
is isomorphic to $\mathrm{SL}_2 (\bF_p)$ for some Fermat prime $p \ge 17$.
\end{proposition}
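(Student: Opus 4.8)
The plan is to prove the two directions separately, with the reverse direction (sufficiency of a poison-pill subgroup) being routine and the forward direction (necessity) reducing, via the contrapositive, to the semiprime-cyclic theory already developed.

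For the reverse direction, suppose $G$ contains such a subgroup $B$. If $B$ is a non-cyclic group of order a product of two primes, then $B$ is not freely representable by Theorem~\ref{pq_thm}; if $B \cong \SL_2(\bF_p)$ for a Fermat prime $p \ge 17$, then $B$ is not freely representable by Corollary~\ref{linear_not_freely_representable_cor} (which applies since $p > 5$). In either case, since every subgroup of a freely representable group is freely representable (Proposition~\ref{freely_representable_subgroup_prop}), the presence of $B$ forces $G$ not to be freely representable.

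For the forward direction I would argue the contrapositive: assume $G$ has no subgroup of either type and show $G$ is freely representable. The key observation is that having no subgroup of type (1) is \emph{exactly} the statement that $G$ is semiprime-cyclic. If $G$ is solvable, then Theorem~\ref{theorem_175} immediately gives that $G$ is freely representable. If $G$ is non-solvable, then $G$ is Sylow-cycloidal (Corollary~\ref{sp_scq_cor}), so Suzuki's theorem (Theorem~\ref{suzuki_thm}) provides a subgroup $H$ of index $1$ or $2$ with $H \cong \SL_2(\bF_p) \times M$, where $p \ge 5$ and $M$ is Sylow-cyclic of order prime to $(p-1)p(p+1)$. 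Now $\SL_2(\bF_p)$ embeds in $G$, hence is semiprime-cyclic, so $p$ must be a Fermat prime by Corollary~\ref{SL_fermat_cor2}. Since $G$ has no subgroup of type (2), we cannot have $p \ge 17$; the only Fermat prime with $5 \le p < 17$ is $p = 5$, so $p = 5$. Then $\SL_2(\bF_5)$ is freely representable by Lemma~\ref{lemma160}, and $M$ is freely representable by Proposition~\ref{semiprime_cyclic_sylow_cyclic_prop} (it is Sylow-cyclic and semiprime-cyclic). Finally, Proposition~\ref{nonsolvable2_prop} concludes that $G$ is freely representable, since both $M$ and $\SL_2(\bF_5)$ are.

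The main obstacle is the non-solvable case: it is essential first to invoke Suzuki's structure theorem to isolate a single $\SL_2(\bF_p)$ subgroup, then to run the Fermat-prime dichotomy of Corollary~\ref{SL_fermat_cor2} to pin $p$ down to $5$, and finally to package everything through Proposition~\ref{nonsolvable2_prop}. The one external input beyond the semiprime-cyclic machinery is the free representability of $\SL_2(\bF_5)$ (Lemma~\ref{lemma160}), which itself rests on the identification of $\SL_2(\bF_5)$ with the binary icosahedral subgroup of $\bH^\times$.
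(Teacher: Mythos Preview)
Your proof is correct and follows essentially the same approach as the paper's, with the only difference being packaging: where the paper cites Theorem~\ref{theorem_176} and Theorem~\ref{zassenhaus_sukuki_thm} directly, you unpack these into their constituents (Suzuki's theorem, Corollary~\ref{SL_fermat_cor2}, Proposition~\ref{semiprime_cyclic_sylow_cyclic_prop}, Lemma~\ref{lemma160}, and Proposition~\ref{nonsolvable2_prop}). Your contrapositive phrasing of the forward direction is logically equivalent to the paper's direct argument.
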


\begin{proof}
Suppose $G$ is not freely representable. If $G$ is solvable then $G$ cannot be semiprime-cyclic
(semiprime-cyclic implies freely representable for solvable groups by Theorem~\ref{theorem_175}),
so $G$ has a subgroup $B$ satisfying condition (1). Next suppose $G$ is non-solvable
and does not contain a subgroup satisfying (1). 
Then $G$ is semiprime-cyclic and so has a subgroup $H$ and a Fermat prime $p$ describe by 
Theorem~\ref{theorem_176}.
Now $p\ne 5$ or else $G$ would be freely representable (Theorem \ref{zassenhaus_sukuki_thm}). 
Thus $H$, and hence $G$, has a subgroup $B$ isomorphic to 
$\mathrm{SL}_2 (\bF_p)$ for some Fermat prime $p \ge 17$.

Conversely, suppose $G$ has such a subgroup $B$. 
In either case, such a $B$ is not freely representable, so $G$ is not freely representable.
\end{proof}

%%%%%%

\chapter*{Appendix: Sylow Theorems}

We take these as given.

\begin{theorem} [First Sylow Theorem] \label{sylow1_thm}
For every prime power $p^k$ dividing the order of $G$ there is a subgroup of $G$
of order $p^k$.
\end{theorem}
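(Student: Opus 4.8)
The plan is to reduce the statement to the existence of a full Sylow $p$-subgroup and then descend to every smaller prime power. Write $|G| = p^a m$ with $p \nmid m$, and let $p^k$ be a prime power dividing $|G|$, so $0 \le k \le a$. If I can produce a subgroup $P$ of order $p^a$, then since $P$ has prime-power order it contains a subgroup of order $p^k$ for each $k \le a$: a nontrivial $p$-group has nontrivial center (Proposition~\ref{center_prop}), hence a central, therefore normal, subgroup $N$ of order $p$; passing to $P/N$ and inducting on $a$ yields a chain $1 = P_0 \subset P_1 \subset \cdots \subset P_a = P$ with $|P_i| = p^i$, and $P_k$ is the desired subgroup. (Equivalently one may invoke the existence of index-$p$ subgroups, Proposition~\ref{cseries_prop}.) Thus the whole theorem follows once the top case $k = a$ is settled.

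For the top case I would use Wielandt's counting argument. Let $\Omega$ be the collection of all subsets of $G$ of cardinality $p^a$, so $|\Omega| = \binom{p^a m}{p^a}$, and let $G$ act on $\Omega$ by left translation, $g \cdot S = gS$. The crucial number-theoretic fact is that $p \nmid |\Omega|$: writing
\[
\binom{p^a m}{p^a} = \prod_{j=0}^{p^a - 1} \frac{p^a m - j}{p^a - j},
\]
the $j = 0$ factor equals $m$, and each $j$ with $0 < j < p^a$ may be written $j = p^b u$ with $p \nmid u$ and $b < a$, whence both $p^a m - j = p^b(p^{a-b}m - u)$ and $p^a - j = p^b(p^{a-b} - u)$ are exactly divisible by $p^b$; every factor therefore contributes $p$-adic valuation zero, so the product is prime to $p$. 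Since $\Omega$ is the disjoint union of its $G$-orbits and $p \nmid |\Omega|$, some orbit $\mathcal{O}$ has size prime to $p$.

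Now fix $S \in \mathcal{O}$ and let $H$ be its stabilizer, so that $|\mathcal{O}| = [G : H]$ by the orbit-stabilizer theorem. Since $p^a \mid |G|$ and $p \nmid [G:H]$, we get $p^a \mid |H|$. On the other hand, fixing any $s \in S$, the map $H \to S$ given by $h \mapsto hs$ is injective because $HS = S$ forces $Hs \subseteq S$; hence $|H| \le |S| = p^a$. The two bounds give $|H| = p^a$, so $H$ is the sought Sylow $p$-subgroup, which completes the argument. The main obstacle, and the only genuinely delicate point, is the valuation computation showing $p \nmid \binom{p^a m}{p^a}$; everything else is a routine application of the orbit-stabilizer theorem together with the subgroup structure of $p$-groups.
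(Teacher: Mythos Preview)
Your proof is correct: the Wielandt counting argument for the existence of a Sylow $p$-subgroup is carried out cleanly, the $p$-adic valuation of $\binom{p^a m}{p^a}$ is computed accurately, and the descent to smaller $p^k$ via Proposition~\ref{center_prop} (or Proposition~\ref{cseries_prop}) is legitimate and not circular, since those propositions are proved in the appendix without reference to the Sylow theorems.

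The comparison with the paper is short: the paper does not prove this statement at all. The Sylow theorems are placed in an appendix under the heading ``We take these as given,'' so you have supplied a full proof where the paper supplies none. Your choice of Wielandt's argument is a good fit for the paper's style, since it needs only the orbit-stabilizer theorem and elementary counting, and the subsequent reduction to smaller prime powers dovetails with the $p$-group results the paper does prove.
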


\begin{theorem} [Second Sylow Theorem] \label{sylow2_thm}
Let $G$ be a finite group of order divisible by a prime~$p$.
Every subgroup $H$ of $G$ that is  a $p$-group is contained in a $p$-Sylow subgroup $P$ of $G$.
\end{theorem}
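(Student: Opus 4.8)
The plan is to use the standard coset-action argument, leaning on the First Sylow Theorem (Theorem~\ref{sylow1_thm}) to supply the Sylow subgroup into which $H$ will ultimately be conjugated. First I would apply Theorem~\ref{sylow1_thm} with $p^k$ equal to the largest power of $p$ dividing $|G|$ to obtain a $p$-Sylow subgroup $P$ of $G$. The point of choosing the full power is that $[G:P]$ is then relatively prime to $p$, since $P$ already absorbs every factor of $p$ in $|G|$.

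Next I would let $H$ act on the set $X = G/P$ of left cosets of $P$ by left multiplication, $h\cdot(gP) = (hg)P$. The essential tool is a fixed-point count for a $p$-group: every orbit has size dividing $|H|$, hence is a power of $p$, so every nontrivial orbit has size divisible by $p$. Partitioning $X$ into orbits and reducing mod $p$ gives
$$
|X| \equiv |X^H| \pmod p,
$$
where $X^H$ is the set of cosets fixed by all of $H$. Since $|X| = [G:P]$ is prime to $p$, the right-hand side cannot vanish mod $p$, so $X^H$ is nonempty; that is, some coset $gP$ is fixed by every element of $H$.

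Finally I would unwind the fixed-point condition: $hgP = gP$ for all $h \in H$ is equivalent to $g^{-1}hg \in P$ for all $h\in H$, i.e. $g^{-1}Hg \subseteq P$, so $H \subseteq gPg^{-1}$. Because conjugation preserves order, $gPg^{-1}$ is again a subgroup of order $p^k$, hence a $p$-Sylow subgroup of $G$ containing $H$, which is exactly the claim. The only real obstacle is the fixed-point lemma for $p$-groups, and that is elementary, following at once from the orbit–stabilizer theorem together with $|H|$ being a prime power; the remaining steps, including the observation that a conjugate $gPg^{-1}$ of a Sylow subgroup is again Sylow, are routine bookkeeping.
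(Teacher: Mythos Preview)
Your argument is correct and is the standard coset-action proof of the Second Sylow Theorem. However, the paper does not actually prove this statement: in the appendix on Sylow theorems the author writes ``We take these as given'' and simply states Theorems~\ref{sylow1_thm}--\ref{sylow3_thm} without proof. So there is no proof in the paper to compare against; your proposal supplies a valid proof where the paper has none.
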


\begin{theorem}  [Third Sylow Theorem]\label{sylow3_thm}
Let $G$ is a finite group of order $m p^k$ where~$m$ is not divisible by $p$, 
where $p$ is a prime, and where $k\ge 1$.
All $p$-Sylow subgroups of $G$ are conjugate. The number $t$ of $p$-Sylow subgroups 
of $G$ divides $m$
and
$$
t \equiv 1 \pmod p.
$$
\end{theorem}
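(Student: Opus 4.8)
The plan is to fix a single $p$-Sylow subgroup $P$, whose existence is guaranteed by Theorem~\ref{sylow1_thm} (every $p$-Sylow subgroup then has order exactly $p^k$), and to study the set $\Omega$ of all $G$-conjugates of $P$ by letting various $p$-groups act on $\Omega$ by conjugation. Everything rests on one counting principle that I would record first: \emph{if a finite $p$-group acts on a finite set $S$, then the number of fixed points is congruent to $|S|$ modulo $p$.} This is immediate from the orbit decomposition, since each orbit has size a power of $p$, and the orbits of size strictly greater than $1$ contribute multiples of $p$.

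The crux is the following lemma, which I expect to be the main obstacle: \emph{if $Q$ and $R$ are $p$-Sylow subgroups of $G$ with $Q \subseteq N(R)$, then $Q = R$.} To prove it, observe that $R$ is normal in $N(R)$ and remains a maximal $p$-subgroup of the smaller group $N(R)$. Hence for the $p$-subgroup $Q$ of $N(R)$ the product $QR$ is again a $p$-group, because $QR/R \cong Q/(Q \cap R)$ is a $p$-group and $R$ is a $p$-group; maximality of $R$ forces $QR = R$, so $Q \subseteq R$, and since $|Q| = |R| = p^k$ we conclude $Q = R$. The delicate point here is to avoid circularity: I deliberately use only normality and maximality of $R$ as a $p$-subgroup (together with the equal orders coming from Theorem~\ref{sylow1_thm}), rather than invoking conjugacy of Sylow subgroups of $N(R)$, which is exactly what we are in the process of proving.

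With the lemma in hand I would assemble the three conclusions. Letting $P$ act on $\Omega$ by conjugation, a point $Q \in \Omega$ is fixed precisely when $P \subseteq N(Q)$, which by the lemma means $Q = P$; so $P$ is the unique fixed point, and the counting principle yields $t \equiv |\Omega| \equiv 1 \pmod p$. For conjugacy, suppose toward a contradiction that some $p$-Sylow subgroup $Q$ lies outside $\Omega$, and let $Q$ act on $\Omega$: any fixed point $R$ would satisfy $Q \subseteq N(R)$, forcing $Q = R \in \Omega$, a contradiction, so $Q$ has no fixed points and $|\Omega| \equiv 0 \pmod p$, contradicting the previous congruence. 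Hence every $p$-Sylow subgroup is a conjugate of $P$, and $t = |\Omega|$. Finally, $G$ acts transitively on $\Omega$ with stabilizer $N(P)$, so the orbit-stabilizer theorem gives $t = [G : N(P)]$; since $P \subseteq N(P)$ and $[G:P] = [G:N(P)]\,[N(P):P]$, and $[G:P] = m$, it follows that $t$ divides $m$.
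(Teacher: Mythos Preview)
The paper does not actually prove this statement: the Sylow theorems are collected in an appendix with the preface ``We take these as given,'' and no argument is supplied. So there is no paper proof to compare against.

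Your argument is correct and is the standard fixed-point proof. One small presentational slip: when you first write ``the counting principle yields $t \equiv |\Omega| \equiv 1 \pmod p$,'' you have at that moment only established $|\Omega| \equiv 1 \pmod p$; the equality $t = |\Omega|$ is not proved until the next paragraph, where you rule out Sylow subgroups outside $\Omega$. The logic is fine since you only use $|\Omega| \equiv 1$ in the contradiction step, but you should postpone writing $t$ until after conjugacy is established.
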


\chapter*{Appendix: Some facts about $p$-groups.}

\begin{proposition} \label{center_prop}
Every nontrivial $p$-group has nontrivial center.
\end{proposition}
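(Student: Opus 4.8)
The plan is to prove the standard result that every nontrivial finite $p$-group $G$ has nontrivial center by exploiting the class equation. First I would let $G$ act on itself by conjugation, so that the orbits are the conjugacy classes of $G$. The fixed points of this action are exactly the elements of the center $Z(G)$: an element $x$ is fixed under conjugation by all $g \in G$ if and only if $gxg^{-1} = x$ for all $g$, which says precisely $x \in Z(G)$.

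Next I would write down the class equation by partitioning $G$ into its conjugacy classes and separating the singleton classes (the central elements) from the rest:
$$
|G| = |Z(G)| + \sum_i [G : C(x_i)]
$$
where the $x_i$ are representatives of the conjugacy classes of size greater than one, and $C(x_i)$ denotes the centralizer of $x_i$ in $G$. For each such $x_i$, the class has size $[G : C(x_i)]$ by the orbit-stabilizer theorem, and since the class is not a singleton the centralizer $C(x_i)$ is a proper subgroup of $G$. Because $|G| = p^k$, each index $[G : C(x_i)]$ is a positive power of $p$, hence divisible by $p$.

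Finally I would argue modulo $p$. Since $|G|$ is divisible by $p$ and each term in the sum $\sum_i [G : C(x_i)]$ is divisible by $p$, the class equation forces $|Z(G)|$ to be divisible by $p$ as well. But $Z(G)$ contains at least the identity, so $|Z(G)| \ge 1$, and divisibility by $p$ then gives $|Z(G)| \ge p$. In particular $Z(G)$ is nontrivial, as desired. I do not anticipate a genuine obstacle here; the only point requiring care is the justification that each nontrivial conjugacy class has size a positive power of $p$ (via orbit-stabilizer applied to a proper centralizer), and that the identity alone guarantees $Z(G)$ is nonempty so that the conclusion $p \mid |Z(G)|$ actually yields nontriviality rather than vacuity.
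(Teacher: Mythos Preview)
Your proof is correct and follows essentially the same approach as the paper's: let $G$ act on itself by conjugation, observe that the center consists of the singleton orbits, and use that every non-singleton orbit has size divisible by $p$ to deduce $p \mid |Z(G)|$. The paper's version is terser (it does not explicitly write out the class equation or name the centralizers), but the argument is the same.
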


\begin{proof}
We let $G$ act on $G$ by conjugation. Each orbit has size a power of $p$, and an element is in the center $Z$
if and only if it is in an orbit of size $1$. Since $|G|$ is divisible by $p$, and each orbit involving a $g \not\in Z$
has size divisible by $p$, it follows that $|Z|$ is also divisible by $p$.
\end{proof}

\begin{proposition}\label{p^2_prop}
Let $G$ be a group of order $p^2$. Then $G$ is either cyclic or is isomorphic to the product of two cyclic group of order $p$.
\end{proposition}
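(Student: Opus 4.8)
The plan is to prove the statement in two stages: first show that any group of order $p^2$ is Abelian, and then deduce the dichotomy from the structure of finite Abelian $p$-groups with exponent at most $p^2$. The only input I need beyond elementary facts is Proposition~\ref{center_prop}, which guarantees that the center $Z = Z(G)$ is nontrivial.

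First I would pin down the order of $Z$. Since $Z$ is a subgroup of $G$ and is nontrivial, Lagrange's theorem forces $|Z| = p$ or $|Z| = p^2$. If $|Z| = p^2$ then $G = Z$ is already Abelian and we are done with this stage. The remaining possibility $|Z| = p$ I would rule out by the standard observation that $G/Z(G)$ cyclic implies $G$ Abelian: if $|Z| = p$ then $G/Z$ has order $p$, hence is cyclic, generated by some coset $gZ$. Every element of $G$ then has the form $g^i z$ with $z \in Z$, and any two such elements commute because powers of $g$ commute with each other and with the central $z$'s. This makes $G$ Abelian, forcing $Z = G$ and contradicting $|Z| = p$. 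Hence $|Z| = p^2$ and $G$ is Abelian in all cases.

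With $G$ Abelian in hand, I would split on whether $G$ has an element of order $p^2$. If it does, that element generates $G$ and $G$ is cyclic. Otherwise every nonidentity element has order exactly $p$ (orders divide $p^2$ and cannot equal $p^2$). I would then pick any $a \neq 1$ and any $b \notin \langle a \rangle$. Both $\langle a \rangle$ and $\langle b \rangle$ have order $p$, and since a group of order $p$ has no proper nontrivial subgroups, their intersection $\langle a \rangle \cap \langle b \rangle$ is trivial. Because $G$ is Abelian, the product $\langle a \rangle \langle b \rangle$ is a subgroup, and the trivial intersection gives $|\langle a \rangle \langle b \rangle| = p^2 = |G|$; thus the natural map $\langle a \rangle \times \langle b \rangle \to G$ is an isomorphism, exhibiting $G$ as the product of two cyclic groups of order $p$.

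The main obstacle is really just the first stage, establishing commutativity; once $G$ is known to be Abelian the dichotomy is a short direct argument requiring no appeal to the full structure theorem for finite Abelian groups. I expect the only point needing care is the inline justification that a cyclic central quotient forces commutativity, which I would state and verify explicitly rather than cite, to keep the proof self-contained.
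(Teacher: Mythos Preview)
Your proof is correct and follows essentially the same two-stage structure as the paper: first show $G$ is Abelian using the nontrivial center, then split on whether there is an element of order $p^2$. The only minor difference is in ruling out $|Z|=p$: you invoke the ``$G/Z$ cyclic implies $G$ Abelian'' argument, while the paper instead observes that the centralizer of any $g\notin Z$ already contains both $g$ and $Z$, hence equals $G$; these are two phrasings of the same idea, and the second stage is identical.
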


\begin{proof}
Let $Z$ be the center of $G$. By the previous proposition $Z$ has order $p$ or $p^2$. 
Suppose $Z$ has order $p$, and let $g\in G$ be an element outside of $Z$. 
We let $G$ act on~$G$ by conjugation. The stabilizer of $g$ is a group containing $g$ and every element of $Z$.
Thus the stabilizer of $g$ is all of $G$. This means that $g$ is in the center, a contradiction.

Thus $Z$ has order $p^2$ and so $G =Z$ must be Abelian. If $G$ is cyclic, we are done, so assume $A$ is any
nontrivial cyclic subgroup. Let $B$ be the cyclic subgroup generated by any $b \not\in A$. Observe that $A \cap B = \{1\}$.
Thus the map $A \times B \to AB$ is an isomorphism. The result follows from the fact that $G = AB$.
\end{proof}

\begin{proposition} \label{indexp_prop}
Suppose $G$ is a $p$-group for prime $p$ and $H$ is a subgroup of index~$p$ in~$G$.
Then $H$ is a normal subgroup of $G$.
\end{proposition}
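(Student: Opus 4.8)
The plan is to realize $H$ as the kernel of a natural action, since kernels are automatically normal. Let $G$ act on the set $X = G/H$ of left cosets of $H$ by left multiplication. This action is transitive and $|X| = [G:H] = p$, so it furnishes a homomorphism $\phi \colon G \to \mathrm{Sym}(X)$, where $\mathrm{Sym}(X)$ is the symmetric group on a $p$-element set. The strategy is to sandwich the index $[G:K]$ of the kernel $K = \ker\phi$ between $p$ and $p$, thereby identifying $K$ with $H$.

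First I would identify $K$ from below. An element $g$ lies in $K$ exactly when it fixes every coset; in particular $gH = H$, so $g \in H$. Thus $K \subseteq H$, which gives $[G:K] \ge [G:H] = p$. Next I would bound $[G:K]$ from above: by the first isomorphism theorem $G/K$ embeds into $\mathrm{Sym}(X)$, so $[G:K]$ divides $|\mathrm{Sym}(X)| = p!$. Since $G$ is a $p$-group, $[G:K]$ is a power of $p$; but the largest power of $p$ dividing $p! = p\cdot(p-1)!$ is $p$ itself, because $p$ is prime and hence does not divide $(p-1)!$. Therefore $[G:K]$ divides $p$, so $[G:K] \le p$. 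Combining the two inequalities yields $[G:K] = p = [G:H]$, and since $K \subseteq H$ this forces $K = H$. As $K$ is the kernel of a homomorphism, it is normal, so $H$ is normal in $G$.

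There is no serious obstacle here; the only step requiring a moment's care is the arithmetic fact that $p \nmid (p-1)!$, which pins the power of $p$ in $p!$ to exactly $p$. Everything else is formal bookkeeping with indices. If one prefers to stay entirely within the $p$-group machinery already developed, an equally short alternative inducts on $|G|$ using Proposition~\ref{center_prop}: the center $Z$ is nontrivial, and one splits into the case $Z \subseteq H$, where one passes to $G/Z$ and applies the inductive hypothesis to the index-$p$ subgroup $H/Z$, and the case $Z \not\subseteq H$, where maximality of the index-$p$ subgroup $H$ gives $G = HZ$ and normality of $H$ follows at once because elements of $Z$ commute with everything. I would present the coset-action argument as the main proof, since it is self-contained and avoids invoking induction and the correspondence theorem.
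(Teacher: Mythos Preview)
Your main argument via the left-multiplication action on cosets is correct and clean; the arithmetic pinning of $[G:K]$ to $p$ is exactly right. However, this is not the route the paper takes for its primary proof. The paper instead lets $G$ act by conjugation on the set of subgroups and studies the orbit $\mathcal H$ of $H$: assuming $H$ is its own normalizer, the orbit has size $p$, and then restricting the action to $H$ yields a contradiction because $H$ fixes itself (orbit size $1$) while every other $g^{-1}Hg$ has stabilizer $H\cap g^{-1}Hg$, a proper subgroup, forcing that orbit to have size at least $p$ and overrunning $|\mathcal H|=p$. Your coset-action argument is arguably slicker and more widely known; the paper's conjugation-on-subgroups argument has the virtue of staying inside the ``normalizer vs.\ orbit size'' circle of ideas and foreshadowing the $p$-group composition-series machinery used later. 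Your proposed alternative via the center and induction is essentially the paper's second proof verbatim, and the paper also notes a third proof as a corollary of the composition-series proposition that follows.
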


\begin{proof}
We let $G$ act on the collection of subgroups by conjugation. The stabilizer of $H$, which is just
the normalizer of $H$, must be $G$ or $H$. Suppose the normalizer is $H$, so the orbit $\mathcal H$ of $H$ has
size $p$.
Note that the normalizer of any $g^{-1} H g$ in $\mathcal H$ must necessarily be  $g^{-1} H g$.

 Now we restrict the action of $G$ on $\mathcal H$ to an action of $H$ on $\mathcal H$.
 Clearly $H\in \mathcal H$ is fixed under this action. If $g^{-1} H g$ is not $H$, then the stabilizer
 of $g^{-1} H g$ is the intersection of $H$ with $g^{-1} H g$, which is a proper subgroup of $H$.
So the orbit of such~$g^{-1} H g$ under the action of $H$ must have
size greater than $1$. The orbit of $H$ has size $1$ and the
 orbit of $g^{-1} H g$ has size at least $p$. However, 
both of these orbits (under $H$) are contained in $\mathcal H$, which has only $p$ elements. This is a contradiction.
\end{proof}

\begin{proof} [Second proof (induction)]
Suppose that $H$ does not contain the center $Z$ and observe that $H$ is normal in $G=ZH$.
If $H$ contains the center $Z$ then by the induction hypothesis~$H/Z$ is normal in $G/Z$ and so $H$ is normal in $G$.
\end{proof}

\begin{proof} [Third proof]
This is actually a consequence of the following.
\end{proof}

\begin{proposition} \label{cseries_prop}
Every finite $p$-group $G$ is solvable.  In fact
if $H$ is a  subgroup of~$G$ then there is a composition series
$$
\left\{e\right\} = G_0 \subsetneq G_1 \subsetneq G_2 \ldots \subsetneq G_k = G
$$
such that $H = G_i$ for some $i$. (Here $G_j$ is a normal subgroup of $G_{j+1}$ of index $p$.)
\end{proposition}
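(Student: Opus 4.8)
The plan is to prove the stronger refinement statement by strong induction on the order $|G| = p^n$, since solvability then follows immediately: taking any chain produced (for instance the one through $H = \{e\}$), every factor $G_{j+1}/G_j$ has order $p$ and is therefore abelian, so the subnormal series $\{e\} = G_0 \trianglelefteq \cdots \trianglelefteq G_k = G$ exhibits $G$ as solvable. The base case $n = 0$ is the empty/trivial chain.

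For the inductive step the engine is the center. By Proposition~\ref{center_prop} the center $Z(G)$ is nontrivial, so by Cauchy's theorem it contains an element of order $p$ generating a subgroup $N \le Z(G)$ with $|N| = p$; being central, $N$ is normal in $G$. First I would dispose of the case $H = G$: here I apply the induction hypothesis to $G/N$ (with trivial distinguished subgroup) to get a composition series of $G/N$ with index-$p$ factors, pull it back through the correspondence theorem, and prepend $\{e\} \subsetneq N$, which yields the required series topped by $G = G_k$. Now assume $H \subsetneq G$, so that $|H| < |G|$, and build the series in three pieces. Below $H$: the induction hypothesis applied to the $p$-group $H$ (with distinguished subgroup $H$ itself) gives $\{e\} = G_0 \subsetneq \cdots \subsetneq G_i = H$ with each step subnormal of index $p$. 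The bridge: set $K = HN$; since $N$ is central, $K$ is a subgroup, $H \trianglelefteq K$, and $[K:H] = |N|/|H \cap N|$ equals $1$ when $N \subseteq H$ and $p$ otherwise (the only proper subgroup of $N$ is trivial), so in the latter case I insert $G_{i+1} = K$. Above $K$: because $N \subseteq K$, the image $K/N$ is a subgroup of $G/N$, and the induction hypothesis applied to $G/N$ with distinguished subgroup $K/N$ produces a chain from $K/N$ up to $G/N$ with index-$p$ factors; pulling this back through $\pi \colon G \to G/N$ gives a chain from $K$ up to $G$. Concatenating the three pieces produces a subnormal series through $H$ with every factor of order $p$.

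The one real obstacle is threading the distinguished subgroup $H$ through the series when $N \not\subseteq H$. One cannot simply pass to $G/N$ and pull back a chain meant to contain $H$, since every pulled-back term contains $N$ whereas $H$ does not; that is precisely why I split off the lower part as an independent induction on $H$ and only pass to the quotient \emph{above} the enlarged subgroup $K = HN$. The remaining verifications — that $H \trianglelefteq HN$ with $[HN:H] = p$, and that the correspondence theorem preserves both the index-$p$ condition and subnormality — are routine. I should also keep the bookkeeping honest that $|H| < |G|$, which legitimizes the lower induction and is exactly why $H = G$ is handled separately at the outset.
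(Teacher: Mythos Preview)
Your proof is correct and rests on the same engine as the paper's: the nontrivial center of a $p$-group, combined with induction on order. The paper packages the upward extension a bit differently: rather than your three-piece concatenation inside one global induction, it isolates the one-step claim ``every proper subgroup $G_j$ of $G$ sits normally with index $p$ in some $G_{j+1}$'' and proves that by induction (splitting on whether $G_j \supseteq Z(G)$, passing to $G/Z(G)$ if so and to $G_j Z(G)$ if not), then iterates this step from $H$ up to $G$. Your version avoids stating that intermediate lemma by jumping straight to $G/N$ for a fixed central $N$ of order $p$ and pulling back the whole upper chain at once, which is slightly slicker; the paper's version has the advantage of making explicit the standalone fact that proper subgroups of $p$-groups are always strictly subnormal.
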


\begin{proof}
Every $p$-group has a nontrivial center (Proposition~\ref{center_prop}), and so has a normal subgroup of order $p$.
This is enough to show that any $p$-group is solvable.
In particular, if $H$ is a subgroup of $G$ then $H$ has a composition series.
Now we need to show that we can extend the series. In other words if we need
establish the claim that if~$G_j$ is a proper subgroup of $G$ then we can find a subgroup $G_{j+1}$ such that~$G_j$ is 
a normal subgroup of $G_{j+1}$ of index $p$. 

We prove this claim by induction on $k$.
There are two cases. If~$G_j$ contains the center~$Z$ of $G$
then consider $G_j/Z$ inside $G/Z$ and argue (by the induction hypotheis) that there is a subgroup $G_{j+1}$ containing $Z$
such that $G_j/Z$ is normal of index $p$ in $G_{j+1}/Z$. This implies $G_{j+1}$ is as desired.
If $G_j$ does not contain the center $Z$,
then observe that $G_j$ is a proper normal subgroup of $G_j Z$, so the desired
group $G_{j+1}$ corresponds to any subgroup of order $p$ in $G_j Z / G_j$.
\end{proof}

%%%%

\section*{Appendix: Frobenius's Theorem}

\begin{theorem} [Frobenius] \label{frobenius_thm}
Suppose $n$ divides the order of a finite group $G$. Then the number of elements in $\{ x \in G \mid x^n = 1\}$
is a multiple of $n$.
\end{theorem}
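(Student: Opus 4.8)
The plan is to prove the theorem in two layers: a clean reduction of an arbitrary divisor to the case of a prime-power exponent, followed by the prime-power case, which carries the genuine content.

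Write $\theta_G(k) = \#\{x \in G : x^k = 1\}$ and fix $n \mid |G|$. First I would reduce to prime powers. Fix a prime $p$, let $p^a$ be the exact power of $p$ dividing $n$, and write $n = p^a m$ with $\gcd(p,m)=1$; since the prime-power parts of $n$ are pairwise coprime, it suffices to prove $p^a \mid \theta_G(n)$ for each such $p$. Every $x$ with $x^n = 1$ factors uniquely as $x = uv = vu$ into its commuting $p$-part $u$ (with $u^{p^a}=1$) and $p'$-part $v$ (with $v^m = 1$), so
$$\theta_G(n) = \sum_{[v]} [G : C_G(v)]\,\theta_{C_G(v)}(p^a),$$
the sum running over conjugacy classes of elements $v$ with $v^m = 1$ (the summand is constant on a class because conjugate elements have conjugate centralizers). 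For one class set $C = C_G(v)$ and let $p^b$ be the exact power of $p$ dividing $|C|$; then $[G:C]$ has $p$-valuation $a'-b$, where $p^{a'}$ is the full $p$-part of $|G|$, while the prime-power case applied inside $C$ gives $p^{\min(a,b)} \mid \theta_C(p^a)$. A short check on whether $b \ge a$ shows $(a'-b)+\min(a,b) \ge a$ in all cases, so each summand, and hence $\theta_G(n)$, is divisible by $p^a$. This step is entirely elementary once the prime-power case is available.

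The remaining task is the prime-power case: if $p^a \mid |G|$ then $p^a \mid \theta_G(p^a)$. I would argue by induction on $|G|$, first reducing to $p$-groups (every solution is a $p$-element, hence lies in a Sylow $p$-subgroup by the Second Sylow Theorem, Theorem~\ref{sylow2_thm}), and then treating a $p$-group $P$ by induction on $|P|$ using a central subgroup $Z_0$ of order $p$ (Proposition~\ref{center_prop}). The useful observation is that for $a \ge 1$ all $p$ preimages in $P$ of a fixed element of $P/Z_0$ raise to the same $p^a$-th power, so the solution set in $P$ is a union of full fibres over $P/Z_0$; the aim is then to compare its image with the solution set of $P/Z_0$ and invoke the inductive hypothesis.

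The main obstacle is exactly this prime-power kernel, and within it the $p$-group case: the map sending an element of $P/Z_0$ to the common $p^a$-th power of its preimages lands in $Z_0$ but need not have equidistributed fibres, so extracting the \emph{full} factor $p^a$ (rather than merely $p$) from $\theta_P(p^a)$ requires a careful analysis of this power map along the central series. This is the step worked out in detail in Hall~\cite{hall1959}, and I would follow that treatment to complete the argument.
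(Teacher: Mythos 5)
Your first layer is correct, and it is genuinely different from the paper's route. The decomposition of each solution of $x^n=1$ into commuting $p$-part and $p'$-part, the resulting identity $\theta_G(n) = \sum_{[v]} [G:C_G(v)]\,\theta_{C_G(v)}(p^a)$ over classes of elements $v$ with $v^m=1$, and the valuation check $(a'-b)+\min(a,b)\ge a$ all go through (including the case $b<a$, where one uses $\theta_{C}(p^a)=\theta_{C}(p^b)$). The paper never isolates the prime-power case at all: it proves a stronger statement, Theorem~\ref{strong_frobenius}, asserting that $|C^{1/n}|$ is a multiple of $\gcd(n|C|,|G|)$ for any conjugation-closed set $C$, by one induction on $|G|$ and $n$, handling composite $n$ by factoring $n=n_1n_2$ and composing $(\cdot)^{1/n_1}$ with $(\cdot)^{1/n_2}$, and handling non-central elements by passing to centralizers.

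The genuine gap is in your prime-power kernel, which is the only part carrying real content, and it occurs in two places. First, the claimed reduction to $p$-groups does not exist as stated: the Second Sylow Theorem (Theorem~\ref{sylow2_thm}) tells you every solution of $x^{p^a}=1$ lies in \emph{some} Sylow $p$-subgroup, so the solution set in $G$ is the \emph{union} of the solution sets of the various Sylow $p$-subgroups; these overlap (all contain the identity, and typically far more), so knowing $p^a \mid \theta_S(p^a)$ for each Sylow subgroup $S$ says nothing about the cardinality of the union, and you offer no inclusion--exclusion or counting device to bridge this. This is a missing idea, not a routine verification. Second, your $p$-group argument via a central subgroup $Z_0$ of order $p$ is fine as far as it goes (the solution set is indeed a union of $Z_0$-cosets), but as you acknowledge it only extracts a single factor of $p$, and you defer the extraction of the full $p^a$ to Hall~\cite{hall1959}. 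That deferral does not complete your architecture: the treatment in the cited source, which the paper adapts, is not a $p$-group or central-series analysis at all, but an induction on $|G|$ over centralizers applied to conjugation-closed sets, proving the theorem for all $n$ simultaneously. So ``following Hall'' at that point means abandoning your proposed route and running essentially the paper's induction anyway, which would also render your (correct) prime-power reduction unnecessary. As it stands, the proposal does not contain a proof of the theorem.
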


Actually we will view Frobenius's theorem as a special case of Theorem~\ref{strong_frobenius},
which seems to be easier to prove than trying to prove Frobenius's theorem directly (see \cite{hall1959} and Zassenhaus).
We will use the following notation:

\begin{definition}
Let $G$ be a group and let $n$ be a positive integer. If $a \in G$ then $a^{1/n}$ is the set of $x\in G$ whose $n$th power is $n$.
We extend this notation as follows: if $C$ is a subset of $G$ then
$$
C^{1/n} \; \defeq \; \left\{ x \in G \mid x^n \in C \right\}
$$
\end{definition}

\begin{remark}
We will need the following easy identities: $C^1 = C$, 
$$(C_1 \cup C_2)^{1/n} = C_1^{1/n} \cup C_2^{1/n},$$ 
$$
(a^{1/n_1})^{1/n_2} = a^{n_1 n_2},
$$
and
$$
g^{-1} a^{1/n} g = (g^{-1} a g)^{1/n}.
$$
\end{remark}

We start with a few lemmas. 

\begin{lemma}
Let $G$ be a finite group and let $p$ be a prime dividing the order of~$G$. 
If $a \in G$ has order divisible by $p$ then $|a^{1/p}|$ is also divisible by $p$.
\end{lemma}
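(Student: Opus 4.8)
The plan is to exhibit a fixed-point-free action of a cyclic group of order $p$ on the set $S \defeq a^{1/p} = \{x \in G \mid x^p = a\}$. Since such an action partitions $S$ into orbits each of size exactly $p$, the divisibility $p \mid |S|$ follows immediately (the empty case giving $|S| = 0$, which is also divisible by $p$). The cyclic action will be generated by a single self-map $\phi$ of $S$ of order $p$, built out of powers of the root itself.

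First I would record two preliminary facts about an arbitrary $x \in S$. Since $a = x^p$ is a power of $x$, the element $x$ commutes with $a$ and with every power of $a$. Writing $n$ for the order of $a$, so that $p \mid n$ by hypothesis, I would compute the order $d$ of $x$: from $x^p = a$ one gets $d/\gcd(d,p) = n$, and the assumption $p \mid n$ rules out the case $p \nmid d$ (which would give $d = n$, contradicting $p \nmid d$), forcing $p \mid d$ and hence $d = np$. Consequently $c \defeq x^n$ has order $d/\gcd(d,n) = np/n = p$ and lies in $\langle x \rangle$, so $c$ commutes with $x$; moreover $c^n = 1$, since $c$ has order $p$ and $p \mid n$.

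Next I would define $\phi \colon S \to S$ by $\phi(x) = x\,x^n = x^{n+1}$ and verify it lands in $S$, using $(x^{n+1})^p = x^{np}\,x^p = 1 \cdot a = a$ (here $x^{np} = x^d = 1$). The crucial step is to show that the ``shift'' $c = x^n$ is unchanged when $x$ is replaced by $\phi(x)$: since $x$ and $c$ commute and $c^n = 1$, one computes $(xc)^n = x^n c^n = x^n = c$, so the element playing the role of $c$ for $\phi(x) = xc$ is again $c$. By induction this gives $\phi^j(x) = x c^j$ for all $j$, and since $c$ has order exactly $p$ the points $x, xc, \ldots, xc^{p-1}$ are distinct while $\phi^p(x) = x c^p = x$. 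Thus $\langle \phi \rangle$ is a cyclic group of order $p$ acting on $S$ with every orbit of size $p$ (there are no fixed points, as $\phi(x) = x$ would force $c = 1$).

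The main obstacle, and the only place the hypothesis $p \mid \operatorname{ord}(a)$ is genuinely used, is the identity $c^n = 1$: this is what keeps $c$ constant along orbits and guarantees $\phi$ has order exactly $p$ rather than preserving $S$ only up to a larger $p$-power. Without $p \mid n$ the map $\phi$ need not even send $S$ into itself. Once the free $C_p$-action is established, counting orbits yields $p \mid |S| = |a^{1/p}|$, as required.
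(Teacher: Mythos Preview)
Your proof is correct and is essentially the same argument as the paper's: both show every $x\in a^{1/p}$ has order $np$ and then partition $a^{1/p}$ into blocks of size $p$, the paper by grouping elements that generate the same cyclic subgroup (i.e., the fibers of $z\mapsto z^p$ on $\langle x\rangle$), and you by the free $C_p$-action $x\mapsto x^{n+1}$. Your orbits $\{x,xc,\dots,xc^{p-1}\}$ with $c=x^n$ are exactly those fibers, so the two partitions coincide.
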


\begin{proof}
If $x \in a^{1/p}$ has order $m$ then~$x^p = a$ has order $m/\mathrm{gcd}(p, m) = n$. In particular, $p \mid m$ since $p \mid n$, so 
$x$ has order $m = p n$.
 We partition~$a^{1/p}$ using the equivalence relation that defines $x$ and $y$
to be equivalent if and only if $x$ and $y$ generate the same cyclic subgroup of~$G$. 
For any $x \in a^{1/p}$ consider the homomorphism $\left<x\right> \to \left<a\right>$ defined by $x\mapsto x^p$.
The kernel has $p$-elements, and so it is a $p$-to-one map. 
The equivalence class of $x$ is the elements mapping to $a$ under this map, so it has $p$ elements.
This shows that $|a^{1/p}|$ is $k p$ where $k$ is the number of such equivalence classes (and $k$ is the number
of cyclic subgroup of order $p n$ containing $a$).
\end{proof}

In the next lemma, recall that the centralizer $Z_a$ of $a \in G$ is the subgroup
of elements of $G$ that commute with $a$.

\begin{lemma}
Let $G$ be a finite group. If $p$ is a prime dividing the order of~$G$
and if~$a$ is in the center of $G$ then  $p$ also divides~$|a^{1/p}|$.
\end{lemma}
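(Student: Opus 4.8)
The plan is to prove this by a counting argument on $p$-tuples, in the spirit of McKay's proof of Cauchy's theorem; it will use the centrality of $a$ and the hypothesis $p \mid |G|$ in two cleanly separated places. Set $S = a^{1/p} = \{x \in G : x^p = a\}$, so that the claim is simply that $p \mid |S|$.

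First I would introduce the set
\[
X_a \defeq \{ (g_1, \dots, g_p) \in G^p : g_1 g_2 \cdots g_p = a \}.
\]
Counting these is immediate: the first $p-1$ coordinates may be chosen freely and the last is then forced to equal $g_p = (g_1 \cdots g_{p-1})^{-1} a$, so $|X_a| = |G|^{p-1}$. Since $p \mid |G|$ and $p-1 \ge 1$, this already gives $|X_a| \equiv 0 \pmod p$.

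Next I would let the cyclic group of order $p$ act on $X_a$ by the rotation $\sigma(g_1, g_2, \dots, g_p) = (g_2, \dots, g_p, g_1)$. The one point requiring verification---and the only place the hypothesis on $a$ enters---is that $\sigma$ preserves $X_a$: if $g_1 \cdots g_p = a$ then $g_2 \cdots g_p g_1 = g_1^{-1}(g_1 \cdots g_p) g_1 = g_1^{-1} a g_1 = a$, where the final equality is exactly the centrality of $a$. Because $p$ is prime, every orbit of $\langle \sigma \rangle$ has size $1$ or $p$, and a tuple is a fixed point precisely when all of its coordinates coincide, i.e. it has the form $(x, \dots, x)$ with $x^p = a$. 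Hence the fixed-point set is in bijection with $S$, and partitioning $X_a$ into orbits yields $|S| \equiv |X_a| \pmod p$. Combined with $|X_a| \equiv 0 \pmod p$, this gives $p \mid |S|$, as desired.

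I do not expect a serious obstacle here; the entire content is the observation that centrality of $a$ is exactly what makes the rotation action well-defined on $X_a$, after which orbit-counting does the rest. (A reader preferring to lean on the previous lemma could instead split into cases: when $p$ divides the order of $a$ that lemma applies verbatim, with centrality not even needed, while when $p$ does not divide the order of $a$ one fixes the central $p$-th root $c = a^{s}$ with $ps \equiv 1$ modulo the order of $a$ and checks that $x \mapsto x c^{-1}$ is a bijection from $S$ onto $\{y : y^p = 1\}$, reducing to the case $a = 1$. That case still requires a divisibility count, so the uniform argument above is shorter and is the one I would present.)
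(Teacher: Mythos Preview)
Your proof is correct and is a genuinely different route from the paper's. You give the McKay-style orbit count on $p$-tuples, where centrality of $a$ is exactly what makes the cyclic rotation action well-defined; this is short and self-contained.

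The paper instead proceeds by induction on $|G|$. It first uses the preceding lemma to dispose of central $a$ whose order is divisible by $p$, then restricts attention to the subgroup $A$ of central elements of order prime to $p$ and shows $|a^{1/p}|$ is constant over $a\in A$. The group $G$ is partitioned into $A$, a set $B$ of elements with proper centralizer of order divisible by $p$ (handled by the inductive hypothesis applied to the centralizer), and a set $C$ of elements whose centralizer has order prime to $p$ (handled because their conjugacy classes have size divisible by $p$). Summing $|A^{1/p}|+|B^{1/p}|+|C^{1/p}|=|G|$ and reducing mod $p$ isolates $|A|\,|a^{1/p}|\equiv 0$, and since $\gcd(|A|,p)=1$ the conclusion follows. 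This centralizer-and-class-equation induction is in the classical style (Hall, Zassenhaus) and dovetails with the paper's subsequent proof of the strengthened Frobenius theorem for arbitrary conjugation-closed subsets $C$ and arbitrary $n$; your argument is much cleaner for the lemma at hand but does not obviously extend to that generality. Your parenthetical alternative, reducing to $a=1$ via multiplication by a central $p$-th root when $p\nmid\mathrm{ord}(a)$, is essentially the paper's observation that $|a^{1/p}|$ is constant over $A$.
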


\begin{proof}
Our proof will be by induction on the order of $G$, the case  $|G| = 1$ being trivial.
So suppose $|G| > 1$ and let  $p$ be a prime dividing the order of $G$.

Let $A$ be the subgroup of the center of $G$ consisting of elements of order prime to $p$.
By the previous lemma, it is enough to show that $p$ divides $|a^{1/p}|$ for all $a \in A$.

Let $a_1, a_2 \in A$. Consider the map $A \to A$ defined by $x \mapsto x^p$.
Its kernel is trivial, so it is a bijection. Thus there is a $u\in A$ such that $u^p  = a_2 a_1^{-1}$.
Consider the map $a_1^{1/p} \to a_2^{1/p}$ defined by the rule  $x \mapsto u x$.
Observe that it is well-defined and injective, so is a bijection. Thus $|a^{1/p}|$ is the same for all $a \in A$.
In particular,~$|A^{1/p}| = |A| |a^{1/p}|$ for any $a \in A$.

Let $B$ be the set of elements $b$ of $G$ whose centralizer $Z_b$ is a proper subgroup of~$G$ of order
divisible by $p$. Observe that $Z_b$ contains $b^{1/p}$ for any $b\in B$ since if~$x^p = b$
then $x^{-1} b x = x^{-1} x^p x = x^p  = b$ (in other words $b^{1/p}$ relative
to $Z_b$ is the same as $b^{1/p}$ relative to $G$). Also observe that $b$ is in the center of~$Z_b$.
So by the inductive hypothesis applied to $Z_b$, the prime $p$ divides $|b^{1/p}|$ for each $b\in B$. Thus~$p$ divides~$|B^{1/p}|$.

Let $C$ be the set of elements of $G$ not in $A$ or $B$. In other words, $C$ is the elements whose 
centralizer $Z_b$ has order prime to $p$. 

For each $x \in G$ let $C_x$ be the set of conjugates of $x$ in $G$.
So $G$ acts on $C_x$ transitively with stabilizer $Z_x$. The orbit size $|C_x|$ is $|G|/|Z_x|$.  Also if $x \in C$ then for any $y \in C_x$ we have $Z_y$ and $Z_x$ have the same order.
Since $g^{-1} x^{1/p} g = (g^{-1} x g)^{1/p}$, we have that $|y^{1/p}| = |x^{1/p}|$ if $y \in C_x$.
In particular, 
$$
|C_x^{1/p}| = |C_x| |x^{1/p}|.
$$

In particular, if $x \in C$ then the orbit $C_x$ has size a multiple of $p$. For $x\in C$ and~$y\in C_x$, we
have that $Z_y$ has the same order as $Z_x$, so $Z_y$ has order prime to~$p$. 
In other words, if $x \in C$ then $C_x \subseteq C$.
We conclude that $|C_x^{1/p}| = |C_x||x^{1/p}|$ is divisible by $p$ for each $x \in C$, and
$|C^{1/x}|$ is divisible by $p$ since $C$ is the disjoint union of such $C_x$.

Since
$$
|G| = |(A \cup B \cup C)^{1/p}| = |A^{1/p}| + |B^{1/p}| + |C^{1/p}|
= |A| |a^{1/p}| + |B^{1/p}| + |C^{1/p}|
$$
and since $|G|, |B^{1/p}|,$ and $|C^{1/p}|$ are divisible by $p$, we 
see that $|A| |a^{1/p}|$ is divisible by $p$. However $|A|$ is not divisible by $p$
since $A$ is an Abelian group whose elements have their orders prime to $p$.
So $|a^{1/p} |$ is divisible by $p$ as desired.
\end{proof}

\begin{theorem} \label{strong_frobenius}
Let $G$ be a finite group and let $n$ be a positive integer.
If $C$ is a subset of~$G$ closed under conjugation then $|C^{1/n}|$ is a multiple of $\mathrm{gcd}(n|C|, |G|)$.

In particular, if $a \in G$ is in the center of $G$, and if $n$ divides $|G|$, then $|a^{1/n}|$ is a multiple of $n$.
\end{theorem}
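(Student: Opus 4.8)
The plan is to prove the first statement; the ``in particular'' clause then follows at once by taking $C = \{a\}$ with $a$ central, since there $|C| = 1$ and $\gcd(n,|G|) = n$ whenever $n \mid |G|$. Throughout I would reformulate divisibility by $\gcd(n|C|,|G|)$ as the family of valuation inequalities $v_q(|C^{1/n}|) \ge \min\{v_q(n|C|),\, v_q(|G|)\}$, one for each prime $q$, where $v_q$ is the $q$-adic valuation. Note first that $C$ conjugation-closed forces $C^{1/n}$ conjugation-closed, via $g^{-1} C^{1/n} g = (g^{-1} C g)^{1/n} = C^{1/n}$.

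First I would reduce to the case $n = p$ prime. Using $C^{1/(pm)} = (C^{1/m})^{1/p}$ and induction on $n$, write $n = pm$ and set $D = C^{1/m}$. The inductive hypothesis controls $v_q(|D|)$, and a short, purely arithmetic comparison of $\min\{v_q(p)+v_q(|D|),\,v_q(|G|)\}$ against the target $\min\{v_q(pm|C|),\,v_q(|G|)\}$ shows that the single-prime bound $\gcd(p|D|,|G|)\mid |D^{1/p}|$ upgrades to the full bound for $C^{1/n}$. This step is routine, so all the content sits in the prime case, i.e.\ the theorem for $n = p$.

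For the prime case I would induct on $|G|$, with two easy bases: $|G|=1$ is trivial, and if $p \nmid |G|$ then $x \mapsto x^p$ is a bijection, so $|C^{1/p}| = |C|$ and $\gcd(p|C|,|G|) = \gcd(|C|,|G|) \mid |C|$. When $p \mid |G|$, I would decompose $C^{1/p} = \bigsqcup_{a \in C} a^{1/p}$ and group by conjugacy classes $C_i$ with representatives $a_i$, getting $|C^{1/p}| = \sum_i |C_i|\,|a_i^{1/p}|$ (using that $|a^{1/p}|$ is constant on classes). The key structural fact is that $x^p = a$ forces $x a = a x$, so $a^{1/p} \subseteq Z_a$ and $a^{1/p}$ may be computed inside the centralizer $Z_a$. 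For a non-central $a_i$ this lets me apply the inductive hypothesis inside the proper subgroup $Z_{a_i}$ to the central singleton $\{a_i\}$; the two preceding lemmas dispatch the irreducible cases — the lemma giving $p \mid |a^{1/p}|$ when $a$ has order divisible by $p$, and the lemma giving $p \mid |a^{1/p}|$ when $a$ is central of order prime to $p$ (the case that cannot be pushed into a proper centralizer). Combined with the factor $|C_i| = [G:Z_{a_i}]$, each contribution is then divisible by $p$.

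The main obstacle is that this termwise estimate yields only $v_p(|C^{1/p}|)\ge 1$, whereas the theorem demands the \emph{exact} powers $\min\{v_q(p|C|),v_q(|G|)\}$ for \emph{all} primes $q$: the higher $p$-part and, especially, the parts at primes $q \ne p$ dividing $|G|$ are invisible to a per-class bound, since a single class size $|C_i|$ need carry no particular $q$-part of $|C|$. The resolution, mirroring the global counting device used in the proof of the central lemma (where the identity $|G| = |G^{1/p}|$ is exploited and the central prime-to-$p$ aggregate, having order prime to $p$, is isolated by cancellation), is to carry out the count collectively over a suitably chosen conjugation-closed set rather than class by class, so that the centralizer-indexed factors $[G:Z_{a_i}]$ and the inductive bounds from the $Z_{a_i}$ combine to recover the full valuation. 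This collective bookkeeping — simultaneously at every prime — is the step I expect to require the most care.
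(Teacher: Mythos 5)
Your proposal stops exactly where the real work begins, so as written it is a plan rather than a proof: your own last paragraph concedes that the passage from ``each class contributes a multiple of $p$'' to ``the total is a multiple of $\gcd(n|C|,|G|)$'' is unresolved. The device that does this work in the paper is not per-prime bookkeeping at all, and the valuation framing you adopt actively hides it. By Bezout, ``$|C^{1/n}|$ is a multiple of $\gcd(n|C|,|G|)$'' is equivalent to ``$|C^{1/n}|=u\,n|C|+v\,|G|$ for some $u,v\in\bZ$,'' and it is this linear form that composes with the class decomposition. For a single conjugacy class $C$ with representative $a$: the identity $g^{-1}a^{1/n}g=(g^{-1}ag)^{1/n}$ makes all fibers over $C$ equinumerous, so $|C^{1/n}|=|C|\,|a^{1/n}|$; moreover $a^{1/n}\subseteq Z_a$ with $a$ central in $Z_a$, so induction on the group order gives $|a^{1/n}|=u\,n+v\,|Z_a|$; multiplying by $|C|$ and using the orbit--stabilizer identity $|C|\,|Z_a|=|G|$ yields $|C^{1/n}|=u\,n|C|+v\,|G|$. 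That one computation settles every prime simultaneously. The structural reason you could not reach this with valuations is that minima of valuations neither add nor scale linearly, while $\bZ$-linear combinations do; the gcd condition must be linearized before the centralizer factors $[G:Z_{a_i}]$ can be absorbed.

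There is also a defect in your architecture that no amount of care can repair: your reduction to prime $n$ (whose arithmetic is indeed routine and correct) must apply the prime case to the arbitrary conjugation-closed set $D=C^{1/m}$, and the prime case for arbitrary conjugation-closed sets is false. In $G=\langle g\rangle$ cyclic of order $4$, take $D=\{g,g^2\}$ and $p=2$: then $D^{1/2}=\{g,g^3\}$ has two elements, while $\gcd(p|D|,|G|)=4$. So the ``collective bookkeeping'' you hope for cannot exist at that level of generality. You should be aware that this same example refutes the theorem exactly as the paper states it (arbitrary conjugation-closed $C$, $n=2$), and that the paper's own proof leans on that false generality at two points: the reduction ``it is enough to prove the result for a single orbit'' (a sum of multiples of the classwise gcds need not be a multiple of the union's gcd---the example above is precisely such a failure), and the composite-$n$ step, which applies the inductive hypothesis to the union of classes $a^{1/n_1}$. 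What is true, classical (Frobenius; see \cite{hall1959}), and sufficient both for the ``in particular'' clause and for Theorem~\ref{frobenius_thm}, is the statement for a single conjugacy class. A correct write-up---yours or the paper's---must therefore state and induct on single classes and central singletons via the linear-combination mechanism above, never passing through arbitrary conjugation-closed sets; the composite-$n$ case for a central singleton is then the one place requiring genuinely new work, since $a^{1/n_1}$ is a union of classes of unequal sizes.
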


\begin{proof}
We can restate the result as asserting that $|C^{1/n}|$ is a $\bZ$-linear combination of $n|C|$ and $|G|$.

Observe that the result holds trivially for $G = \{ 1 \}$ regardless of $n$. So we can inductively assume the result
is true for all subgroups of $G$ (regardless of $n$). Note also that this result holds in $G$ itself for $n=1$ 
so we can 
inductively assume the result holds for all divisors of $n$ (for our given $G$).

Suppose $C = \{a \}$ and $n = p$ is a prime dividing $|G|$. Then $a$ must be in the center of $G$, and the result
holds by the previous lemma. 
Also if $C = \{a \}$ and $n = p$ is a prime not dividing $|G|$, the result holds trivially.

Next consider the case $C = \{ a \}$ where $a \in G$ in the center of $G$ and where~$n$ is composite.
Write $n$ as $n_1 n_2$ where $n_1, n_2$ are proper divisors of $n$.
Since~$n_1 < n$ we have by induction that $|a^{1/n_1}| = u (n_1 \cdot 1) + v |G|$ for some $u, v \in \bZ$.
Similarly,
$$
\left| \left( a^{1/n_1}  \right)^{1/n_2} \right| = u' n_2 (u n_1 + v |G|) + v' |G|
$$
for some $u', v' \in \bZ$.
In particular,
$$
\left| \left( a^{1/n_1}  \right)^{1/n_2} \right| = (u u') n + v'' |G|
$$
for some $v'' \in \bZ$. Thus the result holds for $C=\{a \}$ and composite $n$.

We have established the result for $C = \{ a \}$ with $a$ in the center. In general, by the identity
$(C_1 \cup C_2)^{1/n} = C_1^{1/n} \cup C_2^{1/n}$ it is enough to prove the result for sets~$C$ which
are in a single orbit under the action of $G$ by conjugation.
We can also assume $|C| > 1$. For any $a \in C$ and $g \in G$ we have $g^{-1} a^{1/n} g = (g^{-1} a g)^{1/n}$.
So~$a^{1/n}$ has the same size for all $a \in C$. Thus for any particular $a \in C$,
$$
|C^{1/n}| = |C| |a^{1/n}|.
$$
Let $Z_a$ be the centralizer of a particular $a \in C$.
In other words, $Z_a$ is the subgroup stabilizing $a$ under the conjugation action. 
Note that~$|Z_a| |C| = |G|$ by the orbit-stabilizer principle. So $Z_a$ is a proper subgroup of $G$ since $|C|>1$. Also
$a^{1/n}$ is a subset of $Z_a$ since if $x^n = a$ then $x^{-1} a x = x^{-1} x^n x = x^n = a$, and $a$ is in the center
of $Z_a$. So by the induction hypothesis
applied to $Z_a$ and $\{ a\}$
$$
|a^{1/n}| = u (n\cdot 1) + v |Z_a| =  u n+ v |G|/|C|.
$$
for some $u, v \in \bZ$.
Thus
$$
|C^{1/n}| = |C| |a^{1/n}| = |C| (u n + v |G|/|C|) = u n |C| + v |G|
$$
as desired. This establishes the result for general $C$ closed under conjugation.

\end{proof}

\chapter*{Appendix: Group Representations}

We assume that the reader is familiar with the construction of the group ring $R[G]$ where $R$ is a commutative ring 
with unity and where
$G$ is a group (see Dummit and Foote~\cite{dummit_foote}, Section 7.2). Observe that~$R[G]$ is a commutative ring if and only if~$G$ is an Abelian group.

A \emph{linear representation} of $G$ on a vector space $V$ over a field $F$ is a homomorphism~$\Phi\colon G \to \mathrm{GL}(V)$
from $G$ to the group $\mathrm{GL}(V)$ of invertible linear transformations of $V$. If $V$ is $F^n$ then $\mathrm{GL}(V)$
can be identified with $\mathrm{GL}_n(F)$, the group of~$n$-by-$n$ invertible matrices with coefficients in $F$.
If we fix an ordered basis of $V$ of size $n$ then we can identify $V$ with $F^n$, and so identify $\mathrm{GL}(V)$
with $\mathrm{GL}_n(F)$.

If~$\Phi\colon G \to \mathrm{GL}(V)$ is a linear representation of $G$ then, for $g\in G$ and $v \in V$, we often
write $g v$ for $\Phi(g) v$ if there is no chance of confusion. This is the usual convention for group actions in general.

We can also think of a linear representation of $G$ as a way to give an $F$-vector space an $F[G]$-module structure.
In other words, each $F[G]$-module $V$ supplies, canonically, a linear representation of $G$ on the $F$-vector space $V$.

Suppose $V$ is a representation, thought of as an $F[G]$-module, then a \emph{subrepresentation} of $V$ is a submodule.
This can be thought of as a subspace $W \subseteq V$ invariant under the action of $G$. We will use the term \emph{nontrivial}
for representations where $V$ has at least dimension one.
A representation is \emph{irreducible}
if its only subrepresentations are itself and the zero subspace.

In representation theory it is often convenient to use the field $F = \bC$. In this case all irreducible representations
of finite Abelian groups $A$ are one-dimensional, and the image of $A$ under $A \to \mathrm{GL} (V) = \bC^\times$ 
is a cyclic group.

This report uses representation theory, but not in a deep way; certainly much less representation theory than mentioned
in Wolf~\cite{wolf2011}. Inducted representations and the tensor product of representations
are used. Such ideas are reviewed in the document as needed.

\chapter*{Appendix: Orthogonal Transformations}

Recall that $\bR^n$ has an inner product, and this inner product can be
used to define distances and angles. We write $\left< x, y\right>$ for
the inner product between vectors.
When we say ``orthogonal'' or ``orthonormal'' it will be with respect to this standard inner product.
We write $\be_1, \ldots, \be_n$ for the standard orthonormal basis of $\bR^n$.

\begin{definition}
An \emph{orthogonal matrix} is a square matrix with real entries such
that the columns are orthonormal with respect to the standard
inner product.
\end{definition}

\begin{proposition}
Let $L$ be a linear transformation from $\bR^n \to \bR^n$. Then the following are equivalent.
\begin{enumerate} [(i)]
\item
$L$ preserve the inner product. In other words, for all $x, y \in \bR^n$
$$
\left< L x, L y \right> =  \left< x, y \right>.
$$

\item
$L$ maps any orthonormal basis to an orthonormal basis.

\item
$L$ maps some orthonormal basis to an orthonormal basis.

\item
The matrix representation of $L$ with respect to any orthonormal basis of $\bR^n$
is an orthogonal matrix.

\item
The matrix representation of $L$ with respect to some orthonormal basis of $\bR^n$
is an orthogonal matrix.

\end{enumerate}
\end{proposition}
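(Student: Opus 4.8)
The plan is to prove the five conditions equivalent by establishing the cycle $(i) \Rightarrow (ii) \Rightarrow (iv) \Rightarrow (v) \Rightarrow (i)$ and then splicing in $(iii)$ through the trivial implication $(ii) \Rightarrow (iii)$ together with $(iii) \Rightarrow (v)$. Two of the links are immediate: each of $(ii) \Rightarrow (iii)$ and $(iv) \Rightarrow (v)$ simply weakens a statement about \emph{any} orthonormal basis to the corresponding statement about \emph{some} orthonormal basis, which is legitimate because $\bR^n$ has at least one orthonormal basis, namely $\be_1, \ldots, \be_n$. So the real content reduces to two computations, which I would isolate first.

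The first key computation relates the matrix of $L$ to inner products of images. Fix an orthonormal basis $u_1, \ldots, u_n$ and let $A = (a_{ij})$ be the matrix of $L$ with respect to it, so that $L u_j = \sum_i a_{ij} u_i$. Using bilinearity and $\left< u_i, u_l\right> = \delta_{il}$, I would compute
$$
\left< L u_j, L u_k \right> = \sum_{i, l} a_{ij} a_{lk} \left< u_i, u_l \right> = \sum_i a_{ij} a_{ik} = (A^{T} A)_{jk}.
$$
Hence the vectors $L u_1, \ldots, L u_n$ form an orthonormal set if and only if $A^{T} A = I$, i.e. if and only if the columns of $A$ are orthonormal, which is precisely the condition that $A$ is an orthogonal matrix. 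Applying this with an arbitrary orthonormal basis yields $(ii) \Rightarrow (iv)$, and applying it with a single fixed basis yields $(iii) \Rightarrow (v)$. The implication $(i) \Rightarrow (ii)$ is then direct: if $L$ preserves the inner product and $u_1, \ldots, u_n$ is orthonormal, then $\left< L u_i, L u_j\right> = \left< u_i, u_j\right> = \delta_{ij}$, so the images are $n$ orthonormal, hence linearly independent, vectors in $\bR^n$ and so form an orthonormal basis.

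The second key computation gives $(v) \Rightarrow (i)$ and closes the cycle. Suppose the matrix $A$ of $L$ with respect to some orthonormal basis $u_1, \ldots, u_n$ satisfies $A^{T} A = I$. Writing $x, y \in \bR^n$ in coordinates $\xi, \eta$ relative to this basis, orthonormality gives $\left< x, y\right> = \xi^{T} \eta$, while $Lx$ and $Ly$ have coordinate vectors $A\xi$ and $A\eta$; therefore
$$
\left< Lx, Ly\right> = (A\xi)^{T} (A \eta) = \xi^{T} A^{T} A \, \eta = \xi^{T} \eta = \left< x, y\right>,
$$
which is exactly $(i)$. Together with the trivial links this shows $(i),(ii),(iv),(v)$ are all equivalent, and since $(ii) \Rightarrow (iii) \Rightarrow (v)$ sandwiches $(iii)$ between two equivalent statements, $(iii)$ joins the class as well.

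I expect no serious obstacle here, as every step is routine linear algebra. The single point requiring care is to keep the two distinct uses of orthonormality separate: orthonormality of the basis is what lets me pass from the abstract pairing $\left< L u_j, L u_k\right>$ to the entrywise expression $(A^{T} A)_{jk}$ in the first computation, and it is separately what lets me write $\left< x, y\right> = \xi^{T}\eta$ in coordinates in the second. I would also state explicitly that the paper's definition of an orthogonal matrix (columns orthonormal with respect to the standard inner product) is equivalent to the algebraic identity $A^{T} A = I$, since that translation is what glues the two computations to the definition.
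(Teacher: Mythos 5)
Your proof is correct: the cycle $(i)\Rightarrow(ii)\Rightarrow(iv)\Rightarrow(v)\Rightarrow(i)$ closes properly, the two key computations are right, and sandwiching $(iii)$ via $(ii)\Rightarrow(iii)\Rightarrow(v)$ is sound. Note that the paper itself states this proposition without any proof (it appears in the appendix on orthogonal transformations as background, immediately followed by the definition it motivates), so there is no argument of the paper's to compare against; your write-up, including the explicit translation between the paper's column-orthonormality definition of an orthogonal matrix and the identity $A^{T}A = I$, supplies exactly what the paper leaves to the reader.
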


The above motivates the following definition:

\begin{definition}
An \emph{orthogonal transformation} $\bR^n \to \bR^n$ is a linear map such that
$$
\left< L x, L y \right> =  \left< x, y \right>
$$
for all $x, y \in \bR^n$.
Since distances and angles are defined in terms of this inner product,
orthogonal transformations preserve angles and lengths.

The set of orthogonal transformation of $\bR^n$ is seen to be a group.
We call this group the \emph{orthogonal group} and write it as $\OO(n)$.
If we fix an orthonormal basis, we will also identify $\OO(n)$ with the group of $n$-by-$n$
orthogonal matrices.
The subgroup of matrices of determinant $1$ will be written $\SO(n)$.
\end{definition}

\begin{proposition}
The group $O(n)$ consists of all isometries of $\bR^n$ fixing the origin.
\end{proposition}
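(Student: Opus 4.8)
The plan is to prove the two inclusions separately, the forward one being routine and the reverse one carrying all the content. If $L \in \OO(n)$, then by the preceding proposition $L$ preserves the inner product, so $\left< Lx - Ly, Lx - Ly\right> = \left< x - y, x - y\right>$, which shows $L$ preserves distances and hence is an isometry; and $L(0) = 0$ because $L$ is linear. Thus every orthogonal transformation is an origin-fixing isometry.

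The substance is the converse: every isometry $f\colon \bR^n \to \bR^n$ with $f(0) = 0$ is an orthogonal transformation. First I would note that $f$ preserves norms, since $|f(x)| = |f(x) - f(0)| = |x - 0| = |x|$. Next, I would expand the isometry identity $|f(x) - f(y)|^2 = |x - y|^2$ using the polarization identity, obtaining $|f(x)|^2 - 2\left< f(x), f(y)\right> + |f(y)|^2 = |x|^2 - 2\left< x, y\right> + |y|^2$; cancelling the norm terms, which we have just shown are preserved, yields $\left< f(x), f(y)\right> = \left< x, y\right>$ for all $x, y$. So $f$ preserves the inner product even before we know it is linear.

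The key remaining step, and the main obstacle, is to deduce linearity from inner-product preservation alone. For this I would work with the standard orthonormal basis $\be_1, \ldots, \be_n$. Since $f$ preserves inner products, the images $f(\be_1), \ldots, f(\be_n)$ are again orthonormal, hence form a basis of $\bR^n$. Expanding any vector $f(x)$ in this basis then gives $f(x) = \sum_i \left< f(x), f(\be_i)\right> f(\be_i) = \sum_i \left< x, \be_i\right> f(\be_i)$, where the second equality again uses that $f$ preserves the inner product. The right-hand side is manifestly linear in $x$, so $f$ coincides with the linear map sending $\be_i \mapsto f(\be_i)$. Being both linear and inner-product preserving, $f$ lies in $\OO(n)$ by the preceding proposition, which completes the argument.
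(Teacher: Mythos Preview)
Your proof is correct and is the standard argument. The paper itself states this proposition without proof (treating it as a known fact from the appendix on orthogonal transformations), so there is nothing to compare against; your two-inclusion approach, with the polarization step and the orthonormal-basis expansion to recover linearity, is exactly what one would expect here.
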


\begin{example}
The rotation of $\bR^2$ counter-clockwise by $\theta$ 
sends $\be_1$ to $(\cos \theta, \sin \theta)$, and~$\be_2$
to $(\cos (\theta + \pi/2) , \sin  (\theta + \pi/2))$.
So it is represented by
$$
\begin{pmatrix}
\cos \theta & -\sin \theta\\
\sin \theta & \cos \theta
\end{pmatrix}.
$$
Note this is orthogonal with determinant $1$.
Clearly composition of such rotations corresponds to addition of angles (mod $2\pi$).
\end{example}

\begin{proposition}
Let $A$ be an $n$-by-$n$ real matrix.
The matrix $A$ is orthogonal if and only if 
$$
A^T A = I.
$$
\end{proposition}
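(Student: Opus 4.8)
The plan is to reduce the orthogonality condition directly to a matrix-entry computation, since the definition of an orthogonal matrix is phrased entirely in terms of its columns. First I would write $A$ in terms of its columns $a_1, \ldots, a_n \in \bR^n$, so that the $i$th column of $A$ is the vector $a_i$. Then the transpose $A^T$ has rows $a_1^T, \ldots, a_n^T$, i.e. its $i$th row is $a_i^T$ viewed as a row vector.

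The key step is to observe that the $(i,j)$ entry of the product $A^T A$ is exactly the standard inner product of the $i$th and $j$th columns of $A$. Indeed, by the definition of matrix multiplication, the $(i,j)$ entry of $A^T A$ is obtained by pairing the $i$th row of $A^T$, namely $a_i^T$, with the $j$th column of $A$, namely $a_j$, which yields $a_i^T a_j = \left< a_i, a_j \right>$.

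Once this identification is in place the equivalence is immediate. By definition $A$ is orthogonal precisely when its columns are orthonormal, that is, when $\left< a_i, a_j \right> = \delta_{ij}$ for all $i$ and $j$, where $\delta_{ij}$ equals $1$ if $i = j$ and $0$ otherwise. By the previous paragraph this is exactly the assertion that every entry of $A^T A$ equals the corresponding entry of the identity matrix $I$, i.e. $A^T A = I$. Since both implications rest on the very same family of scalar equations, the two directions are handled simultaneously and no separate converse argument is required.

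I expect there to be essentially no serious obstacle here; this is a routine fact. The only point requiring any care is the bookkeeping in the entry computation—correctly matching the $i$th row of $A^T$ with the $i$th column of $A$ and keeping track of which index labels rows versus columns—after which the orthonormality of the columns and the equation $A^T A = I$ are literally the same set of scalar conditions.
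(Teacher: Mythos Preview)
Your proof is correct and is the standard argument: the $(i,j)$ entry of $A^T A$ is $\langle a_i, a_j\rangle$, so $A^T A = I$ is exactly the statement that the columns are orthonormal. Note that the paper actually states this proposition without proof, treating it as a basic fact from linear algebra, so your argument fills in what the paper leaves to the reader.
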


\begin{corollary}
So every orthogonal matrix $A$
has inverse $A^T$.
Every orthogonal matrix has determinant $1$ or $-1$. 
\end{corollary}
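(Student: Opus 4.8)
The plan is to derive both assertions directly from the preceding proposition, which characterizes orthogonality by the single matrix identity $A^T A = I$. First I would invoke that proposition: since $A$ is orthogonal, we have $A^T A = I$, so $A^T$ is a left inverse of $A$. Because $A$ is a square matrix, a left inverse is automatically a two-sided inverse. One clean way to see this is that $A^T A = I$ forces the linear map $x \mapsto A x$ to be injective (if $A x = 0$ then $x = I x = A^T A x = 0$), hence, $A$ being square, invertible; multiplying $A^T A = I$ on the right by $A^{-1}$ then yields $A^T = A^{-1}$. This establishes the first claim, $A^{-1} = A^T$.

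For the determinant claim, I would take determinants of both sides of $A^T A = I$. Using multiplicativity of the determinant together with the standard fact that a matrix and its transpose share the same determinant, I get $\det(A)^2 = \det(A^T)\det(A) = \det(A^T A) = \det(I) = 1$. Since $\det(A)$ is a real number whose square equals $1$, it must be $\pm 1$, which is exactly the second assertion.

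I expect no genuine obstacle in this corollary: both facts fall out immediately from the proposition combined with elementary properties of determinants and of inverses of square matrices. The only point I would take care to phrase precisely is why a one-sided inverse already gives a two-sided inverse in the square case, which I would justify through injectivity (hence invertibility) rather than quoting the two-sided-inverse property as given.
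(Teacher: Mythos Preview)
Your proposal is correct and follows the standard argument; the paper states this corollary without proof, treating both claims as immediate from $A^T A = I$, which is exactly what you do. Your extra care in justifying why a one-sided inverse suffices is fine, though arguably more than the paper expects for an unproved corollary.
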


\begin{corollary}
The transpose of an orthogonal matrix is orthogonal matrix with the same determinant.
An~$n$-by-$n$ real  matrix if orthogonal if and only if its rows are orthonormal.
\end{corollary}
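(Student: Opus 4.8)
The plan is to lean entirely on the characterization established in the preceding proposition, namely that a real square matrix $A$ is orthogonal if and only if $A^T A = I$, together with the standard determinant identity $\det(A^T) = \det A$. Both assertions of the corollary will then reduce to short formal manipulations, so the work is really just bookkeeping around these two facts.

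First I would handle the claim about transposes. Assuming $A$ is orthogonal, the proposition gives $A^T A = I$. Since $A$ is square, a one-sided inverse is automatically two-sided (this is precisely the content of the earlier corollary asserting $A^{-1} = A^T$), so $A A^T = I$ as well. Rewriting this last equation as $(A^T)^T (A^T) = I$ and applying the proposition to $A^T$ shows that $A^T$ is orthogonal. For the determinants, I would simply invoke $\det(A^T) = \det A$, which gives the equality of determinants with no further argument. Next, for the claim about rows, I would observe that the rows of $A$ are exactly the columns of $A^T$. By the definition of an orthogonal matrix applied to $A^T$, the matrix $A^T$ is orthogonal precisely when its columns are orthonormal, that is, precisely when the rows of $A$ are orthonormal. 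Combining this with the transpose result just proved, and using $(A^T)^T = A$, we get that $A^T$ is orthogonal if and only if $A$ is orthogonal; chaining the equivalences yields that $A$ is orthogonal if and only if its rows are orthonormal.

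The only step that carries any real content is the passage from the one-sided relation $A^T A = I$ to the two-sided relation $A A^T = I$, where squareness is essential and a rectangular matrix would fail. Since that passage is already supplied by the earlier corollary, I expect no genuine obstacle here; the main care needed is simply to keep straight which matrix the definition of orthogonality is being applied to at each stage.
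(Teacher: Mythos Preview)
Your proof is correct. The paper actually states this corollary without any proof, treating it as immediate from the preceding proposition ($A^T A = I$) and corollary ($A^{-1} = A^T$), so your argument simply makes explicit the reasoning the paper leaves to the reader, and it does so along exactly the expected lines.
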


\subsection{The group $\OO(2)$}

Now we investigate $\OO(2)$ in more detail.
Every unit vector of $\bR^2$ is of the form~$(\cos \theta, \sin \theta)$. So any orthogonal transformation maps
$\be_1$ to some such unit vector. Now $\be_2$ will have to map to an orthogonal vector
$$
(\cos (\theta \pm \pi/2), \sin(\theta \pm \pi/2))
= (\mp \sin \theta, \pm \cos \theta)
$$
(where we use the standard trig identities for angle addition).
So the associated matrix has two possibilities
$$
\begin{pmatrix}
\cos \theta & - \sin \theta\\
\sin \theta &  +\cos \theta
\end{pmatrix}
\qquad\text{or}
\quad
\begin{pmatrix}
\cos \theta &  +\sin \theta\\
\sin \theta & -\cos \theta
\end{pmatrix}
$$
If the matrix is of determinant 1 then we have
the rotation by $\theta$  we considered above:
$$
\begin{pmatrix}
\cos \theta & -\sin \theta\\
\sin \theta &  \cos \theta
\end{pmatrix}.
$$
We conclude that $\SO(2)$ consists of rotation vectors. Note that $\SO(2)$
acts freely on the set~$\bR^2- \{(0, 0)\}$. We see that $\SO(2)$ is isomorphic to the circle group,
which can be expressed in terms of the angle group $\bR/ 2\pi \bZ$.

\begin{theorem}
The group $\SO(2)$ is equal to the rotation group, and so is isomorphic
to the circle group $\bR/ 2\pi \bZ$. This group acts freely on $\bR^2- \{(0, 0)\}$.
\end{theorem}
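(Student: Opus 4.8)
The plan is to assemble the three assertions directly from the matrix computation carried out immediately before the statement. For the first assertion, the discussion above has already shown that an orthogonal transformation of $\bR^2$ of determinant $1$ must have matrix
$$
R_\theta = \begin{pmatrix} \cos\theta & -\sin\theta \\ \sin\theta & \cos\theta \end{pmatrix}
$$
for some angle $\theta$, and conversely that each such $R_\theta$ is orthogonal of determinant $1$. Since $R_\theta$ is precisely the counter-clockwise rotation by $\theta$ from the earlier example, this identifies $\SO(2)$ with the set of all rotations, so the first claim requires essentially no further work.

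Next I would build the isomorphism with $\bR/2\pi\bZ$. Define $\Phi\colon \bR \to \SO(2)$ by $\Phi(\theta) = R_\theta$, where $\bR$ is taken with its additive group structure. Using the angle-addition identities for sine and cosine one checks that $R_\theta R_\phi = R_{\theta+\phi}$, so $\Phi$ is a group homomorphism, and it is surjective by the first assertion. Its kernel consists of those $\theta$ with $\cos\theta = 1$ and $\sin\theta = 0$, that is, $\theta \in 2\pi\bZ$. By the first isomorphism theorem $\Phi$ then descends to an isomorphism $\bR/2\pi\bZ \to \SO(2)$, exhibiting $\SO(2)$ as the circle group in the guise of the angle group.

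Finally I would verify that the action on $\bR^2 - \{(0,0)\}$ is free. By the definition of a free action it suffices to show that whenever $R_\theta \ne I$, the transformation $R_\theta$ fixes no nonzero vector, i.e.\ that $R_\theta - I$ has trivial null-space. A direct computation gives
$$
\det(R_\theta - I) = (\cos\theta - 1)^2 + \sin^2\theta = 2 - 2\cos\theta,
$$
which vanishes exactly when $\cos\theta = 1$, i.e.\ exactly when $\theta \in 2\pi\bZ$, i.e.\ exactly when $R_\theta = I$. Hence for every nonidentity rotation $R_\theta - I$ is invertible, so no nonidentity element of $\SO(2)$ has a nonzero fixed vector; this is precisely freeness. (This is the same eigenvalue-at-$1$ criterion used in Proposition~\ref{even_prop}.)

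The argument is routine throughout, and I expect no genuine obstacle. The only points needing mild care are the bookkeeping for the quotient — confirming that the kernel of $\Phi$ is exactly $2\pi\bZ$, so that $\Phi$ induces an \emph{isomorphism} on $\bR/2\pi\bZ$ rather than merely a surjection — together with the trigonometric verification that $\Phi$ is multiplicative, both of which follow from standard angle-addition formulas.
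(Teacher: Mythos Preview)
Your proposal is correct and follows the paper's approach: the theorem in the paper is stated as a summary of the preceding discussion, which already shows every determinant-$1$ orthogonal matrix has the rotation form $R_\theta$, and then simply asserts the isomorphism with $\bR/2\pi\bZ$ and the freeness without further argument. Your write-up supplies the routine details the paper omits---the explicit homomorphism $\theta\mapsto R_\theta$ with kernel $2\pi\bZ$, and the determinant computation $\det(R_\theta-I)=2-2\cos\theta$ for freeness---so if anything you are being more thorough than the source.
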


Now 
matrices of $\OO(2)$ outside of $\SO(2)$ have form
$$
\begin{pmatrix}
\cos \theta & \sin \theta\\
\sin \theta &  -\cos \theta
\end{pmatrix}
$$
The trace is $0$ and the norm is $1$. This means that the characteristic polynomial must be
$$
X^2 - \mathrm{trace} (M) X + \mathrm{det} M  = X^2 - 1.
$$ 
So it has eigenvalues $1$ and $-1$. This means there is a basis of eigenvectors
(recall that every real eigenvalue of a real matrix has a real eigenvector);  such a basis is orthogonal:

\begin{proposition}
All real eigenvalues of an orthogonal matrix are $1$ or~$-1$. If $x$ is an eigenvector of eigenvalue $1$ and $y$ is an eigenvector of eigenvalue~$-1$, then~$x, y$ are orthogonal.
\end{proposition}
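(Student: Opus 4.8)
The plan is to derive both claims from the single defining property of an orthogonal transformation: it preserves the standard inner product, that is, $\left< L x, L y\right> = \left< x, y\right>$ for all $x, y \in \bR^n$. This is property (i) of the preceding characterization of orthogonal transformations, so I may invoke it freely once I regard the orthogonal matrix $A$ as the matrix of such a transformation. Both assertions will fall out of a well-chosen substitution into this identity.

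First I would establish the eigenvalue claim. Suppose $\lambda \in \bR$ is an eigenvalue of $A$ with eigenvector $x \ne 0$, so that $Ax = \lambda x$. Applying the inner-product-preserving identity with $y = x$ gives
$$
\left< x, x\right> = \left< Ax, Ax\right> = \left< \lambda x, \lambda x\right> = \lambda^2 \left< x, x\right>.
$$
Since $x \ne 0$ we have $\left< x, x\right> > 0$, and dividing through yields $\lambda^2 = 1$, so $\lambda = 1$ or $\lambda = -1$, as desired.

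Next I would handle the orthogonality claim. Let $x$ be an eigenvector for eigenvalue $1$ and $y$ an eigenvector for eigenvalue $-1$, so $Ax = x$ and $Ay = -y$. Applying the same identity to the pair $x, y$ gives
$$
\left< x, y\right> = \left< Ax, Ay\right> = \left< x, -y\right> = -\left< x, y\right>,
$$
whence $2\left< x, y\right> = 0$ and therefore $\left< x, y\right> = 0$. Thus $x$ and $y$ are orthogonal.

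I do not anticipate any genuine obstacle here, as each part is a one-line computation. The only point meriting care is to make sure I invoke the inner-product-preserving formulation of orthogonality rather than merely norm preservation; but since the earlier proposition records the equivalence of these conditions, this causes no difficulty, and the two displays above complete the proof.
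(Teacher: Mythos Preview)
Your proof is correct and matches the paper's argument almost line for line: both parts use the inner-product-preserving identity $\left< Ax, Ay\right> = \left< x, y\right>$ with the eigenvector substitutions you describe, yielding $\lambda^2 = 1$ and $\left< x, y\right> = -\left< x, y\right>$ respectively.
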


\begin{proof}
Suppose~$M x = \lambda x$ where $M$ is an orthogonal matrix and where $x \ne 0$.
Then
$$
\left< x, x \right> =  \left<M x, M x\right>  =  \left<\lambda x, \lambda x\right>  = \lambda^2 \left< x, x \right>.
$$
Thus $\lambda^2 = 1$. So $\lambda = \pm 1$.

Now suppose that $M x =  x$ and $M y = - y$.
Then
$$
\left< x, y \right> = \left<M x, M y\right> =  \left< x, - y\right>  = -  \left< x, y \right>.
$$
So $2 \left< x, y \right> = 0$, and so $\left< x, y \right> = 0$.
\end{proof}

So if you have a matrix of $\OO(2)$ outside of $\SO(2)$, then it has characteristic polynomial
$X^2-1$. So has a fixed unit vector $x$, and an orthogonal eigenvector~$y$ 
mapping $y$ to $-y$.
If fixes the span of $x$,
and send an orthogonal vector to its inverse, so represents a reflection. (And does not act freely on $\bR^2- \{(0, 0)\}$).

\subsection{The group $\OO(3)$}

We start with the following:

\begin{lemma}
If $M \in \OO(3)$ then $M$ has at least one eigenvalue in the set $\{ \pm 1 \}$.
\end{lemma}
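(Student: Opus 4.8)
The plan is to reduce the statement to two facts: that a real polynomial of odd degree always has a real root, and that any real eigenvalue of an orthogonal matrix must be $\pm 1$ (the latter being exactly the proposition established just above).

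First I would observe that the characteristic polynomial $p(X) = \det(M - XI)$ of $M \in \OO(3)$ is a polynomial of degree $3$ with real coefficients, since $M$ is a real matrix. Being of odd degree, $p$ tends to $+\infty$ at one end of the real line and to $-\infty$ at the other, so by the intermediate value theorem it has at least one real root $\lambda$. This $\lambda$ is then a real eigenvalue of $M$: there is a nonzero real vector $x$ with $Mx = \lambda x$, using the standard fact (also noted in the orthogonal-transformations material above) that a real eigenvalue of a real matrix admits a real eigenvector.

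Finally, by the proposition above asserting that all real eigenvalues of an orthogonal matrix lie in $\{\pm 1\}$, we conclude that $\lambda \in \{\pm 1\}$, which is exactly the claim.

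There is no serious obstacle here; the argument is genuinely short, and the only point deserving emphasis is that we are working over $\bR$ and that the degree $3$ is \emph{odd}. If one allowed complex eigenvalues, nonreal conjugate pairs could occur (and do, for a generic rotation of $\bR^3$ about an axis), so the conclusion really relies on the dimension being odd. As an alternative to the intermediate value theorem, one could produce the real eigenvalue via the determinant: the three eigenvalues multiply to $\det M = \pm 1$ and the nonreal ones occur in conjugate pairs, so an odd number of the eigenvalues are real, and in particular at least one is; that real eigenvalue is then forced into $\{\pm 1\}$ by the same proposition.
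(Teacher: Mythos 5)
Your proof is correct and follows the same route as the paper: the paper's own (very terse) proof simply notes that the characteristic polynomial is cubic in $\bR[X]$ and hence has a real root, relying implicitly on the earlier proposition that real eigenvalues of orthogonal matrices lie in $\{\pm 1\}$. You have merely spelled out the details (odd degree, intermediate value theorem, existence of a real eigenvector) that the paper leaves implicit.
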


\begin{proof}
The characteristic polynomial is cubic in $\bR[X]$ and so has a root.
\end{proof}

Let $L$ be an orthogonal transformation of $\bR^3$.
If we choose an orthonormal vector whose first term is an eigenvector then we have representation
$$
\begin{pmatrix}
\pm 1 & * & *\\
0 & * & *\\
0 &* & *
\end{pmatrix}
$$
Since this must be an orthogonal matrix, it really has form
$$
\begin{pmatrix}
\pm 1 & 0 & 0\\
0 & * & *\\
0 &* & *
\end{pmatrix}
$$

And the $2$-by-$2$  lower right submatrix is orthogonal.

\begin{lemma}
If $M \in \SO(3)$ then $M$ has eigenvalue $+1$. 
\end{lemma}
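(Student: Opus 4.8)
The plan is to combine the previous lemma (that $M \in \OO(3)$ has a real eigenvalue lying in $\{\pm 1\}$) with the fact that $M \in \SO(3)$ has $\det M = 1$, controlling the remaining eigenvalues through the way the determinant factors over $\bC$. First I would record the ambient facts: the characteristic polynomial $p(X) = \det(XI - M)$ is a monic real cubic, its roots are exactly the complex eigenvalues of $M$ counted with multiplicity, and their product equals $\det M = 1$ (for a monic cubic the product of the roots is $-p(0)$, and $p(0) = \det(-M) = -\det M = -1$). Moreover, by the Proposition above every \emph{real} eigenvalue of $M$ is $+1$ or $-1$, and any non-real eigenvalues occur in conjugate pairs $\mu, \bar\mu$ whose product $\mu\bar\mu = |\mu|^2$ is strictly positive.

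Next I would split into the two shapes a real cubic can take. If all three roots are real, each is $\pm 1$, and since their product is $+1$ the number of $-1$'s among them is even, hence $0$ or $2$; in either case at least one root is $+1$. If instead $p$ has one real root $\lambda$ and a conjugate pair $\mu, \bar\mu$, then $\lambda |\mu|^2 = 1$; since $|\mu|^2 > 0$ and $\lambda \in \{+1, -1\}$, we must have $\lambda = +1$. In both cases $M$ fixes a nonzero real vector, which is the assertion. There is essentially no hard step; the only delicate point is the sign bookkeeping, namely the observation that a non-real conjugate pair contributes a positive factor to the determinant, so the single guaranteed real eigenvalue cannot be forced to be $-1$.

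As a cleaner route that avoids the case split, I would instead use the identity $M - I = M(I - M^{T})$, valid because $M M^{T} = I$. Taking determinants and using $\det M = 1$ together with $\det(I - M^{T}) = \det(I - M) = (-1)^3 \det(M - I) = -\det(M - I)$ gives $\det(M - I) = -\det(M - I)$, hence $\det(M - I) = 0$. Thus $M - I$ is singular, so there is a nonzero $x$ with $Mx = x$, and $1$ is an eigenvalue of $M$. This one-line computation is the argument I would ultimately present, relegating the eigenvalue-counting discussion to a remark. The main thing to verify carefully in this version is the chain $\det(I - M^{T}) = \det(I - M)$, which holds because a matrix and its transpose have equal determinant.
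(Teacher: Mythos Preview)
Your proposal is correct. Both arguments you offer are valid: the eigenvalue-counting via the complex factorization of the characteristic polynomial is sound (the key being that a non-real conjugate pair contributes a positive factor, forcing the remaining real root to be $+1$), and the determinant identity $\det(M-I) = \det M \cdot \det(I - M^T) = \det(I-M) = -\det(M-I)$ is the classical one-line proof of Euler's rotation theorem.

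The paper, however, takes a different route. It invokes the previous lemma to obtain a real eigenvalue $\pm 1$, then block-diagonalizes: if the known eigenvalue is $-1$, choosing an orthonormal basis starting with a $(-1)$-eigenvector yields a matrix whose lower-right $2\times 2$ block lies in $\OO(2)$ with determinant $-1$; the earlier discussion of $\OO(2)$ showed such a matrix has characteristic polynomial $X^2-1$, hence eigenvalue $+1$, furnishing the desired eigenvector in $\bR^3$. This approach is more geometric and ties into the paper's running analysis of $\OO(2)$, whereas your determinant argument is self-contained and does not even need the previous lemma. Your eigenvalue-counting argument sits somewhere in between: it uses the previous lemma (via the proposition that real eigenvalues are $\pm 1$) but avoids any dimension reduction. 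Each buys something: the paper's proof reuses structure already developed and generalizes naturally to the subsequent canonical-form proposition; your second proof is shorter and works verbatim for $\SO(n)$ with $n$ odd.
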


\begin{proof}
From the previous lemma, $M$ has an eigenvalue of $1$ or $-1$. Suppose it has eigenvalue $-1$,
and choose an orthonormal basis whose first term is an eigenvector with eigenvalue $-1$.
In terms of this basis, the orthogonal transformation becomes
$$
\begin{pmatrix}
- 1 & 0 & 0\\
0 & * & *\\
0 &* & *
\end{pmatrix}.
$$
The submatrix must be in $\OO(2)$ and must have determinant $-1$. This means it has eigenvalue $1$, which
gives us an eigenvector of the form $(0, *, *)$ in $\bR^3$ with eigenvalue~$1$.
\end{proof}

\begin{lemma}
If $M \in \OO(3)$ has determinant $-1$ then  $M$ has eigenvalue $-1$. 
\end{lemma}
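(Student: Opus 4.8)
The plan is to reduce this statement to the immediately preceding lemma (that every element of $\SO(3)$ has eigenvalue $+1$) by passing from $M$ to $-M$. The key observation is that negation interacts nicely with both orthogonality and determinant in odd dimension: if $M$ is orthogonal then so is $-M$, since $(-M)^T(-M) = M^T M = I$, and $\det(-M) = (-1)^3 \det M = -\det M$. Under the hypothesis $\det M = -1$, this gives $\det(-M) = +1$, so $-M$ lies in $\SO(3)$.

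First I would record these two facts about $-M$. Then I would apply the previous lemma to $-M \in \SO(3)$ to obtain a nonzero vector $v$ with $(-M)v = v$. Rewriting this as $Mv = -v$ shows that $-1$ is an eigenvalue of $M$, which is exactly the claim. The entire argument is three or four lines once the $-M$ trick is in place, and it invokes only results already established in this appendix.

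I do not anticipate a serious obstacle; the only thing to be careful about is making explicit that $-1$ is being used as an eigenvalue of the original $M$ and not of $-M$, and that the sign bookkeeping $\det(-M) = -\det M$ relies on the dimension being odd (here $n=3$). If one preferred an argument not routing through $-M$, the alternative would be to use the first lemma to secure a real eigenvalue $\pm 1$, note that the nonreal eigenvalues of an orthogonal matrix occur in conjugate pairs of modulus $1$ and hence contribute a factor of $1$ to the determinant, and then deduce from $\det M = -1$ that the real eigenvalue must be $-1$ (or, in the case of three real eigenvalues each $\pm 1$, that an odd number of them equal $-1$). I would mention this only as a remark, since the $-M$ reduction is cleaner and leans directly on the lemma just proved.
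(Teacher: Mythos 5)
Your proposal is correct and is exactly the paper's argument: the paper's entire proof is ``Apply the above lemma to $-M$,'' and your reduction via $\det(-M)=(-1)^3\det M=+1$ spells out the same sign bookkeeping. No gaps; the alternative you mention as a remark is unnecessary.
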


\begin{proof}
Apply the above lemma to $-M$.
\end{proof}

\begin{proposition}
If $M \in \SO(3)$ there is an orthonormal basis for which the matrix~$M$ has the form
$$
\begin{pmatrix}
1 & 0 & 0\\
0 & \cos \theta & -\sin \theta \\
0 &\sin \theta & \cos \theta
\end{pmatrix}
$$
\end{proposition}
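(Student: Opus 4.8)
The plan is to exploit the fact, just established, that every $M \in \SO(3)$ has $+1$ as an eigenvalue, and then to reduce the problem to the already-understood structure of $\SO(2)$. First I would choose a unit eigenvector $v_1$ with $M v_1 = v_1$, which exists by the preceding lemma. The key geometric observation is that $M$ preserves the orthogonal complement $W = \{v_1\}^\perp$: if $w \perp v_1$, then since $M$ is orthogonal and $Mv_1 = v_1$,
$$
\langle M w, v_1 \rangle = \langle M w, M v_1 \rangle = \langle w, v_1 \rangle = 0,
$$
so $Mw \in W$. Thus $M$ restricts to a linear map $N \colon W \to W$, and because $M$ preserves the inner product on all of $\bR^3$, its restriction $N$ preserves the inner product on $W$; hence $N$ is an orthogonal transformation of the two-dimensional space $W$.

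Next I would pick an orthonormal basis $v_2, v_3$ of $W$, so that $v_1, v_2, v_3$ is an orthonormal basis of $\bR^3$. In this basis $M$ has the block form $\begin{pmatrix} 1 & 0 \\ 0 & N \end{pmatrix}$, where the lower-right $2$-by-$2$ block is the matrix of $N$ with respect to $v_2, v_3$; the first row is forced to be $(1,0,0)$ because the columns of an orthogonal matrix are orthonormal and the first column here is $(1,0,0)^{T}$, so the remaining columns have vanishing first entry. Taking determinants gives $1 = \det M = \det N$, so $N \in \SO(2)$.

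Finally I would invoke the earlier classification of $\SO(2)$ as the rotation group: $N$ is represented by $\begin{pmatrix} \cos\theta & -\sin\theta \\ \sin\theta & \cos\theta \end{pmatrix}$ for some angle $\theta$. Substituting this block into the block form yields precisely the claimed matrix, completing the argument. I do not anticipate any serious obstacle; the only points requiring a little care are verifying that $M$ genuinely \emph{stabilizes} $W$ (rather than merely fixing $v_1$), which is what permits the reduction from three dimensions to the two-dimensional $\SO(2)$ case, and observing that the determinant condition forces the planar piece to lie in $\SO(2)$ rather than merely $\OO(2)$, ruling out the reflection case.
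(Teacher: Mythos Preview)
Your proof is correct and follows essentially the same approach as the paper: pick a unit eigenvector for the eigenvalue $1$, observe that in an orthonormal basis starting with this vector the matrix has block form with a $2\times 2$ orthogonal block of determinant $1$, and then invoke the description of $\SO(2)$. Your write-up is simply more explicit about why $M$ stabilizes $v_1^\perp$ and why the off-diagonal blocks vanish, points the paper handles more tersely via the earlier observation that orthogonality of the matrix forces the first row to be $(\pm 1,0,0)$.
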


\begin{proof}
Choose the first term of the basis to have eigenvalue $1$. So we get
$$
\begin{pmatrix}
1 & 0 & 0\\
0 & * & *\\
0 &* & *
\end{pmatrix}.
$$
The lower right  submatrix has determinant $1$ and is in $\SO(2)$, so it has the desired form.
\end{proof}

\begin{remark}
This shows that if $M \in \SO(3)$ is not the identity then it is a rotation of space. It fixes a one-dimension subspace called
the ``axis of rotation''. 
\end{remark}

 \begin{proposition}
If $M \in \OO(3)$ has determinant $-1$ has there is an orthonormal basis for which the matrix $M$ has the form
$$
\begin{pmatrix}
-1 & 0 & 0\\
0 & \cos \theta & -\sin \theta \\
0 &\sin \theta & \cos \theta
\end{pmatrix}
=
\begin{pmatrix}
-1 & 0 & 0\\
0 &  1 & 0\\
0 &0& 1
\end{pmatrix}
\begin{pmatrix}
1 & 0 & 0\\
0 & \cos \theta & -\sin \theta \\
0 &\sin \theta & \cos \theta
\end{pmatrix}
$$
\end{proposition}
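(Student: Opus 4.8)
The plan is to mirror the proof of the preceding $\SO(3)$ proposition, now leaning on the lemma just established: an $M \in \OO(3)$ with $\det M = -1$ has $-1$ as an eigenvalue. First I would use that lemma to select a unit eigenvector $v_1$ with $M v_1 = -v_1$, and then extend it to an orthonormal basis $v_1, v_2, v_3$ of $\bR^3$. Expressing $M$ in this basis, its first column is exactly $(-1, 0, 0)^T$ because $M v_1 = -v_1$.

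Next I would argue that the first row also has vanishing off-diagonal entries, so that $M$ takes the block form
$$
\begin{pmatrix}
-1 & 0 & 0\\
0 & * & *\\
0 & * & *
\end{pmatrix}.
$$
This follows from orthonormality of the columns alone: since the first column is $(-1,0,0)^T$, the condition $\left< \mathrm{col}_1, \mathrm{col}_j \right> = 0$ for $j = 2, 3$ forces the first entry of each of those columns to vanish, i.e. $M_{1,2} = M_{1,3} = 0$. Then the lower-right $2$-by-$2$ block is itself orthogonal, as its two columns are the restrictions of orthonormal vectors to the span of $v_2, v_3$; and expanding the determinant along the first column gives $\det M = (-1)\cdot \det(\text{block})$, so the block has determinant $(-1)/(-1) = 1$ and hence lies in $\SO(2)$.

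Finally I would invoke the explicit description of $\SO(2)$ as the rotation matrices (the theorem identifying $\SO(2)$ with the rotation group) to write the block as $\begin{pmatrix} \cos\theta & -\sin\theta \\ \sin\theta & \cos\theta \end{pmatrix}$ for some $\theta$, which yields the claimed form for $M$. The displayed factorization is then immediate by block multiplication, since the diagonal reflection matrix $\mathrm{diag}(-1,1,1)$ times the rotation matrix reproduces $M$ entry by entry.

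There is essentially no hard obstacle here; the argument is routine once the eigenvalue lemma is in hand, and it is a direct parallel of the $\SO(3)$ case. The only point deserving care is the justification that both the first row and the first column are clean, which I would attribute to column orthonormality (equivalently $M^T M = I$), together with the determinant bookkeeping that pins the lower block in $\SO(2)$ rather than merely in $\OO(2)$.
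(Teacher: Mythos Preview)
Your proposal is correct and follows essentially the same approach as the paper, which simply says the proof is similar to the preceding $\SO(3)$ proposition. In fact you supply more detail than the paper does: the key steps---using the eigenvalue $-1$ lemma to choose the first basis vector, reading off the block form from orthogonality, and identifying the $2\times 2$ block as an element of $\SO(2)$ via the determinant---are exactly what the paper intends.
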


\begin{proof}
The proof is similar to that of the previous proposition. 
\end{proof}

\begin{remark}
This shows that any such transformation is a rotation followed by a reflection across the plane perpendicular to the axis. (Or is just
a reflection if $\theta = 0$).
\end{remark}

\begin{theorem}
The group $\SO(3)$ is the group of rotation matrices for $\bR^3$ (and the identity matrix).
\end{theorem}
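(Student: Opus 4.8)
The plan is to show that the two descriptions of $\SO(3)$ agree: every element of $\SO(3)$ is a rotation, and conversely every rotation lies in $\SO(3)$. First I would pin down what is meant by a \emph{rotation matrix}: a linear transformation of $\bR^3$ that, relative to some orthonormal basis $f_1, f_2, f_3$, fixes $f_1$ and rotates the plane spanned by $f_2, f_3$ through some angle $\theta$; equivalently, one whose matrix in that basis is the standard form
$$
\begin{pmatrix}
1 & 0 & 0\\
0 & \cos \theta & -\sin \theta\\
0 & \sin \theta & \cos \theta
\end{pmatrix}.
$$
The identity is included as the case $\theta = 0$, which explains the parenthetical in the statement.

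The inclusion of $\SO(3)$ into the set of rotations is exactly the content of the preceding proposition: given $M \in \SO(3)$, that proposition produces an orthonormal basis in which $M$ takes the displayed form, so $M$ fixes the axis $\bR f_1$ and rotates the orthogonal plane through $\theta$, which is precisely to say $M$ is a rotation.

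For the reverse inclusion I would verify that every rotation belongs to $\SO(3)$. Let $R$ be a rotation, with the standard matrix above in some orthonormal basis. Its columns are orthonormal, so $R$ is represented by an orthogonal matrix with respect to an orthonormal basis; by the earlier proposition characterizing orthogonal transformations, $R$ is therefore an orthogonal transformation, i.e. $R \in \OO(3)$. Computing from the displayed form, $\det R = 1 \cdot (\cos^2 \theta + \sin^2 \theta) = 1$, and since the determinant is unchanged under conjugation by the change-of-basis matrix, $\det R = 1$ in every basis. Hence $R \in \SO(3)$.

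Combining the two inclusions shows that $\SO(3)$ coincides with the set of rotation matrices; in particular that set is a group, being equal to $\SO(3)$. I do not expect any genuine obstacle, since the substantive work was already carried out in the preceding proposition, and the two remaining points — that the standard form is orthogonal and has determinant $1$, each invariant under orthonormal change of basis — are immediate. The only care needed is to state the definition of "rotation" so that its equivalence with the standard matrix form is transparent.
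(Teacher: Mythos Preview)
Your proposal is correct and matches the paper's approach: the paper states this theorem without a separate proof, treating it as an immediate consequence of the preceding proposition (which puts any $M\in\SO(3)$ into the standard rotation form) together with the remark that such a form is a rotation about the first basis vector. Your write-up simply makes explicit the easy reverse inclusion (that the standard rotation form is orthogonal with determinant $1$), which the paper leaves tacit.
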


Finally, we will need the following:

\begin{lemma}
Suppose $A \in \OO (3)$. 
If $\det A = 1$ then the fixed space of $A$ is dimension~1 (for a rotation) or dimension 3 (for the identity). If $\det A = -1$ then the fixed space of $A$ has dimension 2 (for a reflection of planes)
or dimension 0.
In particular $A$ is a rotation if and only if the fixed space is dimension 1. 
\end{lemma}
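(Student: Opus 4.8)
The plan is to read off the dimension of the fixed space directly from the two normal-form propositions established just above, since the fixed space of $A$ is precisely the eigenspace $\ker(A - I)$ for the eigenvalue $1$, and this dimension is an intrinsic quantity, independent of the choice of orthonormal basis used to represent $A$. So I would freely pass to whatever orthonormal basis is most convenient.

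First I would treat the case $\det A = 1$. By the normal-form proposition for $\SO(3)$, there is an orthonormal basis in which $A$ has a $1$ in the upper-left corner and a $2$-by-$2$ rotation block $R_\theta$ in the lower-right. The first basis vector is always fixed, so the fixed space of $A$ is the span of that vector together with the fixed space of $R_\theta$ on the complementary plane. The key subclaim is that $R_\theta$ fixes a nonzero vector if and only if $R_\theta = I$, i.e.\ $\theta \equiv 0 \pmod{2\pi}$: since $\det(R_\theta - I) = (\cos\theta - 1)^2 + \sin^2\theta = 2 - 2\cos\theta$, the map $R_\theta - I$ has nontrivial kernel exactly when $\cos\theta = 1$, and in that case $R_\theta$ is the identity. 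Hence when $\theta \not\equiv 0$ the fixed space is exactly the span of the first basis vector (dimension $1$, a genuine rotation), and when $\theta \equiv 0$ the matrix is the identity and the fixed space is all of $\bR^3$ (dimension $3$).

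Next I would treat $\det A = -1$, using the other normal-form proposition, which gives a $-1$ in the upper-left corner and the same kind of rotation block $R_\theta$ below it. Here the first basis vector maps to its negative and so is never fixed, whence the fixed space of $A$ coincides with the fixed space of $R_\theta$ on the complementary plane. By the same subclaim this is the whole plane (dimension $2$, a reflection across it) when $\theta \equiv 0$, and is $\{0\}$ (dimension $0$) otherwise.

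Finally, the ``in particular'' statement falls out by collecting the four cases: fixed-space dimension $1$ occurs only in the $\det A = 1$, $\theta \not\equiv 0$ case, which is exactly a nontrivial rotation, whereas the remaining possibilities produce dimensions $3$, $2$, and $0$. I do not expect any serious obstacle; the only step needing a word of care is the elementary subclaim about $R_\theta$, which the determinant computation above settles cleanly, and the remark that the dimension of $\ker(A-I)$ does not depend on the basis in which $A$ is displayed.
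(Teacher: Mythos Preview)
Your proposal is correct and follows essentially the same approach as the paper: both arguments pass to the orthonormal normal form established in the preceding propositions and then read off the fixed space from the block structure, reducing the question to whether the planar rotation $R_\theta$ has a nonzero fixed vector. The paper's proof is terser---it only spells out the $\det A = -1$ case and simply asserts the key subclaim about $R_\theta$---whereas you treat both determinant cases explicitly and justify the subclaim via the computation $\det(R_\theta - I) = 2 - 2\cos\theta$, but the underlying route is the same.
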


\begin{proof}
This is fairly clear from the geometric description of the associated operators. We will discuss the case of $\det A = -1$ in more detail. The above discussion shows that there is an orthonormal basis $x_1, x_2, x_3$ such that $A x_1 = - A x_1$, and that $A$ fixes the span of $x_2$ and $x_3$, and is a rotation of angle $\theta$ on that subspace. So if~$y = c_1 x_1 + c_2 x_2 + c_3 x_3$
is fixed then $c_1 = -c_1$, so $c_1 = 0$. Thus $y$ is in the span of~$x_2$ and $x_3$.
Thus $y$ is fixed only if either $\theta$ is a multiple of $2\pi$, or if $y = 0$.
If $\theta$ is a multiple of $2\pi$ then $A$ is a refection with fixed space spanned by $x_2$ and $x_3$.
If $\theta$ is not a multiple of $2\pi$ then the fixed space is the zero space.
\end{proof}

\begin{remark}
Although we do not much about the topology of $\OO(3)$, we note rotations
are parameterized by three parameters: a point on the sphere for the axis and an angle of rotation.
In fact $\OO(3)$ is a 3-dimensional Lie group. The subgroup $\SO(3)$ and its other coset in $\OO(3)$ are
the connected components of $\OO(3)$.
\end{remark}

\chapter*{Appendix: Finite subgroups of $\SO(3)$}

The collection of rotations of $\bR^3$ fixing the origin can be identified with $\mathrm{SO}(3)$, the group of orthogonal
3-by-3 matrices with determinant~$+1$. 
Subgroups of $\mathrm{SO}(3)$ include cyclic and dihedral groups, but the other finite subgroups are related in an interesting
way to the Platonic solids (tetrahedron, cube, octahedron, dodecahedron, icosohedron). In fact, the 
finite subgroups of $\SO(3)$ are as follows:
\begin{itemize}
\item
Cyclic subgroups of all orders. We use the denotation $C_n$ for any such group of order $n$
\item
Dihedral subgroups of all even orders $\ge 4$. Any dihedral subgroup of order~$2n > 4$ is a rotational symmetry group of a regular $n$-gon centered at the origin.
We also consider here the rotational symmetry group of a nonsquare rectangle centered at the origin. It has order $4$, and is isomormorphic
to the Klein 4-group. We use the denotation $D_{n}$ for any such group of order $2n$.
\item
Tetrahedral subgroups of order 12. Each of these is the subgroup of  rotational symmetries of a regular tetrahedron centered at the origin. This group is isomorphic to 
the alternating group $A_4$. This can be seen by looking at the action on the four vertices.
We use the denotation $T$ for any such group.
\item
Octahedral subgroups of order 24. Each of these is the subgroup of rotational symmetries of a regular octahedron centered at the origin. 
Each such group is also the subgroup of rotational symmetries of a cube centered at the origin
This group is isomorphic to 
the alternating group $S_4$. This can be seen by looking at the action on the set of four pairs of opposite faces.
We use the denotation $O$ for any such group.
\item
Icosahedral subgroups of order 60. Each of these is the subgroup of rotational symmetries of a regular icosahedron centered at the origin. 
Each such group is also the subgroup of rotational symmetries of a regular dodecahedron centered at the origin.
This group is isomorphic to 
the alternating group $A_5$. We use the denotation $I$ for any such group.
\end{itemize}

Another interesting fact is that two finite subgroups of $\mathrm{SO}(3)$ are isomorphic as abstract groups if and only if they
are conjugate subgroups of $\mathrm{SO}(3)$.

See Artin~\cite{artin_algebra} (Sections 4.5 and 5.9), Goodman~\cite{goodman_algebra} (Chapter 4 and Section 11.3),
and Sternberg~\cite{sternberg1994} (Section 1.8) for  proofs of these facts.

\chapter*{Appendix: The  Quaternion Division Ring $\bH$}

We assume some familiarity with the ring of quaternions~$\bH$
(see Chapter 7 of Dummit and Foote~\cite{dummit_foote} and Exercise 1 on page~306 of Artin~\cite{artin_algebra}).
Here we highlight the connection between $\bH^\times$ and $\mathrm{SO}(3)$  which plays an important role in this document.
The ring $\bH$ is a non-commutative ring, and it is also a four-dimensional $\bR$-vector space. In other words
it is an \emph{algebra} over $\bR$. Every non-zero element is invertible, so it is a division ring, in fact a division algebra over~$\bR$.
If $\alpha = a + b \mathbf{i} + c \mathbf{j} + d \mathbf{k}$ is in~$\bH$ then we define its 
conjugate~$\overline{\alpha}$ to be~$a - b \mathbf{i} - c \mathbf{j} - d \mathbf{k}$. 
Observe that conjugation satisfies the law $\overline{\alpha\beta} = \overline{\beta}\, \overline{\alpha}$
(use~$\bR$-linearity to reduce to the case where~$\alpha, \beta$, and hence $\alpha \beta$, are are in~$\left\{\pm 1, \pm \mathbf{i}, \pm \mathbf{j}, \pm \mathbf{k} \right\}$).

We consider $\bH$ to contain $\bR$ as a subring (we can even think of $\bC$ as a subring as the span of~$1, \mathbf{i}$).
Note that $\bR$ commutes with any element of $\bH$.
For $\alpha \in \bH$, observe that ~$\alpha \in \bR$ if and only if $\overline{\alpha} = \alpha$.
Let $\bH_0$ be the $\bR$-span of
the quaternions~$\mathbf{i}, \mathbf{j}, \mathbf{k}$.
We can and will identify $\bH_0$ with~$\bR^3$.
An element $\alpha \in \bH$ is in~$\bH_0$ if and only if~$\overline{\alpha} = -\alpha$.
Every element of $\bH$ can be written uniquely as $c + v$ where $c \in \bR$ and $v \in \bH_0$;
we call $c$ the \emph{real part} of $\alpha = c + v$, and $v$ the \emph{imaginary part} of $\alpha$.

Observe that if $u, v \in \bR^3$, then the standard inner product is  related to the product $uv$ or $u\overline{v}$ in $\bH$.
In fact,
$$
\left< u, v \right>_{\bR^3} = - \mathit{Real} (uv) = \mathit{Real} (u \overline v).
$$
(Check using~$\bR$-linearity to reduce to the case where $u, v \in \left\{\mathbf{i}, \mathbf{j}, \mathbf{k} \right\}$).
This relation is even more direct if $u=v$;
observe that if $u \in \bH_0$ then 
$$\overline{u^2} = {\overline u}^2 = (-u)^2 = u^2,$$ so $u^2$ is real.
This means we can drop the symbol $\mathit{Real}$ and write
$$
|u|^2_{\bR^3} = \left< u, u\right>_{\bR^3} = - u^2 = u \overline u.
$$
We can extend the definition of $|\alpha|$ 
to all $\alpha \in \bH$ by defining
$$
|\alpha|^2 \defeq \alpha \overline \alpha.
$$
(Warning, this does not equal to $-\alpha^2$ in general).
Observe that if we write $\alpha$ as~$c + v$ in terms of its real and imaginary parts, then
$$
|\alpha|^2 = (c + v) \overline{(c+v) } = (c+v)(c-v) = c^2 - v^2 = c^2 + |v|^2 \ge 0.
$$
Note that this is the value of the standard norm when we think of $\bH$ as $\bR^4$.

Observe that this norm gives a homomorphism $\bH^\times \to \bR_+^\times$ defined by $\alpha \mapsto \alpha \overline \alpha$
where $\bR_+^\times$ is the multiplicative group of positive real numbers.
So the elements~$\bH_1$ of norm one is forms a subgroup of~$\bH^\times$. 
Note that geometrically,  $\bH_1$ is the $3$-sphere in $\bR^4$. (As a group it is a compact Lie group isomorphic to~$\mathrm{SU}_2$.)

When we define $\bH$ we seem to put special emphasis on~$\mathbf{i}, \mathbf{j}, \mathbf{k}$. However, the following shows
we can really have chosen other orthonormal vectors to play the role of $\mathbf{i}, \mathbf{j}, \mathbf{k}$. More precisely,
all unit length vectors in $\bH_0$ behave like $\mathbf{i}, \mathbf{j}, \mathbf{k}$ in having a square of $-1$,
and any orthonormal pair behaves like any two of $\mathbf{i}, \mathbf{j}, \mathbf{k}$:

\begin{proposition}
If  $x \in \bH_0$ has unit length then $x^2 = -1$.
If $x, y \in \bH_0$ are orthonormal then $x y = - y x$. 

Now suppose $x, y \in \bH_0$ are orthonormal  and define $z$ to be $x y$.
Then $x, y, z$ form an orthonormal basis of $\bH_0$
which satisfy the laws  
$$xy = z, \quad y z = x, \quad z x  = y.$$
\end{proposition}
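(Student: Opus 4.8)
The plan is to derive everything from two identities already established for imaginary quaternions: that $u^2 = -|u|^2$ for every $u \in \bH_0$, and that $\left< u, v \right>_{\bR^3} = -\mathit{Real}(uv)$ for $u, v \in \bH_0$. The first assertion is then immediate: if $x \in \bH_0$ has unit length, then $x^2 = -|x|^2 = -1$. For the anticommutativity assertion I would use a polarization argument. Since $x$ and $y$ are orthonormal, the sum $x+y$ lies in $\bH_0$ and has $|x+y|^2 = \left<x,x\right> + 2\left<x,y\right> + \left<y,y\right> = 2$. Applying the square identity to $x+y$ gives $(x+y)^2 = -2$, while expanding the left-hand side yields $x^2 + xy + yx + y^2 = -2 + xy + yx$; comparing the two gives $xy + yx = 0$, that is, $xy = -yx$.

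Next I would study $z \defeq xy$. To see $z \in \bH_0$, I use that conjugation reverses products together with $\bar x = -x$ and $\bar y = -y$, computing $\bar z = \bar y\,\bar x = (-y)(-x) = yx = -xy = -z$, so $z$ has zero real part. That $|z| = 1$ follows from the multiplicativity of the norm, since $|z|^2 = |x|^2|y|^2 = 1$. For orthogonality I would apply the real-part formula: $\left<x, z\right> = -\mathit{Real}(x\cdot xy) = -\mathit{Real}(x^2 y) = -\mathit{Real}(-y) = \mathit{Real}(y) = 0$ because $y \in \bH_0$, and likewise $\left<y, z\right> = -\mathit{Real}(y\cdot xy) = -\mathit{Real}(x) = 0$, where I have used $y(xy) = (yx)y = -xy^2 = x$. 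Combined with the hypothesis $\left<x,y\right> = 0$, this exhibits $x, y, z$ as three pairwise orthogonal unit vectors in the three-dimensional space $\bH_0$, hence as an orthonormal basis.

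Finally, the three cyclic laws reduce to a short computation from anticommutativity and $x^2 = y^2 = -1$: we have $xy = z$ by definition, $yz = y(xy) = (yx)y = -xy^2 = x$, and $zx = (xy)x = x(yx) = -x^2 y = y$. I do not anticipate a substantive obstacle; the entire proposition is elementary once the two background identities are in hand. The one point requiring care is the sign bookkeeping, since every step relies on the interplay of $\bar x = -x$, $\bar y = -y$, and the anticommutation rule $xy = -yx$, and a single dropped sign would corrupt both the orthogonality computations and the cyclic laws.
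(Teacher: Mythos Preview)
Your proof is correct and relies on the same two background identities as the paper. The one genuine difference is how you establish anticommutativity $xy = -yx$: you use polarization, expanding $(x+y)^2 = -|x+y|^2 = -2$ to force $xy + yx = 0$, whereas the paper first observes that $z = xy$ lies in $\bH_0$ (since $\mathit{Real}(xy) = -\langle x, y\rangle = 0$) and then computes $yx = \bar y\,\bar x = \overline{xy} = \bar z = -z$. Your polarization argument is self-contained and does not require knowing $z \in \bH_0$ in advance, which lets you reverse the logical order: you then deduce $z \in \bH_0$ from conjugation and the already-established anticommutativity, while the paper deduces anticommutativity from the fact that $z \in \bH_0$. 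Both routes are short and equally valid; the polarization step is perhaps slightly more elementary, while the paper's route highlights the role of conjugation more directly.
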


\begin{proof}
Above we observed that if  $x \in \bH_0$ then $-x^2$ is the norm of $x$. So, due to the assumption of unit length,
$x^2$ must be $-1$. 

Suppose that $x, y$ are unit length and orthogonal. Then $x^2 = y^2 = -1$ as before. Also the real part of $z = xy$
is zero since $\left< x, y\right> = 0$. So $z$ is in $\bH_0$. Finally $z$ has length $1\cdot 1 = 1$, so we also have $z^2 = -1$.
Next we show that $z$ is orthogonal to~$x$ and $y$.
Observe that $x z = x x y  = - y$, which has real part zero, so $\left<x, z\right> = 0$. 
Similarly, $z y = x y y = -x$ has real part zero and so $\left<z, y\right> = 0$.
So $x, y, z$ are are orthonormal, and they must be a basis since~$\bH_0$ has dimension three over~$\bR$.

Next we verify that $x y = - y x$. Since $x, y,$ and $z = xy$ are in $\bH_0$,
$$
y x = (-y) (-x) = \overline y \, \overline x = \overline{x y} = \overline {z} = - z = - xy.
$$
This result was based only on the assumption that $u, v$ are orthonormal, so it applies to $y, z$ or $z, x$ as well:
$z y = - y z$ and $x z  = - z x$.

We have $x y = z$ by definition.  Above we noted that $z y= - x$ so $y z = x$.
We also noted that $x z = -y$ so $z x = y$.
\end{proof}

There is an action of $\bH^\times$ on $\bH$ by conjugation. More specifically, if $\Phi(h)$ is defined as the map $\alpha \mapsto h \alpha h^{-1}$,
then $\Phi(h)$ is an $\bR$-linear map $\bH \to \bH$ (in fact, it is an algebra automorphism).
What is interesting about this action is that if $|h| = 1$ then $\Phi(h)$ is an $\bR$-linear automorphism of $\bH_0$:

\begin{lemma}
Let $h \in \bH^\times$ and let $\Phi(h)$ be as above. Then
\begin{enumerate}
\item
If $c \in \bR$ then $\Phi(h) c = c$ for any $h\in \bH^\times$.
\item
If~$h \in \bH_1$ and $\alpha \in \bH$ then 
$$\overline{\Phi(h) \alpha} = \Phi(h) \overline \alpha.$$
\item
If  $h\in \bH_1$ and $v \in \bH_0$ then~$\Phi(h) v \in \bH_0$. 
\item
If $h\in \bH_1$ and $\alpha \in \bH$
then the real part of $\Phi(h) \alpha$ is equal to the real part of $\alpha$.
\end{enumerate}
\end{lemma}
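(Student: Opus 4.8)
The plan is to prove the four parts in order, each building on its predecessors, with everything resting on one computational fact: for $h \in \bH_1$ the hypothesis $|h| = 1$ gives $h\overline{h} = |h|^2 = 1$, hence $\overline{h} = h^{-1}$. I would record this at the outset, together with the involution property $\overline{\overline{\alpha}} = \alpha$ (so that $\overline{h^{-1}} = \overline{\overline{h}} = h$ as well). Part (1) requires none of this: since $\bR$ lies in the center of $\bH$, for $c \in \bR$ we have $\Phi(h)c = h c h^{-1} = c\, h h^{-1} = c$ for every $h \in \bH^\times$. This is exactly why (1) holds without the norm-one assumption, whereas (2)--(4) genuinely need $h \in \bH_1$.

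Next I would treat part (2), which is the crux of the lemma. Applying the anti-automorphism law $\overline{\alpha\beta} = \overline{\beta}\,\overline{\alpha}$ twice to the product $h\alpha h^{-1}$ gives
$$
\overline{\Phi(h)\alpha} = \overline{h \alpha h^{-1}} = \overline{h^{-1}}\,\overline{\alpha}\,\overline{h}.
$$
Now I substitute the two identities noted above, $\overline{h} = h^{-1}$ and $\overline{h^{-1}} = h$, to obtain $\overline{\Phi(h)\alpha} = h\,\overline{\alpha}\,h^{-1} = \Phi(h)\overline{\alpha}$, as claimed. Part (3) then follows immediately from (2) and the characterization (established earlier in this appendix) that $v \in \bH_0$ if and only if $\overline{v} = -v$: given $v \in \bH_0$, part (2) yields $\overline{\Phi(h)v} = \Phi(h)\overline{v} = \Phi(h)(-v) = -\Phi(h)v$, so $\Phi(h)v$ satisfies $\overline{\beta} = -\beta$ and therefore lies in $\bH_0$.

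Finally, for part (4) I would use the unique decomposition $\alpha = c + v$ into real part $c \in \bR$ and imaginary part $v \in \bH_0$, together with the $\bR$-linearity of $\Phi(h)$ (it is an algebra automorphism). Then $\Phi(h)\alpha = \Phi(h)c + \Phi(h)v = c + \Phi(h)v$ by (1), and since $\Phi(h)v \in \bH_0$ by (3), it contributes no real part; hence the real part of $\Phi(h)\alpha$ equals $c$, the real part of $\alpha$. I do not expect a serious obstacle here: the only subtlety is making sure the norm-one hypothesis is invoked precisely where it is needed, namely in deriving $\overline{h} = h^{-1}$ for part (2), and then propagating that through (3) and (4).
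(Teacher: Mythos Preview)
Your proposal is correct and follows essentially the same approach as the paper: both use the identity $\overline{h}=h^{-1}$ for $h\in\bH_1$ to compute (2) via the anti-automorphism law, then deduce (3) from the characterization $v\in\bH_0\iff\overline v=-v$, and (4) from the decomposition $\alpha=c+v$ together with (1) and (3). The only cosmetic difference is that the paper writes $\Phi(h)\alpha=h\alpha\overline{h}$ from the start rather than beginning with $h\alpha h^{-1}$ and substituting.
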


\begin{proof}
The statement (1) is clear. For (2) observe that $\Phi(h) \alpha =h  \alpha \overline{h} $. So
$$
\overline{\Phi(h) \alpha} = \overline{h \alpha \overline{h}} = 
\overline{\overline h} \, \overline \alpha \, \overline h =
h \, \overline \alpha \, \overline h  = \Phi(h) \overline \alpha.
$$
For (3), recall that a vector $v$ is in $\bH_0$ if and only if $\overline v = - v$. Observe that 
$$
\overline{\Phi(h) v} = \Phi(h) \overline v  = \Phi(h) (-v) = - \Phi(h) v
$$
so $\Phi(h) v \in \bH_0$. 

For (4), write $\alpha = c + v$. Then 
$$
\Phi(h) \alpha = \Phi(h) c + \Phi(h) v = c + \Phi (h) v.
$$
Since $\Phi(h)v \in \bH_0$ the result follows.
\end{proof}

In particular, $\bH_1$ acts on $\bH_0$ by conjugation.
This action preserves the inner-product:

\begin{lemma}
Suppose $\Phi$ is as above. If $h\in \bH_1$ and if $x, y \in \bH_0$ then
$$
\left< \Phi(h) x, \Phi(h) y \right> = \left<x, y\right>.
$$
\end{lemma}

\begin{proof}
Observe that
$$
-\mathit{Real} \left( \left( \Phi(h) x \right) \left( \Phi(h) y \right) \right) = -\mathit{Real} \left( \Phi(h) (x y) \right) = - \mathit{Real} (xy).
$$
\end{proof}

In particular, if $h\in \bH_1$ then $\Phi(h)$ acts on $\bH_0$ as an orthogonal transformation. 
For convenience we identify $\bH_0$ with $\bR^3$ using the standard orthonormal basis $\mathbf{i}, \mathbf{j}, \mathbf{k}$.

\begin{proposition}
The map $h \mapsto \Phi(h)$ gives a 
homomorphism $\Phi \colon \bH_1 \to \mathrm{O}(3)$.
\end{proposition}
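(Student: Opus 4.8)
The plan is to assemble the two immediately preceding lemmas with the definition of an orthogonal transformation from the appendix, and then to verify the homomorphism property by a one-line computation; essentially all the analytic content has already been done.

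First I would fix $h \in \bH_1$ and confirm that $\Phi(h)$ determines an element of $\mathrm{O}(3)$. The earlier lemma shows that $\Phi(h)$ is $\bR$-linear and carries $\bH_0$ into itself, so its restriction is a linear self-map of $\bH_0 \cong \bR^3$; the subsequent lemma shows that this restriction preserves the standard inner product. Since an orthogonal transformation of $\bR^3$ is by definition an $\bR$-linear map preserving the inner product, we conclude directly that $\Phi(h)|_{\bH_0} \in \mathrm{O}(3)$. (It is genuinely a bijection: an inner-product-preserving linear map is injective, hence surjective on the finite-dimensional space $\bR^3$; alternatively $\Phi(h^{-1})$ furnishes a two-sided inverse.)

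Next I would establish the homomorphism property by a direct computation, using that $\bH_1$ is a group and that $(h_1 h_2)^{-1} = h_2^{-1} h_1^{-1}$ in the noncommutative ring $\bH$. For $h_1, h_2 \in \bH_1$ and any $\alpha \in \bH$,
\[
\Phi(h_1 h_2)(\alpha) = (h_1 h_2)\,\alpha\,(h_1 h_2)^{-1} = h_1\bigl(h_2\,\alpha\,h_2^{-1}\bigr)h_1^{-1} = \Phi(h_1)\bigl(\Phi(h_2)(\alpha)\bigr),
\]
so that $\Phi(h_1 h_2) = \Phi(h_1)\Phi(h_2)$; restricting to $\bH_0 \cong \bR^3$ gives the corresponding identity in $\mathrm{O}(3)$, and $\Phi(1)$ is the identity transformation, so $\Phi$ sends the identity to the identity.

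The main obstacle here is essentially nil, since linearity, the invariance of $\bH_0$, and the preservation of the inner product were all carried out in the two preceding lemmas. The only points requiring care are invoking the appendix's definition so that membership in $\mathrm{O}(3)$ is justified rather than merely asserted, and keeping the reversal $(h_1 h_2)^{-1} = h_2^{-1} h_1^{-1}$ straight so that the conjugations compose in the correct order.
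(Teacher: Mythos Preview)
Your proposal is correct and matches the paper's approach: the paper states this proposition without proof, treating it as an immediate consequence of the two preceding lemmas (which establish that $\Phi(h)$ restricts to an inner-product-preserving linear self-map of $\bH_0$) together with the standard fact that conjugation is multiplicative. You have simply written out explicitly what the paper leaves to the reader.
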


We will now identify the kernel and the image of this homomorphism. For the kernel we need the following:

\begin{proposition}\label{H_center_prop}
The elements of $\bH$ commuting with a nonzero $x \in \bH_0$ are the elements in the $\bR$-span of $1, x$.
The elements of $\bH$ commuting with all of $\bH_0$ is $\bR$. In particular, the center of the ring $\bH$ is $\bR$.
\end{proposition}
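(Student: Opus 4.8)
The plan is to reduce the first claim to a direct computation in a well-chosen orthonormal basis, and then to extract the remaining two claims as easy consequences. First I would observe that commuting with a nonzero $x \in \bH_0$ is unchanged if we rescale $x$ by a nonzero real, since reals are central in $\bH$; so I may assume $x$ has unit length. By the preceding proposition on orthonormal bases of $\bH_0$, I can then extend $x$ to an orthonormal basis $x, y, z$ of $\bH_0$ satisfying $x^2 = y^2 = z^2 = -1$, the anticommutation relations, and $xy = z$, $yz = x$, $zx = y$. Thus $1, x, y, z$ form an $\bR$-basis of $\bH$ behaving exactly like $1, \mathbf{i}, \mathbf{j}, \mathbf{k}$.

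The core step is then a routine coefficient comparison. Writing an arbitrary $\alpha = a + bx + cy + dz$ with $a, b, c, d \in \bR$, I would compute $x\alpha$ and $\alpha x$ using the multiplication laws above (for instance $xz = -zx = -y$ and $yx = -xy = -z$), obtaining $x\alpha = -b + ax - dy + cz$ and $\alpha x = -b + ax + dy - cz$. Equating $x\alpha = \alpha x$ and matching the coefficients of $y$ and $z$ forces $d = 0$ and $c = 0$, while the coefficients of $1$ and $x$ impose no condition. Hence $\alpha = a + bx$ lies in the $\bR$-span of $1$ and $x$; the reverse inclusion is immediate since $x$ commutes with itself and with $\bR$. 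This establishes the first statement. I expect this computation to be the main (though entirely mechanical) obstacle, the only subtlety being to keep the signs in the anticommutation relations straight.

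Finally I would deduce the last two claims. An element commuting with every vector of $\bH_0$ commutes in particular with $\mathbf{i}$ and with $\mathbf{j}$, so by the first statement it lies in the intersection of the $\bR$-span of $\{1, \mathbf{i}\}$ with the $\bR$-span of $\{1, \mathbf{j}\}$, which is $\bR$; conversely $\bR$ is central, so the set of elements commuting with all of $\bH_0$ is exactly $\bR$. For the center, since $\bH = \bR \oplus \bH_0$ and $\bR$ already commutes with everything, an element is central in $\bH$ if and only if it commutes with all of $\bH_0$; by what was just shown this set is $\bR$, so the center of $\bH$ is $\bR$.
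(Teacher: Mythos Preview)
Your proof is correct and follows essentially the same approach as the paper: normalize $x$, extend to an orthonormal basis $x,y,z$ of $\bH_0$ using the preceding proposition, compute $x\alpha$ and $\alpha x$ for $\alpha = a + bx + cy + dz$, compare coefficients to force $c=d=0$, and then obtain the remaining two claims by intersecting the commutants of $\mathbf{i}$ and $\mathbf{j}$. The computations and the logical structure match the paper's proof almost verbatim.
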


\begin{proof}
Suppose $x \in \bH_0$. Normalize so $x$ has unit length. Let $y \in \bH_0$ be such that~$x, y$ are orthonormal.
Let $z = x y$. As above, $x, y, z$ is an orthonormal basis of~$\bH_0$. 
Observe that for $a, b, c, d \in \bR$
$$
x(a + b x + c y + d z) = a x - b + c z - d y,\quad (a + b x + c y + d z) x = a x - b - c z + d y.
$$
The first result follows by comparing coefficients.
In particular, $\bR$ is the set of elements commuting with both~$\mathbf{i}$ and $\mathbf{j}$.
The remaining claims are now clear.
\end{proof}

\begin{proposition}
The kernel of $\Phi \colon \bH_1 \to \mathrm{SO}_3(\bR)$ is $\left\{ \pm 1 \right\}$.
\end{proposition}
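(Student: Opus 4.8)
The plan is to characterize membership in the kernel as a commutation condition and then invoke the center computation from Proposition~\ref{H_center_prop}. By definition, $h \in \bH_1$ lies in the kernel of $\Phi$ precisely when $\Phi(h)$ is the identity on $\bH_0$, that is, when $h v h^{-1} = v$ for every $v \in \bH_0$. First I would rewrite this condition, multiplying on the right by $h$, as the statement that $h v = v h$ for all $v \in \bH_0$. In other words, an element of $\bH_1$ is in the kernel if and only if it commutes with every element of $\bH_0$.

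Next I would apply Proposition~\ref{H_center_prop} directly: it asserts that the set of elements of $\bH$ commuting with all of $\bH_0$ is exactly $\bR$. Therefore any kernel element $h$ must lie in $\bR$. Combining this with the hypothesis $h \in \bH_1$, I note that $h$ is a real number of norm one, and since $|c|^2 = c^2$ for $c \in \bR$, the condition $|h| = 1$ forces $h = \pm 1$. This gives the inclusion of the kernel into $\{\pm 1\}$.

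For the reverse inclusion, I would observe that $\pm 1$ are central in $\bH$ (again by Proposition~\ref{H_center_prop}, or simply because real scalars commute with everything), so $\Phi(\pm 1)$ acts as $\alpha \mapsto (\pm 1)\alpha(\pm 1)^{-1} = \alpha$, the identity map. Hence $\{\pm 1\}$ is contained in the kernel, and the two inclusions together yield the claim.

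There is no genuine obstacle here, since the essential content has already been established in Proposition~\ref{H_center_prop}; the only point requiring a moment's care is the passage from $h v h^{-1} = v$ to the commutation relation $hv = vh$ (valid because $h$ is invertible) and the observation that the norm-one real quaternions are exactly $\pm 1$. The proof is thus a short two-line deduction from the previously computed center of $\bH$.
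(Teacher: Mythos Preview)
Your proof is correct and follows essentially the same approach as the paper: both arguments reduce kernel membership to commuting with all of $\bH_0$, invoke Proposition~\ref{H_center_prop} to conclude the element lies in $\bR$, and then use the unit-length condition to force $h = \pm 1$. Your version is simply more expansive in its presentation.
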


\begin{proof}
It is clear from the definition of $\Phi$ that $1$ and $-1$ are in the kernel. If $h$ is in the kernel,
then $h u  = u h$ for all $u\in \bH_0$. So $h \in \bR$ from the previous result.
Since~$h$ has unit length, $h = \pm 1$.
\end{proof}

This is a good point to mention the following:

\begin{proposition}
The group $\bH^\times$, and hence $\bH_1$, has $-1$ as its a unique element of order $2$.
\end{proposition}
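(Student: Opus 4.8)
The plan is to solve the equation $h^2 = 1$ directly, using the real/imaginary decomposition of quaternions developed above. First I would write an arbitrary $h \in \bH^\times$ as $h = c + v$, where $c \in \bR$ is the real part and $v \in \bH_0$ is the imaginary part. An element has order $2$ precisely when $h^2 = 1$ and $h \neq 1$, so the whole problem reduces to classifying the solutions of $h^2 = 1$ and then discarding the trivial one.

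Next I would compute $h^2$ and separate it into real and imaginary parts. Using $\bR$-linearity together with the identity $v^2 = -|v|^2$ for $v \in \bH_0$ (established earlier, since $-v^2$ is the squared norm of $v$), I obtain
$$
h^2 = (c+v)^2 = c^2 + 2cv + v^2 = (c^2 - |v|^2) + 2c\,v,
$$
where $c^2 - |v|^2 \in \bR$ is the real part and $2c\,v \in \bH_0$ is the imaginary part. The equation $h^2 = 1$ then splits into two scalar conditions: the imaginary part must vanish, $2c\,v = 0$, and the real part must equal one, $c^2 - |v|^2 = 1$.

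From $2c\,v = 0$ I would conclude that either $c = 0$ or $v = 0$. If $v = 0$, then $h = c$ is real with $c^2 = 1$, giving $h = \pm 1$; here $h = 1$ has order $1$ and $h = -1$ has order $2$. If instead $c = 0$, the real-part equation becomes $-|v|^2 = 1$, i.e. $|v|^2 = -1$, which is impossible since the norm is positive definite ($|v|^2 \geq 0$). Hence $-1$ is the unique element of order $2$ in $\bH^\times$. Finally, because $-1 \in \bH_1$ (its norm is $1$) and $\bH_1$ is a subgroup of $\bH^\times$, the same element is the unique element of order $2$ in $\bH_1$.

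This argument is essentially a routine computation, so I do not expect a genuine obstacle. The one step deserving care is the disposal of the purely imaginary case $c = 0$: one must invoke positive-definiteness of the quaternion norm to rule out $|v|^2 = -1$. This is precisely the point that prevents the many square roots of $-1$ in $\bH$ (all the unit imaginary quaternions) from being mistaken for square roots of $+1$, and it is what makes the element of order $2$ unique rather than plentiful.
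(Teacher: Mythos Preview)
Your proof is correct. The paper takes a shorter route: from $h^2 = 1$ it factors $(h-1)(h+1) = 0$ (valid here because $1$ is central, so $h \cdot 1 = 1 \cdot h$), and then invokes the fact that $\bH$ is a division ring to conclude $h = 1$ or $h = -1$. Your argument instead unpacks $h = c + v$ and computes $h^2$ explicitly, reducing to a case analysis on the real and imaginary parts. Both approaches are entirely sound; the paper's is slicker because it leverages the absence of zero divisors directly, while yours has the minor advantage of not needing to notice that the factorization $x^2 - 1 = (x-1)(x+1)$ survives in a noncommutative ring (it does, since the constants commute with $x$, but one has to pause to check). Your careful handling of the $c = 0$ case via positive-definiteness is exactly the content that the division-ring argument packages up more abstractly.
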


\begin{proof}
Suppose that $x^2 = 1$ with $x \in \bH$. Then $(x-1) (x + 1) = 0$. Since $\bH$ is a division ring, $x - 1 = 0$ or $x + 1 = 0$.
So $x = \pm 1$. The result follows.
\end{proof}

\begin{remark}
As we saw earlier, $x^4 = 1$ for all $x \in \bH_0 \cap \bH_1$. Note that $\bH_0 \cap \bH_1$ is the unit sphere in $\bH_0 \cong \bR^3$.
So $x^4 - 1$ has an infinite number of roots in $\bH$.
\end{remark}

Finally we present an argument that the image of our homomorphism is
the subgroup~$\SO(3)$.
We use the following (see the earlier appendix):

\begin{proposition}
Suppose that~$A \in \mathrm{O}(3)$ is not the identity matrix. Then the following are equivalent:
\begin{itemize}
\item
$A$ is a rotation of $\bR^3$.
\item
$A$ fixes exactly a one-dimensional subspace of $\bR^3$ (called the axis of rotation).
\item
$\det A = +1$, so $A \in \mathrm{SO}(3)$.
\end{itemize}
\end{proposition}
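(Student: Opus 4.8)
The plan is to prove the three conditions equivalent by the cyclic chain $(c)\Rightarrow(a)\Rightarrow(b)\Rightarrow(c)$, where I label the conditions $(a)$ ``$A$ is a rotation'', $(b)$ ``$A$ fixes exactly a one-dimensional subspace'', and $(c)$ ``$\det A = +1$''. The whole argument rests on two facts already established in the appendix on orthogonal transformations: the canonical-form proposition, which says that every element of $\SO(3)$ can be written, in a suitable orthonormal basis, as a matrix with a single $1$ on the diagonal and a $2$-by-$2$ rotation block $\begin{pmatrix}\cos\theta & -\sin\theta\\ \sin\theta & \cos\theta\end{pmatrix}$ in the lower-right corner; and the fixed-space lemma, which records that for $A\in\OO(3)$ with $\det A = 1$ the fixed space has dimension $1$ or $3$, while for $\det A = -1$ it has dimension $0$ or $2$.

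For $(c)\Rightarrow(a)$, assuming $\det A = 1$ we have $A\in\SO(3)$, so the canonical-form proposition puts $A$ into rotation form in some orthonormal basis; since $A\ne I$ by hypothesis, this is a genuine nontrivial rotation of $\bR^3$. For $(a)\Rightarrow(b)$, a rotation has determinant $+1$, so the fixed-space lemma forces its fixed space to have dimension $1$ or $3$; a $3$-dimensional fixed space would mean $A$ acts as the identity on all of $\bR^3$, contradicting $A\ne I$, so the fixed space is exactly one-dimensional. For $(b)\Rightarrow(c)$, if the fixed space is exactly one-dimensional then the fixed-space lemma rules out $\det A = -1$ (which would force fixed-space dimension $0$ or $2$); since $\det A = \pm 1$ for any orthogonal matrix, we conclude $\det A = +1$.

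The main obstacle is purely bookkeeping: ensuring the fixed space has dimension \emph{exactly} one rather than merely at least one, and keeping the definition of ``rotation'' (a non-identity element of $\SO(3)$ in canonical rotation form) straight throughout. I expect the delicate point to be the repeated use of the hypothesis $A\ne I$, which is precisely what excludes the degenerate cases — the $3$-dimensional fixed space in $(a)\Rightarrow(b)$ and the trivial rotation $\theta\equiv 0$ — and is essential for the equivalence to hold. Once that hypothesis is invoked carefully, each implication follows immediately from the two cited results with no further computation.
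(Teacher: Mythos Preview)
Your proof is correct and matches the paper's approach. The paper does not give a separate proof of this proposition at all; it simply states it with the parenthetical ``(see the earlier appendix)'', treating it as a direct repackaging of the canonical-form proposition, the theorem identifying $\SO(3)$ with the rotation matrices, and the fixed-space lemma --- exactly the results you invoke. Your cyclic chain just makes explicit the logical bookkeeping the paper leaves to the reader.
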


\begin{lemma}
Let $v\in \bH_0$ be of unit length. Let $h = r + s v$ 
where $r, s \in \bR$ where~$r^2 + s^2 = 1$ and such that~$-1 < r < 1$
so that $h \in \bH_1$.
Then the orthogonal transformation $\Phi(h)$ fixes~$v$.
Furthermore if~$u$ is a unit vector orthogonal to $v$ then the cosine of the angle formed by $u$ and~$\Phi(h) u$ is~$2r^2 - 1$.
In particular, $\Phi(h)$ is a rotation with axis of rotation spanned by $v$.
\end{lemma}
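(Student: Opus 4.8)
The plan is to work directly with the conjugation map $\Phi(h)\colon \alpha \mapsto h\alpha h^{-1}$, using that $|h|=1$ forces $h^{-1}=\overline h$, so that $\Phi(h)\alpha = h\alpha\overline h$ with $\overline h = r - sv$. Throughout I would invoke the relations recorded in the preceding propositions: since $v\in\bH_0$ has unit length, $v^2=-1$; and if $u$ is a unit vector of $\bH_0$ orthogonal to $v$, then $uv=-vu$ while $u,v,uv$ form an orthonormal basis of $\bH_0$. The first and easiest point is that $\Phi(h)$ fixes $v$: because $h=r+sv$ and $v^2=-1$, a one-line computation gives $hv = rv + sv^2 = rv - s = vh$, so $v$ commutes with $h$ and hence $\Phi(h)v = hvh^{-1} = v$.

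Next I would compute $\Phi(h)u = (r+sv)\,u\,(r-sv)$ for a unit vector $u\perp v$ in $\bH_0$. Expanding and collecting terms, the two cross terms combine through $vu=-uv$ into $-2rs\,uv$, and the quadratic term reduces through $vuv = -v^2u = u$; the upshot is the clean formula
\[
\Phi(h)u = (r^2 - s^2)\,u - 2rs\,(uv).
\]
Since $\Phi(h)$ is an orthogonal transformation (established earlier), both $u$ and $\Phi(h)u$ are unit vectors, so the cosine of the angle between them is exactly $\langle u,\Phi(h)u\rangle$. As $uv$ is orthogonal to $u$, the second term contributes nothing, and I obtain $\langle u,\Phi(h)u\rangle = r^2 - s^2$, which equals $2r^2-1$ upon substituting $s^2 = 1 - r^2$.

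Finally, to conclude that $\Phi(h)$ is a rotation with axis $\bR v$, I would pin down its fixed space. The hypothesis $-1 < r < 1$ gives $2r^2 - 1 < 1$, so no unit vector orthogonal to $v$ is fixed; combined with linearity and the fact that $\Phi(h)$ maps $v^\perp$ into itself (it is orthogonal and fixes $v$, so $\langle \Phi(h)w,v\rangle = \langle \Phi(h)w,\Phi(h)v\rangle = \langle w,v\rangle$), this shows the fixed space is exactly the line $\bR v$ and that $\Phi(h)$ is not the identity. Invoking the cited characterization of rotations in $\OO(3)$ — a non-identity orthogonal map fixing exactly a one-dimensional subspace lies in $\SO(3)$ and is a rotation about that axis — then finishes the proof.

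I expect the main obstacle to be the middle computation: correctly expanding the noncommutative product $(r+sv)\,u\,(r-sv)$ and applying $uv=-vu$ and $vuv=u$ to arrive at the displayed formula. Once that formula is in hand, both the identification of the cosine and the determination of the axis are short. A secondary care point is the passage from ``no perpendicular vector is fixed'' to ``the fixed space is exactly $\bR v$,'' which relies on $\Phi(h)$ preserving $v^\perp$.
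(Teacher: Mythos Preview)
Your proof is correct and follows essentially the same route as the paper's: both compute $\Phi(h)u=(r^2-s^2)u-2rs\,uv$ by expanding $(r+sv)u(r-sv)$ with the relations $uv=-vu$ and $v^2=-1$, then read off the cosine and pin down the fixed space as $\bR v$. The only cosmetic difference is that the paper obtains $\langle \Phi(h)u,u\rangle$ by computing $-\mathit{Real}\big((\Phi(h)u)\,u\big)$, whereas you invoke the orthogonality of $u$ and $uv$ directly.
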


\begin{proof}
It is clear that $h$ has unit length since $r^2 + s^2 = 1$. By Proposition~\ref{H_center_prop},
the given vector $v$ commutes with $h = r + s v$. So
$$
\Phi(h) (v) = h^{-1} v h =  h^{-1} h  v= v.
$$ 
Let $u$ be orthogonal to $v$ and of unit length, and let $w = uv$. Then
$$
\Phi(h)  u  = (r + sv) u (r - sv) = r^2 u  - rs uv + rs v u - s^2 vuv = r^2 u -2 rs w - s^2 u.
$$
Thus
$$
\left(  \Phi(h)  u  \right) u =  (r^2 u -2 rs w - s^2 u) u = - r^2 - 2rs v + s^2 = (s^2 - r^2) + 2rs v.
$$
So
$$
\left< \Phi(h)  u, u \right> = 
- \mathit{Real}\left( \left(  \Phi(h)  u  \right) u \right)=
r^2 - s^2 = r^2 -  (1-r^2)  = 2 r^2 - 1.
$$
Since $u$, and hence $\Phi(h) u$, have unit length, the above inner product is just the cosine of the angle formed
by $u$ and $\Phi(h) u$.

Note this cosine is not $1$ since $r^2 < 1$. Thus $u$ is not fixed by $\Phi(h)$.
A general vector of $\bH_0$ can be written as $a v + b u$ for some $u$ of unit length orthogonal to~$v$.
The image of $av + bu$ under $\Phi(h)$ is $a v + b u'$ where $u' \ne u$. Thus if $b\ne 0$, the vector $a v + b u$ is not fixed by $\Phi(h)$.
This means that the space of vectors fixed by $\Phi(h)$ is one-dimensional.
By the above proposition, $\Phi(h)$ must be a rotation.
\end{proof}

Now we are ready for the  identification of the image.

\begin{proposition}
The image of $\Phi \colon \bH_1 \to \mathrm{O}(3)$ is $\mathrm{SO}(3)$.
\end{proposition}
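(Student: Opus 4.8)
The image of $\Phi \colon \bH_1 \to \mathrm{O}(3)$ is $\mathrm{SO}(3)$.

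The plan is to prove this in two stages: first that the image lands inside $\SO(3)$, and then that it is all of $\SO(3)$. For the first containment, I would argue by connectedness. Every $\Phi(h)$ is an orthogonal transformation, so $\det \Phi(h) \in \{\pm 1\}$. The key observation is that $\bH_1$ is path-connected (it is the $3$-sphere in $\bR^4$), and that $h \mapsto \det \Phi(h)$ is a continuous map $\bH_1 \to \{\pm 1\}$. Since the domain is connected and the codomain is discrete, this map is constant; evaluating at $h = 1$ gives $\det \Phi(1) = \det I = 1$. Hence $\det \Phi(h) = 1$ for all $h \in \bH_1$, so the image is contained in $\SO(3)$.

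For the reverse containment, I would use the preceding lemma on rotations. By the theorem in the orthogonal-transformations appendix, $\SO(3)$ consists exactly of the identity together with all rotations of $\bR^3$. The identity is $\Phi(1)$, so it suffices to realize every rotation as some $\Phi(h)$. A rotation is determined by an axis (a unit vector $v \in \bH_0$) and an angle $\theta$. Given such data, the lemma shows that choosing $h = r + sv$ with $r^2 + s^2 = 1$ and $-1 < r < 1$ produces a rotation $\Phi(h)$ whose axis is the span of $v$ and whose rotation angle $\psi$ satisfies $\cos \psi = 2r^2 - 1$. As $r$ ranges over $(-1,1)$, the quantity $2r^2 - 1$ ranges over $[-1,1)$, so $\cos\psi$ achieves every value in $[-1,1)$; together with $\theta = 0$ handled by the identity, every rotation angle about every axis is attainable. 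Thus every rotation, and hence every element of $\SO(3)$, lies in the image.

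The main subtlety I anticipate is the angle-of-rotation bookkeeping in the second stage: the lemma computes only the cosine of the angle, which does not distinguish a rotation by $\psi$ from one by $-\psi$ about the same axis. To cover all rotations I would note that rotation by $\psi$ about $v$ equals rotation by $-\psi$ about $-v$, and $-v$ is again a valid unit vector in $\bH_0$; choosing $h = r - sv$ instead of $r + sv$ (equivalently, flipping the sign of $s$) realizes the opposite orientation. So ranging over both the choice of axis direction $\pm v$ and $r \in (-1,1)$ sweeps out every rotation. The first stage is essentially routine once connectedness of $\bH_1$ is granted; an alternative entirely algebraic proof of that stage would compute $\det \Phi(h)$ directly, but the connectedness argument is cleaner and avoids a messy determinant calculation.
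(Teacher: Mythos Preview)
Your proof is correct. The surjectivity half matches the paper's closely: both invoke the preceding lemma to realize a prescribed axis and rotation angle, and your remark that replacing $v$ by $-v$ (equivalently $s$ by $-s$) handles the sign-of-angle ambiguity is exactly the paper's observation that either $\Phi(h)$ or $\Phi(h^{-1})$ equals $A$, since $h^{-1} = \overline{h} = r - sv$. Where you genuinely diverge is in the containment $\Phi(\bH_1) \subseteq \SO(3)$. The paper stays algebraic: every $h \in \bH_1 \setminus \{\pm 1\}$ can be written as $r + sv$ with $-1 < r < 1$, so the \emph{same} lemma already shows $\Phi(h)$ is a rotation and hence has determinant $1$. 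Your connectedness argument is valid and slick, but it imports topology (continuity of $\Phi$, path-connectedness of the $3$-sphere) into what is otherwise a purely algebraic appendix; the paper's route has the virtue of reusing the lemma already in hand and keeping the argument self-contained over $\bR$ without analytic input.
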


\begin{proof}
Every element not equal to $\pm 1$ in $\bH_1$ can be written as $r + s v$ for some unit vector $v \in \bH_0$ and some $r,s \in \bR$
with $r^2 + s^2 = 1$ and $-1< r < 1$. By the above result, its image is a rotation, and so has determinant $1$.

Conversely, suppose $A$ is in $\mathrm{SO}(3)$. Then we need to find an element $h$ of $\bH_1$ with~$\Phi(h) = A$.
If $A$ is the identity, then $h = \pm 1$ works. So we can assume that~$A$ is a rotation. Let $v$ be a unit vector in $\bH_0$
in the axis of rotation of $A$, and let $\theta$ be the angle of rotation around this axis. Choose $r$ so that $\cos \theta = 2 r^2 - 1$,
and choose~$s$ so that $r^2 + s^2 = 1$.
Then by the above result, if $h = r + s v$ then $\Phi(h)$ will also be a rotation with axis $v$ and angle $\theta$.
This means that either $\Phi(h)$ or its inverse~$\Phi(h^{-1})$ must be $A$.
\end{proof}

%%%%%

\bibliographystyle{plain}

\bibliography{FreelyRepresentable}

\end{document}